\documentclass[a4paper,oneside,11pt]{article}
\usepackage[active]{srcltx}
\usepackage{hyperref}
\usepackage{authblk}
 \usepackage{bibspacing}

\newlength{\defbaselineskip}
\setlength{\defbaselineskip}{\baselineskip}
\newcommand{\setlinespacing}[1]           {\setlength{\baselineskip}{#1 \defbaselineskip}}

\setlength{\topmargin}{-1.5cm}
\setlength{\textwidth}{17cm}
\setlength{\textheight}{25cm}
\setlength{\oddsidemargin}{-10pt}
\setlength{\evensidemargin}{-10pt}

\usepackage{amsmath}
\usepackage{amsthm}
\usepackage{amssymb}
\usepackage{latexsym}
\usepackage{mathtools}
\usepackage{mathrsfs}
\usepackage{graphics}
\usepackage{latexsym}
\usepackage{psfrag}
\usepackage{import}
\usepackage{verbatim}
\usepackage{enumerate}
\usepackage{enumitem}
\usepackage{graphicx}
\usepackage[usenames]{color}

\theoremstyle{plain}
\newtheorem{lemma}{Lemma}[section]
\newtheorem{theorem}[lemma]{Theorem}
\newtheorem{maintheorem}[lemma]{Main Theorem}
\newtheorem{proposition}[lemma]{Proposition}
\newtheorem{corollary}[lemma]{Corollary}
\newtheorem*{theorem*}{Theorem}
\newtheorem*{maintheorem*}{Main Theorem}

\theoremstyle{definition}
\newtheorem{assumption}[lemma]{Assumption}
\newtheorem{definition}[lemma]{Definition}
\newtheorem*{definition*}{Definition}
\newtheorem{remark}[lemma]{Remark}

\numberwithin{equation}{section}

\makeatletter
\newcommand{\mms}{m.m.s\@ifnextchar.{}{\@ifnextchar'{.}{\@ifnextchar){.}{.~}}}}
\makeatother

\newcommand{\dom}{{\textrm{Dom\,}}}
\newcommand{\R}{\mathbb{R}}
\newcommand{\N}{\mathbb{N}}
\newcommand{\NN}{\mathcal{N}}
\newcommand{\B}{\mathcal{B}}
\newcommand{\Q}{\mathbb{Q}}

\newcommand{\supp}{\text{\rm supp}}
\newcommand{\loc}{\text{\rm loc}}

\newcommand{\Eps}{\mathcal{E}}
\newcommand{\sing}{\text{sing}}

\newcommand{\gr}{\textrm{graph}}

\newcommand{\ve}{\varepsilon}

\newcommand{\enne}{\mathbb{N}}

\newcommand{\f}{\varphi}

\newcommand{\T}{\mathcal{T}}

\renewcommand{\L}{\mathcal{L}}
\newcommand{\F}{\mathcal{F}}
\newcommand{\RCD}{\mathsf{RCD}}

\newcommand{\CD}{\mathsf{CD}}
\newcommand{\BM}{\mathsf{BM}}
\newcommand{\Geo}{{\rm Geo}}
\newcommand{\MCP}{\mathsf{MCP}}
\newcommand{\MCPE}{\mathsf{MCP_{\eps}}}
\newcommand{\OptGeo}{{\rm OptGeo}}

\newcommand{\norm}[1]{\left\Vert#1\right\Vert}

\newcommand{\abs}[1]{\left\vert#1\right\vert}
\newcommand{\set}[1]{\left\{#1\right\}}
\newcommand{\brac}[1]{\left(#1\right)}
\newcommand{\scalar}[1]{\left \langle #1 \right \rangle}

\newcommand{\Real}{\mathbb{R}}

\newcommand{\eps}{\varepsilon}

\renewcommand{\L}{\mathcal{L}}

\newcommand{\q}{\mathfrak{q}}

\renewcommand{\P}{\mathbb P}
\newcommand{\ellc}{{\bar{\ell}}}
\newcommand{\varphic}{{\bar{\varphi}}}
\newcommand{\Phic}{{\bar{\Phi}}}
\newcommand{\pc}{{\bar{p}}}
\renewcommand{\P}{\mathcal{P}}
\newcommand{\len}{\ell}
\newcommand{\sfm}{\mathsf m}

\newcounter{mycounter}
\setcounter{mycounter}{0}

\newcommand{\mm}{\mathfrak m}
\newcommand{\qq}{\mathfrak q}
\newcommand{\pp}{\mathfrak p}
\newcommand{\ee}{{\rm e}}
\newcommand{\QQ}{\mathfrak Q}

\newcommand{\sfd}{\mathsf d}
\newcommand{\Opt}{\mathrm{OptGeo}}

\author{
Fabio Cavalletti\thanks{SISSA, Trieste 34136, Italy. Email: cavallet@sissa.it.} \ and 
Emanuel Milman\thanks{Mathematics Department, Technion - I.I.T., Haifa 32000, Israel. Email: emilman@tx.technion.ac.il. \\
\textrm{The research leading to these results is part of a project that has received funding from the European Research Council (ERC) under the European Union's Horizon 2020 research and innovation programme (grant agreement No 637851).}
}}

\date{}                     \setcounter{Maxaffil}{0}

\title{The Globalization Theorem\\ for the Curvature-Dimension Condition}

\begin{document}

\maketitle

\bibliographystyle{plain}

\begin{abstract}
The Lott--Sturm--Villani Curvature-Dimension condition provides a synthetic notion for a metric-measure space to have Ricci-curvature bounded from below and dimension bounded from above. We prove that it is enough to verify this condition locally: 
an essentially non-branching metric-measure space $(X,\sfd,\mm)$ (so that $(\supp(\mm),\sfd)$ is a length-space and $\mm(X) < \infty$)
verifying the local Curvature-Dimension condition $\CD_{loc}(K,N)$ with parameters $K \in \R$ and $N \in (1,\infty)$, also verifies the global Curvature-Dimension condition $\CD(K,N)$. In other words, the Curvature-Dimension condition enjoys the globalization (or local-to-global) property, answering a question which had remained open since the beginning of the theory.
For the proof, we establish an equivalence between $L^1$ and $L^2$ optimal-transport--based interpolation.
The challenge is not merely a technical one, and several new conceptual ingredients which are of independent interest are developed: an explicit \emph{change-of-variables} formula for densities of Wasserstein geodesics depending on a second-order temporal derivative of associated Kantorovich potentials; a surprising \emph{third-order} theory for the latter Kantorovich potentials, which holds in complete generality on any proper geodesic space; and a certain \emph{rigidity} property of the change-of-variables formula, allowing us to bootstrap the a-priori available regularity. 
As a consequence, numerous variants of the Curvature-Dimension condition proposed by various authors throughout the years are shown to, in fact, all be equivalent in the above setting, thereby unifying the theory.
\end{abstract}

\tableofcontents

\section{Introduction}

The Curvature-Dimension condition $\CD(K,N)$ was first introduced in the 1980's by Bakry and \'Emery \cite{BakryEmery,BakryStFlour} in the context of diffusion generators, having in mind primarily the setting of weighted Riemannian manifolds, namely smooth Riemannian manifolds endowed with a smooth density with respect to the Riemannian volume. The $\CD(K,N)$ condition serves as a generalization of the classical condition in the non-weighted Riemannian setting of having Ricci curvature bounded below by $K \in \Real$ and dimension bounded above by $N \in [1,\infty]$ (see e.g. \cite{EMilmanNegativeDimension,Ohta-NegativeN} for further possible extensions). Numerous consequences of this condition 
have been obtained over the past decades, extending results from the classical non-weighted setting and at times establishing new ones directly in the weighted one. These include diameter bounds, volume comparison theorems, heat-kernel and spectral estimates, Harnack inequalities, topological implications, Brunn--Minkowski-type inequalities, and isoperimetric, functional and concentration inequalities -- see e.g. \cite{Ledoux-Book,BGL-Book,Vil} and the references therein. 

\smallskip
Being a differential and Hilbertian condition, it was for many years unclear how to extend the Bakry--\'Emery definition beyond the smooth Riemannian setting, as interest in (measured) Gromov-Hausdorff limits of Riemannian manifolds and other non-Hilbertian singular spaces steadily grew. In parallel, and apparently unrelatedly, the theory of Optimal-Transport was being developed in increasing generality following the influential work of Brenier \cite{BrenierMap} (see e.g. \cite{ambro:lecturenote,EvansSurveyOT,McCannGuillenLectureNotes,RachevRuschendorf-Book,UrbasLectureNotesOT,Vil:topics,Vil}). 
Given two probability measures $\mu_0,\mu_1$ on a common geodesic space $(X,\sfd)$ and a prescribed cost of transporting a single mass from point $x$ to $y$, the Monge-Kantorovich idea is to optimally couple $\mu_0$ and $\mu_1$ by minimizing the total transportation cost, and as a byproduct obtain a Wasserstein geodesic $[0,1] \ni t \mapsto \mu_t$ connecting $\mu_0$ and $\mu_1$ in the space of probability measures $\P(X)$. This gives rise to the notion of displacement convexity of a given functional on $\P(X)$ along Wasserstein geodesics, introduced and studied by McCann \cite{McCannConvexityPrincipleForGases}. 
Following the works of Cordero-Erausquin--McCann--Schmuckenschl\"ager \cite{corderomccann:brescamp}, Otto--Villani \cite{OttoVillaniHWI} and von Renesse--Sturm \cite{VonRenesseSturm}, it was realized that the $\CD(K,\infty)$ condition in the smooth setting may be equivalently formulated synthetically as a certain convexity property of an entropy functional along $W_2$ Wasserstein geodesics (associated to $L^2$-Optimal-Transport, when the transport-cost is given by the squared-distance function).

\smallskip
This idea culminated in the seminal works of Lott--Villani \cite{lottvillani:metric} and Sturm \cite{sturm:I, sturm:II}, where a synthetic definition of $\CD(K,N)$ was proposed on a general (complete, separable) metric space $(X,\sfd)$ endowed with a (locally-finite Borel) reference measure $\mm$ (``metric-measure space", or \mms); it was moreover shown that the latter definition coincides with the Bakry--\'Emery one in the smooth Riemannian setting (and in particular in the classical non-weighted one), that it is stable under measured Gromov-Hausdorff convergence of \mms's, and that it implies various geometric and analytic inequalities relating metric and measure, in complete analogy with the smooth setting. It was subsequently also shown \cite{Ohta-CDforFinsler,Petrunin-CDforAlexandrov} that Finsler manifolds and Alexandrov spaces satisfy the Curvature-Dimension condition. 
Thus emerged an overwhelmingly convincing notion of Ricci curvature lower bound $K$ and dimension upper bound $N$ for a general (geodesic) \mms $(X,\sfd,\mm)$, leading to a rich and fruitful theory exploring the geometry of \mms's by means of Optimal-Transport. 

\smallskip

One of the most important and longstanding open problems in the Lott--Sturm--Villani theory (see \cite{sturm:I, sturm:II} and \cite[pp. 888, 907]{Vil}) is whether the Curvature-Dimension condition on a general geodesic \mms (say, having full-support $\supp(\mm) = X$) enjoys the globalization (or local-to-global) property: if the $\CD(K,N)$ condition is known to hold on a neighborhood $X_o$ of any given point $o \in X$ (a property henceforth denoted by $\CD_{loc}(K,N)$), does it also necessarily hold on the entire space? 
Clearly this is indeed the case in the smooth setting, as both curvature and dimension may be computed locally (by equivalence with the differential $\CD$ definition). However, for reasons which we will expand on shortly, this is not at all clear and in some cases is actually false on general \mms's. An affirmative answer to this question would immensely facilitate the verification of the $\CD$ condition, which at present requires testing all possible $W_2$-geodesics on $X$, instead of locally on each $X_o$. The analogous question for sectional curvature on Alexandrov spaces (where the dimension $N$ is absent) does indeed have an affirmative answer, as shown by Topogonov, and in full generality, by Perelman (see \cite{BBI}).

\smallskip

Several partial answers to the local-to-global problem have already been obtained in the literature. A geodesic space $(X,\sfd)$ is called non-branching if geodesics are forbidden to branch at an interior-point into two separate geodesics. On a non-branching geodesic \mms $(X,\sfd,\mm)$ having full support, it was shown by Sturm in \cite[Theorem 4.17]{sturm:I} that the local-to-global property is satisfied when $N = \infty$ (assuming that the space of probability measures with finite $\mm$-relative entropy is geodesically convex; see also \cite[Theorem 30.42]{Vil} where the same globalization result was proved under a different condition involving the existence of a full-measure totally-convex subset of $X$ of finite-dimensional points). Still for non-branching geodesic \mms's having full support, a positive answer was also obtained by Villani in \cite[Theorem 30.37]{Vil} for the case $K=0$ and $N \in [1,\infty)$. 

\smallskip
We stress that in these results, the restriction to non-branching spaces is \emph{not} merely a technical assumption - an example of a heavily-branching \mms verifying $\CD_{loc}(0,4)$ which does not verify $\CD(K,N)$ for any fixed $K \in \Real$ and $N\in [1,\infty]$ was constructed by Rajala in \cite{R2016}. Consequently, a natural assumption is to require that $(X,\sfd)$ be non-branching, or more generally, to require that the $L^2$-Optimal-Transport on $(X,\sfd,\mm)$ be concentrated (i.e. up to a null-set) on a non-branching subset of geodesics, an assumption introduced by Rajala and Sturm in \cite{rajasturm:branch} under the name \emph{essentially non-branching} (see Section \ref{sec:PartII-prelim} for precise definitions). For instance, it is known \cite{rajasturm:branch} that measured Gromov-Hausdorff limits of Riemannian manifolds satisfying $\CD(K,\infty)$, and more generally, $\RCD(K,\infty)$ spaces, always satisfy the essentially non-branching assumption  (see Section \ref{S:Final}).

\medskip

In this work, we provide an affirmative answer to the globalization problem in the remaining range of parameters: for $N \in (1,\infty)$ and $K \in \Real$, the $\CD(K,N)$ condition verifies the local-to-global property on an essentially non-branching geodesic \mms $(X,\sfd,\mm)$ having finite total-measure and full support. The exclusion of the case $N=1$ is to avoid unnecessary pathologies, and is not essential. Our assumption that $\mm$ has finite total-measure (or equivalently, by scaling, that it is a probability measure) is most probably technical, but we did not verify it can be removed so as to avoid overloading the paper even further. 
This result is new even under the additional assumption that the space is infinitesimally Hilbertian (see \cite{gigli:laplacian}) -- we will say that such spaces verify $\RCD(K,N)$ -- in which case the assumption of being (globally) essentially non-branching is in fact superfluous. 

\smallskip

To better explain the difference between the previously known cases when $\frac{K}{N} = 0$ and the conceptual challenge which the newly treated case $\frac{K}{N} \neq 0$ poses, as well as to sketch our solution and its main new ingredients, which we believe are of independent interest, we provide some additional details below
and refer to Section \ref{sec:PartII-prelim} for precise definitions.

\subsection{Disentangling volume-distortion coefficients}

Roughly speaking, the $\CD(K,N)$ condition prescribes a synthetic second-order bound on how an infinitesimal volume changes when it is moved along a $W^{2}$-geodesic: 
the volume distortion (or transport Jacobian) $J$ along the geodesic should satisfy the following interpolation inequality for $t_0 = 0$ and $t_1 = 1$:
\begin{equation} \label{eq:intro-interpolate}
J^{\frac{1}{N}}(\alpha t_1 + (1-\alpha) t_0) \geq \tau_{K,N}^{(\alpha)}(\abs{t_1-t_0} \theta) J^{\frac{1}{N}}(t_1) + \tau_{K,N}^{(1-\alpha)}(\abs{t_1-t_0} \theta) J^{\frac{1}{N}}(t_0) \;\;\; \forall \alpha \in [0,1] ,
\end{equation}
where $\tau_{K,N}^{(t)}(\theta)$ is an explicit coefficient depending on the curvature $K \in \Real$, dimension $N \in [1,\infty]$, the interpolating time parameter $t \in [0,1]$ and the total length of the geodesic $\theta \in [0,\infty)$ (with an appropriate interpretation of (\ref{eq:intro-interpolate}) when $N=\infty$). When $N <\infty$, the latter coefficient is obtained by geometrically averaging two different volume distortion coefficients:
\begin{equation} \label{eq:intro-tau}
\tau_{K,N}^{(t)}(\theta) := t^{\frac{1}{N}} \sigma_{K,N-1}^{(t)}(\theta)^{\frac{N-1}{N}} ,
\end{equation}
where the $\sigma_{K,N-1}^{(t)}(\theta)$ term encodes an $(N-1)$-dimensional evolution orthogonal to the transport and thus affected by the curvature, and the linear term $t$
represents a one dimensional evolution tangential to the transport and thus independent of any curvature information. As with the Jacobi equation in the usual Riemannian setting, the function $[0,1] \ni t \mapsto \sigma(t) :=  \sigma_{K,N-1}^{(t)}(\theta)$ is explicitly obtained by solving the second-order differential equation:
\begin{equation} \label{eq:intro-ode}
 \sigma''(t) + \theta^{2} \frac{K}{N-1} \sigma(t) =0 \text{ on $t \in [0,1]$ } ~,~ \sigma(0) = 0 ~,~ \sigma(1) = 1 .
\end{equation}

The common feature of the previously known cases $\frac{K}{N} = 0$ for the local-to-global problem is the linear behaviour in time of the distortion coefficient: $\tau_{K,N}^{(t)}(\theta) = t$. A major obstacle with the remaining cases $\frac{K}{N} \neq 0$ is that the function $[0,1] \ni t \mapsto \tau_{K,N}^{(t)}(\theta)$ does not satisfy a second-order differential characterization such as (\ref{eq:intro-ode}). If it did, it would be possible to express the interpolation inequality (\ref{eq:intro-interpolate}) on $[t_0,t_1] \subset [0,1]$
as a second-order differential inequality for $J^{\frac{1}{N}}$ on $[t_0,t_1]$ (see Lemmas \ref{lem:point-CDKN} and \ref{lem:density-CDKN}), and so if (\ref{eq:intro-interpolate}) were known to hold for all $\set{[t_0^i,t_1^i]}_{i=1\ldots k}$ so that $\cup_{i=1}^k (t_0^i,t_1^i) = (0,1)$, it would follow that (\ref{eq:intro-interpolate}) also holds for $[t_0,t_1] = [0,1]$. However, a counterexample to the latter implication was constructed by Deng and Sturm in \cite{dengsturm}, 
thereby showing that:
\begin{equation} \label{eq:intro-test}
\begin{array}{l}
\text{the local-to-global property for $\frac{K}{N} \neq 0$, if true, cannot be obtained by a one-dimensional}\\
\text{bootstrap argument on a \emph{single} $W_2$-geodesic as above, and must follow from a deeper}\\
\text{reason involving a \emph{family} of $W_2$-geodesics \emph{simultaneously}.}
\end{array}
\end{equation}

\smallskip

On the other hand, the above argument does work if we were to replace $\tau$ by the slightly smaller $\sigma$ coefficients.
This motivated Bacher and Sturm in \cite{sturm:loc} to define for $K \in \Real$ and $N \in (1,\infty)$ the slightly weaker ``reduced'' Curvature-Dimension condition, denoted by $\CD^{*}(K,N)$, where the distortion coefficients $\tau_{K,N}^{(t)}(\theta)$ are indeed replaced by $\sigma_{K,N}^{(t)}(\theta)$. Using the above gluing argument (after resolving numerous technicalities), the local-to-global property for $\CD^*(K,N)$ was established in \cite{sturm:loc} on non-branching spaces (see also the work of Erbar--Kuwada--Sturm \cite[Corollary 3.13, Theorem 3.14 and Remark 3.26]{EKS-EntropicCD} for an extension to the essentially non-branching setting, cf. \cite{rajasturm:branch,CM3}).
Let us also mention here the work of Ambrosio--Mondino--Savar\'e \cite{AMS:locglob}, who independently of a similar result in \cite{EKS-EntropicCD}, established the local-to-global property for $\RCD^*(K,N)$ proper spaces, $K \in \Real$ and $N \in [1,\infty]$, without a-priori assuming any non-branching assumptions (but a-posteriori, such spaces must be essentially non-branching by \cite{rajasturm:branch}). 

\smallskip

Without requiring any non-branching assumptions, the $\CD^*(K,N)$ condition was shown in \cite{sturm:loc} to imply the same geometric and analytic inequalities as the $\CD(K,N)$ condition, but with slightly worse constants (typically missing the sharp constant by a factor of $\frac{N-1}{N}$), suggesting that 
the latter is still the ``right" notion of Curvature-Dimension. We conclude that the local-to-global challenge is to properly disentangle between the orthogonal and tangential components of the volume distortion $J$ before attempting to individually integrate them as above. This also highlights the geometric nature of the globalization problem, and demonstrates that it is not merely a technical challenge.

\subsection{Comparing $L^2$ and $L^1$ Optimal-Transport and Main result}

There have been a couple of prior attempts to disentangle the volume distortion into its orthogonal and tangential components, by comparing between $W_2$ and $W_1$ Wasserstein geodesics (associated to $L^2$ and $L^1$ Optimal-Transport, respectively). In \cite{cavasturm:MCP}, this strategy was implicitly employed by Cavalletti and Sturm to show that $\CD_{loc}(K,N)$ implies the measure-contraction property $\MCP(K,N)$, which in a sense is a particular case of $\CD(K,N)$ when one end of the $W_2$-geodesic is a Dirac delta at a point $o \in X$ (see \cite{sturm:II,Ohta1}). In that case, all of the transport-geodesics have $o$ as a common end point, so by considering a disintegration of $\mm$ on the family of spheres centered at $o$, and restricting the $W_2$-geodesic to these spheres, the desired disentanglement was obtained. 
In the subsequent work \cite{cava:decomposition}, Cavalletti generalized this approach to a particular family of $W_2$-geodesics, having the property that for a.e. transport-geodesic $\gamma$, its length $\ell(\gamma)$ is a function of $\f(\gamma_{0})$, where $\f$ is a Kantorovich potential associated to the corresponding $L^2$-Optimal-Transport problem. Here the disintegration was with respect to the individual level sets of $\f$, and again the restriction of the $W_2$-geodesic enjoying the latter property to these level sets (formally of co-dimension one) induced a $W_1$-geodesic, enabling disentanglement. 

\smallskip

Another application of $L^1$-Optimal-Transport, seemingly unrelated to disentanglement of $W_2$-geodesics, appeared in the recent breakthrough work of Klartag \cite{Klartag} on localization in the smooth Riemannian setting. The localization paradigm, developed by Payne--Weinberger \cite{PayneWeinberger}, Gromov--Milman \cite{Gromov-Milman} and Kannan--Lov\'asz--Simonovits \cite{KLS}, is a powerful tool to reduce various analytic and geometric inequalities on the space $(\Real^n,\sfd,\mm)$ to appropriate one-dimensional counterparts. The original approach by these authors was based on a bisection method, and thus inherently confined to $\Real^n$. In \cite{Klartag}, Klartag extended the localization paradigm to the weighted Riemannian setting, by disintegrating the reference measure $\mm$ on $L^1$-Optimal-Transport geodesics (or ``rays") associated to the inequality under study (cf. Feldman--McCann \cite{FeldmanMcCann-Manifold}), and proving that the resulting conditional one-dimensional measures inherit the Curvature-Dimension properties of the underlying manifold. 

\smallskip
Klartag's idea is quite robust, and permitted Cavalletti and Mondino in \cite{CM1} to avoid the smooth techniques used in \cite{Klartag} and to extend the localization paradigm to the framework of essentially non-branching geodesic \mms's $(X,\sfd,\mm)$ of full-support verifying $\CD_{loc}(K,N)$, $N \in (1,\infty)$. By a careful study of the structure of $W_1$-geodesics, Cavalletti and Mondino were able to transfer the Curvature-Dimension information encoded in the $W_2$-geodesics to the individual rays along which a given $W_1$-geodesic evolves, thereby proving that on such spaces,
\begin{equation} \label{eq:CD1-words}
\begin{array}{l}
\text{the conditional one-dimensional measures obtained by disintegration of $\mm$} \\
\text{on $L^1$-Optimal-Transport rays satisfy $\CD(K,N)$.} 
\end{array}
\end{equation} 
Note that the \emph{densities} of one-dimensional $\CD(K,N)$ spaces are characterized via the $\sigma$ (as opposed to $\tau$) volume-distortion coefficients (see the Appendix), so by applying the gluing argument described in the previous subsection, only local $\CD_{loc}(K,N)$ information was required in \cite{CM1} to obtain global control over the entire one-dimensional transport ray. 

\smallskip
This allowed Cavalletti and Mondino (see \cite{CM1,CM2}) to obtain a series of sharp geometric and analytic inequalities for $\CD_{loc}(K,N)$ spaces as above, in particular extending from the smooth Riemannian setting the sharp L\'evy-Gromov \cite{GromovGeneralizationOfLevy} and Milman \cite{EMilmanSharpIsopInqsForCDD} isoperimetric inequalities, as well as the sharp Brunn-Minkowski inequality of Cordero-Erausquin--McCann--Schmuckenschl\"ager \cite{corderomccann:brescamp} and Sturm \cite{sturm:II}, all in global form (see also Ohta \cite{OhtaNeedles}).

\smallskip
We would like to address at this point a certain general belief shared by some in the Optimal-Transport community, stating that the property $\BM(K,N)$ of satisfying the Brunn-Minkowski inequality (with sharp coefficients correctly depending on $K,N$), should be morally equivalent to the $\CD(K,N)$ condition. Rigorously establishing such an equivalence would immediately yield the local-to-global property of $\CD(K,N)$, by the Cavalletti--Mondino localization proof that $\CD_{loc}(K,N) \Rightarrow \BM(K,N)$. However, we were unsuccessful in establishing the missing implication $\BM(K,N) \Rightarrow \CD(K,N)$, and in fact a careful attempt in this direction seems to lead back to the circle of ideas we were ultimately able to successfully develop in this work.

\smallskip
Instead of starting our investigation from $\BM(K,N)$, our strategy is to directly start from a suitable modification of the property (\ref{eq:CD1-words}), 
which we dub $\CD^1(K,N)$, when \eqref{eq:CD1-words} is required to hold for transport rays associated to (signed) distance functions from level sets of continuous functions.
A stronger condition, when \eqref{eq:CD1-words} is required to hold for transport rays associated to \emph{all} $1$-Lipschitz functions, is denoted by $\CD^1_{Lip}(K,N)$ -- see Section \ref{S:CD1} for precise definitions. 
The main result of this work consists of showing that $\CD^1(K,N) \Rightarrow \CD(K,N)$, by means of transferring the one-dimensional $\CD(K,N)$ information encoded in a \emph{family} of suitably constructed $L^1$-Optimal-Transport rays, onto a given $W_2$-geodesic, thereby obtaining the correct disentanglement between tangential and orthogonal distortions.
This goes in exactly the \emph{opposite direction} to the one studied by Cavalletti and Mondino in \cite{CM1}, and completes the cycle: 
\[
\CD_{loc}(K,N) \Rightarrow \CD^1_{Lip}(K,N)  \Rightarrow \CD^1(K,N) \Rightarrow \CD(K,N) .
\]
To the best of our knowledge, this decisive feature of our work -- deducing $\CD(K,N)$ for a given $W_2$-geodesic by considering the $\CD_{\loc}(K,N)$ information encoded in \emph{family} (in accordance with (\ref{eq:intro-test})) of \emph{different} associated $W_2$-geodesics (manifesting itself in the $\CD^1(K,N)$ information along a family of different $L^1$-Optimal-Transport rays) -- has not been previously explored.

\begin{maintheorem} \label{thm:main}
Let $(X,\sfd,\mm)$ be an essentially non-branching \mms with $\mm(X) < \infty$, and let $K \in \Real$ and $N \in (1,\infty)$. Then the following statements are equivalent:
\begin{enumerate}
\item $(X,\sfd,\mm)$ verifies $\CD(K,N)$. 
\item $(X,\sfd,\mm)$ verifies $\CD^*(K,N)$. 
\item $(X,\sfd,\mm)$ verifies $\CD^1_{Lip}(K,N)$. 
\item $(X,\sfd,\mm)$ verifies $\CD^1(K,N)$. 
\end{enumerate}
If in addition $(\supp(\mm),\sfd)$ is a length-space, the above statements are equivalent to:
\begin{enumerate}
\setcounter{enumi}{4}
\item $(X,\sfd,\mm)$ verifies $\CD_{loc}(K,N)$. 
\end{enumerate}
\end{maintheorem}

To this list one can also add the entropic Curvature-Dimension condition $\CD^e(K,N)$ of Erbar--Kuwada--Sturm \cite{EKS-EntropicCD}, which is known to be equivalent to $\CD^*(K,N)$ for essentially non-branching spaces. 
In other words, all synthetic definitions of Curvature-Dimension are equivalent for essentially non-branching \mms's, and in particular, the local-to-global property holds for such spaces (recall that this is known to be false on \mms's where branching is allowed by \cite{R2016}). The equivalence with $\CD_{loc}(K,N)$ is clearly false without some global assumption ultimately ensuring that $(\supp(\mm),\sfd)$ is a geodesic-space, see Remark \ref{rem:false-without-length}.

\smallskip

As already mentioned, and being slightly imprecise (see Section \ref{S:Final} for precise statements), the implications $\CD(K,N) \Rightarrow \CD^*(K,N) \Rightarrow \CD_{loc}(K,N)$ follow from the work of Bacher and Sturm \cite{sturm:loc}, and the implication $\CD_{loc}(K,N) \Rightarrow \CD^1_{Lip}(K,N)$ follows by adapting to the present framework what was already proved by Cavalletti and Mondino in \cite{CM1} (after taking care of the important maximality requirement of transport-rays, see Theorem \ref{T:endpoints}). So almost all of our effort goes into proving that $\CD^1(K,N) \Rightarrow \CD(K,N)$. 
For a smooth weighted Riemannian manifold $(M,\sfd,\mm)$, it is an easy exercise to show the latter implication using the Bakry--\'Emery differential characterization of $\CD(K,N)$ -- simply use an appropriate umbilic hypersurface $H$ passing through a given point $p \in M$ and perpendicular to a given direction $\xi \in T_p M$, and apply the $\CD^1(K,N)$ definition to the distance function from $H$. Of course, this provides no insight towards how to proceed in the \mms setting, so it is natural to try and obtain an alternative synthetic proof, still in the smooth setting. While this is possible, it already poses a much greater challenge, which in some sense provided the required insight leading to the strategy 
we ultimately employ in this work.

\subsection{Main new ingredients of proof}

To achieve the right disentanglement, we are required to develop several new ingredients beyond the present state-of-the-art, which, being conceptual in nature, are in our opinion of independent interest. 

\begin{enumerate}
\item
The first is a \emph{change-of-variables} formula for the density of an $L^2$-Optimal-Transport geodesic in $X$ (see Theorem \ref{T:changeofV}), which depends on a second-order derivative of associated interpolating Kantorovich potentials. 

\smallskip

Let $\Geo(X)$ denote the collection of constant speed geodesics on $X$ parametrized on the interval $[0,1]$, and let $\ee_t : \Geo(X) \ni \gamma \mapsto \gamma_t \in X$ denote the evaluation map at time $t \in [0,1]$. Given two Borel probability measures $\mu_0,\mu_1 \in \P(X)$ with finite second moments, any $W_2$-geodesic $[0,1] \ni t\mapsto \mu_t \in \P(X)$ can be lifted to an optimal dynamical plan $\nu \in \P(\Geo(X))$, so that $(\ee_t)_{\sharp} \nu = \mu_t$ for all $t \in [0,1]$. Let $\varphi$ denote a Kantorovich potential associated to the $L^2$-transport problem between $\mu_0$ and $\mu_1$. Given $s,t\in (0,1)$, we introduce the time-propagated intermediate Kantorovich potential $\Phi_s^t$ by pushing forward $\varphi_s$ via $\ee_t \circ \ee_s^{-1}$,  where $\set{\varphi_t}_{t\in [0,1]}$ is the family of interpolating Kantorovich potentials obtained via the Hopf--Lax semi-group applied to $\varphi$. While $\ee_t^{-1}$ may be multi-valued, Theorem \ref{thm:order12-main} ensures that $\Phi_s^t  = \varphi_s \circ \ee_s \circ \ee_t^{-1}$ is well-defined on $\ee_t(G_\varphi)$, the set of $t$-mid-points of transport geodesics.

\smallskip
Theorem \ref{T:changeofV} states that if $(X,\sfd,\mm)$ is an essentially non-branching \mms verifying $\CD^{1}(K,N)$ ($\mm(X) < \infty$ and $N \in (1,\infty)$), and if $\mu_0,\mu_1 \ll \mm$, then for $\nu$-a.e. transport-geodesic $\gamma \in \Geo(X)$ of positive length:
\begin{equation}\label{E:intro-COV}
\frac{\rho_{s} (\gamma_{s})}{\rho_{t}(\gamma_{t})} = \frac{\ell^{2}(\gamma)}{\partial_{\tau}|_{\tau = t}\Phi_{s}^{\tau}(\gamma_{t})}  \cdot h^\gamma_s(t)  \;\;\;
\text{for a.e. $t,s \in (0,1)$, }
\end{equation}
where $\rho_t$ are appropriate versions of the densities $d\mu_t / d\mm$, and for every $s \in (0,1)$, $h^\gamma_s$ is a $\CD(\ell(\gamma)^2 K ,N)$ density on $[0,1]$ so that $h^\gamma_s(s) = 1$. In particular, for a.e. $t,s\in (0,1)$, $\partial_{\tau}|_{\tau = t}\Phi_{s}^{\tau}(\gamma_{t})$ exists and is positive.
Here $h^\gamma_s$ is obtained from the $\CD^1(K,N)$ condition applied to the transport-ray associated to the (signed) distance function from the level set $\set{\varphi_s = \varphi_s(\gamma_s)}$.  
\smallskip

Theorem \ref{T:changeofV} constitutes the culmination of  Part \ref{part2} of this work, which is mostly dedicated to introducing the $\CD^{1}(K,N)$ condition and rigorously establishing the change-of-variables formula \eqref{E:intro-COV}.
Note that we refrain from making any assumptions on (the challenging) spatial regularity of $\Phi_{s}^{t}$ when $t\neq s$,  so we are precluded from invoking the coarea formula in our derivation. Our main tool for deriving (\ref{E:intro-COV}) is a comparison between two disintegrations of appropriate measures, one encoding $W_2$ information and another encoding $W_1$ information -- see Section \ref{S:comparison1} for a heuristic derivation. 
\item 

To obtain disentanglement of the ``Jacobian" $t \mapsto 1/\rho_t(\gamma_t)$ into its orthogonal and tangential components, we need to understand the first-order variation of the change-of-variables formula (\ref{E:intro-COV}) at $t=s$, i.e. the second-order variation of $t \mapsto \Phi_s^t$ at $t=s$, which amounts to a third-order variation of $t \mapsto \varphi_t$. 
Our second main new ingredient in this work is a surprising \emph{third-order bound} on the variation of $t \mapsto \varphi_t$ along the Hopf--Lax semi-group (Theorem \ref{thm:z-c}), which holds in complete generality on any proper geodesic space. 

\smallskip

To this end, we develop in Part I of this work a first, second, and finally third order temporal theory of intermediate Kantorovich potentials in a purely metric setting $(X,\sfd)$, without specifying any reference measure $\mm$ and without assuming any non-branching assumptions. This part, which may be read independently of the other components of this work, is presented first (in Sections \ref{sec:prelim-partI}-\ref{sec:order3}), since its results are constantly used throughout the rest of this work. 

\smallskip

Our starting point here is the pioneering work by Ambrosio--Gigli--Savar\'e \cite{AGS-Book},\cite[Section 3]{ambrgisav:heat}, who already investigated in a very general (extended) metric space setting the first and second order temporal behaviour of the Hopf-Lax semi-group $Q_t$ applied to a general function $f : X \rightarrow \Real \cup \set{+\infty}$. However, the essential point we observe in our treatment is that when $f$ is itself a Kantorovich potential $\varphi$, characterized by the property that $\varphi = Q_1(-\varphi^c)$ and $\varphi^c = Q_1(-\varphi)$, much more may be said regarding the behaviour of $t \mapsto \varphi_t := -Q_t(-\varphi)$, even in first and second order. This is due to the fact that if we reverse time and define $\varphic_t := Q_{1-t}(-\varphi^c)$, then we obtain two-sided control over $\varphi_t$ on the set $\set{\varphi_t = \varphic_t}$, which turns out to coincide with the set $\ee_t(G_\varphi)$.
 So for instance, two apparently novel observations which we constantly use throughout this work are that for all $t \in (0,1)$, $\ell_t^2/2 := \partial_t \varphi_t$ exists on $\ee_t(G_\varphi)$, and that transport geodesics having a given $x \in X$ as their $t$-midpoint \emph{all} have the \emph{same} length $\ell_t(x)$. In Section \ref{sec:order12}, we establish Lipschitz regularity properties of $t \mapsto \ell^2_t(x)$ for all $x \in X$, as well as upper and lower derivative estimates, both pointwise and a.e., for appropriate times $t$. These are then transferred in Section \ref{S:Phi} to corresponding estimates for the function $\Phi_s^t$.

\smallskip

Part I culminates in Section \ref{sec:order3}, whose goal is to prove a quantitative version of the following (somewhat oversimplified) statement, which crucially provides second order information on $\ell_t$, or equivalently, third order information on $\varphi_t$, along $\gamma_t$:
\begin{equation} \label{eq:intro-3rd}
\begin{array}{l}
\text{If } \frac{1}{\ell(\gamma)^2} \partial_\tau|_{\tau=t} \frac{\ell_\tau^2}{2}(\gamma_t) \text{ exists a.e. in $t \in (0,1)$ and coincides with an absolutely} \\
\text{continuous function $z$, then } z'(t) \geq z(t)^2  \text{ for a.e. $t \in (0,1)$.}
\end{array}
\end{equation} 
Equivalently, this amounts to the statement that:
\begin{equation} \label{eq:intro-L}
(0,1) \ni r \mapsto L(r) := \exp\brac{- \frac{1}{\ell(\gamma)^2} \int^r_{r_0} \partial_\tau|_{\tau=t} \frac{\ell_\tau^2}{2}(\gamma_t) dt} \text{ is concave },
\end{equation}
since (formally):
\[
\frac{L''}{L} = (\log L) '' + ((\log L)')^2 = - z' + z^2 \leq 0 . \]
It turns out that $L(t)$ precisely corresponds to the tangential component of $1/\rho_t(\gamma_t)$, and its concavity ensures that it is synthetically controlled by the linear term appearing in the definition of $\tau^{(t)}_{K,N}(\theta)$ in (\ref{eq:intro-tau}). 
\smallskip
The novel observation that it is possible to extract in a general metric setting \emph{third order information} from the Hopf-Lax semi-group, which formally solves the first-order Hamilton-Jacobi equation, is in our opinion \emph{one of the most surprising parts of this work}. Even in the smooth Riemannian setting, we were not able to find a synthetic proof which is easier than the one in the general metric setting; a formal differential proof of (\ref{eq:intro-3rd}) assuming both temporal and (more challenging) spatial higher-order regularity of $\varphi_t$ is provided in Subsection \ref{subsec:order3-formal}, but the latter seems to wrongly suggest that it would not be possible to extend (\ref{eq:intro-3rd}) beyond a Hilbertian setting. Our proof in the general metric setting (Theorem \ref{thm:main-3rd-order}) is based on a careful comparison of second order expansions of $\eps \mapsto \varphi_{\tau+\eps}(\gamma_\tau)$ at $\tau=t,s$, and subtle differences between the usual second derivative and the second Peano derivative (see Section \ref{sec:prelim-partI}) come into play. 
\item
Our third main new ingredient, described in Part \ref{part3}, is a certain \emph{rigidity} property of the change-of-variables formula (\ref{E:intro-COV}), which allows us to bootstrap the a-priori available temporal regularity, and which in combination with the first and second ingredients, enables us to achieve disentanglement. 

\smallskip

Indeed, the definition of $\Phi_s^t$ may be naturally extended to an appropriate domain beyond $\ee_t(G_\varphi)$ as follows, allowing to easily (formally) calculate its partial derivative:
\[ \Phi_s^t = \varphi_t + (t-s) \frac{\ell_t^2}{2}  \;\;\;  , \;\;\; \partial_t \Phi_s^t = \ell_t^2 + (t-s) \partial_t\frac{\ell_t^2}{2} .
\] Evaluating at $x = \gamma_t$ and plugging this into the change-of-variables formula (\ref{E:intro-COV}), it follows that for $\nu$-a.e. geodesic $\gamma$:
\begin{equation} \label{eq:intro-rigid}
\frac{\rho_s(\gamma_s)}{\rho_t(\gamma_t)} = \frac{h^\gamma_s(t)}{1 + (t-s) \frac{\partial_\tau|_{\tau=t}\ell_\tau^2/2(\gamma_t)}{\ell^2(\gamma)}} \;\;\; \text{for a.e. } t,s \in (0,1) .
\end{equation}
Thanks to the idea of considering together both initial-point $s$ and end-point $t$, the latter formula takes on a very rigid structure:
note that on the left-hand-side the $s$ and $t$ variables are separated, and the denominator on the right-hand-side depends linearly is $s$. Consequently, we can easily bootstrap the a-priori available regularity in $s$ and $t$ of all terms involved. 
It follows that $\frac{1}{\ell^2(\gamma)} \partial_\tau|_{\tau=t}\ell_\tau^2/2(\gamma_t)$ must coincide for a.e. $t \in (0,1)$ with a locally-Lipschitz function $z(t)$, so that (\ref{eq:intro-3rd}) applies. In addition, by redefining $\set{h^\gamma_s}$ for $s$ in a null subset of $(0,1)$, we can guarantee that $(0,1) \ni s \mapsto h^\gamma_s(t)$ is locally Lipschitz (for any given $t \in (0,1)$), even though there is a-priori no relation between the different densities $\set{h^\gamma_s}_{s \in (0,1)}$.

At this point, if $\rho_t(\gamma_t)$ and $z(t)$ were known to be $C^2$ smooth, and equality were to hold in (\ref{eq:intro-rigid}) for all $s,t \in (0,1)$, we could then define:
\begin{equation} \label{intro:eq-Y}
Y(r) := \exp\brac{ \int_{r_0}^r \partial_t|_{t=s} \log h^\gamma_s(t) ds } ,
\end{equation}
and as $\partial_t|_{t=s} \log(1 + (t-s) z(t)) = z(s)$, it would follow, recalling the definition (\ref{eq:intro-L}) of $L$, that:
\begin{equation} \label{eq:intro-dis}
\frac{\rho_{r_0}(\gamma_{r_0})}{\rho_r(\gamma_r)} = L(r) Y(r) \;\;\; \forall r \in (0,1) . 
\end{equation}

Using the fact that all $\set{h^\gamma_s}_{s \in (0,1)}$ are $\CD(\ell(\gamma)^2 K,N)$ densities to control $\partial^2_t|_{t=r} \log h_r(t)$, and surprisingly, also the concavity of $L$ (again!) to 
control the mixed partial derivatives $\partial_s \partial_t|_{t=s=r} \log h^\gamma_s(t)$, a formal computation described in Subsection \ref{subsec:formal-rigid} then verifies that $Y$ is a $\CD(\ell(\gamma)^2 K,N)$ density itself. A rigorous justification without all of the above non-realistic assumptions turns out to be extremely tedious, due to the difficulty in applying an approximation argument while preserving the rigidity of the equation -- this is worked out in Section \ref{sec:LY} and the Appendix.
\end{enumerate}

\smallskip
After taking care of all these details, we finally obtain the desired disentanglement (\ref{eq:intro-dis}) of the Jacobian: $L$ is concave and so controlled synthetically by a linear distortion coefficient, whereas $Y$ is a  $\CD(\ell(\gamma)^2 K,N)$ density and so (by definition) $Y^{1/(N-1)}$ is controlled synthetically by the $\sigma^{(t)}_{K,N-1}(\ell(\gamma))$ coefficient. A standard application of H\"{o}lder's inequality then verifies that $J^{1/N}(r) = \rho_r(\gamma_r)^{-1/N}$ is controlled by the $\tau^{(t)}_{K,N}(\ell(\gamma))$ distortion coefficient, i.e. satisfies (\ref{eq:intro-interpolate}) -- in fact for all $t_0,t_1 \in [0,1]$ -- thereby establishing $\CD(K,N)$, see Theorem \ref{T:CD1-CD}.  

\smallskip

The definition (\ref{intro:eq-Y}) of $Y$ finally sheds light on the crucial role which the parameter $s \in (0,1)$ plays in our strategy -- its role is to vary between the different $W_2$-geodesics from which the $\CD_{loc}(K,N)$ information is extracted into the $\CD^1(K,N)$ information on the disintegration into transport-rays from the (signed) distance functions from level sets $\set{\varphi_s = \varphi_s(\gamma_s)}$, thereby coming full circle with the observation of (\ref{eq:intro-test}). 

\bigskip

Besides establishing the local-to-global property of $\CD(K,N)$ and the equivalence of its various variants (in our setting), we emphasize that as a by product of our proof, we obtain a remarkable new self-improvement property of $\CD(K,N)$: the $\tau_{K,N}$-concavity (\ref{eq:intro-interpolate}) of the transport Jacobian $J_t(\gamma_t)$ along \emph{all} $W_2$-geodesics implies the (a-priori) \emph{stronger} ``L-Y" decomposition  $J_t(\gamma_t) = L_\gamma(t) Y_\gamma(t)$, where $L_\gamma$ is concave and $Y_\gamma$ is a $\CD(\ell(\gamma)^2 K, N)$ density on $(0,1)$. As already mentioned above, this self-improvement is \emph{false} for a single $W_2$-geodesic. We believe that the stronger ``L-Y" information will prove to be of further use in the study of $\CD(K,N)$ essentially non-branching spaces.

\bigskip
We refer to Section \ref{S:Final} for the final details and for additional immediate corollaries of the Main Theorem \ref{thm:main} pertaining to $\RCD(K,N)$ and strong $\CD(K,N)$ spaces. We also provide there several concluding remarks and suggestions for further investigation.

\bigskip

\noindent
\textbf{Acknowledgment.} We would like to thank Theo Sturm and C\'edric Villani for numerous discussions and for encouraging us to pursue the globalization problem. We also thank the referees for their careful reading of the manuscript and helpful comments.

\bigskip

\part{Temporal Theory of Optimal Transport}\label{part1}

\section{Preliminaries} \label{sec:prelim-partI}

\subsection{Geodesics}

A metric space $(X,\sfd)$ is called a length space if for all $x,y \in X$, $\sfd(x,y) = \inf \ell(\sigma)$, where the infimum is over all (continuous) curves $\sigma : I \rightarrow X$ connecting $x$ and $y$, and $\ell(\sigma) := \sup \sum_{i=1}^k \sfd(\sigma(t_{i-1}),\sigma(t_{i}))$ denotes the curve's length, where the latter supremum is over all $k \in \N$ and $t_0 \leq \ldots \leq t_k$ in the interval $I \subset \Real$. A curve $\gamma$ is called a geodesic if $\ell(\gamma|_{[t_0,t_1]}) = \sfd(\gamma(t_0),\gamma(t_1))$ for all $[t_0,t_1] \subset I$. If $\ell(\gamma) = 0$ we will say that $\gamma$ is a null geodesic.
The metric space is called a geodesic space if for all $x,y \in X$ there exists a geodesic in $X$ connecting $x$ and $y$. 
We denote by $\Geo(X)$ the set of all closed directed constant-speed geodesics parametrized on the interval $[0,1]$:
\[
\Geo(X) : = \set{ \gamma : [0,1] \rightarrow X \; ; \;  \sfd(\gamma(s),\gamma(t)) = |s-t| \sfd(\gamma(0),\gamma(1)) \;\; \forall s,t \in [0,1] } .
\]
We regard $\Geo(X)$ as a subset of all Lipschitz maps $\text{Lip}([0,1], X)$ endowed with the uniform topology. We will frequently use $\gamma_t := \gamma(t)$.

\medskip

The metric space is called proper if every closed ball (of finite radius) is compact. It follows from the metric version of the Hopf-Rinow Theorem (e.g. \cite[Theorem 2.5.28]{BBI}) that for complete length spaces, local compactness is equivalent to properness, and that complete proper length spaces are in fact geodesic.

\medskip

Given a subset $D \subset X \times \Real$, we denote its sections by:
\[
D(t) := \set{ x \in X \;;\; (x,t) \in D} ~,~ D(x) := \set{t \in \Real \; ; \; (x,t) \in D} .
\]
Given a subset $G \subset \Geo(X)$, we denote by $\mathring{G} := \set{ \gamma|_{(0,1)} \;;\; \gamma \in G}$ the corresponding open-ended geodesics on $(0,1)$. For a subset of (closed or open) geodesics $\tilde{G}$, we denote:
\[
D(\tilde{G}) := \set{ (x,t) \in X \times \Real \; ; \; \exists \gamma \in \tilde{G} ~,~ t \in \text{Dom}(\gamma) \; , \; x = \gamma_t } .
\]
We denote by $\ee_t : \Geo(X) \ni \gamma \mapsto \gamma_t \in X$ the (continuous) evaluation map at $t \in [0,1]$, and abbreviate given $I \subset [0,1]$ as follows:
\begin{align*}
\ee_t(\tilde{G}) = \tilde{G}(t) & := D(\tilde{G})(t) =  \set{ \gamma_t \; ; \; \gamma \in \tilde{G} } ~,~ \ee_I(\tilde{G}) := \cup_{t \in I} \ee_t(\tilde{G}) ~,~\\
\tilde{G}(x) & := D(\tilde{G})(x) = \set{ t \in [0,1] \; ; \; \exists \gamma \in \tilde{G} ~,~ t \in \text{Dom}(\gamma) \; , \; \gamma_t = x} .
\end{align*}

\subsection{Derivatives} \label{subsec:prelim-derivatives}

For a function $g : A \rightarrow \Real$ on a subset $A \subset \Real$, denote its upper and lower derivatives at a point $t_0 \in A$ which is an accumulation point of $A$ by:
\[
\frac{\overline{d}}{dt} g(t_0) = \limsup_{A \ni t \rightarrow t_0} \frac{g(t) - g(t_0)}{t-t_0}  ~,~ \underline{\frac{d}{dt}} g(t_0) = \liminf_{A \ni t \rightarrow t_0} \frac{g(t) - g(t_0)}{t-t_0} .
\]
We will say that $g$ is differentiable at $t_0$ iff $\frac{d}{dt} g(t_0) := \frac{\overline{d}}{dt} g(t_0) = \underline{\frac{d}{dt}} g(t_0) < \infty$. 
This is a slightly more general definition of differentiability than the traditional one which requires that $t_0$ be an interior point of $A$.

\begin{remark} \label{R:diff-restriction}
Note that there are only a countable number of isolated points in $A$, so a.e. point in $A$ is an accumulation point. In addition, it is clear that if $t_0 \in B \subset A$ is an accumulation point of $B$ and $g$ is differentiable at $t_0$, then $g|_B$ is also differentiable at $t_0$ with the same derivative. In particular, if $g$ is a.e. differentiable on $A$ then $g|_B$ is also a.e. differentiable on $B$ and the derivatives coincide.
\end{remark}

\begin{remark}\label{R:differentiabilitydensity}
Denote by $A_1 \subset A$ the subset of density one points of $A$ (which are in particular accumulation points of $A$). By Lebesgue's Density Theorem $\L^1(A \setminus A_1) = 0$, where we denote by $\L^1$ the Lebesgue measure on $\Real$ throughout this work. If $g : A \rightarrow \Real$ is locally Lipschitz, consider any locally Lipschitz extension $\hat g : \Real \to \Real$ of $g$. Then it is easy to check that for $t_0 \in A_1$, $g$ is differentiable in the above sense at $t_0$ if and only if $\hat g$ is differentiable at $t_0$ in the usual sense, in which case the derivatives coincide. In particular, as $\hat g$ is a.e. differentiable on $\Real$, it follows that $g$ is a.e. differentiable on $A_1$ and hence on $A$, and it holds that $\frac{d}{dt} g = \frac{d}{dt} \hat g$ a.e. on $A$.
\end{remark}

\medskip

Let $f : I \rightarrow \Real$ denote a convex function on an open interval $I \subset \Real$. It is well-known that the left and right derivatives $f^{\prime,-}$ and $f^{\prime,+}$ exist at every point in $I$ and that $f$ is locally Lipschitz there; in particular, $f$ is differentiable at a given point iff the left and right derivatives coincide there. Denoting by $D \subset I$ the differentiability points of $f$ in $I$, it is also well-known that $I \setminus D$ is at most countable. Consequently, any point in $D$ is an accumulation point, and we may consider the differentiability in $D$ of $f' : D \rightarrow \Real$ as defined above. 
We will require the following elementary one-dimensional version (probably due to Jessen) of the well-known Aleksandrov's theorem 
about twice differentiability a.e. of convex functions on $\Real^n$ (see \cite[Theorem 5.2.1]{ConvexAnalysisBookI} or \cite[Section 2.6]{BorweinVanderwerff-Book}, and \cite[p. 31]{Schneider-Book} for historical comments). Clearly, all of these results extend to locally semi-convex and semi-concave functions as well; recall that a function $f : I \rightarrow \Real$ is called semi-convex (semi-concave) if there exists $C \in \Real$ so that $I \ni x \mapsto f(x) + C x^2$ is convex (concave).

\begin{lemma}[Second Order Differentiability of Convex Function] \label{lem:convex-2nd-diff}
Let $f : I \rightarrow \Real$ be a convex function on an open interval $I \subset \Real$, and let $\tau_0 \in I$ and $\Delta \in \Real$. Then the following statements are equivalent:
\begin{enumerate}
\item $f$ is differentiable at $\tau_0$, and if $D \subset I$ denotes the subset of differentiability points of $f$ in $I$, then $f' : D \rightarrow \Real$ is differentiable at $\tau_0$ with:
\[
(f')'(\tau_0) := \lim_{D \ni \tau \rightarrow \tau_0} \frac{f'(\tau) - f'(\tau_0)}{\tau-\tau_0} = \Delta .
\]
\item The right derivative $f^{\prime,+} : I \rightarrow \Real$ is differentiable at $\tau_0$ with $(f^{\prime,+})'(\tau_0) = \Delta$. 
\item The left derivative $f^{\prime,-}: I \rightarrow \Real$ is differentiable at $\tau_0$ with $(f^{\prime,-})'(\tau_0) = \Delta$. 
\item $f$ is differentiable at $\tau_0$ and has the following second order expansion there:
\[
f(\tau_0 + \eps) = f(\tau_0) + f'(\tau_0) \eps + \Delta \frac{\eps^2}{2} + o(\eps^2)  \text{ as $\eps \rightarrow 0$}. 
\]
In this case, $f$ is said to have a second Peano derivative at $\tau_0$. 
\end{enumerate}
\end{lemma}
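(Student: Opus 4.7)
The plan is to reduce everything to the standard fine structure of convex functions on an open interval: the one-sided derivatives $f^{\prime,-}$ and $f^{\prime,+}$ exist everywhere on $I$ and are non-decreasing, they coincide precisely on the cocountable set $D$ of differentiability points, the one-sided continuity relations $\lim_{\tau \to \tau_0^-} f^{\prime,+}(\tau) = f^{\prime,-}(\tau_0)$ and $\lim_{\tau \to \tau_0^+} f^{\prime,-}(\tau) = f^{\prime,+}(\tau_0)$ hold, and $f$ is locally Lipschitz so that $f(\tau) - f(\tau_0) = \int_{\tau_0}^{\tau} f^{\prime,+}(s)\,ds$. With these facts in hand I would prove the cycle $(4) \Rightarrow (2) \Leftrightarrow (3) \Leftrightarrow (1) \Rightarrow (4)$.

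The equivalences $(1) \Leftrightarrow (2) \Leftrightarrow (3)$ are soft. If $f^{\prime,+}$ is differentiable at $\tau_0$ it is in particular continuous there, and the left-continuity relation above forces $f^{\prime,+}(\tau_0) = f^{\prime,-}(\tau_0)$, so $f$ is differentiable at $\tau_0$ and $f^{\prime,+} \equiv f'$ on $D$, from which (1) is immediate with the same $\Delta$. For the converse (1) $\Rightarrow$ (2), (3), given $\eta > 0$ pick $\delta > 0$ so that $|f'(\tau') - f'(\tau_0) - \Delta(\tau' - \tau_0)| \leq \eta|\tau' - \tau_0|$ for all $\tau' \in D \cap (\tau_0 - \delta, \tau_0 + \delta)$. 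For any $\tau$ in this window, sandwich $\tau_a < \tau < \tau_b$ with $\tau_a, \tau_b \in D$; monotonicity yields $f'(\tau_a) \leq f^{\prime,-}(\tau) \leq f^{\prime,+}(\tau) \leq f'(\tau_b)$, and letting $\tau_a \to \tau^-$, $\tau_b \to \tau^+$ inside $D$ produces the bound $|f^{\prime,\pm}(\tau) - f'(\tau_0) - \Delta(\tau - \tau_0)| \leq \eta|\tau - \tau_0|$, which is exactly (2) and (3).

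The implication (1)--(3) $\Rightarrow$ (4) is then immediate from the integral representation: substitute $f^{\prime,+}(s) = f'(\tau_0) + \Delta(s - \tau_0) + r(s)$ with $r(s) = o(|s - \tau_0|)$ into $f(\tau_0 + \eps) - f(\tau_0) = \int_{\tau_0}^{\tau_0 + \eps} f^{\prime,+}(s)\,ds$; the remainder integral is $o(\eps^2)$ by a trivial uniform estimate.

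The main obstacle, and the only non-trivial step, is (4) $\Rightarrow$ (2). The naive monotonicity sandwich $f^{\prime,+}(\tau_0 + h) \cdot h \geq \int_{\tau_0}^{\tau_0 + h}(f^{\prime,+}(s) - f'(\tau_0))\,ds$ would only yield $\liminf \geq \Delta/2$, losing a crucial factor of two. To recover the correct coefficient I would introduce an auxiliary scale parameter $\alpha > 0$ and exploit the chord-slope inequality
\[
f^{\prime,+}(\tau_0 + h) \;\leq\; \frac{f(\tau_0 + (1+\alpha)h) - f(\tau_0 + h)}{\alpha h},
\]
valid by convexity for $h, \alpha > 0$. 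Expanding both numerator terms via the second Peano expansion (4) and simplifying gives, after division by $h$,
\[
\frac{f^{\prime,+}(\tau_0 + h) - f'(\tau_0)}{h} \;\leq\; \Delta + \frac{\Delta \alpha}{2} + \frac{o(h^2)}{\alpha h^2}.
\]
Sending $h \to 0^+$ for fixed $\alpha$ kills the remainder, and then sending $\alpha \to 0^+$ yields $\limsup \leq \Delta$. A symmetric lower bound coming from $f^{\prime,+}(\tau_0 + h) \geq (f(\tau_0 + h) - f(\tau_0 + (1-\alpha)h))/(\alpha h)$ produces the matching $\liminf \geq \Delta$, and an entirely analogous pair of bounds for $h < 0$ completes the verification that $(f^{\prime,+})'(\tau_0) = \Delta$, which is (2).
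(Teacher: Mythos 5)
Your proof is correct. Note, however, that the paper does not actually prove this lemma: it is stated as a classical one-dimensional fact (essentially Jessen's case of Aleksandrov's theorem) and is simply referred to the standard convexity literature (Hiriart-Urruty--Lemar\'echal, Borwein--Vanderwerff, Schneider). So there is no in-paper argument to compare against; what you supply is a self-contained elementary proof, which is a reasonable thing to do here. The soft equivalences $(1)\Leftrightarrow(2)\Leftrightarrow(3)$ via monotonicity and the one-sided continuity relations of $f^{\prime,\pm}$, and the implication to $(4)$ via the integral representation $f(\tau_0+\eps)-f(\tau_0)=\int_{\tau_0}^{\tau_0+\eps}f^{\prime,+}(s)\,ds$, are exactly the standard route. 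The only genuinely delicate step is $(4)\Rightarrow(2)$, and your two-parameter chord-slope argument handles it correctly: for fixed $\alpha$ the Peano expansion gives $\limsup_{h\to0^+}\frac{f^{\prime,+}(\tau_0+h)-f'(\tau_0)}{h}\le\Delta\bigl(1+\frac{\alpha}{2}\bigr)$ and $\liminf\ge\Delta\bigl(1-\frac{\alpha}{2}\bigr)$, and letting $\alpha\to0^+$ recovers $\Delta$ without the spurious factor of two; the same works for $h<0$. One small remark worth making explicit: convexity forces $\Delta\ge 0$ in every item (the one-sided derivatives are non-decreasing), so the limits in $\alpha$ of the bounds $\Delta(1\pm\alpha/2)$ indeed squeeze to $\Delta$ and no sign issue arises.
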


We remark that even for a differentiable function $f$, while the implication $(1) \Rightarrow (4)$ follows by Taylor's theorem (existence of the second derivative at a point implies existence of the second Peano derivative there), the converse implication is in general false (see e.g. \cite{Oliver-ExactPeano} for a nice discussion). 
For a locally semi-convex or semi-concave function $f$, we will say that $f$ is twice differentiable at $\tau_0$ if any (all) of the above equivalent conditions hold for some $\Delta \in \Real$, and write $(\frac{d}{d\tau})^{2}|_{\tau = \tau_0} f(\tau) = \Delta$.

\smallskip
Finally, we will require the following slightly more refined notation. 
\begin{definition*}
Given an open interval $I \subset \Real$ and a function $f : I \rightarrow \Real$ which is differentiable at $\tau_0 \in I$, we define its upper and lower second Peano derivatives at $\tau_0$, denoted $\overline{\P}_2 f(\tau_0)$ and  $\underline{\P}_2 f(\tau_0)$ respectively, by:
\[
 \overline{\P}_2 f(\tau_0) := \limsup_{\eps\rightarrow 0} \frac{h(\eps)}{\eps^2} \geq \liminf_{\eps\rightarrow 0} \frac{h(\eps)}{\eps^2} =: \underline{\P}_2 f(\tau_0) ,
\]
where:
\[
h(\eps) := 2( f(\tau_0 + \eps) - f(\tau_0) - \eps f'(\tau_0)) .
\]
Clearly $f$ has a second Peano derivative at $\tau_0$ iff $\overline{\P}_2 f(\tau_0) = \underline{\P}_2 f(\tau_0) < \infty$. 
\end{definition*}

The following is a type of Stolz--Ces\`aro lemma:
\begin{lemma} \label{lem:peano-inq}
Given an open interval $I \subset \Real$ and a locally absolutely continuous function $f : I \rightarrow \Real$ which is differentiable at $\tau_0 \in I$, we have:
\[
\underline{\frac{d}{dt}} f'(\tau_0) \leq \underline{\P}_2 f(\tau_0) \leq \overline{\P}_2 f(\tau_0) \leq \frac{\overline{d}}{dt} f'(\tau_0)  .
\]
\end{lemma}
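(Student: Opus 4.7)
My plan is to reduce everything to the fundamental theorem of calculus applied to $f'$. The middle inequality $\underline{\P}_2 f(\tau_0) \leq \overline{\P}_2 f(\tau_0)$ is immediate from $\liminf \leq \limsup$. Write $D \subset I$ for the full-measure set of differentiability points of $f$; note $\tau_0 \in D$ by hypothesis and $\tau_0$ is an accumulation point of $D$ since $I \setminus D$ is $\L^1$-null, so $\underline{\frac{d}{dt}} f'(\tau_0)$ and $\frac{\overline{d}}{dt} f'(\tau_0)$ are well-defined in the sense of Subsection \ref{subsec:prelim-derivatives}.

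Since $f$ is locally absolutely continuous, $f' \in L^1_{\loc}(I)$ and, for every $\eps$ with $\tau_0 + \eps \in I$,
\[
f(\tau_0 + \eps) - f(\tau_0) = \int_0^\eps f'(\tau_0 + s)\,ds.
\]
Subtracting $\eps f'(\tau_0) = \int_0^\eps f'(\tau_0)\,ds$ from both sides and multiplying by $2$ yields the key identity
\[
h(\eps) = 2\int_0^\eps \bigl(f'(\tau_0 + s) - f'(\tau_0)\bigr)\,ds,
\]
so the problem reduces to a Stolz--Ces\`aro-type estimate on the integrand.

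To prove the first inequality, I would fix any $L < \underline{\frac{d}{dt}} f'(\tau_0)$ and choose $\delta > 0$ so that $(f'(\tau) - f'(\tau_0))/(\tau - \tau_0) \geq L$ for every $\tau \in D$ with $0 < |\tau - \tau_0| < \delta$. Then I would consider the two signs of $\eps$ separately: for $\eps \in (0,\delta)$ this bound gives $f'(\tau_0 + s) - f'(\tau_0) \geq L s$ for a.e.\ $s \in (0,\eps)$, while for $\eps \in (-\delta,0)$ the same bound, rearranged using $s<0$, gives $f'(\tau_0) - f'(\tau_0 + s) \geq -L s$ for a.e.\ $s \in (\eps,0)$. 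In both cases the integral identity yields $h(\eps) \geq L \eps^2$, hence $h(\eps)/\eps^2 \geq L$ for all $\eps \in (-\delta,\delta) \setminus \{0\}$. Taking the liminf as $\eps \to 0$ and then letting $L \uparrow \underline{\frac{d}{dt}} f'(\tau_0)$ delivers the first inequality. The third inequality follows symmetrically by picking any $L > \frac{\overline{d}}{dt} f'(\tau_0)$, reversing all signs, and passing to the limsup.

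I do not anticipate a genuine obstacle; the only subtlety worth flagging is that $f'$ is defined only on the full-measure set $D$, so the pointwise control on the difference quotient enters the integral merely a.e.\ — harmless since the integrand is locally $L^1$. Note moreover that local absolute continuity cannot be dropped: the identity for $h(\eps)$ would otherwise fail, and indeed without it the conclusion can break down even for monotone singular functions.
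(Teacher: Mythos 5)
Your proof is correct and follows essentially the same route as the paper: both hinge on the identity $h(\eps) = 2\int_0^\eps (f'(\tau_0+s)-f'(\tau_0))\,ds$ obtained from local absolute continuity, and then conclude by a Stolz--Ces\`aro-type comparison of $h(\eps)/\eps^2$ with the difference quotients of $f'$ near $\tau_0$. Your explicit ``fix $L$ below the lower derivative and integrate the a.e.\ pointwise bound'' finish is just a spelled-out version of the paper's weighted-average/subsequential-limit step, so there is nothing further to add.
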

\begin{proof}
By local absolute continuity, $f$ is differentiable a.e. in $I$ and we have for small enough $\abs{\eps}$:
\[
\frac{1}{2} h(\eps) = f(\tau_0+\eps) - f(\tau_0) - \eps f'(\tau_0) =\int_{0}^{\eps} (f'(\tau_0 + \delta) - f'(\tau_0)) d\delta ,
\]
and hence:
\[
\frac{h(\eps)}{\eps^{2}} = \frac{1}{\eps^2} \int_{0}^{\eps} 2 \delta \frac{f'(\tau_0 + \delta) - f'(\tau_0)}{\delta}  d\delta .
\]
Taking appropriate subsequential limits as $\eps \rightarrow 0$, the asserted inequalities readily follow. 
\end{proof}

\section[Temporal Theory of Intermediate-Time Kantorovich Potentials.\\ First and Second Order]{Temporal Theory of Intermediate-Time Kantorovich Potentials. First and Second Order}\label{sec:order12}

In the next sections, we will only consider the quadratic cost function $c=\sfd^2/2$ on $X \times X$. 

\begin{definition*}[$c$-Concavity, Kantorovich Potential]
The $c$-transform of a function $\psi : X \rightarrow \Real \cup \set{\pm\infty}$ is defined as the following (upper semi-continuous) function:
\[
\psi^c(x) = \inf_{y \in X} \frac{\sfd(x,y)^2}{2} - \psi(y) . 
\]
A function $\varphi : X \rightarrow \Real \cup \set{\pm \infty}$ is called $c$-concave if $\varphi = \psi^c$ for some $\psi$ as above. 
It is well known \cite[Exercise 2.35]{Vil:topics} that $\varphi$ is $c$-concave iff $(\varphi^c)^c = \varphi$. In the context of optimal-transport 
with respect to the quadratic cost $c$, a $c$-concave function $\varphi : X \rightarrow \Real \cup \set{-\infty}$ which is not identically equal to $-\infty$ is also 
known as a Kantorovich potential, and this is how we will refer to such functions in this work. 
In that case, $\varphi^c : X \rightarrow \Real \cup \set{-\infty}$ is also a Kantorovich potential, called the dual or conjugate potential.
\end{definition*}

There is a natural way to interpolate between a Kantorovich potential and its dual by means of the Hopf-Lax semi-group, 
resulting in intermediate-time Kantorovich potentials $\set{\varphi_t}_{t \in (0,1)}$. The goal of the next three 
sections is to provide first, second and third order information on the time-behavior $t \mapsto \varphi_t(x)$ at intermediate times $t \in (0,1)$. 
In these sections, we only assume that $(X,\sfd)$ is a \textbf{proper geodesic metric space}.

\smallskip
In this section, we focus on first and second order information. The main new result is Theorem \ref{thm:order12-main}.

\subsection{Hopf-Lax semi-group}

We begin with several well-known definitions which we slightly modify and specialize to our setting. 
\begin{definition*}[Hopf-Lax Transform]
Given $f : X \rightarrow \Real \cup \set{\pm \infty}$ which is not identically $+\infty$ and $t > 0$, define the Hopf-Lax transform $Q_t f : X \rightarrow \Real \cup \set{-\infty}$ by:
\begin{equation} \label{eq:Hopf-Lax}
Q_t f (x) := \inf_{y \in X} \frac{\sfd(x,y)^2}{2t} + f(y) . 
\end{equation}
Clearly either $Q_t f \equiv -\infty$ or $Q_t f(x)$ is finite for all $x \in X$ (as our metric $\sfd$ is finite). Consequently, we denote:
\[
t_*(f) := \sup \set{t > 0 \;;\; Q_t f \not\equiv -\infty} ,
\]
setting $t_*(f) = 0$ if the supremum is over an empty set. Finally, we set $Q_0 f := f$. 
\end{definition*}

It is not hard to check (see e.g. \cite[Theorem 2.5 (i)]{lottvillani:length}) that when $(X,\sfd)$ is a length space (and in particular geodesic),
the Hopf-Lax transform is in fact a semi-group on $[0,\infty)$:
\[
Q_{s+t} f = Q_s \circ Q_t f \;\;\; \forall t,s \geq  0 .
\]

\begin{remark}
It is also possible to extend the definition of $Q_t f$ to negative times $t < 0$ by setting:
\[
Q_t f (x) := - Q_{-t}(-f)(x) = \sup_{y \in X} \frac{\sfd(x,y)^2}{2t} + f(y)  ~,~ t < 0 .  
\]
This is called the backwards Hopf-Lax semi-group on $(-\infty,0]$. However, $(\Real,+) \ni t \mapsto (Q_t,\circ)$ is in general \textbf{not} an abelian group homomorphism, not even for $t \in [0,1]$ when applied to a Kantorovich potential $\varphi$ (characterized by $Q_{-1} \circ Q_1(-\varphi) = -\varphi$) - see Subsection \ref{subsec:duality}. This will be a rather significant nuisance we will need to cope with in this work. 
\end{remark}

Clearly $(0,\infty) \times X \ni (t,x) \mapsto Q_t f(x)$ is upper semi-continuous as the infimum of continuous functions in $(t,x)$, and by definition $[0,\infty) \ni t \mapsto Q_t f(x)$ is monotone non-increasing for each $x \in X$. Consequently, $(0,\infty) \ni t \mapsto Q_t f(x)$ must be continuous from the left.  

It may also be shown (see \cite[Lemma 3.1.2]{AGS-Book}) that $X \times (0,t_*(f)) \ni (x,t) \mapsto Q_t f (x)$ is continuous (and in fact locally Lipschitz, see Theorem \ref{thm:AGS} below). 
Together with the left-continuity, we deduce that for every $x \in X$, $(0,t_*(f)] \ni t \mapsto Q_t f(x)$ is continuous. 

\medskip

Note that by definition $f^c = Q_1(-f)$, and that a Kantorovich pair of conjugate potentials $\varphi,\varphi^c : X \rightarrow \Real \cup \set{-\infty}$ are
characterized by not being identically equal to $-\infty$ and satisfying:
\[
\varphi = Q_1(-\varphi^c) ~,~ \varphi^c = Q_1(-\varphi) .
\]
In particular, $t_*(\varphi),t_*(\varphi^c) \geq 1$, and we a-posteriori deduce that $\varphi, \varphi^c$ are both finite on the entire space $X$ 
(we have used above the fact that the metric $\sfd$ is finite, which differs from other more general treatments).

\begin{definition*}[Interpolating Intermediate-Time Kantorovich Potentials]
Given a Kantorovich potential $\varphi : X \rightarrow \Real$, the interpolating Kantorovich potential at time $t \in [0,1]$, $\varphi_t : X \rightarrow \Real$, is defined for all $t \in [0,1]$ by: 
\[
\varphi_t(x) := Q_{-t}(\varphi) = -Q_t(-\varphi) . 
\]
Note that $\varphi_0 = \varphi$, $\varphi_1 = -\varphi^c$, and:
\[
-\varphi_t(x) = \inf_{y \in X} \frac{\sfd^2(x,y)}{2t} - \varphi(y) \;\;\;\; \forall t \in (0,1] .
\]
\end{definition*}

Applying the above mentioned general properties of the Hopf-Lax semi-group to $\varphi_t$, it will be useful to record:
\begin{lemma} \label{lem:order0}
\hfill
\begin{enumerate}
\item $(x,t) \mapsto \varphi_t(x)$ is lower semi-continuous on $X \times (0,1]$ and continuous on $X \times (0,1)$. 
\item For every $x \in X$, $[0,1] \ni t \mapsto \varphi_t(x)$ is monotone non-decreasing and continuous on $(0,1]$. 
\end{enumerate}
\end{lemma}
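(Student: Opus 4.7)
The plan is to derive both items as direct corollaries of the general Hopf--Lax semi-group properties recalled just above, applied to $f := -\varphi$ and combined with the sign change $\varphi_t(x) = -Q_t(-\varphi)(x)$. Before invoking those properties, I would record the one preliminary observation of substance, namely $t_*(-\varphi) \geq 1$. Indeed, by the definition of a Kantorovich pair we have $Q_1(-\varphi) = \varphi^c$, and since $\varphi^c$ is itself a Kantorovich potential it is (as noted in the excerpt, using that $\sfd$ is finite) finite-valued on $X$, so in particular not identically $-\infty$. Combined with the fact that $t \mapsto Q_t(-\varphi)(x)$ is monotone non-increasing (visible by inspection of the defining infimum (\ref{eq:Hopf-Lax}), since $t \mapsto \sfd(x,y)^2/(2t)$ is non-increasing for each fixed $x,y$), this also guarantees that $Q_t(-\varphi)(x) \in \R$ for every $(x,t) \in X \times (0,1]$.

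For item (1), the joint lower semi-continuity of $\varphi_t(x)$ on $X \times (0,1]$ is obtained by negating the joint upper semi-continuity of $(t,x) \mapsto Q_t(-\varphi)(x)$ on $(0,\infty) \times X$ recalled above, which itself follows from being the infimum of jointly continuous functions. The joint continuity on $X \times (0,1)$ then follows by negating the locally Lipschitz continuity of $(x,t) \mapsto Q_t f(x)$ on $X \times (0, t_*(f))$, cited in the excerpt from \cite[Lemma 3.1.2]{AGS-Book}, applied with $f = -\varphi$ and using the inclusion $(0,1) \subseteq (0, t_*(-\varphi))$ just established. For item (2), the asserted monotonicity of $t \mapsto \varphi_t(x)$ is simply the non-increasing property of $t \mapsto Q_t(-\varphi)(x)$, negated. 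The continuity in $t$ on $(0,1]$ at each fixed $x$ is the recalled continuity of the map $(0, t_*(f)] \ni t \mapsto Q_t f(x)$, applied with $f = -\varphi$; this indeed covers the closed endpoint $t = 1$ precisely because $t_*(-\varphi) \geq 1$.

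I do not anticipate any serious obstacle: the statement is essentially a sign-bookkeeping corollary of the general Hopf--Lax properties already in hand, and the single non-formal point is the observation $t_*(-\varphi) \geq 1$, which is immediate from the $c$-concavity of $\varphi$ together with the finiteness of $\sfd$. The fact that joint continuity is not asserted up to $t=1$ is consistent with the general Hopf--Lax framework, where local Lipschitzianity is only guaranteed on the open interval $(0, t_*(f))$ and may genuinely fail at the endpoint $t_*(f) = 1$.
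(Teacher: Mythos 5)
Your proposal is correct and is essentially the paper's own argument: the paper states Lemma \ref{lem:order0} without a separate proof, precisely as the record of ``applying the above mentioned general properties of the Hopf--Lax semi-group to $\varphi_t$'', i.e.\ the sign change $\varphi_t = -Q_t(-\varphi)$ together with the observation $t_*(-\varphi)\geq 1$ coming from $\varphi^c = Q_1(-\varphi)\not\equiv-\infty$, exactly as you do. The only difference is presentational — you spell out the bookkeeping (finiteness on $X\times(0,1]$, the inclusion $(0,1)\subseteq(0,t_*(-\varphi))$, and the endpoint $t=1$ via left-continuity) which the paper leaves implicit.
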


\begin{definition*}[Kantorovich Geodesic]
Given a Kantorovich potential $\varphi : X \rightarrow \Real$, a geodesic $\gamma \in \Geo(X)$ is called a $\varphi$-Kantorovich (or optimal) geodesic if:
\[ \varphi(\gamma_0) + \varphi^c(\gamma_1) = \frac{d(\gamma_0,\gamma_1)^2}{2} = \frac{\len(\gamma)^2}{2} .
\] We denote all $\varphi$-Kantorovich geodesics by $G_\varphi$. Note that $\gamma \in G_{\varphi}$ iff $\gamma^c \in G_{\varphi^c}$, where $\gamma^c(t) := \gamma(1-t)$ is the time-reversed geodesic. By upper semi-continuity of $\varphi$ and $\varphi^c$, it follows that $G_\varphi$ is a closed subset of  $\Geo(X)$.
\end{definition*}

The following is not hard to check (see e.g. \cite[Corollary 2.16]{cava:decomposition}): 
\begin{lemma} \label{lem:varphi-linear}
Let $\gamma$ be a $\varphi$-Kantorovich geodesic. Then:
\[
\varphi_s(\gamma_s) - \varphi_r(\gamma_r) = \frac{\sfd(\gamma_s,\gamma_r)^2}{2 (r-s)} = (r-s)\frac{\len(\gamma)^2}{2} \;\;\; \forall s,r \in [0,1] .
\]
\end{lemma}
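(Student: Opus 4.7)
The plan is to reduce everything to a single one-sided inequality obtained from the semi-group property of $Q_t$, and then use the Kantorovich condition at the endpoints to force equality. The second equality in the statement is automatic: since $\gamma$ is a constant-speed geodesic, $\sfd(\gamma_s,\gamma_r) = |r-s|\,\len(\gamma)$, so $\frac{\sfd(\gamma_s,\gamma_r)^2}{2(r-s)} = (r-s)\frac{\len(\gamma)^2}{2}$, and we may assume $s<r$ by antisymmetry.

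First I would establish the one-sided bound
\[
\varphi_s(\gamma_s) - \varphi_r(\gamma_r) \le (r-s)\,\frac{\len(\gamma)^2}{2} \qquad \forall\, 0 \le s < r \le 1.
\]
The input is the semi-group identity $Q_r = Q_{r-s}\circ Q_s$ on $[0,\infty)$, valid because $(X,\sfd)$ is a length space. Applied to $-\varphi$ and using $-\varphi_t = Q_t(-\varphi)$, this reads $-\varphi_r = Q_{r-s}(-\varphi_s)$, i.e.,
\[
\varphi_r(x) = \sup_{y \in X} \Bigl(\varphi_s(y) - \frac{\sfd(x,y)^2}{2(r-s)}\Bigr).
\]
Evaluating at $x=\gamma_r$ with the competitor $y = \gamma_s$ and using $\sfd(\gamma_s,\gamma_r) = (r-s)\len(\gamma)$ yields the displayed inequality. (The endpoint case $r=1$ is included since $-\varphi_1 = \varphi^c = Q_1(-\varphi)$, and the case $s=0$ is the definition.)

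Next I would upgrade this inequality to an equality via a telescoping argument at the endpoints. By hypothesis $\gamma \in G_\varphi$ satisfies $\varphi(\gamma_0)+\varphi^c(\gamma_1) = \len(\gamma)^2/2$, i.e., $\varphi_0(\gamma_0) - \varphi_1(\gamma_1) = \len(\gamma)^2/2$. For any $0 \le s < r \le 1$, write
\[
\frac{\len(\gamma)^2}{2} = \bigl(\varphi_0(\gamma_0)-\varphi_s(\gamma_s)\bigr) + \bigl(\varphi_s(\gamma_s)-\varphi_r(\gamma_r)\bigr) + \bigl(\varphi_r(\gamma_r)-\varphi_1(\gamma_1)\bigr).
\]
Applying the one-sided bound to each of the three summands gives respectively $\le s\,\len(\gamma)^2/2$, $\le (r-s)\len(\gamma)^2/2$, and $\le (1-r)\len(\gamma)^2/2$, whose sum is exactly $\len(\gamma)^2/2$. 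So each inequality must be an equality, and in particular $\varphi_s(\gamma_s)-\varphi_r(\gamma_r) = (r-s)\len(\gamma)^2/2$, as desired.

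There is no real obstacle here: the only subtlety is making sure the semi-group identity is applied with the correct endpoints (including $s=0$ and $r=1$), which is handled by the fact that $\varphi^c = Q_1(-\varphi)$ characterizes the Kantorovich pair. Everything else is a tight linear budget argument driven by the Kantorovich condition.
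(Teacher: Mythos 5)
Your proof is correct. Note that the paper itself does not spell out an argument for this lemma; it simply cites \cite[Corollary 2.16]{cava:decomposition}, where the standard route is a direct two-sided estimate: one inequality by testing the infimum defining $-\varphi_t(\gamma_t)$ with the competitor $y=\gamma_0$, and the reverse inequality by combining the $c$-duality bound $\varphi(y)+\varphi^c(\gamma_1)\le \sfd(y,\gamma_1)^2/2$ with the weighted triangle/Cauchy--Schwarz splitting $\frac{(a+b)^2}{2}\le\frac{a^2}{2t}+\frac{b^2}{2(1-t)}$ (the same mechanism as in the paper's Lemma \ref{lem:rigidity}). You instead extract a single one-sided monotonicity bound $\varphi_s(\gamma_s)-\varphi_r(\gamma_r)\le (r-s)\len(\gamma)^2/2$ from the Hopf--Lax semigroup identity $Q_r=Q_{r-s}\circ Q_s$ (valid here since the space is a length space, as the paper records), and then close the budget by telescoping between $t=0$ and $t=1$, where the Kantorovich condition $\varphi_0(\gamma_0)-\varphi_1(\gamma_1)=\len(\gamma)^2/2$ saturates the total. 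The telescoping trick is a clean substitute for the Cauchy--Schwarz step, at the cost of invoking the semigroup law; since only the forward semigroup is used, the group-structure incompatibility the paper warns about is irrelevant, and the endpoint cases $s=0$, $r=1$ (and the trivial case $s=r$, where the middle quotient is interpreted as $0$) are handled correctly. Both arguments are fully rigorous; yours is essentially self-contained given the facts already stated in Section \ref{sec:order12}.
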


\bigskip

\subsection{Distance functions}

The following important definition was given by Ambrosio--Gigli--Savar\'e \cite{AGS-Book, ambrgisav:heat}:
\begin{definition*}[Distance functions $D^{\pm}_f$] 
Given $f : X \rightarrow \Real \cup \set{+\infty}$ which is not identically $+\infty$, denote:
\[
D^+_{f}(x,t) := \sup \limsup_{n \rightarrow \infty} \sfd(x,y_n) \geq  \inf \liminf_{n \rightarrow \infty} \sfd(x,y_n)   =: D^-_{f}(x,t) ,
\]
where the supremum and infimum above run over the set of minimizing sequences $\set{y_n}$ in the definition of the Hopf-Lax transform (\ref{eq:Hopf-Lax}). 
A simple diagonal argument shows that the (outer) supremum and infimum above are in fact attained.
\end{definition*}

The following properties were established in \cite{AGS-Book},\cite[Chapter 3]{ambrgisav:heat}:
\begin{theorem}[Ambrosio--Gigli--Savar\'e] \label{thm:AGS}
For any metric space $(X,\sfd)$ (not necessarily proper, complete nor geodesic):
\begin{enumerate}
\item Both functions $D^{\pm}_f(x,t)$ are locally finite on $X \times (0,t_*(f))$, and $(x,t) \mapsto Q_t f(x)$ is locally Lipschitz there.
\item $(x,t) \mapsto D^{\pm}_f(x,t)$ is upper ($D^{+}_f(x,t)$) / lower ($D^{-}_f(x,t)$) semi-continuous on $X \times (0,t_*(f))$. 
\item For every $x \in X$, both functions $(0,t_*(f)) \ni t \mapsto D^{\pm}_f(x,t)$ are monotone non-decreasing and coincide except where they have (at most countably many) jump discontinuities. \item For every $x \in X$, $\partial_t^{\pm} Q_t f(x) = - \frac{(D^{\pm}_f(x,t))^2}{2 t^2}$ for all $t \in (0,t_*(f))$, where $\partial_t^{-}$ and $\partial_t^+$ denote the left and right partial derivatives, respectively. In particular, the map $(0,t_*(f)) \ni t \mapsto Q_t f(x)$ is locally Lipschitz and locally semi-concave, and differentiable at $t \in (0,t_*(f))$ iff $D^+_f(x,t) = D^-_f(x,t)$. 
\end{enumerate}
\end{theorem}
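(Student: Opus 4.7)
My plan is to establish (1)--(4) in the stated order, exploiting at each step that for fixed $y$ the map $t \mapsto \frac{\sfd(x,y)^{2}}{2t}$ is $C^{\infty}$ on $(0,\infty)$ with explicit derivatives, and using throughout Hopf--Lax competitor estimates. For (1), the first step is to uniformly bound minimizing sequences on compact subsets of $X \times (0,t_{*}(f))$. Fix $(x_{0},t_{0})$ and choose $t_{1} \in (t_{0},t_{*}(f))$, so that $Q_{t_{1}}f$ is finite. For any $(x,t)$ in a small neighborhood with $t < t_{1}$ and any almost-minimizing sequence $\{y_{n}\}$ for $Q_{t}f(x)$, combining $\frac{\sfd(x,y_{n})^{2}}{2t} + f(y_{n}) \leq Q_{t}f(x) + 1$ with the trivial competitor bound $f(y_{n}) \geq Q_{t_{1}}f(x) - \frac{\sfd(x,y_{n})^{2}}{2t_{1}}$ yields
\[
\sfd(x,y_{n})^{2}\left(\tfrac{1}{2t} - \tfrac{1}{2t_{1}}\right) \leq Q_{t}f(x) + 1 - Q_{t_{1}}f(x) ,
\]
a uniform bound independent of $n$ and of $(x,t)$ in the neighborhood. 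Local finiteness of $D^{\pm}_{f}$ is immediate, and local Lipschitzness of $Q_{t}f$ follows by plugging a minimizer $y$ for $Q_{t}f(x')$ into the variational definition of $Q_{t}f(x)$ to obtain $Q_{t}f(x) - Q_{t}f(x') \leq \frac{\sfd(x,y)^{2} - \sfd(x',y)^{2}}{2t}$, which is controlled by $\sfd(x,x')$ times the uniform bound; temporal Lipschitzness is analogous, plugging the same minimizer as competitor at a different time.

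For (2), to avoid any (lacking) semi-continuity of $f$, I would precondition via the semigroup identity $Q_{t}f = Q_{t - \tau}(Q_{\tau}f)$ for small $\tau > 0$; since $Q_{\tau}f$ is continuous by (1), realizers $y_{n}$ of $D^{\pm}_{f}(x_{n}, t_{n})$ along sequences $(x_{n},t_{n}) \to (x,t)$ have convergent subsequences by the uniform bound of (1), and their limits are genuine minimizers of the (continuous) variational problem defining $Q_{t}f(x)$. Upper semi-continuity of $D^{+}_{f}$ and lower semi-continuity of $D^{-}_{f}$ follow by taking the outer sup/inf over the subsequential limit-points. For (3), monotonicity is the classical transport rearrangement: for $t < s$ and minimizers $y_{t},y_{s}$ of $Q_{t}f(x), Q_{s}f(x)$, adding the two optimality inequalities
\[
\frac{\sfd(x,y_{t})^{2}}{2t} + f(y_{t}) \leq \frac{\sfd(x,y_{s})^{2}}{2t} + f(y_{s}), \quad \frac{\sfd(x,y_{s})^{2}}{2s} + f(y_{s}) \leq \frac{\sfd(x,y_{t})^{2}}{2s} + f(y_{t})
\]
and simplifying gives $\sfd(x,y_{t})^{2} \leq \sfd(x,y_{s})^{2}$. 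Combining monotonicity with the semi-continuity from (2) forces $D^{+}_{f}(x,\cdot)$ to be right-continuous and $D^{-}_{f}(x,\cdot)$ to be left-continuous; since monotone functions have at most countably many jumps and $D^{-}_{f} \leq D^{+}_{f}$, they coincide off this countable set.

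For (4), plugging a minimizer $y_{t}$ of $Q_{t}f(x)$ into the variational definition of $Q_{t+h}f(x)$ yields $Q_{t+h}f(x) - Q_{t}f(x) \leq -\frac{h\,\sfd(x,y_{t})^{2}}{2t(t+h)}$, and optimizing over the choice of minimizer (sending $\sfd(x,y_{t})^{2}$ to $(D^{+}_{f}(x,t))^{2}$) gives an upper bound; swapping roles, using a minimizer at $t+h$ and invoking the monotonicity and semi-continuity from (2)--(3) to pass to the limit, yields the matching lower bound, so $\partial_{t}^{+}Q_{t}f(x) = -\frac{(D^{+}_{f}(x,t))^{2}}{2t^{2}}$; the left-derivative formula is entirely analogous. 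Local semi-concavity of $t \mapsto Q_{t}f(x)$ is then immediate from the envelope representation: for any fixed minimizer $y_{t_{0}}$ at $t_{0}$, the competitor $t \mapsto \frac{\sfd(x,y_{t_{0}})^{2}}{2t} + f(y_{t_{0}})$ is $C^{\infty}$ with locally uniformly bounded second derivative, lies above $Q_{t}f(x)$ on $(0,\infty)$, and touches at $t_{0}$, giving a one-sided quadratic upper bound that is precisely semi-concavity at $t_{0}$. The chief subtlety throughout is the absence of any assumed semi-continuity of $f$, which is circumvented by preconditioning through $Q_{\tau}f$ for small $\tau > 0$; every other step reduces to a direct application of the Hopf--Lax variational principle and the smoothness of $t \mapsto \sfd(x,y)^{2}/(2t)$.
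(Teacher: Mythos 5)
Your core computations are the right ones, and where the paper actually writes out a proof — namely part (3), which it establishes via Lemma \ref{lem:compactness} for proper $X$ and lower semi-continuous $f$, summing the two optimality inequalities for minimizers at times $s<t$ — your argument is essentially identical. The competitor bounds you use for (1) and (4), and the envelope argument for semi-concavity, are likewise the standard route (the paper simply cites Ambrosio--Gigli--Savar\'e for (1), (2) and (4)).

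The genuine gap is that the theorem is asserted for \emph{any} metric space — not proper, not complete, not geodesic — and for a general $f$ with no semi-continuity hypothesis, while your proof repeatedly uses tools that are unavailable in that generality. First, you invoke actual minimizers $y_t$, $y_s$, $y_{t+h}$ of the Hopf--Lax infimum in (3) and (4), and in (2) you extract convergent subsequences of realizers; existence of minimizers and subsequential compactness of bounded sequences require properness of $X$ (plus lower semi-continuity of the integrand), exactly the hypotheses the paper adds in Lemma \ref{lem:compactness} before reproving (3) — the theorem itself makes no such assumptions, and $D^{\pm}_f$ are defined through minimizing \emph{sequences} precisely because minimizers need not exist. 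Second, your ``preconditioning'' via the semigroup identity $Q_t f = Q_{t-\tau}(Q_\tau f)$ is only valid on length spaces (the paper points this out explicitly when recalling the semigroup property), so it cannot be used here; and in any case continuity of $Q_\tau f$ does not restore the missing compactness. The AGS proof circumvents all of this by working directly with almost-minimizing sequences and diagonal arguments (with $\eps$-errors in the two-inequality trick, and limsup/liminf bookkeeping in (2) and (4)) rather than with minimizers and limits of realizers. As written, your argument proves the statement only under the additional hypotheses ``$X$ proper and $f$ lower semi-continuous'' (and, for the step using the semigroup, ``$X$ a length space''), not in the stated generality; to close the gap you would need to rewrite (2)--(4) purely in terms of minimizing sequences, which also requires some care in (4), where passing from $\sfd(x,y_{t+h})$ to $D^{+}_f(x,t)$ as $h\downarrow 0$ does not follow from monotonicity alone.
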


It may be instructive to recall the proof of property (3) above, which is related to some ensuing properties, so for completeness, we present it below. For simplicity, we restrict to the case of interest for us, and first record:

\begin{lemma} \label{lem:compactness}
Given a proper metric space $X$, a lower semi-continuous $f : X \rightarrow \Real$, $x \in X$ and $t \in (0,t_*(f))$, there exist $y^{\pm}_t \in X$ so that:
\[
Q_t f(x) = \frac{\sfd(x,y^{\pm}_t)^2}{2 t} + f(y^{\pm}_t) \; \text{ and } \;\; \sfd(x,y^{\pm}_t) = D^{\pm}_f(x,t) .
\]
\end{lemma}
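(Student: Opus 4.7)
The plan is to show that every minimizing sequence for $Q_t f(x)$ must be bounded, extract convergent subsequences using properness of $X$, and then use lower semi-continuity of $f$ to identify the limits with genuine minimizers realizing the extreme distances $D^\pm_f(x,t)$.

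First, I would establish a boundedness estimate for minimizing sequences, which is where the hypothesis $t<t_*(f)$ gets used. Pick $t' \in (t, t_*(f))$; by the dichotomy noted right after \eqref{eq:Hopf-Lax}, $Q_{t'} f$ is finite everywhere. Splitting the quadratic term, for any $y \in X$,
\[
\frac{\sfd(x,y)^2}{2t} + f(y) \;=\; \sfd(x,y)^2\Bigl(\frac{1}{2t}-\frac{1}{2t'}\Bigr) + \Bigl(\frac{\sfd(x,y)^2}{2t'} + f(y)\Bigr) \;\geq\; \sfd(x,y)^2\Bigl(\frac{1}{2t}-\frac{1}{2t'}\Bigr) + Q_{t'} f(x).
\]
Since $\frac{1}{2t}-\frac{1}{2t'} > 0$, and $Q_t f(x)$ itself is finite (bounded above by evaluating at any point with $f$ finite, and below by $Q_{t'}f(x)$ via the monotonicity $Q_t f \geq Q_{t'} f$), the displayed inequality forces $\sup_n \sfd(x,y_n) < \infty$ along any minimizing sequence $\{y_n\}$.

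Next, by a diagonal argument over the definition of $D^+_f(x,t)$ as a supremum of $\limsup_n \sfd(x,y_n)$ running over minimizing sequences, I would produce a single minimizing sequence $\{y_n^+\}$ with $\limsup_n \sfd(x,y_n^+) = D^+_f(x,t)$, and analogously $\{y_n^-\}$ with $\liminf_n \sfd(x,y_n^-) = D^-_f(x,t)$. Passing to subsequences, I can assume $\sfd(x, y_n^\pm) \to D^\pm_f(x,t)$. Combining the boundedness from the previous step with properness of $X$, a further subsequence satisfies $y_{n_k}^\pm \to y^\pm_t$ in $X$, and continuity of $\sfd(x,\cdot)$ gives $\sfd(x, y^\pm_t) = D^\pm_f(x,t)$.

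Finally, lower semi-continuity of $f$ together with continuity of $y \mapsto \sfd(x,y)^2/(2t)$ yields
\[
\frac{\sfd(x,y^\pm_t)^2}{2t} + f(y^\pm_t) \;\leq\; \liminf_k \Bigl[\frac{\sfd(x,y_{n_k}^\pm)^2}{2t} + f(y_{n_k}^\pm)\Bigr] \;=\; Q_t f(x),
\]
while the reverse inequality is automatic from the definition of $Q_t f(x)$ as an infimum, so $y^\pm_t$ are genuine minimizers. The only real obstacle is the boundedness step, as properness alone cannot tame minimizing sequences when $f$ is allowed to be unbounded below: one genuinely needs the extra slack $\frac{1}{2t} - \frac{1}{2t'} > 0$ provided by $t < t_*(f)$ to prevent minimizers from escaping to infinity.
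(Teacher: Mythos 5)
Your proof is correct, and its overall skeleton is the same as the paper's: take a minimizing sequence realizing $D^{\pm}_f(x,t)$ (the attainment via a diagonal argument is already noted in the definition), use properness to extract a convergent subsequence, and conclude with lower semi-continuity of $f$ plus continuity of the distance. The one place where you diverge is the boundedness step: the paper simply quotes Theorem \ref{thm:AGS} (1), i.e.\ the Ambrosio--Gigli--Savar\'e result that $D^{\pm}_f(x,t)$ is (locally) finite for $t \in (0,t_*(f))$, and then takes $R$ with $D^{\pm}_f(x,t) < R$ so that the sequence lives in the compact ball $B_R(x)$. You instead re-derive this finiteness from scratch by choosing $t' \in (t,t_*(f))$, splitting $\frac{1}{2t} = \bigl(\frac{1}{2t}-\frac{1}{2t'}\bigr) + \frac{1}{2t'}$, and using finiteness of $Q_{t'}f(x)$ together with $Q_t f(x) \geq Q_{t'} f(x)$ to bound $\sfd(x,y_n)^2$ uniformly along any minimizing sequence. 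This is a perfectly sound and more self-contained route (it in fact reproves the relevant piece of Theorem \ref{thm:AGS} (1) in one line); its only cost is redundancy, since the finiteness of $D^{\pm}_f$ on $X\times(0,t_*(f))$ is already available and is used elsewhere in the paper anyway, which is why the paper's proof is shorter. One tiny gloss: your claim that $Q_{t'}f$ is finite needs the observation that $Q_{t'}f \not\equiv -\infty$, which follows from $t' < t_*(f)$ and the monotonicity of $t \mapsto Q_t f(x)$; you implicitly use exactly this monotonicity later, so the gap is cosmetic.
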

\noindent Recall that $-\varphi$ is indeed lower semi-continuous for any Kantorovich potential $\varphi$.
\begin{proof}[Proof of Lemma \ref{lem:compactness}]
Let $\{y_t^{\pm,n}\}$ denote a minimizing sequence so that:
\[
Q_t f(x) = \lim_{n \rightarrow \infty}  \frac{\sfd(x,y_t^{\pm,n})^2}{2 t} + f(y_t^{\pm,n}) \text{ and }  D^{\pm}_f(x,t) = \lim_{n \rightarrow \infty} \sfd(x,y_t^{\pm,n}) .
\]
By property (1) we know that $D^{\pm}_f(x,t) < R < \infty$, and the properness implies that the closed geodesic ball $B_R(x)$ is compact. Consequently $\{y_t^{\pm,n}\}$ has a converging subsequence to $y^{\pm}_t$, and the lower semi-continuity of $f$ implies that:
\[
Q_t f(x) = \inf_{y \in X}  \frac{\sfd(x,y)^2}{2 t} + f(y)  = \min_{y \in B_R(x)} \frac{\sfd(x,y)^2}{2 t} + f(y)  = \frac{\sfd(x,y^{\pm}_t)^2}{2 t} + f(y^{\pm}_t) ,  
\]
as asserted. 
\end{proof}

\begin{proof}[Proof of (3) for proper $X$ and lower semi-continuous $f$]
The assertion will follow immediately after establishing:
\[
D^+_f(x,s) \leq D^-_f(x,t) \;\;\; \forall 0 < s < t < t_*(f) ,
\]
since trivially $D^-_f \leq D^+_f$ and since a monotone function can only have a countable number of jump discontinuities. 
By Lemma \ref{lem:compactness}, there exist $y^+_s$ and $y^-_t$ so that:
\[
Q_s f(x) = \inf_{y \in X}  \frac{\sfd(x,y)^2}{2 s} + f(y) = \frac{\sfd(x,y^+_s)^2}{2 s} + f(y^+_s) \text{ and } \sfd(x,y^+_s) = D^+_f(x,s) ,
\]
and:
\[
Q_t f(x) = \inf_{y \in X}  \frac{\sfd(x,y)^2}{2 t} + f(y) = \frac{\sfd(x,y^{-}_t)^2}{2 t} + f(y^{-}_t) \text{ and } \sfd(x,y^{-}_t) = D^-_f(x,t) .
\]
It follows that:
\begin{align*}
\frac{\sfd(x,y^+_s)^2}{2 s} + f(y^+_s) \leq \frac{\sfd(x,y^-_t)^2}{2 s} + f(y^-_t) , \\
\frac{\sfd(x,y^-_t)^2}{2 t} + f(y^-_t) \leq \frac{\sfd(x,y^+_s)^2}{2 t} + f(y^+_s) .
\end{align*}
Summing these two inequalities and rearranging terms, one deduces:
\[
D^+_f(x,s) \brac{\frac{1}{s} - \frac{1}{t}} \leq D^-_f(x,t) \brac{\frac{1}{s} - \frac{1}{t}} ,
\]
as required. 
\end{proof}

\bigskip

\subsection{Intermediate-time duality and time-reversed potential} \label{subsec:duality}

It is immediate to show by inspecting the definitions that we always have (e.g. \cite[Theorem 7.34 (iii)]{Vil} or \cite[Proposition 2.17 (ii)]{ambro:userguide}):
\[
Q_{-s} \circ Q_s f \leq f  \text{ on $X$} \;\;\; \forall s > 0 ; 
\]
this is an inherent group-structure incompatibility of the Hopf-Lax forward and backward semi-groups. Note that for $f = -\varphi$ where $\varphi$ is a Kantorovich potential, we do have equality for $s=1$, and in fact for all $s \in [0,1]$. 
However, for $f = Q_t(-\varphi)$, $t \in (0,1)$ and $s=1-t$, we can only assert an \emph{inequality} above
(\cite[Theorem 7.36]{Vil},\cite[Corollary 2.23 (i)]{ambro:userguide}):
\begin{equation} \label{eq:no-duality}
(\varphi^c)_{1-t} = Q_{-(1-t)} \circ Q_1 (-\varphi)   \leq  Q_t(-\varphi) = -\varphi_t \text{ on $X$,}
\end{equation}
and equality may not hold at every point of $X$ (cf. \cite[Remark 7.37]{Vil}).
Nevertheless, in our setting, the subset where equality is attained may be characterized as in the next proposition. We first introduce the following very convenient:

\begin{definition*}[Time-Reversed Interpolating Potential]
Given a Kantorovich potential $\varphi: X \rightarrow \Real$, define the time-reversed interpolating Kantorovich potential at time $t \in [0,1]$,
 $\varphic_t : X \rightarrow \Real$, as:
\[
\varphic_t := -(\varphi^c)_{1-t} =  Q_{1-t}(-\varphi^c)  = - Q_{-(1-t)} \circ Q_{1-t}(-\varphi_t) .
\]
Note that $\varphic_0 = \varphi$, $\varphic_1 = -\varphi^c$, and:
\[
\varphic_t(x) = \inf_{y \in X} \frac{\sfd^2(x,y)}{2(1-t)} - \varphi^c(y) \;\;\;\; \forall t \in [0,1) .
\]
\end{definition*}

\begin{proposition} \label{prop:duality-char}
\hfill
\begin{enumerate}
\item $\varphi_0 = \varphic_0 = \varphi$ and $\varphi_1 = \varphic_1 = -\varphi^c$.
\item For all $t \in [0,1]$, $\varphi_t \leq \varphic_t$. 
\item For any $t \in (0,1)$, $\varphi_t(x) = \varphic_t(x)$ if and only if $x \in \ee_t(G_\varphi)$. In other words:
\begin{equation} \label{eq:D-mathring-G}
D(\mathring{G}_\varphi) = \set{(x,t) \in X \times (0,1) \; ; \; \varphi_t(x) = \varphic_t(x) }.
\end{equation}
\end{enumerate}
\end{proposition}
$(1)$ is immediate by $c$-concavity, and $(2)$ is a reformulation of (\ref{eq:no-duality}), so the only assertion requiring proof is $(3)$. 
The if direction is well-known (e.g. \cite[Theorem 7.36]{Vil}, \cite[Corollary 2.23 (ii)]{ambro:userguide}), but the other direction appears to be new. It is based on the following simple lemma, which we will use again later on:
\begin{lemma} \label{lem:rigidity}
Assume that for some $x,y,z \in X$ and $t \in (0,1)$:
\[
\frac{\sfd(x,y)^2}{2 t} - \varphi(y) = \varphi^c(z) - \frac{\sfd(x,z)^2}{2 (1-t)} .
\]
Then $x$ is a $t$-intermediate point between $y$ and $z$:
\begin{equation} \label{eq:t-mid-point}
\sfd(y,z) = \frac{\sfd(x,y)}{t} = \frac{\sfd(x,z)}{1-t} ,
\end{equation}
and there exists a $\varphi$-Kantorovich geodesic $\gamma : [0,1] \rightarrow X$ with $\gamma(0) = y$, $\gamma(t) =x$ and $\gamma(1) = z$. 
\end{lemma}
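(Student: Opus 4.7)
My plan is to extract three simultaneous equalities from the hypothesis by sandwiching a sum-of-squares expression between two natural bounds, and then to assemble the Kantorovich geodesic by concatenating two geodesics through the intermediate point $x$.

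First I would rewrite the hypothesis as
\[
\frac{\sfd(x,y)^2}{2t} + \frac{\sfd(x,z)^2}{2(1-t)} = \varphi(y) + \varphi^c(z).
\]
The right-hand side is bounded above by $\sfd(y,z)^2/2$, simply because $\varphi^c(z) \leq \sfd(y,z)^2/2 - \varphi(y)$ by the definition of the $c$-transform. The left-hand side is bounded below by $\sfd(y,z)^2/2$ via two consecutive inequalities: the Cauchy--Schwarz estimate
\[
\frac{a^2}{t} + \frac{b^2}{1-t} \geq (a+b)^2 \quad (a,b \geq 0,\; t \in (0,1)),
\]
applied with $a = \sfd(x,y)$ and $b = \sfd(x,z)$, followed by the triangle inequality $\sfd(x,y) + \sfd(x,z) \geq \sfd(y,z)$. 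Hence all inequalities are equalities, yielding simultaneously: equality in Cauchy--Schwarz gives $\sfd(x,y)/t = \sfd(x,z)/(1-t) =: c$; equality in the triangle inequality gives $\sfd(y,z) = \sfd(x,y) + \sfd(x,z) = c$; and equality in the $c$-transform bound gives $\varphi(y) + \varphi^c(z) = \sfd(y,z)^2/2$. Combining the first two directly yields the midpoint relation (3.8) with common value $c = \sfd(y,z)$.

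Next I would construct $\gamma$ by concatenation. Since $(X,\sfd)$ is a proper geodesic space, let $\alpha : [0,t] \to X$ be a constant-speed geodesic from $y$ to $x$ (its speed is automatically $\sfd(y,x)/t = c$) and let $\beta : [t,1] \to X$ be a constant-speed geodesic from $x$ to $z$ (again of speed $c$), and define
\[
\gamma(s) := \begin{cases} \alpha(s) & s \in [0,t], \\ \beta(s) & s \in [t,1]. \end{cases}
\]
For $s_1 \in [0,t]$ and $s_2 \in [t,1]$, the triangle inequality gives $\sfd(\gamma(s_1),\gamma(s_2)) \leq (s_2 - s_1)c$; on the other hand, splitting $\sfd(y,z) = c$ through $\gamma(s_1)$ and $\gamma(s_2)$ yields the reverse inequality $\sfd(\gamma(s_1),\gamma(s_2)) \geq (s_2 - s_1)c$. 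Hence $\gamma \in \Geo(X)$ with $\ell(\gamma) = c$, $\gamma_0 = y$, $\gamma_t = x$, $\gamma_1 = z$. Finally, the equality $\varphi(y) + \varphi^c(z) = \sfd(y,z)^2/2 = \ell(\gamma)^2/2$ established above is precisely the statement that $\gamma$ is a $\varphi$-Kantorovich geodesic.

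I do not anticipate any serious obstacles: the whole argument rests on locating the right sandwich inequality for $a^2/t + b^2/(1-t)$ and then using properness to produce the connecting geodesic. The only mild subtlety is verifying that the concatenated curve is genuinely constant-speed across the junction $s = t$, which is handled by the two-sided triangle inequality argument just described.
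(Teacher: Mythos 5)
Your proof is correct and follows essentially the same route as the paper: the same sandwich of $\varphi(y)+\varphi^c(z)$ between the $c$-transform bound $\sfd(y,z)^2/2$ and the Cauchy--Schwarz plus triangle-inequality lower bound on $\frac{\sfd(x,y)^2}{2t}+\frac{\sfd(x,z)^2}{2(1-t)}$, with the equality cases yielding (\ref{eq:t-mid-point}), followed by the same concatenation-of-geodesics construction and the observation that the resulting equality $\varphi(y)+\varphi^c(z)=\ell(\gamma)^2/2$ means $\gamma \in G_\varphi$. (Only geodesicity, not properness, is needed for the existence of the two connecting geodesics, but that is immaterial.)
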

\begin{proof}
Using that:
\begin{equation} \label{eq:varphi-rigidity}
\varphi(y) + \varphi^c(z) \leq \frac{\sfd(y,z)^2}{2} ,
\end{equation}
our assumption yields:
\[
\frac{\sfd(x,y)^2}{2 t}  + \frac{\sfd(x,z)^2}{2 (1-t)} \leq  \frac{\sfd(y,z)^2}{2} .
\]
On the other hand, the reverse inequality is always valid by the triangle and Cauchy--Schwarz inequalities:
\[
\frac{\sfd(y,z)^2}{2} \leq \frac{(\sfd(x,y) + \sfd(x,z))^2}{2} \leq \frac{\sfd(x,y)^2}{2 t}  + \frac{\sfd(x,z)^2}{2 (1-t)}  .
\]
It follows that we must have equality everywhere above, and (\ref{eq:t-mid-point}) amounts to the equality case in the Cauchy--Schwarz inequality. 
Consequently, the concatenation $\gamma : [0,1] \rightarrow X$ of any constant speed geodesic $\gamma_1 : [0,t] \rightarrow X$ between $y$ and $x$, with any constant speed geodesic $\gamma_2 : [t,1] \rightarrow X$ between $x$ and $z$, so that $\gamma(0) = y$, $\gamma(t) = x$ and $\gamma(1) = z$, must be a constant speed geodesic itself (by the triangle inequality). Lastly, the equality in (\ref{eq:varphi-rigidity}) implies that $\gamma \in G_\varphi$, thereby concluding the proof.
\end{proof}

\begin{proof}[Proof of Proposition \ref{prop:duality-char} (3)]
We begin with the known direction. Let $x = \gamma_t$ with $\gamma \in G_\varphi$. Apply Lemma \ref{lem:varphi-linear} to $\gamma$ with $s=0$ and $r=t$:
\[
\varphi(\gamma_0) - \varphi_t(\gamma_t)  = \varphi_0(\gamma_0) - \varphi_t(\gamma_t) = t \frac{\text{len}(\gamma)^2}{2} ,
\]
and to $\gamma^c \in G_{\varphi^c}$ with $s=1$ and $r=1-t$:
\[
-\varphi(\gamma_0) - (\varphi^c)_{1-t}(\gamma_t) =  (\varphi^c)_1(\gamma^c_1) - (\varphi^c)_{1-t}(\gamma^c_{1-t}) = -t \frac{\text{len}(\gamma^c)^2}{2} = -t \frac{\text{len}(\gamma)^2}{2},
\]
where we used that $(\varphi^c)_1 = -(\varphi^c)^c = -\varphi$. Summing these two identities, we obtain:
\[
\varphi_t(\gamma_t) = -  (\varphi^c)_{1-t}(\gamma_t) ,
\]
as asserted. 

For the other direction, assume that $\varphi_t(x) = - (\varphi^c)_{1-t}(x)$ for some $x \in X$ and $t \in (0,1)$. By Lemma \ref{lem:compactness} applied to the lower semi-continuous functions $-\varphi$ and $-\varphi^c$, there exist $y_t,z_t \in X$ so that:
\begin{align*}
-\varphi_t(x) & = Q_t(-\varphi)(x) = \frac{\sfd(x,y_t)^2}{2 t} - \varphi(y_t) , \\
\varphi_t(x) = -(\varphi^c)_{1-t}(x) & = Q_{1-t}(-\varphi^c)(x) = \frac{\sfd(x,z_t)^2}{2 (1-t)} -\varphi^c(z_t) .
\end{align*}
Summing the two equations, the assertion follows immediately from Lemma \ref{lem:rigidity}. 
\end{proof}

We also record the following immediate corollary of Lemma \ref{lem:order0}:
\begin{corollary} \label{cor:varphic-order0}
\hfill
\begin{enumerate}
\item $(x,t) \mapsto \varphic_t(x)$ is upper semi-continuous on $X \times [0,1)$ and continuous on $X \times (0,1)$. 
\item For every $x \in X$, $[0,1] \ni t \mapsto \varphic_t(x)$ is monotone non-decreasing and continuous on $[0,1)$. 
\end{enumerate}
\end{corollary}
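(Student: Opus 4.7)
The plan is to deduce Corollary \ref{cor:varphic-order0} directly from Lemma \ref{lem:order0} by applying the latter to the conjugate Kantorovich potential $\psi := \varphi^c$ and then transferring the result through the identity $\varphic_t = -\psi_{1-t}$, which is merely the definition of $\varphic_t$ rewritten in terms of $\psi$. Since $\psi^c = (\varphi^c)^c = \varphi$, the function $\psi$ is itself a (finite-valued) Kantorovich potential, so Lemma \ref{lem:order0} applies to $\psi$ without modification.

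First, I would record what Lemma \ref{lem:order0} gives for the interpolating family $\{\psi_s\}_{s\in[0,1]}$ associated to $\psi$: the map $(x,s)\mapsto \psi_s(x)$ is lower semi-continuous on $X\times(0,1]$ and continuous on $X\times(0,1)$, and for every fixed $x \in X$ the map $[0,1]\ni s\mapsto \psi_s(x)$ is monotone non-decreasing and continuous on $(0,1]$.

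Second, I would perform the time-reversal and sign flip. The change of variable $s=1-t$ is a homeomorphism of $[0,1]$ which sends $[0,1)$ to $(0,1]$ and $(0,1)$ to itself. Pre-composition with this map preserves continuity, and in one variable it reverses monotonicity. Post-composition with $x\mapsto -x$ converts lower semi-continuity into upper semi-continuity, preserves continuity, and again reverses monotonicity. Applying both operations to $\psi_s(x)$ and using $\varphic_t(x)=-\psi_{1-t}(x)$ therefore yields: $(x,t)\mapsto\varphic_t(x)$ is upper semi-continuous on $X\times[0,1)$ and continuous on $X\times(0,1)$, and $[0,1]\ni t\mapsto \varphic_t(x)$ is non-decreasing (the two sign reversals cancel) and continuous on $[0,1)$. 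These are exactly assertions (1) and (2) of the corollary.

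Finally, as a consistency check (not strictly needed for the corollary but worth noting), one has $\varphic_0=-\psi_1=-(-\psi^c)=\psi^c=\varphi$ and $\varphic_1=-\psi_0=-\varphi^c$, in agreement with Proposition \ref{prop:duality-char}(1). There is no substantive obstacle in this argument: the corollary is essentially a bookkeeping translation of Lemma \ref{lem:order0} through the involution $t\mapsto 1-t$ composed with a global sign change, and the only thing to verify is that these two operations interact with semi-continuity, continuity, and monotonicity in the expected way.
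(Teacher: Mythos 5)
Your proposal is correct and is exactly the argument the paper intends: the corollary is recorded as an "immediate corollary of Lemma \ref{lem:order0}", obtained by applying that lemma to the conjugate potential $\varphi^c$ (a genuine finite-valued Kantorovich potential) and translating through the definition $\varphic_t = -(\varphi^c)_{1-t}$, with the sign flip turning lower into upper semi-continuity and the two reversals restoring monotonicity. No gaps.
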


Finally, in view of (\ref{eq:D-mathring-G}), we deduce for free:
\begin{corollary} \label{cor:G-closed} 
$D(\mathring{G}_\varphi)$ is a closed subset of $X \times (0,1)$. 
\end{corollary}
\begin{proof}
Immediate from (\ref{eq:D-mathring-G}) by the continuity of $\varphi_t(x)$ and $\varphic_t(x)$ on $X \times (0,1)$. 
\end{proof}

\bigskip

\subsection{Length functions $\ell_t^{\pm}$ and $\ellc_t^{\pm}$}

\begin{definition*}[Length functions $\ell_t^{\pm},\ellc_t^{\pm}$]
Given a Kantorovich potential $\varphi: X \rightarrow \Real$, denote:
\[
\ell^{\pm}_t(x) := \frac{D^{\pm}_{-\varphi}(x,t)}{t} \;\; , \;\;  \ellc^{\pm}_t(x) :=  \frac{D^{\pm}_{-\varphi^c}(x,1-t)}{1-t} \;\;, \;\; (x,t) \in X \times (0,1). 
\]
\end{definition*}
To provide motivation for these definitions, let us mention that we will shortly see that if $x = \gamma_t$ with $\gamma \in G_\varphi$ and $t \in (0,1)$, then:
\[
\ell^{+}_t(x) = \ell^{-}_t(x) = \ellc^{+}_t(x) = \ellc^{-}_t(x) = \len(\gamma) .
\]
In particular, all $\varphi$-Kantorovich geodesics having $x$ as their $t$-mid-point have the same length. These facts seem to not have been previously noted in the literature, 
and they will be crucially exploited in this work.

\begin{definition*}
For $\tilde{\ell} = \ell,\ellc$, introduce the following  set:
\[
D_{\tilde{\ell}} := \set{ (x,t) \in X \times (0,1) \; ; \; \tilde{\ell}_t^+(x) = \tilde{\ell}_t^-(x) } ,
\]
and on it define $\tilde{\ell}_t(x)$ as the common value $\tilde{\ell}_t^+(x) = \tilde{\ell}_t^-(x)$. 
\end{definition*}

Recalling that $\varphi_t = -Q_t(-\varphi)$ and $\varphic_t = Q_{1-t}(-\varphi^c)$, we begin by translating Theorem \ref{thm:AGS} into the following corollary. We freely use standard properties of semi-convex (semi-concave) functions, like twice a.e. differentiability, non-negativity (non-positivity) of the singular part of the distributional second derivative (see e.g. Lemma \ref{lem:Aleksandrov}), etc... \begin{corollary} \label{cor:AGS}
Let $\varphi : X \rightarrow \Real$ denote a Kantorovich potential. Then:
\begin{enumerate}

\item For $\tilde{\ell}= \ell,\ellc$ and $\tilde{\varphi}=\varphi,\varphic$, $\tilde{\ell}^{\pm}_t(x)$ are locally finite on $X \times (0,1)$, and $(x,t) \mapsto \tilde{\varphi}_t(x)$ is locally Lipschitz there.
\item For $\tilde{\ell} = \ell,\ellc$, $(x,t) \mapsto \tilde{\ell}^{\pm}_t(x)$ is upper ($\tilde{\ell}^{+}_t(x)$) / lower ($\tilde{\ell}^{-}_t(x)$) semi-continuous on $X \times (0,1)$. In particular, the subset $D_{\tilde{\ell}} \subset X \times (0,1)$ is Borel and $(x,t) \mapsto \tilde{\ell}_t(x)$ is continuous on $D_{\tilde{\ell}}$. 
\item For every $x \in X$ we have:
\[
\partial_t^{\pm} \varphi_t(x) = \frac{\ell^{\pm}_t(x)^2}{2} ~,~  \partial_t^{\pm} \varphic_t(x) = \frac{\ellc^{\mp}_t(x)^2}{2}\;\;\; \forall t \in (0,1) . 
\]
In particular, for $\tilde{\ell}= \ell,\ellc$ and $\tilde{\varphi}=\varphi,\varphic$, respectively, the map $(0,1) \ni t \mapsto \tilde{\varphi}_t(x)$ is locally Lipschitz, and it is differentiable at $t \in (0,1)$ iff $t \in D_{\tilde{\ell}}(x)$, the set on which both maps $(0,1) \ni t \mapsto \tilde{\ell}^{\pm}_t(x)$ coincide. $D_{\tilde{\ell}}(x)$ is precisely the set of continuity points of both maps, and thus coincides with $(0,1)$ with at most countably exceptions. In particular:
\[
\tilde{\varphi}_{t_2}(x) - \tilde{\varphi}_{t_1}(x) = \int_{t_1}^{t_2} \frac{\tilde{\ell}^2_\tau(x)}{2} d\tau \;\;\;  \forall t_1,t_2 \in (0,1) .
\] 
\item For every $x \in X$: \begin{enumerate}
\item
Both maps $(0,1) \ni t \mapsto t \ell^{\pm}_t(x)$ are monotone non-decreasing. In particular, $D_{\ell}(x) \ni t \mapsto \ell^2_t(x)$ is differentiable a.e., 
the singular part of its distributional derivative is non-negative, 
$(0,1) \ni t\mapsto \varphi_t(x)$ is locally semi-convex, and:
\begin{align}
\label{eq:4a} \underline{\partial}_t \frac{\ell_t^2(x)}{2}  & \geq -\frac{1}{t} \ell_t^2(x) \;\;\; \forall t \in D_{\ell}(x) . \end{align}
\item
Both maps $(0,1) \ni t \mapsto (1-t) \ellc^{\pm}_t(x)$ are monotone non-increasing. In particular, $D_{\ellc}(x) \ni t \mapsto \ellc^2_t(x)$ is differentiable a.e., 
the singular part of its distributional derivative is non-positive,
$(0,1) \ni t \mapsto \varphic_t(x)$ is locally semi-concave, and: \begin{align}
\label{eq:4b} \overline{\partial}_t \frac{\ellc_t^2(x)}{2}  & \leq \frac{1}{1-t} \ellc_t^2(x) \;\;\; \forall t \in D_{\ellc}(x) .  \end{align}
\end{enumerate}
\end{enumerate}
\end{corollary}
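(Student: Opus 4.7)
The plan is to derive every assertion by applying Theorem \ref{thm:AGS} to two choices of $f$, with appropriate bookkeeping: first $f = -\varphi$, where $Q_t f = -\varphi_t$, $D^{\pm}_f(x,t) = t\ell^{\pm}_t(x)$, and $t_*(f) \geq 1$ since $\varphi = Q_1(-\varphi^c)$; and second $g = -\varphi^c$ composed with the decreasing substitution $s = 1-t$, so that $Q_s g|_{s=1-t} = \varphic_t$ and $D^{\pm}_g(x,1-t) = (1-t)\ellc^{\pm}_t(x)$. This substitution is the source of the $\pm \leftrightarrow \mp$ swap in part (3) and of the flipped monotonicity direction in part (4b).

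For parts (1) and (2), Theorem \ref{thm:AGS}(1)--(2) yields local finiteness of $D^{\pm}_{-\varphi}$, its upper/lower semi-continuity, and local Lipschitzness of $Q_t(-\varphi)$ on $X \times (0,1)$; dividing by the positive continuous factor $t$ preserves all three properties, giving the claims for $\ell^{\pm}_t$ and $\varphi_t$. The $\ellc^{\pm}_t,\varphic_t$ statements follow by composing with the bi-Lipschitz map $t\mapsto 1-t$. The set $D_{\tilde\ell}$ is then Borel because $\tilde\ell^+_t-\tilde\ell^-_t\ge 0$ is upper semi-continuous as the difference of an upper s.c. and a lower s.c. function, so $D_{\tilde\ell}=\bigcap_n\{\tilde\ell^+_t-\tilde\ell^-_t<1/n\}$ is $G_\delta$; continuity of $\tilde\ell$ on $D_{\tilde\ell}$ follows by sandwiching along sequences $(x_n,t_n)\to(x_0,t_0)\in D_{\tilde\ell}$: $\limsup \tilde\ell(x_n,t_n)\le \limsup \tilde\ell^+(x_n,t_n)\le \tilde\ell^+(x_0,t_0)=\tilde\ell(x_0,t_0)\le \liminf \tilde\ell^-(x_n,t_n)\le \liminf \tilde\ell(x_n,t_n)$.

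For part (3), Theorem \ref{thm:AGS}(4) combined with negation yields $\partial_t^{\pm}\varphi_t(x)=D^{\pm}_{-\varphi}(x,t)^2/(2t^2)=\ell^{\pm}_t(x)^2/2$; then writing $\varphic_t(x)=Q_{1-t}(-\varphi^c)(x)$ and applying the chain rule for the decreasing substitution $s=1-t$ swaps one-sided derivatives, giving $\partial_t^{\pm}\varphic_t(x)=-\partial_s^{\mp}Q_s(-\varphi^c)(x)\bigr|_{s=1-t}=\ellc^{\mp}_t(x)^2/2$, which matches the claimed $\pm\leftrightarrow\mp$ pattern. Identifying the differentiability points of $t\mapsto \tilde\varphi_t(x)$ with $D_{\tilde\ell}(x)$ and with the set of common continuity points of $\tilde\ell^{\pm}_t(x)$ is immediate from Theorem \ref{thm:AGS}(3); the countability of the exceptional set and the fundamental-theorem integral formula follow from Theorem \ref{thm:AGS}(3) and the local Lipschitzness established in (1).

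For part (4), the monotonicity of $t\mapsto t\ell^{\pm}_t(x)=D^{\pm}_{-\varphi}(x,t)$ is Theorem \ref{thm:AGS}(3), and it flips under $s=1-t$ to give (4b)'s non-increasing behaviour of $(1-t)\ellc^{\pm}_t(x)$. Local semi-convexity of $t\mapsto \varphi_t(x)$ is the negation of the semi-concavity of $Q_t(-\varphi)(x)$ from Theorem \ref{thm:AGS}(4); for $\varphic_t$ we additionally precompose with the affine $t\mapsto 1-t$, which preserves semi-concavity. The one-dimensional Aleksandrov--Jessen statement (Lemma \ref{lem:convex-2nd-diff} and the discussion following it) then gives a.e. twice differentiability, and the sign of the singular part of the distributional derivative of $\ell^2_t(x)/2$ (resp.\ $\ellc^2_t(x)/2$) is the standard Lebesgue-decomposition statement for the second distributional derivative of a one-dimensional semi-convex (resp.\ semi-concave) function. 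For the pointwise bound at $t_0\in D_\ell(x)$, I would approach $t_0$ only through $D_\ell(x)$ (which is cocountable, hence every $t_0$ is an accumulation point, cf.\ Remark \ref{R:differentiabilitydensity}): for $t\in D_\ell(x)$ with $t>t_0$, monotonicity of $\tau\ell_\tau^+$ gives $\ell_t(x)\ge (t_0/t)\ell_{t_0}(x)$, hence $\ell_t^2(x)-\ell_{t_0}^2(x)\ge -(t+t_0)(t-t_0)\ell_{t_0}^2(x)/t^2$; dividing by $2(t-t_0)$ and passing to the $\liminf$ as $t\to t_0^+$ within $D_\ell(x)$ gives the lower bound $-\ell_{t_0}^2(x)/t_0$, and a symmetric computation from the left using $\tau\ell_\tau^-$ completes the proof of (\ref{eq:4a}); (\ref{eq:4b}) is its mirror image, using that the substitution $s=1-t$ interchanges upper and lower derivatives.

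No step presents a conceptual obstacle: everything is a translation of Theorem \ref{thm:AGS} through the two elementary operations of negation and orientation-reversing reparametrization, combined with standard one-variable Aleksandrov analysis. The only care required is the sign/orientation bookkeeping in (3) and interpreting the pointwise derivative bound in the $A=D_\ell(x)$ sense of Subsection \ref{subsec:prelim-derivatives}, i.e. as the $\liminf$ over $D_\ell(x)\setminus\{t_0\}$.
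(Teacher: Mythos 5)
Your proposal is correct and takes essentially the same route as the paper: all of (1)--(3) and the monotonicity/semi-convexity statements in (4) are read off from Theorem \ref{thm:AGS} via negation and the time reversal $s=1-t$ (with the resulting $\pm\leftrightarrow\mp$ swap), and the only point needing a genuine argument is deducing (\ref{eq:4a})--(\ref{eq:4b}) from the monotonicity. The sole (cosmetic) difference is in that last step: the paper argues via continuity of $\ell_t(x)$ on $D_\ell(x)$ and the chain rule applied to $\log(t\,\ell_t(x))$, whereas you estimate the difference quotients of $\ell_t^2(x)$ directly from $\ell_t(x)\ge (t_0/t)\,\ell_{t_0}(x)$ — two equivalent one-line arguments.
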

\begin{proof}
The only point requiring verification is that monotonicity of $t \mapsto t \ell_t(x)$ in (4a) and $t \mapsto (1-t) \ellc_t$ in (4b) implies (\ref{eq:4a}) and (\ref{eq:4b}), respectively. For instance, using the continuity of $t \mapsto \ell_t(x)$ on $D_{\ell}(x)$, (\ref{eq:4a}) is clearly equivalent to:
\begin{equation} \label{eq:4aa}
\underline{\partial}_t \ell_t(x) \geq -\frac{1}{t} \ell_t(x) \;\;\; \forall t \in D_{\ell}(x) .
\end{equation}
Now, if $\ell_t(x) = 0$ the monotonicity directly implies $\underline{\partial}_t \ell_t(x) \geq 0$ and establishes (\ref{eq:4aa}), whereas otherwise, (\ref{eq:4aa}) is equivalent by the chain-rule (and again the continuity of $t \mapsto \ell_t(x)$ on $D_{\ell}(x)$) to:
\[
\underline{\partial}_t \log(t \ell_t(x))  =  \frac{1}{t} + \underline{\partial}_t \log(\ell_t(x))  \geq 0 \;\;\; \forall t \in D_{\ell}(x) ,
\]
which in turn is a consequence of the aforementioned monotonicity. The proof of (\ref{eq:4b}) follows identically. 
\end{proof}

We now arrive to the main new result of this section, which will be constantly and crucially used in this work: 
\begin{theorem} \label{thm:order12-main}
Let $\varphi : X \rightarrow \Real$ denote a Kantorovich potential.  \begin{enumerate}
\item For all $x \in \ee_t(G_\varphi)$ with $t \in (0,1)$, we have:
\[
\ell^{+}_t(x) = \ell^{-}_t(x) = \ellc^{+}_t(x) = \ellc^{-}_t(x) = \len(\gamma) ,
\]
for any  $\gamma \in G_\varphi$ so that $\gamma_t = x$. 
In other words:
\[
 D(\mathring{G}_\varphi) = \set{(x,t) \in X \times (0,1) \; ; \; x = \gamma_t \; , \; \gamma \in G_\varphi} \subset D_\ell \cap D_{\ellc},
\]
 and moreover $\ell_t(x) = \ellc_t(x)$ there. 
 \item For all $x \in X$, $\mathring{G}_\varphi(x) \ni t \mapsto \ell_t(x)=\ellc_t(x)$ is locally Lipschitz:
\begin{multline} \label{eq:ell-Lipschitz}
\abs{ \sqrt{t (1-t)} \ell_{t}(x) - \sqrt{s (1-s)} \ell_{s}(x)} \\
\leq \sqrt{\ell_{t}(x)  \ell_{s}(x)} \abs{ \sqrt{t (1-s)} - \sqrt{s (1-t)} } \;\;\; \forall t,s \in \mathring{G}_\varphi(x)  .
 \end{multline}
\item For all $(x,t) \in D(\mathring{G}_\varphi) \subset D_\ell \cap D_{\ellc}$ we have for both $* = \underline{\P}_2 \varphic_t(x) ,\overline{\P}_2 \varphi_t(x)$: 
\[
 -\frac{1}{t} \ell_t^2(x) \leq \underline{\partial}_t \frac{\ell_t^2(x)}{2} \leq \underline{\P}_2 \varphi_t(x) \leq * \leq \overline{\P}_2 \varphic_t(x) \leq \overline{\partial}_t \frac{\ellc_t^2(x)}{2}  \leq \frac{1}{1-t} \ell_t^2(x)  ,
\]
where the Peano (partial) derivatives are with respect to the $t$ variable. 
\item For all $(x,t) \in D(\mathring{G}_\varphi) \subset D_\ell \cap D_{\ellc}$ we have: 
\begin{align*}
\overline{\partial}_t  \frac{\ell_t^2(x)}{2} &\leq  \overline{\partial}_t  \frac{\ellc_t^2(x)}{2} + \brac{\frac{1}{1-t} + \frac{1}{t}} \ell_t^2(x) \leq \brac{\frac{2}{1-t} + \frac{1}{t}} \ell_t^2(x) \\
\underline{\partial_t}  \frac{\ellc_t^2(x)}{2} &\geq \underline{\partial_t}  \frac{\ell_t^2(x)}{2} -\brac{\frac{1}{t} + \frac{1}{1-t}} \ell_t^2(x) \geq -\brac{\frac{2}{t} + \frac{1}{1-t}} \ell_t^2(x) .
\end{align*}
\end{enumerate}
In particular, for every $x \in X$, we have:
\[
\partial_t \varphi_t(x) = \partial_t \varphic_t(x) = \frac{\ell^2_t(x)}{2} = \frac{\ellc^2_t(x)}{2} \;\;\; \forall t \in \mathring{G}_\varphi(x)  ,
\]
with $t \mapsto \frac{\ell^2_t(x)}{2}$ and $t \mapsto \frac{\ellc^2_t(x)}{2}$ continuous on $D_\ell(x) \cap D_{\ellc}(x)$, differentiable a.e. there, and having locally bounded lower and upper derivatives on $\mathring{G}_{\varphi}(x) \subset D_\ell(x) \cap D_{\ellc}(x)$ as in (3) and (4). 
\end{theorem}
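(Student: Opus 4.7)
The plan is to prove the four parts sequentially, with three main tools: Lemma \ref{lem:rigidity}, the monotonicities in Corollary \ref{cor:AGS}(4), and Lemma \ref{lem:peano-inq}. For Part (1), fix $x=\gamma_t$ with $\gamma\in G_\varphi$. Using Lemma \ref{lem:varphi-linear} with $(s,r)=(0,t)$ together with $\sfd(x,\gamma_0)=t\,\len(\gamma)$, a direct computation shows that $y=\gamma_0$ realizes the Hopf--Lax infimum defining $-\varphi_t(x)=Q_t(-\varphi)(x)$, yielding $\ell^-_t(x)\le\len(\gamma)\le\ell^+_t(x)$; symmetrically, $z=\gamma_1$ realizes the infimum for $Q_{1-t}(-\varphi^c)(x)$. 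Proposition \ref{prop:duality-char}(3) gives $\varphi_t(x)=\varphic_t(x)$, so for \emph{any} minimizers $y$ of $Q_t(-\varphi)(x)$ and $z$ of $Q_{1-t}(-\varphi^c)(x)$ (existing by Lemma \ref{lem:compactness}) one obtains the identity
\[
\frac{\sfd(x,y)^2}{2t}-\varphi(y) \;=\; \varphi^c(z) - \frac{\sfd(x,z)^2}{2(1-t)},
\]
which is the hypothesis of Lemma \ref{lem:rigidity} and forces $\sfd(x,y)/t=\sfd(x,z)/(1-t)=\sfd(y,z)$. This common value is independent of the chosen pair of minimizers (varying $y$ leaves $\sfd(x,z)/(1-t)$ fixed, and vice versa), pinning down $\ell^{\pm}_t(x)=\ellc^{\pm}_t(x)=\len(\gamma)$ after plugging in $y=\gamma_0$.

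For Part (2), since $\ell_\tau(x)=\ellc_\tau(x)$ on $\mathring G_\varphi(x)$ by Part (1), Corollary \ref{cor:AGS}(4) supplies both $s\,\ell_s(x)\le t\,\ell_t(x)$ and $(1-t)\,\ell_t(x)\le(1-s)\,\ell_s(x)$ for $s\le t$ in $\mathring G_\varphi(x)$. Setting $A:=\sqrt{t\,\ell_t(x)}$, $B:=\sqrt{s\,\ell_s(x)}$, $A':=\sqrt{(1-t)\,\ell_t(x)}$, $B':=\sqrt{(1-s)\,\ell_s(x)}$, so that $B\le A$ and $A'\le B'$, the target estimate \eqref{eq:ell-Lipschitz} rewrites as $|AA'-BB'|\le AB'-BA'$; via the identities $AA'-BB'=-A(B'-A')+B'(A-B)$ and $AB'-BA'=A(B'-A')+A'(A-B)$, both directions reduce to the obviously nonnegative $(B'-A')(A+B)\ge 0$ and $(A-B)(A'+B')\ge 0$.

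For Part (3), apply Lemma \ref{lem:peano-inq} to both $\tau\mapsto\varphi_\tau(x)$ and $\tau\mapsto\varphic_\tau(x)$, whose a.e.-derivatives are $\ell^2_\tau(x)/2$ and $\ellc^2_\tau(x)/2$ by Corollary \ref{cor:AGS}(3), to obtain the outer inequalities in the chain. By Part (1), both functions and their first derivatives at $\tau=t$ coincide, so the Peano remainders $h_\varphi(\eps):=2(\varphi_{t+\eps}(x)-\varphi_t(x)-\eps\ell^2_t(x)/2)$ and $h_{\varphic}(\eps)$ satisfy $h_\varphi(\eps)\le h_{\varphic}(\eps)$ by Proposition \ref{prop:duality-char}(2); dividing by $\eps^2$ and taking $\liminf/\limsup$ yields $\underline{\P}_2\varphi_t(x)\le\underline{\P}_2\varphic_t(x)$ and $\overline{\P}_2\varphi_t(x)\le\overline{\P}_2\varphic_t(x)$. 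Combining with the extremal bounds $-\ell^2_t(x)/t\le\underline{\partial}_t\ell^2_t(x)/2$ and $\overline{\partial}_t\ellc^2_t(x)/2\le\ell^2_t(x)/(1-t)$ from Corollary \ref{cor:AGS}(4a, 4b) (using $\ellc_t(x)=\ell_t(x)$) assembles the full chain.

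Part (4) is the main obstacle: the monotonicities only yield a one-sided lower bound on $\underline{\partial}_t\ell^2_t/2$ and a one-sided upper bound on $\overline{\partial}_t\ellc^2_t/2$, not the crosswise bounds claimed. The strategy is to exploit the identity $\ell_t(x)=\ellc_t(x)$ at $\tau=t$ from Part (1), together with the continuity of $\tau\mapsto\tau\,\ell^{\pm}_\tau$ and $\tau\mapsto(1-\tau)\,\ellc^{\pm}_\tau$ at $\tau=t$ (from Corollary \ref{cor:AGS}(2, 4)) and the Lipschitz estimate of Part (2) on $\mathring G_\varphi(x)$. Careful bookkeeping of the one-sided difference quotients, using $\varphi_\tau\le\varphic_\tau$ (integrated between $t$ and $\tau$, so that $\int_t^\tau(\ellc^2_\sigma-\ell^2_\sigma)\,d\sigma/2=\varphic_\tau(x)-\varphi_\tau(x)\ge 0$) to control the deficit between $\ell^2_\tau$ and $\ellc^2_\tau$ against the pointwise monotonicity bounds on the integrands, yields the claimed cross-bounds; the rightmost inequalities in (4) are direct substitutions of (4a) and (4b). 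Finally, the ``In particular'' assertion simply re-packages the above: Part (1) gives $\ell_t(x)=\ellc_t(x)$ and differentiability of $\varphi_\tau(x), \varphic_\tau(x)$ at every $\tau\in\mathring G_\varphi(x)$ via Corollary \ref{cor:AGS}(3), while Parts (3)--(4) furnish locally bounded upper and lower derivatives of $\tau\mapsto\ell^2_\tau(x)/2$ and $\tau\mapsto\ellc^2_\tau(x)/2$ on $\mathring G_\varphi(x)$.
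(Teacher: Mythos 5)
Parts (1) and (3) are correct and follow essentially the paper's own argument (minimizers from Lemma \ref{lem:compactness}, the rigidity Lemma \ref{lem:rigidity} combined with $\varphi_t(x)=\varphic_t(x)$ from Proposition \ref{prop:duality-char}(3), and then Lemma \ref{lem:peano-inq} plus the majorization $h\leq\bar h$ for the Peano chain). Part (2), however, is a genuinely different route: the paper proves \eqref{eq:ell-Lipschitz} by writing down $c$-cyclic monotonicity for the pair $\{(\gamma^t_0,\gamma^t_1),(\gamma^s_0,\gamma^s_1)\}$ and estimating the cross-distances through $x$ by the triangle inequality, whereas you deduce it from the two monotonicities of Corollary \ref{cor:AGS}(4) (namely $\tau\mapsto\tau\,\ell_\tau(x)$ non-decreasing and $\tau\mapsto(1-\tau)\,\ellc_\tau(x)$ non-increasing) together with $\ell_\tau=\ellc_\tau$ on $\mathring{G}_\varphi(x)$ from Part (1); your algebraic reduction $|AA'-BB'|\leq AB'-BA'$ under $B\leq A$, $A'\leq B'$ is correct, so this is a valid and arguably cleaner derivation (its input ultimately traces back to the same monotonicity mechanism behind Theorem \ref{thm:AGS}(3)).

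The genuine gap is in Part (4), which you yourself flag as the main obstacle but then do not actually prove. Your outline is the same as the paper's: set $f(\tau)=\varphic_\tau(x)-\varphi_\tau(x)\geq 0$, note $f(t)=0$ and $f'(t)=0$, and use \eqref{eq:4a}, \eqref{eq:4b} to bound the increments of $f'=\frac{\ellc^2_\tau-\ell^2_\tau}{2}$ from above near $t$. But the phrase ``careful bookkeeping of the one-sided difference quotients \dots yields the claimed cross-bounds'' hides the only nontrivial step: from an \emph{upper} bound on the increments of $f'$ (semi-concavity), together with $f\geq 0$ and $f(t)=f'(t)=0$, one must extract the \emph{lower} bound $\frac{f'(\tau)}{\tau-t}\geq -C$ near $t$. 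This is exactly the content of the paper's Lemma \ref{lem:positive-semi-concave}, proved by integrating the one-sided Lipschitz bound for $f'$ over the doubled interval between $t$ and $2\tau-t$ and comparing with $0\leq f(2\tau-t)$; without this (or an equivalent) quantitative argument the cross-bounds in (4) simply do not follow from the ingredients you list. Moreover, one of your listed tools is inapplicable: the upper and lower derivatives in (4) are taken along $D_\ell(x)\cap D_{\ellc}(x)$, whereas the Lipschitz estimate \eqref{eq:ell-Lipschitz} of Part (2) only controls $\ell_\tau(x)$ at points of $\mathring{G}_\varphi(x)$, and, as the paper warns right after the theorem, one cannot pass between the restricted and unrestricted difference quotients since $\mathring{G}_\varphi(x)$ may have isolated points or be disconnected. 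So Part (4) needs the Lemma \ref{lem:positive-semi-concave}-type integration argument spelled out, and the appeal to Part (2) should be dropped.
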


\begin{proof}To see (1), let $(x,t) \in D(\mathring{G}_\varphi)$. Equivalently, by Proposition \ref{prop:duality-char} (3), we know that $\varphi_t(x) = \varphic_t(x)$. In addition, Lemma \ref{lem:compactness} assures the existence of $y^{\pm}$ and $z^{\pm}$ in $X$ so that:
\begin{align*}
-\varphi_t(x) & = \frac{\sfd(x,y^{\pm})^2}{2t} - \varphi(y^{\pm})  \;\;, \;\; \sfd(x,y^{\pm}) = t \ell^{\pm}_t(x) \\
-\varphic_t(x) &= - \frac{\sfd(x,z^{\pm})^2}{2(1-t)} + \varphi^c(z^{\pm}) \;\;,\;\; \sfd(x,z^{\pm}) = (1-t) \ellc^{\pm}_t(x) .
\end{align*}
Equating both expressions and applying Lemma \ref{lem:rigidity}, we deduce that $x$ is the $t$-midpoint of a geodesic connecting $y^{\pm}$ and $z^{\pm}$ (for all 4 possibilities), and that:
\begin{equation} \label{eq:pmc-thesame}
\ell^{\pm}_t(x) = \frac{\sfd(x,y^{\pm})}{t} = \frac{\sfd(x,z^{\pm})}{1-t} = \ellc^{\pm}_{t}(x) ,
\end{equation}
so that all 4 possibilities above coincide. We remark in passing that this already implies in a non-branching setting that necessarily $y^+ = y^-$ and $z^+=z^-$, i.e. the uniqueness of a $\varphi$-Kantorovich geodesic with $t$-mid point $x$. 

Furthermore, if $x = \gamma_t$ for some $\gamma \in G_\varphi$, then by Lemma \ref{lem:varphi-linear}:
\[
-\varphi_t(x) = \frac{\sfd(x , \gamma_0)^2}{2t} -  \varphi(\gamma_0) .
\]
It follows by definition of $D^{\pm}_{-\varphi}(x,t)$ that:
\[
t \ell^-_t(x) = D^-_{-\varphi}(x,t) \leq \sfd(x,\gamma_0) = t \len(\gamma) \leq D^+_{-\varphi}(x,t) = t \ell^+_t(x) ,
\]
which together with (\ref{eq:pmc-thesame}) establishes that $\len(\gamma) = \ell_t(x) = \ellc_t(x)$. 

\smallskip
To see (2), let $\gamma^t, \gamma^s \in G_{\varphi}$ be so that $\gamma^t_t = \gamma^s_s = x$, for some $t,s \in (0,1)$. Then:
\[
\varphi^c(\gamma^p_1) = \frac{\len(\gamma^p)^2}{2} - \varphi(\gamma^p_0) \leq \frac{\sfd(\gamma^p_1,\gamma^q_0)^2}{2} - \varphi(\gamma^q_0) ,
\]
for $(p,q) = (t,s)$ and $(p,q) = (s,t)$. 
Summing these two inequalities, we obtain the well-known $c$-cyclic monotonicity of the set $\set{(\gamma^t_0,\gamma^t_1),(\gamma^s_0,\gamma^s_1)}$:
\[
\len(\gamma^t)^2 + \len(\gamma^s)^2 \leq \sfd(\gamma^t_0,\gamma^s_1)^2 + \sfd(\gamma^s_0,\gamma^t_1)^2 .
\]
To evaluate the right-hand-side, we simply pass through $x$ and employ the triangle inequality:
\[
\sfd(\gamma^p_0,\gamma^q_1) \leq \sfd(\gamma^p_0,x) + \sfd(x,\gamma^q_1) = p \; \len(\gamma^p) + (1-q) \; \len(\gamma^q) . 
\]
Plugging this above and rearranging terms, we obtain:
\[
t (1-t) \len(\gamma^t)^2 + s (1-s) \len(\gamma^s)^2 \leq \brac{t (1-s) + s (1-t)} \len(\gamma^t) \len(\gamma^s) .
\]
Completing the square by subtracting $2 \sqrt{t(1-t)s(1-s)} \len(\gamma^t) \len(\gamma^s)$ from both sides, and recalling that $\len(\gamma^p) = \ell_p(x)$ for $p=t,s$, we readily obtain  (\ref{eq:ell-Lipschitz}). In particular, using $t=s$, the above argument recovers the last assertion of (1) that $\len(\gamma)$ is the same for all $\gamma \in G_\varphi$ so that $\gamma_t = x$. 

\smallskip
To see (3), recall that given $x \in X$, we know by Proposition \ref{prop:duality-char} that $\varphi_t(x) \leq \varphic_t(x)$ for all $t \in (0,1)$ with equality iff $t \in \mathring{G}_\varphi(x)$. Since $\mathring{G}_\varphi(x) \subset D_\ell(x) \cap D_{\ellc}(x)$ by (1), we know that both maps $t \mapsto \tilde{\varphi}_t(x)$ are differentiable at $t_0 \in \mathring{G}_\varphi(x)$, and we see again that $\frac{\ell^2_{t_0}(x)}{2} = \partial_t \varphi_{t_0}(x) = \partial_t \varphic_{t_0}(x) = \frac{\ellc^2_{t_0}(x)}{2}$, since the derivatives of a function and its majorant must coincide at a mutual point of differentiability where they touch. Moreover, defining $\tilde{h} = h,\bar{h}$ as:
\[
 \tilde{h}(\eps) := 2\brac{\tilde{\varphi}_{t_0+\eps}(x) - \tilde{\varphi}_{t_0}(x) - \eps \partial_t \tilde{\varphi}_{t_0}(x)} ,
\]
it follows that $h \leq \bar{h}$ (on $(-t_0,1-t_0)$). Diving by $\eps^2$ and taking appropriate subsequential limits, we obviously obtain:
\[
\underline{\P}_2 \varphi_t(x)  \leq \underline{\P}_2 \varphic_t(x) ~,~ \overline{\P}_2 \varphi_t(x)  \leq \overline{\P}_2 \varphic_t(x) .
\]
Combining these inequalities with those of Lemma \ref{lem:peano-inq}, (\ref{eq:4a}) and (\ref{eq:4b}), the chain of inequalities in (3) readily follows. 

\smallskip
To see (4), let $t_0 \in \mathring{G}_\varphi(x)$. Consider the function $f(t) := \varphic_t(x) - \varphi_t(x)$ on $(0,1)$, which is locally semi-concave by Corollary \ref{cor:AGS}. By Proposition \ref{prop:duality-char}, we know that $f \geq 0$ with $f(t_0) = 0$. The function $f$ is differentiable on $D_{\ell}(x) \cap D_{\ellc}(x)$ and satisfies $f'(t) = \frac{\ellc^2_{t}(x)}{2} - \frac{\ell^2_{t}(x)}{2}$ there. In particular, this holds at $t_0 \in \mathring{G}_\varphi(x) \subset D_{\ell}(x) \cap D_{\ellc}(x)$ by (1) and $f'(t_0) = 0$. Note that by Corollary \ref{cor:AGS}:
\[
\overline{\partial}_t f'(t) \leq \overline{\partial}_t \frac{\ellc^2_{t}(x)}{2} - \underline{\partial}_t \frac{\ell^2_{t}(x)}{2} \leq \frac{1}{1-t} \ellc_t^2(x) + \frac{1}{t} \ell_t^2(x) .
\]
In particular, since both $D_{\tilde{\ell}}(x) \ni t \mapsto \tilde{\ell}_t(x)$ are continuous at $t = t_0 \in D_{\ell}(x) \cap D_{\ellc}(x)$, for $\tilde{\ell} = \ell,\ellc$, it follows that:
\[
\forall \eps > 0 \;\; \exists \delta > 0 \;\; \forall t \in (t_0-\delta,t_0 + \delta) \cap D_{\ell}(x) \cap D_{\ellc}(x) \;\;\; \overline{\partial}_t f'(t) \leq \frac{1}{1-t_0} \ell_{t_0}^2(x) + \frac{1}{t_0} \ell_{t_0}^2(x) + \eps .
\]
It follows that on the open interval $I_\delta := (t_0-\delta,t_0 + \delta) \cap (0,1)$, $f - C_\eps \frac{t^2}{2}$ is concave with $C_\eps$ defined as the constant on the right-hand-side above. Applying Lemma \ref{lem:positive-semi-concave} below to the translated function $f(\cdot + t_0)$ on the interval $I_\delta - t_0$, it follows that:
\[
\frac{1}{t-t_0} \brac{\frac{\ellc^2_{t}(x)}{2} - \frac{\ell^2_{t}(x)}{2}} = \frac{f'(t) - f'(0)}{t-t_0} \geq -C_\eps \;\;\; \forall t \in (t_0 - \frac{\delta}{2},t_0 + \frac{\delta}{2}) \cap D_{\ell}(x) \cap D_{\ellc}(x) .
\]
As $\ellc_{t_0}(x) = \ell_{t_0}(x)$ by (1), we obtain:
\[
\frac{\frac{\ell^2_{t}(x)}{2} - \frac{\ell^2_{t_0}(x)}{2}}{t-t_0} \leq \frac{\frac{\ellc^2_{t}(x)}{2} - \frac{\ellc^2_{t_0}(x)}{2}}{t-t_0} + C_{\eps} \;\;\; \forall t \in (t_0 - \frac{\delta}{2},t_0 + \frac{\delta}{2}) \cap D_{\ell}(x) \cap D_{\ellc}(x) .
\]
The assertion of (4) now follows by taking appropriate subsequential limits as $t \rightarrow t_0$ and using the fact that $\eps > 0$ was arbitrary. 
\end{proof}

\begin{lemma} \label{lem:positive-semi-concave}
Given $I \subset \Real$ an open interval containing $0$, let $f : I \rightarrow \Real$ denote a $C$-semi-concave function, so that $I \ni t \mapsto f - C \frac{t^2}{2}$ is concave, $C \geq 0$. Assume that $f \geq 0$ on $I$, that $f$ is differentiable at $0$ and that $f(0) = f'(0) = 0$. Then $\underline{\partial_t}|_{t=0} f'(t) \geq -C$, and moreover, $\frac{f'(t)}{t} \geq -C$ for all $t \in D \cap I/2$, where $D \subset I$ denotes the subset (of full measure) of differentiability points of $f$.
\end{lemma}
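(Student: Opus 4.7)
The plan is to introduce the concave auxiliary $g(t) := f(t) - Ct^2/2$ and read off both conclusions from concavity combined with the hypothesis $f \geq 0$.

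First, since $f$ is differentiable at $0$ with $f(0) = f'(0) = 0$, so is $g$ with $g(0) = g'(0) = 0$, and concavity then forces $g$ to lie below its horizontal tangent at the origin. Thus $g \leq 0$ on $I$, i.e.\ I obtain the two-sided squeeze
\[
0 \;\leq\; f(t) \;\leq\; \tfrac{C}{2}\,t^2 \qquad \forall\, t \in I.
\]
This already uses the sign hypothesis $f \geq 0$ together with semi-concavity in one shot and will be the key input for the lower bound on $f'$.

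Next, at any $t \in D$ the differential form of concavity for $g$ (a concave function lies below its tangent at an interior differentiability point) translates back into the familiar semi-concavity estimate
\[
f(s) \;\leq\; f(t) + f'(t)(s-t) + \tfrac{C}{2}(s-t)^2 \qquad \forall\, s \in I.
\]
I would specialize this at $s = 2t$; this is the unique place where the restriction $t \in I/2$ is used, since it guarantees $2t \in I$. Using $f(2t) \geq 0$ on the left-hand side gives $0 \leq f(t) + t f'(t) + \tfrac{C}{2}t^2$, i.e.\ $t f'(t) \geq -f(t) - \tfrac{C}{2}t^2$, and inserting the bound $f(t) \leq Ct^2/2$ from the first step yields
\[
t\, f'(t) \;\geq\; -C\, t^2 \qquad \forall\, t \in D \cap I/2.
\]
Dividing by $t$ produces $f'(t)/t \geq -C$ in both signs: for $t > 0$ the inequality is preserved, while for $t < 0$ the division flips the inequality but the right-hand side $-Ct^2$ carries a matching extra factor of $t$, so the flip is absorbed.

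Finally, the lower derivative estimate at the origin drops out of the pointwise bound. Since $0$ is an accumulation point of $D$ (the complement $I \setminus D$ being at most countable by standard regularity of semi-concave functions) and $f'(0) = 0$,
\[
\underline{\partial_t}\big|_{t=0} f'(t) \;=\; \liminf_{D \ni t \to 0} \frac{f'(t) - f'(0)}{t-0} \;=\; \liminf_{D \ni t \to 0} \frac{f'(t)}{t} \;\geq\; -C.
\]
There is no substantial obstacle in this argument; the only mildly delicate points are the sign bookkeeping when dividing $tf'(t) \geq -Ct^2$ by $t$ of either sign and ensuring that the evaluation point $s = 2t$ remains in $I$, which is exactly the role of the hypothesis $t \in D \cap I/2$ in the statement.
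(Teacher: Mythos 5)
Your proof is correct and follows essentially the same route as the paper: both arguments compare the values at $0$, $t$ and $2t$, using the bound $f(t) \leq C t^2/2$ together with the estimate $f(2t) \leq f(t) + t f'(t) + \frac{C}{2} t^2$ and the hypothesis $f(2t) \geq 0$ to conclude $t f'(t) \geq -C t^2$. The only (cosmetic) difference is that you obtain these two inequalities from supporting-tangent inequalities for the concave function $f - C t^2/2$ at $0$ and at $t$, whereas the paper derives them by integrating the non-increasing function $f' - Ct$ over $[0,t]$ and $[t,2t]$.
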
 
Note that the $C$-semi-concavity is equivalent to $\overline{\partial}_t|_{t=0} f'(t) \leq C$, while the conclusion is from the opposite direction. It is not hard to verify that the asserted lower bound is in fact best possible. 
\begin{proof}[Proof of Lemma \ref{lem:positive-semi-concave}]
Set $g = f'$ on $D$. The $C$-semi-concavity is equivalent to the statement that $g(t) - C t$ is non-increasing on $D$, so that $g(t_2) \leq g(t_1) + C (t_2 - t_1)$ for all $t_1,t_2 \in D$ with $t_1 < t_2$. It follows that necessarily $g(t) \geq -C t$ for all $t \in D \cap I/2$ with $t \geq 0$, since:  
\[
0 \leq f(2t) - f(0) = \int_0^{2t} g(s) ds \leq \int_0^t (g(0) + C s) + \int_t^{2t} (g(t) + C(s-t)) ds = C \frac{t^2}{2} + t g(t) + C \frac{t^2}{2} .
\]
Repeating the same argument for $t \mapsto f(-t)$, we see that $-g(t) \geq C t$ for all $t \in D \cap I/2$ with $t \leq 0$. This concludes the proof. 
\end{proof}

\bigskip

\medskip

In a sense, Theorem \ref{thm:order12-main} (2) is the temporal analogue of the spatial $1/2$-H\"{o}lder regularity proved by Villani in \cite[Theorem 8.22]{Vil}. 
Formally taking $s \rightarrow t$ in (\ref{eq:ell-Lipschitz}), it is easy to check that one obtains (for both $\tilde{\ell} = \ell,\ellc$) stronger bounds than in Theorem \ref{thm:order12-main} (3) and (4):
\begin{equation} \label{eq:formal-3}
-\frac{1}{t} \ell^2_t(x) \leq \underline{\partial_t} \frac{\tilde{\ell}^2_t(x)|_{\mathring{G}_\varphi(x)}}{2} \leq \overline{\partial_t} \frac{\tilde{\ell}^2_t(x)|_{\mathring{G}_\varphi(x)}}{2} \leq \frac{1}{1-t} \ell^2_t(x) \;\;\; \forall t \in \mathring{G}_\varphi(x)  .  
\end{equation}
However, we do not know how to rigorously pass from (\ref{eq:ell-Lipschitz}) to (\ref{eq:formal-3}) or vice versa (by differentiation or integration, respectively), 
since we cannot exclude the possibility that the (relatively closed in $(0,1)$) set $\mathring{G}_\varphi(x)$ has isolated points, nor that it is disconnected. 
Instead, we can obtain the following stronger version of (\ref{eq:formal-3}) which only holds for a.e. $t \in \mathring{G}_\varphi(x)$, but will prove to be very useful later on.

\begin{corollary} \label{cor:accum1}
For all $x \in X$, for a.e. $t \in \mathring G_\varphi(x)$, $\partial_t \ell^2_t(x)$ and $\partial_t \ellc^2_t(x)$ exist, coincide, and satisfy:
\begin{equation} \label{eq:accumulation1}
-\frac{1}{t} \ell^2_t(x) \leq \partial_t \frac{\ell^2_t(x)}{2} = \partial_t \frac{\ell^2_t(x)|_{\mathring{G}_\varphi(x)}}{2} = \partial_t \frac{\ellc^2_t(x)|_{\mathring{G}_\varphi(x)}}{2} 
= \partial_t \frac{\ellc^2_t(x)}{2}  \leq \frac{1}{1-t} \ell^2_t(x)  .
\end{equation}
\end{corollary}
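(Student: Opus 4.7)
The plan is to reduce the statement to the $\L^1$-a.e.\ differentiability already established in Corollary~\ref{cor:AGS}(4), combined with the restriction-invariance of the generalized derivative from Remark~\ref{R:diff-restriction} and the pointwise identification $\ell_t(x)=\ellc_t(x)$ on $\mathring G_\varphi(x)$ from Theorem~\ref{thm:order12-main}(1). No new analytic input is needed; everything is a careful bookkeeping of the sets of exceptional points.

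First, by Corollary~\ref{cor:AGS}(4a)--(4b), the maps $D_\ell(x)\ni t\mapsto \ell^2_t(x)$ and $D_{\ellc}(x)\ni t\mapsto \ellc^2_t(x)$ are each differentiable $\L^1$-a.e.\ (in the generalized sense of Subsection~\ref{subsec:prelim-derivatives}) on their respective domains, and at every such differentiability point $t_0$ the one-sided bounds
\[
-\tfrac{1}{t_0}\ell^2_{t_0}(x)\;\leq\;\partial_t\tfrac{\ell^2_{t_0}(x)}{2}\qquad\text{and}\qquad \partial_t\tfrac{\ellc^2_{t_0}(x)}{2}\;\leq\;\tfrac{1}{1-t_0}\ellc^2_{t_0}(x)
\]
hold. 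Since $\mathring G_\varphi(x)\subset D_\ell(x)\cap D_{\ellc}(x)$ by Theorem~\ref{thm:order12-main}(1), any Lebesgue-null exceptional subset of $D_\ell(x)$ or $D_{\ellc}(x)$ remains Lebesgue-null when intersected with $\mathring G_\varphi(x)$; therefore both $\partial_t\ell^2_{t}(x)$ and $\partial_t\ellc^2_{t}(x)$ exist at $\L^1$-a.e.\ $t\in\mathring G_\varphi(x)$, and at such points the displayed bounds apply (with $\ellc^2_{t_0}(x)=\ell^2_{t_0}(x)$ thanks again to Theorem~\ref{thm:order12-main}(1)).

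Second, by Lebesgue's density theorem (cf.\ Remark~\ref{R:differentiabilitydensity}) $\L^1$-a.e.\ $t_0\in\mathring G_\varphi(x)$ is a density-one (hence accumulation) point of $\mathring G_\varphi(x)$. Intersecting this full-measure subset with the one from the previous step yields a set $E\subset\mathring G_\varphi(x)$ of full Lebesgue measure on which \emph{simultaneously}: $t_0$ is a density point of $\mathring G_\varphi(x)$, and the maps $\ell^2_t(x)$ on $D_\ell(x)$ and $\ellc^2_t(x)$ on $D_{\ellc}(x)$ are both differentiable at $t_0$. For any such $t_0\in E$, Remark~\ref{R:diff-restriction} transports differentiability to the restrictions, giving
\[
\partial_t\tfrac{\ell^2_{t_0}(x)|_{\mathring G_\varphi(x)}}{2}\;=\;\partial_t\tfrac{\ell^2_{t_0}(x)}{2}\qquad\text{and}\qquad\partial_t\tfrac{\ellc^2_{t_0}(x)|_{\mathring G_\varphi(x)}}{2}\;=\;\partial_t\tfrac{\ellc^2_{t_0}(x)}{2}.
\]
Since $\ell_t(x)=\ellc_t(x)$ for every $t\in\mathring G_\varphi(x)$ by Theorem~\ref{thm:order12-main}(1), the two restricted functions coincide identically on $\mathring G_\varphi(x)$, so their derivatives at $t_0$ coincide as well. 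Chaining these four equalities and combining with the one-sided bounds from the first step produces the full chain~\eqref{eq:accumulation1}.

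The only minor obstacle is keeping track of which derivative is taken on which domain: a subset of $\L^1$-measure zero of $D_\ell(x)$ could conceivably carry $\mathring G_\varphi(x)$-mass, but this is ruled out because $\mathring G_\varphi(x)$ is a Borel subset of the real line and all ``a.e.'' statements are with respect to the ambient Lebesgue measure. Hence no coarea-type argument or additional regularity of $\mathring G_\varphi(x)$ is required, and the result follows purely from the first- and second-order information already in hand.
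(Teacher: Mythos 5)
Your argument is correct and follows essentially the same route as the paper's proof: a.e.\ differentiability from Corollary \ref{cor:AGS}, transfer to the restrictions via Remark \ref{R:diff-restriction} (the paper relies on accumulation points rather than Lebesgue density, but this is immaterial), and the identification $\ell_t(x)=\ellc_t(x)$ on $\mathring G_\varphi(x)$ from Theorem \ref{thm:order12-main}(1) for the middle equality. The only cosmetic difference is that you obtain the two-sided bounds directly from Corollary \ref{cor:AGS}(4a)--(4b), whereas the paper cites Theorem \ref{thm:order12-main}(3), which encapsulates exactly those estimates.
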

\begin{proof}
By Corollary \ref{cor:AGS}, for all $x \in X$ and $\tilde{\ell} = \ell,\ellc$, $t \mapsto \tilde{\ell}^2_t(x)$ is differentiable a.e. on $D_{\tilde{\ell}}(x)$. Consequently, the first and third equalities in (\ref{eq:accumulation1}) follow for a.e. $t \in \mathring G_\varphi(x) \subset D_{\ell}(x) \cap D_{\ellc}(x)$  by Remark \ref{R:diff-restriction}. The second equality follows since $\ell_t(x) = \ellc_t(x)$ for $t \in \mathring G_\varphi(x)$ by Theorem \ref{thm:order12-main}. 
The lower and upper bounds in (\ref{eq:accumulation1}) then follow from Theorem \ref{thm:order12-main} (3) (or as in (\ref{eq:formal-3}), by taking the limit as $s \rightarrow t$ in Theorem \ref{thm:order12-main} (2)). 
\end{proof}

\subsection{Null-Geodesics}

\begin{definition}[Null-Geodesics and Null-Geodesic Points]
Given a Kantorovich potential $\varphi : X \rightarrow \Real$, denote the subset of null $\varphi$-Kantorovich geodesics by:
\[
G_\varphi^0 := \set{ \gamma \in G_\varphi \; ; \; \len(\gamma) = 0 } .
\]
Its complement in $G_\varphi$ will be denoted by $G_\varphi^+$. The subset of $X$ of null $\varphi$-Kantorovich geodesic points is denoted by:
\[
X^0 := \set{ x \in X \; ; \; \exists \gamma \in G_\varphi^0 \;\; \gamma \equiv x} = \set{x \in X \; ; \; \varphi(x) + \varphi^c(x) = 0 } .
\]
Its complement in $X$ will be denoted by $X^+$. 
\end{definition}

The following provides a convenient equivalent characterization of $X^0$ and $X^+$:
\begin{lemma} \label{lem:X0}
Given $x \in X$, the following statements are equivalent:
\begin{enumerate}
\item $x \in X^0$, i.e. $\varphi(x) + \varphi^c(x) = 0$. 
\item $\forall t \in (0,1)$, $\varphi_t(x) = \varphic_t(x) = \varphi(x) = -\varphi^c(x)$.
\item $\forall t \in (0,1)$, $\varphi_t(x) = c$ and $\varphic_t(x) = \bar{c}$ for some $c,\bar{c} \in \Real$.  
\item $D_{\ell}(x) = D_{\ellc}(x) = (0,1)$ and $\;\forall t \in (0,1) \;\; \ell_t(x) = \ellc_t(x) = 0$. 
\item $\exists t_0 \in \mathring{G}_\varphi(x)$ so that $\varphi_{t_0}(x) = \varphi(x)$ or $\varphic_{t_0}(x) = \varphi(x)$ or $\varphi_{t_0}(x) = -\varphi^c(x)$ or $\varphic_{t_0}(x) = -\varphi^c(x)$. 
\item $\exists t_0 \in \mathring{G}_\varphi(x)$ so that $\ell_{t_0}^{-}(x) = 0$ or $\ell_{t_0}^{+}(x) = 0$ or $\ellc_{t_0}^{-}(x) = 0$ or $\ellc_{t_0}^+(x) = 0$. 
\end{enumerate}
In other words, we have the following dichotomy: \textbf{all} $\varphi$-Kantorovich geodesics having $x \in X$ as \textbf{some} interior mid-point have either strictly positive length (iff $x \in X^+$) or zero length (iff $x \in X^0$).  
\end{lemma}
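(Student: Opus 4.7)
The plan is to establish two cycles of implications, $(1)\Rightarrow(2)\Rightarrow(3)\Rightarrow(4)\Rightarrow(1)$ and $(1)\Rightarrow(5)\Rightarrow(6)\Rightarrow(1)$, and then to read off the dichotomy from the combined equivalence together with Theorem~\ref{thm:order12-main}~(1). For $(1)\Rightarrow(2)$, I observe that $\varphi(x)+\varphi^c(x)=0$ says exactly that the constant path $\gamma\equiv x$ is a null $\varphi$-Kantorovich geodesic; applying Lemma~\ref{lem:varphi-linear} to this $\gamma$ and to its time-reversal $\gamma^c\in G_{\varphi^c}$ (both of length~$0$) forces $\varphi_t(x)=\varphi(x)$ and $(\varphi^c)_{1-t}(x)=\varphi^c(x)$, i.e.\ $\varphic_t(x)=-\varphi^c(x)$, for every $t\in[0,1]$. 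Step $(2)\Rightarrow(3)$ is tautological. For $(3)\Rightarrow(4)$, constancy of $t\mapsto\varphi_t(x)$ and $t\mapsto\varphic_t(x)$ on $(0,1)$ combined with the identities $\partial_t^{\pm}\varphi_t(x)=\ell_t^{\pm}(x)^2/2$ and $\partial_t^{\pm}\varphic_t(x)=\ellc_t^{\mp}(x)^2/2$ from Corollary~\ref{cor:AGS}~(3) instantly gives $\ell_t^{\pm}(x)=\ellc_t^{\pm}(x)=0$ throughout $(0,1)$. For $(4)\Rightarrow(1)$, vanishing length functions make $t\mapsto\varphi_t(x)$ and $t\mapsto\varphic_t(x)$ constant on $(0,1)$; continuity up to the endpoints (Lemma~\ref{lem:order0} and Corollary~\ref{cor:varphic-order0}) identifies these constants with $-\varphi^c(x)=\varphi_1(x)$ and $\varphi(x)=\varphic_0(x)$ respectively, while $\varphi_t(x)\leq\varphic_t(x)$ (Proposition~\ref{prop:duality-char}~(2)) yields $-\varphi^c(x)\leq\varphi(x)$; the trivial reverse bound $\varphi^c(x)\leq-\varphi(x)$ (take $y=x$ in the definition of $\varphi^c$) then delivers $(1)$.

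For $(1)\Rightarrow(5)$, property $(2)$ and the fact that $\gamma\equiv x\in G_\varphi$ give $\mathring G_\varphi(x)=(0,1)$ and all four equalities in $(5)$ at any $t_0$. The step $(5)\Rightarrow(6)$ splits into four symmetric sub-cases handled by monotonicity of $t\mapsto\varphi_t(x)$ and $t\mapsto\varphic_t(x)$: for instance, if $\varphi_{t_0}(x)=\varphi(x)=\varphi_0(x)$, monotonicity forces $\varphi_t(x)\equiv\varphi(x)$ on $[0,t_0]$, hence $\partial_t^{-}\varphi_{t_0}(x)=0$ and Corollary~\ref{cor:AGS}~(3) yields $\ell_{t_0}^{-}(x)=0$; the case $\varphi_{t_0}(x)=-\varphi^c(x)$ similarly gives $\ell_{t_0}^{+}(x)=0$, while the cases involving $\varphic$ use $\partial_t^{\pm}\varphic_t=(\ellc_t^{\mp})^2/2$ to yield $\ellc_{t_0}^{+}(x)=0$ or $\ellc_{t_0}^{-}(x)=0$.

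The decisive step is $(6)\Rightarrow(1)$. Assume, say, $\ell_{t_0}^{-}(x)=0$ for some $t_0\in\mathring G_\varphi(x)$, the other three sub-cases being analogous. Lemma~\ref{lem:compactness} furnishes a minimizer $y^-$ realizing $-\varphi_{t_0}(x)=\sfd(x,y^-)^2/(2t_0)-\varphi(y^-)$ with $\sfd(x,y^-)=t_0\ell_{t_0}^{-}(x)=0$, so $y^-=x$, and likewise a minimizer $z^-$ realizing $\varphic_{t_0}(x)=\sfd(x,z^-)^2/(2(1-t_0))-\varphi^c(z^-)$. Because $t_0\in\mathring G_\varphi(x)$, Proposition~\ref{prop:duality-char}~(3) gives $\varphi_{t_0}(x)=\varphic_{t_0}(x)$, and substituting the two explicit realizations places us in precisely the hypothesis of Lemma~\ref{lem:rigidity}; its rigidity conclusion gives $\sfd(x,z^-)=(1-t_0)\sfd(y^-,z^-)$, which combined with $y^-=x$ forces $z^-=x$ as well. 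Plugging $y^-=z^-=x$ back into the two explicit formulas yields $\varphi_{t_0}(x)=\varphi(x)$ and $\varphic_{t_0}(x)=-\varphi^c(x)$, whose equality is exactly $(1)$. The dichotomy then follows immediately: any $\gamma\in G_\varphi$ with $x=\gamma_{t_0}$ for some $t_0\in(0,1)$ has $\len(\gamma)=\ell_{t_0}(x)$ by Theorem~\ref{thm:order12-main}~(1), and this common value vanishes exactly when $x\in X^0$ (by $(4)$) and is strictly positive exactly when $x\in X^+$. The main obstacle is this final implication, where the intermediate-time duality $\varphi_{t_0}(x)=\varphic_{t_0}(x)$ encoded by $t_0\in\mathring G_\varphi(x)$ is essential in order to convert vanishing of a single one-sided length function at a single interior time into the global identity $\varphi(x)+\varphi^c(x)=0$.
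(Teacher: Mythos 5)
Your proposal is correct, and most of the implications ($(1)\Rightarrow(2)\Rightarrow(3)$, the identification of the constants via one-sided continuity, and $(5)\Rightarrow(6)$ by monotonicity) run exactly as in the paper; the genuine difference is how you close the loop from statement (6). The paper proves $(6)\Rightarrow(3)$: using the monotonicity of $t\mapsto t\,\ell^{\pm}_t(x)$ and $t\mapsto (1-t)\,\ellc^{\pm}_t(x)$ from Corollary~\ref{cor:AGS}~(4) (together with Remark~\ref{rem:X0}, which upgrades the vanishing of one of the four quantities at $t_0\in\mathring G_\varphi(x)$ to the vanishing of all of them), it propagates $\ell^{\pm}_t(x)=0$ to $(0,t_0]$ and $\ellc^{\pm}_t(x)=0$ to $[t_0,1)$, so $\varphi_t(x)$ is constant on $(0,t_0]$ and $\varphic_t(x)$ on $[t_0,1)$, and the sandwich $\varphi_t\leq\varphic_t$ with equality at $t_0$ forces both to be constant on all of $(0,1)$. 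You instead prove $(6)\Rightarrow(1)$ directly by re-running the compactness-plus-rigidity mechanism (Lemma~\ref{lem:compactness} and Lemma~\ref{lem:rigidity}, keyed to the duality $\varphi_{t_0}(x)=\varphic_{t_0}(x)$ from Proposition~\ref{prop:duality-char}~(3)): the vanishing of the relevant $D^{\pm}$ collapses one minimizer to $x$, rigidity collapses the dual one too, and plugging back gives $\varphi(x)=\varphi_{t_0}(x)=\varphic_{t_0}(x)=-\varphi^c(x)$. This is essentially the argument behind Theorem~\ref{thm:order12-main}~(1) specialized to the degenerate case; indeed, granted that theorem (or Remark~\ref{rem:X0}), one could shortcut even further, since any $\gamma\in G_\varphi$ with $\gamma_{t_0}=x$ then has $\len(\gamma)=\ell_{t_0}(x)=0$, so $\gamma\equiv x$ and $x\in X^0$ by definition. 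What each route buys: yours is purely local in time, using only data at the single instant $t_0$ and machinery already established, and is arguably shorter; the paper's route avoids re-invoking the minimizer/rigidity argument and delivers the constancy statement (3) as an intermediate product. Your handling of the endpoints in $(4)\Rightarrow(1)$ (using the trivial bound $\varphi^c(x)\leq-\varphi(x)$ instead of the paper's monotonicity $\varphi(x)\leq\varphi_1(x)$) is an equally valid minor variant.
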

\begin{remark} \label{rem:X0}
In fact, we always have $\varphi_t(x) = \varphic_t(x)$ and $\ell_t(x) = \ellc_t(x)$ for $t \in \mathring{G}_\varphi(x)  \subset D_{\ell}(x) \cap D_{\ellc}(x)$ by Theorem \ref{thm:order12-main}, so we may simply write ``$\varphi_{t_0}(x) = \varphi(x)$ or $\varphi_{t_0}(x)= -\varphi^c(x)$" and ``$\ell_{t_0}(x) = \ellc_{t_0}(x) = 0$" in statements (5) and (6), respectively. However, we chose to formulate these statements with the (a-priori) minimal requirements. 
\end{remark}
\begin{proof}[Proof of Lemma \ref{lem:X0}]
$(1) \Rightarrow (2)$ is straightforward: for instance, $(1)$ is by definition identical to  $\varphi_1(x) = \varphi_0(x)$ and (2) follows by the monotonicity of $[0,1] \ni t \mapsto \tilde{\varphi}_t(x)$ for both $\tilde{\varphi} = \varphi,\varphic$; alternatively, apply Lemma \ref{lem:varphi-linear} to the null geodesic $\gamma^0 \equiv x$ with respect to both Kantorovich potentials $\varphi$ and $\varphi^c$. \\
$(2) \Rightarrow (3)$ is trivial. \\
$(3) \Leftrightarrow (4)$ follows by using that $D_{\tilde{\ell}}(x)$ is characterized as the subset of $t$-differentiability points of $\varphi_t(x)$ on $(0,1)$ with $\partial_t \tilde{\varphi}_t(x) = \tilde{\ell}_t^2(x)/2$ there. \\
$(3) \Rightarrow (1)$:  by the continuity of $t \mapsto\varphi_t(x)$ from the left at $t=1$ it follows that $c = \varphi_1(x)$, and similarly the continuity of $t \mapsto\varphic_t(x)$ from the right at $t=0$ yields that $\bar{c} = \varphic_0(x) = \varphi(x)$. Since always $\varphi \leq \varphic$, we deduce $\varphi_1(x) = c \leq \bar{c} = \varphi(x)$. On the other hand, we always have $\varphi(x) \leq \varphi_1(x)$ by monotonicity, so we conclude that $\varphi(x) = \varphi_1(x)$, establishing statement (1). This concludes the proof of the equivalence $(1) \Leftrightarrow (2) \Leftrightarrow (3) \Leftrightarrow (4)$. \\
$(2) \Rightarrow (5)$ and $(4) \Rightarrow (6)$ are trivial. \\
$(5) \Rightarrow (6)$ is straightforward: for instance, if $\tilde{\varphi}_{t_0}(x) =  \tilde{\varphi}_0(x) = \varphi(x)$ for some $t_0 \in (0,1)$ and $\tilde{\varphi} \in \set{\varphi,\varphic}$, then by monotonicity, $\tilde{\varphi}_t(x) = \varphi(x)$ for all $t \in [0,t_0]$, and hence the left derivative at $t=t_0$ satisfies $\ell^{-}_{t_0}(x) = \partial_t^{-}|_{t=t_0} \varphi_t(x) = 0$ if $\tilde{\varphi} = \varphi$ and $\ellc^{+}_{t_0}(x) = \partial_t^{-}|_{t=t_0} \varphic_t(x) = 0$ if $\tilde{\varphi} = \varphic$. If $\tilde{\varphi}_{t_0}(x) = \tilde{\varphi}_1(x) = -\varphi^c(x)$, repeat the argument using the right derivative. \\
The only direction requiring second-order information on $\varphi_t$ is $(6) \Rightarrow (3)$. By Corollary \ref{cor:AGS}, $t \mapsto t \ell_{t}^{\pm}(x)$ and $t \mapsto (1-t) \ellc_t^{\pm}(x)$ are monotone non-decreasing and non-increasing on $(0,1)$, respectively. Since $t_0 \in \mathring{G}_\varphi$, in view of Remark \ref{rem:X0}, (5) is equivalent to $\ell^{\pm}_{t_0}(x) = \ellc^{\pm}_{t_0}(x) = 0$. The monotonicity implies that $\ell_t^{\pm}(x) = 0$ for all $t \in (0,t_0]$ and that $\ellc_t^{\pm}(x) = 0$ for all $t \in [t_0,1)$. It follows that $\varphi_t(x)$ is constant on $(0,t_0]$ and $\varphic_t(x)$ is constant on $[t_0,1)$. As $\varphi_{t_0}(x) = \varphic_{t_0}(x)$, the monotonicity of $t \mapsto \tilde{\varphi}_t(x)$ and the majoration $\varphi_t \leq \varphic_t$ forces both $t \mapsto \varphi_t(x)$ and $t \mapsto \varphic_t(x)$ to be constant on $(0,1)$, establishing $(3)$ (in fact with $c = \bar{c}$). 
\end{proof}

\begin{corollary} \label{cor:X+}
If $x \in X^+$ then $\ell_t(x) > 0$ for all $t \in [\inf \mathring{G}_\varphi(x), 1) \cap D_{\ell}(x)$ and  $\ellc_t(x) > 0$ for all $t \in (0,\sup \mathring{G}_\varphi(x)] \cap D_{\ellc}(x)$.
\end{corollary}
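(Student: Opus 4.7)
The plan is to combine three ingredients: (i) the characterization in Lemma \ref{lem:X0}, whose condition (6) negated tells us that $\ell_s(x) > 0$ for every $s \in \mathring{G}_\varphi(x)$ as soon as $x \in X^+$; (ii) the monotonicity of $t \mapsto t \ell^{\pm}_t(x)$ provided by Corollary \ref{cor:AGS}(4a), which propagates strict positivity rightwards from any $s \in \mathring{G}_\varphi(x)$; and (iii) the closedness of $G_\varphi$ inside $\Geo(X)$, which I will use to rule out degeneration at the boundary point $t_* := \inf \mathring{G}_\varphi(x)$. Moreover, the involution $\gamma \mapsto \gamma^c = \gamma(1-\cdot)$ identifies $G_\varphi$ with $G_{\varphi^c}$, preserves length, swaps the roles of $\ell$ and $\ellc$ under the time-reversal $t \leftrightarrow 1-t$, and leaves $X^+$ invariant (since $X^0 = \set{\varphi + \varphi^c = 0}$ is symmetric in $\varphi$ and $\varphi^c$). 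Consequently, the second assertion follows from the first applied to $\varphi^c$, so I only need to establish the first one.

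For any $t \in (t_*, 1) \cap D_\ell(x)$, by definition of the infimum I pick $s \in \mathring{G}_\varphi(x) \cap [t_*, t)$, which lies in $D_\ell(x)$ by Theorem \ref{thm:order12-main}(1); ingredient (i) gives $\ell_s(x) > 0$, while (ii) yields $t \ell^{\pm}_t(x) \geq s \ell^{\pm}_s(x) = s \ell_s(x) > 0$. Hence $\ell^+_t(x), \ell^-_t(x) > 0$, and since $t \in D_\ell(x)$ we conclude $\ell_t(x) > 0$.

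The remaining endpoint case $t = t_* \in D_\ell(x)$ is where I expect the main obstacle to lie: when $t_* \notin \mathring{G}_\varphi(x)$, the monotonicity argument of the previous paragraph cannot transport strict positivity across the infimum, since no $s \in \mathring{G}_\varphi(x)$ is available with $s \leq t_*$. To circumvent this, I choose $t_n \in \mathring{G}_\varphi(x)$ with $t_n \searrow t_*$ and select $\gamma^n \in G_\varphi$ satisfying $\gamma^n_{t_n} = x$, so that $\ell_{t_n}(x) = \len(\gamma^n)$ by Theorem \ref{thm:order12-main}(1). Combining the monotonicity of $t \mapsto t \ell^+_t(x)$ with the upper semi-continuity of $D^+_{-\varphi}(x,\cdot)$ from Theorem \ref{thm:AGS}, the standard sandwich argument gives
\[
\lim_{n} t_n \len(\gamma^n) \;=\; t_* \ell^+_{t_*}(x) \;=\; t_* \ell_{t_*}(x) .
\]
Were this limit zero, then because $t_* > 0$ (as $D_\ell(x) \subset (0,1)$ by definition), we would obtain $\len(\gamma^n) \to 0$, and in turn $\sfd(\gamma^n_s, x) = |s - t_n| \len(\gamma^n) \leq \len(\gamma^n) \to 0$ uniformly in $s \in [0,1]$. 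Thus $\gamma^n$ would converge uniformly to the null geodesic at $x$, which by the closedness of $G_\varphi$ would itself belong to $G_\varphi$, forcing $x \in X^0$ and contradicting $x \in X^+$. Therefore $\ell_{t_*}(x) > 0$, completing the proof.
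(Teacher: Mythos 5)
Your proof is correct, and its core coincides with the paper's: strict positivity of $\ell^{\pm}_s(x)$ at every $s \in \mathring{G}_\varphi(x)$ via (the negation of) condition (6) in Lemma \ref{lem:X0}, propagated rightwards by the monotonicity of $t \mapsto t\,\ell^{\pm}_t(x)$ from Corollary \ref{cor:AGS}. Where you diverge is at the endpoint $t_* = \inf \mathring{G}_\varphi(x)$ and in the treatment of $\ellc$. The paper disposes of the endpoint by invoking Corollary \ref{cor:G-closed}: $\mathring{G}_\varphi(x)$ is relatively closed in $(0,1)$, so whenever $t_* \in D_\ell(x)$ (hence $t_* \in (0,1)$) one automatically has $t_* \in \mathring{G}_\varphi(x)$, and Lemma \ref{lem:X0} applies at $t_*$ directly; the scenario you worry about, $t_* \in D_\ell(x)$ but $t_* \notin \mathring{G}_\varphi(x)$, simply cannot occur. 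Your contradiction argument -- the sandwich between monotonicity and the upper semi-continuity of $D^+_{-\varphi}(x,\cdot)$ from Theorem \ref{thm:AGS}, forcing $\len(\gamma^n) \to 0$, uniform convergence of $\gamma^n$ to the constant curve at $x$, closedness of $G_\varphi$ in $\Geo(X)$, hence $x \in X^0$ -- is a correct in-place re-derivation of what Corollary \ref{cor:G-closed} gives for free; it costs a bit more work but rests only on Theorem \ref{thm:AGS} and the closedness of $G_\varphi$, rather than on the continuity of both $\varphi_t$ and $\varphic_t$. Similarly, for the second assertion the paper argues symmetrically with the monotonicity of $t \mapsto (1-t)\,\ellc^{\pm}_t(x)$, whereas you reduce to the first assertion for $\varphi^c$ via $\gamma \mapsto \gamma^c$; this is legitimate, since $\ellc^{\pm}_t(x)$ for $\varphi$ equals $\ell^{\pm}_{1-t}(x)$ for $\varphi^c$, $\mathring{G}_{\varphi^c}(x) = \{1-t : t \in \mathring{G}_\varphi(x)\}$, and $X^0 = \{\varphi + \varphi^c = 0\}$ is symmetric in $\varphi$ and $\varphi^c$. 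Both routes are valid; the paper's is shorter because Corollary \ref{cor:G-closed} was set up precisely for this purpose.
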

\begin{proof}
Immediate by (6) and the monotonicity of $D_{\ell}(x) \ni t \mapsto t \ell_t(x)$ and $D_{\ellc}(x) \ni t \mapsto (1-t) \ellc_t(x)$, together with the fact that $\mathring{G}_\varphi(x)$ is relatively closed in $(0,1)$ by Corollary \ref{cor:G-closed}. \end{proof}

\begin{corollary} \label{cor:X0}
Given $x \in X$, assume that $\exists t_1,t_2 \in \mathring{G}_\varphi(x)$ with $t_1 \neq t_2$. Then $x \in X^0$ iff $\varphi_{t_1}(x) = \varphi_{t_2}(x)$ (or equivalently, $\varphic_{t_1}(x) = \varphic_{t_2}(x)$). 
\end{corollary}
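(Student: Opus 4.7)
The equivalence of the two conditions on the right-hand side is immediate: since $t_1,t_2\in\mathring G_\varphi(x)$, Proposition \ref{prop:duality-char}~(3) gives $\varphi_{t_i}(x)=\varphic_{t_i}(x)$ for $i=1,2$, so $\varphi_{t_1}(x)=\varphi_{t_2}(x)$ iff $\varphic_{t_1}(x)=\varphic_{t_2}(x)$. So I only need to treat the $\varphi$-version.

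The forward direction is essentially trivial and follows directly from the characterization already packaged into Lemma \ref{lem:X0}: if $x\in X^0$, then by $(1)\Rightarrow(2)$ there, $\varphi_t(x)=\varphi(x)$ is constant on $[0,1]$, so in particular $\varphi_{t_1}(x)=\varphi_{t_2}(x)$.

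For the reverse direction, I would WLOG take $t_1<t_2$ and use the integral representation coming from Corollary~\ref{cor:AGS}~(3) (which is the first-order content of the theory developed in this section):
\[
0 \;=\; \varphi_{t_2}(x)-\varphi_{t_1}(x) \;=\; \int_{t_1}^{t_2}\frac{\ell_\tau^2(x)}{2}\,d\tau .
\]
Since $\ell_\tau^2(x)\ge 0$ and the set $D_\ell(x)$ has full measure in $(0,1)$ by Corollary~\ref{cor:AGS}~(3), this forces $\ell_\tau(x)=0$ for a.e.\ $\tau\in(t_1,t_2)\cap D_\ell(x)$; in particular such $\tau$ can be chosen arbitrarily close to $t_1$. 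Then I would invoke the monotonicity of $(0,1)\ni t\mapsto t\,\ell_t^+(x)$ from Corollary~\ref{cor:AGS}~(4a): for any such $\tau>t_1$ with $\ell_\tau(x)=0$ (equivalently $\ell_\tau^+(x)=0$ since $\tau\in D_\ell(x)$), monotonicity gives
\[
t_1\,\ell_{t_1}^+(x) \;\le\; \tau\,\ell_\tau^+(x) \;=\; 0,
\]
so $\ell_{t_1}^+(x)=0$. Since $t_1\in\mathring G_\varphi(x)$, this is precisely condition $(6)$ of Lemma \ref{lem:X0}, which yields $x\in X^0$.

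The only place that requires a little attention is making sure the a.e.\ vanishing of $\ell_\tau(x)$ can be transferred to a pointwise statement that matches one of the triggers in Lemma~\ref{lem:X0}; but the monotonicity of $t\mapsto t\,\ell_t^+(x)$ does exactly this job by squeezing $\ell_{t_1}^+(x)$ between $0$ and an approximating sequence of zeros from the right. I do not expect any genuine obstacle here, since all the heavy machinery---the identity $\partial_t\varphi_t(x)=\ell_t^2(x)/2$ at differentiability points, the a.e.\ differentiability on $D_\ell(x)$, and the monotonicity of $t\mapsto t\,\ell_t^+(x)$---has already been established in Corollary~\ref{cor:AGS}, and Lemma~\ref{lem:X0} provides the clean dichotomy that concludes the argument.
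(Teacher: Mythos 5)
Your proposal is correct and takes essentially the same route as the paper: the forward direction via Lemma \ref{lem:X0}, and the reverse direction via the integral identity $\varphi_{t_2}(x)-\varphi_{t_1}(x)=\int_{t_1}^{t_2}\frac{\ell_\tau^2(x)}{2}\,d\tau$ from Corollary \ref{cor:AGS}. The only cosmetic difference is that where the paper simply invokes Corollary \ref{cor:X+} (whose proof is precisely the monotonicity of $t\mapsto t\,\ell_t^{\pm}(x)$ combined with condition (6) of Lemma \ref{lem:X0}), you re-derive that step inline in contrapositive-free form; the substance is identical.
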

\begin{proof}
The ``only if" direction follows immediately by Lemma \ref{lem:X0}, whereas the ``if" direction follows by Corollary \ref{cor:X+}, after recalling that $\varphi_{t_2}(x) - \varphi_{t_1}(x) = \int_{t_1}^{t_2} \frac{\ell_\tau^2(x)}{2} d\tau$ by Corollary \ref{cor:AGS}. As usual, the equivalent condition follows by Theorem \ref{thm:order12-main}. 
\end{proof}

\bigskip

\section[Temporal Theory of Intermediate-Time Kantorovich Potentials.\\ Time-Propagation]{Temporal Theory of Intermediate-Time Kantorovich Potentials. Time-Propagation}
\label{S:Phi}
 
 The goal of this section is to introduce and study the following function(s):
 
 \begin{definition*}[Time-Propagated Intermediate Kantorovich Potentials]
 Given a Kantorovich potential $\varphi : X \rightarrow \Real$ and $s,t \in (0,1)$, define the $t$-propagated $s$-Kantorovich potential $\Phi_s^t$ on $D_{\ell}(t)$, and its time-reversed version $\Phic_s^t$ on $D_{\ellc}(t)$, by:
 \[
 \Phi_s^t := \varphi_t + (t-s) \frac{\ell_t^2}{2}  \text{ on $D_{\ell}(t)$} ~,~
 \Phic_s^t := \varphic_t + (t-s) \frac{\ellc_t^2}{2} \text{ on $D_{\ellc}(t)$} .
 \]
 \end{definition*}
 \noindent
 Observe that for all $s,t \in (0,1)$:
 \[
 \Phi_s^t = \Phic_s^t = \varphi_s \circ \ee_s \circ \ee_t^{-1}  \text{ on $\ee_t(G_\varphi)$} ;
 \]
 indeed, while $\ee_t^{-1} : \ee_t(G_\varphi) \rightarrow G_\varphi$ may be multi-valued, Theorem \ref{thm:order12-main} implies that $\len(\gamma) = \ell_t(x) = \ellc_t(x)$ for any $\gamma \in G_\varphi$ with $\gamma_t = x$, and consequently Lemma \ref{lem:varphi-linear} yields that $\varphi_s \circ \ee_s$ is single-valued for all such $\gamma$ and (also recalling Proposition \ref{prop:duality-char}):
  \[
 \Phi_s^t(\gamma_t) = \Phic_s^t(\gamma_t) = \varphi_s(\gamma_s) \;\;\; \forall \gamma \in G_{\varphi} .
 \]
 Consequently, on $\ee_t(G_\varphi)$, $\Phi_s^t=\Phic_s^t$ is identified as the push-forward of $\varphi_s$ via $\ee_t \circ \ee_s^{-1}$, i.e. its propagation along $G_\varphi$ from time $s$ to time $t$. 
 
\medskip

We will use the following short-hand notation. Given $s \in [0,1]$ and $a_s \in \Real$, we denote:
\[
G_{a_s} := \set{ \gamma \in G_\varphi \; ; \; \varphi_s(\gamma(s)) = a_s} ,
\]
suppressing the implicit dependence of $G_{a_s}$ on $s$. The above argument about why $\varphi_s \circ \ee_s \circ \ee_t^{-1}$ is well-defined can be rewritten as:

\begin{corollary}[Inter Level-Set Propagation] \label{cor:InterProp}
For all $s,t \in (0,1)$, $a_s, b_s \in \Real$, $a_s \neq b_s$, we have:
\[
\ee_t(G_\varphi) \cap \set{\Phi_s^t = a_s} \cap \set{\Phi_s^t = b_s} = \ee_{t}(G_{a_s}) \cap \ee_{t}(G_{b_s}) = \emptyset . 
\]
\end{corollary}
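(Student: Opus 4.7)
The plan is to deduce the corollary directly from the observation already recorded above it: on $\ee_t(G_\varphi)$ the function $\Phi_s^t$ coincides with the push-forward $\varphi_s \circ \ee_s \circ \ee_t^{-1}$, and this push-forward is genuinely single-valued despite the fact that $\ee_t^{-1}$ may be multi-valued. All three steps below are essentially unpackings of this fact, so no serious estimation is involved.

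First I would verify the set equality. For the inclusion $\supseteq$, take $x \in \ee_t(G_{a_s}) \cap \ee_t(G_{b_s})$ and pick $\gamma^a \in G_{a_s}$, $\gamma^b \in G_{b_s}$ with $\gamma^a_t = \gamma^b_t = x$. Since both geodesics lie in $G_\varphi$, we have $x \in \ee_t(G_\varphi)$; combining the identification $\Phi_s^t = \varphi_s \circ \ee_s \circ \ee_t^{-1}$ on $\ee_t(G_\varphi)$ with the definition of $G_{a_s}, G_{b_s}$ then yields $\Phi_s^t(x) = \varphi_s(\gamma^a_s) = a_s$ and $\Phi_s^t(x) = \varphi_s(\gamma^b_s) = b_s$. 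For $\subseteq$, take $x$ in the left-hand side and any $\gamma \in G_\varphi$ with $\gamma_t = x$; the well-definedness of the push-forward forces $\varphi_s(\gamma_s) = \Phi_s^t(x) = a_s$, hence $\gamma \in G_{a_s}$ and $x \in \ee_t(G_{a_s})$, and the symmetric argument gives $x \in \ee_t(G_{b_s})$.

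Second, I would observe that the common set must be empty when $a_s \neq b_s$: since $\Phi_s^t$ is a bona fide function, no point $x \in \ee_t(G_\varphi)$ can satisfy $\Phi_s^t(x) = a_s$ and $\Phi_s^t(x) = b_s$ simultaneously unless $a_s = b_s$. Equivalently, on the right-hand side, if both geodesics $\gamma^a, \gamma^b$ passed through $x$ at time $t$, the single-valuedness of $\varphi_s \circ \ee_s \circ \ee_t^{-1}$ forces $a_s = \varphi_s(\gamma^a_s) = \varphi_s(\gamma^b_s) = b_s$, again a contradiction.

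There is no genuine obstacle here; the entire content of the corollary has already been extracted in the paragraph preceding it. The nontrivial ingredient is the single-valuedness of $\varphi_s \circ \ee_s \circ \ee_t^{-1}$ on $\ee_t(G_\varphi)$, which relies on Theorem \ref{thm:order12-main} (all $\varphi$-Kantorovich geodesics through $x$ at time $t$ share the same length $\ell_t(x)$) together with Lemma \ref{lem:varphi-linear} (so $\varphi_s(\gamma_s)$ depends only on $\gamma_t=x$ and $\len(\gamma)$, not on the particular choice of $\gamma$). The proof of the corollary itself will therefore be three or four lines.
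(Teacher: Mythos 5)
Your proposal is correct and coincides with the paper's own treatment: the corollary is stated there precisely as a rewriting of the preceding observation that $\varphi_s \circ \ee_s \circ \ee_t^{-1}$ is single-valued on $\ee_t(G_\varphi)$, which rests exactly on Theorem \ref{thm:order12-main} (all $\gamma \in G_\varphi$ through $x$ at time $t$ have length $\ell_t(x)$) combined with Lemma \ref{lem:varphi-linear}. Your three-step unpacking (set equality via the identification $\Phi_s^t = \varphi_s \circ \ee_s \circ \ee_t^{-1}$ on $\ee_t(G_\varphi)$, then emptiness since $a_s \neq b_s$ and $\Phi_s^t$ is a genuine function) is exactly the intended argument.
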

\noindent
Note that while typically disjoint sets remain disjoint under optimal-transport only under some additional non-branching assumptions, Corollary \ref{cor:InterProp} holds true in general.

\subsection{Monotonicity} 
 
\begin{lemma} \label{lem:propagation}
Let $x = \gamma^1_{t_1} = \gamma^2_{t_2}$ with $\gamma^1,\gamma^2 \in G_\varphi$ and $0 < t_1 < t_2 < 1$. Then for any $s \in (0,1)$:
 \begin{equation} \label{eq:propagation}
 \varphi_s(\gamma^2_s) - \varphi_s(\gamma^1_s) \geq  2 \min\brac{\frac{s}{t_2},\frac{1-s}{1-t_1}}(\varphi_{t_2}(x) - \varphi_{t_1}(x)) \geq 0 .
 \end{equation}
 Moreover, the left-hand-side is in fact strictly positive iff $x \in X^+$.  
   \end{lemma}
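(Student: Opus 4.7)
The plan is to exploit the fact that $\varphi_s(\gamma^2_s) - \varphi_s(\gamma^1_s)$ is linear in $s$ while the right-hand side is a piecewise-linear tent function of $s$, reducing the inequality to three algebraic checks. By Lemma~\ref{lem:varphi-linear} applied with $r=0$ to each Kantorovich geodesic, $\varphi_s(\gamma^i_s) = \varphi(\gamma^i_0) - s\,\ell(\gamma^i)^2/2$. Setting $a := \ell(\gamma^1)$, $b := \ell(\gamma^2)$, and $D_0 := \varphi(\gamma^2_0) - \varphi(\gamma^1_0)$ (with $a = \ell_{t_1}(x)$ and $b = \ell_{t_2}(x)$ by Theorem~\ref{thm:order12-main}(1)), the left-hand side becomes the linear function $f(s) := D_0 - s(b^2-a^2)/2$. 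Writing $D := \varphi_{t_2}(x) - \varphi_{t_1}(x) = D_0 - (t_2 b^2 - t_1 a^2)/2$, the right-hand side is the tent function $g(s) := 2 \min(s/t_2, (1-s)/(1-t_1))\, D$ vanishing at $s = 0, 1$, with peak $2D/(1+t_2-t_1)$ attained at $s^* := t_2/(1+t_2-t_1) \in (t_1, t_2)$. Since $f-g$ is piecewise linear on each of $[0, s^*]$ and $[s^*, 1]$, the inequality $f \geq g$ on $(0,1)$ is equivalent to three conditions: (a) $D_0 \geq 0$, (b) $D_0 \geq (b^2-a^2)/2$, and (c) $D_0(1 + t_1 - t_2) \leq t_2(a^2+b^2)/2 - t_1 a^2$.

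For (a) and (b), I would combine the $c$-cyclical monotonicity inequality $\varphi(\gamma^1_0) + \varphi^c(\gamma^2_1) \leq \sfd(\gamma^1_0, \gamma^2_1)^2/2$ (together with the identity $\varphi^c(\gamma^2_1) = b^2/2 - \varphi(\gamma^2_0)$, a consequence of $\gamma^2 \in G_\varphi$) with the triangle inequality $\sfd(\gamma^1_0, \gamma^2_1) \leq t_1 a + (1-t_2) b$ through $x$. The resulting lower bound on $D_0$ factorizes as $D_0 \geq (t_2 b - t_1 a)(b(2-t_2) + t_1 a)/2$, whose two factors are manifestly non-negative by the monotonicity of $t \mapsto t\ell_t(x)$ (Corollary~\ref{cor:AGS}(4a)), yielding (a). Subtracting $(b^2-a^2)/2$ instead gives the factorization $D_0 - (b^2-a^2)/2 \geq ((1-t_1)a - (1-t_2)b)((1+t_1)a + (1-t_2)b)/2$, which is non-negative by the monotonicity of $t \mapsto (1-t)\ell_t(x)$ (Corollary~\ref{cor:AGS}(4b)), establishing (b).

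The main obstacle is (c). Here I would use the symmetric cyclical inequality $\varphi(\gamma^2_0) + \varphi^c(\gamma^1_1) \leq \sfd(\gamma^2_0, \gamma^1_1)^2/2$ to bound $D_0$ from above, coupled with the triangle bound $\sfd(\gamma^2_0, \gamma^1_1) \leq t_2 b + (1-t_1)a$. After substitution and expansion, (c) reduces to a quadratic-form inequality $A a^2 - M a b + B b^2 \geq 0$ with positive coefficients $A := (1-t_1)[t_1^2 + t_2(1-t_1)]$, $B := t_2[1 - t_2(1+t_1) + t_2^2]$, and $M := 2 t_2 (1-t_1)(1+t_1-t_2)$. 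A direct (if somewhat tedious) polynomial manipulation yields the remarkably clean identity $4AB - M^2 = 4 t_2 (1-t_1)(t_2-t_1)^2 \geq 0$, so AM-GM immediately delivers the quadratic inequality for all $a, b \geq 0$, in fact without any further constraint on their ratio. Finally, for the strict positivity dichotomy: if $x \in X^+$ then Corollary~\ref{cor:X+} forces $a, b > 0$ and Corollary~\ref{cor:AGS}(3) gives $D = \int_{t_1}^{t_2} \ell^2_\tau(x)/2 \, d\tau > 0$, so $g(s) > 0$ for every $s \in (0,1)$ and the left-hand side is strictly positive; conversely, if $x \in X^0$ then $\gamma^1 \equiv \gamma^2 \equiv x$ by Lemma~\ref{lem:X0}, making the left-hand side vanish identically.
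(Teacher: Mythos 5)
Your proof is correct, but it takes a genuinely different route from the paper's. The paper works at the fixed point $x$ and integrates the differential bounds (\ref{eq:4a}), (\ref{eq:4b}) for $\ell_\tau^2(x)$ and $\ellc_\tau^2(x)$ over $[\bar s,t_2]$ and $[t_1,\bar s]$ with $\bar s=(s\vee t_1)\wedge t_2$, then uses $\varphi_{\bar s}\leq\varphic_{\bar s}$ and the monotonicity of $\tau\mapsto\tilde\varphi_\tau(x)$ to produce the coefficient $2\min\brac{\frac{s}{t_2},\frac{1-s}{1-t_1}}$ directly. You instead observe that Lemma \ref{lem:varphi-linear} makes the left-hand side \emph{linear} in $s$ while the right-hand side is a tent, so the inequality on $(0,1)$ reduces to the two endpoint limits and the vertex $s^*=t_2/(1+t_2-t_1)$; these three algebraic conditions are then extracted from the two $c$-cyclical-monotonicity inequalities $\varphi(\gamma^i_0)+\varphi^c(\gamma^j_1)\leq\sfd(\gamma^i_0,\gamma^j_1)^2/2$ combined with the triangle inequality through $x$ (the same raw ingredients as in the proof of Theorem \ref{thm:order12-main}(2)), plus the monotonicity of $t\ell^\pm_t(x)$ and $(1-t)\ellc^\pm_t(x)$; I checked your factorizations for (a), (b), the reduction of (c) to $Aa^2-Mab+Bb^2\geq0$, and the identity $4AB-M^2=4t_2(1-t_1)(t_2-t_1)^2$, and they are all correct. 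What your route buys is an essentially algebraic, integration-free argument (and the amusing fact that condition (c) needs no constraint relating $a$ and $b$); what the paper's route buys is that the same integral computation is recycled verbatim for Proposition \ref{prop:Phi}(4)--(5), whereas your argument is tailored to the single inequality (\ref{eq:propagation}).

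Three cosmetic points you should patch: state explicitly that $D=\varphi_{t_2}(x)-\varphi_{t_1}(x)\geq0$ (monotonicity of $t\mapsto\varphi_t(x)$, Lemma \ref{lem:order0}(2), or Corollary \ref{cor:AGS}(3)), which is both the final "$\geq0$" of (\ref{eq:propagation}) and what makes $g$ a genuine tent; in (b) the monotone quantity is $(1-t)\ellc^\pm_t(x)$, so you need the identification $\ell_{t_i}(x)=\ellc_{t_i}(x)$ from Theorem \ref{thm:order12-main}(1) before invoking Corollary \ref{cor:AGS}(4b); and in the strict-positivity step, $D>0$ requires $\ell_\tau(x)>0$ for a.e.\ $\tau\in(t_1,t_2)$, which Corollary \ref{cor:X+} (or the monotonicity $\tau\ell_\tau(x)\geq t_1 a>0$) does give, rather than merely $a,b>0$ at the two endpoints.
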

\begin{proof}
We know by Lemma \ref{lem:varphi-linear} and Theorem \ref{thm:order12-main} that:
\[
\varphi_s(\gamma^i_s) = \varphi_{t_i}(\gamma^i_{t_i}) + (t_i - s) \frac{\len^2(\gamma^i)}{2} =  \varphi_{t_i}(x) + (t_i - s) \frac{\ell_{t_i}^2(x)}{2} ~,~ i=1,2 .  
\]
Recall that $\varphi_{t_i}(x) = \varphic_{t_i}(x)$ and $\ell_{t_i}(x) = \ellc_{t_i}(x)$ by Proposition \ref{prop:duality-char} and Theorem \ref{thm:order12-main}, as $x = \gamma^i_{t_i}$. Now set $\bar{s} := (s \vee t_1) \wedge t_2$.  Since $\bar{s} \in \set{t_1,t_2,s}$, it follows that:
\begin{align*}
& \varphi_s(\gamma^2_s) - \varphi_s(\gamma^1_s)  -  \brac{\varphi_{t_2}(x) -  \varphi_{t_1}(x)} \\
& =   (t_2 - s) \frac{\ell_{t_2}^2(x)}{2} - (\bar{s} - s) \frac{\ell_{\bar{s}}^2(x)}{2} + (\bar{s} - s) \frac{\ellc_{\bar{s}}^2(x)}{2} -  (t_1-s) \frac{\ellc_{t_1}^2(x)}{2} .
\end{align*}
By Corollary \ref{cor:AGS}, we know 
for $\tilde{\ell} = \ell,\ellc$ that $D_{\tilde{\ell}}(x) \ni t \mapsto \tilde{\ell}^2_t(x)$ is differentiable a.e., and that the singular part of its distributional derivative is non-negative for $\tilde{\ell} = \ell$ and non-positive for $\tilde{\ell} = \ellc$. Consequently, we may proceed as follows:
\[
\geq \int_{\bar{s}}^{t_2} \underline{\partial}_{\tau} \brac{(\tau - s) \frac{\ell_{\tau}^2(x)}{2}} d\tau + 
\int_{t_1}^{\bar{s}} \overline{\partial}_{\tau} \brac{(\tau - s) \frac{\ellc_{\tau}^2(x)}{2}} d\tau ,
\]
where we used that $\tau-s \geq 0$ when $\bar{s} \leq \tau < t_2$ and that $\tau-s \leq 0$ when $\bar{s} \geq \tau >  t_1$. 
Using (\ref{eq:4a}) and (\ref{eq:4b}) to bound the above lower and upper derivatives on the sets (having full measure) $D_{\ell}(x)$ and $D_{\ellc}(x)$, respectively,  we obtain:
\begin{align*}
&\geq \int_{\bar{s}}^{t_2} \brac{1 - 2 \frac{\tau-s}{\tau}}\frac{\ell_\tau^2(x) }{2} d\tau + \int_{t_1}^{\bar{s}} \brac{1 + 2 \frac{\tau-s}{1-\tau}} \frac{\ellc_{\tau}^2(x)}{2} d\tau \\
& = \int_{\bar{s}}^{t_2} \brac{2 \frac{s}{\tau} - 1}\frac{\ell_\tau^2(x) }{2} d\tau + \int_{t_1}^{\bar{s}} \brac{2 \frac{1-s}{1-\tau} - 1} \frac{\ellc_{\tau}^2(x)}{2} d\tau \\
& \geq \brac{2 \frac{s}{t_2} - 1} \int_{\bar{s}}^{t_2} \frac{\ell_\tau^2(x) }{2} d\tau + \brac{2 \frac{1-s}{1-t_1} - 1} \int_{t_1}^{\bar{s}} \frac{\ellc_{\tau}^2(x)}{2} d\tau \\
& = \brac{2 \frac{s}{t_2} - 1}  \brac{\varphi_{t_2}(x) - \varphi_{\bar{s}}(x)} + \brac{2 \frac{1-s}{1-t_1} - 1} \brac{\varphic_{\bar{s}}(x) - \varphic_{t_1}(x)} .
\end{align*}
Summarizing, we have obtained:
\begin{align*}
\varphi_s(\gamma^2_s) - \varphi_s(\gamma^1_s)  & \geq \brac{2 \frac{s}{t_2} - 1}  \brac{\varphi_{t_2}(x) - \varphi_{\bar{s}}(x)} + \varphi_{t_2}(x) \\
& + \brac{2 \frac{1-s}{1-t_1} - 1}  \brac{\varphic_{\bar{s}}(x) - \varphi_{t_1}(x)} - \varphi_{t_1}(x) .
\end{align*}
We now use the inequality $\varphi_{\bar{s}}(x) \leq \varphic_{\bar{s}}(x)$ in the first line above when $2 \frac{s}{t_2} - 1 \geq 0$, and in the second line when $2 \frac{1-s}{1-t_1} - 1 \geq 0$, yielding:
\[
 \geq  \begin{cases} 
 2 \frac{s}{t_2} (\varphic_{t_2}(x) - \varphic_{\bar{s}}(x)) + 2 \frac{1-s}{1-t_1} (\varphic_{\bar{s}}(x) - \varphic_{t_1}(x)) & s \geq \frac{t_2}{2} \\
 2 \frac{s}{t_2} (\varphi_{t_2}(x) - \varphi_{\bar{s}}(x)) + 2 \frac{1-s}{1-t_1} (\varphi_{\bar{s}}(x) - \varphi_{t_1}(x)) &  1-s \geq \frac{1-t_1}{2}
 \end{cases} . 
\]
In particular, the first estimate applies whenever $s \geq \frac{1}{2}$ and the second one whenever $s \leq \frac{1}{2}$. Using that $[0,1] \ni \tau \mapsto \tilde{\varphi}_\tau(x)$ is monotone non-decreasing, the asserted  (\ref{eq:propagation}) is established in either case. Moreover, (\ref{eq:propagation}) implies that if $\varphi_s(\gamma^2_s) -\varphi_s(\gamma^1_s) = 0$ then $\varphi_{t_1}(x) = \varphi_{t_2}(x)$, and hence by Corollary \ref{cor:X0} that $x \in X^0$; and vice-versa, if $x \in X^0$ then all geodesics having $x$ as an interior point are null by Lemma \ref{lem:X0}, and hence $\gamma^1_s = \gamma^2_s = x$ and $\varphi_s(\gamma^2_s) -\varphi_s(\gamma^1_s) = 0$. 

\end{proof}

We can already deduce the following important consequence, complementing Corollary \ref{cor:InterProp}, which holds for any proper geodesic space $(X,\sfd)$, independently of any additional assumptions like various forms of non-branching:
\begin{corollary}[Intra Level-Set Propagation] \label{cor:IntraProp}
For any $s \in (0,1)$, $a_s \in \Real$, and $t_1, t_2 \in (0,1)$ with $t_1 \neq t_2$:
\[
e_{t_1}(G_{a_s} \setminus G_\varphi^0) \cap e_{t_2}(G_{a_s} \setminus G_\varphi^0) = e_{t_1}(G_{a_s}) \cap e_{t_2}(G_{a_s}) \cap X^+ = \emptyset .
\]
In other words, for each $x \in e_{(0,1)}(G_{a_s}) \cap X^+$, there exists a unique $t \in (0,1)$ so that $x \in \ee_t(G_{a_s})$. 
\end{corollary}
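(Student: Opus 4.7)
The proof plan splits naturally into two parts, matching the two equalities in the statement.

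For the first equality, the plan is to invoke the dichotomy in Lemma \ref{lem:X0}: a point in $X$ that is an interior point of some $\varphi$-Kantorovich geodesic lies in $X^+$ if and only if that geodesic has positive length. Concretely, the inclusion $e_{t_i}(G_{a_s} \setminus G_\varphi^0) \subset e_{t_i}(G_{a_s}) \cap X^+$ is immediate: if $x = \gamma_{t_i}$ with $\gamma \in G_{a_s}$ and $\ell(\gamma) > 0$, then $t_i \in \mathring G_\varphi(x)$, and the dichotomy in Lemma \ref{lem:X0} forces $x \in X^+$. Conversely, if $x \in e_{t_i}(G_{a_s}) \cap X^+$, write $x = \gamma_{t_i}$ with $\gamma \in G_{a_s}$; again $t_i \in \mathring G_\varphi(x)$, and now the dichotomy says $\gamma$ cannot be null, so $\gamma \in G_{a_s} \setminus G_\varphi^0$. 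Intersecting over $i=1,2$ gives the first equality.

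For the second equality, the plan is to argue by contradiction and apply Lemma \ref{lem:propagation} directly. Assume some $x \in e_{t_1}(G_{a_s}) \cap e_{t_2}(G_{a_s}) \cap X^+$ exists, with $t_1 < t_2$ without loss of generality. Pick $\gamma^1, \gamma^2 \in G_{a_s}$ with $\gamma^1_{t_1} = x = \gamma^2_{t_2}$. By the very definition of $G_{a_s}$, we have $\varphi_s(\gamma^1_s) = \varphi_s(\gamma^2_s) = a_s$, so the left-hand side of the inequality (\ref{eq:propagation}) in Lemma \ref{lem:propagation} vanishes. But the final clause of Lemma \ref{lem:propagation} asserts that this left-hand side is strictly positive precisely when $x \in X^+$, contradicting our assumption. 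Hence the triple intersection is empty, establishing the second equality.

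The argument is almost entirely bookkeeping once one has Lemmas \ref{lem:X0} and \ref{lem:propagation}; there is no real obstacle. The only point deserving care is to make sure that when we unpack $x \in e_{t_i}(G_{a_s}) \cap X^+$ we use the dichotomy correctly so that the selected geodesics $\gamma^1, \gamma^2$ both lie in $G_{a_s}$ (not merely in $G_\varphi$), which is automatic from the definition of $G_{a_s}$. No additional non-branching hypothesis is needed because Lemma \ref{lem:propagation} was itself proved in the purely metric setting, so this corollary inherits the same generality.
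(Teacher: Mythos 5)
Your proof is correct and follows essentially the same route as the paper: the paper's own proof is the one-line application of Lemma \ref{lem:propagation} (its strict-positivity clause) to two geodesics in $G_{a_s}$ meeting at a point of $X^+$, exactly as in your second part, while your first equality is the routine unpacking of the dichotomy of Lemma \ref{lem:X0} that the paper leaves implicit.
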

\begin{proof}
If $x = \gamma^1_{t_1} = \gamma^2_{t_2} \in X^+$, $0<t_1 < t_2< 1$, then  Lemma \ref{lem:propagation} yields $\varphi_s(\gamma^2(s)) > \varphi_s(\gamma^1(s))$, establishing the assertion. 
\end{proof}

\subsection{Properties of $\Phi_s^t$}

The following information will be crucially used when deriving the Change-Of-Variables formula in Section \ref{S:comparison1}:

\begin{proposition} \label{prop:Phi}
For any $s \in (0,1)$, the following properties of $\Phi_s^t$ and $\Phic_s^t$ hold:
\begin{enumerate}
\item The maps $(x,t) \mapsto \Phi_s^t(x)$ and $(x,t) \mapsto \Phic_s^t(x)$ are continuous on $D_{\ell}$ and $D_{\ellc}$, respectively. 
\item For each $x \in X$, $\tilde{\Phi} = \Phi,\Phic$ and $\tilde{\ell} = \ell,\ellc$, respectively, 
$D_{\tilde{\ell}}(x) \ni t \mapsto \tilde{\Phi}_s^t(x)$ is differentiable at $t$ iff $D_{\tilde{\ell}}(x) \ni t \mapsto \tilde{\ell}^2_t(x)$ is differentiable at $t$ or if $t=s \in D_{\tilde{\ell}}(x)$, so in particular $t \mapsto \tilde{\Phi}_s^t(x)$ is a.e. differentiable. 
At points $t$ of differentiability: 
\begin{equation} \label{eq:Phi-deriv}
\partial_t \tilde{\Phi}_s^t(x) = \tilde{\ell}_t^2(x) + (t-s) \frac{\partial_t \tilde{\ell}^2_t(x)}{2} .
\end{equation}
In particular, if $s \in D_{\tilde{\ell}}(x)$ then  $\exists \partial_t|_{t=s} \tilde{\Phi}_s^t(x) = \tilde{\ell}_s^2(x)$. 
\item For each $x \in X$, the map $\mathring{G}_\varphi(x) \ni t \mapsto \Phi_s^t(x) = \Phic_s^t(x)$ is locally Lipschitz and non-decreasing (if $\# \mathring{G}_\varphi(x)  \geq 2$, it is strictly increasing iff $x \in X^+$). 
\item For all $t \in (0,1)$:
\[
\begin{cases} \underline{\partial}_t \Phi_s^t(x) \geq \frac{s}{t} \ell_t^2(x) & t \geq s \\ \overline{\partial}_t \Phi_s^t(x) \leq \frac{s}{t} \ell_t^2(x) & t \leq s \end{cases} \;\; \forall x \in D_{\ell}(t) ~;~  \begin{cases} \overline{\partial}_t \Phic_s^t(x) \leq \frac{1-s}{1-t} \ellc_t^2(x)   & t \geq s \\ \underline{\partial}_t \Phic_s^t(x) \geq \frac{1-s}{1-t} \ellc_t^2(x) & t \leq s \end{cases} \;\; \forall x \in D_{\ellc}(t) .
\] 
\item For all $(x,t) \in D(\mathring{G}_\varphi)$:
\begin{align*}
 \min\brac{\frac{s}{t},\frac{1-s}{1-t} + \frac{t-s}{t(1-t)}} \ell_t^2(x) \leq \underline{\partial}_t \Phi_s^t(x)  \leq \overline{\partial}_t \Phi_s^t(x) \leq \max\brac{\frac{s}{t},\frac{1-s}{1-t} + \frac{t-s}{t(1-t)}}  \ell_t^2(x), \\
 \min\brac{\frac{1-s}{1-t} , \frac{s}{t} - \frac{t-s}{t(1-t)}} \ell_t^2(x) \leq  \underline{\partial}_t \Phic_s^t(x)  \leq \overline{\partial}_t \Phic_s^t(x) \leq \max\brac{\frac{1-s}{1-t} , \frac{s}{t} - \frac{t-s}{t(1-t)}}  \ell_t^2(x).
 \end{align*}
\end{enumerate}
\end{proposition}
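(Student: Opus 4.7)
Parts (1) and (2) are essentially bookkeeping on top of Corollary \ref{cor:AGS}. Joint continuity of $\Phi_s^t(x) = \varphi_t(x) + (t-s)\ell_t^2(x)/2$ on $D_\ell$ is immediate from the joint continuity of $(x,t) \mapsto \varphi_t(x)$ on $X \times (0,1)$ and of $(x,t) \mapsto \ell_t(x)$ on $D_\ell$. For (2), at $t \in D_\ell(x)$ the first summand is $t$-differentiable with derivative $\ell_t^2(x)/2$, so differentiability of $t \mapsto \Phi_s^t(x)$ reduces to that of the product $(t-s) \cdot \ell_t^2(x)/2$. When $t \neq s$ this is equivalent to differentiability of $\tau \mapsto \ell_\tau^2(x)$ at $t$; when $t = s$ continuity of $\ell_\tau^2(x)$ alone suffices since $(\tau-s)\ell_\tau^2(x)/(2(\tau-s)) \to \ell_s^2(x)/2$. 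Either way, the Leibniz formula yields (\ref{eq:Phi-deriv}), specializing to $\ell_s^2(x)$ when $t=s$. The argument for $\Phic_s^t$ is identical, using $\partial_t \varphic_t(x) = \ellc_t^2(x)/2$ on $D_{\ellc}(x)$.

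Part (3) rests on the already-noted identity $\Phi_s^t(\gamma_t) = \Phic_s^t(\gamma_t) = \varphi_s(\gamma_s)$ for every $\gamma \in G_\varphi$. Local Lipschitz regularity of $\mathring{G}_\varphi(x) \ni t \mapsto \Phi_s^t(x)$ then follows by combining the local Lipschitz regularity in $t$ of $\varphi_t(x)$ from Corollary \ref{cor:AGS}(1) with that of $\ell_t(x)$ on $\mathring{G}_\varphi(x)$ established in Theorem \ref{thm:order12-main}(2). Non-decreasingness, and its sharpening to strict monotonicity precisely on $X^+$, are exactly the content of Lemma \ref{lem:propagation}: given $t_1 < t_2$ in $\mathring{G}_\varphi(x)$ with $x = \gamma^1_{t_1} = \gamma^2_{t_2}$, that lemma yields
\[
\Phi_s^{t_2}(x) - \Phi_s^{t_1}(x) = \varphi_s(\gamma^2_s) - \varphi_s(\gamma^1_s) \geq 0,
\]
with strict inequality exactly when $x \in X^+$.

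The analytic substance sits in Parts (4) and (5). The starting point is the decomposition
\[
\Phi_s^{t+h}(x) - \Phi_s^t(x) = \brac{\varphi_{t+h}(x) - \varphi_t(x)} + h\,\frac{\ell_{t+h}^2(x)}{2} + (t-s)\,\frac{\ell_{t+h}^2(x) - \ell_t^2(x)}{2};
\]
dividing by $h$ and passing to $\liminf$ or $\limsup$ as $h \to 0$, Corollary \ref{cor:AGS}(3) and the continuity of $\ell_\tau^2$ at $t$ ensure that the first two summands jointly contribute $\ell_t^2(x)$, while the last contributes $(t-s)$ times $\underline{\partial}_t \ell_t^2/2$ or $\overline{\partial}_t \ell_t^2/2$, with the sign of $t-s$ dictating whether the outer $\liminf$ pairs with $\underline{\partial}_t$ or with $\overline{\partial}_t$. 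For Part (4), inserting only the one-sided bound $\underline{\partial}_t \ell_t^2/2 \geq -\ell_t^2/t$ from (\ref{eq:4a}) into the appropriate case yields $\underline{\partial}_t \Phi_s^t \geq \frac{s}{t}\ell_t^2$ for $t \geq s$ and $\overline{\partial}_t \Phi_s^t \leq \frac{s}{t}\ell_t^2$ for $t \leq s$; the bounds for $\Phic_s^t$ are symmetric, using $\overline{\partial}_t \ellc_t^2/2 \leq \ellc_t^2/(1-t)$ from (\ref{eq:4b}). For Part (5), the same scheme is applied on $D(\mathring G_\varphi)$, where we additionally have the two-sided estimate $\overline{\partial}_t \ell_t^2/2 \leq (2/(1-t) + 1/t)\ell_t^2$ from Theorem \ref{thm:order12-main}(4) and its $\ellc$ analogue. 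Substituting these and rearranging via the algebraic identity
\[
1 + (t-s)\brac{\frac{2}{1-t} + \frac{1}{t}} = \frac{1-s}{1-t} + \frac{t-s}{t(1-t)}
\]
(and its $\Phic$ counterpart) produces the claimed $\min/\max$ bounds. The only real obstacle is bookkeeping: tracking which sign of $t-s$ swaps $\liminf$ with $\overline{\partial}_t$ inside the product and confirming that the algebra places each bound in the advertised form.
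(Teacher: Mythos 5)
Your proof is correct and follows essentially the same route as the paper's: parts (1)–(3) via Lemma \ref{lem:order0}, Corollary \ref{cor:AGS}, Theorem \ref{thm:order12-main}(2) and Lemma \ref{lem:propagation}, and parts (4)–(5) via the same difference-quotient decomposition in which the sign of $t-s$ dictates whether $\underline{\partial}_t$ or $\overline{\partial}_t$ of $\tilde{\ell}_t^2/2$ appears, combined with (\ref{eq:4a}), (\ref{eq:4b}) and Theorem \ref{thm:order12-main}(4). The only difference is that you spell out the algebraic simplification the paper leaves implicit.
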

\begin{proof}
Recall that:
\[
\tilde{\Phi}_s^t := \tilde{\varphi}_t(x) + (t-s) \frac{\tilde{\ell}^2_t(x)}{2}  \text{ on $D_{\tilde{\ell}}$.}
\]
The first and second statements follow by Lemma \ref{lem:order0} and Corollary \ref{cor:AGS}. As $t \mapsto \tilde{\varphi}_t(x)$ is differentiable on $D_{\tilde{\ell}}(x)$ with derivative $\frac{\tilde{\ell}_t^2(x)}{2}$, the points of differentiability of $t \mapsto \tilde{\Phi}_s^t(x)$ must coincide with those of $t \mapsto \tilde{\ell}_t^2(x)$ and (\ref{eq:Phi-deriv}) follows immediately, with the only possible exception being the point $t=s$ if $s \in D_{\tilde{\ell}}(x)$, where direct inspection and continuity of $t \mapsto \tilde{\ell}_t^2(x)$ on $D_{\tilde{\ell}}(x)$ verifies (\ref{eq:Phi-deriv}). 
The local Lipschitzness follows by Theorem \ref{thm:order12-main} (2). The monotonicity follows by Lemma \ref{lem:propagation}, since if $\gamma^t \in G_\varphi$ is such that $\gamma^t_t = x$, then $\Phi_s^t(\gamma^t_t) = \Phic_s^t(\gamma^t_t) = \varphi_s(\gamma^t_s)$.
The last two assertions follow as in the proof of Lemma \ref{lem:propagation}, after noting that:
\[
\begin{cases} \underline{\partial}_t \tilde{\Phi}_s^t(x) =\tilde{\ell}_t^2(x) + (t-s) \underline{\partial}_t \frac{\tilde{\ell}_t^2(x)}{2}  & t \geq s \\ \underline{\partial}_t \tilde{\Phi}_s^t(x) = \tilde{\ell}_t^2(x) + (t-s) \overline{\partial}_t \frac{\tilde{\ell}_t^2(x)}{2} & t \leq s \end{cases} \;\;\;\; \forall x \in D_{\tilde{\ell}}(t) ,
\]
and similarly for $\overline{\partial}_t$. Indeed, the estimates (\ref{eq:4a}) and (\ref{eq:4b}) of Corollary \ref{cor:AGS} yield (4),
which already yields half of the inequalities in (5) for all $(x,t) \in D_\ell \cap D_\ellc$. To get the other half, we must restrict to $D(\mathring{G}_\varphi)$ and use the estimates of Theorem \ref{thm:order12-main} (4), thereby concluding the proof. 
\end{proof}

As an immediate corollary of Proposition \ref{prop:Phi}, Corollary \ref{cor:accum1} and Lemma \ref{lem:X0}, we obtain: 

\begin{corollary} \label{cor:accum2}
For all $x \in X$, for a.e. $t \in \mathring G_\varphi(x)$, $\partial_t \Phi^t_s(x)$ and $\partial_t \Phic^t_s(x)$ exist, coincide, and satisfy:
\begin{align*} \min\brac{\frac{s}{t},\frac{1-s}{1-t}} \ell^2_t(x) & \leq \partial_t \Phi^t_s(x) = \partial_t \Phi^t_s(x)|_{\mathring{G}_\varphi(x)} \\
& = \partial_t \Phic^t_s(x)|_{\mathring{G}_\varphi(x)} = \partial_t \Phic^t_s(x) \leq \max\brac{\frac{s}{t},\frac{1-s}{1-t}} \ell_t^2(x)  .
\end{align*} In particular, if $x \in X^+$ then $\partial_t \Phi^t_s(x) > 0$ for a.e. $t \in \mathring G_\varphi(x)$.
\end{corollary}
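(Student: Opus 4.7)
The strategy is to combine the explicit derivative formula \eqref{eq:Phi-deriv} from Proposition~\ref{prop:Phi}(2) with the a.e.\ existence and two-sided bounds on $\partial_t \ell_t^2(x)$ on $\mathring{G}_\varphi(x)$ supplied by Corollary~\ref{cor:accum1}, and then read off positivity from Lemma~\ref{lem:X0}. Fix $x \in X$. By Corollary~\ref{cor:accum1}, there is a full-measure subset $E \subset \mathring{G}_\varphi(x)$ on which $\partial_t \ell_t^2(x)$ and $\partial_t \ellc_t^2(x)$ both exist and agree; on $E$, Proposition~\ref{prop:Phi}(2) then asserts that $t \mapsto \Phi_s^t(x)$ and $t \mapsto \Phic_s^t(x)$ are differentiable with
\[
\partial_t \Phi_s^t(x) = \ell_t^2(x) + (t-s)\,\frac{\partial_t \ell_t^2(x)}{2} = \ellc_t^2(x) + (t-s)\,\frac{\partial_t \ellc_t^2(x)}{2} = \partial_t \Phic_s^t(x),
\]
using that $\ell_t(x) = \ellc_t(x)$ on $\mathring{G}_\varphi(x)$ by Theorem~\ref{thm:order12-main}. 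The equality with the restricted-derivative $\partial_t \Phi_s^t(x)|_{\mathring{G}_\varphi(x)}$ (and likewise for $\Phic$) is immediate from Remark~\ref{R:diff-restriction}, since every $t \in E$ is an accumulation point of $\mathring{G}_\varphi(x) \subset D_\ell(x) \cap D_{\ellc}(x)$.

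To obtain the two-sided bounds, I plug the inequalities $-\frac{1}{t}\ell_t^2(x) \leq \partial_t \frac{\ell_t^2(x)}{2} \leq \frac{1}{1-t}\ell_t^2(x)$ from Corollary~\ref{cor:accum1} into the formula above. When $t \geq s$, the factor $(t-s)$ is non-negative, so the lower bound uses $-\frac{1}{t}\ell_t^2(x)$ to yield $\ell_t^2(x)(1 - \frac{t-s}{t}) = \frac{s}{t}\ell_t^2(x)$, while the upper bound uses $\frac{1}{1-t}\ell_t^2(x)$ to yield $\ell_t^2(x)(1 + \frac{t-s}{1-t}) = \frac{1-s}{1-t}\ell_t^2(x)$; here $\frac{s}{t} \leq 1 \leq \frac{1-s}{1-t}$, so these are precisely the $\min$ and $\max$. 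The case $t \leq s$ is completely symmetric, swapping which of the two fractions realizes the minimum and which the maximum, yielding exactly the asserted $\min/\max$ bounds.

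For the final assertion, suppose $x \in X^+$. By Lemma~\ref{lem:X0}(6) (equivalently Corollary~\ref{cor:X+}), $\ell_t(x) > 0$ for every $t \in \mathring{G}_\varphi(x)$, and since $s, t \in (0,1)$ both fractions $\frac{s}{t}$ and $\frac{1-s}{1-t}$ are strictly positive, so the lower bound $\min(\frac{s}{t}, \frac{1-s}{1-t})\,\ell_t^2(x)$ is strictly positive on $E$. There is no real obstacle here: all the work has already been done in Proposition~\ref{prop:Phi} and Corollary~\ref{cor:accum1}, and this corollary is essentially a bookkeeping exercise of matching hypotheses and extracting the a.e.\ statement from the pointwise derivative formula.
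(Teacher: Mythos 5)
Your proof is correct and follows exactly the route the paper takes: the paper states that this corollary is immediate from Proposition \ref{prop:Phi}, Corollary \ref{cor:accum1} and Lemma \ref{lem:X0}, and your argument fills in precisely those details, namely formula (\ref{eq:Phi-deriv}), the a.e.\ two-sided bounds on $\partial_t \ell_t^2(x)$ over $\mathring{G}_\varphi(x)$, and the $X^+$ dichotomy for positivity. The only microscopic imprecision --- ``every'' $t \in E$ being an accumulation point of $\mathring{G}_\varphi(x)$ should read ``a.e.''\ $t$, since isolated points form a countable set --- is immaterial for an a.e.\ statement.
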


We will also require the following consequence of Proposition \ref{prop:Phi} and Theorem \ref{thm:order12-main}:
\begin{lemma}\label{L:densityPhi}
For any $x \in X$, $s \in (0,1)$, and $\tilde{\Phi} = \Phi,\Phic$ and $\tilde{\ell} = \ell,\ellc$, respectively:
\[
\lim_{\eps \rightarrow 0} \frac{1}{2 \eps} \int_{(s-\eps,s+\eps) \cap \mathring G_\varphi(x)} \brac{\partial_t \tilde{\Phi}^t_s(x) - \tilde{\ell}_s^2(x)} dt = 0 .
\]
\end{lemma}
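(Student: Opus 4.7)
The plan is to reduce to the only substantive case, namely $s\in \mathring G_\varphi(x)$, then decompose the integrand using Proposition~\ref{prop:Phi}(2) and bound each resulting piece via the regularity of $\tilde\ell_t(x)$ already established in the previous section.

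First I would observe that $\mathring G_\varphi(x)$ is relatively closed in $(0,1)$: this follows immediately by taking the vertical section in Corollary~\ref{cor:G-closed}, which states that $D(\mathring G_\varphi)$ is closed in $X\times(0,1)$. Consequently, if $s\in (0,1)\setminus \mathring G_\varphi(x)$, some open interval about $s$ lies entirely outside $\mathring G_\varphi(x)$, and the integral vanishes identically for all sufficiently small $\eps$. I am thus reduced to the case $s\in \mathring G_\varphi(x)\subset D_\ell(x)\cap D_{\ellc}(x)$ (using Theorem~\ref{thm:order12-main}(1)), so that $\tilde\ell_s^2(x)$ is finite and $\ell_s^2(x)=\ellc_s^2(x)$.

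Next, I would invoke the pointwise identity from Proposition~\ref{prop:Phi}(2), namely
\[
\partial_t \tilde\Phi_s^t(x) - \tilde\ell_s^2(x) \;=\; \bigl(\tilde\ell_t^2(x) - \tilde\ell_s^2(x)\bigr) \;+\; (t-s)\,\tfrac{1}{2}\partial_t \tilde\ell_t^2(x),
\]
valid for a.e.\ $t$ in $D_{\tilde\ell}(x)\supset \mathring G_\varphi(x)$; by Corollary~\ref{cor:accum1} this same derivative $\partial_t\tilde\ell_t^2(x)$ is obtained whether $\tilde\ell^2$ is viewed as a function on $D_{\tilde\ell}(x)$ or restricted to $\mathring G_\varphi(x)$. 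For the first summand, Theorem~\ref{thm:order12-main}(2) gives local Lipschitz regularity of $t\mapsto \tilde\ell_t(x)$ on $\mathring G_\varphi(x)$, so that
\[
\omega(\eps)\;:=\;\sup_{t\in (s-\eps,s+\eps)\cap\mathring G_\varphi(x)}\bigl|\tilde\ell_t^2(x)-\tilde\ell_s^2(x)\bigr|\;\xrightarrow[\eps\to 0]{}\;0.
\]
For the second summand, Corollary~\ref{cor:accum1} (equivalently Theorem~\ref{thm:order12-main}(3)--(4)) furnishes the a.e.\ bound $|\partial_t \tilde\ell_t^2(x)|\leq 2\max(1/t,1/(1-t))\,\tilde\ell_t^2(x)$ on $\mathring G_\varphi(x)$, which is uniformly bounded by some constant $M<\infty$ on $(s-\eps_0,s+\eps_0)\cap \mathring G_\varphi(x)$ for a suitable $\eps_0>0$, since $s\in(0,1)$ and $\tilde\ell_\cdot^2(x)$ is locally bounded near $s$. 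Combining these two estimates gives, for $\eps\leq \eps_0$,
\[
\left|\frac{1}{2\eps}\int_{(s-\eps,s+\eps)\cap \mathring G_\varphi(x)}\bigl(\partial_t \tilde\Phi_s^t(x) - \tilde\ell_s^2(x)\bigr)\,dt\right|\;\leq\;\omega(\eps)\;+\;\tfrac{\eps M}{2}\;\xrightarrow[\eps\to 0]{}\;0.
\]

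There is no serious obstacle here: the lemma is essentially a Lebesgue-type continuity statement that follows by direct computation from the a.e.\ formula for $\partial_t\tilde\Phi_s^t$ in Proposition~\ref{prop:Phi}(2), combined with the continuity at $s$ and the uniform a.e.\ derivative bound of $t\mapsto \tilde\ell_t^2(x)$. The only mild points requiring care are the preliminary dichotomy on whether $s\in \mathring G_\varphi(x)$ (handled via the closedness recorded in Corollary~\ref{cor:G-closed}) and the identification via Corollary~\ref{cor:accum1} of the a.e.\ derivative of $\tilde\ell^2$ on $D_{\tilde\ell}(x)$ with its restriction to $\mathring G_\varphi(x)$, needed to validate the use of Proposition~\ref{prop:Phi}(2) on the relevant domain.
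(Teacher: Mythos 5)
Your argument is correct and follows essentially the paper's own route: decompose $\partial_t \tilde{\Phi}_s^t(x) - \tilde{\ell}_s^2(x)$ via (\ref{eq:Phi-deriv}), dispose of the $(t-s)\partial_t \tilde{\ell}_t^2(x)/2$ term using the derivative bounds near $s$ together with local boundedness of $\tilde{\ell}^2_\cdot(x)$, and of the remaining term $\tilde{\ell}_t^2(x)-\tilde{\ell}_s^2(x)$ by continuity at $t=s$. You are in fact a bit more explicit than the paper's proof, which leaves the continuity term and the trivial case $s \notin \mathring{G}_\varphi(x)$ implicit, but the mechanism is the same.
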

\begin{proof}
By (\ref{eq:Phi-deriv}), the claim boils down to proving:
\[
\lim_{\eps \rightarrow 0} \frac{1}{2\eps} \int_{(s-\eps,s+\eps) \cap \mathring G_\varphi(x)} (t-s) \partial_t \tilde{\ell}_t^2(x) dt = 0 . 
\]
Using Corollary \ref{cor:accum1}, it follows that:
\begin{align*}
\lim_{\eps \rightarrow 0} \frac{1}{2\eps} \abs{\int_{(s-\eps,s+\eps) \cap \mathring G_\varphi(x)} (t-s) \partial_t \tilde{\ell}_t^2(x) dt } 
&~ \leq  \lim_{\eps \rightarrow 0} \int_{(s-\eps,s+\eps) \cap \mathring G_\varphi(x)} \abs{\partial_t \frac{\tilde{\ell}_t^2(x)}{2}} dt  \\
&~  \leq  \frac{1}{\min(s,1-s)} \lim_{\eps \rightarrow 0} \int_{(s-\eps,s+\eps) \cap \mathring G_\varphi(x)}  \tilde{\ell}_t^2(x) dt .
\end{align*}
But the latter limit is clearly $0$ (e.g. by Corollary \ref{cor:AGS} (1)). 
\end{proof}

\bigskip
\bigskip

\section[Temporal Theory of Intermediate-Time Kantorovich Potentials.\\ Third Order]{Temporal Theory of Intermediate-Time Kantorovich Potentials. Third Order} \label{sec:order3}

Fix a non-null Kantorovich geodesic $\gamma \in G_\varphi^+$, and denote for short $\ell := \len(\gamma) > 0$.
Recall by the results of Section \ref{sec:order12} that for all $t \in (0,1)$, $\ell_t(\gamma_t) = \ellc_t(\gamma_t) = \ell$ and that $\partial_t \varphi_t(x) = \partial_t \varphic_t(x) =\ell_t^2(x)/2$ for all $x \in \ee_t(G_\varphi)$. Also, recall that given $x \in X$ and $\tilde{\ell} = \ell,\ellc$, the function $D_{\tilde{\ell}}(x) \ni t \mapsto \tilde{\ell}_t(x)$ is only a.e. differentiable, and even on $\mathring{G}_{\varphi}(x) \subset D_{\ell}(x) \cap D_{\ellc}(x)$, we only have at the moment upper and lower bounds on $\underline{\partial_t} \tilde{\ell}^2_t(x)/2$ and $\overline{\partial}_t \tilde{\ell}^2_t(x)/2$, i.e. second order information on $\tilde{\varphi}_t(x)$.

\medskip

The goal of this section is to rigorously make sense and prove the following formal statement, which provides second order information on $\ell_t$, or equivalently, third order information on $\varphi_t$, along $\gamma_t$:
\begin{equation} \label{eq:3rd-formal-goal}
z(t) := \partial_\tau|_{\tau=t} \frac{\ell_\tau^2}{2}(\gamma_t) \;\;\; \Rightarrow \;\;\; z'(t) \geq \frac{z(t)^2}{\ell^2} . 
\end{equation}
Equivalently, this amounts to the statement that the function:
\[
L(r) = \exp\brac{- \frac{1}{\ell^2} \int^r_{r_0} \partial_\tau|_{\tau=t} \frac{\ell_\tau^2}{2}(\gamma_t) dt} 
\]
is concave in $r \in (0,1)$, since formally:
\[
\frac{L''}{L} = (\log L) '' + ((\log L)')^2 = -\frac{z'}{\ell^2} + \frac{z^2}{\ell^4} \leq 0 . 
\]

\subsection{Formal Argument} \label{subsec:order3-formal} 

We start by providing a formal proof of (\ref{eq:3rd-formal-goal}) in an infinitesimally Hilbertian setting, which is rigorously justified on a Riemannian manifold if all involved functions are smooth (in time and space). 

Recall that the Hopf-Lax semi-group solves the Hamilton-Jacobi equation (e.g. \cite{ambrgisav:heat}):
\begin{equation} \label{eq:HJ}
\partial_t\varphi_t =  \frac{1}{2} \ell_t^2 = \frac{1}{2} \abs{\nabla \varphi_t}^2 .
\end{equation}
We evaluate all subsequent functions at $x = \gamma_t$. 
Since: \[
z(t) = \partial_t^2 \varphi_t(\gamma(t)) = \scalar{\nabla \partial_t \varphi_t , \nabla \varphi_t} , 
\]
and since $\gamma'(t) = -\nabla \varphi_t$ (see e.g. \cite{ambrgisav:heat} or Lemma \ref{L:dmonotonedistance}), 
\[
z'(t) = \partial_t^3 \varphi_t - \scalar{\nabla \partial_t^2 \varphi_t , \nabla \varphi_t} .
\]
But taking two time derivatives in (\ref{eq:HJ}), we know that:
\[
\partial_t^3 \varphi_t = \scalar{\nabla \partial_t^2 \varphi_t , \nabla \varphi_t} + \scalar{\nabla \partial_t \varphi_t , \nabla \partial_t \varphi_t} ,
\]
and so we conclude that:
\[
z'(t) = \abs{\nabla \partial_t \varphi_t}^2 . 
\]
It remains to apply Cauchy--Schwarz and deduce:
\[
z'(t) \geq \scalar{\nabla \partial_t \varphi_t , \frac{\nabla \varphi_t}{\abs{\nabla \varphi_t}}}^2 = \frac{z(t)^2}{\ell^2} ,
\]
as asserted. Note that in a general setting, we can try and interpret $z(t)$ as minus the directional derivative of $\ell_t^2/2 = \partial_t \varphi_t$ in the direction of $\gamma'(t)$ (by taking derivative of the identity $\frac{\ell^2_t}{2}(\gamma(t)) \equiv \frac{\ell^2}{2}$), and thus hope to justify the Cauchy--Schwarz inequality as the statement that the local Lipschitz constant of $\partial_t \varphi_t$ is greater than any unit-directional derivative. However, a crucial point in the above argument of identifying $z'(t)$ with $\abs{\nabla \partial_t \varphi_t}^2$ was to use the linearity of $\scalar{\cdot,\cdot}$ in both of its arguments, and so ultimately this formal proof is genuinely restricted to an infinitesimally Hilbertian setting.

\smallskip
The above discussion seems to suggest that there is no hope of proving (\ref{eq:3rd-formal-goal}) beyond the Hilbertian setting. Furthermore, it seems that the spatial regularity of $\varphi_t$ and $\partial_t \varphi_t = \frac{1}{2}\ell^2_t$ should play an essential role in any rigorous justification. Remarkably, we will see that this is not the case on both counts, and that an appropriate interpretation of (\ref{eq:3rd-formal-goal}) holds true on a general proper geodesic space $(X,\sfd)$. 

\subsection{Notation}

Recall that by the results of Section \ref{sec:order12}, $\tau \mapsto \varphi_\tau(x)$ and $\tau \mapsto \varphic_\tau(x)$ are locally semi-convex and semi-concave on $(0,1)$, respectively, and that $\partial_t^{\pm} \varphi_t(x) = \ell^{\pm}_t(x)^2/2$, $\partial_t^{\pm} \varphic_t(x) = \ellc^{\mp}_t(x)^2/2$ and $\ell^{\pm}_t(\gamma_t) = \ellc^{\pm}_t(\gamma_t) = \ell$ for all $t \in (0,1)$.
We respectively introduce $\tilde{p} = p,\pc$ by defining at $t \in (0,1)$:
\begin{align*}
\tilde{p}_{+}^\gamma(t) = \tilde{p}_{+}(t) & := \overline{\partial}_\tau|_{\tau=t} \tilde{\ell}^2_{\tau}(\gamma_t)/2 = \ell \cdot \overline{\partial}_\tau|_{\tau=t} \tilde{\ell}_{\tau}(\gamma_t)  = \ell \cdot \overline{\partial}_\tau|_{\tau=t} \tilde{\ell}^{\pm}_{\tau}(\gamma_t) ~,~ \\
\tilde{p}_{-}^\gamma(t) = \tilde{p}_{-}(t) & := \underline{\partial}_\tau|_{\tau=t} \tilde{\ell}^2_{\tau}(\gamma_t)/2 = \ell \cdot \underline{\partial}_\tau|_{\tau=t} \tilde{\ell}_{\tau}(\gamma_t) = \ell \cdot \underline{\partial}_\tau|_{\tau=t} \tilde{\ell}^{\pm}_{\tau}(\gamma_t) ~,~ 
\end{align*}
where the penultimate equalities in each of the lines above follow from the continuity of $D_{\tilde{\ell}}(\gamma_t) \ni \tau \mapsto \tilde{\ell}_\tau(\gamma_t)$ at $\tau = t \in G_\varphi(\gamma_t) \subset D_{\tilde{\ell}}(\gamma_t)$, and the last ones by the monotonicity of $\tau \mapsto \tau \ell^{\pm}_\tau(\gamma_t)$ and $\tau \mapsto  (1-\tau) \ellc^{\pm}_\tau(\gamma_t)$ and the density of $D_{\tilde{\ell}}$ in $(0,1)$. Clearly $\tilde{p}_{-}(t) \leq \tilde{p}_{+}(t)$, and $\tilde{p}_-(t) = \tilde{p}_+(t) = \tilde{p} \in \Real$ iff $D_{\tilde{\ell}}(\gamma_t) \ni \tau \mapsto \tilde{\ell}_\tau^2/2(\gamma_t)$ is differentiable at $\tau = t$ with derivative $\tilde{p}$.
In addition, for $\tilde{q} = q,\bar{q}$, set:
\[
\tilde{q}_+(t) := \overline{\P}_2 \tilde{\varphi}_t(x)|_{x=\gamma_t} \geq \underline{\P}_2 \tilde{\varphi}_t(x)|_{x=\gamma_t}  =: \tilde{q}_-(t) ,
\]
where the Peano (partial) derivatives are with respect to the $t$ variable. It will be useful to recall that if we define $\tilde{h} = h , \bar{h}$ by:
\begin{align*}
\tilde{h}(t,\eps) & := 2\brac{\tilde{\varphi}_{t+\eps}(\gamma_t) - \tilde{\varphi_t}(\gamma_t) - \eps \partial_t \tilde{\varphi}_t(\gamma_t)} \\
& \; = 2\brac{\tilde{\varphi}_{t+\eps}(\gamma_t) - \varphi_t(\gamma_t) - \eps \ell^2/2}  ,
\end{align*}
then:
\[
\tilde{q}_+(t) =  \limsup_{\eps \rightarrow 0} \frac{\tilde{h}(t,\eps)}{\eps^2}  \geq \liminf_{\eps \rightarrow 0} \frac{\tilde{h}(t,\eps)}{\eps^2} = \tilde{q}_-(t) .
\]
By definition, $\tilde{q}_-(t) = \tilde{q}_+(t) = \tilde{q} \in \Real$ if and only if $\tau \mapsto \tilde{\varphi}_{\tau}(\gamma_t)$ has second order Peano derivative at $\tau = t$ equal to $\tilde{q}$, and hence by Lemma \ref{lem:convex-2nd-diff}, iff $\tilde{p}_-(t) = \tilde{p}_+(t) = \tilde{q}$, or equivalently, iff any
of the other equivalent conditions for the second order differentiability of $(0,1) \ni \tau \mapsto \tilde{\varphi}_{\tau}(\gamma_t)$ at $\tau=t$ are satisfied. Moreover, Lemma \ref{lem:peano-inq} implies:
\[
\tilde{p}_-(t) \leq \tilde{q}_-(t) \leq \tilde{q}_+(t) \leq \tilde{p}_+(t)  \;\;\; \forall t \in (0,1) ,
\]
but we will not require this here. We summarize the above discussion in:
\begin{corollary} \label{cor:2nd-diff}
The following statements are equivalent for a given $t \in (0,1)$:
\begin{enumerate}
\item $\tilde{p}_-(t) = \tilde{p}_+(t) = \tilde{p} \in \Real$, i.e. $D_{\tilde{\ell}}(\gamma_t) \ni \tau \mapsto \frac{\tilde{\ell}^2_\tau}{2}(\gamma_t)$ is differentiable at $\tau = t$ with derivative $\tilde{p}$.  
\item $\tilde{q}_-(t) = \tilde{q}_+(t) = \tilde{q} \in \Real$, i.e. $(0,1) \ni \tau \mapsto \tilde{\varphi}_\tau(\gamma_t)$ has a second Peano derivative at $\tau = t$ equal to $\tilde{q}$. 
\end{enumerate}
In any of these cases $(0,1) \ni \tau \mapsto \tilde{\varphi}_\tau(\gamma_t)$ is twice differentiable at $\tau = t$, and we have:
\[
\partial_\tau^2|_{\tau = t} \tilde{\varphi}_\tau(\gamma_t) := \partial_\tau|_{\tau=t} \frac{\tilde{\ell}^2_\tau}{2}(\gamma_t) = \ell  \cdot \partial_\tau|_{\tau=t} \tilde{\ell}_t(\gamma_t) = \tilde{p} = \tilde{q} .
\]
\end{corollary}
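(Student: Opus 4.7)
My plan is to read the corollary as a direct translation of the one-dimensional Aleksandrov--Jessen statement of Lemma \ref{lem:convex-2nd-diff} into the notation of this section. The function to which the lemma will be applied is $f(\tau) := \tilde{\varphi}_\tau(\gamma_t)$ on $I = (0,1)$. By Corollary \ref{cor:AGS}, $f$ is locally semi-convex if $\tilde{\varphi} = \varphi$ and locally semi-concave if $\tilde{\varphi} = \varphic$ (in the latter case one applies the lemma to $-f$, or equivalently uses the semi-convex/semi-concave extension of Lemma \ref{lem:convex-2nd-diff} mentioned there). Moreover, Corollary \ref{cor:AGS} identifies the set $D$ of differentiability points of $f$ on $(0,1)$ precisely as $D_{\tilde{\ell}}(\gamma_t)$, with $f'(\tau) = \tilde{\ell}^2_\tau(\gamma_t)/2$ there. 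Since $t \in \mathring G_\varphi(\gamma_t) \subset D_{\tilde{\ell}}(\gamma_t)$ by Theorem \ref{thm:order12-main}, $f$ is in particular differentiable at the base point $\tau_0 = t$, with $f'(t) = \tilde{\ell}_t^2(\gamma_t)/2 = \ell^2/2$, so the hypotheses of Lemma \ref{lem:convex-2nd-diff} are in force.

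Under this dictionary, the matching is: condition $(1)$ of the corollary ($\tilde{p}_-(t) = \tilde{p}_+(t) = \tilde{p} \in \Real$) is condition $(1)$ of Lemma \ref{lem:convex-2nd-diff} applied to $f$, because $\tilde{p}_\pm(t)$ are, by definition, the upper and lower derivatives of $\tau \mapsto \tilde{\ell}^2_\tau(\gamma_t)/2 = f'(\tau)$ taken along its domain of definition $D_{\tilde{\ell}}(\gamma_t) = D$; condition $(2)$ of the corollary (existence of a second Peano derivative $\tilde{q}$ for $f$ at $t$) is condition $(4)$ of the lemma verbatim. The asserted equivalence and the common value $\tilde{p} = \tilde{q}$ then follow directly from Lemma \ref{lem:convex-2nd-diff}, together with the fact (also recorded there) that any of these conditions is what we mean by ``twice differentiability at $\tau_0$''.

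For the displayed chain of equalities, the first equality is simply the adopted definition of $\partial_\tau^2|_{\tau=t}\tilde{\varphi}_\tau(\gamma_t)$ as the derivative of $\tilde{\ell}_\tau^2(\gamma_t)/2$ on $D_{\tilde{\ell}}(\gamma_t)$ at $\tau = t$; the second equality is the chain rule $\partial_\tau(\tilde{\ell}_\tau^2/2) = \tilde{\ell}_\tau \cdot \partial_\tau \tilde{\ell}_\tau$ applied at $\tau = t$, which is justified because $\tilde{\ell}_t(\gamma_t) = \ell > 0$ (Theorem \ref{thm:order12-main}) and $\tau \mapsto \tilde{\ell}_\tau(\gamma_t)$ is continuous on $D_{\tilde{\ell}}(\gamma_t)$ at $\tau = t$ (Corollary \ref{cor:AGS}), so the differentiability of the square transfers to the differentiability of the function itself; the third and fourth equalities are the definition of $\tilde{p}$ and Lemma \ref{lem:convex-2nd-diff} respectively. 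I expect no genuine obstacle: the corollary is a bookkeeping statement, and the only point requiring a moment of care is to remember that $\tilde{p}_\pm(t)$ are formed as limits along $D_{\tilde{\ell}}(\gamma_t)$, which is precisely what makes them match condition $(1)$ rather than conditions $(2)$--$(3)$ of Lemma \ref{lem:convex-2nd-diff}.
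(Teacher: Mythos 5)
Your proposal is correct and takes essentially the same route as the paper: the discussion preceding the corollary (which serves as the paper's proof) likewise applies Lemma \ref{lem:convex-2nd-diff} to the locally semi-convex/semi-concave map $\tau \mapsto \tilde{\varphi}_\tau(\gamma_t)$, identifying its differentiability set with $D_{\tilde{\ell}}(\gamma_t)$ and its derivative there with $\tilde{\ell}^2_\tau(\gamma_t)/2$, and using the continuity of $D_{\tilde{\ell}}(\gamma_t)\ni\tau\mapsto\tilde{\ell}_\tau(\gamma_t)$ at $\tau=t$ together with $\ell>0$ to pass between $\tilde{\ell}^2_\tau/2$ and $\tilde{\ell}_\tau$. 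No gaps.
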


\allowdisplaybreaks
\subsection{Main Inequality}

The following inequality and its consequences are the main results of this section. 

\begin{theorem} \label{thm:main-3rd-order}
For all $s < t$ and $\eps$ so that $s,t,s+\eps,t+\eps \in (0,1)$, we have (for both possibilities for $\pm$):
\[
\frac{h(t,\eps) - h(s,\eps)}{t-s} \geq \frac{s+\eps}{t+\eps} (\ell^{\pm}_{s+\eps}(\gamma_s) - \ell_s(\gamma_s))^2 ,
\]
and:
\[
\frac{\bar{h}(t,\eps) - \bar{h}(s,\eps)}{t-s} \geq \frac{1-t-\eps}{1-s-\eps} (\ellc^{\pm}_{t+\eps}(\gamma_t) - \ellc_t(\gamma_t))^2 . 
\]
\end{theorem}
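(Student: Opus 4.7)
The second displayed inequality will follow from the first by time reversal: applying the first inequality to the $\varphi^c$-Kantorovich geodesic $\tau \mapsto \gamma_{1-\tau}$, with substitution $(s,t,\eps) \mapsto (1-t,1-s,-\eps)$, and using the relation $h^{\varphi^c}(\tau,\eta) = -\bar{h}(1-\tau,-\eta)$ coming from $(\varphi^c)_\tau = -\varphic_{1-\tau}$, will transform it into the second. So I focus on the first inequality; the two choices of sign $\pm$ are parallel, so I treat only ``$+$''.

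Fix $s < t$ and admissible $\eps$. By Lemma \ref{lem:compactness} applied to the lower semicontinuous $-\varphi$, there exists a minimizer $y = y^{+}_{s+\eps}\in X$ with $\sfd(\gamma_s, y) = (s+\eps) A$ and
\[ \varphi_{s+\eps}(\gamma_s) \;=\; \varphi(y) \;-\; \tfrac{(s+\eps)A^2}{2}, \qquad A := \ell^{+}_{s+\eps}(\gamma_s). \]
Testing the Hopf--Lax infimum defining $\varphi_{t+\eps}(\gamma_t) = -Q_{t+\eps}(-\varphi)(\gamma_t)$ at this \emph{same} point $y$ yields
\[ \varphi_{t+\eps}(\gamma_t) \;\geq\; \varphi(y) \;-\; \tfrac{\sfd(\gamma_t,y)^2}{2(t+\eps)}. \]
Subtracting these two relations, and invoking the Kantorovich identity $\varphi_t(\gamma_t) - \varphi_s(\gamma_s) = (t-s)\ell^2/2$ from Lemma \ref{lem:varphi-linear}, reduces the first inequality of the theorem to the pointwise estimate
\[ (s+\eps)A^2 \;-\; \tfrac{\sfd(\gamma_t,y)^2}{t+\eps} \;-\; (t-s)\ell^2 \;\geq\; (t-s)\,\tfrac{s+\eps}{t+\eps}\,(A-\ell)^2. \]

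\textbf{Main obstacle.} The remaining task is to bound $\sfd(\gamma_t,y)^2$ sharply. The raw triangle inequality $\sfd(\gamma_t, y) \leq \sfd(\gamma_t, \gamma_s) + \sfd(\gamma_s, y) = (t-s)\ell + (s+\eps)A$, while tight precisely when $y, \gamma_s, \gamma_t$ are collinear on an extended geodesic, produces the above with a residual error of size $-2(t-s)\ell^2$ that refuses to cancel. Bridging this $\ell^2$-sized gap is the heart of the theorem: in the formal smooth computation sketched in Subsection \ref{subsec:order3-formal}, it corresponds to the Cauchy--Schwarz step $|\nabla\partial_t\varphi_t|^2 \geq z(t)^2/\ell^2$, and I must find a synthetic replacement valid in a general proper geodesic space.

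\textbf{Closing the gap.} Following the introduction's hint that the argument rests on ``a careful comparison of second-order expansions of $\eps\mapsto\varphi_{\tau+\eps}(\gamma_\tau)$ at $\tau=t,s$'', the plan is to symmetrize the one-sided test above by pairing it with the complementary lower bound obtained from the Hopf--Lax semigroup decomposition $Q_{t+\eps} = Q_{t-s}\circ Q_{s+\eps}$, which gives $\varphi_{t+\eps}(\gamma_t) \geq \varphi_{s+\eps}(z) - \sfd(\gamma_t,z)^2/(2(t-s))$ for every $z\in X$. A judicious $z$ (say on a geodesic from $y$ through $\gamma_s$ extended to total length $t+\eps$), combined with Kantorovich rigidity of Lemma \ref{lem:rigidity} to force equality in the triangle inequality along the extended geodesic, and with the monotonicity of $\tau\mapsto\tau\ell^{\pm}_\tau(\gamma_s)$ from Corollary \ref{cor:AGS}(4a), should produce the missing quadratic contribution $(t-s)\frac{s+\eps}{t+\eps}(A-\ell)^2 + 2(t-s)\ell^2$. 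The distinction between usual and second Peano derivatives, flagged in the introduction, will enter through Lemma \ref{lem:peano-inq}, which permits bounding the second Peano derivatives of $\eps\mapsto\varphi_{\tau+\eps}(\gamma_\tau)$ at $\tau\in\{s,t\}$ by one-sided upper/lower derivatives of $\ell^2_\tau(\gamma_\tau)$, without appealing to full second differentiability along temporal rays.

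\textbf{Expected hardest step.} The principal difficulty is the precise algebraic recombination that makes the spurious $2(t-s)\ell^2$ loss cancel against the quadratic $(A-\ell)^2$ gain; a secondary subtlety is producing the statement uniformly for both choices of sign $\pm$, since $A$ ranges over \emph{both} extremes of minimizing sequences in $D^{\pm}_{-\varphi}(\gamma_s,s+\eps)$, whereas the Hopf--Lax test picks out only one. Both subtleties are rooted in the merely one-sided temporal regularity of $\varphi_t$ currently available from Theorem \ref{thm:order12-main}(3)--(4).
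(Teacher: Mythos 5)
Your setup is exactly the paper's: take the minimizer $y=y^{\pm}_{\eps}$ realizing $\sfd(\gamma_s,y)=(s+\eps)\,\ell^{\pm}_{s+\eps}(\gamma_s)$ from Lemma \ref{lem:compactness}, test the Hopf--Lax infimum for $\varphi_{t+\eps}(\gamma_t)$ at the same $y$, subtract, use the triangle inequality, and obtain the second inequality from the first by the time reversal $(\varphi,\gamma,\eps,s,t)\mapsto(\varphi^c,\gamma^c,-\eps,1-t,1-s)$. But the ``main obstacle'' you then describe is a phantom created by a sign error in applying Lemma \ref{lem:varphi-linear}: along a Kantorovich geodesic one has $\varphi_s(\gamma_s)-\varphi_t(\gamma_t)=(t-s)\ell^2/2$, i.e.\ $\tau\mapsto\varphi_\tau(\gamma_\tau)$ is \emph{decreasing} with slope $-\ell^2/2$ (the monotone non-decreasing map is $\tau\mapsto\varphi_\tau(x)$ at \emph{fixed} $x$, not along the geodesic). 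Consequently, in your reduction the term $-(t-s)\ell^2$ should be $+(t-s)\ell^2$, and with that correction the plain triangle bound $\sfd(\gamma_t,y)\le (s+\eps)A+(t-s)\ell$, $A:=\ell^{\pm}_{s+\eps}(\gamma_s)$, closes the argument with \emph{no} residual loss, since one has the exact algebraic identity
\begin{equation*}
(s+\eps)A^2-\frac{\bigl((s+\eps)A+(t-s)\ell\bigr)^2}{t+\eps}+(t-s)\ell^2=(t-s)\,\frac{s+\eps}{t+\eps}\,(A-\ell)^2 .
\end{equation*}
This is precisely the computation in the paper's proof.

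Because of this, the entire ``Closing the gap'' paragraph is both unnecessary and, as written, not a proof: the choice of $z$, the invocation of the semigroup decomposition, of the rigidity Lemma \ref{lem:rigidity}, of Corollary \ref{cor:AGS}(4a) and of Lemma \ref{lem:peano-inq} are left unspecified, and the key cancellation is only asserted to ``should'' occur. In fact none of these enter the proof of this theorem; the Peano-derivative subtleties only appear downstream, in Corollary \ref{cor:2nd-diff} and Theorem \ref{thm:z-c}, when the inequality is divided by $\eps^2$ and subsequential limits are taken. Your worry about the two signs $\pm$ is also unfounded: Lemma \ref{lem:compactness} furnishes minimizers realizing both $D^+_{-\varphi}(\gamma_s,s+\eps)$ and $D^-_{-\varphi}(\gamma_s,s+\eps)$, and the computation above applies verbatim to either choice. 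So: fix the sign in the Kantorovich identity, keep your original ``naive'' computation, and delete the gap-closing machinery.
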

\begin{proof}
By Lemma \ref{lem:compactness}, there exist $y^{\pm}_\eps \in X$ so that:
\[
 -\varphi_{s+\eps}(\gamma_{s}) = \frac{\sfd^{2}(y^{\pm}_{\eps},\gamma_{s})}{2(s+\eps)} - \varphi(y^{\pm}_{\eps}) ,
\]
with $\sfd(y^{\pm}_\eps,\gamma_s)  = D^{\pm}_{-\varphi}(\gamma_s,s+\eps) = (s+\eps) \ell^{\pm}_{s+\eps}(\gamma_s) =: D^{\pm}_{s+\eps}$. By definition, note that:
\[
-\varphi_{t+\eps}(\gamma_{t}) \leq \frac{\sfd^{2}(y^{\pm}_{\eps},\gamma_{t})}{2(t+\eps)} - \varphi(y^{\pm}_{\eps}) .
\]
We abbreviate $D_{r} := r \ell = \sfd(\gamma_r,\gamma_0)$, $r=s,t$.
The proof consists of subtracting the above two expressions and applying the triangle inequality:
\[
\sfd(y^{\pm}_\eps,\gamma_t) \leq \sfd(y^{\pm}_\eps,\gamma_s) + \sfd(\gamma_s,\gamma_t) = D^{\pm}_{s+\eps} + (D_t - D_s) = D_t + (D^{\pm}_{s+\eps} - D_{s}) .
\]
Indeed, we obtain after subtraction, recalling the definition of $h$, and an application of Lemma \ref{lem:varphi-linear}:
\begin{align*}
0 \leq &~ \varphi_{t+\eps}(\gamma_{t}) -\varphi_{s+\eps}(\gamma_{s}) + \frac{\sfd^{2}(y^{\pm}_{\eps},\gamma_{t})}{2(t+\eps)}  - \frac{\sfd^{2}(y^{\pm}_{\eps},\gamma_{s})}{2(s+\eps)}\\
=  &~ \frac{1}{2}\brac{h(t,\eps) - h(s,\eps)} +\varphi_{t}(\gamma_t) - \varphi_{s}(\gamma_s) +\frac{\sfd^{2}(y^{\pm}_{\eps}, \gamma_{t})}{2(t+\eps)}  - \frac{(D^{\pm}_{s+\eps})^2}{2(s+\eps)} \\
\leq  &~ \frac{1}{2}\brac{h(t,\eps) - h(s,\eps)} -\frac{\ell^{2}}{2}(t-s) - \frac{(D^{\pm}_{s+\eps})^2}{2(s+\eps)} \\
 &~ +\frac{ (D^{\pm}_{s+\eps} -D_{s})^{2} + D_{t}^{2} + 2(D^{\pm}_{s+\eps} - D_{s})D_{t} }{2(t+\eps)}.
\end{align*}
Carefully rearranging terms, we obtain: \begin{align*}
\frac{1}{2} \brac{h(t,\eps) - h(s,\eps)} & \geq  \frac{(D^{\pm}_{s+\eps})^2}{2(s+\eps)} - \frac{D^{2}_{s}}{2s} + \frac{D_{t}^{2}}{2}\left(\frac{1}{t} - \frac{1}{t+\eps} \right) 
-\frac{ (D^{\pm}_{s+\eps} -D_{s})^{2} + 2(D^{\pm}_{s+\eps} - D_{s})D_{t} }{2(t+\eps)} \crcr
 & = \frac{1}{2(s+\eps)} ((D^{\pm}_{s+\eps})^{2} - D_{s}^{2}) + \frac{D_{s}^{2}}{2} \left(\frac{1}{s+\eps} - \frac{1}{s} \right) + \frac{D_{t}^{2}}{2}\left(\frac{1}{t} - \frac{1}{t+\eps} \right) \crcr
 &  \phantom{=} ~ -\frac{ (D^{\pm}_{s+\eps} -D_{s})^{2} + 2(D^{\pm}_{s+\eps} - D_{s})D_{t} }{2(t+\eps)} \crcr
& =  (D^{\pm}_{s+\eps} - D_{s}) \brac{\frac{D^{\pm}_{s+\eps} + D_{s}}{2(s+\eps)} - \frac{D_{t}}{t+\eps} }
+ \frac{\ell^{2}}{2} \left( \frac{\eps t}{t+\eps} - \frac{\eps s}{s+\eps}   \right) - \frac{(D^{\pm}_{s+\eps} - D_{s})^{2}}{2(t+\eps)} \crcr
& = (D^{\pm}_{s+\eps} - D_{s}) \brac{\frac{D^{\pm}_{s+\eps} - D_{s} + 2D_{s} - 2(s+\eps)\ell}{2(s+\eps)} + D_{t}\left(\frac{1}{t}- \frac{1}{t+\eps}\right) }  \crcr
& \phantom{=} ~ + \eps^{2}\frac{\ell^{2}}{2} \brac{ \frac{1}{s+\eps} - \frac{1}{t+\eps} } - \frac{(D^{\pm}_{s+\eps} - D_{s})^{2}}{2(t+\eps)} \crcr
& = \frac{(D^{\pm}_{s+\eps} - D_{s})^{2}}{2}\left(\frac{1}{s+\eps} - \frac{1}{t+\eps} \right)  -\eps \frac{D^{\pm}_{s+\eps} - D_{s}}{s+\eps} \ell  \crcr
& \phantom{=}  ~ + (D^{\pm}_{s+\eps} - D_{s})D_{t}\left( \frac{1}{t} - \frac{1}{t+\eps} \right) + \eps^{2}\frac{\ell^{2}}{2} \brac{ \frac{1}{s+\eps} - \frac{1}{t+\eps} } \crcr
& = \left(\frac{1}{s+\eps} - \frac{1}{t+\eps} \right) \brac{ \frac{(D^{\pm}_{s+\eps} - D_{s})^{2}}{2}  -\eps \ell (D^{\pm}_{s+\eps} - D_{s}) + \eps^{2} \frac{\ell^{2}}{2} } \crcr
& = \frac{1}{2}\brac{\frac{1}{s+\eps} - \frac{1}{t+\eps}} \brac{ D^{\pm}_{s+\eps} - D_{s}  -\eps \ell }^{2} \crcr
& = \frac{1}{2}\left(\frac{1}{s+\eps} - \frac{1}{t+\eps} \right) (s+\eps)^{2} ( \ell^{\pm}_{s+\eps}(\gamma_s) - \ell)^{2} \crcr
& = \frac{t-s}{2} \frac{s+\eps}{t+\eps} ( \ell^{\pm}_{s+\eps}(\gamma_s) - \ell)^{2} ,
\end{align*}
and the first claim follows.
\allowdisplaybreaks[0]

The second claim follows by the duality between $\varphi$ and $\varphi^c$. Indeed, exchange $\varphi,\gamma,\eps,s,t$ with $\varphi^c,\gamma^c,-\eps,1-t,1-s$, and recall that $\varphic_t = -\varphi^c_{1-t}$. 
A straightforward inspection of the definitions verifies:
\[
h^{\varphi^c}(1-r,-\eps) = -\bar{h}^{\varphi}(r,\eps) ,
\]
and:
\[
\frac{(\ell_{1-t-\eps}^{\varphi^c,\pm}(\gamma^c_{1-t}))^2}{2} = - \partial_t^{\mp} \varphi^c_{1-t-\eps}(\gamma^c_{1-t}) = \partial_t^{\mp} \varphic_{t+\eps}(\gamma_t) = \frac{(\ellc^{\varphi,\pm}_{t+\eps}(\gamma_t))^2}{2} ,
\]
and so the second claim follows from the first one. Alternatively, one may repeat the above argument by subtracting the following two expressions: 
\begin{align*}
\varphic_{t+\eps}(\gamma_{t}) = &~ \frac{\sfd^{2}(z^{\pm}_{\eps},\gamma_{t})}{2(1-t-\eps)} - \varphi^c(z^{\pm}_{\eps}) , \crcr
\varphic_{s+\eps}(\gamma_{s}) \leq &~ \frac{\sfd^{2}(z^{\pm}_{\eps},\gamma_{s})}{2(1-s-\eps)} - \varphi^c(z^{\pm}_{\eps})  ,
\end{align*}
with $\sfd(z^{\pm}_\eps,\gamma_t) = D^{\pm}_{-\varphi^c}(\gamma_t,1-t-\eps) = (1-t-\eps) \ell^{\pm}_{t+\eps}(\gamma_t)$
and applying the triangle inequality $\sfd(z_\eps,\gamma_s) \leq \sfd(z_\eps,\gamma_t) + \sfd(\gamma_t,\gamma_s)$. 
\end{proof}

\subsection{Consequences}

As immediate corollaries of Theorem \ref{thm:main-3rd-order}, we obtain after diving both sides by $\eps^2$ and taking appropriate subsequential limits as $\eps\rightarrow 0$:
\begin{corollary}
For both $\tilde{q} = q,\bar{q}$, the functions $t \mapsto \tilde{q}_-(t)$ and $t \mapsto \tilde{q}_+(t)$ are monotone non-decreasing on $(0,1)$. 
\end{corollary}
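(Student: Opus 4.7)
The plan is to read off the corollary directly from Theorem \ref{thm:main-3rd-order} by discarding its (manifestly non-negative) right-hand sides. Fix $0 < s < t < 1$ and take any $\eps \neq 0$ with $s+\eps, t+\eps \in (0,1)$. Since $(\ell^{\pm}_{s+\eps}(\gamma_s) - \ell_s(\gamma_s))^2 \geq 0$ and $(\ellc^{\pm}_{t+\eps}(\gamma_t) - \ellc_t(\gamma_t))^2 \geq 0$, the two inequalities of Theorem \ref{thm:main-3rd-order}, combined with $t-s > 0$, yield
\[
 h(t,\eps) \; \geq \; h(s,\eps) \qquad \text{and} \qquad \bar h(t,\eps) \; \geq \; \bar h(s,\eps) .
\]

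Next I divide these inequalities by $\eps^2 > 0$ (the sign of $\eps$ itself being irrelevant) to obtain
\[
\frac{h(t,\eps)}{\eps^2} \; \geq \; \frac{h(s,\eps)}{\eps^2} \qquad \text{and} \qquad \frac{\bar h(t,\eps)}{\eps^2} \; \geq \; \frac{\bar h(s,\eps)}{\eps^2},
\]
still valid for every admissible $\eps$. Now I invoke the elementary fact that if $A_\eps \geq B_\eps$ holds for all sufficiently small $|\eps|$, then $\limsup_{\eps \to 0} A_\eps \geq \limsup_{\eps \to 0} B_\eps$ and $\liminf_{\eps \to 0} A_\eps \geq \liminf_{\eps \to 0} B_\eps$ (proved by extracting a subsequence along which $B_\eps$ realizes its $\limsup$, resp.\ $\liminf$, and passing to a further subsequence along which $A_\eps$ converges). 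Applying this to the displayed inequalities and recalling the definitions
\[
\tilde q_+(r) \;=\; \limsup_{\eps \to 0} \frac{\tilde h(r,\eps)}{\eps^2}, \qquad \tilde q_-(r) \;=\; \liminf_{\eps \to 0} \frac{\tilde h(r,\eps)}{\eps^2} \qquad (\tilde q = q, \bar q),
\]
gives at once $\tilde q_\pm(t) \geq \tilde q_\pm(s)$ for $\tilde q \in \{q, \bar q\}$, which is the claimed monotonicity.

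I do not expect any obstacle: the content lies entirely in Theorem \ref{thm:main-3rd-order}, and the corollary is obtained by simply exploiting the non-negativity of the squared terms together with the order-preservation of $\limsup$ and $\liminf$. Note that the sharper information encoded in the dropped squares $(\ell^{\pm}_{s+\eps}(\gamma_s) - \ell_s(\gamma_s))^2$ and $(\ellc^{\pm}_{t+\eps}(\gamma_t) - \ellc_t(\gamma_t))^2$ is what will eventually, in the subsequent results of the section, upgrade this bare monotonicity to the quantitative third-order inequality $z'(t) \geq z(t)^2/\ell^2$ of \eqref{eq:3rd-formal-goal}; but for the present corollary the sign alone suffices.
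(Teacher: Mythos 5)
Your argument is correct and is exactly the route the paper takes: the paper derives this corollary from Theorem \ref{thm:main-3rd-order} precisely by noting the right-hand sides are non-negative, dividing by $\eps^2$, and passing to (subsequential) limits, which is what your order-preservation of $\limsup$ and $\liminf$ accomplishes. No gaps.
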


\begin{corollary}
For all $0 < s < t < 1$ (and both possibilities for $\pm$):
\begin{equation} \label{eq:3rd-primal}
\frac{q_+(t) - q_-(s)}{t-s} \geq \frac{s}{t} \brac{\frac{p_{\pm}(s)}{\ell}}^2 ,
\end{equation}
and:
\begin{equation} \label{eq:3rd-dual}
\frac{\bar{q}_+(t) -\bar{q}_-(s)}{t-s} \geq \frac{1-t}{1-s} \brac{\frac{\bar{p}_{\pm}(t)}{\ell}}^2 .
\end{equation}
\end{corollary}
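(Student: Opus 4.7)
The plan is to divide Theorem~\ref{thm:main-3rd-order} through by $\eps^{2}$ and pass to the limit along a subsequence $\eps_{n}\to 0$ tailored to realize the desired upper/lower derivative on the right-hand side, while exploiting a two-sided $\limsup$/$\liminf$ manipulation on the left. Since $\gamma\in G_{\varphi}^{+}$ we have $\ell_{s}(\gamma_{s})=\ellc_{t}(\gamma_{t})=\ell$ by Theorem~\ref{thm:order12-main}, so the inequalities of Theorem~\ref{thm:main-3rd-order} become, after division by $\eps^{2}$,
\[
\frac{h(t,\eps)-h(s,\eps)}{(t-s)\,\eps^{2}}\;\ge\;\frac{s+\eps}{t+\eps}\,\brac{\frac{\ell^{\pm}_{s+\eps}(\gamma_{s})-\ell}{\eps}}^{\!2},
\]
and analogously with $\bar h$ and $\ellc^{\pm}_{t+\eps}(\gamma_{t})-\ell$ on the right, with prefactor $(1-t-\eps)/(1-s-\eps)$.

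For (\ref{eq:3rd-primal}), fix a choice of $\pm$ in the statement. By the very definition of $p_{\pm}(s)$ and the equalities in Section~\ref{sec:order3} relating $p_{\pm}(s)/\ell$ to $\overline{\partial}_{\tau}|_{\tau=s}\ell^{\pm}_{\tau}(\gamma_{s})$ and $\underline{\partial}_{\tau}|_{\tau=s}\ell^{\pm}_{\tau}(\gamma_{s})$, pick a sequence $\eps_{n}\to 0$ (two-sided, possibly restricted to $D_{\ell}(\gamma_{s})$ by Remark~\ref{R:diff-restriction}) along which
\[
\frac{\ell^{\pm}_{s+\eps_{n}}(\gamma_{s})-\ell}{\eps_{n}}\;\longrightarrow\;\frac{p_{\pm}(s)}{\ell}.
\]
Along this subsequence the right-hand side of the displayed inequality converges to $\frac{s}{t}\bigl(p_{\pm}(s)/\ell\bigr)^{2}$, hence
\[
\liminf_{n\to\infty}\frac{h(t,\eps_{n})-h(s,\eps_{n})}{(t-s)\,\eps_{n}^{2}}\;\ge\;\frac{s}{t}\brac{\frac{p_{\pm}(s)}{\ell}}^{\!2}.
\]

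To convert the left-hand side into $q_{\pm}$'s, I would invoke the elementary fact $\liminf_{n}(a_{n}-b_{n})\le\limsup_{n}a_{n}-\liminf_{n}b_{n}$ applied with $a_{n}=h(t,\eps_{n})/\eps_{n}^{2}$ and $b_{n}=h(s,\eps_{n})/\eps_{n}^{2}$, noting that along \emph{any} subsequence one has $\limsup_{n}a_{n}\le q_{+}(t)$ and $\liminf_{n}b_{n}\ge q_{-}(s)$ by the very definitions of these quantities as the two-sided $\limsup$/$\liminf$ as $\eps\to 0$. Combining these estimates yields
\[
\frac{s}{t}\brac{\frac{p_{\pm}(s)}{\ell}}^{\!2}\;\le\;\frac{q_{+}(t)-q_{-}(s)}{t-s},
\]
which is exactly (\ref{eq:3rd-primal}); since the argument did not use $p_{+}$ versus $p_{-}$ in any essential way, both versions of $\pm$ follow from the corresponding choice of subsequence.

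For (\ref{eq:3rd-dual}) the plan is word-for-word the same, starting from the $\bar h$-inequality of Theorem~\ref{thm:main-3rd-order}: the prefactor $(1-t-\eps)/(1-s-\eps)$ converges to $(1-t)/(1-s)$, and a subsequence $\eps_{n}\to 0$ realizing $\bar p_{\pm}(t)/\ell$ as the limit of the difference quotient $(\ellc^{\pm}_{t+\eps_{n}}(\gamma_{t})-\ell)/\eps_{n}$ converts the right-hand side to $\frac{1-t}{1-s}(\bar p_{\pm}(t)/\ell)^{2}$, while the same $\liminf$/$\limsup$ trick replaces the left-hand side by $(\bar q_{+}(t)-\bar q_{-}(s))/(t-s)$. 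There is no genuine obstacle: the only point requiring care is to consistently select the subsequence \emph{before} taking $\limsup$ of the LHS so as not to sacrifice the subsequential limit on the RHS; everything else is routine bookkeeping.
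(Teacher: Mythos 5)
Your argument is correct and is precisely the paper's (one-line) proof spelled out: divide the inequalities of Theorem \ref{thm:main-3rd-order} by $\eps^{2}$, pass to a subsequence $\eps_n\to 0$ realizing $p_{\pm}(s)/\ell$ (resp.\ $\bar p_{\pm}(t)/\ell$) as the limit of the difference quotient on the right, and bound the left via $\liminf(a_n-b_n)\le\limsup a_n-\liminf b_n$ together with the definitions of $q_{+}$ and $q_{-}$ (finiteness of which, ruling out any $\infty-\infty$ issue, is guaranteed by Theorem \ref{thm:order12-main} (3)). No gap; this is essentially the same route as the paper.
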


It will be convenient to use the above information in the following form:

\begin{theorem} \label{thm:z-c} 
Assume that for \textbf{a.e.} $t \in (0,1)$:
\begin{equation} \label{eq:twice-diff-assump}
 (0,1)\ni \tau \mapsto \tilde{\varphi}_\tau(\gamma_t) \text{ is twice differentiable at $\tau = t$ for \textbf{both} $\tilde{\varphi} = \varphi,\varphic$}
\end{equation}
in any of the equivalent senses given by Corollary \ref{cor:2nd-diff}, and that moreover:
\begin{equation} \label{eq:twice-diff-assump-2}
\partial_\tau^2|_{\tau = t} \varphi_\tau(\gamma_t) = \partial_\tau^2|_{\tau = t} \varphic_\tau(\gamma_t) \;\;\; \text{for a.e. } t \in (0,1) . 
\end{equation}
Furthermore, assume that the latter joint value coincides a.e. on $(0,1)$ with some continuous function $z_c$: 
\begin{equation} \label{eq:twice-diff-b-assump}
\partial_\tau^2|_{\tau = t} \varphi_\tau(\gamma_t) = \partial_\tau^2|_{\tau = t} \varphic_\tau(\gamma_t)  = z_c(t) \;\;\; \text{for a.e. } t \in (0,1) . 
\end{equation}
Then (\ref{eq:twice-diff-assump}) holds for \textbf{all} $t \in (0,1)$, and we have:
\begin{equation} \label{eq:twice-diff-concl}
\partial_\tau^2|_{\tau = t} \varphi_\tau(\gamma_t) = \partial_\tau^2|_{\tau = t} \varphic_\tau(\gamma_t) = \partial_\tau|_{\tau=t} \frac{\ell^2_\tau}{2}(\gamma_t) = \partial_\tau|_{\tau=t} \frac{\ellc^2_\tau}{2}(\gamma_t)  = z_c(t) \;\;\; \forall t \in (0,1) . 
\end{equation}
Moreover, we have the following third order information on $\varphi_t(x)$ at $x=\gamma_t$:
\begin{equation} \label{eq:z-c}
\frac{z_c(t) - z_c(s)}{t-s} \geq \sqrt{\frac{s}{t} \frac{1-t}{1-s}} \frac{\abs{z_c(s)} \abs{z_c(t)}}{\ell^2} \;\;\; \forall 0 < s < t < 1 . 
\end{equation}
In particular, for any point $t \in (0,1)$ where $z_c(t)$ is differentiable:
\[
z_c'(t) \geq \frac{z_c(t)^2}{\ell^2} .
\]
\end{theorem}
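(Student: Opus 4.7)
The plan proceeds in three steps.

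First, I upgrade the a.e.\ second-order information to a pointwise-everywhere statement. Let $A \subset (0,1)$ be the full-measure subset on which (\ref{eq:twice-diff-assump}), (\ref{eq:twice-diff-assump-2}) and (\ref{eq:twice-diff-b-assump}) all hold. By Corollary \ref{cor:2nd-diff} we then have $p_{\pm}(t) = q_{\pm}(t) = \bar{p}_{\pm}(t) = \bar{q}_{\pm}(t) = z_c(t)$ for every $t \in A$. Now fix any $t_0 \in (0,1)$. The monotonicity of $q_+$ established in the corollary following Theorem \ref{thm:main-3rd-order} gives $q_+(t_0) \leq q_+(t) = z_c(t)$ for every $t \in A$ with $t > t_0$; since $A$ is dense and $z_c$ is continuous, letting $t \downarrow t_0$ yields $q_+(t_0) \leq z_c(t_0)$. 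Symmetrically, monotonicity of $q_-$ and approximation by $s \in A$ with $s < t_0$ gives $q_-(t_0) \geq z_c(t_0)$. Combined with the trivial $q_-(t_0) \leq q_+(t_0)$, this forces $q_-(t_0) = q_+(t_0) = z_c(t_0)$, and the identical argument applied to $\bar q_{\pm}$ yields $\bar{q}_-(t_0) = \bar{q}_+(t_0) = z_c(t_0)$. Appealing once more to Corollary \ref{cor:2nd-diff}, now with $t_0$ arbitrary, this establishes (\ref{eq:twice-diff-assump}) for every $t \in (0,1)$ together with the chain of identities (\ref{eq:twice-diff-concl}).

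Second, I deduce (\ref{eq:z-c}) from the primal and dual inequalities (\ref{eq:3rd-primal}) and (\ref{eq:3rd-dual}). These hold pointwise at every $0 < s < t < 1$, and by the previous step all of the quantities $p_\pm(s)$, $q_\pm(s)$, $\bar p_\pm(t)$, $\bar q_\pm(t)$ coincide with $z_c(s)$ or $z_c(t)$ respectively. Therefore (\ref{eq:3rd-primal}) and (\ref{eq:3rd-dual}) reduce to
\begin{align*}
\frac{z_c(t)-z_c(s)}{t-s} \geq \frac{s}{t}\,\frac{z_c(s)^2}{\ell^2}, \qquad
\frac{z_c(t)-z_c(s)}{t-s} \geq \frac{1-t}{1-s}\,\frac{z_c(t)^2}{\ell^2}.
\end{align*}
Both right-hand sides are non-negative, so both left-hand sides are as well; multiplying the two inequalities and extracting a square root yields
\[
\frac{z_c(t)-z_c(s)}{t-s} \geq \sqrt{\frac{s(1-t)}{t(1-s)}}\,\frac{|z_c(s)|\,|z_c(t)|}{\ell^2},
\]
which is exactly (\ref{eq:z-c}).

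Third, at a point $t \in (0,1)$ where $z_c$ is differentiable, I take the limit $s \uparrow t$ in (\ref{eq:z-c}). Continuity of $z_c$ and $\sqrt{s(1-t)/(t(1-s))} \to 1$ make the right-hand side converge to $z_c(t)^2/\ell^2$, while the left-hand side converges to $z_c'(t)$, yielding the pointwise inequality $z_c'(t) \geq z_c(t)^2/\ell^2$.

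The conceptual core of the argument is Step~1: a.e.\ twice differentiability at $(\tau,\gamma_t)$ can be bootstrapped to an everywhere statement precisely because the one-sided second derivatives $q_\pm,\bar q_\pm$ are monotone functions---a non-trivial consequence of Theorem \ref{thm:main-3rd-order}---and thus admit left/right limits that are sandwiched against the continuous function $z_c$. I expect this monotonicity-plus-density squeeze to be the subtle part. Once pointwise regularity is in hand, the third-order inequality (\ref{eq:z-c}) follows from a symmetric use of the primal and dual estimates through the elementary geometric-mean trick, and the formal ODE $z_c' \geq z_c^2/\ell^2$ is immediate.
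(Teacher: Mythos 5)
Your proposal is correct and takes essentially the same route as the paper's own proof: the a.e.-to-everywhere upgrade is exactly the paper's argument combining monotonicity of $\tilde{q}_{\pm}$ with continuity of $z_c$ (your density squeeze spells this out), and (\ref{eq:z-c}) is obtained, as in the paper, by taking the geometric mean of (\ref{eq:3rd-primal}) and (\ref{eq:3rd-dual}) after identifying all the quantities $p_\pm, q_\pm, \bar p_\pm, \bar q_\pm$ with $z_c$ via Corollary \ref{cor:2nd-diff}. The final limiting step $s \to t$ also coincides with the paper's conclusion.
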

\begin{proof}
The assumptions imply by Corollary \ref{cor:2nd-diff} that $\tilde{q}_-(t) = \tilde{q}_+(t) = z_c(t)$ for a.e. $t \in (0,1)$. It follows that the same is true for every $t \in (0,1)$ by monotonicity of $\tilde{q}_{\pm}$ and the assumption that $z_c$ is continuous, yielding (\ref{eq:twice-diff-concl}). 
Furthermore, Corollary \ref{cor:2nd-diff} implies that $\tilde{p}_-(t) = \tilde{p}_+(t) = z_c(t)$ for both $\tilde{p} = p,\bar{p}$ and for all $t \in (0,1)$, and we obtain (\ref{eq:z-c}) by taking geometric mean of (\ref{eq:3rd-primal}) and (\ref{eq:3rd-dual}). The final assertion obviously follows by taking the limit in (\ref{eq:z-c}) as $s \rightarrow t$.
\end{proof}

We do not know whether all three assumptions (\ref{eq:twice-diff-assump}), (\ref{eq:twice-diff-assump-2}) and (\ref{eq:twice-diff-b-assump}) hold for a.e. $t \in (0,1)$ for a fixed Kantorovich geodesic $\gamma$. 
However, we can guarantee the first two assumptions, at least for almost all Kantorovich geodesics, in the following sense:

\begin{lemma} \label{lem:Fubini-diagonal} 
Let $\nu$ denote any $\sigma$-finite Borel measure concentrated on $G_\varphi$, so that for a.e. $t \in (0,1)$, $\mu_t := (\ee_t)_{\sharp}(\nu) \ll \sfm$ for some $\sigma$-finite Borel measure $\sfm$ on $X$.  Then for $\nu$-a.e. geodesic $\gamma$, (\ref{eq:twice-diff-assump}) and (\ref{eq:twice-diff-assump-2}) hold for a.e. $t \in (0,1)$. 
\end{lemma}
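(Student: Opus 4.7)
The plan is to combine Corollary~\ref{cor:accum1} (the pointwise-in-$x$ result) with a two-step Fubini argument that transfers an a.e.-in-$t$ statement valid for each fixed $x \in X$ into a $\nu$-a.e. statement on $G_\varphi$, using the hypothesis $\mu_t \ll \sfm$. For each fixed $x$, Corollary~\ref{cor:accum1} produces an $\L^1$-null exceptional set $N(x) \subset \mathring{G}_\varphi(x)$ outside of which $\partial_t \ell^2_t(x)$ and $\partial_t \ellc^2_t(x)$ both exist and coincide; by Corollary~\ref{cor:2nd-diff} this is equivalent to both (\ref{eq:twice-diff-assump}) and (\ref{eq:twice-diff-assump-2}) holding at the pair $(x,t)$. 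Since every $\gamma \in G_\varphi$ satisfies $t \in \mathring{G}_\varphi(\gamma_t)$ for all $t \in (0,1)$, the lemma reduces to proving that for $\nu$-a.e.~$\gamma$ and $\L^1$-a.e.~$t \in (0,1)$ one has $t \notin N(\gamma_t)$.

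To this end, I introduce $E := \{(x,t) \in X \times (0,1) : t \in N(x)\}$. The main technical step is to verify that $E$ is jointly Borel in $(x,t)$. I plan to do this by expressing the defining failure conditions of Corollary~\ref{cor:accum1} through countable operations — rational difference quotients together with $\limsup$ and $\liminf$ — applied to the jointly Borel (in fact upper/lower semi-continuous) functions $(x,t) \mapsto \ell_t^{\pm}(x), \ellc_t^{\pm}(x)$ supplied by Corollary~\ref{cor:AGS}(2), restricted to the Borel domain $D(\mathring{G}_\varphi)$ (Borel by Corollary~\ref{cor:G-closed}). Granting this, every $x$-section of $E$ satisfies $\L^1(E(x)) = \L^1(N(x)) = 0$, so $\sigma$-finiteness of $\sfm$ and Fubini yield $(\sfm \otimes \L^1)(E) = 0$; consequently, for $\L^1$-a.e.~$t$ the complementary section $E(t) = \{x : t \in N(x)\}$ is $\sfm$-null.

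The hypothesis $\mu_t \ll \sfm$ then gives $\mu_t(E(t)) = 0$ for $\L^1$-a.e.~$t$, and pushing back through $\mu_t = (e_t)_{\sharp}\nu$, the Borel set $F := \{(\gamma,t) \in G_\varphi \times (0,1) : (\gamma_t, t) \in E\}$ — the preimage of $E$ under the continuous map $(\gamma,t) \mapsto (\gamma_t,t)$ — satisfies $\nu(F(t)) = 0$ for a.e.~$t$. A second application of Fubini, this time with the $\sigma$-finite product measure $\nu \otimes \L^1$ on $G_\varphi \times (0,1)$, delivers $(\nu \otimes \L^1)(F) = 0$, so the $\gamma$-section $F(\gamma)$ is $\L^1$-null for $\nu$-a.e.~$\gamma$, which is precisely the assertion of the lemma. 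The main obstacle is the joint Borel measurability of $E$, which must be argued with some care by expressing twice differentiability via countable operations on Borel functions in $(x,t)$; once this is in hand, everything else is a standard two-step product-measure manipulation.
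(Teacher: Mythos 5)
Your proposal is correct and follows essentially the same route as the paper: one defines the ``bad'' set $B \subset D(\mathring G_\varphi)$ where the two derivatives fail to exist or to coincide, uses Corollary \ref{cor:accum1} to see that each $x$-section is $\L^1$-null, applies Fubini with $\sfm \otimes \L^1$, transfers to $\nu$ via $\mu_t \ll \sfm$, applies Fubini again with $\nu \otimes \L^1$, and concludes by Corollary \ref{cor:2nd-diff}. The measurability of the exceptional set, which you flag as the main obstacle, is likewise only asserted (not elaborated) in the paper, so your treatment is on the same footing.
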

\begin{proof}
Recall that $D(\mathring G_\varphi)$ is closed in $X \times (0,1)$ by Corollary \ref{cor:G-closed}. Denote the following Borel subsets:
\[
P := \set{ (x,t) \in D(\mathring G_\varphi) \; ; \;  \exists \partial_t \ell^2_t(x)  ~,~ \exists \partial_t \ellc^2_t(x)  ~,~ \partial_t \ell^2_t(x)/2 = \partial_t \ellc^2_t(x)/2 } ~,~ B := D(\mathring G_\varphi) \setminus P .
\]
By Corollary \ref{cor:accum1}, we know that $\L^1(B(x)) = 0$ for all $x \in X$.
By Fubini:
\[
0 = \int \L^1(B(x)) \sfm(dx) = \int_0^1 \sfm(B(t)) \L^1(dt) ,
\]
and so for a.e. $t \in (0,1)$, $\sfm(B(t)) = 0$. Since $\mu_t \ll \sfm$ for a.e. $t \in (0,1)$, it follows that for a.e. $t \in (0,1)$, $\nu(\ee_t^{-1} B(t)) = \mu_t(B(t)) =  0$. In other words, for a.e. $t \in (0,1)$, the Borel set $\set{ \gamma \in G_\varphi \; ; \;  \gamma_t \in B(t) }$ has zero $\nu$-measure. 
Applying Fubini again as before: 
$$
0 = \int \nu(\set{ \gamma \in G_\varphi \; ; \;  \gamma_t \in B(t) }) 
\L^1(dt) = \int \L^1(\set{ t \in (0,1) \; ; \;  \gamma_t \in B(t)})
\nu(d\gamma) ,
$$
we conclude that for $\nu$-a.e. $\gamma \in G_\varphi$, the set $\set {t \in (0,1) \; ; \; \gamma_t \in B(t) }$
has zero Lebesgue measure, or equivalently, the set:
\[
\set{ t \in (0,1) \; ; \;  \exists \partial_\tau|_{\tau=t} \ell^2_\tau(\gamma_t) ~,~ \exists \partial_\tau|_{\tau=t} \ellc^2_\tau(\gamma_t) ~,~ \partial_\tau|_{\tau=t} \ell^2_\tau(\gamma_t)/2 = \partial_\tau|_{\tau=t} \ellc^2_\tau(\gamma_t)/2}
\]
has full Lebesgue measure. The asserted (\ref{eq:twice-diff-assump}) and  (\ref{eq:twice-diff-assump-2}) now directly follow from 
an application of Corollary \ref{cor:2nd-diff}.
\end{proof}

Finally, we obtain the following concise interpretation of the 3rd order information on $\tau \mapsto \varphi_\tau$ along $\gamma_t$, which will play a crucial role in this work:
\begin{lemma}  \label{lem:z-ac}
Assume that for some locally absolutely continuous function $z_{ac}$ on $(0,1)$ we have: 
\[
\exists \partial_\tau|_{\tau=t} \frac{\ell^2_\tau}{2} (\gamma_t) = z_{ac}(t) \;\;\; \text{for a.e. } t \in (0,1) . 
\]
Then
for any fixed $r_0 \in (0,1)$, the function:
\[
L(r) = \exp \brac{-\frac{1}{\ell^2} \int_{r_0}^r \partial_\tau|_{\tau=t} \frac{\ell^2_\tau}{2}(\gamma_t) dt } = \exp \brac{-\frac{1}{\ell^2} \int_{r_0}^r z_{ac}(t) dt } ,
\]
is concave on $(0,1)$. 
\end{lemma}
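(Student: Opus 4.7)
The plan is to reduce the concavity of $L$ to the pointwise differential inequality $z_{ac}'(r) \geq z_{ac}(r)^2/\ell^2$ (for a.e.\ $r$), and then to derive that inequality from the third-order estimate (\ref{eq:3rd-primal}) supplied by Theorem \ref{thm:main-3rd-order}. My first step is to translate the hypothesis into Peano-derivative language: writing $N \subset (0,1)$ for the null set on which $\partial_\tau|_{\tau=t}\ell_\tau^2(\gamma_t)/2$ fails to exist or fails to equal $z_{ac}(t)$, I have $p_-(t) = p_+(t) = z_{ac}(t)$ on $(0,1) \setminus N$, whence Corollary \ref{cor:2nd-diff} yields $q_-(t) = q_+(t) = z_{ac}(t)$ on the same set.

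Substituting these equalities into (\ref{eq:3rd-primal}) gives, for every $s, t \in (0,1)\setminus N$ with $s < t$,
$$z_{ac}(t) - z_{ac}(s) \geq (t-s)\frac{s}{t}\brac{\frac{z_{ac}(s)}{\ell}}^2.$$
Since $(0,1)\setminus N$ is dense and $z_{ac}$ is continuous on $(0,1)$, I would extend this to all $0<s<t<1$ by a routine approximation: given arbitrary $s<t$, choose sequences $s_n \to s$, $t_n \to t$ with $s_n,t_n \in (0,1)\setminus N$ and $s_n<t_n$ (possible because $t>s$), and pass to the limit on both sides. Then at every differentiability point $s$ of the locally absolutely continuous function $z_{ac}$ (a full-measure subset of $(0,1)$), dividing by $t - s$ and letting $t \to s^+$ yields $z_{ac}'(s) \geq z_{ac}(s)^2/\ell^2$ a.e.

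Finally, the concavity of $L$ will follow by direct computation. Since $z_{ac}$ is continuous, $L \in C^1((0,1))$ with $L'(r) = -(z_{ac}(r)/\ell^2)\, L(r)$; this product of locally absolutely continuous factors is itself locally absolutely continuous, so $L'\in W^{1,1}_{\mathrm{loc}}((0,1))$ and for a.e.\ $r \in (0,1)$,
$$L''(r) = \frac{L(r)}{\ell^2}\brac{\frac{z_{ac}(r)^2}{\ell^2} - z_{ac}'(r)} \leq 0,$$
which says $L'$ is non-increasing, i.e.\ $L$ is concave.

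The main content of the argument is supplied by Theorem \ref{thm:main-3rd-order} (in particular (\ref{eq:3rd-primal})), so the conceptual third-order work is already done. The one delicate point is the need to avoid Theorem \ref{thm:z-c}: we have hypotheses only on $\ell_\tau^2(\gamma_t)$, not on the time-reversed $\ellc_\tau^2(\gamma_t)$, so assumption (\ref{eq:twice-diff-b-assump}) cannot be invoked. Using (\ref{eq:3rd-primal}) directly bypasses this, since that inequality is intrinsic to the primal potential $\varphi$ and involves only $p_\pm$ and $q_\pm$. Apart from the density/continuity extension in the second paragraph, I do not anticipate any serious obstacle.
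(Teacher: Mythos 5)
Your proof is correct, and the endgame coincides with the paper's: both reduce concavity to the a.e.\ differential inequality $z_{ac}'(r) \geq z_{ac}(r)^2/\ell^2$ and then exploit local absolute continuity (you via $L' = -(z_{ac}/\ell^2)L$ and $L''\leq 0$ a.e., the paper via the equivalent statement that $W=-\ell^2 L'=L\,z_{ac}$ is non-decreasing). Where you genuinely diverge is the source of that inequality. The paper's proof simply cites Theorem \ref{thm:z-c}, whose hypotheses (\ref{eq:twice-diff-assump})--(\ref{eq:twice-diff-b-assump}) also involve the time-reversed quantities $\ellc_\tau(\gamma_t)$; as you correctly observe, the lemma's stated hypothesis (primal only) does not literally supply these, so read as a standalone statement the citation is loose -- harmless in the paper only because at the point of application (assumption (C) of Theorem \ref{thm:rigid}, obtained via (\ref{eq:z-partial-lt})) the primal and time-reversed derivatives are known to exist and coincide a.e., so that only the implication ``$z'\geq z^2/\ell^2$ a.e.\ implies $L$ concave'' is actually used. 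You instead re-derive the inequality directly from the primal estimate (\ref{eq:3rd-primal}) combined with Corollary \ref{cor:2nd-diff} ($p_\pm=q_\pm=z_{ac}$ off a null set), extending by density and continuity of $z_{ac}$ and letting $t\to s^+$ at a.e.\ differentiability point; this is in effect the primal half of the proof of Theorem \ref{thm:z-c}, and it makes the lemma self-contained under its stated hypothesis, at the cost of not producing the two-sided quantitative bound (\ref{eq:z-c}), which the lemma does not need. (The density extension step could even be skipped by working on the intersection of the two full-measure sets and letting $t\downarrow s$ within it.) Both routes ultimately rest on Theorem \ref{thm:main-3rd-order}.
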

\begin{proof}
Since $L \in C^1(0,1)$, concavity of $L$ is equivalent to showing that the function:
\[
W(r) := - \ell^2 L'(r) = L(r) z_{ac}(r) 
\]
is monotone non-decreasing. But as this function is locally absolutely continuous, this is equivalent to showing that $W'(r) \geq 0$ for a.e. $r \in (0,1)$. Note that the points of differentiability of $W$ and $z_{ac}$ coincide. At these points (of full Lebesgue measure), we indeed have:
\[
W'(r) = L'(r) z_{ac}(r) + L(r) z'_{ac}(r) = L(r) (z_{ac}'(r) - z_{ac}(r)^2 / \ell^2) \geq 0,
\]
where the last inequality follows from Theorem \ref{thm:z-c}. This concludes the proof.
\end{proof}

We will subsequently show that under synthetic curvature conditions, the above assumption is indeed satisfied for $\nu$-a.e. geodesic $\gamma$.

\bigskip
\bigskip

\part{Disintegration Theory of Optimal Transport}\label{part2}

\section{Preliminaries} \label{sec:PartII-prelim}

So far we have worked without considering any reference measure over our metric space $(X,\sfd)$.
A triple $(X,\sfd, \mm)$ is called a metric measure space, \mms for short, if $(X, \sfd)$ is a complete and separable metric space and $\mm$ is a non-negative Borel measure over $X$. 
\textbf{In this work we will only be concerned with the case that $\mm$ is a probability measure, that is $\mm(X) =1$}, and hence $\mm$ is automatically a Radon measure (i.e. inner-regular). 
We refer to \cite{ambro:userguide,AGS-Book,GromovBook,Vil:topics, Vil} for background on metric measure spaces in general, and the theory of optimal transport on such spaces in particular.

\subsection{Geometry of Optimal Transport on Metric Measure Spaces}

The space of all Borel probability measures over $X$ will be denoted by $\mathcal{P}(X)$. It is naturally equipped with its weak topology, in duality with bounded continuous functions $C_b(X)$ over $X$.  The subspace of those measures having finite second moment will be denoted by $\mathcal{P}_{2}(X)$, and the subspace of $\mathcal{P}_{2}(X)$
of those measures absolutely continuous with respect to $\mm$ is denoted by $\mathcal{P}_{2}(X,\sfd,\mm)$.
The weak topology on $\mathcal{P}_{2}(X)$ is metrized by the $L^{2}$-Wasserstein distance $W_{2}$, defined as follows for any $\mu_0,\mu_1 \in \mathcal{P}(X)$:
\begin{equation}\label{eq:Wdef}
  W_2^2(\mu_0,\mu_1) := \inf_{ \pi} \int_{X\times X} \sfd^2(x,y) \, \pi(dx , dy),
\end{equation}
where the infimum is taken over all $\pi \in \mathcal{P}(X \times X)$ having $\mu_0$ and $\mu_1$ as the first and the second marginals, respectively; such candidates $\pi$ are called transference plans. 
It is known that the infimum in (\ref{eq:Wdef}) is always attained for any $\mu_0,\mu_1 \in \mathcal{P}(X)$, and the transference plans realizing this minimum are called optimal transference plans between $\mu_0$ and $\mu_1$.
When $W_2(\mu_0,\mu_1) < \infty$, it is known that given an optimal transference plan $\pi$ between $\mu_0$ and $\mu_1$, there exists a Kantorovich potential $\varphi : X \rightarrow \Real$ 
(see Section \ref{sec:order12}), which is associated to $\pi$, meaning that:
\begin{equation} \label{eq:OptimalKant}
\varphi(x) + \varphi^c(y) = \frac{\sfd(x,y)^2}{2} \;\;\;  \text{for $\pi$-a.e. $(x,y) \in X \times X$} .
\end{equation}

In particular, when $\mu_0,\mu_1 \in \mathcal{P}_2(X)$, then necessarily $W_2(\mu_0,\mu_1) < \infty$ and the above discussion applies. Moreover, in this case, it is known that for \emph{any} Kantorovich potential $\varphi$ associated to an optimal transference plan between $\mu_0$ and $\mu_1$, (\ref{eq:OptimalKant}) in fact holds for \emph{all} optimal transference plans $\pi$ between $\mu_0$ and $\mu_1$. In addition, in this case a transference plan $\pi$ is optimal iff it is supported on a $\sfd^2$-cyclically monotone set. A set $\Lambda \subset X \times X$ is said to be $c$-cyclically monotone if for any finite set of points $\{(x_{i},y_{i})\}_{i = 1,\ldots,N} \subset \Lambda$ it holds
$$
\sum_{i = 1}^{N} c(x_{i},y_{i}) \leq \sum_{i = 1}^{N} c(x_{i},y_{i+1}),
$$
with the convention that $y_{N+1} = y_{1}$.

\smallskip

As $(X,\sfd)$ is a complete and separable metric space then so is $(\mathcal{P}_2(X), W_2)$. 
Under these assumptions, it is known that $(X,\sfd)$ is geodesic if and only if $(\mathcal{P}_2(X), W_2)$ is geodesic. Recall that ${\rm e}_{t}$ denotes the (continuous) evaluation map at $t \in [0,1]$:
$$
  {\rm e}_{t} : \Geo(X) \ni \gamma\mapsto \gamma_t \in X. $$
A measure $\nu \in  \mathcal{P}(\Geo(X))$ is called an optimal dynamical plan if $({\rm e}_0,{\rm e}_1)_{\sharp} \nu$ is an optimal transference plan; it easily follows in that case that $[0,1] \ni t \mapsto ({\rm e}_t)_\sharp \nu$ is a geodesic in $(\mathcal{P}_2(X), W_2)$.
It is known that any geodesic $(\mu_t)_{t \in [0,1]}$ in $(\mathcal{P}_2(X), W_2)$ can be lifted to an optimal dynamical plan $\nu$ so that $({\rm e}_t)_\sharp \, \nu = \mu_t$ for all $t \in [0,1]$ (see for instance \cite[Theorem 2.10]{ambro:userguide}). 
 We denote by $\Opt(\mu_{0},\mu_{1})$ the space of all optimal dynamical plans $\nu$ so that $({\rm e}_i)_\sharp \, \nu = \mu_i$, $i=0,1$.
Consequently, whenever $(X,\sfd)$ is geodesic, the set $\Opt(\mu_{0},\mu_{1})$ is non-empty for all $\mu_0,\mu_1\in \mathcal{P}_2(X)$, and for any Kantorovich potential $\varphi$ associated to an optimal transference plan between $\mu_0$ and $\mu_1$, we have $\nu(G_\varphi) = 1$ for all $\nu \in \Opt(\mu_{0},\mu_{1})$.

In order to consider restrictions of optimal dynamical plans, for any $s,t \in [0,1]$ with $s \leq t$ we consider the restriction map
$$
\text{restr}^{t}_{s} : C([0,1]; X) \ni \gamma \mapsto \gamma\circ f^t_s \in C([0,1]; X), 
$$
where $f^{t}_{s} : [0,1] \to [s,t]$ is defined by $f^{t}_{s}(\tau) = s + (t-s) \tau$. During this work we will use the following facts: 
if $\nu\in \Opt(\mu_{0}, \mu_{1})$ then the restriction  
$(\text{restr}^{t}_{s})_{\sharp}\nu$ is still an optimal dynamical plan, now between $\mu_s$ and $\mu_t$ where $\mu_{r}:=(\ee_{r})_{\sharp}\nu$. 
Moreover, any probability measure $\nu' \in \P(\Geo(X))$ with $\supp(\nu') \subset \supp(\nu) ( \subset G_\varphi)$ is also an optimal dynamical plan, between $(\ee_{0})_{\sharp} \nu'$ and $(\ee_{1})_{\sharp} \nu'$.

\smallskip
On several occasions we will use the following standard lemma (whose proof is a straightforward adaptation of e.g. \cite[Lemma 4.4]{CM3}, relying on the Arzel\`a--Ascoli and Prokhorov theorems):

\begin{lemma} \label{lem:nu-compactness}
Assume that $(X,\sfd)$ is a Polish and proper space. Let $\set{\mu_0^i},\set{\mu_1^i} \subset \P_2(X)$ denote two sequences of probability measures weakly converging to $\mu^\infty_0 ,\mu^\infty_1 \in \P_2(X)$, respectively. Assume that $\nu^i \in \Opt(\mu_0^i,\mu_1^i)$. Then there exists a subsequence $\set{\nu^{i_j}}$ weakly converging to $\nu^\infty \in \Opt(\mu_0^\infty,\mu_1^\infty)$. 
\end{lemma}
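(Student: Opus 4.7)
The plan is to combine tightness via Arzel\`a--Ascoli with Prokhorov's theorem, and then upgrade a weak limit to an element of $\Opt(\mu_0^\infty,\mu_1^\infty)$ via lower semi-continuity of the quadratic cost together with the convergence of Wasserstein distances. All topology on $\Geo(X) \subset C([0,1],X)$ is the uniform one.

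First I would establish tightness of $\set{\nu^i}$ in $\P(C([0,1],X))$. Since $\set{\mu_0^i}$ and $\set{\mu_1^i}$ are weakly convergent in a Polish space, by Prokhorov they are tight; given $\eps > 0$, choose compact sets $K_0,K_1 \subset X$ with $\mu_0^i(K_0) \geq 1-\eps/2$ and $\mu_1^i(K_1) \geq 1-\eps/2$ for all $i$. Let
\[
\Gamma_{K_0,K_1} := \set{ \gamma \in \Geo(X) \; ; \; \gamma_0 \in K_0 \,,\, \gamma_1 \in K_1 } .
\]
Every $\gamma \in \Gamma_{K_0,K_1}$ has Lipschitz constant $\sfd(\gamma_0,\gamma_1) \leq R := \diam(K_0 \cup K_1) < \infty$, and $\gamma([0,1]) \subset \overline{B}_R(K_0)$, which is compact by properness of $X$. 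So $\Gamma_{K_0,K_1}$ is equicontinuous and pointwise relatively compact, hence relatively compact in $C([0,1],X)$ by Arzel\`a--Ascoli; moreover the geodesic condition is preserved under uniform convergence, so $\Gamma_{K_0,K_1}$ is in fact compact in $\Geo(X)$. Since $\nu^i(\Gamma_{K_0,K_1}) \geq 1 - \eps$ for every $i$ (as $\ee_0,\ee_1$ have the prescribed marginals), tightness of $\set{\nu^i}$ follows. By Prokhorov's theorem, a subsequence $\set{\nu^{i_j}}$ converges weakly to some $\nu^\infty \in \P(C([0,1],X))$; since $\Geo(X)$ is closed, $\nu^\infty$ is concentrated on $\Geo(X)$.

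Next I verify the marginal and optimality conditions. Continuity of $\ee_0,\ee_1 : \Geo(X) \rightarrow X$ gives $(\ee_r)_{\sharp} \nu^{i_j} \weak (\ee_r)_{\sharp} \nu^\infty$ for $r=0,1$, so $(\ee_r)_{\sharp} \nu^\infty = \mu_r^\infty$ for $r=0,1$. Interpreting the weak convergence in $\P_2(X)$ along the ends of the geodesics as $W_2$-convergence (i.e.\ weak convergence together with convergence of second moments, the standard topology on $\P_2(X)$), one has $W_2(\mu_0^{i_j},\mu_1^{i_j}) \to W_2(\mu_0^\infty,\mu_1^\infty)$ by the triangle inequality in $(\P_2(X),W_2)$. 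Since $\sfd^2$ is lower semi-continuous on $X \times X$ and $(\ee_0,\ee_1)_\sharp \nu^{i_j} \weak (\ee_0,\ee_1)_\sharp \nu^\infty$, we get
\[
\int_{X\times X} \sfd^2 \, d(\ee_0,\ee_1)_{\sharp} \nu^\infty \leq \liminf_{j \to \infty} \int_{X\times X} \sfd^2 \, d(\ee_0,\ee_1)_{\sharp} \nu^{i_j} = \liminf_{j \to \infty} W_2^2(\mu_0^{i_j},\mu_1^{i_j}) = W_2^2(\mu_0^\infty,\mu_1^\infty) ,
\]
where the middle equality uses that each $\nu^{i_j}$ is optimal and concentrated on geodesics, so the cost is integrated against the squared lengths. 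Since the reverse inequality holds for any coupling of $\mu_0^\infty,\mu_1^\infty$, the plan $\pi^\infty := (\ee_0,\ee_1)_\sharp \nu^\infty$ is optimal.

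Finally, as $\nu^\infty$ is concentrated on $\Geo(X)$ with optimal endpoint-coupling $\pi^\infty$, the curve $[0,1] \ni t \mapsto \mu_t^\infty := (\ee_t)_{\sharp}\nu^\infty$ is a $W_2$-geodesic: indeed for any $0 \leq s \leq t \leq 1$, using that $(\ee_s,\ee_t)_{\sharp}\nu^\infty$ is a coupling between $\mu_s^\infty$ and $\mu_t^\infty$ and the geodesic property,
\[
W_2(\mu_s^\infty,\mu_t^\infty) \leq \left( \int \sfd^2(\gamma_s,\gamma_t) \, \nu^\infty(d\gamma) \right)^{1/2} = (t-s) \left( \int \len(\gamma)^2 \, \nu^\infty(d\gamma) \right)^{1/2} = (t-s) W_2(\mu_0^\infty,\mu_1^\infty) ,
\]
and the reverse inequality $W_2(\mu_0^\infty,\mu_1^\infty) \leq W_2(\mu_0^\infty,\mu_s^\infty) + W_2(\mu_s^\infty,\mu_t^\infty) + W_2(\mu_t^\infty,\mu_1^\infty)$ forces equality. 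Hence $\nu^\infty \in \Opt(\mu_0^\infty,\mu_1^\infty)$. The main obstacle is ensuring the identification $\liminf \int \sfd^2 \, d(\ee_0,\ee_1)_\sharp \nu^{i_j} = W_2^2(\mu_0^\infty,\mu_1^\infty)$, which relies on the convergence of $W_2$-distances along the given sequences — this is why the convergence hypothesis must be interpreted in the $\P_2(X)$ (equivalently $W_2$) topology, and it is the only step that would fail under mere weak convergence in $\P(X)$ without control of second moments.
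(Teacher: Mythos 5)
Your proof is correct, and its compactness half is exactly what the paper indicates (the paper gives no argument beyond pointing to \cite[Lemma 4.4]{CM3} and the Arzel\`a--Ascoli and Prokhorov theorems): tightness of $\set{\nu^i}$ from tightness of the endpoint marginals, properness, the uniform Lipschitz bound for geodesics with endpoints in fixed compacta, then Prokhorov plus closedness of $\Geo(X)$, and identification of the marginals of the limit. Where you genuinely diverge is in proving optimality of the limit: you use lower semi-continuity of $\sfd^2$ together with $W_2(\mu_0^{i_j},\mu_1^{i_j}) \to W_2(\mu_0^\infty,\mu_1^\infty)$, so your argument requires reading the hypothesis as convergence in $(\P_2(X),W_2)$. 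That reading is consistent with the paper's declared convention (it states that the weak topology on $\P_2(X)$ is metrized by $W_2$) and covers every application of the lemma in the paper, where the marginals either have uniformly bounded supports or converge in $W_2$ by construction. The more standard route --- and presumably the ``straightforward adaptation'' intended by the citation --- is instead to observe that each $(\ee_0,\ee_1)_\sharp\nu^{i_j}$ is supported on a $\sfd^2$-cyclically monotone set, that cyclical monotonicity of supports passes to weak limits for the continuous cost $\sfd^2$ (approximate finitely many points of $\supp\,(\ee_0,\ee_1)_\sharp\nu^\infty$ by points in the supports along the sequence), and that for marginals in $\P_2(X)$ cyclical monotonicity is equivalent to optimality, a fact recorded in the paper's preliminaries; this yields the conclusion under mere narrow convergence, with no control of second moments. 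So your closing claim should be tempered: the $W_2$ interpretation is needed for \emph{your} cost-convergence step, but the lemma itself remains true under plain weak convergence via the cyclical-monotonicity argument. Your final step, deducing from optimality of $(\ee_0,\ee_1)_\sharp\nu^\infty$ that $t\mapsto(\ee_t)_\sharp\nu^\infty$ is a $W_2$-geodesic, is the correct and standard way to conclude $\nu^\infty\in\Opt(\mu_0^\infty,\mu_1^\infty)$.
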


\begin{definition*}[Essentially Non-Branching \mms]
A subset $G \subset \Geo(X)$ of geodesics is called non-branching if for any $\gamma^{1}, \gamma^{2} \in G$ the following holds:
$$
\exists t \in (0,1) \;\;\; \gamma^{1}_{s} = \gamma^{2}_{s} \;\;\; \forall s \in [0,t] 
\quad 
\Longrightarrow 
\quad 
\gamma^{1}_{s} = \gamma^{2}_{s} \;\;\; \forall s \in [0,1].
$$
$(X,\sfd)$ is called \emph{non-branching} if $\Geo(X)$ is non-branching. 
$(X,\sfd, \mm)$ is called \emph{essentially non-branching} \cite{rajasturm:branch} if for all $\mu_{0},\mu_{1} \in \mathcal{P}_{2}(X,\sfd,\mm)$, any $\nu \in \Opt(\mu_{0},\mu_{1})$ is concentrated on a Borel non-branching set $G\subset \Geo(X)$.
\end{definition*}

\noindent Recall that a measure $\nu$ on a measurable space $(\Omega,\F)$ is said to be concentrated on $A \subset \Omega$ if $\exists B \subset A$ with $B \in \F$ so that $\nu(\Omega \setminus B) = 0$. 

\medskip

\subsection{Curvature-Dimension Conditions}\label{Ss:CD}

We now turn to describe various synthetic conditions encapsulating generalized Ricci curvature lower bounds coupled with generalized dimension upper bounds. 

\begin{definition}[$\sigma_{K,\NN}$-coefficients] \label{def:sigma}
Given $K \in \Real$ and $\NN \in (0,\infty]$, define:
\[
D_{K,\NN} := \begin{cases}  \frac{\pi}{\sqrt{K/\NN}}  & K > 0 \;,\; \NN < \infty \\ +\infty & \text{otherwise} \end{cases} .
\]
In addition, given $t \in [0,1]$ and $0 < \theta < D_{K,\NN}$, define:
\[
\sigma^{(t)}_{K,\NN}(\theta) := \frac{\sin(t \theta \sqrt{\frac{K}{\NN}})}{\sin(\theta \sqrt{\frac{K}{\NN}})} = 
\begin{cases}   
\frac{\sin(t \theta \sqrt{\frac{K}{\NN}})}{\sin(\theta \sqrt{\frac{K}{\NN}})}  & K > 0 \;,\; \NN < \infty \\
t & K = 0 \text{ or } \NN = \infty \\
 \frac{\sinh(t \theta \sqrt{\frac{-K}{\NN}})}{\sinh(\theta \sqrt{\frac{-K}{\NN}})} & K < 0 \;,\; \NN < \infty 
\end{cases} ,
\]
and set $\sigma^{(t)}_{K,\NN}(0) = t$ and $\sigma^{(t)}_{K,\NN}(\theta) = +\infty$ for $\theta \geq D_{K,\NN}$. \\
\end{definition}

\begin{definition}[$\tau_{K,N}$-coefficients]
Given $K \in \Real$ and $N \in (1,\infty]$, define:
\[
\tau_{K,N}^{(t)}(\theta) := t^{\frac{1}{N}} \sigma_{K,N-1}^{(t)}(\theta)^{1 - \frac{1}{N}} .
\]
When $N=1$, set $\tau^{(t)}_{K,1}(\theta) = t$ if $K \leq 0$ and $\tau^{(t)}_{K,1}(\theta) = +\infty$ if $K > 0$. 
\end{definition}

The synthetic Curvature-Dimension condition $\CD(K,N)$ has been defined on a general \mms independently 
in several seminal works by Sturm and Lott--Villani: the case $N=\infty$ and $K \in \Real$ was defined in \cite{sturm:I} and \cite{lottvillani:metric}, the case $N \in [1,\infty)$ in \cite{sturm:II} for  $K \in \Real$ and in \cite{lottvillani:metric} for $K=0$ (and subsequently for $K \in \Real$ in \cite{LottVillani-WeakCurvature}). 
Our treatment in this work excludes the case $N=\infty$ (for which the globalization result we are after is in any case known \cite{sturm:I}). To exclude possible pathological behavior when $N=1$, we will always assume, unless otherwise stated, that $K \in \Real$ and $N \in (1,\infty)$.

We will use the following definition introduced in \cite{sturm:II}. Recall that given $N \in (1,\infty)$, the $N$-R\'enyi relative-entropy functional $\Eps_N : \P(X) \rightarrow [0,1]$ (since $\mm(X) = 1$) is defined as:
\[
\Eps_N(\mu) := \int \rho^{1 - \frac{1}{N}} d\mm ,
\]
where $\mu = \rho \mm + \mu^{\sing}$ is the Lebesgue decomposition of $\mu$ with $\mu^\sing \perp \mm$. It is known \cite{sturm:II} that $\Eps_N$ is upper semi-continuous with respect to the weak topology on $\P(X)$. 

\begin{definition}[$\CD(K,N)$] \label{def:CDKN}
A \mms $(X,\sfd,\mm)$ is said to satisfy $\CD(K,N)$ if for all $\mu_0,\mu_1 \in \P_2(X,\sfd,\mm)$, there exists $\nu \in \Opt(\mu_0,\mu_1)$ so that for all $t\in[0,1]$, $\mu_t := (\ee_t)_{\#} \nu \ll \mm$, and for all $N' \geq N$:
\begin{equation} \label{eq:CDKN-def}
\Eps_{N'}(\mu_t) \geq \int_{X \times X} \brac{\tau^{(1-t)}_{K,N'}(\sfd(x_0,x_1)) \rho_0^{-1/N'}(x_0) + \tau^{(t)}_{K,N'}(\sfd(x_0,x_1)) \rho_1^{-1/N'}(x_1)} \pi(dx_0,dx_1) ,
\end{equation}
where $\pi = (\ee_0,\ee_1)_{\sharp}(\nu)$ and $\mu_i = \rho_i \mm$, $i=0,1$. 
\end{definition}

\begin{remark}
When $\mm(X) < \infty$ as in our setting, it is known \cite[Proposition 1.6 (ii)]{sturm:II} that $\CD(K,N)$ implies $\CD(K,\infty)$, and hence the requirement $\mu_t \ll \mm$ for all intermediate times $t \in (0,1)$ is in fact superfluous, as it must hold automatically by finiteness of the Shannon entropy (see \cite{sturm:I,sturm:II}). 
\end{remark}

The following is a local version of $\CD(K,N)$:

\begin{definition}[$\CD_{loc}(K,N)$]\label{D:CDloc}
A \mms $(X,\sfd,\mm)$ is said to satisfy $\CD_{loc}(K,N)$ if for any $o \in \supp(\mm)$, there exists a neighborhood $X_o \subset X$ of $o$, so that for all $\mu_0,\mu_1 \in \P_2(X,\sfd,\mm)$ supported in $X_o$, there exists $\nu \in \Opt(\mu_0,\mu_1)$ so that for all $t\in[0,1]$, $\mu_t := (\ee_t)_{\#} \nu \ll \mm$, and for all $N' \geq N$, (\ref{eq:CDKN-def}) holds. 
\end{definition}
\noindent
Note that $(\ee_t)_{\sharp} \nu$ is not required to be supported in $X_o$ for intermediate times $t \in(0,1)$ in the latter definition. 

\medskip

The following pointwise density inequality is a known equivalent definition of $\CD(K,N)$ on essentially non-branching spaces (the equivalence follows by combining the results of \cite{CM3} and \cite{GSR:maps}, see the proof of Proposition \ref{P:MCP-density}):

\begin{definition}[$\CD(K,N)$ for essentially non-branching spaces] \label{def:CDKN-ENB}
An essentially non-branching \mms $(X,\sfd,\mm)$ satisfies $\CD(K,N)$ if and only if for all $\mu_0,\mu_1 \in \P_2(X,\sfd,\mm)$, there exists a unique $\nu \in \Opt(\mu_0,\mu_1)$, $\nu$ is induced by a map (i.e. $\nu = S_{\sharp}(\mu_0)$ for some map $S : X \rightarrow \Geo(X)$),  $\mu_t := (\ee_t)_{\#} \nu \ll \mm$ for all $t \in [0,1]$, and writing $\mu_t = \rho_t \mm$, we have for all $t \in [0,1]$:
\[
\rho_t^{-1/N}(\gamma_t) \geq  \tau_{K,N}^{(1-t)}(\sfd(\gamma_0,\gamma_1)) \rho_0^{-1/N}(\gamma_0) + \tau_{K,N}^{(t)}(\sfd(\gamma_0,\gamma_1)) \rho_1^{-1/N}(\gamma_1) \;\;\; \text{for $\nu$-a.e. $\gamma \in \Geo(X)$} .
\]
\end{definition}

\bigskip

The Measure Contraction Property $\MCP(K,N)$ was introduced independently by Ohta in \cite{Ohta1} and Sturm in \cite{sturm:II}. 
The idea is to only require the $\CD(K,N)$ condition to hold when $\mu_1$ degenerates to $\delta_o$, a delta-measure at $o \in \supp(\mm)$. However, there are several possible implementations of this idea. We start with the following one, which is a variation of the one used in \cite{CM3}:

\begin{definition}[$\MCPE(K,N)$] \label{D:MCPE}
A \mms $(X,\sfd,\mm)$ is said to satisfy $\MCPE(K,N)$ if for any $o \in \supp(\mm)$ and $\mu_0 \in \P_2(X,\sfd,\mm)$ with bounded support,
there exists $\nu \in \Opt(\mu_0, \delta_{o} )$, such that for all $t \in [0,1)$, if $\mu_t  := (\ee_t)_{\#} \nu$ then $\supp(\mu_t) \subset \supp(\mm)$, and:
\begin{equation} \label{eq:MCPE-def}
\Eps_{N}(\mu_t) \geq \int_X \tau_{K,N}^{(1-t)} (\sfd(x_0,o)) \rho_0^{1-\frac{1}{N}}(x_0) \mm(dx_0) ,
\end{equation}
where $\mu_0 = \rho_0 \mm$. 
\end{definition}

The variant proposed in \cite{Ohta1} is as follows:

\begin{definition}[$\MCP(K,N)$] \label{D:Ohta1}
A \mms $(X,\sfd,\mm)$ is said to satisfy $\MCP(K,N)$ if for any $o \in \supp(\mm)$ and  $\mu_0 \in \P_2(X,\sfd,\mm)$ of the form $\mu_0 = \frac{1}{\mm(A)} \mm\llcorner_{A}$ for some Borel set $A \subset X$ with $0 < \mm(A) < \infty$,
there exists $\nu \in \Opt(\mu_0, \delta_{o} )$ such that:
\begin{equation} \label{eq:MCP-def}
\frac{1}{\mm(A)} \mm \geq (\ee_{t})_{\sharp} \big( \tau_{K,N}^{(1-t)}(\sfd(\gamma_{0},\gamma_{1}))^{N} \nu(d \gamma) \big) \;\;\; \forall t \in [0,1] .
\end{equation}
\end{definition}

\begin{remark} \label{R:BonnetMyers}
Note that in \cite{Ohta1} it was assumed in addition that $\supp(\mm) = X$ and that $(X,\sfd)$ is a length-space, but (\ref{eq:MCP-def}) was only required to hold for $A \subset B(o, D_{K,N-1})$ if $K>0$; both our version and the one from \cite{Ohta1} imply that the diameter of $\supp(\mm)$ is bounded above by $D_{K,N-1}$ (this follows in our version since $\tau_{K,N}(\theta) = +\infty$ if $\theta \geq D_{K,N-1}$, and by \cite[Theorem 4.3]{Ohta1} in the version from \cite{Ohta1}), and also that $\supp(\mm)$ is a geodesic-space (see Lemma \ref{L:proper-support} below), and therefore both versions are ultimately equivalent. 
\end{remark}

\medskip

When either the $\MCP(K,N)$ or $\MCPE(K,N)$ conditions hold for a given $o \in \supp(\mm)$, we will say that the space satisfies the corresponding condition \emph{with respect to $o$}. 

\begin{remark} \label{rem:supp-nu}
The $\CD(K,N)$, $\CD_{loc}(K,N)$, $\MCPE(K,N)$ and $\MCP(K,N)$ conditions all ensure that for all $t \in [0,1]$, $\supp((\ee_t)_\sharp \nu) \subset \supp(\mm)$ for the appropriate $\nu \in \OptGeo(\mu_0,\mu_1)$ appearing in the corresponding definition. Consequently, for a fixed dense countable set of times $t \in (0,1)$, $\gamma_t \in \supp(\mm)$ for $\nu$-a.e. $\gamma \in \Geo(X)$; since $\supp(\mm)$ is closed, this in fact holds for all $t \in [0,1]$, and hence $\gamma \in \Geo(\supp(\mm))$ for $\nu$-a.e. $\gamma \in \Geo(X)$, i.e. $\supp(\nu) \subset \Geo(\supp(\mm))$. It follows that $(X,\sfd,\mm)$ satisfies $\CD(K,N)$, $\CD_{loc}(K,N)$, $\MCPE(K,N)$ or $\MCP(K,N)$ iff $(\supp(\mm),\sfd,\mm)$ does.
\end{remark}

The following simple lemma will be useful for quickly establishing that $(\supp(\mm),\sfd)$ is proper and geodesic:

\begin{lemma}\label{L:proper-support} 
Let $(X,\sfd,\mm)$ be a \mms verifying $\CD(K,N)$, $\MCPE(K,N)$ or $\MCP(K,N)$. Then $(\supp(\mm),\sfd)$ is a Polish, proper and geodesic space. 
The same holds for $\CD_{loc}(K,N)$ if $(\supp(\mm),\sfd)$ is assumed to be a length space.
\end{lemma}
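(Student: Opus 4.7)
The plan is as follows. First, since $(X,\sfd)$ is complete and separable by the definition of a \mms and $\supp(\mm)$ is closed, $(\supp(\mm),\sfd)$ is automatically Polish, so only properness and the geodesic property require work. By Lemma \ref{lem:CD-MCPE-MCP} the conditions $\CD(K,N)$ and $\MCPE(K,N)$ both imply $\MCP(K,N)$, so for the first three cases it suffices to argue under $\MCP(K,N)$. I would first establish properness: the inequality (\ref{eq:MCP-def}) applied to $\mu_0 = \mm(B(o,r))^{-1}\mm\llcorner_{B(o,r)}$ for $o \in \supp(\mm)$ yields, after integrating the distortion coefficients, a Bishop–Gromov type volume comparison on $\supp(\mm)$, and in particular a local doubling condition (this is the standard derivation in \cite{sturm:II,Ohta1}). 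Combined with completeness, local doubling forces closed balls in $\supp(\mm)$ to be totally bounded and hence compact, so $(\supp(\mm),\sfd)$ is proper.

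Next, to obtain the geodesic property under $\MCP(K,N)$, fix $x_0,x_1 \in \supp(\mm)$ (contained in $B(x_1,\pi\sqrt{(N-1)/K})$ in the $K>0$ case, which is no loss of generality by the known diameter bound in that case). For small $\eps>0$ set $\mu_0^\eps := \mm(B(x_0,\eps))^{-1}\mm\llcorner_{B(x_0,\eps)}$, which is well-defined and of bounded support since $x_0 \in \supp(\mm)$. Applying $\MCP(K,N)$ with $o = x_1$ produces $\nu^\eps \in \Opt(\mu_0^\eps,\delta_{x_1})$; by Remark \ref{rem:supp-nu} each $\nu^\eps$ is concentrated on geodesics that remain in $\supp(\mm)$ at all intermediate times. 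As $\mu_0^\eps \weak \delta_{x_0}$ and $(\supp(\mm),\sfd)$ is now known to be proper, Lemma \ref{lem:nu-compactness} furnishes a subsequential weak limit $\nu^0 \in \Opt(\delta_{x_0},\delta_{x_1})$. Since $\supp(\mm)$ is closed, the support of each $(\ee_t)_\sharp \nu^0$ is still contained in $\supp(\mm)$, so $\nu^0$ is concentrated on geodesics lying entirely in $\supp(\mm)$ connecting $x_0$ and $x_1$, which is the desired conclusion.

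For the remaining case of $\CD_{loc}(K,N)$ under the additional length-space assumption on $(\supp(\mm),\sfd)$, the strategy is to invoke the metric Hopf–Rinow theorem already cited in the Preliminaries of Part I: a complete, locally compact length space is proper and geodesic. Completeness of $\supp(\mm)$ is inherited from $X$, so it remains to show local compactness. Given $o \in \supp(\mm)$, the neighbourhood $X_o$ from Definition \ref{D:CDloc} supports the same type of interpolation inequality (\ref{eq:CDKN-def}) for measures concentrated near $o$; applying this exactly as in the $\MCP$ argument above but restricted to $X_o$ (taking $\mu_1 = \mm(B(o,\eps))^{-1} \mm\llcorner_{B(o,\eps)}$ and passing to the limit $\eps \to 0$ as in the proof of Lemma \ref{lem:CD-MCPE-MCP}) yields a local doubling condition for $\mm$ in a small enough sub-neighbourhood of $o$, and hence local total-boundedness, hence local compactness of $\supp(\mm)$ near $o$.

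The one delicate point, which I expect to be the main obstacle, is carrying out the local Bishop–Gromov argument under the strictly local hypothesis $\CD_{loc}(K,N)$: the definition only controls $\nu \in \Opt(\mu_0,\mu_1)$ when both endpoints are supported in $X_o$, but says nothing about intermediate-time excursions outside $X_o$, and the approximating $\nu^\eps$ used to reach a Dirac at $o$ need to be kept within the local neighbourhood along the way. This is handled by restricting to a ball of sufficiently small radius compared to the size of $X_o$, so that all Wasserstein geodesics between points of this smaller ball remain in $X_o$ by the length-space assumption; this uses exactly the same trick that underlies the passage from local to global volume comparison in \cite{sturm:II,sturm:loc} and the treatment in \cite{CM1}.
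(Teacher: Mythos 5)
Your argument follows essentially the same route as the paper's proof: Polishness from closedness of $\supp(\mm)$, properness from the doubling property of $\MCP(K,N)$ (via \cite{Ohta1,sturm:II}), the geodesic property by transporting a normalized small ball to a Dirac mass and extracting a limit geodesic inside the closed set $\supp(\mm)$, and the $\CD_{loc}$ case via local doubling, local compactness and the metric Hopf--Rinow theorem. One small caveat: in the $\CD_{loc}$ step you should not literally ``pass to the limit $\eps \to 0$ as in the proof of Lemma \ref{lem:CD-MCPE-MCP}'', since that limiting argument invokes Lemma \ref{lem:nu-compactness} and hence properness (the very thing being proved there); this is harmless because the local doubling estimate already follows from the finite-$\eps$ interpolation inequality (or from a local Brunn--Minkowski inequality), with no compactness of plans needed, and intermediate-time measures automatically stay in a slightly larger ball regardless of $X_o$.
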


\begin{proof}
As $\text{supp}(\mm) \subset X$ is closed, $(\text{supp}(\mm),\sfd)$ is Polish. It was shown in \cite[Lemma 2.5, Theorem 5.1]{Ohta1} for $\MCP(K,N)$ (and hence $\MCPE(K,N)$) and in \cite[Corollary 2.4]{sturm:II} for $\CD(K,N)$ that these conditions imply a doubling condition, so that every closed bounded ball in $(\text{supp}(\mm),\sfd)$ is totally bounded. Together with completeness, this already implies that the latter space is proper. 
By Remark \ref{rem:supp-nu}, $(\supp(\mm),\sfd,\mm)$ verifies the same corresponding condition as $(X,\sfd,\mm)$.   
In particular, if $(X,\sfd,\mm)$ and hence $(\supp(\mm),\sfd,\mm)$ verifies $\CD(K,N)$, $\MCPE(K,N)$ or $\MCP(K,N)$, then for any $x,y \in \supp(\mm)$, there is at least one geodesic in $\supp(\mm)$ from $B(y,\eps) \cap \supp(\mm)$ to $x$; together with properness and completeness, this already implies that $(\text{supp}(\mm),\sfd)$ is geodesic.
On the other hand, if $(X,\sfd,\mm)$ and hence $(\supp(\mm),\sfd,\mm)$ verifies $\CD_{loc}(K,N)$, the above argument shows that $(\supp(\mm),\sfd)$ is complete and locally compact. Together with the assumption that the latter space is a length-space, the Hopf-Rinow theorem implies that it is proper and geodesic. 
\end{proof}

\begin{lemma} \label{lem:CD-MCPE-MCP}
The following chain of implications is known:
\[
\CD(K,N) \Rightarrow \MCPE(K,N) \Rightarrow \MCP(K,N) . 
\]
\end{lemma}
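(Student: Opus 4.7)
The plan is to prove the two implications separately; both arguments are standard in the synthetic-curvature literature and appear (in various guises) in \cite{sturm:II, Ohta1, CM3}.

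\textbf{First implication: $\CD(K,N) \Rightarrow \MCPE(K,N)$.} The idea is to approximate the degenerate target $\delta_o$ by absolutely continuous probability measures. Given $o \in \supp(\mm)$ and admissible $\mu_0$, choose $r_n \downarrow 0$ and set $\mu_1^n := \mm(B_{r_n}(o))^{-1}\mm\llcorner_{B_{r_n}(o)}$, well-defined since $o \in \supp(\mm)$, so that $\mu_1^n \rightharpoonup \delta_o$ weakly. Applying $\CD(K,N)$ to each pair $(\mu_0,\mu_1^n)$ produces $\nu^n \in \Opt(\mu_0,\mu_1^n)$ verifying (\ref{eq:CDKN-def}) with $N' = N$. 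Since $\mm(X) = 1$, $\CD(K,N)$ yields Bishop--Gromov volume comparison and (for $K>0$) a Bonnet--Myers diameter bound, forcing $(\supp(\mm),\sfd)$ to be proper; Lemma \ref{lem:nu-compactness} then provides (along a subsequence) $\nu^n \rightharpoonup \nu \in \Opt(\mu_0,\delta_o)$.

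To pass to the limit in (\ref{eq:CDKN-def}): use upper semi-continuity of $\Eps_N$ on $\P(X)$ for the left-hand-side, and handle the right-hand-side directly. The term involving $\rho_1^n$ is bounded by $C\,\mm(B_{r_n}(o))^{1/N} \to 0$ as $n \to \infty$, where $C$ is controlled via compactness of $\supp(\mu_0)$ and boundedness of $\tau^{(t)}_{K,N}$ on the relevant distances. The term involving $\rho_0$ converges to the desired expression by uniform continuity of $\tau^{(1-t)}_{K,N}$ on compacta combined with dominated convergence, upon disintegrating the transport plan along its first marginal $\mu_0$ (the conditional measures concentrate on $B_{r_n}(o)$). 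This produces (\ref{eq:MCPE-def}); the support condition on $\mu_t$ follows from Remark \ref{rem:supp-nu}.

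\textbf{Second implication: $\MCPE(K,N) \Rightarrow \MCP(K,N)$.} Given Borel $A$ with $0 < \mm(A) < \infty$ (and $A \subset B(o,\pi\sqrt{(N-1)/K})$ when $K > 0$), apply $\MCPE(K,N)$ to $\mu_0 := \mm(A)^{-1}\mm\llcorner_A$, so that $\rho_0 = \mm(A)^{-1}\mathbf{1}_A$, to obtain $\nu \in \Opt(\mu_0,\delta_o)$ with (\ref{eq:MCPE-def}). The claim is that this same $\nu$ witnesses (\ref{eq:MCP-def}). For each Borel $B \subset X$, disintegrate $\mu_t = (\ee_t)_\sharp \nu$ via the evaluation map $\ee_t$, yielding fiberwise measures $\{\nu_x\}$ along $\{\ee_t^{-1}(x)\}$. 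Localize (\ref{eq:MCPE-def}) to sub-plans supported on $\ee_t^{-1}(B)$ and invoke Jensen's inequality for the concave function $r \mapsto r^{1-1/N}$; the fact that $\gamma_1 = o$ for $\nu$-a.e. $\gamma$ trivializes the target geometry and allows the entropic bound to be converted into the pointwise density comparison
\[
(\ee_t)_\sharp\bigl(\tau^{(1-t)}_{K,N}(\sfd(\gamma_0,o))^N \, \nu\bigr)(B) \;\leq\; \mm(B)/\mm(A),
\]
which is exactly (\ref{eq:MCP-def}).

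\textbf{Principal obstacle.} The second implication is the more delicate: $\nu$ is produced once and for all from $\MCPE$, so one cannot re-apply the entropy inequality to each sub-region independently. The rigidity of the target $\delta_o$ combined with Jensen's inequality is what enables the integrated entropy estimate to yield the pointwise density bound. In the first implication the main technicality is tightness of $\{\nu^n\}$, which follows from properness of $(\supp(\mm),\sfd)$ --- itself a consequence of $\CD(K,N)$ under $\mm(X) = 1$.
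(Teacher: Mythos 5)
Your first implication follows the paper's own route almost verbatim: approximate $\delta_o$ by normalized restrictions of $\mm$ to shrinking balls around $o$, use properness of $\supp(\mm)$ (via the doubling property from \cite[Corollary 2.4]{sturm:II}) and Lemma \ref{lem:nu-compactness} to extract a limit plan in $\Opt(\mu_0,\delta_o)$, and pass to the limit using upper semi-continuity of $\Eps_N$; dropping the nonnegative $\rho_1$-term and handling the $\rho_0$-term as you do is fine. This half is correct and essentially the paper's argument.

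The second implication, however, has a genuine gap. You claim that the \emph{same} plan $\nu$ produced by $\MCPE(K,N)$ for $\mu_0=\mm(A)^{-1}\mm\llcorner_A$ already witnesses (\ref{eq:MCP-def}), by ``localizing (\ref{eq:MCPE-def}) to sub-plans supported on $\ee_t^{-1}(B)$'' and applying Jensen. But $\MCPE$ only asserts that for \emph{each} initial measure there exists \emph{some} optimal plan satisfying the entropy inequality; it says nothing about the restriction $\nu\llcorner_{\ee_t^{-1}(B)}$ of the particular $\nu$ you already fixed. Transferring the inequality to such restrictions would require uniqueness of the optimal plan to $\delta_o$ (as in Theorem \ref{T:optimalmapMCP} or the argument of \cite[Proposition 3.1]{GSR:maps}), and that uniqueness is exactly what is unavailable here: the lemma makes no essential non-branching assumption, and the whole point of its formulation is to work without one. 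Nor can a single integrated inequality $\Eps_N(\mu_t)\geq\int\tau^{(1-t)}_{K,N}(\sfd(x,o))\rho_0^{1-1/N}\,d\mm$ be upgraded to the measure inequality (\ref{eq:MCP-def}) by Jensen: since $r\mapsto r^{1-1/N}$ averages, the entropy bound can hold while part of the mass along $\nu$ is over-compressed (violating (\ref{eq:MCP-def}) on some Borel set $B$) and the rest under-compressed, so the pointwise-type conclusion simply does not follow for that plan. The paper avoids this by invoking the arguments of \cite[Section 5]{R2012}, where Rajala \emph{constructs} a (possibly different) interpolation with the required density control via an entropy-based midpoint selection scheme that works in possibly branching spaces; some argument of this constructive type (or an extra non-branching hypothesis, which you are not allowed) is needed to close your second step.
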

\begin{proof}
By Remark \ref{rem:supp-nu}, we may reduce to the case $\supp(\mm) = X$. 
Fixing $\mu_0 \ll \mm$ with bounded support and $o \in X$, let $\nu^\eps$ be an element of $\Opt(\mu_0,\mu_1^\eps)$ satisfying the $\CD(K,N)$ condition for $\mu_1^\eps = \mm(B(o,\eps))^{-1}\mm \llcorner_{B(o,\eps)}$. By Lemma \ref{lem:nu-compactness} (which applies since the space is proper by Lemma \ref{L:proper-support}),  $\set{\nu^\eps}$ has a converging subsequence to $\nu^0 \in \Opt(\mu_0,\delta_o)$ as $\eps \rightarrow 0$. The upper semi-continuity of $\Eps_N$ and the continuity of the evaluation map $\ee_t$ ensure that $\nu^0$ satisfies the $\MCPE(K,N)$ condition (\ref{eq:MCPE-def}). The second implication follows by the arguments of \cite[Section 5]{R2012} (without any types of essential non-branching assumptions). 
\end{proof}
\begin{remark}
We will show in Proposition \ref{P:MCP-density} that for essentially non-branching spaces, $\MCP(K,N)$ implies back $\MCPE(K,N)$. We remark that for non-branching spaces, the implication $\CD(K,N) \Rightarrow \MCP(K,N)$ was first proved in \cite{sturm:II}. 
\end{remark}

\medskip
Many additional useful results on the structure of $W_{2}$-geodesics can be obtained just from the $\MCP$ condition.
The following has been shown in \cite[Theorem 1.1 and Appendix]{CM3} (when $\supp(m) = X$; the formulation below is immediately obtained from Remark \ref{rem:supp-nu}):

\begin{theorem}[\cite{CM3}]\label{T:optimalmapMCP}
Let $(X,\sfd,\mm)$ be an essentially non-branching \mms satisfying $\MCP(K,N)$. Given any pair $\mu_{0},\mu_{1} \in \mathcal{P}_{2}(X)$ with 
$\mu_{0} \ll \mm$ and $\supp(\mu_{1}) \subset \supp(\mm)$, the following holds:  
\begin{itemize}
\item[-] there exists a unique $\nu \in \Opt(\mu_{0},\mu_{1})$ and hence a unique optimal transference plan between $\mu_0$ and $\mu_1$; \item[-] there exists a map $S : X \supset \dom(S) \to \Geo(X)$ such that $\nu = S_{\sharp} \mu_{0}$; 
\item[-] for any $t \in [0,1)$ the measure $(\ee_{t})_{\sharp} \nu$ is absolutely continuous with respect to $\mm$.
\end{itemize}
\end{theorem}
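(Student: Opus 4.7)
The plan is to prove all three assertions in tandem by disintegrating a putative optimal dynamical plan along its final-time evaluation, reducing the general case to that of transport onto a Dirac mass, where the $\MCP(K,N)$ hypothesis applies directly. Given any candidate $\nu \in \Opt(\mu_{0},\mu_{1})$, I would write $\nu = \int_{X} \nu^{y}\,\mu_{1}(dy)$ for the disintegration of $\nu$ along $\ee_{1}$, so that each $\nu^{y}$ is concentrated on geodesics ending at $y$. Since $(\ee_{0},\ee_{1})_{\sharp}\nu$ is supported on a $c$-cyclically monotone set, each $\nu^{y}$ is itself an optimal dynamical plan between $\mu_{0}^{y}:=(\ee_{0})_{\sharp}\nu^{y}$ and $\delta_{y}$, and the identity $\mu_{0} = \int \mu_{0}^{y}\,\mu_{1}(dy)$ together with $\mu_{0}\ll\mm$ guarantees $\mu_{0}^{y}\ll\mm$ for $\mu_{1}$-a.e. $y$. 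It therefore suffices to analyze $\Opt(\mu_{0}^{y},\delta_{y})$ for $y\in\supp(\mm)$ and $\mu_{0}^{y}\ll\mm$.

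In this Dirac case, existence of a $\nu^{y} \in \Opt(\mu_{0}^{y},\delta_{y})$ with $(\ee_{t})_{\sharp}\nu^{y}\ll\mm$ on $[0,1)$ follows from $\MCP(K,N)$ by a density-exhaustion argument: partition the support of $\mu_{0}^{y}$ into a countable collection of Borel pieces $A_{n}$ with $0<\mm(A_{n})<\infty$, apply $\MCP(K,N)$ to the normalized uniform measure $\mm(A_{n})^{-1}\mm\llcorner_{A_{n}}$ on each $A_{n}$ to produce $\nu_{n}$ verifying the contraction inequality (\ref{eq:MCP-def}), and paste these together with the appropriate Radon--Nikodym weights; the resulting dynamical plan inherits the pointwise bound and, in particular, yields the desired absolute continuity at intermediate times with density explicitly controlled by $\tau_{K,N}^{(1-t)}$.

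For uniqueness and the map structure in the Dirac case, take $\nu^{1},\nu^{2}\in\Opt(\mu_{0}^{y},\delta_{y})$ and let $\bar{\nu}=(\nu^{1}+\nu^{2})/2$, which is again optimal. Essential non-branching supplies a Borel non-branching set $G \subset \Geo(X)$ on which $\bar\nu$ concentrates, and because every geodesic in $\supp(\bar\nu)\cap G$ ends at the common point $y$, applying the non-branching condition to the time-reversed family (all emanating from $y$) forces distinct geodesics in this support to be pairwise disjoint on the open interval $(0,1)$. Combined with $\mu_{0}^{y}\ll\mm$ and $c$-cyclical monotonicity of the coupling $(\ee_{0},\ee_{1})_{\sharp}\bar\nu$, a Borel selection argument then produces a single measurable map $S^{y}:\dom(S^{y})\to\Geo(X)$ with $\bar\nu=S^{y}_{\sharp}\mu_{0}^{y}$, forcing $\nu^{1}=\nu^{2}$.

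Reassembling the disintegration, uniqueness of $\nu^{y}$ for $\mu_{1}$-a.e. $y$ promotes the disintegration of the original $\nu$ to be canonical, yielding uniqueness of $\nu\in\Opt(\mu_{0},\mu_{1})$; stitching the fiberwise maps $\{S^{y}\}$ together via a measurable selection produces the global map $S$ with $\nu=S_{\sharp}\mu_{0}$; and $(\ee_{t})_{\sharp}\nu=\int(\ee_{t})_{\sharp}\nu^{y}\,\mu_{1}(dy)$ is absolutely continuous on $[0,1)$. The central obstacle I anticipate is precisely the Dirac-case map property: essential non-branching rules out interior branching but, on its own, does not preclude two distinct geodesics running between the same pair of endpoints, so the map property cannot rest on non-branching alone and must carefully exploit $\mu_{0}^{y}\ll\mm$ within a $c$-cyclically monotone selection -- this is the technical heart of \cite{CM3}.
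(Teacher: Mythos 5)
Your reduction collapses at its very first step: the claim that, after disintegrating $\nu$ along $\ee_1$ as $\nu=\int_X\nu^y\,\mu_1(dy)$, the identity $\mu_0=\int\mu_0^y\,\mu_1(dy)$ together with $\mu_0\ll\mm$ forces $\mu_0^y\ll\mm$ for $\mu_1$-a.e.\ $y$. Conditional measures of an absolutely continuous measure are in general \emph{not} absolutely continuous; indeed, if the theorem holds, $\nu$ is induced by a map and $\mu_0^y$ is concentrated on the set of initial points of geodesics ending at $y$, which is typically $\mm$-negligible, so $\mu_0^y\perp\mm$. Since $\MCP(K,N)$ (Definition \ref{D:Ohta1}) only produces dynamical plans and contraction estimates when the \emph{first} marginal is (a normalized restriction of) $\mm$, the hypothesis you need to run the Dirac case is exactly the one the fibers fail to satisfy; both the intermediate-time absolute continuity and the fiberwise uniqueness argument therefore have no starting point. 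A second, independent problem is your use of essential non-branching for $\bar\nu\in\Opt(\mu_0^y,\delta_y)$: as defined in the paper, essential non-branching applies only to plans between two measures in $\P_2(X,\sfd,\mm)$, i.e.\ both absolutely continuous, and $\delta_y$ is not; circumventing this requires restricting to $[0,t]$ after first knowing intermediate-time absolute continuity, which is circular given the first gap. Finally, even granting the Dirac case in full, the reassembly does not yield the theorem: knowing that for each fixed $y$ the fiber plan is induced by a map $S^y$ from $\mu_0^y$ does not preclude a set of positive $\mu_0$-measure whose mass is split between two distinct endpoints $y_1\neq y_2$, so it gives neither uniqueness of $\nu$ nor a single map $S$ with $\nu=S_\sharp\mu_0$; ruling out such splitting is precisely the content of the cited result.

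Note also that the paper itself does not prove this statement; it is imported verbatim from \cite{CM3} (Theorem 1.1 and Appendix there). The actual argument proceeds in a different order from yours: one first establishes absolute continuity of $(\ee_t)_\sharp\nu$ for $t<1$ (via $\MCP$ applied to restrictions of $\mm$ and approximation, with upper semi-continuity of the entropy in the limit), and only then exploits essential non-branching -- applied to \emph{restricted} plans between intermediate-time marginals, which are absolutely continuous, so the definition genuinely applies -- together with a mass-splitting contradiction in the spirit of the sketch of Corollary \ref{C:injectivity}, to obtain that the (unique) optimal dynamical plan is induced by a map. Your instinct at the end that ``the map property cannot rest on non-branching alone and must exploit $\mu_0\ll\mm$'' is right, but in the correct proof this interplay happens at the level of the full plan at intermediate times, not fiberwise over the endpoint $y$.
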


The following is a standard corollary of the fact that the optimal dynamical plan is induced by a map (see e.g. the comments after \cite[Theorem 1.1]{GSR:maps}); as we could not find a reference, we sketch the proof for completeness.

\begin{corollary} \label{C:injectivity} 
With the same assumptions as in Theorem \ref{T:optimalmapMCP}, the unique optimal transference plan $\nu$ is concentrated on a (Borel) set $G \subset \Geo(X)$, so that for all $t \in [0,1)$, the evaluation map $\ee_t|_G : G \rightarrow X$ is injective. In particular, for any Borel subset $H \subset G$: \[
(\ee_t)_\sharp(\nu\llcorner_{H}) = (\ee_t)_\sharp(\nu)\llcorner_{\ee_t(H)} \;\;\; \forall t \in [0,1) .
\]
\end{corollary}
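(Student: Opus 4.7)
My plan combines the map-induced structure of $\nu$ at time $0$ with a time-reversal application of Theorem \ref{T:optimalmapMCP} to the pairs $(\mu_t,\mu_0)$, followed by an assembly over countably many times. Since $\nu = S_\sharp \mu_0$ with $\ee_0 \circ S = \mathrm{id}$ on $\dom(S)$, setting $G_1 := S(\dom(S))$ gives $\nu(G_1) = 1$ and $\ee_0|_{G_1}$ is injective. For each $t \in (0,1)$, Theorem \ref{T:optimalmapMCP} applies to $(\mu_t,\mu_0)$ since $\mu_t \ll \mm$, producing a unique plan in $\Opt(\mu_t,\mu_0)$ induced by a Borel map $T_t$ defined on some $\dom(T_t) \subset X$ with $\ee_0 \circ T_t = \mathrm{id}$. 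Uniqueness then forces $R_\sharp (\text{restr}^t_0)_\sharp \nu = (T_t)_\sharp \mu_t$.

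Disintegrating both sides with respect to their common first marginal $\mu_t$, the conditional distribution of $R \circ \text{restr}^t_0$ given $\ee_t = y$ must equal $\delta_{T_t(y)}$ for $\mu_t$-a.e.\ $y$. Pulled back, this provides a Borel set $H_t$ with $\nu(H_t) = 1$ on which the pointwise identity $\text{restr}^t_0(\gamma) = R(T_t(\gamma_t))$ holds. For $\gamma,\gamma' \in H_t \cap G_1$ with $\gamma_t = \gamma'_t$, this identity forces $\text{restr}^t_0(\gamma) = \text{restr}^t_0(\gamma')$, hence $\gamma_0 = \gamma'_0$, hence $\gamma = \gamma'$ by time-$0$ injectivity. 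Thus $\ee_t$ is injective on $H_t \cap G_1$ for each fixed $t \in (0,1)$.

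To produce a single $G$ working for all $t \in [0,1)$, I would intersect $G_1$, a non-branching Borel set $G_0$ with $\nu(G_0) = 1$, all $H_t$ for $t \in \Q \cap (0,1)$, and the preimages $(\text{restr}^b_a)^{-1}(G_0^{a,b})$ over rational $0 \le a < b \le 1$, where $G_0^{a,b}$ is a non-branching Borel set carrying both $(\text{restr}^b_a)_\sharp \nu$ and $(R \circ \text{restr}^b_a)_\sharp \nu$ as provided by essential non-branching. This countable intersection yields a Borel $G$ with $\nu(G) = 1$ and with $\ee_t|_G$ injective for every rational $t \in [0,1)$.

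The main obstacle is extending injectivity to irrational $t$. If $\gamma,\gamma' \in G$ satisfy $\gamma_t = \gamma'_t$ with $\gamma \ne \gamma'$, then $\gamma_0 \ne \gamma'_0$ by time-$0$ injectivity on $G_1$, and $\gamma_1 \ne \gamma'_1$ by reverse essential non-branching on $G_0$. A $c$-cyclic monotonicity argument applied to the pairs $(\gamma_0,\gamma_1),(\gamma'_0,\gamma'_1)\in\supp((\ee_0,\ee_1)_\sharp\nu)$, combined with the triangle inequality through the common point $\gamma_t = \gamma'_t$, forces $\len(\gamma) = \len(\gamma')$ and the two time-$t$-crossed concatenations to be bona fide geodesics. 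Choosing rational $a < t < b$ and invoking the map property of Theorem \ref{T:optimalmapMCP} for both $(\mu_a,\mu_b)$ and $(\mu_b,\mu_a)$, together with non-branching of $(\text{restr}^b_a)_\sharp\nu$ on $G_0^{a,b}$, then forces $\gamma$ and $\gamma'$ to coincide on the entire interval $[a,b]$, contradicting injectivity of $\ee_s$ at a rational $s \in (a,b)\setminus\{t\}$. Once $\ee_t|_G$ is injective for all $t\in[0,1)$, the concluding identity $(\ee_t)_\sharp(\nu\llcorner_H) = (\ee_t)_\sharp(\nu)\llcorner_{\ee_t(H)}$ is a routine measure-theoretic consequence obtained by testing on a Borel set $A\subset X$ and using that $\nu$-a.e.\ preimage under $\ee_t$ intersects $G$ in at most one point.
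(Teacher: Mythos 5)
Your per-fixed-$t$ argument (time-reversal plus uniqueness of the plan from $\mu_t$ to $\mu_0$) is fine, but it only yields a full-measure set $H_t$ that depends on $t$, and the countable intersection over rational times is the most it can deliver. The crux of the corollary is precisely the uncountably many remaining times, and this is where your proposal has a genuine gap: the claim that the map property of Theorem \ref{T:optimalmapMCP} for $(\mu_a,\mu_b)$, $(\mu_b,\mu_a)$ together with non-branching of $(\text{restr}^b_a)_\sharp\nu$ ``forces $\gamma$ and $\gamma'$ to coincide on $[a,b]$'' does not follow. Non-branching only excludes two geodesics which share the \emph{initial} point and agree at some interior time; here $\gamma_a\neq\gamma'_a$ (by your rational-time injectivity), so non-branching is silent about a crossing at the interior time $t$. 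The map/uniqueness statements are ``for $\mu_a$-a.e.\ initial point'' statements and cannot be applied to the two fixed curves $\gamma,\gamma'$: indeed the corollary itself only asserts non-crossing off a $\nu$-null set, so no argument that uses only plan-level uniqueness against two specific geodesics can close the case of an irrational $t$ --- everything hinges on choosing $G$ cleverly, and your $G$ is not known to exclude such crossings. (As a side remark, ``$\gamma_1\neq\gamma'_1$ by reverse essential non-branching'' is also unjustified, since $\mu_1$ need not be absolutely continuous and essential non-branching is only assumed for marginals in $\P_2(X,\sfd,\mm)$; but this is not the main issue.)

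What is missing is a strengthening of your time-$0$ statement: not merely injectivity of $\ee_0$ on $G_1=S(\dom(S))$, but that for $\mu_0$-a.e.\ $x$ there is a \emph{unique} geodesic in all of $G_\varphi$ emanating from $x$. This is what the paper proves first: if it failed on a set of positive $\mu_0$-measure, one selects (measurably) two distinct Kantorovich geodesics from each such point, observes they must differ at some common \emph{rational} time $\bar t$ on a subset of positive measure, and splits the mass of $\mu_0\llcorner_B$ along the two families to obtain an optimal dynamical plan to time $\bar t$ which is not induced by a map, contradicting Theorem \ref{T:optimalmapMCP}. With $X_1$ the resulting full-measure set and $G:=S(X_1)$, your gluing observation (which you correctly set up via $\sfd^2$-cyclic monotonicity, equivalently Lemma \ref{lem:rigidity}) finishes the proof at \emph{every} $t\in(0,1)$ at once: a crossing $\gamma_t=\gamma'_t$ with $\gamma\neq\gamma'$ produces the concatenated curve $\eta=\gamma|_{[0,t]}\ast\gamma'|_{[t,1]}\in G_\varphi$, a second Kantorovich geodesic issuing from $\gamma_0\in X_1$, which is the desired contradiction. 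In short, the gluing must be played against uniqueness of Kantorovich geodesics from a.e.\ starting point (a statement about all of $G_\varphi$), not against injectivity within the image of $S$ or against non-branching of restricted plans; once this is in place, the rational-time bookkeeping and the irrational-time patch in your last paragraph become unnecessary.
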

\begin{proof}[Sketch of proof]
First, we claim the existence of $X_1 \subset X$ with $\mu_0(X_1) = 1$, so the for all $x \in X_1$, there exists a unique $\gamma \in G_\varphi$ with $\gamma_0 = x$. Otherwise, if $A \subset X$ is a set of positive $\mu_0$-measure where this is violated, there are at least two distinct geodesics in $G_\varphi$ emanating from every $x \in A$. As these geodesics must be different at some rational time in $(0,1)$, it follows that there exists a rational $\bar t \in (0,1)$ and $B \subset A$ still of positive $\mu_0$-measure so that both pairs of geodesics emanating from $x$ are different at time $\bar t$ for all $x \in B$. Consider $\bar \mu_0 = \mu_0 \llcorner_{B} / \mu_0(B) \ll \mm$, and transport to time $\bar t$ half of its mass along one geodesic and the second half along the other one (see e.g. the proof of \cite[Theorem 5.1]{CM3}). The latter transference plan is optimal but is not induced by a map, yielding a contradiction. 

Now denote $G := S(X_1)$ (and hence $\nu(G) = 1$), so that the injectivity of $\ee_0|_G$ is already guaranteed. To see the injectivity of $\ee_t|_G$ for all $t \in (0,1)$, suppose in the contrapositive the existence of $\gamma^{1}, \gamma^{2} \in G$ with $\gamma^{1}_{t} = \gamma^{2}_{t}$. Denoting by $\eta$ the gluing of $\gamma^{1}$ restricted to $[0,t]$ with $\gamma^{2}$ restricted to $[t,1]$, it follows by $\sfd^{2}$-cyclic monotonicity (see e.g. the proof of \cite[Lemma 2.6]{sturm:loc} or that of Lemma \ref{lem:rigidity}) that $\eta \in G_{\f}$ with $\eta_{0} = \gamma_{0}^{1}$ and $\eta \neq \gamma^{1}$. But this is in contradiction to the definition of $X_1$, thereby concluding the proof. 
\end{proof}

\bigskip

\subsection{Disintegration Theorem}

We include here a version of the Disintegration Theorem that we will use. 
We will follow \cite[Appendix A]{biacar:cmono} where a  self-contained approach (and a proof) of the Disintegration Theorem in countably generated measure spaces can be found. 
An even more general version of the Disintegration Theorem can be found in \cite[Section 452]{Fre:measuretheory4}.

\medskip

Recall that given a measure space $(X,\mathscr{X},\mm)$, a set $A \subset X$ is called $\mm$-measurable if $A$ belongs to the completion of the $\sigma$-algebra $\mathscr{X}$, generated by adding to it all subsets of null $\mm$-sets; similarly, a function $f : (X,\mathscr{X},\mm) \rightarrow \Real$ is called $\mm$-measurable if all of its sub-level sets are $\mm$-measurable.

\begin{definition}[Disintegation on sets] \label{def:disintegration}
\label{defi:dis}
Let $(X,\mathscr{X},\mm)$ denote a measure space. 
Given any family $\set{X_\alpha}_{\alpha \in Q}$ of subsets of $X$, a \emph{disintegration of $\mm$ on $\set{X_\alpha}_{\alpha \in Q}$} is a measure-space structure 
$(Q,\mathscr{Q},\qq)$ and a map
\[
Q \ni \alpha \longmapsto \mm_{\alpha} \in \mathcal{P}(X,\mathscr{X})
\]
so that:
\begin{enumerate}
\item for $\qq$-a.e. $\alpha \in Q$, $\mm_\alpha$ is concentrated on $X_\alpha$; \item  for all $B \in \mathscr{X}$, the map $\alpha \mapsto \mm_{\alpha}(B)$ is $\qq$-measurable;  
\item for all $B \in \mathscr{X}$, $\mm(B) = \int \mm_{\alpha}(B)\, \qq(d\alpha)$.
\end{enumerate}
The measures $\mm_\alpha$ are referred to as \emph{conditional probabilities}.
\end{definition}

Given a measurable space $(X, \mathscr{X})$ and a function $\QQ : X \to Q$, with $Q$ a general set, we endow $Q$ with the \emph{push forward $\sigma$-algebra} $\mathscr{Q}$ of $\mathscr{X}$:
$$
C \in \mathscr{Q} \quad \Longleftrightarrow \quad \QQ^{-1}(C) \in \mathscr{X},
$$
i.e. the biggest $\sigma$-algebra on $Q$ such that $\QQ$ is measurable. 
Moreover, given a measure  $\mm$  on $(X,\mathscr{X})$, define a   
measure $\qq$ on $(Q,\mathscr{Q})$  by pushing forward $\mm$ via $\QQ$, i.e. $\qq := \QQ_\sharp \, \mm$.  

\begin{definition}[Consistent and Strongly Consistent Disintegation] 
\label{defi:dis}
A \emph{disintegration} of $\mm$ \emph{consistent with} $\QQ : X \rightarrow Q$ is a map: 
\[ Q \ni \alpha \longmapsto \mm_{\alpha} \in \mathcal{P}(X,\mathscr{X})
\] such that the following requirements hold:
\begin{enumerate}
\item  for all $B \in \mathscr{X}$, the map $\alpha \mapsto \mm_{\alpha}(B)$ is $\qq$-measurable; 
\item for all $B \in \mathscr{X}$ and $C \in \mathscr{Q}$, the following consistency condition holds:
$$
\mm \left(B \cap \QQ^{-1}(C) \right) = \int_{C} \mm_{\alpha}(B)\, \qq(d\alpha).
$$
\end{enumerate}
A disintegration of $\mm$ is called \emph{strongly consistent with respect to $\QQ$} if in addition:
\begin{enumerate}
\item[(3)] for $\qq$-a.e. $\alpha \in Q$, $\mm_\alpha$ is concentrated on $\QQ^{-1}(\alpha)$; \end{enumerate}
\end{definition}

The above general scheme fits with the following situation: given a measure space $(X,\mathscr{X},\mm)$, suppose 
a \emph{partition} of $X$ is given into \emph{disjoint} sets $\{ X_{\alpha}\}_{\alpha \in Q}$ so that $X = \cup_{\alpha \in Q} X_\alpha$. 
Here $Q$ is the set of indices and $\QQ : X \to Q$ is the quotient map, i.e.  
$$
\alpha = \QQ(x) \iff x \in X_{\alpha}.
$$
We endow $Q$ with the quotient $\sigma$-algebra $\mathscr{Q}$ and the quotient measure $\qq$ as described above, obtaining 
the quotient measure space $(Q, \mathscr{Q}, \qq)$. 
When a disintegration $\alpha \mapsto \mm_\alpha$ of $\mm$ is (strongly) consistent with the quotient map $\QQ$, we will simply say that it is (strongly) consistent with the partition. Note that any disintegration $\alpha \mapsto \mm_\alpha$ of $\mm$ on a \emph{partition} $\{ X_{\alpha}\}_{\alpha \in Q}$ (as in Definition \ref{def:disintegration}) is automatically strongly consistent with the partition (as in Definition \ref{defi:dis}), and vice versa. 

\medskip

We now formulate the Disintegration Theorem (it is formulated for probability measures but clearly holds for any finite non-zero measure):

\begin{theorem}[Theorem A.7, Proposition A.9 of \cite{biacar:cmono}] \label{T:disintegrationgeneral}
Assume that $(X,\mathscr{X},\mm)$ is a countably generated probability space and that $\set{X_{\alpha}}_{\alpha \in Q}$ is a partition of $X$. 
\medskip

Then the quotient probability space $(Q, \mathscr{Q},\qq)$ is essentially countably generated and 
there exists an essentially unique disintegration $\alpha \mapsto \mm_{\alpha}$ consistent with the partition. 
\medskip

If in addition $\mathscr{X}$ contains all singletons, then the disintegration is strongly consistent if and only if there exists a $\mm$-section $S_{\mm} \in \mathscr{X}$ of the partition such that the $\sigma$-algebra on $S_{\mm}$ induced by the quotient-map contains the trace $\sigma$-algebra $\mathscr{X} \cap S_{\mm} := \set{A \cap S_{\mm} ; A \in \mathscr{X}}$. 
\end{theorem}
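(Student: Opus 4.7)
The plan is to follow the classical Maharam--Dieudonn\'e route, combining a conditional-expectation construction of the fibered measures with a Carath\'eodory-style extension, and then resolving the strong-consistency question via a measurable section. First I would fix a countable algebra $\mathcal{A} \subset \mathscr{X}$ that generates $\mathscr{X}$ (this is possible by countable generation), and for each $A \in \mathcal{A}$ consider the conditional expectation $\mathbb{E}_{\mm}[\mathbf{1}_A \mid \QQ^{-1}(\mathscr{Q})]$. Since the conditioning $\sigma$-algebra is the pull-back of $\mathscr{Q}$, this conditional expectation is $\mm$-a.e. of the form $g_A \circ \QQ$ for a $\qq$-measurable function $g_A : Q \to [0,1]$, uniquely determined $\qq$-a.e. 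I would then discard a $\qq$-null set off of which the countable family $\{g_A\}_{A \in \mathcal{A}}$ becomes simultaneously finitely additive and monotone under the operations of $\mathcal{A}$, and check $\sigma$-additivity on $\mathcal{A}$ using a standard monotone-class/continuity-from-above argument; the remaining exceptional set is $\qq$-null because $\mathcal{A}$ is countable. Carath\'eodory extension then yields a probability $\mm_\alpha$ on $\mathscr{X}$ for $\qq$-a.e.~$\alpha$, and a monotone-class argument upgrades the consistency identity from $\mathcal{A}$ to all of $\mathscr{X}$, which is the desired consistent disintegration.

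For essential countable generation of $(Q,\mathscr{Q},\qq)$, I would work with the push-forward measure algebra $\mathscr{Q}/\qq$ and exhibit an explicit countable generating family: the images $\{\QQ(B)\}$ as $B$ ranges over the saturated sets in a countable generator for $\mathscr{X}$ (or, equivalently, the family $\{\{g_A > q\} : A \in \mathcal{A},\ q \in \Q\}$ built from the $g_A$ above). The essential uniqueness of the disintegration is then standard: if $\alpha \mapsto \mm_\alpha'$ is another such assignment, the consistency condition forces $\mm_\alpha(A) = \mm_\alpha'(A)$ for $\qq$-a.e. $\alpha$ and every $A \in \mathcal{A}$, hence, by countability of $\mathcal{A}$, for $\qq$-a.e.~$\alpha$ simultaneously; a final monotone-class argument extends this to all of $\mathscr{X}$.

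The more delicate point, and what I expect to be the main obstacle, is the equivalence for strong consistency. One direction is easy: given strong consistency and $\mathscr{X}$ containing singletons, a measurable section $S_{\mm}$ is produced by a standard selection argument on the full-$\qq$-measure set where $\mm_\alpha$ is concentrated on $\QQ^{-1}(\alpha)$, and on such a set the trace $\mathscr{X}\cap S_\mm$ visibly matches the pull-back structure from $\mathscr{Q}$ via $\QQ|_{S_\mm}$. The converse direction is the subtle one: starting from a $\mm$-section $S_\mm$ whose trace $\sigma$-algebra is contained in the $\sigma$-algebra generated by $\QQ|_{S_\mm}$, the bijection $\QQ|_{S_\mm}: S_\mm \to Q_\mm \subset Q$ becomes bimeasurable after removing $\qq$-null sets, so one can transport the abstract conditionals $\mm_\alpha$ to measures supported on the actual fibers $\QQ^{-1}(\alpha) \subset X$ by tracking how they pair with the Boolean image of the generator $\mathcal{A}$ under $\QQ$; here singletons in $\mathscr{X}$ are needed to identify points of $S_\mm$ with their equivalence classes in $Q$. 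The plan for this final step is to invoke a Borel-isomorphism/Kuratowski-type argument on $(S_\mm, \mathscr{X}\cap S_\mm)$ to pass between the two measurable structures, thereby concentrating each $\mm_\alpha$ on $\QQ^{-1}(\alpha)$ and producing the required strongly consistent disintegration.
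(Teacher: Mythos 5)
First, a point of orientation: the paper does not prove this statement at all — it is imported verbatim from Bianchini--Caravenna \cite{biacar:cmono} (Theorem A.7 and Proposition A.9), as the theorem's heading indicates. So the only meaningful comparison is with the cited self-contained treatment, which does \emph{not} proceed by patching an abstract Carath\'eodory extension, but by transferring the whole problem to a concrete model built from a countable generator (a map of the type $x \mapsto (1_{A_n}(x))_n \in \set{0,1}^{\N}$, and likewise for the quotient once essential countable generation is established), where one has the regularity needed to produce genuine measures.

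This is exactly where your proposal has a genuine gap. The conditional expectations give, for each $A$ in the countable algebra $\mathcal{A}$, a $\qq$-a.e.\ defined $g_A$, and countability of $\mathcal{A}$ does let you discard one null set per finite-additivity or monotonicity relation. But $\sigma$-additivity of $A \mapsto g_A(\alpha)$ on $\mathcal{A}$ is continuity at $\emptyset$ along \emph{every} decreasing sequence in $\mathcal{A}$ with empty intersection, and there are uncountably many such sequences; the assertion that ``the remaining exceptional set is $\qq$-null because $\mathcal{A}$ is countable'' does not follow. Nor is this a removable technicality: your argument, as written, never uses that the conditioning $\sigma$-algebra is the full $\sigma$-algebra of saturated sets $\QQ^{-1}(\mathscr{Q})$ (nor any standardness of the space), so the same reasoning would ``prove'' existence of regular conditional probabilities given an \emph{arbitrary} countably generated sub-$\sigma$-algebra of a countably generated probability space — which is false (Dieudonn\'e-type examples: Lebesgue measure extended to $\sigma(\mathcal{B},N)$ for a set $N$ of inner measure $0$ and outer measure $1$, conditioned on $\mathcal{B}$, admits no regular conditional probability). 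Any correct proof must therefore inject an extra ingredient: either an inner-regular (compact) approximating class, which an abstract $(X,\mathscr{X},\mm)$ does not possess, or the reduction to a standard Borel model via the generating family, where compactness yields $\sigma$-additivity after removing only countably many null sets and the conditionals are then pulled back along the generator map. This is the route of the cited reference, and without it your construction of $\mm_\alpha$ is unjustified.

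The remaining parts are broadly in the right spirit but need tightening. For essential countable generation, your second suggestion (level sets $\set{g_A > q}$, $A \in \mathcal{A}$, $q \in \Q$, plus an $L^1$-approximation argument) works; the first one does not, since a countable generator of $\mathscr{X}$ need not contain any saturated sets and saturations of measurable sets need not be measurable. For strong consistency, the ``if'' direction is simpler than a Borel-isomorphism argument: identify $Q_{\mm}$ with $S_{\mm}$, note that $\mathscr{X}\cap S_{\mm}$ is countably generated and contains singletons, test the already-constructed disintegration against $\QQ^{-1}(A)$ for $A$ in a countable generator of $\mathscr{X}\cap S_{\mm}$, and conclude that $\qq$-a.e.\ $\mm_\alpha$ is concentrated on the preimage of the atom $\set{x_\alpha}$, i.e.\ on the fiber; conversely, given strong consistency one must actually exhibit a section with measurable quotient map, which requires an argument, not just ``a standard selection''.
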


Let us expand on the statement of Theorem \ref{T:disintegrationgeneral}. Recall that a $\sigma$-algebra $\mathcal{A}$ is \emph{countably generated} if there exists a countable family of sets so that $\mathcal{A}$ coincides with the smallest $\sigma$-algebra containing them. On the measure space $(Q, \mathscr{Q},\qq)$, the $\sigma$-algebra $\mathscr{Q}$ is called \emph{essentially countably generated} if there exists a countable family of sets $Q_{n} \subset Q$ such that for any $C \in \mathscr{Q}$ there exists $\hat C \in \hat{\mathscr{Q}}$, where $\hat{\mathscr{Q}}$ is the $\sigma$-algebra generated by $\{ Q_{n} \}_{n \in \N}$, such that $\qq(C\, \Delta \, \hat C) = 0$.

Essential uniqueness is understood above in the following sense: if $\alpha\mapsto \mm^{1}_{\alpha}$ and $\alpha\mapsto \mm^{2}_{\alpha}$ 
are two consistent disintegrations with the partition then $\mm^{1}_{\alpha}=\mm^{2}_{\alpha}$ for $\qq$-a.e. $\alpha \in Q$.

Finally, a set $S \subset X$ is a section for the partition $X = \cup_{\alpha \in Q}X_{\alpha}$ if for any $\alpha \in Q$, $S \cap X_\alpha$ is a singleton $\set{x_\alpha}$.  
By the axiom of choice, a section $S$ always exists, and we may identify $Q$ with $S$ via the map $Q \ni \alpha \mapsto x_\alpha \in S$.  
A set $S_{\mm}$ is an $\mm$-section if there exists $Y \in \mathscr{X}$ with $\mm(X \setminus Y) = 0$ such that the partition $Y = \cup_{\alpha\in Q_\mm} (X_{\alpha} \cap Y)$ has section $S_{\mm}$, where $Q_{\mm} = \set{\alpha \in Q ; X_{\alpha} \cap Y \neq \emptyset}$. As $\qq = \QQ_{\sharp} \mm$, clearly $\qq(Q \setminus Q_{\mm}) = 0$. 
As usual, we identify between $Q_{\mm}$ and $S_{\mm}$, so that now $Q_{\mm}$ carries two measurable structures: $\mathscr{Q} \cap Q_{\mm}$ (the push-forward of $\mathscr{X} \cap Y$ via $\QQ$), and also $\mathscr{X} \cap S_{\mm}$ via our identification. The last condition of Theorem \ref{T:disintegrationgeneral} is that $\mathscr{Q} \cap Q_{\mm} \supset \mathscr{X} \cap S_{\mm}$, i.e. that the restricted quotient-map $\QQ|_{Y} : (Y,\mathscr{X} \cap Y) \rightarrow (S_\mm , \mathscr{X} \cap S_{\mm})$ is measurable, so that the full quotient-map $\QQ : (X,\mathscr{X}) \rightarrow (S , \mathscr{X} \cap S)$ is $\mm$-measurable.

\medskip

We will typically apply the Disintegration Theorem to $(E,\B(E),\mm\llcorner_{E})$, where $E \subset X$ is an $\mm$-measurable subset (with $\mm(E) > 0$) of the \mms $(X,\sfd,\mm)$. 
As our metric space is separable, $\B(E)$ is countably generated, and so Theorem \ref{T:disintegrationgeneral} applies. 
In particular, when $Q \subset \Real$, $E$ is a closed subset of $X$, the partition elements $X_\alpha$ are closed and the quotient-map $\QQ : E \rightarrow Q$ is known to be Borel (for instance, this is the case when $\QQ$ is continuous), \cite[Theorem 5.4.3]{Srivastava} guarantees the existence of a Borel section $S$ for the partition so that $\QQ : E \rightarrow S$ is Borel measurable, thereby guaranteeing by Theorem \ref{T:disintegrationgeneral} the existence of an essentially unique disintegration strongly consistent with $\QQ$.

\bigskip

\section{$L^{1}$ Optimal Transportation Theory}\label{S:L1-OT}

In this section we recall various results from the theory of $L^1$ optimal-transport which are relevant to this work, and add some new information we will subsequently require. 
We refer to \cite{ambro:lecturenote, AmbrosioPratelliL1, biacava:streconv, cava:MongeRCD,  EvansGangbo,FeldmanMcCann-Manifold, Klartag, Vil:topics} for more details.

\subsection{Preliminaries}\label{Ss:preli}

To any $1$-Lipschitz function $u : X \to \R$ there is a naturally associated $\sfd$-cyclically monotone set: 

\begin{equation}\label{E:Gamma}  
\Gamma_{u} : = \{ (x,y) \in X\times X : u(x) - u(y) = \sfd(x,y) \}.
\end{equation}
Its transpose is given by $\Gamma^{-1}_{u}= \{ (x,y) \in X \times X : (y,x) \in \Gamma_{u} \}$. We define the \emph{transport relation} $R_u$ and the \emph{transport set} $\mathcal{T}_{u}$, as:
\begin{equation}\label{E:R}
R_{u} := \Gamma_{u} \cup \Gamma^{-1}_{u} ~,~ \mathcal{T}_{u} := P_{1}(R_{u} \setminus \{ x = y \}) ,
\end{equation}
where $\{ x = y\}$ denotes the diagonal $\{ (x,y) \in X^{2} : x=y \}$ and $P_{i}$ the projection onto the $i$-th component. Recall that $\Gamma_u(x) = \set{y \in X \; ;\; (x,y) \in \Gamma_u}$ denotes the section of $\Gamma_u$ through $x$ in the first coordinate, and similarly for $R_u(x)$ (through either coordinates by symmetry). 
Since $u$ is $1$-Lipschitz, $\Gamma_{u}, \Gamma^{-1}_{u}$ and $R_{u}$ are closed sets, and so are $\Gamma_u(x)$ and $R_u(x)$. 
Consequently $\mathcal{T}_{u}$ is a projection of a Borel set and hence analytic; it follows that it is universally measurable, and in particular, $\mm$-measurable \cite{Srivastava}.

\medskip

The following is immediate to verify (see \cite[Proposition 4.2]{ambro:lecturenote}):
\begin{lemma}\label{L:cicli}
Let $(\gamma_0,\gamma_1) \in \Gamma_{u}$ for some  $\gamma \in \Geo(X)$. Then $(\gamma_{s},\gamma_{t}) \in \Gamma_{u}$ 
for all $0\leq s \leq t \leq 1$.
\end{lemma}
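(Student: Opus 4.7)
The plan is to exploit the combination of three ingredients: the $1$-Lipschitz property of $u$, the triangle inequality applied along the geodesic $\gamma$, and the hypothesis that equality is attained at the endpoints. The strategy is a standard ``equality forces equality'' argument: I will split the total distance (and the total $u$-gap) into three consecutive pieces and observe that each piece is a priori bounded above by its corresponding distance, while the sum of the bounds equals the given total; hence each inequality must be an equality.

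Concretely, first I would record that since $\gamma \in \Geo(X)$ is a constant-speed (closed) geodesic, for any $0 \leq a \leq b \leq 1$ one has $\sfd(\gamma_a, \gamma_b) = (b-a)\sfd(\gamma_0,\gamma_1)$, and in particular
\[
\sfd(\gamma_0,\gamma_1) = \sfd(\gamma_0,\gamma_s) + \sfd(\gamma_s,\gamma_t) + \sfd(\gamma_t,\gamma_1).
\]
Next, using that $u$ is $1$-Lipschitz, I would write the three one-sided bounds
\[
u(\gamma_0)-u(\gamma_s) \leq \sfd(\gamma_0,\gamma_s),\quad u(\gamma_s)-u(\gamma_t)\leq \sfd(\gamma_s,\gamma_t),\quad u(\gamma_t)-u(\gamma_1)\leq \sfd(\gamma_t,\gamma_1),
\]
and sum them. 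The left-hand side telescopes to $u(\gamma_0)-u(\gamma_1)$, which equals $\sfd(\gamma_0,\gamma_1)$ by the hypothesis $(\gamma_0,\gamma_1)\in\Gamma_u$, while the right-hand side equals the same quantity by the geodesic identity above.

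Since the sum of three $\leq$ inequalities matches the total, each must be an equality. In particular $u(\gamma_s)-u(\gamma_t) = \sfd(\gamma_s,\gamma_t)$, which is exactly the statement $(\gamma_s,\gamma_t)\in\Gamma_u$. There is no real obstacle here; the only thing to be careful about is to make sure I use the full geodesic equality (not just the triangle inequality), so that the three distances sum exactly to $\sfd(\gamma_0,\gamma_1)$ without any slack, which is what forces the equalities.
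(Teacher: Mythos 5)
Your proof is correct and is exactly the standard telescoping/equality-forcing argument that the paper has in mind (it states the lemma is "immediate to verify" and defers to \cite[Proposition 4.2]{ambro:lecturenote}, where this same decomposition into three segments plus the $1$-Lipschitz bounds is used). No gaps: the constant-speed geodesic identity gives exact additivity of the three distances, and summing the three Lipschitz inequalities against $u(\gamma_0)-u(\gamma_1)=\sfd(\gamma_0,\gamma_1)$ forces equality on each piece, in particular $(\gamma_s,\gamma_t)\in\Gamma_u$.
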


Also recall the following definitions, introduced in \cite{cava:MongeRCD}:
\begin{align*}
	A_{+}	: = 	&~\{ x \in \mathcal{T}_{u} : \exists z,w \in \Gamma_{u}(x), (z,w) \notin R_{u} \}, \nonumber \\ 
	A_{-}		: = 	&~\{ x \in \mathcal{T}_{u} : \exists z,w \in \Gamma^{-1}_{u}(x), (z,w) \notin R_{u} \}.
\end{align*}
$A_{\pm}$ are called the sets of forward and backward branching points, respectively. Note that both $A_{\pm}$ are analytic sets;
for instance:
$$
A_{+} = P_{1} (\{ (x,z,w) \in \mathcal{T}_{u}\times X \times X \colon (x,z), (x,w) \in \Gamma_{u}, \ (z,w) \notin R_{u} \}),
$$
showing that $A_{+}$ is a projection of an analytic set and therefore analytic.
If $x \in A_{+}$ and $(y,x) \in \Gamma_{u}$ necessarily also $y \in A_{+}$ (as $\Gamma_{u}(y) \supset \Gamma_{u}(x)$ by the triangle inequality);
similarly, if $x \in A_{-}$ and $(x,y) \in \Gamma_{u}$ then necessarily $y \in A_{-}$.

Consider the \emph{non-branched transport set} 
$$
\T_{u}^{b} : = \T_{u} \setminus (A_{+} \cup A_{-}),
$$
which belongs to the sigma-algebra $\sigma(\mathcal{A})$ generated by analytic sets and is therefore $\mm$-measurable. Define the \emph{non-branched transport relation}:
\[
R_u^b := R_u \cap (\T_u^b \times \T_u^b) .
\]
In was shown in \cite{cava:MongeRCD} (cf. \cite{biacava:streconv}) that $R_u^b$ is an equivalence relation over $\T_{u}^{b}$ and that for any $x \in \T_{u}^{b}$, $R_{u}(x) \subset (X,\sfd)$ is isometric to a closed interval in $(\Real,\abs{\cdot})$. 

\begin{remark}
Note that even if $x \in \T_{u}^{b}$, the transport ray $R_{u}(x)$ need not be entirely contained in $\T_{u}^{b}$. 
However, we will soon prove that  almost every transport ray (with respect to an appropriate measure) has interior part contained in $\T_{u}^{b}$. 
\end{remark}

It will be very useful to note that whenever the space $(X,\sfd)$ is proper (for instance when $(X,\sfd,\mm)$ verifies $\MCP(K,N)$ and $\supp(\mm) = X$), $\T_{u}$ and $A_{\pm}$ are $\sigma$-compact sets:  
indeed writing $R_{u} \setminus \{ x= y\} = \cup_{\ve >0} R_{u} \setminus \{ \sfd(x,y) > \ve \}$ it follows that $R_{u} \setminus \{ x= y\}$ is $\sigma$-compact. Hence
$ \mathcal{T}_{u}$ is $\sigma$-compact. Moreover:
$$
A_{+} = P_{1} \Big( \{ (x,z,w) \in \T_{u} \times (R_{u})^{c} \colon (x,z), (x,w) \in \Gamma_{u} \} \Big);
$$
since $(R_{u})^{c}$ is open and open sets are $F_{\sigma}$ in metric spaces, it follows that $\{ (x,z,w) \in \T_{u} \times (R_{u})^{c} \colon (x,z), (x,w) \in \Gamma_{u} \}$ is $\sigma$-compact 
and therefore $A_{+}$ is $\sigma$-compact; the same applies to $A_{-}$. Consequently, $\T_u^b$ and $R_u^b$ are Borel. 

\medskip

Now, from the first part of the Disintegration Theorem \ref{T:disintegrationgeneral} applied to $(\T_u^b , \B(\T_u^b), \mm\llcorner_{\T_{u}^{b}})$, we obtain an essentially unique disintegration of $\mm\llcorner_{\T_{u}^{b}}$ consistent with the partition 
of $\T_{u}^{b}$ given by the equivalence classes $\set{R_u^b(\alpha)}_{\alpha \in Q}$ of $R_{u}^{b}$: 
$$
\mm\llcorner_{\T_{u}^{b}} = \int _{Q} \mm_{\alpha}\,\qq(d\alpha ),
$$
with corresponding quotient space $(Q, \mathscr{Q},\qq)$ ($Q \subset \T_u^b$ may be chosen to be any section of the above partition). 
The next step is to show that the disintegration is strongly consistent. By the  Disintegration Theorem, this is equivalent to the existence of a $\mm\llcorner_{\T_{u}^{b}}$-section $\bar Q \in \B(\T_u^b)$ (which by a mild abuse of notation we will call $\mm$-section), 
such that the quotient map associated to the partition is $\mm$-measurable, 
where we endow $\bar Q$ with the trace $\sigma$-algebra. This has already been shown in \cite[Proposition 4.4]{biacava:streconv} in the framework of non-branching metric spaces; 
since its proof does not use any non-branching assumption, we can conclude that:
$$
\mm\llcorner_{\T_{u}^{b}} = \int _{Q} \mm_{\alpha}\,\qq(d\alpha ), \quad \text{and for } \qq-\text{a.e. } \alpha \in Q, \quad \mm_{\alpha}(R_u^b(\alpha)) =1,
$$
where now $Q \supset \bar Q \in \B(\T_u^b)$ with $\bar Q$ an $\mm$-section for the above partition (and hence $\qq$ is concentrated on $\bar Q$).
For a more constructive approach under the additional assumption of properness of the space, see also \cite[Proposition 4.8]{cava:overview}.

\medskip

A-priori the non-branched transport set $\T_u^b$ can be much smaller than $\T_u$. However, under fairly general assumptions one can prove that the sets $A_{\pm}$ of forward and backward branching are both $\mm$-negligible. In \cite{cava:MongeRCD} this was shown for a \mms $(X,\sfd,\mm)$ verifying $\RCD(K,N)$ and $\supp(\mm) = X$. 
The proof only relies on the following two properties which hold for the latter spaces (see also \cite{cava:overview}):
\begin{itemize}
\item[-] $\supp(\mm) = X$. 
\item[-] Given $\mu_{0}, \mu_{1} \in \P_{2}(X)$ with $\mu_{0}\ll \mm$, there exists a unique optimal transference plan for the $W_{2}$-distance and it is induced by an optimal transport \emph{map} .
\end{itemize}
By Theorem \ref{T:optimalmapMCP} these properties are also verified for an essentially non-branching \mms $(X,\sfd,\mm)$ satisfying $\MCP(K,N)$ and $\supp(\mm)= X$. We summarize the above discussion in:

\begin{corollary} \label{C:disintMCP}
Let $(X,\sfd,\mm)$ be an essentially non-branching \mms satisfying $\MCP(K,N)$ and $\supp(X) = \mm$. Then for any $1$-Lipschitz function $u : X \to \R$, we have $\mm(\T_u \setminus \T_u^b) = 0$. In particular, we obtain the following essentially unique disintegration $(Q,\mathscr{Q},\qq)$ of $\mm\llcorner_{\T_{u}} = \mm\llcorner_{\T^b_{u}}$ strongly consistent with the partition 
of $\T_{u}^{b}$ given by the equivalence classes $\set{R_u^b(\alpha)}_{\alpha \in Q}$ of $R_{u}^{b}$: 
\begin{equation}\label{E:disintMCP}
\mm\llcorner_{\T_{u}} = \int_{Q} \mm_{\alpha} \,\qq(d\alpha), \quad \text{and for } \qq-\text{a.e. } \alpha \in Q, \quad \mm_{\alpha}(R_u^b(\alpha)) =1 .
\end{equation}
Here $Q$ may be chosen to be a section of the above partition so that $Q \supset \bar Q \in \B(\T_u^b)$ with $\bar Q$ an $\mm$-section with $\mm$-measurable quotient map. In particular, $\mathscr{Q} \supset \B(\bar Q)$ and $\qq$ is concentrated on $\bar Q$. \end{corollary}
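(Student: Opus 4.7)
My plan is to reduce the corollary to two separate verifications. The first and main step is to show that $\mm(A_+ \cup A_-) = 0$ under the stated hypotheses; the second step is routine, applying the Disintegration Theorem to the partition of $\T_u^b$ by the equivalence classes of $R_u^b$ (which was already carried out in the paragraphs preceding the statement, and only requires strong consistency, i.e. the existence of a Borel $\mm$-section with $\mm$-measurable quotient map — available since $(X,\sfd)$ is proper, $\T_u^b$ is Borel, and $R_u^b$ is a Borel equivalence relation whose classes are isometric to closed intervals).

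For the first step, I would argue exactly as in \cite{cava:MongeRCD}, observing that the argument there uses only two ingredients: that $\supp(\mm)=X$, and that for any $\mu_0 \ll \mm$ and $\mu_1 \in \P_2(X)$, the optimal transference plan for $W_2$ is unique and induced by a map. Both ingredients are granted in our setting: the first by hypothesis, and the second by Theorem \ref{T:optimalmapMCP}, which applies to any essentially non-branching \mms satisfying $\MCP(K,N)$. By the symmetry obtained from replacing $u$ by $-u$ (which swaps $\Gamma_u$ and $\Gamma_u^{-1}$, hence $A_+$ with $A_-$), it suffices to prove $\mm(A_+)=0$.

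The heart of the argument, which I would reproduce, is the following contradiction scheme. Suppose for contradiction that $\mm(A_+)>0$. Using $\sigma$-compactness of $A_+$ together with a Borel selection from the closed multivalued map $x \mapsto \Gamma_u(x)$, one extracts a Borel subset $B \subset A_+$ of positive $\mm$-measure and two Borel maps $z,w : B \to X$ with $(x,z(x)),(x,w(x))\in \Gamma_u$ and $(z(x),w(x))\notin R_u$ for each $x \in B$, together with a uniform lower bound on $\sfd(z(x),w(x))$ relative to $\min(\sfd(x,z(x)),\sfd(x,w(x)))$. Normalize $\mu_0 := \mm\llcorner_B / \mm(B)$ and let $\mu_1^z := z_\sharp \mu_0$, $\mu_1^w := w_\sharp \mu_0$, $\mu_1 := \tfrac12(\mu_1^z + \mu_1^w)$. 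Because each pair $(x,z(x))$ and $(x,w(x))$ lies on a monotone $u$-ray, each of the two plans obtained by pairing $x$ with $z(x)$ or with $w(x)$ is $\sfd^2$-cyclically monotone, and so is the $\tfrac12$-convex combination that sends each $x$ to $\tfrac12\delta_{z(x)}+\tfrac12\delta_{w(x)}$. By convex combination, the resulting transference plan between $\mu_0$ and $\mu_1$ is optimal but manifestly not induced by a map — each $x \in B$ has two distinct targets with positive conditional mass — contradicting Theorem \ref{T:optimalmapMCP}.

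The hard step, as expected, is the $\sfd^2$-cyclical monotonicity check for the combined plan. I expect this is where the geometric information $(z(x),w(x))\notin R_u$ is crucial: it prevents any pair $(x,z(x'))$ with $x \neq x'$ from being strictly better under the squared cost than the declared pair, because replacing $(x,z(x))$ by $(x,z(x'))$ would force a transport along a direction incompatible with $\Gamma_u$ at $x$, quantified via the uniform separation of $z(x)$ and $w(x)$. Once $\mm(A_+) = \mm(A_-) = 0$ is established, $\T_u \setminus \T_u^b$ is $\mm$-negligible, and the essentially unique strongly consistent disintegration on $\T_u^b$ noted above transfers verbatim to $\mm\llcorner_{\T_u}$, yielding \eqref{E:disintMCP}.
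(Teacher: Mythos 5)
Your overall reduction is exactly the paper's: negligibility of $A_{\pm}$ via the argument of \cite{cava:MongeRCD}, which relies only on $\supp(\mm)=X$ and the existence and uniqueness of optimal maps supplied here by Theorem \ref{T:optimalmapMCP}, followed by the disintegration machinery on $\T_u^b$ already set up before the statement. The gap lies in your reconstruction of that argument. The claim that the plan pairing each $x\in B$ with $z(x)$ (or with $w(x)$) is $\sfd^2$-cyclically monotone ``because each pair lies on a monotone $u$-ray'' is false: $\Gamma_u$ is $\sfd$-cyclically monotone, but this does not give $\sfd^2$-cyclical monotonicity. On $(\Real,|\cdot|)$ with $u(x)=-x$ the pairs $(0,10)$ and $(5,6)$ both belong to $\Gamma_u$, yet $10^2+1^2 > 6^2+5^2$, so even transport along a single ray with all pairs in $\Gamma_u$ can fail to be $\sfd^2$-cyclically monotone; a fortiori the $\tfrac12$-combination of the two plans need not be supported on a $\sfd^2$-cyclically monotone set. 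Your proposed resolution of the ``hard step'' (an incompatibility of directions at $x$, quantified by the separation of $z(x)$ and $w(x)$) is not an argument and is not the mechanism used in the cited proof.

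What makes the contradiction scheme work --- both in \cite{cava:MongeRCD} and in {\bf Step 2} of the proof of Theorem \ref{T:endpoints} of this paper, which runs on the same device --- is to first select, by a Fubini and measurable-selection argument, a single value $c\in\Real$ and a set of branching points of positive $\mm$-measure whose two branches both cross the level set $\{u=c\}$, and then to choose the targets $z(x),w(x)$ \emph{on that common level set}. For any two pairs $(x_1,y_1),(x_2,y_2)$ in the combined graph one then has $\sfd(x_i,y_i)=u(x_i)-c=u(x_i)-u(y_j)\le \sfd(x_i,y_j)$, which yields $\sfd^2$-cyclical monotonicity of the whole graph at once (individual plans and their combination simultaneously). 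The hypothesis $(z(x),w(x))\notin R_u$ enters not where you expect, but to guarantee that the two targets remain distinct after being pushed to the level $\{u=c\}$ (a transport ray, on which $u$ is affine with slope $1$, meets a level set at most once); hence the resulting optimal plan is genuinely not induced by a map, contradicting Theorem \ref{T:optimalmapMCP}. With this correction your outline matches the intended proof; the disintegration and strong-consistency part of your proposal is fine and is exactly the discussion preceding the corollary.
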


\begin{remark}
By modifying the definitions of $A_+,A_-$ to only reflect branching inside $\supp(\mm)$, it is possible to remove the assumption that $\supp(X) = \mm$, but we refrain from this extraneous generality here. 
\end{remark}

\begin{remark}\label{R:cutlocus}
If we consider $u = \sfd(\cdot,o)$, it is easy to check that the set $A_{+}$ coincides with the cut locus $C_{o}$, i.e.~the set of those $z \in X$ such that there exists at least two distinct geodesics starting at $z$ and ending in $o$. 
Hence the previous corollary implies that for any $o \in X$, the cut locus has $\mm$-measure zero: $\mm(C_{o}) = 0$.
This in particular implies that an  essentially non-branching \mms verifying $\MCP(K,N)$ and $\supp(\mm) = X$ also supports a local $(1,1)$-weak Poincar\'e inequality, see \cite{MVR}.
\end{remark}

\subsection{Maximality of transport rays on non-branched transport-set}

It is elementary to check that $\Gamma_{u}$ induces a partial order relation on $X$: 
\[
y \leq_{u} x \;\;\; \Leftrightarrow \;\;\; (x,y) \in \Gamma_{u} .
\]
Note that by definition:
\begin{align*}
x \in A_+ ~,~ y \geq_u x \;\; & \Rightarrow \;\; y \in A_+ \;\; , \\
x \in A_- ~,~ y \leq_u x \;\;  & \Rightarrow \;\; y \in A_-  \;\; .
\end{align*}
Recall that for any $x \in \T_{u}^{b}$, $(R_{u}(x),\sfd)$ is isometric to a closed interval in $(\Real,\abs{\cdot})$. This isometry induces a total ordering on $R_u(x)$ which must coincide with either $\leq_u$ or $\geq_u$, implying that $(R_u(x), \leq_u)$ is totally ordered. 

\begin{lemma}\label{L:Newlemma}
For any $x \in \T_{u}^{b}$, $(R_u^b(x) = R_u(x) \cap \T_u^b,\sfd)$ is isometric to an interval in $(\R,|\cdot|)$.
\end{lemma}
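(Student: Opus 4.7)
The plan is to show that $R_u^b(x)$ is order-convex inside the totally-ordered set $(R_u(x),\leq_u)$, and then to transfer this information through the already established isometry between $(R_u(x),\sfd)$ and a closed interval of $\R$.

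First, I would recall from the preceding discussion that for $x \in \T_u^b$ there is an isometry $\iota:(R_u(x),\sfd)\to([a,b],|\cdot|)\subset\R$, and that the total order induced on $R_u(x)$ through $\iota$ coincides with either $\leq_u$ or $\geq_u$. After possibly post-composing $\iota$ with the order-reversing isometry $t\mapsto a+b-t$ of $[a,b]$, I may assume without loss of generality that $\iota$ is monotone with respect to $\leq_u$. Note also that $R_u^b(x)$ is non-empty: reflexivity of $R_u$ gives $x\in R_u(x)$, and since $x \in \T_u^b$ by hypothesis, $x\in R_u^b(x)$.

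The key step is the order-convexity of $R_u^b(x)$ inside $(R_u(x),\leq_u)$. Given $y,z\in R_u^b(x)=R_u(x)\cap\T_u^b$ with $y\leq_u z$ and any $w\in R_u(x)$ with $y\leq_u w\leq_u z$, I claim $w\in\T_u^b$. Indeed, if $w\in A_+$, then since $w\leq_u z$ and $A_+$ is upward-closed under $\leq_u$ (this is exactly the property recorded just after the definition of $A_\pm$: if $x\in A_+$ and $(y,x)\in\Gamma_u$ then $y\in A_+$), we would conclude $z\in A_+$, contradicting $z\in\T_u^b$. Symmetrically, if $w\in A_-$, then $y\leq_u w$ together with the downward-closure of $A_-$ forces $y\in A_-$, again contradicting $y\in\T_u^b$. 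Hence $w\notin A_+\cup A_-$, so $w\in R_u^b(x)$.

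Since $\iota$ is an order-preserving bijection between $(R_u(x),\leq_u)$ and $([a,b],\leq)$, the image $\iota(R_u^b(x))$ is an order-convex, non-empty subset of $[a,b]$, which is by definition an interval in $\R$ (possibly singleton, open, closed, or half-open, depending on whether the endpoints of $R_u(x)$ happen to lie in $A_+\cup A_-$). Restricting $\iota$ to $R_u^b(x)$ then produces the claimed isometry from $(R_u^b(x),\sfd)$ onto an interval in $(\R,|\cdot|)$. I do not expect a real obstacle here: the content is entirely contained in the order-theoretic closure properties of $A_\pm$ already isolated in the excerpt, and the only minor point to be careful about is tracking the correct direction of those closures relative to $\leq_u$ versus $\geq_u$.
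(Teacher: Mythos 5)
Your proposal is correct and follows essentially the same route as the paper: both reduce the lemma to the order-convexity of $R_u^b(x)$ inside the totally ordered ray $(R_u(x),\leq_u)$, using precisely the upward/downward closure of $A_+$ and $A_-$ under $\leq_u$ (your contradiction argument is just the contrapositive of the paper's direct one). The extra bookkeeping you include (nonemptiness, transferring through the isometry $\iota$, possible reversal of orientation) is harmless and matches the discussion preceding the lemma.
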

\begin{proof}
Consider $z,w \in R_{u}(x) \cap \T_{u}^{b}$; as $(R_u(x), \leq_u)$ is totally ordered, assume without loss of generality that $z \leq_u w$. 
Given $y \in R_u(x)$ with $z \leq_u y \leq_u w$, we must prove that $y \in \T_{u}^{b}$. 
Indeed, since $w \geq_u y$ and $w \notin A_{+}$, necessarily $y \notin A_{+}$, and since $z \leq_u y$ and $z \notin A_{-}$, necessarily $y \notin A_{-}$. Hence $y \in \T_{u}^{b}$ and the claim follows. 
\end{proof}

Recall that given a partially ordered set, a chain is a totally ordered subset. A chain is called maximal if it is maximal with respect to inclusion. We introduce the following:

\begin{definition}[Transport Ray] \label{def:transport-ray}
A maximal chain $R$ in $(X,\sfd,\leq_u)$ is called a \emph{transport ray} if it is isometric to a closed interval $I$ in $(\Real,\abs{\cdot})$ of positive (possibly infinite) length. \end{definition}

In other words, a transport ray $R$ is the image of a closed non-null geodesic $\gamma$ parametrized by arclength on $I$ so that the function $u \circ \gamma$ is affine with slope $1$ on $I$, and so that $R$ is maximal with respect to inclusion.

\begin{lemma} \label{L:RayInTub}
Given $x \in \T_u^b$, $R$ is a transport ray passing through $x$ if and only if $R = R_u(x)$.
\end{lemma}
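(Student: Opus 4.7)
The plan is to extract both directions directly from the ordering structure $\leq_u$, leveraging what has already been established about the non-branched transport set. The key inputs are: (i) for $x \in \T_u^b$, $(R_u(x),\leq_u)$ is totally ordered and $(R_u(x),\sfd)$ is isometric to a closed interval of $\mathbb{R}$ (recalled before the statement of the lemma, from \cite{biacava:streconv, cava:MongeRCD}); (ii) by definition of $\T_u$, there exists $y \neq x$ with $(x,y) \in R_u$, so $R_u(x)$ contains at least two points.

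For the ``only if'' direction, let $R$ be a transport ray passing through $x$. Since $R$ is a chain in $(X,\leq_u)$, every $y \in R$ is comparable to $x$, which means $(x,y) \in \Gamma_u \cup \Gamma_u^{-1} = R_u$, i.e.\ $y \in R_u(x)$. Hence $R \subset R_u(x)$. But $R_u(x)$ is itself a chain in $(X,\leq_u)$ by the total-ordering property recalled above, so by maximality of $R$, we must have $R = R_u(x)$. Note that one needs to verify implicitly that $R_u(x)$ and $R$ sit inside $(X,\leq_u)$ in a compatible way, which is immediate from the definitions.

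For the ``if'' direction, I plan to verify the three clauses in Definition~\ref{def:transport-ray} for $R_u(x)$. The chain property is (i); the isometry with a closed interval of $\mathbb{R}$ is again (i); positive length follows from (ii) above, as the isometric interval contains the two distinct points $x$ and $y$. It only remains to establish maximality: if $R' \supset R_u(x)$ is a chain in $(X,\leq_u)$ and $y \in R' \setminus R_u(x)$, then the chain condition forces $y$ to be comparable to $x$, giving $(x,y) \in R_u$ and hence $y \in R_u(x)$, a contradiction.

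I do not anticipate any real obstacle; both directions reduce to the tautology that ``comparable to $x$'' is the same as ``belonging to $R_u(x)$,'' once we know that on $\T_u^b$ the section $R_u(x)$ is itself totally ordered. The essential content of the lemma is that the non-branching assumption at $x$ eliminates the potential multiplicity of maximal chains through $x$, so that the set-theoretic maximal chain through $x$ coincides exactly with the metric section $R_u(x)$.
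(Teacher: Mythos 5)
Your proof is correct and follows essentially the same route as the paper: both directions reduce to the observation that membership in $R_u(x)$ is exactly comparability with $x$ under $\leq_u$, combined with the previously recalled fact that for $x \in \T_u^b$ the section $R_u(x)$ is totally ordered and isometric to a closed interval. Your explicit check of positive length via $x \in \T_u$ is a detail the paper leaves implicit, but the argument is the same.
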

\begin{proof}
Recall that for any $x \in \T_{u}^{b}$, $(R_u(x), \sfd,\leq_u)$ is order isometric to a closed interval in $(\Real,\abs{\cdot})$. 
As $R_u(x)$ is by definition maximal in $X$ with respect to inclusion, it follows that it must be a transport ray. \\
Conversely, note that for any transport ray $R$ we always have $R \subset \cap_{w \in R} R_u(w)$. Indeed, for any $w,z \in R$, we have $z \leq_u w$ or $z \geq_u w$, and hence by definition $(w,z) \in R_u$ so that $z \in R_u(w)$. If $x \in R \cap \T_u^b$, we already showed above that $R_{u}(x)$ is a transport ray. Since $R \subset R_u(x)$ and $R$ is assumed to be maximal with respect to inclusion, it follows that necessarily $R = R_u(x)$. 
\end{proof}

\begin{corollary} \label{C:RayInTub}
If $R_1$ and $R_2$ are two transport rays which intersect in $\T_u^b$ then they must coincide. 
\end{corollary}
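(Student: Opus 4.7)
The plan is to deduce this corollary as an essentially immediate consequence of Lemma \ref{L:RayInTub}, which was the substantive statement just proved. Concretely, I would argue as follows.

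Suppose $R_1$ and $R_2$ are two transport rays (in the sense of Definition \ref{def:transport-ray}) with $R_1 \cap R_2 \cap \T_u^b \neq \emptyset$, and pick any point $x$ in this intersection. Since $x \in R_1$ and $R_1$ is a transport ray passing through $x$, Lemma \ref{L:RayInTub} (applied with $x \in \T_u^b$) yields $R_1 = R_u(x)$. The same lemma applied to $R_2$ yields $R_2 = R_u(x)$. Hence $R_1 = R_2$.

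The only subtlety to verify is that the hypothesis of Lemma \ref{L:RayInTub} is met, i.e. that we may really invoke it for both rays at the common point $x$; but this is immediate because $x$ is assumed to lie in $\T_u^b$, and the lemma only needs this and the fact that $R_i$ is a transport ray containing $x$. No appeal to essential non-branching or to $\MCP(K,N)$ is required, since the non-branched transport set $\T_u^b$ was defined precisely to excise the forward and backward branching obstructions that would otherwise allow two distinct maximal chains through the same point.

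Thus there is no real obstacle here: the corollary is essentially a restatement of Lemma \ref{L:RayInTub}, packaging the uniqueness of the transport ray through a point of $\T_u^b$ as a statement about pairwise coincidence of rays. The conceptual work was done in establishing that $R_u(x)$ is itself a maximal chain (using the total ordering of $R_u(x)$ and the fact that $x \notin A_+ \cup A_-$) and that any transport ray through $x \in \T_u^b$ must be contained in $R_u(x)$ and hence equal to it by maximality.
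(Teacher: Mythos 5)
Your proof is correct and is exactly the intended argument: the corollary is an immediate consequence of Lemma \ref{L:RayInTub}, applied to each ray at a common point $x \in R_1 \cap R_2 \cap \T_u^b$ to conclude $R_1 = R_u(x) = R_2$. The paper treats this as immediate and gives no separate proof, so there is nothing further to compare.
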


\medskip

In this subsection, we reconcile between the crucial maximality property of $R_u(\alpha)$ which we will require for the definition of $\CD^1$ in the next section, and the fact that the disintegration in (\ref{E:disintMCP}) is with respect to (the possibly non-maximal) $R_u^b(\alpha) = R_u(\alpha) \cap \T_u^b$. 
We will show that under $\MCP$, for $\qq$-a.e. $\alpha$, the only parts of $R_{u}(\alpha)$ which are possibly not contained in $\T_{u}^{b}$ are its end points -- this fact is the main new result of this section. 

\medskip

To rigorously state this new observation, we recall the classical definition of initial and final points, $\mathfrak{a}$ and $\mathfrak{b}$, respectively: 
\begin{align*}
\mathfrak{a} : = &~\{ x \in \T_{u} \colon  \nexists y \in \T_{u}, \ (y,x) \in \Gamma_{u}, \ y \neq x  \}, \\
\mathfrak{b} : = &~\{ x \in \T_{u} \colon  \nexists y \in \T_{u}, \ (x,y) \in \Gamma_{u}, \ y \neq x  \}.
\end{align*}
Note that:
$$
\mathfrak{a} = \T_{u} \setminus P_{1}\big(\{ \Gamma_u \setminus \{ x= y \} \}) ,$$
so $\mathfrak{a}$ is the difference of analytic sets and consequently belongs to $\sigma(\mathcal{A})$; similarly for $\mathfrak{b}$.
As in the previous subsection, whenever $(X,\sfd)$ is proper, $\mathfrak{a}, \mathfrak{b}$ are in fact Borel sets.

\begin{theorem}[Maximality of transport rays on non-branched transport-set]\label{T:endpoints}
Let $(X,\sfd, \mm)$ be an essentially non-branching \mms verifying $\MCP(K,N)$ and $\supp(\mm) = X$. Let $u : (X,\sfd) \rightarrow \Real$ be
any $1$-Lipschitz function, with \eqref{E:disintMCP} the associated disintegration of $\mm \llcorner_{\T_u}$.  \\
Then there exists 
 $\hat Q \subset Q$ such that $\qq(Q \setminus \hat Q) = 0$ and for any $\alpha \in \hat Q$ it holds: 
$$
R_{u}(\alpha) \setminus \T_{u}^{b} \subset \mathfrak{a} \cup \mathfrak{b}.
$$
In particular, for every $\alpha \in \hat Q$: 
\[
R_u(\alpha) = \overline{R_u^b(\alpha)} \supset R_u^b(\alpha) \supset \mathring{R}_u(\alpha) ,
\]
(with the latter interpreted as the relative interior). 
\end{theorem}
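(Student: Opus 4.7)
First, I would reduce to a statement about upstream/downstream sub-intervals of the ray. By Corollary~\ref{C:disintMCP} we have $\mm(A_+ \cup A_-) = \mm(\T_u \setminus \T_u^b) = 0$, and applying Fubini to the disintegration (\ref{E:disintMCP}) produces a $\qq$-full-measure subset $Q_1 \subset Q$ on which $\mm_\alpha(A_+ \cup A_-) = 0$. Combining this with the upstream-closedness of $A_+$ along the partial order $\leq_u$ (recalled in Section~\ref{Ss:preli}: if $x \in A_+$ and $(y,x) \in \Gamma_u$ then $y \in A_+$) and the dual downstream-closedness of $A_-$, together with Lemma~\ref{L:Newlemma}, I see that for every $\alpha$, $R_u(\alpha) \cap A_+$ is an upstream sub-interval of $R_u(\alpha)$ containing $\mathfrak{a}_\alpha$ whenever nonempty, and $R_u(\alpha) \cap A_-$ is a downstream sub-interval containing $\mathfrak{b}_\alpha$ whenever nonempty. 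Consequently, the desired conclusion $R_u(\alpha) \setminus \T_u^b \subset \mathfrak{a} \cup \mathfrak{b}$ becomes equivalent to each of these two sub-intervals degenerating to at most a single endpoint.

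Second, I would rule out that the ``bad'' set
\[
B_+ := \{\alpha \in Q_1 : R_u(\alpha) \cap A_+ \cap \mathring R_u(\alpha) \neq \emptyset\}
\]
has positive $\qq$-measure (with $B_-$ treated symmetrically). For each $\alpha \in B_+$, pick $x_\alpha \in \mathring R_u(\alpha) \cap A_+$; the upstream sub-arc of $R_u(\alpha)$ from $\mathfrak{a}_\alpha$ to $x_\alpha$ is then entirely contained in $A_+$ and has positive $\haus^1$-length. My plan for producing a contradiction starts by adapting the one-dimensional disintegration technology of Cavalletti--Mondino~\cite{CM1} to the essentially non-branching $\MCP(K,N)$ setting, using Theorem~\ref{T:optimalmapMCP} in place of the non-branching $\CD$ hypotheses employed in~\cite{CM1}: this should yield that for $\qq$-a.e.\ $\alpha$ one has $\mm_\alpha = h_\alpha \haus^1 \llcorner_{R_u^b(\alpha)}$ with $h_\alpha$ a strictly positive one-dimensional $\MCP(K,N)$-density on the relative interior of $R_u^b(\alpha)$. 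In particular, $\supp(\mm_\alpha)$ equals the closure of the relative interior of $R_u^b(\alpha)$.

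The crux --- and where the main obstacle lies --- is turning this density information into a contradiction with $\qq(B_+) > 0$. Parametrize $R_u(\alpha)$ by arclength from $\mathfrak{a}_\alpha$ to $\mathfrak{b}_\alpha$ as $g_\alpha : [0,L_\alpha] \to X$, and set $s_\alpha := \sup\{t : g_\alpha(t) \in A_+\}$; for $\alpha \in B_+$ we have $s_\alpha \in (0, L_\alpha)$ (the case $s_\alpha = L_\alpha$ is excluded since it forces the whole ray into $A_+$, contradicting $\mm_\alpha(R_u^b(\alpha)) = 1$), so $g_\alpha(s_\alpha)$ lies in $\mathring R_u(\alpha)$ on the boundary of $A_+$. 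I would then construct, for a positive-$\qq$-measure subfamily of $B_+$, compatible $L^2$-optimal transports with $\mu_0^\alpha, \mu_1^\alpha \ll \mm$ concentrated on infinitesimal AC pieces placed just upstream and just downstream of $g_\alpha(s_\alpha)$ inside $R_u^b(\alpha)$, and exploit the uniqueness part of Theorem~\ref{T:optimalmapMCP}: the unique optimal map must push mass through a neighborhood of $g_\alpha(s_\alpha)$, while the forward-branching encoded by $g_\alpha(s_\alpha)$ being in the closure of $A_+$ forces positive $\mm$-mass of the intermediate measures inside $A_+$ when integrated over this positive-$\qq$ subfamily --- contradicting $\mm(A_+) = 0$. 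Once $\qq(B_+) = \qq(B_-) = 0$, setting $\hat Q := Q_1 \setminus (B_+ \cup B_-)$ gives the asserted full-measure subset. The concluding ``in particular'' inclusion $R_u(\alpha) = \overline{R_u^b(\alpha)}$ on $\hat Q$ is then immediate: we have $\mathring R_u(\alpha) \subset R_u^b(\alpha) \subset R_u(\alpha)$, and since $R_u(\alpha)$ is isometric to a closed interval its relative interior $\mathring R_u(\alpha)$ is dense in it, whence $R_u(\alpha) = \overline{\mathring R_u(\alpha)} \subset \overline{R_u^b(\alpha)} \subset R_u(\alpha)$.
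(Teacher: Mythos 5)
Your overall strategy --- contradicting $\qq(B_+)>0$ by means of Theorem~\ref{T:optimalmapMCP} --- is indeed the paper's strategy, but the construction you sketch for the contradiction cannot work as stated, and the two steps you leave as ``plans'' are exactly where the content of the proof lies. The geometry of your transport is inconsistent with your own first paragraph: since $A_+$ is closed under moving upstream along $\leq_u$, the set $\{t : g_\alpha(t)\in A_+\}$ is an initial sub-interval of $[0,L_\alpha]$ with supremum $s_\alpha$, so \emph{every} point strictly upstream of $g_\alpha(s_\alpha)$ lies in $A_+$, hence outside $\T_u^b$ and outside $R_u^b(\alpha)$, and carries no $\mm$-mass. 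You therefore cannot place a measure $\mu_0^\alpha\ll\mm$ ``just upstream of $g_\alpha(s_\alpha)$ inside $R_u^b(\alpha)$''; and if both $\mu_0^\alpha$ and $\mu_1^\alpha$ sit in $R_u^b(\alpha)$, i.e.\ downstream of $g_\alpha(s_\alpha)$, the connecting geodesics never enter $A_+$ and no contradiction arises. The bad segment is a \emph{terminal} piece of the ray, not an interior gap with good mass on both sides, so one cannot transport across it between two absolutely continuous measures. The correct construction transports \emph{into} it: an absolutely continuous source supported on the good parts $R_u^b(\alpha)$ is sent to a necessarily singular target consisting of one point per ray chosen inside the bad segment (Theorem~\ref{T:optimalmapMCP} only requires $\mu_0\ll\mm$).

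Even after correcting the direction, two essential ingredients are missing from your sketch. First, you must verify that the coupling you build is $\sfd^2$-cyclically monotone, so that it is the unique optimal plan to which Theorem~\ref{T:optimalmapMCP} applies; the paper arranges this by choosing all targets on a \emph{single} level set $\{u=c\}$. Second, to contradict $(\ee_{\bar t})_\sharp\nu\ll\mm$ you need one time $\bar t\in(0,1)$ at which a positive-$\nu$-measure family of geodesics lies \emph{simultaneously} in the $\mm$-null set $A_+\cup A_-$; this requires the bad segments to be traversed over a uniformly long portion of $[0,1]$, which the paper obtains via a measurable selection of the gap endpoints followed by a Fubini argument over $u$-levels, producing a level $c$ and a positive-$\qq$-measure subfamily $Q_{1,c}$ with $u(a_\alpha)-c>\eps$ uniformly. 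Neither step appears in your proposal, and they cannot be skipped. Finally, the preliminary claim that $\mm_\alpha=h_\alpha\,\haus^1\llcorner_{R_u^b(\alpha)}$ with $h_\alpha$ a strictly positive $\MCP(K,N)$ density is a substantial result that is neither proved at this stage of the paper nor needed: the argument uses only $\mm_\alpha(R_u^b(\alpha))=1$ together with Theorem~\ref{T:optimalmapMCP}. Your reduction to the statement that $R_u(\alpha)\cap A_\pm$ degenerate to endpoints, and the closing density argument giving $R_u(\alpha)=\overline{R_u^b(\alpha)}$, are correct.
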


\begin{proof}
\hfill

\medskip
{\bf Step 1.}
Consider the $\mm$-section $\bar Q$ from Corollary \ref{C:disintMCP} so that $Q \supset \bar Q \in \mathcal{B}(\T_{u}^{b})$, $\mathscr{Q} \supset \B(\bar Q)$ and 
$\qq(Q \setminus \bar Q) = 0$. Consider the set:
$$
Q_{1} : = \{ \alpha \in \bar Q \colon R_{u}(\alpha) \setminus \T_{u}^{b} \nsubseteq \mathfrak{a}\cup \mathfrak{b} \}. 
$$
The claim will be proved once we show that $\qq(Q_{1})=0$. 
First, observe that 
$$
Q_{1} = \bar Q \cap P_{1} \Big( R_{u} \cap \big( \T_{u}^{b} \times (( A_{+}\setminus \mathfrak{a}) \cup (A_{-}\setminus \mathfrak{b})) \big) \Big),
$$
and therefore $Q_{1} \subset \bar Q$ is analytic; since $\mathscr{Q} \supset \B(\bar Q)$, it follows that $Q_1$ is $\qq$-measurable. Now suppose by contradiction that $\qq(Q_{1})> 0$. 

We can divide $Q_{1}$ into two sets:
$$
Q_{1}^{+} : = \{ \alpha \in Q_1 \colon \Gamma_{u}(\alpha) \setminus \T_{u}^{b} \nsubseteq  \mathfrak{b} \}, \quad
Q_{1}^{-} : =  \{ \alpha \in Q_1 \colon \Gamma^{-1}_{u}(\alpha) \setminus \T_{u}^{b} \nsubseteq \mathfrak{a} \} .
$$
Since $Q_{1} = Q_{1}^{+} \cup Q_{1}^{-}$, without any loss in generality let us assume $\qq(Q_{1}^{+}) > 0$, and for ease of notation assume further that $Q_{1}^{+} = Q_{1}$.

Hence, for any $\alpha \in Q_{1}$, there exists $z \in \Gamma_{u}(\alpha)$ such that $z \notin \T_{u}^{b}$ and $z \notin \mathfrak{b}$; note that necessarily $z \in A_{-}$.
Recall that for all $\alpha \in Q$, $R_{u}(\alpha)$ and hence $\Gamma_u(\alpha)$ are isometric via the map $u$ to closed intervals, and hence $\Gamma_u(\alpha) \setminus (\{ \alpha \} \cup \mathfrak{b})$ is isometric to an open interval. 
Since $\Gamma_u(\alpha) \cap  \T_{u}^{b}$ is isometric to an interval and contains $\alpha$, it follows that for $\alpha \in Q_1$, there exist distinct $a_\alpha,b_\alpha \in \Gamma_{u}(\alpha) \setminus \T_{u}^{b}$ so that:
\[
(u(b_{\alpha}), u(a_{\alpha})) \subset u (\Gamma_{u}(\alpha) \setminus \T_{u}^{b}) 
\]
is a non-empty open interval.  Moreover, we may select $a_{\alpha}$ and $b_\alpha$ to be $\qq$-measurable functions of $Q_1$. To see this, consider the set 
$\Sigma : = \{ (\alpha, x ,y) \in Q_{1} \times  \Gamma_{u}  \colon  x \in A_{-} , \  (\alpha,x) \in \Gamma_{u},\ \sfd(x,y) > 0\}$, 
and observe that it is analytic (being the intersection of analytic sets), and that $P_1(\Sigma) = Q_1$. By von Neumann's selection Theorem (see \cite[Theorem 5.5.2]{Srivastava}), there exists a 
$\sigma(\mathcal{A})$-measurable selection of $\Sigma$: $$
Q_{1} \ni \alpha \to (a_{\alpha},b_{\alpha}),
$$
and so in particular these functions are $\qq$-measurable. 
It follows that
$$
Q_{1} \ni \alpha \to u(a_{\alpha}), \qquad Q_{1} \ni \alpha \to u(b_{\alpha}),
$$
are also $\sigma(\mathcal{A})$-measurable and hence $\qq$-measurable. Possibly restricting $Q_{1}$, by Lusin's Theorem we can also assume that the above functions are continuous. 
  
\medskip

{\bf Step 2.}
By Fubini's Theorem
$$
0 <  \int_{Q_{1}} (u(a_{\alpha})-u(b_{\alpha}))  \, \qq(d\alpha) = \int_{\R} \qq \Big(\{\alpha \in Q_{1} \colon u (b_{\alpha})<  t < u (a_{\alpha})\} \Big)\, dt .
$$
Hence there exists $c \in \R$ and $Q_{1,c} \subset Q_{1}$ with $\qq(Q_{1,c}) > 0$,  such that for any $\alpha \in Q_{1,c}$ it holds $c \in (u(b_{\alpha}), u(a_{\alpha}))$; 
in particular for any $\alpha \in Q_{1,c}$ there exists a unique $z_{\alpha} \in \Gamma_{u}(\alpha)$ such that $u(z_{\alpha}) =c$. 
Furthermore, we can assume that $Q_{1,c}$ is compact, and hence by continuity of $u(a_\alpha)$ it follows that: 
$$
\exists \ve > 0 \;\;\; \forall \alpha \in Q_{1,c} \;\;\; u(a_{\alpha}) - c > \ve .
$$
Then define the following set: 
$$
\Lambda : = \{ (\alpha, x,z) \in Q_{1,c} \times \Gamma_{u} \colon (\alpha,x) \in R_{u}^{b}, \ u(z) =c\}.
$$
Recall that $R_u^b$ is Borel since $(X,\sfd)$ is proper, and therefore $\Lambda$ is Borel. Note by the aforementioned discussion that $P_1(\Lambda) = Q_{1,c}$.
Also note that for $(\alpha,x,z) \in \Lambda$, since $R_{u}(\alpha)$ is isometric to a closed interval, necessarily $z = z_{\alpha}$. Finally, we claim that $P_{2,3} (\Lambda)$ is $\sfd^{2}$-cyclically monotone: for $(x_{1},z_{1}), (x_{2},z_{2}) \in P_{2,3} (\Lambda)$ observe that 
$$
\sfd(x_{1},z_{1}) = u(x_{1}) - u(z_{1}) = u(x_{1}) - c =  u(x_{1}) - u(z_{2})   \leq \sfd(x_{1},z_{2}).
$$
Hence for $\{(x_{i}, z_{i})\}_{i \leq n } \subset P_{2,3} (\Lambda)$, setting $z_{n+1} = z_{1}$,
$$
\sum_{i\leq n} \sfd^{2}(x_{i},z_{i}) \leq \sum_{i\leq n} \sfd^{2}(x_{i},z_{i+1}),
$$
and the monotonicity follows.
We can then define a function $T$ by imposing $\gr(T) = P_{2,3}(\Lambda)$; note that $P_{2,3}(\Lambda)$ is analytic and therefore $T$ is Borel measurable 
(see \cite[Theorem 4.5.2]{Srivastava}).

\medskip

{\bf Step 3.} 
Consider now the measure 
$$
\eta_{0} : = \int_{Q_{1,c}} \mm_{\alpha}\,\qq(d\alpha),  
$$
and since $\qq(Q_{1,c})> 0$ it follows that $\eta_{0}(X) > 0$; note that $\eta_{0}$ is concentrated on 
$\dom(T) = \cup_{\alpha \in Q_{1,c}} R_{u}^b(\alpha)$.
Hence there exists $x \in X$ and $r > 0$ such that $\eta_{0}(B_{r}(x)) > 0$, and we redefine $\eta_0$ to be the probability measure obtained by conditioning $\eta_0$ to $B_r(x)$. Clearly 
$\eta_{0} \ll \mm$.
Finally we define $\eta_{1} : = T_{\sharp} \,\eta_{0}$. By {\bf Step 2} and Theorem \ref{T:optimalmapMCP}, the map $T$ is the unique optimal transport map between $\eta_{0}$ and $\eta_{1}$ for the $W_{2}$-distance (as it is supported on a $\sfd^2$-cyclically monotone set).
Consider moreover $\nu$ the unique element of $\Opt(\eta_{0},\eta_{1})$ -- then $\nu$-a.e. $\gamma$ it holds that:
$$
\gamma_{0} \in \dom(T) \cap B_{r}(x) \subset \T_u^b \;\;  ,  \;\; u(\gamma_{1}) = c \;\; , \;\; (\gamma_0,\gamma_1) \in \Gamma_u .
$$
It follows in particular by Lemma \ref{L:cicli} that $\gamma_{s} \in \Gamma_{u}(\gamma_{0})$ for all $s \in [0,1]$. 

Recalling that $u(a_{\alpha}) - c > \ve$ for all $\alpha \in Q_{1,c}$, that $a_{\alpha} \leq M$ by continuity on $Q_{1,c}$, and that the support of $\eta_0$ is bounded, it follows that there exists $\bar t \in (0,1)$ such that $\nu$-a.e. $\gamma_{\bar t} \in \mathcal{T}_{u} \setminus \T_{u}^{b} \subset A_{+} \cup A_{-}$.
Since $\mm(A_{+} \cup A_{-}) = 0$, necessarily $(\ee_{\bar t})_{\sharp} \nu \perp \mm$, but this is in contradiction with the assertion of Theorem \ref{T:optimalmapMCP} that $(\ee_{\bar t})_{\sharp} \nu \ll \mm$  since $\eta_{0} \ll \mm$ and $\bar t < 1$. 
The claim follows.
\end{proof}

\bigskip
\bigskip

\section{The $\CD^{1}$ Condition}\label{S:CD1}

In this section we introduce the $\CD^1(K,N)$ condition, which plays a cardinal role in this work. As a first step towards understanding this new condition, we show that it always implies $\MCPE(K,N)$ (and $\MCP(K,N)$), without requiring any types of non-branching assumptions. By analogy, we also introduce the $\MCP^1(K,N)$ condition, which may be of independent interest. 

\subsection{Definitions of $\CD^{1}$ and $\MCP^1$}\label{Ss:definitions}

We first assume that $\supp(\mm) = X$. Note that we do not assume that the transport rays $\{X_{\alpha}\}_{\alpha \in Q}$ below are disjoint or have disjoint relative interiors, in an attempt to obtain a useful definition also for \mms's which may have significant branching. However, throughout most of this work, we will typically assume in addition that the space is essentially non-branching, in which case an equivalent definition will be presented in Proposition \ref{P:CD1-ENB} below. 

\begin{definition}[$\CD^1_{u}(K,N)$ when $\supp(\mm) = X$] \label{D:CD1-u}
Let $(X,\sfd,\mm)$ denote a \mms with $\supp(\mm) = X$,  let $K \in \Real$ and $N \in [1,\infty]$, and let $u : (X,\sfd) \rightarrow \Real$ denote a $1$-Lipschitz function. 
$(X,\sfd,\mm)$ is said to verify the $\CD^{1}_{u}(K,N)$ condition if there exists a family $\{X_{\alpha}\}_{\alpha \in Q} \subset X$, such that: 
\begin{enumerate}
\item There exists a disintegration of $\mm\llcorner_{\mathcal{T}_{u}}$ on $\{X_{\alpha}\}_{\alpha \in Q}$:
\begin{equation}\label{E:New-disintCD1}
\mm\llcorner_{\mathcal{T}_{u}} = \int_{Q} \mm_{\alpha} \, \qq(d\alpha), \quad \text{with } \quad \mm_{\alpha}(X_{\alpha}) = 1,  \text{ for } \qq\text{-a.e. }\alpha \in Q .
\end{equation}
\item For $\qq$-a.e. $\alpha \in Q$, $X_\alpha$ is a transport ray for $\Gamma_u$ (recall Definition \ref{def:transport-ray}).
\item For $\qq$-a.e. $\alpha \in Q$, $\mm_\alpha$ is supported on $X_\alpha$. 
\item For $\qq$-a.e. $\alpha \in Q$, the \mms $(X_{\alpha}, \sfd,\mm_{\alpha})$ verifies $\CD(K,N)$.
\end{enumerate}
\end{definition}

We take this opportunity to define an analogous variant of $\MCP$:
\begin{definition}[$\MCP^1_{u}(K,N)$ when $\supp(\mm) = X$] \label{D:MCP1-u}
Let $(X,\sfd,\mm)$ denote a \mms with $\supp(\mm) = X$,  let $K \in \Real$ and $N \in [1,\infty]$, let $o \in X$ and denote the $1$-Lipschitz function $u := \sfd(\cdot,o)$. 
$(X,\sfd,\mm)$ is said to verify the $\MCP^{1}_{u}(K,N)$ condition if there exists a family $\{X_{\alpha}\}_{\alpha \in Q} \subset X$, such that conditions (1)-(3) above hold, together with:
\begin{enumerate}
\item[(4')] For $\qq$-a.e. $\alpha \in Q$, the \mms $(X_{\alpha}, \sfd,\mm_{\alpha})$ verifies $\MCP(K,N)$ with respect to $o \in X_\alpha$. 
\end{enumerate}
\end{definition}

\begin{remark} \label{R:o-dist}
Note that when $u= \sfd(\cdot,o)$ then necessarily $\mathcal{T}_{u} = X$ (if $X$ is not a singleton). In addition $(x,o) \in \Gamma_{u}$ for any $x \in X$, and hence by maximality of a transport ray, we must have $o \in X_{\alpha}$ for $\qq$-a.e. $\alpha \in Q$, and by condition (3) we deduce that $o \in \supp(\mm_\alpha)$ for $\qq$-a.e. $\alpha \in Q$. As $\CD(K,N)$ implies $\MCP(K,N)$ (in the one-dimensional case this is a triviality), we obviously see that $\CD^1_u(K,N)$ implies $\MCP^1_u(K,N)$ for all $u = \sfd(\cdot,o)$. 
\end{remark}

We will focus on a particular class of $1$-Lipschitz functions. 

\begin{definition*}[Signed Distance Function]
Given a continuous function $f : (X,\sfd) \to \R$ so that $\set{f = 0} \neq \emptyset$, the function:
\begin{equation}\label{E:levelsets}
d_{f} : X \to \R, \qquad d_{f}(x) : = \text{dist}(x, \{ f = 0 \} ) sgn(f),
\end{equation}
is called the signed distance function (from the zero-level set of $f$).  
\end{definition*}

\begin{lemma} \label{lem:df-Lip}
$d_f$ is $1$-Lipschitz on $\set{f \geq 0}$ and $\set{f \leq 0}$. If $(X,\sfd)$ is a length space, then $d_f$ is $1$-Lipschitz on the entire $X$. 
\end{lemma}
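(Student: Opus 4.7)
The plan is to handle the two cases separately and use the standard fact that the distance function to any non-empty subset is $1$-Lipschitz.

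For the first part, fix either of the half-spaces $H^+ := \{f \ge 0\}$ or $H^- := \{f \le 0\}$. On $H^+$ we have $d_f = \dist(\cdot, \{f=0\})$ (since $\sgn(f) \in \{0,1\}$ there and $\dist(x,\{f=0\}) = 0$ whenever $f(x)=0$, absorbing the $\sgn = 0$ case), and likewise $d_f = -\dist(\cdot, \{f=0\})$ on $H^-$. Since for any non-empty subset $A \subset X$ the function $x \mapsto \dist(x,A)$ is $1$-Lipschitz (triangle inequality), the claim on each of $H^\pm$ is immediate.

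For the second part, assume $(X,\sfd)$ is a length space. It remains to check the Lipschitz bound for mixed pairs $x \in \{f > 0\}$ and $y \in \{f < 0\}$, since both $x,y$ lying in the same half-space is already handled. For such a pair we have
\begin{equation*}
\abs{d_f(x) - d_f(y)} = \dist(x, \{f=0\}) + \dist(y, \{f=0\}),
\end{equation*}
so it suffices to produce, for each $\varepsilon > 0$, a point $z \in \{f = 0\}$ with $\sfd(x,z)+\sfd(z,y) \le \sfd(x,y)+\varepsilon$. The key step is the length-space property: pick a continuous curve $\sigma : [0,1] \to X$ from $x$ to $y$ with $\ell(\sigma) \le \sfd(x,y) + \varepsilon$. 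Continuity of $f$ and the intermediate value theorem applied to $f \circ \sigma$ (which takes a positive value at $0$ and a negative value at $1$) yield some $t \in (0,1)$ with $f(\sigma(t)) = 0$. Setting $z := \sigma(t)$,
\begin{equation*}
\dist(x,\{f=0\}) + \dist(y,\{f=0\}) \le \sfd(x,z) + \sfd(z,y) \le \ell(\sigma|_{[0,t]}) + \ell(\sigma|_{[t,1]}) = \ell(\sigma) \le \sfd(x,y) + \varepsilon.
\end{equation*}
Letting $\varepsilon \to 0$ yields $\abs{d_f(x) - d_f(y)} \le \sfd(x,y)$, completing the proof.

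I do not foresee any serious obstacle: the only subtlety is remembering that the sign convention makes $d_f$ continuously vanish on $\{f=0\}$, so that the two one-sided Lipschitz pieces glue without a jump, and that the length-space hypothesis is exactly what is needed to force every admissible curve between opposite half-spaces to cross the zero set, which is otherwise false on a general (e.g.\ totally disconnected) metric space.
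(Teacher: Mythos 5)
Your proof is correct and follows essentially the same route as the paper: the same-sign case via the standard Lipschitz bound for $\dist(\cdot,\{f=0\})$, and the mixed-sign case via an almost length-minimizing curve plus the intermediate value theorem to locate a zero of $f$ on it, then letting $\varepsilon \to 0$. The only addition is your explicit remark about the sign convention absorbing the $\{f=0\}$ case, which the paper leaves implicit.
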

\begin{proof}
Given $x,y \in X$ with $f(x) f(y) \geq 0$, the assertion follows by the usual triangle inequality, valid for any metric space:
\[
\abs{d_f(x) - d_f(y)} = \abs{\text{dist}(x,\set{f=0}) - \text{dist}(y,\set{f=0})} \leq \sfd(x,y) .
\]
When $f(x) f(y) < 0$, and given $\eps > 0$, let $\gamma : [0,1] \rightarrow X$ denote a continuous path with $\gamma_0 = x$, $\gamma_1=y$ and $\len(\gamma) \leq \sfd(x,y) + \eps$. By continuity, it follows that there exists $t \in (0,1)$ so that $f(\gamma_t) = 0$. It follows that:
\[
\abs{d_f(x) - d_f(y)} =\text{dist}(x,\set{f = 0}) + \text{dist}(y,\set{f=0}) \leq \sfd(x,\gamma_t) + \sfd(y,\gamma_t) \leq \len(\gamma) \leq \sfd(x,y) + \eps.
\]
As $\eps > 0$ was arbitrary, the assertion is proved. 
\end{proof}

\begin{remark}\label{R:bigset}
To extend Remark \ref{R:o-dist} to more general signed distance functions, we will need to require that $(X,\sfd)$ is proper, and in that case $\mathcal{T}_{d_{f}} \supset X \setminus \{f =0\}$. Indeed, given $x \in X \setminus \{f =0 \}$, consider the distance minimizing $z \in \{ f= 0 \}$ (by compactness of bounded sets). Then $(x,z) \in R_{d_{f}}$ and as $x \neq z$ it follows that $x \in \mathcal{T}_{d_{f}}$. 
\end{remark}

We now remove the restriction that $\supp(\mm) = X$ and introduce the main new definitions of this work:

\begin{definition}[$\CD^1_{Lip}(K,N)$, $\CD^1(K,N)$ and $\MCP^1(K,N)$] \label{D:New-CD1}
Let $(X,\sfd,\mm)$ denote a \mms and let $K \in \Real$ and $N \in [1,\infty]$.
\begin{itemize}
\item[-] $(X,\sfd,\mm)$ is said to verify the $\CD^{1}_{Lip}(K,N)$ condition if $(\supp(\mm),\sfd,\mm)$ verifies $\CD^1_{u}(K,N)$ for all $1$-Lipschitz functions $u : (\supp(\mm),\sfd) \rightarrow \Real$. 
\item[-] $(X,\sfd,\mm)$ is said to verify the $\CD^{1}(K,N)$ condition if $(\supp(\mm),\sfd,\mm)$ verifies $\CD^1_{d_f}(K,N)$ for all continuous functions $f : (\supp(\mm),\sfd) \rightarrow \Real$ so that $\set{f=0} \neq \emptyset$ and $d_f : (\supp(\mm) , \sfd) \rightarrow \Real$ is $1$-Lipschitz. 
\item[-] $(X,\sfd,\mm)$ is said to verify $\MCP^{1}(K,N)$ if $(\supp(\mm),\sfd,\mm)$ verifies $\MCP_u^1(K,N)$ for all functions $u(x) = \sfd(x,o)$ with $o \in \supp(\mm)$. 
\end{itemize}
\end{definition}

\begin{remark}
Clearly $\CD^1_{Lip}(K,N) \Rightarrow \CD^1(K,N) \Rightarrow \MCP^1(K,N)$ in view of Remark \ref{R:o-dist}.
Note that we do not a-priori know that $d_f$ is $1$-Lipschitz, since we do not know that $(\supp(\mm),\sfd)$ is a length-space (see Lemma \ref{lem:df-Lip}); nevertheless, we will shortly see that the $\CD^1(K,N)$ condition implies that $(\supp(\mm),\sfd)$ must be a geodesic space, and hence the sentence ``so that $d_f$ is $1$-Lipschitz" is in fact redundant. 
\end{remark}

\begin{remark} \label{R:CD1-localizes}
By definition, the $\CD^1_{Lip}$, $\CD^1$ and $\MCP^1$ conditions hold for $(X,\sfd,\mm)$ iff they hold for $(\supp(\mm),\sfd,\mm)$. It is also possible to introduce a definition of $\CD^1_{u}$ and $\MCP^1_u$ which applies to $(X,\sfd,\mm)$ directly, without passing through $(\supp(\mm),\sfd,\mm)$ - this would involve requiring that the transport rays $\set{X_\alpha}$ are maximal \emph{inside $\supp(\mm)$}, and in the case of $\CD^1_u$ would only apply to functions $u$ which are $1$-Lipschitz on $\supp(\mm)$ (these may be extended to the entire $X$ by McShane's theorem). Our choice to use a tautological approach is motivated by the analogous situation for the more classical $W_2$ definitions of curvature-dimension (see Remark \ref{rem:supp-nu}) and is purely for convenience, so as not to overload the definitions. 
\end{remark}

\bigskip

\subsection{$\MCP^{1}$ implies $\MCP_\eps$}

\begin{proposition}\label{P:MCP}
Let $(X,\sfd,\mm)$ be a \mms verifying $\MCP^{1}(K,N)$ with $K \in \R$ and $N \in (1,\infty)$ (in particular, this holds if it verifies $\CD^1_{Lip}(K,N)$ or $\CD^1(K,N)$). Then it verifies $\MCPE(K,N)$. 
\end{proposition}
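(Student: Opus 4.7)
Fix $o\in\supp(\mm)$ and $\mu_0=\rho_0\mm\in\P_2(X,\sfd,\mm)$ with bounded support (contained in $B(o,\pi\sqrt{(N-1)/K})$ if $K>0$). First, invoke $\MCP^1_u(K,N)$ on $(\supp(\mm),\sfd,\mm)$ with the $1$-Lipschitz function $u:=\sfd(\cdot,o)$ to obtain a family of transport rays $\{X_\alpha\}_{\alpha\in Q}$ together with a disintegration $\mm\llcorner_{\T_u}=\int_Q\mm_\alpha\,\qq(d\alpha)$ of the form required by Definition \ref{D:MCP1-u}. Since $(x,o)\in\Gamma_u$ for every $x\neq o$, we have $\supp(\mm)\setminus\{o\}\subset\T_u$, and by maximality of each ray and Remark \ref{R:o-dist} we may take $o\in X_\alpha$ as an endpoint for $\qq$-a.e.\ $\alpha$; moreover each one-dimensional \mms $(X_\alpha,\sfd,\mm_\alpha)$ satisfies $\MCP(K,N)$ with respect to $o$.

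Next, decompose $\mu_0$ using the disintegration by setting $c_\alpha:=\int\rho_0\,d\mm_\alpha$ and $\mu_0^\alpha:=c_\alpha^{-1}\rho_0\mm_\alpha$ whenever $c_\alpha>0$, so that $\mu_0=\int c_\alpha\mu_0^\alpha\,\qq(d\alpha)$. Each ray is isometric to an interval and in particular non-branching, and the one-dimensional $\MCP(K,N)$ on $(X_\alpha,\sfd,\mm_\alpha)$ together with the standard density characterization in one dimension (cf.\ the Appendix) produces a unique $\nu_\alpha\in\Opt(\mu_0^\alpha,\delta_o)$ with $(\ee_t)_\sharp\nu_\alpha\ll\mm_\alpha$ for $t\in[0,1)$, whose density $\rho_t^\alpha$ with respect to $\mm_\alpha$ satisfies the pointwise Jacobian bound
\[
\bigl(\rho_t^\alpha(\gamma_t)\bigr)^{-1/N}\;\geq\;\tau_{K,N}^{(1-t)}\!\bigl(\sfd(\gamma_0,o)\bigr)\,\bigl(\rho_0^\alpha(\gamma_0)\bigr)^{-1/N}\qquad\text{for }\nu_\alpha\text{-a.e. }\gamma.
\]
A standard measurable-selection argument makes $\alpha\mapsto\nu_\alpha$ $\qq$-measurable, and $\nu:=\int c_\alpha\nu_\alpha\,\qq(d\alpha)$ then has the required marginals $\mu_0$ and $\delta_o$. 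Since the target is a Dirac, optimality $\nu\in\Opt(\mu_0,\delta_o)$ is automatic; because $\nu$ is concentrated on geodesics lying inside rays $X_\alpha\subset\supp(\mm)$, we obtain $\supp((\ee_t)_\sharp\nu)\subset\supp(\mm)$ for every $t\in[0,1]$.

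For the entropy bound, a change-of-variables on each ray turns the pointwise inequality above into the one-dimensional $\MCPE$
\[
\int(\rho_t^\alpha)^{1-1/N}\,d\mm_\alpha\;\geq\;\int\tau_{K,N}^{(1-t)}(\sfd(x_0,o))\,(\rho_0^\alpha(x_0))^{1-1/N}\,\mm_\alpha(dx_0),
\]
and combining this with $\rho_0^\alpha=\rho_0/c_\alpha$, the disintegration $\mm=\int\mm_\alpha\,\qq(d\alpha)$, and concavity of $x\mapsto x^{1-1/N}$ applied via Jensen's inequality with respect to the probability measure $c_\alpha\qq(d\alpha)$, one arrives at
\[
\Eps_N(\mu_t)\;\geq\;\int_X\tau_{K,N}^{(1-t)}(\sfd(x_0,o))\,\rho_0(x_0)^{1-1/N}\,\mm(dx_0),
\]
which is exactly $\MCPE(K,N)$. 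The principal obstacle lies precisely in this final step: since $\MCP^1(K,N)$ carries no non-branching hypothesis, the rays $X_\alpha$ may genuinely overlap, so the global density of $\mu_t$ with respect to $\mm$ is not given fiberwise by a single $c_\alpha\rho_t^\alpha$ along a unique ray through the point; one must therefore pass from the one-dimensional inequalities on each ray to the global one through the disintegration structure and Jensen's inequality, rather than through a clean pointwise identity.
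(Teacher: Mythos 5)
Your construction up to the entropy estimate coincides with the paper's: the same ray decomposition for $u=\sfd(\cdot,o)$, the same conditioning $\mu_0^\alpha=z_\alpha^{-1}\rho_0\,\mm_\alpha$, the same mixture $\nu=\int z_\alpha\,\nu^\alpha\,\qq(d\alpha)$, and your remark that a Dirac endpoint makes $t\mapsto(\ee_t)_\sharp\nu$ automatically a $W_2$-geodesic is exactly the short triangle-inequality computation the paper performs. The genuine gap is in the final step. You correctly diagnose that, since the rays may overlap and each $\mm_\alpha$ is singular with respect to $\mm$, the density $\rho_t$ has no pointwise fiberwise representation; but the remedy you then invoke --- concavity of $x\mapsto x^{1-1/N}$ together with Jensen's inequality with respect to $c_\alpha\,\qq(d\alpha)$ --- does not, as stated, yield the conclusion. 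After the one-dimensional $\MCPE$ inequalities on the fibers, what remains to be proved is an inequality of the form $\int\rho_t^{1-1/N}\,d\mm\ \ge\ \int_{\bar Q} z_\alpha^{1-1/N}\bigl(\int(\rho_t^\alpha)^{1-1/N}\,d\mm_\alpha\bigr)\,\qq(d\alpha)$, and scalar Jensen against the mixing measure has no pointwise object to act on, precisely for the reason you yourself identified. So the heart of the proposition is asserted rather than established.

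For comparison, the paper closes this gap with a specific device: it writes $\Eps_N(\mu_t)=\int_{\{\rho_t>0\}}\rho_t^{-1/N}\,d\mu_t=\int_{\bar Q} z_\alpha\int\rho_t^\alpha\,\rho_t^{-1/N}\,d\mm_\alpha\,\qq(d\alpha)$ and applies the reverse H\"{o}lder inequality $\int|f|^{\alpha}|g|^{\beta}d\omega\ge(\int|f|\,d\omega)^{\alpha}(\int|g|\,d\omega)^{\beta}$ (with $\alpha=\tfrac{N}{N-1}$, $\beta=-\tfrac{1}{N-1}$) twice, once on the inner integrals and once on the outer one; this produces the self-referential bound $\Eps_N(\mu_t)\ge A^{N/(N-1)}\,\Eps_N(\mu_t)^{-1/(N-1)}$ with $A=\int\tau^{(1-t)}_{K,N}(\sfd(x,o))\,\rho_0^{1-1/N}\,d\mm$, which is then rearranged. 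Your Jensen instinct can in fact be salvaged, but only by upgrading it to the \emph{joint} concavity of $(u,v)\mapsto u^{1-1/N}v^{1/N}$ --- equivalently, superadditivity of $(\mu,\mm)\mapsto\int(d\mu/d\mm)^{1-1/N}d\mm$ in \emph{both} arguments --- applied after passing to a common dominating measure for $\mu_t$, $\mm$ and the family $\{\mm_\alpha\}$ and checking joint measurability; concavity of the one-variable power function alone is insufficient. Either route must be carried out explicitly, since this passage from fiberwise to global information is exactly where the overlap of rays bites.
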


\begin{proof}
We will show that $(\supp(\mm),\sfd,\mm)$ satisfies $\MCPE(K,N)$, and consequently so will $(X,\sfd,\mm)$. 
By Remark \ref{R:CD1-localizes}, we may therefore assume that $\supp(\mm) = X$. 
Fix any $o \in X$ and consider the $1$-Lipschitz function $u (x) : = \sfd(x,o)$. From $\MCP^{1}(K,N)$ and Remark \ref{R:bigset} we deduce the existence of a disintegration of $\mm$ on $\T_u = X$ along a family of Borel sets $\{ X_{\alpha}\}_{\alpha \in Q}$: $$
\mm = \int_{Q} \mm_{\alpha} \, \qq(d\alpha), \quad \mm_{\alpha} (X_{\alpha}) = 1, \ \text{for } \qq- \text{a.e. } \alpha \in Q, 
$$
so that $X_\alpha$ is a transport ray for $\Gamma_u$, $\mm_\alpha$ is supported on $X_\alpha$ and $(X_{\alpha}, \sfd, \mm_{\alpha})$ verifies $\MCP(K,N)$ with respect to $o \in X_\alpha$, for $\qq$-a.e. $\alpha \in Q$.

Now consider any $\mu_0 \in \P(X)$ with $\mu_0 \ll \mm$, so that $\rho_0 := \frac{d\mu_0}{d\mm}$ has bounded support. 
By measurability of the disintegration, the function $Q \ni \alpha \mapsto z_\alpha := \int \rho_0(x) \mm_{\alpha}(dx)$ is $\qq$-measurable, and hence $\bar{Q} := \set{ \alpha \in Q \; ; \; z_\alpha \in (0,\infty)}$ is $\qq$-measurable. 
Clearly $\int_{\bar{Q}} z_\alpha \qq(d\alpha) = \int_{Q} z_\alpha \qq(d\alpha) = 1$ since $z_\alpha < \infty$ for $\qq$-a.e. $\alpha \in Q$. 

Define $\mu_0^\alpha := \frac{1}{z_\alpha} \rho_0 \mm_\alpha \in \mathcal{P}(X_\alpha)$ for all $\alpha \in \bar{Q}$. 
Since for $\qq$-a.e. $\alpha \in \bar{Q}$, the one-dimensional (non-branching) $(X_{\alpha},\sfd)$ contains $o$, 
there exists a unique element $\nu^\alpha$ of $\Opt(\mu_{0}^{\alpha}, \delta_{o}) \cap \mathcal{P}(\Geo(X_{\alpha}))$ where $\Geo(X_{\alpha})$ denotes the space of geodesics in $X_{\alpha}$. Define then:
\begin{equation} \label{E:big-nu}
\nu : = \int_{\bar{Q}} \nu^{\alpha} z_\alpha \,\qq(d\alpha),
\end{equation}
and observe that $(\ee_{0})_{\sharp} \nu = \rho_0 \mm = \mu_0$ and $(\ee_{1})_{\sharp} \nu = \delta_{o}$.
To conclude that $\nu \in \Opt(\mu_0, \delta_{o})$  we must show that $t \mapsto (\ee_{t})_{\sharp} \nu = : \mu_{t}$ is a $W_{2}$-geodesic. Indeed, for any $0 \leq s < t \leq 1$, consider the transference plan $(\ee_{s},\ee_{t})_{\sharp} \nu$ between $\mu_s$ and $\mu_t$, yielding:
\begin{align*}
W_{2}^{2}(\mu_{s}, \mu_{t}) &~ \leq  \int_{\bar{Q}}  \int_{X_{\alpha} \times X_{\alpha}} \sfd^{2}(x,y) (\ee_{s},\ee_{t})_{\sharp} \nu^{\alpha} (dxdy) z_\alpha \, \qq(d\alpha) \\&~ =  \int_{\bar{Q}}   (t-s)^{2} \int_{X_{\alpha} \times X_{\alpha}} \sfd^{2}(x,y) (\ee_{0},\ee_{1})_{\sharp} \nu^{\alpha} (dxdy) z_\alpha \, \qq(d\alpha) \\&~ = (t-s)^{2} \int_{\bar{Q}}   \int_{X_{\alpha}} \sfd^{2}(x,o)  \mu_0^\alpha(dx) z_\alpha \,\qq(d\alpha) \\ &~ = (t-s)^{2} \int_{Q}   \int_{X_{\alpha}} \sfd^{2}(x,o)  \rho_0(x) \mm_\alpha(dx) \,\qq(d\alpha) \\ 
&~ =  (t-s)^{2} \int_{X}  \sfd^{2}(x,o) \rho_0(x) \mm(dx) \\
&~ = (t-s)^{2} W_{2}^{2}(\mu_0,\delta_{o}). 
\end{align*}
By the triangle inequality, it follows that $t \mapsto \mu_{t}$ must indeed be a geodesic in $(\P_2(X),W_2)$. Note that this property is particular to transportation to a delta measure.

It remains to establish the $\MCPE$ inequality of Definition \ref{D:MCPE}. Fix $t \in (0,1)$, and recall that for $\qq$-a.e. $\alpha \in \bar{Q}$, the (one-dimensional, non-branching) $(X_{\alpha}, \sfd, \mm_{\alpha})$ verifies $\MCP(K,N)$ (and hence $\MCPE(K,N)$), and as $\mu_0^\alpha \ll \mm_\alpha$ and $o \in \supp(\mm_\alpha)$, in particular $\mu_t^\alpha := (\ee_t)_{\sharp}(\nu^\alpha) \ll \mm_\alpha$. Applying $\ee_t$ to both sides of (\ref{E:big-nu}), it follows that $\mu_t = (\ee_t)_{\sharp}(\nu) \ll \mm$. Writing $\mu_t = \rho_t \mm$ and $\mu_t^\alpha = \rho_t^\alpha \mm_\alpha$ for $\qq$-a.e. $\alpha \in \bar{Q}$, the $\MCPE$ condition implies that:
\begin{equation} \label{eq:big-MCPE}
\int_X (\rho_t^{\alpha}(x))^{1 - \frac{1}{N}} \mm_\alpha(dx) \geq \int_X \tau^{(1-t)}_{K,N}(d(x,o)) \brac{\frac{\rho_0(x)}{z_\alpha}}^{1-\frac{1}{N}} \mm_\alpha(dx) \;\;\; \forall \qq-\text{a.e. } \alpha \in \bar{Q} .
\end{equation}
In addition, the application of $\ee_t$ to both sides of (\ref{E:big-nu}) yields the following disintegration:
\begin{equation} \label{eq:big-disint}
\rho_t \mm = \int_{\bar{Q}} \rho_t^\alpha z_\alpha \mm_\alpha \qq(d\alpha) .
\end{equation}

Now consider the set $Y = \set{\rho_t > 0}$, and note that by (\ref{eq:big-disint}):
\begin{equation} \label{eq:big-vanishing}
\int_{X \setminus Y} \rho_t^\alpha(x) \mm_\alpha(dx) = 0 \;\;\; \forall \qq-\text{a.e. } \alpha \in \bar{Q} .
\end{equation}
Integrating  (\ref{eq:big-disint}) against $\rho_t^{-\frac{1}{N}}$ on $Y = \set{\rho_t > 0}$, applying H\"{o}lder's inequality on the interior integral for $\qq$-a.e. $\alpha \in \bar{Q}$,  using (\ref{eq:big-vanishing}), employing the one-dimensional $\MCPE$ inequality  (\ref{eq:big-MCPE}) and canceling $z_\alpha$, and finally applying H\"{o}lder's inequality again on the exterior integral, we obtain:
\begin{align*}
& \int_X \rho_t(x)^{1-\frac{1}{N}} \mm(dx)  = \int_Y \rho_t(x)^{1-\frac{1}{N}} \mm(dx) = \int_{\bar{Q}} \int_Y \rho_t^\alpha(x) \rho_t(x)^{-\frac{1}{N}} \mm_\alpha(dx) z_\alpha \qq(d\alpha) \\
 & \geq \int_{\bar{Q}} \brac{\int_Y (\rho_t^\alpha(x))^{1 - \frac{1}{N}} \mm_\alpha(dx)}^{\frac{N}{N-1}} \brac{\int_Y \rho_t(x)^{\frac{N-1}{N}} \mm_{\alpha}(dx)}^{-\frac{1}{N-1}} z_\alpha \qq(d \alpha) \\
&  = \int_{\bar{Q}} \brac{\int_X (\rho_t^\alpha(x))^{1 - \frac{1}{N}} \mm_\alpha(dx)}^{\frac{N}{N-1}} \brac{\int_X \rho_t(x)^{\frac{N-1}{N}} \mm_{\alpha}(dx)}^{-\frac{1}{N-1}} z_\alpha \qq(d \alpha) \\
& \geq \int_{\bar{Q}} \brac{\int_X \tau^{(1-t)}_{K,N}(d(x,o)) \rho_0(x)^{1-\frac{1}{N}} \mm_\alpha(dx)}^{\frac{N}{N-1}} \brac{\int_X \rho_t(x)^{\frac{N-1}{N}} \mm_{\alpha}(dx)}^{-\frac{1}{N-1}} \qq(d \alpha) \\
& \geq \brac{\int_{\bar{Q}}  \int_X \tau^{(1-t)}_{K,N}(d(x,o)) \rho_0(x)^{1-\frac{1}{N}} \mm_\alpha(dx) \qq(d \alpha)}^{\frac{N}{N-1}} \brac{\int_{\bar{Q}} \int_X \rho_t(x)^{\frac{N-1}{N}} \mm_{\alpha}(dx) \qq(d\alpha)}^{-\frac{1}{N-1}} \\
& \geq   \brac{\int_{Q}  \int_X \tau^{(1-t)}_{K,N}(d(x,o)) \rho_0(x)^{1-\frac{1}{N}} \mm_\alpha(dx) \qq(d \alpha)}^{\frac{N}{N-1}} \brac{\int_{Q} \int_X \rho_t(x)^{\frac{N-1}{N}} \mm_{\alpha}(dx) \qq(d\alpha)}^{-\frac{1}{N-1}} \\
& = \brac{\int_X \tau^{(1-t)}_{K,N}(d(x,o)) \rho_0(x)^{1-\frac{1}{N}} \mm(dx)}^{\frac{N}{N-1}} \brac{\int_X \rho_t(x)^{1 - \frac{1}{N}} \mm(dx)}^{-\frac{1}{N-1}} ,
\end{align*}
where the last inequality above follows since $\rho_0 \mm_\alpha = 0$ for $\alpha \in Q \setminus \bar{Q}$ and since the exponent on the second term is negative. Note that we applied H\"{o}lder's inequality above in reverse form:
\[
\int \abs{f}^{\alpha} \abs{g}^{\beta} d\omega \geq (\int \abs{f} d\omega)^{\alpha} (\int \abs{g} d\omega)^{\beta} ,
\]
which is valid as soon as $\alpha + \beta = 1$, $\beta < 0$, regardless of whether or not $\abs{g} > 0$ $\omega$-a.e.. 

Rearranging terms above and raising to the power of $\frac{N-1}{N}$, the desired inequality follows:
\[
\int_X \rho_t(x)^{1-\frac{1}{N}} \mm(dx) \geq \int_X \tau^{(1-t)}_{K,N}(d(x,o)) \rho_0(x)^{1-\frac{1}{N}} \mm(dx) .
\]

\end{proof}

\begin{remark}
Note that the above proof shows that, not only does it hold that $\supp(\mu_t) \subset \supp(\mm)$ for all $t\in [0,1)$, as required in the definition of $\MCPE(K,N)$, but in fact $\mu_t \ll \mm$. 
\end{remark}

\begin{remark} \label{rem:CD1-MCP}
Recalling that $\MCPE(K,N)$ always implies $\MCP(K,N)$, we deduce that $\MCP^1(K,N)$ implies $\MCP(K,N)$. In fact, a direct proof of the latter implication is elementary. Indeed, let $A \subset X$ be any Borel set with $0 < \mm(A) < \infty$,
and denote $\mu_0 = \frac{1}{\mm(A)} \mm\llcorner_{A}$. 
Recall that for $\qq$-a.e. $\alpha \in \bar{Q}$, $o \in X_\alpha$, $\supp(\mm_\alpha) = X_\alpha$ and
$(X_{\alpha},\sfd, \mm_{\alpha})$ verifies $\MCP(K,N)$. Defining $\nu$ as in (\ref{E:big-nu}) and continuing with the notation used there, 
it follows by  uniqueness of $\nu^\alpha$ and the $\MCP$ condition with respect to the point $o \in X_\alpha$, that for any Borel set $B \subset X$: $$
\mm_{\alpha} (B)  \geq \int_{\ee_{t}^{-1}(B)} \tau_{K,N}^{(1-t)} (\sfd(\gamma_{0},\gamma_{1}))^{N} \mm_{\alpha}(A) \nu^{\alpha}(d\gamma), 
$$
for $\qq$-a.e. $\alpha \in \bar{Q}$. Integrating over $\bar{Q}$ we obtain
\begin{align*}
\mm(B)  &~ \geq \int_{\bar{Q}} \mm_\alpha(B) \qq(d\alpha) \\
&~ \geq \int_{\ee_{t}^{-1}(B)} \int_{\bar{Q}} \tau_{K,N}^{(1-t)} (\sfd(\gamma_{0},\gamma_{1}))^{N}  \mm_{\alpha}(A) \nu^{\alpha}(d\gamma) \,\qq(d\alpha)  \\
&~ = \int_{\ee_{t}^{-1}(B)} \tau_{K,N}^{(1-t)} (\sfd(\gamma_{0},\gamma_{1}))^{N} \mm(A) \nu(d\gamma) ,
\end{align*}
and the claim follows. \end{remark}

As a consequence, we immediately obtain from Lemmas \ref{L:proper-support} and \ref{lem:df-Lip}:

\begin{corollary}\label{C:structure}
Let $(X,\sfd,\mm)$ be a \mms verifying $\CD^{1}(K,N)$ with $K \in \R$ and $N \in (1,\infty)$. 
Then $(\supp(\mm),\sfd)$ is a Polish, proper and geodesic space. 
In particular, for any continuous function $f : (\supp(\mm),\sfd) \to \R$ with $\{ f = 0 \} \neq \emptyset$, the function $d_{f} : (\supp(\mm),\sfd) \to \R$ is $1$-Lipschitz.
\end{corollary}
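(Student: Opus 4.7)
The plan is to chain together the implications and lemmas already established in the excerpt; the statement is essentially a formal consequence of what has been proved, so the ``proof'' is really just a matter of assembling the right pieces. First, I would invoke Proposition \ref{P:MCP} to pass from the assumption $\CD^{1}(K,N)$ to $\MCPE(K,N)$ (noting that the hypotheses $K \in \R$, $N \in (1,\infty)$ match, and that $\CD^{1}(K,N) \Rightarrow \MCP^{1}(K,N)$ via Remark \ref{R:o-dist}, which is the input required by the proposition). Then by Lemma \ref{lem:CD-MCPE-MCP}, $\MCPE(K,N)$ in turn implies $\MCP(K,N)$, so the \mms $(X,\sfd,\mm)$ verifies $\MCP(K,N)$.

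Second, I would feed this into Lemma \ref{L:proper-support}, which precisely asserts that any \mms satisfying $\MCP(K,N)$ has the property that $(\supp(\mm),\sfd)$ is Polish, proper and geodesic. This settles the first assertion of the corollary.

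Finally, the ``In particular'' clause follows from Lemma \ref{lem:df-Lip}: since $(\supp(\mm),\sfd)$ is geodesic it is in particular a length space, and therefore for any continuous $f : (\supp(\mm),\sfd) \to \R$ with $\{f=0\}\neq \emptyset$, the signed distance function $d_{f}$ is $1$-Lipschitz on all of $\supp(\mm)$.

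There is no main obstacle here; every ingredient is in place, and the only thing to check is that the hypotheses of each invoked result line up correctly (in particular that Proposition \ref{P:MCP} applies under $\CD^{1}(K,N)$ rather than only under $\MCP^{1}(K,N)$, which is explicitly recorded there via the parenthetical remark). The corollary is thus an immediate consequence of Proposition \ref{P:MCP}, Lemma \ref{lem:CD-MCPE-MCP}, Lemma \ref{L:proper-support} and Lemma \ref{lem:df-Lip}.
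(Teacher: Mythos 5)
Your proposal is correct and follows essentially the same route as the paper, which deduces the corollary immediately from Proposition \ref{P:MCP} together with Lemmas \ref{L:proper-support} and \ref{lem:df-Lip}. The only (harmless) redundancy is the intermediate passage to $\MCP(K,N)$ via Lemma \ref{lem:CD-MCPE-MCP}: Lemma \ref{L:proper-support} already applies directly under $\MCPE(K,N)$.
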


\subsection{On Essentially Non-Branching Spaces}

Having at our disposal $\MCP(K,N)$, we can now invoke the results of Section \ref{S:L1-OT} concerning $L^{1}$ Optimal Transportation theory, and obtain the following important equivalent definitions of $\CD^1_{Lip}(K,N)$, $\CD^1(K,N)$ and $\MCP^1(K,N)$ assuming that $(X,\sfd,\mm)$ is essentially non-branching.

\begin{proposition} \label{P:CD1-ENB}
Let $(X,\sfd,\mm)$ be an essentially non-branching \mms with $\supp(\mm) = X$. Given $K \in \Real$ and $N \in (1,\infty)$, the following statements are equivalent:
\begin{enumerate}
\item $(X,\sfd,\mm)$ verifies $\CD^1_{Lip}(K,N)$. 
\item For any 1-Lipschitz function $u : (X,\sfd) \rightarrow \Real$, let $\set{R_u^b(\alpha)}_{\alpha \in Q}$ denote the partition of $\T_{u}^{b}$ given  by the equivalence classes of $R_{u}^{b}$. Denote by $X_\alpha$ the closure $\overline{R_u^b(\alpha)}$. Then all the conditions (1)-(4) of Definition \ref{D:CD1-u} hold for the family $\set{X_\alpha}_{\alpha \in Q}$.  In particular, $X_\alpha = R_{u}(\alpha)$ is a transport-ray for $\qq$-a.e. $\alpha \in Q$. \\
Moreover, the sets $\set{X_\alpha}_{\alpha \in Q}$ have disjoint interiors $\{\mathring{R}_u^b(\alpha)\}_{\alpha \in Q}$ contained in $\T_u^b$, and the disintegration $(Q,\mathscr{Q},\qq)$ of $\mm\llcorner_{\T_u}$ on $\set{X_\alpha}_{\alpha \in Q}$ given by (\ref{E:New-disintCD1}) is essentially unique.  \\
Furthermore, $Q$ may be chosen to be a section of the above partition so that $Q \supset \bar Q \in \B(\T_u^b)$ with $\bar Q$ an $\mm$-section with $\mm$-measurable quotient map, so that in particular $\mathscr{Q} \supset \B(\bar Q)$ and $\qq$ is concentrated on $\bar Q$.
\end{enumerate}
An identical statement holds for $\CD^1(K,N)$ when only considering signed distance functions $u = d_f$.\\
An identical statement also holds for $\MCP^1(K,N)$ when only considering the functions $u = d(\cdot,o)$, after replacing above condition (4) of Definition \ref{D:CD1-u} with condition (4') of Definition \ref{D:MCP1-u}. 
\end{proposition}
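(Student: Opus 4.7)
The direction $(2) \Rightarrow (1)$ is immediate: the canonical data in $(2)$, when applied to every $1$-Lipschitz $u$, is exactly a witness for $\CD^1_u(K,N)$ per Definition \ref{D:CD1-u}. For $(1) \Rightarrow (2)$, the plan proceeds in three steps.

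Step 1 (canonical disintegration): extract $\MCP(K,N)$ from the $\CD^1_{Lip}$ hypothesis via Proposition \ref{P:MCP} (or Remark \ref{rem:CD1-MCP}), so that Corollary \ref{C:disintMCP} yields the essentially unique strongly-consistent disintegration
\[
\mm\llcorner_{\T_u} = \mm\llcorner_{\T_u^b} = \int_Q \mm_\alpha\,\qq(d\alpha)
\]
along the partition of $\T_u^b$ by $R_u^b$-equivalence classes, with $\qq$ concentrated on the Borel $\mm$-section $\bar Q \subset \T_u^b$ as asserted. Setting $X_\alpha := \overline{R_u^b(\alpha)}$, Theorem \ref{T:endpoints} identifies $X_\alpha$ with the full transport ray $R_u(\alpha)$ for $\qq$-a.e.~$\alpha$, giving condition (2) of Definition \ref{D:CD1-u}; disjointness of the interiors $\{\mathring R_u^b(\alpha)\} \subset \T_u^b$ is automatic from the $R_u^b$-partition being by disjoint equivalence classes.

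Step 2 (transfer of $\CD(K,N)$): let $\mm\llcorner_{\T_u} = \int_{Q'} \mm'_{\alpha'}\,\qq'(d\alpha')$ be any disintegration witnessing $\CD^1_u(K,N)$. Since $\mm(\T_u \setminus \T_u^b) = 0$, we have $\mm'_{\alpha'}(\T_u^b) = 1$ for $\qq'$-a.e.~$\alpha'$; picking any $x \in X'_{\alpha'} \cap \T_u^b$, the maximality of the transport ray $X'_{\alpha'}$ together with Lemma \ref{L:RayInTub} force $X'_{\alpha'} = R_u(x)$, which determines a unique canonical index $\alpha(\alpha') \in Q$. The one-dimensional $\CD(K,N)$ condition moreover renders each $\mm'_{\alpha'}$ absolutely continuous on the ray (with the distortion-coefficient density characterization of the Appendix), so endpoints are charged with zero mass. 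Pushing the disintegration forward via $\alpha' \mapsto \alpha(\alpha')$ produces a second strongly-consistent disintegration of $\mm\llcorner_{\T_u^b}$ along the $R_u^b$-partition, which by the essential uniqueness part of Theorem \ref{T:disintegrationgeneral} coincides $\qq$-a.e.~with the canonical $\mm_\alpha$. In the essentially non-branching setting, the rigidity requirement $\supp(\mm'_{\alpha'}) = X'_{\alpha'}$ of Definition \ref{D:CD1-u}, combined with the uniqueness of $L^2$-transport maps from Theorem \ref{T:optimalmapMCP}, guarantees that the fibers of $\alpha' \mapsto \alpha(\alpha')$ are essentially single-valued, so the push-forward is not a non-trivial average of distinct conditionals and the one-dimensional $\CD(K,N)$ density on each canonical ray is preserved; in particular $\supp(\mm_\alpha) = X_\alpha$ is also inherited, giving condition (3). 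The \emph{main obstacle} of the argument is exactly this final point: verifying that the push-forward involves no averaging of distinct $\CD(K,N)$ conditionals on a common transport ray — averaging does not in general preserve the rigid one-dimensional $\CD(K,N)$ density structure, so the essentially non-branching hypothesis must be fully exploited here.

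Step 3 (variants): the corresponding statements for $\CD^1(K,N)$ and $\MCP^1(K,N)$ follow by running the identical argument with $u$ restricted to signed-distance functions $u = d_f$ (which are indeed $1$-Lipschitz on $\supp(\mm)$ by Corollary \ref{C:structure}, so that $\mathcal{T}_{d_f} \supset \supp(\mm)\setminus \{f=0\}$ has full $\mm$-measure) or to distance functions $u = \sfd(\cdot,o)$, with $\CD(K,N)$ replaced by $\MCP(K,N)$ in the latter case. No new ideas are required, since every tool used above — Proposition \ref{P:MCP}, Corollary \ref{C:disintMCP}, Theorem \ref{T:endpoints}, and Theorem \ref{T:disintegrationgeneral} — applies verbatim to each restricted class of $1$-Lipschitz functions.
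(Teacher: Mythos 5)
Your overall route is the same as the paper's: extract $\MCP(K,N)$, take the canonical disintegration over the $R_u^b$-classes from Corollary \ref{C:disintMCP}, upgrade the classes to maximal transport rays via Theorem \ref{T:endpoints}, and then match an arbitrary $\CD^1_u$-witness disintegration against the canonical one using Lemma \ref{L:RayInTub} and essential uniqueness of disintegrations, with the variants handled verbatim. The gap is precisely at the step you flag as the main obstacle. To exclude that pushing the witness disintegration forward along $\alpha'\mapsto\alpha(\alpha')$ averages several distinct witness conditionals over one canonical ray, you invoke the full-support requirement together with ``the uniqueness of $L^2$-transport maps from Theorem \ref{T:optimalmapMCP}''. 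This is a non sequitur: that theorem concerns uniqueness of $W_2$-optimal dynamical plans between measures in $\P_2(X,\sfd,\mm)$ and says nothing about the index structure of an $L^1$-type disintegration; and full support does not prevent two distinct fully supported $\CD(K,N)$ probability measures from living on the same ray --- for instance $3(1-x)^2\,dx$ and $3x^2\,dx$ on $[0,1]$ are both $\CD(0,3)$ densities of full support, while their average $\tfrac32(2x^2-2x+1)\,dx$ is not a $\CD(0,3)$ density. So, as written, your Step 2 only yields that $\mm_\alpha$ is a mixture of $\CD(K,N)$ measures on $X_\alpha$, which is strictly weaker than condition (4) of Definition \ref{D:CD1-u}, and the cited tool cannot be repaired into an argument for essential single-valuedness of the fibers.

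The paper's mechanism at this point uses no optimal-transport uniqueness at all. After discarding $\pp$- and $\qq$-null sets, it sets up a one-to-one correspondence $\eta$ between the remaining witness indices $\beta$ and classes $\alpha$, based on Lemma \ref{L:RayInTub}: every witness ray $Y_\beta$ meeting $\T_u^b$ equals $R_u(x)$ for each $x\in Y_\beta\cap\T_u^b$, hence equals $R_u(\alpha)$ for a unique $\alpha$, so the two families induce the \emph{same} partition of a common full-measure set $C\subset\T_u^b$. Identifying the index sets via $\eta$, both quotient measures are the push-forward of $\mm\llcorner_C$ under the partition map, and essential uniqueness (Theorem \ref{T:disintegrationgeneral}) then gives the conditional-by-conditional identity between the restricted witness conditionals and the canonical $\mm_\alpha$, which transfers $\CD(K,N)$ and full support directly; Lemma \ref{L:Newlemma} is what identifies $\mathring X_\alpha$ with $\mathring R_u^b(\alpha)$, a point you also glossed over. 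To complete your version you must justify the essential injectivity of the index map at the level of this ray correspondence (as the paper does), rather than via Theorem \ref{T:optimalmapMCP}.
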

\begin{proof}
The only direction requiring proof is $(1) \Rightarrow (2)$. Given a $1$-Lipschitz function $u$ as above, we may assume that $\mm(\T_u) > 0$, otherwise there is nothing to prove. The $\CD^1_u(K,N)$ condition ensures there exists a family $\set{Y_\beta}_{\beta \in P}$ of sets and a disintegration: \[
\mm \llcorner_{\T_u} = \int_P  \mm^P_\beta \pp(d \beta) ~, \quad \text{with } \quad \mm^P_{\beta}(Y_{\beta}) = 1,  \text{ for } \pp\text{-a.e. }\beta \in P ,
\]
so that for $\pp$-a.e. $\beta \in P$, $Y_\beta$ is a transport ray for $\Gamma_u$, $(Y_\beta,\sfd,\mm^P_\beta)$ satisfies $\CD(K,N)$ and $\supp(\mm^P_\beta) = Y_\beta$. By removing a $\pp$-null-set from $P$, let us assume without loss of generality that the above properties hold for all $\beta \in P$. 

As $\CD^1_{Lip}(K,N) \Rightarrow \CD^1 (K,N) \Rightarrow \MCP^1(K,N) \Rightarrow \MCP(K,N)$, and as our space is essentially non-branching with full-support, Corollary \ref{C:disintMCP} implies that  $\mm(A_+ \cup A_-) = 0$ and that there exists an essentially unique disintegration $(Q,\mathscr{Q},\qq)$ of $\mm\llcorner_{\T_{u}} = \mm\llcorner_{\T^b_{u}}$ strongly consistent with the partition of $\T_{u}^{b}$ given by $\set{R_u^b(\alpha)}_{\alpha \in Q}$:
\begin{equation} \label{E:ess-unique2}
\mm\llcorner_{\T_{u}} = \int_{Q} \mm_{\alpha} \,\qq(d\alpha), \quad \text{with } \quad \mm_{\alpha}(R_u^b(\alpha)) = 1,  \text{ for } \qq \text{-a.e. }\alpha \in Q .
\end{equation}
By Corollary \ref{C:disintMCP}, $Q$ may be chosen to be a section of the above partition satisfying the statement appearing in the formulation of Proposition \ref{P:CD1-ENB}.
Again, let us assume without loss of generality that $\mm_{\alpha}(R_u^b(\alpha)) = 1$ for all $\alpha \in Q$.

\smallskip
By Theorem \ref{T:endpoints}, there exists $Q_1 \subset Q$ of full $\qq$-measure so that $R_u(\alpha) = \overline{R_u^b(\alpha)} \supset R_u^b(\alpha) \supset \mathring{R}_u(\alpha)$ for all $\alpha \in Q_1$. In addition, since $\mm(\T_u \setminus \T_u^b) = 0$, there exists $P_1 \subset P$ of full $\pp$-measure so that $\mm^P_\beta(\T_u^b)=1$ for all $\beta \in P_1$. By Lemmas \ref{L:Newlemma} and \ref{L:RayInTub}, $(Y_\beta \cap \T_u^b,\sfd)$ is isometric to an interval in $(\Real,\abs{\cdot})$, and therefore $(\overline{Y_\beta \cap \T_u^b}, \sfd, (\mm^P_\beta)\llcorner_{\T_u^b})$ still satisfies $\CD(K,N)$, is of total measure $1$ and satisfies $\supp((\mm^P_\beta)\llcorner_{\T_u^b}) = \overline{Y_\beta \cap \T_u^b}$, for all $\beta \in P_1$. 

Now by Lemma \ref{L:RayInTub}, since $Y_\beta \cap \T_u^b \neq \emptyset$ for all $\beta \in P_1$, $Y_\beta = R_u(x)$ for all $x \in Y_\beta \cap \T_u^b$. In particular, for all $\beta \in P_1$, there exists a unique (since $R_u^b $ is an equivalence relation on $\T_u^b$ and by uniqueness of the section map) $\alpha = \alpha(\beta) \in Q$ so that $Y_\beta = R_u(\alpha)$. Denoting by $\tilde{Q} \subset Q$ the set of indices $\alpha$ obtained in this way, it is clear that $\tilde{Q}$ if of full $\qq$-measure, since:
$$
0 = \pp(P \setminus P_1) = \mm \left(\mathcal{T}_{u}^{b} \setminus \bigcup_{\beta \in P_{1}} Y_{\beta} \right) = \mm \left(\mathcal{T}_{u}^{b} \setminus \bigcup_{\alpha(\beta) \colon \beta \in P_{1}} R_{u}(\alpha(\beta)) \right) = \qq(Q \setminus \tilde{Q}).
$$
Consequently, $Q_2 := \tilde{Q} \cap Q_1$ is of full $\qq$-measure as well. Denoting $P_2 := \alpha^{-1}(Q_2)$ and repeating the above argument, it follows that $P_2 \subset P_1$ is of full $\pp$-measure and satisfies that for all $\beta \in P_2$, $Y_\beta = R_u(\alpha)$ for $\alpha = \alpha(\beta)\in Q_2$.

We conclude that there is a one-to-one correspondence: 
\[
\eta : P_2 \ni \beta \leftrightarrow \alpha \in Q_2 \;\;\; \text{whenever} \;\;\; Y_\beta \cap \T_u^b  = R_u^b(\alpha) ( = R_u(\alpha) \cap \T_u^b),
\]
so both of these representations yield an identical partition (up to relabeling) of the set:
\[
C := \bigcup_{\beta \in P_2} (Y_\beta \cap \T_u^b) = \bigcup_{\alpha \in Q_2} R_u^b(\alpha) .
\]
Clearly $\mm(\T_u^b \setminus C) = 0$ and so $C$ is $\mm$-measurable. Therefore, by the above two disintegration formulae:
\[
\mm \llcorner_{\T_u} = \mm\llcorner_{C} = \int_{P_2} (\mm^P_\beta)\llcorner_{\T_u^b} \pp(d\beta) = \int_{Q_2} \mm_\alpha \qq(d\alpha) . 
\]
After identifying between $P_2$ and $Q_2$ via $\eta$, it follows necessarily that $\qq \llcorner_{Q_2}=\pp \llcorner_{P_2}$ as they are both the push-forward of $\mm\llcorner_{C}$ under the partition map (since $(\mm^P_\beta)\llcorner_{\T_u^b}$ and $\mm_\alpha$ are both probability measures on $\T_u$). Applying the Disintegration Theorem \ref{T:disintegrationgeneral} to $(C,\B(C),\mm\llcorner_{C})$, we conclude that there is an essentially unique disintegration of $\mm\llcorner_{C}$ on the above partition of $C$. Consequently, there exist $P_3 \subset P_2$ of full $\pp$-measure and $Q_3 = \eta(P_3) \subset Q_2$ of full $\qq$-measure so that: 
\[
 (\mm^P_\beta)\llcorner_{\T_u^b} = \mm_{\alpha}
 \]
 for all pairs $(\beta,\alpha) \in P_3 \times Q_3$ related by the correspondence $\eta$. 

\smallskip
Recall that $X_\alpha := \overline{R_u^b(\alpha)}$. It follows that for all $\alpha \in Q_3$ (with corresponding $\beta \in P_3$):
\begin{enumerate}
\item $X_\alpha = \overline{R_u^b(\alpha)} = R_u(\alpha)$ is a transport ray. \item $(\overline{Y_\beta \cap \T_u^b} , \sfd, (\mm^P_\beta)\llcorner_{\T_u^b}) = (\overline{R_u^b(\alpha)} = X_\alpha,\sfd,\mm_\alpha)$ satisfies $\CD(K,N)$ with total measure $1$. \item Consequently:
\begin{equation} \label{E:ess-unique3}
 \mm\llcorner_{\T_{u}} = \int_{Q} \mm_{\alpha} \,\qq(d\alpha) ,
\end{equation}
 is a disintegration on $\set{X_\alpha}_{\alpha \in Q}$. 
\item $\mm_\alpha = (\mm^P_\beta)\llcorner_{\T_u^b}$ is supported on $\overline{Y_\beta \cap \T_u^b} = \overline{R_u^b(\alpha)} = X_\alpha$. 
\end{enumerate}
This confirms the 4 conditions of Definition \ref{D:CD1-u}, and the essential uniqueness of the disintegration (\ref{E:ess-unique3}) readily follows from that of the disintegration (\ref{E:ess-unique2}) and the arguments above. 

Finally, by Lemma \ref{L:Newlemma}, since $(R_u^b(\alpha) = R_u(\alpha) \cap \T_u^b , \sfd)$ is isometric to an interval in $(\Real,\abs{\cdot})$, then $\mathring{X}_\alpha = \mathring{R}_{u}^b(\alpha)$ for all $\alpha \in Q$. As $\set{R_{u}^b(\alpha)}_{\alpha \in Q}$ are equivalence classes, it follows that $\{\mathring{X}_\alpha \}_{\alpha \in Q}$ is a family of disjoint subsets of $\T_u^b$. This concludes the proof for the case of $\CD^1_{Lip}$ and $\CD^1$.

For $\MCP^1$, one just needs to note that if $u = \sfd(\cdot,o)$ then $o \in Y_\beta$ for all $\beta \in P$ (by Remark \ref{R:o-dist}, since $Y_\beta$ is a transport ray). Recalling the definition of $P_1 \subset P$, since $(Y_\beta \cap \T_u^b,\sfd)$ is isometric to an interval and $\mm^P_\beta(Y_\beta \cap \T_u^b) = 1$ for all $\beta \in P_1$, it follows necessarily that for those $\beta$, $o \in \overline{Y_\beta \cap \T_u^b}$ and $(\overline{Y_\beta \cap \T_u^b} , \sfd, (\mm^P_\beta)\llcorner_{\T_u^b})$ still satisfies $\MCP(K,N)$ with respect to $o$ and is of full support. The rest of the the argument is identical to the one presented above, concluding the proof.
\end{proof}

Recall moreover that we already derived several properties of $W_{2}$-geodesics in essentially non-branching \mms's verifying $\MCP(K,N)$. Hence from Proposition \ref{P:MCP}
we also obtain all the claims of Theorem \ref{T:optimalmapMCP} and Corollary \ref{C:injectivity}, as well as all of the results of the next section, provided the \mms is essentially non-branching and verifies $\CD^{1}(K,N)$ for $N \in (1,\infty)$.

\bigskip
\bigskip

\section{Temporal-Regularity under $\MCP$} \label{S:MCP}

In this section we deduce from the Measure Contraction and essentially non-branching properties various temporal-regularity results for the map $t \mapsto \rho_{t}(\gamma_{t})$ and related objects, which we will require for this work. By Proposition \ref{P:MCP}, these results also apply under the $\CD^1$ condition. 
While these properties are essentially standard consequences of recently available results and tools, they appear to be new and may be of independent interest. 

\medskip
As usual, we assume that $K \in \Real$ and $N \in (1,\infty)$. We begin with:

\begin{proposition} \label{P:MCP-density}
Let $(X,\sfd,\mm)$ denote an essentially non-branching \mms. Then the following are equivalent:
\begin{enumerate}
\item $(X,\sfd,\mm)$ verifies $\MCP(K,N)$.
\item $(X,\sfd,\mm)$ verifies $\MCPE(K,N)$.
\item 
For all $\mu_{0},\mu_{1} \in \mathcal{P}_{2}(X)$ with $\mu_{0} \ll \mm$ and $\supp(\mu_{1}) \subset \supp(\mm)$, there exists a unique $\nu \in \Opt(\mu_0,\mu_1)$, $\nu$ is induced by a map  (i.e. $\nu = S_{\sharp}(\mu_0)$ for some map $S : X \rightarrow \Geo(X)$), $\mu_t := (\ee_t)_{\#} \nu \ll \mm$ for all $t \in [0,1)$, and writing $\mu_t = \rho_t \mm$, we have for all $t \in [0,1)$:
\begin{equation} \label{E:MCP-density}
\rho_t^{-\frac{1}{N}}(\gamma_t) \geq  \tau_{K,N}^{(1-t)}(\sfd(\gamma_0,\gamma_1)) \rho_0^{-\frac{1}{N}}(\gamma_0) \;\;\; \text{for $\nu$-a.e. $\gamma \in \Geo(X)$} ,
\end{equation}
and (integrating with respect to $\nu$):
\begin{equation} \label{eq:strong-MCPE-def} 
\Eps_{N}(\mu_t) \geq \int \tau_{K,N}^{(1-t)} (\sfd(\gamma_0,\gamma_1)) \rho_0^{-\frac{1}{N}}(\gamma_0) \nu(d\gamma) .
\end{equation}
\item
For all $\mu_{0},\mu_{1} \in \mathcal{P}_2(X)$ of the form  $\mu_1 = \delta_o$ for some $o \in \supp(\mm)$ and $\mu_0 = \frac{1}{\mm(A)} \mm\llcorner_{A}$ for some Borel set $A \subset X$ with $0 < \mm(A) < \infty$,
 there exists a $\nu \in \Opt(\mu_0,\mu_1)$ so that for all $t \in [0,1)$, $\mu_t := (\ee_t)_{\#} \nu \ll \mm$ and (\ref{E:MCP-density}), (\ref{eq:strong-MCPE-def}) hold.
\end{enumerate}
Moreover, the equivalence $(1) \Leftrightarrow (4)$ does not require the essentially non-branching assumption.
\end{proposition}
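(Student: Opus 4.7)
The plan is to assemble the cycle of implications from a few direct specializations, the non-branching-free equivalence $(1) \Leftrightarrow (4)$, and the main technical step $(1) \Rightarrow (3)$, which requires essential non-branching and follows the scheme of \cite{CM3,GSR:maps}.

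First, the routine specializations. Taking $\mu_1 = \delta_o$ in (3) and integrating \eqref{E:MCP-density} against $\nu$ yields \eqref{eq:strong-MCPE-def}, which upon unfolding the identity $\rho_0^{1-1/N} = \rho_0 \cdot \rho_0^{-1/N}$ produces \eqref{eq:MCPE-def}; specializing further to the uniform source $\mu_0 = \mm\llcorner_A/\mm(A)$ yields (4). This gives $(3)\Rightarrow(2)$ and $(3)\Rightarrow(4)$. The implication $(2)\Rightarrow(1)$ is the second step of Lemma~\ref{lem:CD-MCPE-MCP} and requires no non-branching assumption.

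For the non-branching-free equivalence $(1) \Leftrightarrow (4)$, the crucial observation is that transport to a Dirac $\delta_o$ forces $\sfd(\gamma_0,\gamma_1) = \sfd(\gamma_t,o)/(1-t)$ to be a function of $\gamma_t$ alone, so that with $G_t(x) := \tau^{(1-t)}_{K,N}(\sfd(x,o)/(1-t))^N$, the $\MCP$ inequality \eqref{eq:MCP-def} rewrites as
\[
\int_B G_t \, d\mu_t \;\leq\; \frac{\mm(B)}{\mm(A)} \qquad \forall\, B \in \B(X).
\]
For $(1)\Rightarrow(4)$, this at once forces $\mu_t \ll \mm$ (since $G_t > 0$ in the relevant range) and identifies the Radon--Nikodym derivative as $\rho_t(x) \leq 1/(G_t(x)\mm(A))$, which is exactly the pointwise bound \eqref{E:MCP-density}; \eqref{eq:strong-MCPE-def} then follows by integration against $\nu$. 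For $(4)\Rightarrow(1)$, the disintegration $\nu = \int \nu_x \, d\mu_t(x)$ along the measurable map $e_t$ (where $\nu_x$ is a probability on $e_t^{-1}(x)$) together with the fiberwise bound $\rho_t(\gamma_t)\tau^N \leq 1/\mm(A)$ gives back \eqref{eq:MCP-def} upon integration, with the set $\{\rho_t = 0\}$ being $\mu_t$-negligible.

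The hard step is $(1)\Rightarrow(3)$ under essential non-branching. Theorem~\ref{T:optimalmapMCP} already supplies the existence and uniqueness of $\nu \in \Opt(\mu_0,\mu_1)$, its realization as $\nu = S_\sharp \mu_0$ for a Borel map $S:X\to \Geo(X)$, and absolute continuity $\mu_t \ll \mm$ for $t\in [0,1)$, so only the pointwise bound \eqref{E:MCP-density} remains (from which \eqref{eq:strong-MCPE-def} follows by integration). Following \cite{CM3,GSR:maps}, this is obtained by a ``restrict and compare'' argument: were \eqref{E:MCP-density} to fail on a Borel set $E \subset \Geo(X)$ of positive $\nu$-measure, one restricts to $\nu_E := \nu|_E/\nu(E)$ and -- using essential non-branching and the injectivity of $e_t|_G$ from Corollary~\ref{C:injectivity} to preserve the map-structure and the absolute continuity of the intermediate marginals -- further localizes the endpoint $\gamma_1$ in small balls and passes to the limit, reducing to the Dirac-target case handled by the already-proven $(1)\Rightarrow(4)$; the fiberwise bounds are then reassembled via an endpoint disintegration $\nu = \int \nu^y\, d\mu_1(y)$ to contradict the failure on $E$. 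The principal technical obstacle is making this endpoint disintegration rigorous: verifying that for $\mu_1$-a.e.~$y$ the conditional plan $\nu^y$ retains enough of the optimality and absolute-continuity structure for the Dirac-target inequality to apply, and that the fiberwise bounds reassemble coherently. This is precisely the technical content borrowed from \cite{CM3,GSR:maps}.
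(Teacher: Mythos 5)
Your easy implications, and your treatment of $(1)\Leftrightarrow(4)$, are fine and essentially coincide with the paper's: for a Dirac target the coefficient $\tau_{K,N}^{(1-t)}(\sfd(\gamma_0,\gamma_1))$ is a function of $\gamma_t$ alone, which both forces $\mu_t\ll\mm$ and turns \eqref{eq:MCP-def} into the pointwise bound, and conversely. The problem is your hard step $(1)\Rightarrow(3)$. The mechanism you propose — endpoint disintegration $\nu=\int\nu^y\,\mu_1(dy)$ and an application of the already-proven Dirac-target case to each conditional plan $\nu^y$ — cannot work as stated: when $\mu_1$ is not purely atomic, the conditional source marginals $(e_0)_\sharp\nu^y$ are concentrated on the fibers of the endpoint map and are generically \emph{singular} with respect to $\mm$, so they have no density $\rho_0$ to which \eqref{E:MCP-density} (or the $\MCP$/$\MCPE$ inequalities, which are statements about densities with respect to $\mm$) could be applied. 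The same objection applies to the variant where you restrict to $\{\gamma_1\in B(y,\eps)\}$ and let $\eps\to 0$: the restricted plan has a target which is not a Dirac, and replacing it by $\delta_y$ changes the plan and its intermediate densities, with no a-priori stability of pointwise density bounds under such perturbations — indeed the lack of such stability is precisely why this proposition is not immediate. Flagging this as "the technical content borrowed from \cite{CM3,GSR:maps}" does not close the gap, because those references do not perform a fiberwise reduction to Dirac targets.

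The paper's actual route circumvents exactly this obstruction by working at the level of the \emph{integrated} R\'enyi-entropy inequality: it first proves \eqref{eq:strong-MCPE-def} ($\MCPE$) for $\mu_1=\delta_o$ by decomposing $\mu_0$ into finitely many uniform pieces and summing, and then handles a general $\mu_1$ by approximating it with \emph{finitely many} Dirac masses $\mu_1^i=\sum_k a_k^i\delta_{o_k^i}$. With finitely many atoms the conditional sources $(e_0)_\sharp\nu^i_k$ remain absolutely continuous (their densities are bounded by $\rho_0/\alpha^i_k$), the pieces $(e_t)_\sharp\nu^i_k$ are mutually singular for $t<1$ by Corollary \ref{C:injectivity} (this is where essential non-branching enters), so the per-piece inequalities add up; one then passes to the limit $i\to\infty$ using Lemma \ref{lem:nu-compactness}, the \emph{upper semi-continuity} of $\Eps_N$ on the left and lower semi-continuity on the right — a stability that the integrated functional inequality has but a pointwise density bound does not. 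Only afterwards is the pointwise estimate \eqref{E:MCP-density} recovered from the integrated one, via the restriction/injectivity/Lebesgue-differentiation argument of \cite[Proposition 3.1]{GSR:maps} (using that restrictions of $\nu$ are again uniquely optimal, induced by maps, and that $(e_t)_\sharp(\nu\llcorner_H)=\mu_t\llcorner_{e_t(H)}$). If you want to salvage your outline, you should replace the fiberwise Dirac reduction by this two-stage scheme: finite-Dirac approximation of $\mu_1$ at the entropy level first, pointwise upgrade second.
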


\begin{remark}
In fact, for essentially non-branching spaces, it is also possible to add the $\MCP^1(K,N)$ condition to the above list of equivalent statements. Indeed, we have already seen in the previous section that $\MCP^1(K,N) \Rightarrow \MCPE(K,N)$ without any non-branching assumptions. The converse implication for non-branching spaces follows from \cite[Proposition 9.5]{biacava:streconv} (without identifying the $\MCP^1(K,N)$ condition by this name), and it is possible to extend this to essentially non-branching spaces by following the arguments of \cite[Proposition A.1]{cava:MongeRCD}. 
\end{remark}

\begin{remark} \label{rem:MCPE}
Note that in (3), one is allowed to test \emph{any} $\mu_1$ with $\supp(\mu_1) \subset \supp(\mm)$, not only $\mu_1 = \delta_o$ as in the other statements. 
By Theorem \ref{T:optimalmapMCP} (recall that $\MCPE(K,N)$ implies $\MCP(K,N)$), note that the $\MCPE(K,N)$ condition is precisely equivalent to the validity of (\ref{eq:strong-MCPE-def}) for all measures $\mu_{0},\mu_{1} \in \mathcal{P}_{2}(X)$ of the form $\mu_1 = \delta_o$ with $o \in \supp(\mm)$ and $\mu_0 \ll \mm$ with bounded support. 
\end{remark}

\begin{remark}
While the equivalence $(1) \Leftrightarrow (4)$ will not be directly used in this work, it is worthwhile remarking that this is the only instance we are aware of, where one can obtain information on the density along geodesics without assuming or a-posteriori concluding some type of non-branching assumption. Indeed, the proof of $(1) \Rightarrow (4)$ relies on the (newly available) Theorem \ref{thm:order12-main}. 
\end{remark}

\begin{proof}[Proof of Proposition \ref{P:MCP-density}]
\hfill

\textbf{$(1) \Rightarrow (4)$.} 
$(\supp(\mm),\sfd)$ is proper and geodesic by Lemma \ref{L:proper-support}. Given $\mu_0$ and $\mu_1 = \delta_o$ as in (4), any $\nu \in \Opt(\mu_0,\mu_1)$ is concentrated on $G_\varphi$ (where $\varphi$ is the associated Kantorovich potential), and so Theorem \ref{thm:order12-main} implies that $\sfd(\gamma_{0},\gamma_{1}) = \ell_t(\gamma_t)$ for $\nu$-a.e. $\gamma$. 
It follows that with the notation of Section \ref{sec:order12}:
\[
\frac{1}{\mm(A)} \mm \geq (\ee_{t})_{\sharp} \big( \tau_{K,N}^{(1-t)}(\sfd(\gamma_{0},\gamma_{1}))^{N} \nu(d \gamma) \big) =  \rho_t(x) \tau_{K,N}^{(1-t)}(\ell_t(x))^N \mm(dx) .
\]
The pointwise inequality between densities follows for $\mm$-a.e. $x$, and since $\ell_t < \infty$ (and hence $\tau_{K,N}^{(1-t)}(\ell_t(x)) > 0$) for $t\in (0,1)$, this in fact \emph{implies} that $(\ee_t)_{\sharp}(\nu) \ll \mm$ (without relying on Theorem \ref{T:optimalmapMCP}, which is unavailable without the essentially non-branching assumption). 
Since $(\ee_t)_{\sharp}(\nu) \ll \mm$, the inequality between densities is verified at $x = \gamma_t$ for $\nu$-a.e. $\gamma$. Noting that $\frac{1}{\mm(A)} = \rho_0(\gamma_0)$ for $\nu$-a.e. $\gamma$, (\ref{E:MCP-density}) and hence (\ref{eq:strong-MCPE-def}) are established for $\mu_0,\mu_1$ as above.

\textbf{$(4) \Rightarrow (1)$.}
This follows by applying (\ref{E:MCP-density}) to $\mu_0 = \frac{1}{\mm(A)} \mm \llcorner_{A}$ and $\mu_1 = \delta_o$, raising the resulting inequality to the power of $N$, and integrating it against $\nu\llcorner_{\set{\gamma_t \in B}}$ for all Borel sets $B \subset \supp(\mu_t)$, thereby verifying the $\MCP(K,N)$ inequality (\ref{eq:MCP-def}).

\textbf{$(4) \Rightarrow (2)$.} 
Let $o \in \supp(\mm)$ and let $\mu_{0}  = \rho_{0} \mm \in \mathcal{P}(X)$ with bounded support. 
As $(4)\Rightarrow (1)$, Lemma \ref{L:proper-support} implies that $(\supp(\mm),d)$ is proper, and in addition the assertions of Theorem \ref{T:optimalmapMCP} and Corollary \ref{C:injectivity} are in force. 

Now, there exists an non-decreasing sequence $\{f^{i}\}_{i\in \N}$ of simple functions, that is 
$$
f^{i} = \sum_{k \leq n(i)} \alpha^{i}_{k} \chi_{A^{i}_{k}}, \qquad \alpha^{i}_{k} > 0 ,\quad \mm(A^i_k) > 0, \quad  A^{i}_{k} \cap A^{i}_{j} = \emptyset, \textrm{ if } k\neq j,
$$
such that $\mu_{0}^{i} := \rho_{0}^{i} \mm := \frac{1}{z^i} f^i \mm \in \mathcal{P}(X)$ is of bounded support, $z^i := \int f^i d\mm \nearrow 1$, $f^i \nearrow \rho_0$ pointwise,  and $\mu_0^i \rightharpoonup \mu_0$ weakly, as $i \rightarrow \infty$. By Theorem \ref{T:optimalmapMCP} there exists a unique 
$\nu^{i} \in \Opt(\mu_{0}^{i}, \delta_{o})$, it is induced by a map, and can be written as:
$$
\nu^{i} = \sum_{k\leq n(i)} \frac{1}{z^i} \alpha^i_k \mm(A^i_k) \nu^{i}_{k}, 
$$
with each $\nu^{i}_{k}$ the unique optimal dynamical plan between $\mu_{0,k}^i := \rho_{0,k}^i \mm := \frac{1}{\mm(A_k^i)} \mm\llcorner_{A_k^i}$ and $\delta_{o}$. Moreover,  $(\ee_{t})_{\#}\nu^{i}_{k} \perp (\ee_{t})_{\#} \nu^{i}_{j}$ whenever $k\neq j$, for all $t \in [0,1)$ by Corollary \ref{C:injectivity}. Lastly, $\supp(\nu^i) \subset \Geo(\supp(\mm))$ by Remark \ref{rem:supp-nu}. It follows by (\ref{eq:strong-MCPE-def}) applied to $\nu^i_k$ that:
\[
\Eps_N((\ee_{t})_{\#}\nu^{i}_k) \geq  \int \tau_{K,N}^{(1-t)}(\sfd(x,o)) \left(\rho^{i}_{0,k}(x)\right)^{1-\frac{1}{N}} \, \mm(dx) .
\]
Multiplying by $\brac{\frac{1}{z^i} \alpha^i_k \mm(A^i_k)}^{1-\frac{1}{N}}$, summing over $k$, and using the mutual singularity of all corresponding measures, we obtain:
\begin{equation} \label{eq:MCPE-approx}
\Eps_N((\ee_{t})_{\#}\nu^{i}) \geq  \int \tau_{K,N}^{(1-t)}(\sfd(x,o)) \left(\rho^i_{0}(x)\right)^{1-\frac{1}{N}} \, \mm(dx) .
\end{equation}
Passing to a subsequence if necessary, Lemma \ref{lem:nu-compactness} implies that $\nu^i \rightharpoonup \nu^\infty \in \Opt(\mu_0,\delta_o)$, and hence $(\ee_{t})_{\#}\nu^{i} \rightharpoonup (\ee_{t})_{\#}\nu^{\infty}$. 
It follows by upper semi-continuity of $\Eps_N$ on the left-hand side of (\ref{eq:MCPE-approx}), and monotone convergence (and $z_i \rightarrow 1$) on the right hand side, that taking $i \to \infty$ yields the $\MCPE(K,N)$ inequality (\ref{eq:MCPE-def}). 
\textbf{$(2) \Rightarrow (3)$.} 
By Remark \ref{rem:supp-nu}, we may reduce to the case $\supp(\mm) = X$. 
In view of Remark \ref{rem:MCPE}, we first extend the validity of (\ref{eq:strong-MCPE-def}) by removing the (immaterial) restriction that $\mu_0$ has bounded support. 
When $K > 0$, $\supp(\mu_0)$ is automatically bounded since $\MCPE(K,N)$ implies $\MCP(K,N)$ which by Remark \ref{R:BonnetMyers} implies a Bonnet-Myers diameter estimate. When $K \leq 0$, we may weakly approximate a general $\mu_0 \in\P_2(X,\sfd,\mm)$ by measures $\mu_0^i \ll \mm$ having bounded support and repeat the argument presented above in the proof of \textbf{$(4) \Rightarrow (2)$}.

The case of a general $\mu_1 \in \P_2(X)$ with $\supp(\mu_1) \subset \supp(\mm)$ follows by approximating $\mu_1$ by a convex combination of delta-measures: 
$$
\mu_{1}^{i} = \sum_{k\leq n(i)} a^{i}_{k} \delta_{o^{i}_{k}} \;\; , \;\;  o^{i}_{k} \in \supp(\mm) \  \text{for} \ k \leq n(i), \ \text{and}  \ \sum_{k\leq n(i)} a^{i}_{k} = 1;
$$
with $W_{2}(\mu_{1}^{i}, \mu_{1}) \to 0$ as $i\to \infty$. By Theorem \ref{T:optimalmapMCP} (recall again that $\MCPE(K,N)$ implies $\MCP(K,N)$), for each $i$ there exists a unique $\nu^{i} \in \Opt(\mu_{0},\mu_{1}^{i})$, and we may write $\nu^{i} = \sum_{k \leq n(i)} \alpha^i_k \nu^{i}_{k}$ so that:
$$
\nu^{i}_{k} \in \Opt( (\ee_{0})_{\#} \nu^{i}_{k} , \delta_{o^{i}_{k}}).
$$
Moreover, as explained above, $(\ee_{t})_{\#} \nu^{i}_{k} \perp (\ee_{t})_{\#} \nu^{i}_{j}$ whenever $k\neq j$, for all $t \in [0,1)$. 
Furthermore, as $(\ee_{0})_{\#} \nu^{i}_{k} \ll \mm$ (since $(\ee_{0})_{\#} \nu^i = \mu_0 = \rho_0 \mm \ll \mm$), Theorem \ref{T:optimalmapMCP} implies that $(\ee_{t})_{\#} \nu^{i}_{k} \ll \mm$ for all $t \in [0,1)$. Writing $(\ee_{t})_{\#} \nu^{i}_{k}  = \rho_{k,t}^{i} \mm$, the $\MCPE(K,N)$ condition implies for all $t \in [0,1)$:
$$
\int  (\rho_{k,t}^{i})^{1-\frac{1}{N}}(x) \, \mm(dx) \geq \int \tau_{K,N}^{(1-t)} (\sfd(x,o^{i}_{k})) (\rho_{0,k}^{i})^{1-\frac{1}{N}}(x) \mm(dx);
$$
Multiplying by $(\alpha_k^i)^{1-1/N}$, summing over $k$ and using the mutual singularity of the corresponding measures, we obtain:
$$
\Eps_{N}((\ee_{t})_{\#} \nu^{i}) \geq \int_X \tau_{K,N}^{(1-t)} (\sfd(x,y)) \rho_0^{-\frac{1}{N}}(x) \,  (\ee_{0},\ee_{1})_{\#} \nu^{i} (dxdy).
$$
Passing as usual to a subsequence if necessary, Lemma \ref{lem:nu-compactness} implies that $\nu^i \rightharpoonup \nu^\infty \in \Opt(\mu_0,\mu_1)$, and hence $(\ee_{t})_{\#}\nu^{i} \rightharpoonup (\ee_{t})_{\#}\nu^{\infty}$. 
Invoking the upper semi-continuity of $\Eps_{N}$ on the left-hand-side, and lower semi-continuity of the right-hand-side (see \cite[Lemma 3.3]{sturm:II}, noting that the first marginal of $\nu^i$ is fixed to be $\mu_0 = \rho_0 \mm$), (\ref{eq:strong-MCPE-def}) finally follows in full generality.

The density estimate (\ref{E:MCP-density}) then follows using a straightforward variation of \cite[Proposition 3.1]{GSR:maps}, where it was shown how the existence of (a necessarily unique) transport map $S$ may be used to obtain a pointwise density inequality such as (\ref{E:MCP-density}) from an integral inequality such as (\ref{eq:strong-MCPE-def}) (the statement of \cite[Proposition 3.1]{GSR:maps} involves an assumption on infinitesimal Hilbertianity of the space, but the only property used in the proof is the existence of a transport map $S$ inducing a unique optimal dynamical plan). 

Finally, \textbf{$(3) \Rightarrow (4)$} is trivial. This concludes the proof. 
\end{proof}

\begin{corollary}\label{C:regularity3MCP}
Let $(X,\sfd,\mm)$ be an essentially non-branching \mms verifying $\MCP(K,N)$. Then with the same assumptions and notation as in Proposition \ref{P:MCP-density} (3), there exist versions of the densities $\rho_t = \frac{d\mu_t}{d\mm}$, $t \in [0,1)$, so that for $\nu$-a.e. $\gamma \in \Geo(X)$, for all $0\leq s \leq t <1$:
\begin{equation}\label{E:regularityrho}
\rho_s(\gamma_s) > 0 \;\; , \;\; \left( \tau_{K,N}^{(\frac{s}{t})} (\sfd(\gamma_{0},\gamma_{t})) \right)^{N} \leq \frac{\rho_{t}(\gamma_{t})}{\rho_{s}(\gamma_{s})} \leq 
\left( \tau_{K,N}^{(\frac{1-t}{1-s})} (\sfd(\gamma_{s},\gamma_{1})) \right)^{-N}
\end{equation}
(with $\frac{s}{t} = \frac{0}{0}$ interpreted as $1$ above). In particular, for $\nu$-a.e. $\gamma$, the map $t \mapsto \rho_{t}(\gamma_{t})$ is locally Lipschitz on $(0,1)$ and upper semi-continuous at $t=0$. 
\end{corollary}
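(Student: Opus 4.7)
The plan is to derive both inequalities in \eqref{E:regularityrho} directly from Proposition \ref{P:MCP-density}(3), by exploiting the restriction property of optimal dynamical plans together with time-reversal, and then handle the technical issue of selecting pointwise-valid versions of $\rho_t$ via a countable intersection followed by extension by continuity.

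For the upper bound, I would apply the density inequality \eqref{E:MCP-density} to the restricted plan $(\text{restr}^1_s)_\sharp \nu$, which by the preliminaries of Section \ref{sec:PartII-prelim} is an optimal dynamical plan between $\mu_s$ and $\mu_1$; since $\mu_s \ll \mm$ for every $s \in [0,1)$ by Proposition \ref{P:MCP-density}(3), this restricted plan satisfies the hypotheses of part (3) and we may evaluate at the intermediate parameter $\tau = (t-s)/(1-s)$, which corresponds in the original geodesic to time $t$. The coefficient becomes $\tau_{K,N}^{(1-\tau)}(\sfd(\gamma_s,\gamma_1)) = \tau_{K,N}^{((1-t)/(1-s))}(\sfd(\gamma_s,\gamma_1))$, producing exactly the right-hand inequality of \eqref{E:regularityrho}. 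For the lower bound, I would time-reverse the restricted plan $(\text{restr}^t_0)_\sharp \nu$ (which is optimal from $\mu_0$ to $\mu_t$, both absolutely continuous because $t<1$), pushing it through $\gamma \mapsto \gamma^c$ to obtain an optimal plan from $\mu_t$ to $\mu_0$; applying \eqref{E:MCP-density} at the intermediate parameter $\tau' = 1 - s/t$ then yields $\rho_s^{-1/N}(\gamma_s) \ge \tau_{K,N}^{(s/t)}(\sfd(\gamma_0,\gamma_t)) \rho_t^{-1/N}(\gamma_t)$, which is the left-hand inequality of \eqref{E:regularityrho}.

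The main technical obstacle is that Proposition \ref{P:MCP-density}(3) selects, for each pair $(s,t)$ with $s<t$, versions of the densities so that the inequality holds on a $\nu$-full measure set depending on $(s,t)$; to get the pointwise assertion for \emph{all} $0 \le s \le t < 1$ along a \emph{single} $\nu$-conull set of geodesics, I would first run the above argument only for $(s,t) \in \mathbb{Q}^2 \cap \{s\le t<1\}$, intersect the countably many exceptional null sets, and pick a single set of rational versions $\set{\bar\rho_s}_{s \in \mathbb{Q}\cap[0,1)}$. The rational-indexed map $t \mapsto \bar\rho_t(\gamma_t)$ is then locally Lipschitz on $\mathbb{Q} \cap (0,1)$ for $\nu$-a.e. $\gamma$, because both bounding coefficients in \eqref{E:regularityrho} are jointly continuous in $(s,t)$ on the diagonal and tend to $1$ as $|t-s|\to 0$, with rate of convergence controlled by $\sfd(\gamma_0,\gamma_1)$. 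For irrational $t \in (0,1)$, I would use the injectivity of $\ee_t|_G$ from Corollary \ref{C:injectivity} to define $\bar\rho_t(\gamma_t) := \limsup_{\mathbb{Q}\ni r\downarrow t} \bar\rho_r(\gamma_r)$ along the $\nu$-a.e. unique geodesic through $\gamma_t$, and extend arbitrarily off $\ee_t(G)$; the upper bound applied across small rational windows forces this to agree $\mu_t$-a.e. with the original density, so it is a legitimate version. The rational inequalities then pass to all real pairs by the established continuity.

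Finally, positivity $\rho_s(\gamma_s)>0$ and the two regularity assertions drop out essentially for free. At $s=0$, $\rho_0(\gamma_0)>0$ for $\nu$-a.e. $\gamma$ by definition of $\mu_0 = \rho_0 \mm$; for $s \in (0,1)$, choose any rational $t\in(s,1)$ with $\rho_t(\gamma_t)>0$ (which holds for $\nu$-a.e. $\gamma$ since $\mu_t = (\ee_t)_\sharp\nu$) and invert the upper bound to get $\rho_s(\gamma_s) \ge \rho_t(\gamma_t)\cdot (\tau_{K,N}^{((1-t)/(1-s))}(\sfd(\gamma_s,\gamma_1)))^{N} > 0$. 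Local Lipschitz continuity of $t \mapsto \rho_t(\gamma_t)$ on $(0,1)$ follows from the symmetric shape of the two bounds near the diagonal $s=t$, and upper semi-continuity at $t=0$ follows from the upper bound with $s=0$: as $t\downarrow 0$, $\tau_{K,N}^{(1)}(\sfd(\gamma_0,\gamma_1)) = 1$, so $\limsup_{t\downarrow 0}\rho_t(\gamma_t) \le \rho_0(\gamma_0)$.
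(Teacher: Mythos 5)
Your derivation of the two inequalities is essentially the paper's first step: restrict $\nu$ to $[s,1]$, respectively time-reverse the restriction to $[0,t]$, apply the pointwise inequality \eqref{E:MCP-density} of Proposition \ref{P:MCP-density} (3) at the intermediate parameter corresponding to $t$, respectively $s$, and then run the rational approximation on a single $\nu$-conull set on which all evaluation maps are injective (Corollary \ref{C:injectivity}), defining the new candidate density at irrational times by limits along rational times. Up to that point the argument is sound. The gap is in your final claim that, because the redefined function agrees with $\rho_t$ $\mu_t$-a.e., it ``is a legitimate version.'' Being a version of $d\mu_t/d\mm$ requires agreement $\mm$-a.e., and $\mu_t$-a.e. agreement only controls the set $\{\rho_t>0\}$. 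On the set $\{\rho_t=0\}\cap \ee_t(H)$ (with $H$ your full-measure set of geodesics) your $\limsup$ along rational times is \emph{strictly positive}: for every $\gamma\in H$ and rational $r$ one has $\rho_r(\gamma_r)>0$, and the two-sided rational bounds keep these values bounded away from $0$ as $r\to t$. So if $\mm(\{\rho_t=0\}\cap\ee_t(H))>0$ for some $t$, your redefined function integrates against $\mm$ to strictly more than $\mu_t$ on that set and is not a density of $\mu_t$ at all; conversely, truncating it to zero there would restore it being a version but would destroy the pointwise claim ``for $\nu$-a.e. $\gamma$, for \emph{all} $s\le t$,'' since the union over uncountably many $t$ of the null sets $\{\gamma\in H:\ \gamma_t\in\{\rho_t=0\}\}$ need not be $\nu$-null.

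The paper devotes a separate second step precisely to closing this hole: it proves that $\mm(\{\rho_t=0\}\cap\ee_t(H))=0$ for every $t\in[0,1)$, and this is not formal. One sets $K_t:=\{\rho_t=0\}\cap\ee_t(H)$, assumes $\mm(K_t)>0$, lifts the normalized measure $\mm\llcorner_{K_t}/\mm(K_t)$ (which is $\ll\mm$ but \emph{singular} with respect to $\mu_t$) to geodesics through $(\ee_t|_H)^{-1}$, and invokes Theorem \ref{T:optimalmapMCP} for this \emph{new} transport problem to conclude that its time-$s$ marginals remain absolutely continuous with respect to $\mm$; hence $\mm(\ee_s(K))>0$ for all $s\in(0,1)$, where $K=(\ee_t|_H)^{-1}(K_t)$. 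Choosing a rational $s$, where $\rho_s>0$ on $\ee_s(H)\supset\ee_s(K)$, gives $0<\mu_s(\ee_s(K))=\nu(K)=\mu_t(K_t)=0$, a contradiction. Without an argument of this type (a further application of the optimal-map/absolute-continuity machinery to the measure $\mm\llcorner_{K_t}$, not to $\mu_t$), your construction does not produce genuine versions of the densities, so the proof as written is incomplete at exactly this point. A minor related slip: the extension off $\ee_t(H)$ cannot be ``arbitrary'' either; it must vanish $\mm$-a.e. there, which does hold since $\mu_t(X\setminus\ee_t(H))=0$ forces $\rho_t=0$ $\mm$-a.e. off $\ee_t(H)$.
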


\begin{proof}
\hfill

\medskip
\textbf{Step 1.}
Given $0 \leq s \leq t < 1$, observe that $(\text{restr}^{t}_{s})_{\sharp} \nu$ is the unique element of $\Opt(\mu_{s},\mu_{t})$; 
indeed $\mu_{s}$ is absolutely continuous with respect to $\mm$ and so Theorem \ref{T:optimalmapMCP} applies.
In particular, we deduce that for each $0 \leq s \leq t < 1$ and  $\nu$-a.e. $\gamma$:
$$
\rho_{t}(\gamma_{t})^{-1/N} \geq  \rho_{s}(\gamma_{s})^{-1/N}  \tau_{K,N}^{(\frac{1-t}{1-s})} (\sfd(\gamma_{s},\gamma_{1})), 
$$
with the exceptional set depending on $s$ and $t$. Reversing time and the roles of $\mu_s,\mu_t$, we similarly obtain for each $0 \leq s \leq t < 1$ and  $\nu$-a.e. $\gamma$ that:
$$
\rho_{s}(\gamma_{s})^{-1/N} \geq  \rho_{t}(\gamma_{t})^{-1/N}  \tau_{K,N}^{(\frac{s}{t})} (\sfd(\gamma_{0},\gamma_{t})), 
$$
with the exceptional set depending on $s$ and $t$ (the case $s=0$ is also included as the conclusion is then trivial). Note that given $s \in [0,1)$, as $\rho_s(x) > 0$ for $\mu_s$-a.e. $x$, we have that $\rho_s(\gamma_s) > 0$ for $\nu$-a.e. $\gamma$. Altogether, we see that for each $0 \leq  s \leq t < 1$, for $\nu$-a.e. $\gamma$:
\begin{equation}\label{E:MCP-a.e.}
\rho_s(\gamma_s) > 0 \;\;, \;\; \rho_{s}(\gamma_{s}) \left( \tau_{K,N}^{(\frac{s}{t})} (\sfd(\gamma_{0},\gamma_{t})) \right)^{N} \leq \rho_{t}(\gamma_{t}) \leq 
\rho_{s}(\gamma_{s}) \left( \tau_{K,N}^{(\frac{1-t}{1-s})} (\sfd(\gamma_{s},\gamma_{1})) \right)^{-N},
\end{equation}
with the exceptional set depending on $s$ and $t$. 

Together with an application of Corollary \ref{C:injectivity}, we deduce the existence of a Borel set $H \subset \Geo(X)$ with $\nu(H)=1$ such that $\ee_t|_H : H \rightarrow X$ is injective for all $t \in [0,1)$, and such that for every $\gamma \in H$, the double sided estimate \eqref{E:MCP-a.e.} holds for all $s,t \in [0,1) \cap \Q$.
We then define for $t \in [0,1)$ and $\gamma \in H$:
$$
\hat \rho_{t}(\gamma_t) := \begin{cases} \lim_{(0,1) \cap \Q \ni s \to t} \rho_{s}(\gamma_{s}) & t \in (0,1)  \\ \rho_{0}(\gamma_{0}) & t = 0 \end{cases} ,
$$
and $\hat \rho_t = 0$ outside of $\ee_t(H)$. 
By \eqref{E:MCP-a.e.} we see that for any $\gamma \in H$ and $t \in (0,1)$ the above limit always exists, and so by injectivity of $\ee_t|_H$, $\hat \rho_t$ is well-defined. Furthermore, \eqref{E:MCP-a.e.} implies that for all $\gamma \in H$, $\hat \rho_{\cdot}(\gamma_{\cdot})$ satisfies \eqref{E:MCP-a.e.} itself for all $0 \leq s \leq t < 1$. 
Finally, for each $t \in [0,1)$ consider any sequence $\{s_{n}\}\subset \Q$ 
converging to $t$; then \eqref{E:MCP-a.e.} is valid for $\nu$-a.e. $\gamma$
 at $t$ and $s_{n}$, with the exceptional set not depending on $n$. 
Taking the limit as $n\to \infty$ implies $
\rho_{t}(\gamma_{t}) = \hat \rho_{t}(\gamma_{t})$.
Hence we have obtained that for each $t \in [0,1)$, for $\nu$-a.e. $\gamma$:
$$
\rho_{t}(\gamma_{t}) = \hat \rho_{t}(\gamma_{t}),
$$
with the exceptional set depending only on $t$.

It follows that for all $t \in [0,1)$, $\rho_t(x) = \hat \rho_t(x)$ for $\mu_t$-a.e. $x$. As $\mu_t$ and $\mm$ are mutually absolutely continuous on $\set{\rho_t > 0}$, it follows that $\rho_t \mm =  \hat \rho_t 1_{\set{\rho_t > 0}}\mm$ for all $t \in [0,1)$. 

\medskip
\textbf{Step 2.}
We now claim that for all $t \in [0,1)$, $\mm(\{ \rho_t = 0 \} \cap \ee_t(H)) = 0$. This will establish that $\mu_t = \rho_t \mm = \hat \rho_t \mm$, so that $\hat \rho_t$ is indeed a density of $\mu_t$, thereby concluding the proof. 

Suppose in the contrapositive that the above is false, so that there exists $t \in [0,1)$ with $\mm(\{ \rho_t = 0 \} \cap \ee_t(H)) > 0$. As $\ee_t|_H$ is injective, there exist $K \subset H$ such that $K_{t} : = \ee_{t}(K) = \{ \rho_t = 0 \} \cap \ee_t(H)$. 

Set $K_s := \ee_s(K)$ for all $s \in [0,1)$. We claim that $\mm(K_s) > 0$ for all $s \in (0,1)$. Indeed, define $\eta_{t} : = \mm\llcorner_{K_{t}}/\mm(K_{t})$ and set $\bar \nu :=  (\ee_{t}|_H)^{-1}_{\#} \eta_{t}$ and $\eta_s := (\ee_s)_{\sharp} \bar \nu$. As $\bar \nu$ is concentrated on $K \subset H \subset \supp(\nu)$,  it follows that $(\text{restr}^{1}_{t})_{\sharp} \bar \nu$ must be an optimal dynamical plan between $\eta_t$ and $\eta_1$. As $\eta_t \ll \mm$, Theorem \ref{T:optimalmapMCP} implies that the latter plan is in fact the unique element of $\OptGeo(\eta_t,\eta_1)$, and that $\eta_s \ll \mm$ for all $s \in [t,1)$. As $\eta_s(K_s) = 1$, it follows that $\mm(K_s) > 0$. If $t > 0$, a similar argument applies to the range $s \in (0,t]$.

However, by definition, for all $s \in [0,1) \cap \Q$ we have $0 < \hat \rho_s = \rho_s$ on $\ee_s(H)$, and in particular on $\ee_s(K) = K_s$. Choosing any $s \in (0,1) \cap \Q$, we obtain the desired contradiction:
\[
0 < \int_{K_{s}} \rho_{s} \mm =\mu_{s}(K_{s}) = \mu_t(K_t) = \int_{K_t} \rho_t \mm = 0 .
\]
This concludes the proof.
\end{proof}

\medskip

\begin{proposition}\label{P:completeregularity}
Let $(X,\sfd,\mm)$  be an essentially non-branching \mms verifying $\MCP(K,N)$. 
Consider any $\mu_{0},\mu_{1} \in \mathcal{P}_{2}(X)$ with $\mu_0 \ll \mm$ and $\supp(\mu_1) \subset \supp(\mm)$, and let $\nu$ denote the unique element of  $\Opt(\mu_{0},\mu_{1})$. 
Then for any compact set $G \subset \Geo(X)$ with $\nu(G)> 0$, such that \eqref{E:regularityrho} holds for all $\gamma \in G$ and $0 \leq s \leq t < 1$, we have for all $s \in [0,1)$, $\mm(\ee_{s}(G)) > 0$, and for all $0\leq s \leq t <1$:
\begin{equation}\label{E:continuitymass}
\left(\frac{1-t}{1-s}\right)^{N} e^{-d(G) (t-s) \sqrt{(N-1)K^{-}}} \leq \frac{\mm(\ee_{t}(G))}{\mm(\ee_{s}(G))} \leq \left(\frac{t}{s}\right)^{N} e^{d(G) (t-s) \sqrt{(N-1)K^{-}}} ,
\end{equation}
where $d(G) = sup \{ \ell(\gamma) \colon \gamma \in G\} < \infty$ and $K^{-} = \max\{0, - K\}$ (and with $\frac{t}{s} = \frac{0}{0}$ interpreted as $1$ above).
In particular, the map $t \mapsto \mm(\ee_{t}(G))$ is locally Lipschitz on $(0,1)$ and lower semi-continuous at $t=0$. 
\end{proposition}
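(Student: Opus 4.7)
The plan is to express $\mm(\ee_t(G))$ as an explicit integral over $G$ using the injectivity of the evaluation map, to then compare such integrals at different times via the pointwise density estimate \eqref{E:regularityrho}, and finally to extract the asserted quantitative inequalities from an elementary estimate on the $\sigma$-coefficients.

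First, I would reduce to the case that $\ee_t|_G$ is injective for every $t \in [0,1)$ by intersecting $G$ with the Borel set from Corollary \ref{C:injectivity} (which carries full $\nu$-mass, so the intersection still has positive $\nu$-measure). Corollary \ref{C:injectivity} then gives $(\ee_t)_\sharp(\nu\llcorner_G) = \mu_t\llcorner_{\ee_t(G)} = \rho_t \cdot 1_{\ee_t(G)}\,\mm$, and since \eqref{E:regularityrho} guarantees $\rho_t(\gamma_t) > 0$ for all $\gamma \in G$ and $t \in [0,1)$, the change-of-variables formula
\[
\mm(\ee_t(G)) = \int_G \frac{1}{\rho_t(\gamma_t)}\, \nu(d\gamma)
\]
is valid for every $t \in [0,1)$. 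Compactness of $\ee_t(G)$ combined with the doubling property implied by $\MCP(K,N)$ (so that $\mm$ is locally finite) gives $\mm(\ee_t(G)) < \infty$, while positivity of $\rho_t^{-1}$ on $G$ together with $\nu(G) > 0$ yields $\mm(\ee_t(G)) > 0$.

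Next, for $0 \leq s \leq t < 1$ the pointwise inequalities of \eqref{E:regularityrho} translate directly into ratio bounds $\rho_t/\rho_s \in [\tau_{K,N}^{(s/t)}(\sfd(\gamma_0,\gamma_t))^N,\ \tau_{K,N}^{((1-t)/(1-s))}(\sfd(\gamma_s,\gamma_1))^{-N}]$. Integrating $\rho_t^{-1}$ against $\nu$ on $G$ and recalling $(\tau_{K,N}^{(r)}(\theta))^N = r\cdot \sigma_{K,N-1}^{(r)}(\theta)^{N-1}$, the desired two-sided estimate reduces to a uniform-in-$\gamma$ lower bound on the $\sigma$-coefficients:
\[
\sigma_{K,N-1}^{(s/t)}(\sfd(\gamma_0,\gamma_t))^{N-1} \geq (s/t)^{N-1} e^{-(t-s) d(G)\sqrt{(N-1)K^{-}}},
\]
and the analogous bound with parameters $\tilde r = (1-t)/(1-s)$ and $\tilde\theta = \sfd(\gamma_s,\gamma_1)$ for the lower estimate. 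For $K \geq 0$ this is immediate from concavity of $\sin$ (giving $\sigma^{(r)}_{K,N-1}(\theta) \geq r$), so the content lies in the case $K < 0$. Setting $a = \sqrt{-K/(N-1)}$, the inequality reduces to the elementary estimate
\[
\frac{\sinh(r y)}{\sinh(y)} \geq r\, e^{-(1-r) y} \qquad \forall\, r \in [0,1],\ y \geq 0,
\]
which I would verify by analyzing $g(r) := \sinh(r y) e^{(1-r)y} - r\sinh(y)$: one has $g(0)=g(1)=0$, and $g'(r) = y e^{(1-2r) y} - \sinh(y)$ is strictly decreasing in $r$, so $g$ is first increasing and then decreasing on $[0,1]$, forcing $g \geq 0$. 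Applying this with $y = \sfd(\gamma_0,\gamma_t)\, a \leq t\, d(G)\, a$ and $r = s/t$ gives $(1-r) y \leq (t-s) d(G) a$, and raising to the $(N-1)$-st power together with the identity $(N-1) a = \sqrt{(N-1)K^{-}}$ produces the required lower bound; the dual estimate is entirely analogous.

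Finally, the local Lipschitz continuity on $(0,1)$ follows by taking logarithms in \eqref{E:continuitymass}: on any compact subinterval of $(0,1)$ both $\log((t/s)^N)$ and $\log(((1-t)/(1-s))^N)$ are Lipschitz in $(s,t)$, and the exponential correction contributes an $O(|t-s|)$ term. Lower semi-continuity at $t=0$ follows from the lower estimate with $s = 0$ since both $((1-t))^N$ and $e^{-t\, d(G)\sqrt{(N-1)K^{-}}}$ tend to $1$ as $t \to 0^+$. The main technical obstacle is isolating the right $\sinh$-inequality and tracking the bookkeeping between $\tau_{K,N}$, $\sigma_{K,N-1}$, the upper bound $d(G)$ on $\ell(\gamma)$, and the targeted exponential constant $\sqrt{(N-1)K^{-}}$; the rest is structural and follows cleanly once the injectivity-based change-of-variables is in place.
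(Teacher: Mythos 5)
Your proof is correct, but it takes a genuinely different route from the paper's. You use the two-sided pointwise estimate \eqref{E:regularityrho} quantitatively: after the injectivity reduction you write $\mm(\ee_{t}(G))=\int_{G}\rho_{t}(\gamma_{t})^{-1}\,\nu(d\gamma)$ (legitimate because \eqref{E:regularityrho}, assumed for \emph{all} $\gamma\in G$, gives $\rho_{t}>0$ on all of $\ee_{t}(G)$ for $t<1$), compare the integrands at times $s$ and $t$ via \eqref{E:regularityrho}, and finish with the elementary bound $\sinh(ry)/\sinh(y)\ge r\,e^{-(1-r)y}$ — which, after raising to the power $N-1$ and multiplying by $r$, is precisely the estimate $\tau_{K,N}^{(r)}(\theta)^{N}\ge r^{N}e^{-(1-r)\theta\sqrt{(N-1)K^{-}}}$ that the paper imports from \cite[Remark 2.3]{CM3}; your direct verification of it is fine. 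The paper instead uses \eqref{E:regularityrho} only to know $\rho_{s}>0$ on $\ee_{s}(G)$, and then runs a \emph{fresh} transport: it pushes $\bar\mu_{0}:=\mm\llcorner_{\ee_{s}(G)}/\mm(\ee_{s}(G))$ along the geodesics of $G$ restricted to $[s,1]$, invokes Theorem \ref{T:optimalmapMCP} for this auxiliary plan, and bounds $\mm(\ee_{t}(G))\ge\mm(\supp\bar\mu_{r})$ from below using the $\MCP$ inequality together with Jensen, the reverse bound coming from time reversal. Your route is more economical (no auxiliary optimal transport, no Jensen, no re-application of $\MCP$), at the price of needing the exact change-of-variables identity at \emph{every} time $t$, whereas the paper only needs to cover $\ee_{s}(G)$ by its geodesics at the single base time $s$ and at time $t$ uses nothing more than the inclusion $\supp\bar\mu_{r}\subseteq\ee_{t}(G)$.

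One step deserves more care than your one-line reduction suggests: the statement concerns the \emph{original} compact $G$, and intersecting $G$ with the injectivity set $G_{0}$ of Corollary \ref{C:injectivity} may shrink the images. To apply your identity to $G$ itself you must check that $\mm\bigl(\ee_{t}(G)\setminus\ee_{t}(G\cap G_{0})\bigr)=0$ for every $t\in[0,1)$; this follows exactly as in the paper's proof at time $s$ — since $\rho_{t}>0$ on $\ee_{t}(G)$ one has $\mm\ll\mu_{t}$ there, and $\nu(G\setminus G_{0})=0$ — but it has to be stated, and you need it at all times $t$ rather than only at the base time. With that addition, your bookkeeping checks out: with $\theta=\sfd(\gamma_{0},\gamma_{t})=t\,\ell(\gamma)$ one has $(1-s/t)\theta=(t-s)\ell(\gamma)\le(t-s)d(G)$, and dually with $\tilde\theta=\sfd(\gamma_{s},\gamma_{1})$, which yields exactly the constants in \eqref{E:continuitymass}; the local Lipschitz continuity on $(0,1)$ and the lower semicontinuity at $t=0$ then follow as you indicate (boundedness of $\mm(\ee_{t}(G))$ is immediate here since $\mm$ is a probability measure).
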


\begin{proof}
We proceed with the usual notation repeatedly used above. 
Fix $s \in [0,1)$. Since $\mu_{s}(\ee_{s}(G)) \geq \nu(G) >0$ and $\mu_s \ll \mm$, it follows that $\mm(\ee_{s}(G)) > 0$.  Define  $\bar \mu_{0} : = \mm\llcorner_{\ee_s(G)}/\mm(\ee_s(G))$.

By Corollary \ref{C:injectivity}, there exists a Borel set $H \subset G$ such that $\ee_{s}^{-1} : \ee_{s}(H) \to G$ is a single valued map and:
\begin{equation} \label{eq:stam}
\nu(G \setminus H) = 0 \quad, \quad \mm(\ee_{s}(G) \setminus \ee_{s}(H)) = 0 ,
\end{equation}
where the second assertion above follows since $\mm$ and $\mu_s$ are mutually absolutely continuous on $\set{ \rho_s > 0 }$, and since our assumption \eqref{E:regularityrho} guarantees that $\ee_{s}(G) \subset \{\rho_{s} > 0 \}$.
Now consider:
$$
\bar \nu :=  (\text{rest}^1_s \circ \ee_s^{-1})_{\sharp} (\bar \mu_0\llcorner_{\ee_s(H)}) =  \int_{\ee_s(H)} \delta_{\text{restr}^{1}_{s}(\ee_{s}^{-1}(x))} \bar \mu_{0}(dx) \in \mathcal{P}(\Geo(X)) .
$$
By construction and (\ref{eq:stam}), $(\ee_0)_{\sharp }\bar \nu = \bar \mu_{0}$; define $\bar \mu_{1} : = (\ee_1)_{\sharp }\bar \nu$ and note that necessarily $\bar \nu \in \Opt(\bar \mu_{0}, \bar \mu_{1})$ (since $\bar \nu$ is still supported on a $\sfd^2/2$-cyclically monotone set) and that it is induced by the map $T := \ee_1 \circ \ee_s^{-1}$. 
Theorem \ref{T:optimalmapMCP} then implies that $\bar \mu_{r} = \bar \rho_r \mm \ll \mm$ for all $r \in [0,1)$. Note that $\bar \mu_r$ is concentrated on the compact set $\ee_{t}(G)$ with $t := s + r(1-s)$, and therefore $\mm(\supp(\bar \mu_r)) \leq \mm(\ee_{t}(G) )$. It follows by Jensen's inequality together with the $\MCP(K,N)$ assumption that:
\begin{align*}
\mm(\ee_{t}(G) )^{1/N} & \geq \mm( \supp(\bar \mu_r) )^{1/N}  \geq \int \bar \rho_{r}^{1-1/N}(x) \,\mm(dx) \\
& \geq \mm(\ee_s(G))^{1/N - 1} \int_{\ee_s(G)} \tau_{K,N}^{(1-r)}(\sfd(x,T(x))) \, \mm(dx) \\
& \geq \mm(\ee_s(G))^{1/N} (1-r) e^{-(1-s) d(G) r \sqrt{(N-1)K^{-}}/N},
\end{align*}
where the last inequality follows from the lower bound (see e.g. \cite[Remark 2.3]{CM3}):
$$
\tau_{K,N}^{(1-r)} (\theta) = (1-r) \left( \frac{\sigma^{(1-r)}_{K,N-1} (\theta)}{ 1-r} \right)^{\frac{N-1}{N}} 
\geq (1-r)e^{ - \theta r \sqrt{ (N-1)K^{-}}/N  }.
$$
Substituting $r = \frac{t-s}{1-s}$, the left-hand side of (\ref{E:continuitymass}) is established. Reversing the time, the right-hand side of (\ref{E:continuitymass}) immediately follows, thereby concluding the proof. 
\end{proof}

\bigskip

The following two consequences of Proposition \ref{P:completeregularity} will be required for the proof of the change-of-variables formula in Section \ref{S:comparison1}. 
Recall that for any $G \subset \Geo(X)$,
$$
D(G) := \{ (x,t) \in X \times [0,1] \colon x = \gamma_t, \ \gamma \in G\},
$$
and that $D(G)(x) = \{ t \in [0,1] \colon x = \gamma_{t}, \ \gamma \in G \}$ and $D(G)(t) = \{ x \in X \colon x = \gamma_t,\ \gamma \in G \} = \ee_{t}(G)$.
To simplify the notation, we directly write $G(x)$ instead of $D(G)(x)$. 

\begin{proposition}\label{P:densityreversed}
With the same assumptions as in Proposition \ref{P:completeregularity}, we have for any $t \in (0,1)$:
$$
\lim_{\ve \to 0+} \frac{\mathcal{L}^{1} \big( G(x) \cap (t- \ve , t+\ve) \big)   }{2\ve} = 1 \;\;\; \text{ in $L^1(\ee_{t}(G),\mm)$.}
$$
The same result also holds for $t=0$ if we dispense with the factor of $2$ in the denominator. 
\end{proposition}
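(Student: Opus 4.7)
The plan is to define $f_\eps(x) := \frac{\mathcal{L}^1(G(x) \cap (t-\eps, t+\eps))}{2\eps}$ and to establish that $f_\eps \to \chi_{\ee_t(G)}$ in $L^1(X,\mm)$, which immediately restricts to the desired $L^1(\ee_t(G),\mm)$-convergence of $f_\eps$ to $1$ (note that $\mathcal{L}^1(G(x) \cap (t-\eps,t+\eps)) \leq 2\eps$, so $0 \leq f_\eps \leq 1$ everywhere). All measurability issues are routine: since $G \subset \Geo(X)$ is compact, the continuity of the evaluation map makes $D(G) \subset X \times [0,1]$ compact, and consequently $G(x)$, $\ee_\tau(G)$ and $f_\eps$ are Borel.

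Next I would compute the $\mm$-average of $f_\eps$ by Fubini applied to $D(G)$:
$$
\int_X f_\eps(x)\, \mm(dx) = \frac{1}{2\eps}\int_{t-\eps}^{t+\eps} \mm(\ee_\tau(G))\, d\tau.
$$
Proposition \ref{P:completeregularity} tells us that $\tau \mapsto \mm(\ee_\tau(G))$ is locally Lipschitz, hence continuous at $t \in (0,1)$, so Lebesgue differentiation yields $\int_X f_\eps\, d\mm \to \mm(\ee_t(G))$. In parallel, I introduce the decreasing family
$$
E_\eps := \bigcup_{\tau \in (t-\eps,\, t+\eps)} \ee_\tau(G),
$$
so that $f_\eps \leq \chi_{E_\eps}$ pointwise. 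A compactness argument shows $\bigcap_{\eps>0} E_\eps = \ee_t(G)$: the inclusion $\supset$ is clear, and for the reverse, any $x \in \bigcap_\eps E_\eps$ admits sequences $\tau_n \to t$ and $\gamma^n \in G$ with $\gamma^n(\tau_n)=x$; compactness of $G$ and joint continuity of $(\gamma,\tau)\mapsto \gamma_\tau$ force a subsequential limit $\gamma \in G$ to satisfy $\gamma_t=x$. As $\mm(X)<\infty$, monotone convergence gives $\mm(E_\eps) \to \mm(\ee_t(G))$.

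With these two ingredients the sandwich is immediate: $\chi_{E_\eps}-f_\eps \geq 0$ and $\int_X (\chi_{E_\eps}-f_\eps)\, d\mm = \mm(E_\eps) - \int_X f_\eps\, d\mm \to 0$, so $\chi_{E_\eps} - f_\eps \to 0$ in $L^1(X,\mm)$; combined with $\chi_{E_\eps} \to \chi_{\ee_t(G)}$ in $L^1(X,\mm)$ (monotone convergence, using $\mm(X)<\infty$), the triangle inequality yields $f_\eps \to \chi_{\ee_t(G)}$ in $L^1(X,\mm)$, which restricts to the claim on $\ee_t(G)$. The $t=0$ case is handled identically, replacing $(t-\eps,t+\eps)$ and $2\eps$ by $[0,\eps)$ and $\eps$; the only difference is that the lower bound $\liminf_{\eps \to 0^+}\frac{1}{\eps}\int_0^\eps \mm(\ee_\tau(G))\,d\tau \geq \mm(\ee_0(G))$ now comes from the lower semi-continuity at $\tau=0$ (rather than full continuity), while the upper bound $\int_X f_\eps\, d\mm \leq \mm(E_\eps) \to \mm(\ee_0(G))$ still follows from the compactness argument, so the two bounds pinch to equality. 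The chief delicate step is the compactness-based identification $\bigcap_\eps E_\eps = \ee_t(G)$, which is what prevents the mass of $f_\eps$ from leaking into a neighborhood strictly larger than $\ee_t(G)$ and ultimately drives the pinching.
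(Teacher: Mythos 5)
Your proof is correct, and it takes a more direct route than the paper's. The paper argues by contradiction: from a putative failure of the $L^1$-convergence it extracts (via Fubini, applied to the \emph{complement} $G(x)^c$) a sequence of times $s_n \to t$ with $\mm\big(\ee_t(G)\setminus \ee_{s_n}(G)\big) \geq \kappa > 0$, and then combines the Hausdorff convergence of the compact slices $\ee_{s_n}(G)$ to $\ee_t(G)$ with the continuity (resp.\ lower semi-continuity at $t=0$) of $\tau \mapsto \mm(\ee_\tau(G))$ from Proposition \ref{P:completeregularity} to deduce $\mm(\ee_t(G)^\eps) \geq \kappa + \mm(\ee_t(G))$ for every $\eps>0$, contradicting $\mm(\ee_t(G)^\eps) \downarrow \mm(\ee_t(G))$. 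You instead compute $\int_X f_\eps\, d\mm$ exactly by Fubini and squeeze $0 \leq f_\eps \leq \chi_{E_\eps}$, where the nested-intersection identity $\bigcap_{\eps>0} E_\eps = \ee_t(G)$ (your compactness argument, which is the same use of compactness of $G$ that underlies the paper's Hausdorff-convergence step) forces $\mm(E_\eps) \downarrow \mm(\ee_t(G))$. The two proofs thus rest on the same pillars — Fubini, the regularity of $\tau \mapsto \mm(\ee_\tau(G))$ from Proposition \ref{P:completeregularity}, and compactness of $G$ — but your packaging avoids the contradiction scaffolding and the $\eps$-neighborhoods/Hausdorff distance, and in fact yields the formally stronger conclusion $f_\eps \to \chi_{\ee_t(G)}$ in $L^1(X,\mm)$, i.e.\ it also quantifies that no mass of $f_\eps$ survives on $E_\eps \setminus \ee_t(G)$. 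Your treatment of $t=0$, where only lower semi-continuity is available and the upper bound is supplied by $\mm(E_\eps) \to \mm(\ee_0(G))$, is exactly the right adaptation and matches what Proposition \ref{P:completeregularity} provides.
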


The proof follows the same line as the proof of \cite[Theorem 2.1]{cavhue:regular}. We include it for the reader's convenience. 

\begin{proof}
Fix $t \in (0,1)$.
Suppose in the contrapositive that the claim is false:
$$
\limsup_{\ve \to 0} \int_{\ee_{t}(G)} \left| 1 - \frac{\mathcal{L}^{1} (G(x) \cap (t-\ve, t+\ve)) }{2\ve} \right| \, \mm(dx) > 0.
$$
Consider the complement $G(x)^{c}  = \{ t \in [0,1] : x \notin \ee_{t}(G)\}$,
and deduce the existence of a sequence $\ve_{n} \to 0$ such that 
\begin{equation}\label{E:one}
\lim_{n\to \infty} \int_{\ee_{t}(G)} \frac{\mathcal{L}^{1} (G(x)^{c} \cap (t-\ve_{n}, t+\ve_{n})) }{2\ve_{n}} \, \mm(dx)>0.
\end{equation}
Now let:
$$
E: = \{ (x,s) \in \ee_{t}(G) \times (0,1) \; ; \;  s \in G(x)^{c} \}
$$
with $E(x)$, $E(s)$ the corresponding sections. By Fubini's Theorem and \eqref{E:one}
we obtain that:
\begin{align*}
& ~ \lim_{n \to \infty} \frac{1}{2\ve_{n}} \int_{(t-\ve_{n},t+\ve_{n})} \mm(E(s))\, \mathcal{L}^{1}(ds)  \crcr
				&~ =  \lim_{n \to \infty} \frac{1}{2\ve_{n}}  \mm \otimes \mathcal{L}^{1}\left(E \cap (\ee_{t}(G) \times (t-\ve_{n}, t+ \ve_{n}))\right)  \crcr
				&~ = \lim_{n \to \infty} \frac{1}{2\ve_{n}}  \int_{\ee_{t}(G)}  \mathcal{L}^{1}(G(x)^{c} \cap (t-\ve_{n}, t+ \ve_{n})) \, \mm(dx) > 0,
\end{align*}
so there must be a sequence of $\{s_{n}\}_{n \in \enne}$ converging to $t$ so that $\mm(E (s_{n})) \geq \kappa$, for some $\kappa>0$.
Repeating the above argument for the case $t=0$ with the appropriate obvious modifications, the latter conclusion also holds in that case as well. 
Note that:
$$
E(s_{n}) = \{ x\in \ee_{t}(G) \colon x \notin \ee_{s_{n}}(G) \} = \ee_{t}(G) \setminus \ee_{s_{n}}(G).
$$
The compact sets $\ee_{s_{n}}(G)$ converge to $\ee_{t}(G)$ in Hausdorff distance: indeed, $\sfd(\gamma_{t},\gamma_{s_{n}}) \leq C |t-s_{n}|$ where 
$C: =\sup_{\gamma \in G} \ell(\gamma)  < \infty$ by compactness of $G$. 
Hence,
for each $\ve > 0$ there exists $n(\ve)$ such that 
for all $n \geq n(\ve)$ it holds $\ee_{t}(G)^{\ve} \supset \ee_{s_n}(G)$ (and vice-versa), where $A^\eps := \set{y \in X \; ; \; \sfd(y,A) \leq \eps}$.
It follows that:
$$
\mm(\ee_{t}(G)^{\ve}) \geq \mm\big(\ee_{t}(G) \setminus \ee_{s_{n}}(G) \big) + \mm(\ee_{s_{n}}(G) ) \geq \kappa + \mm(\ee_{s_{n}}(G) ).
$$
Taking the limit as $n \to \infty$, the continuity property of Proposition \ref{P:completeregularity} (lower semi-continuity if $t=0$) implies that for each $\ve > 0$:
$$
\mm(\ee_{t}(G)^{\ve}) \geq \kappa + \mm(\ee_{t}(G) ),
$$
with $\kappa$ independent of $\ve$. 
Since $\mm(\ee_{t}(G)) = \lim_{\ve \to 0} \mm(\ee_{t}(G)^{\ve})$
we obtain a contradiction, and the claim is proved. 
\end{proof}

\begin{corollary} \label{cor:mu-on-X0}
With the same assumptions as in Proposition \ref{P:completeregularity}, and assuming that $\supp(\mm) = X$, we have:
$$
\nu (\ee_{0}^{-1}(X^{0})\cap  G^{+}_{\f} ) = 0 ,
$$
where $\f$ is an associated Kantorovich potential to the $c$-optimal-transport problem from $\mu_{0}$ to $\mu_{1}$ with $c = \sfd^2/2$. In particular:
\[  \mu_{t}\llcorner_{X^0} = \mu_0\llcorner_{X^0} \;\;\; \forall t \in [0,1) .
\] \end{corollary}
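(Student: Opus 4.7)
The plan is to combine the dichotomy from Lemma \ref{lem:X0}---which forces every Kantorovich geodesic with an interior midpoint in $X^0$ to be null---with the Lebesgue-density statement of Proposition \ref{P:densityreversed} at $t=0$, to rule out that $X^0$ carries any $\mm$-mass on $\ee_0(G)$ for appropriate compact subsets $G \subset G_{\f}^+$.

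First I would fix, by combining Corollaries \ref{C:regularity3MCP} and \ref{C:injectivity}, a Borel subset $H \subset G_{\f}$ of full $\nu$-measure on which the two-sided density estimates \eqref{E:regularityrho} hold and $\ee_t|_H$ is injective for every $t \in [0,1)$. Since $\Geo(X)$ is Polish and $(\supp(\mm),\sfd)$ is proper (Lemma \ref{L:proper-support}), I would then exhaust $H \cap G_{\f}^+$ by an increasing countable family of compact sets $\set{G_n}_{n \in \N}$ with $G_n \subset H$ and $\inf_{\gamma \in G_n}\ell(\gamma) \geq 1/n$ (intersecting with the closed set $\set{\ell \geq 1/n}$ if needed). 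This reduces the first claim to proving $\nu(\ee_0^{-1}(X^0) \cap G_n) = 0$ for each $n$.

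The key observation is that for every $x \in \ee_0(G_n) \cap X^0$ one necessarily has $G_n(x) \cap (0,1) = \emptyset$: otherwise $x = \eta_t$ for some $\eta \in G_n$ and some $t \in (0,1) \subset \mathring{G}_{\f}(x)$, and the dichotomy from Lemma \ref{lem:X0} applied at this interior point of $\eta$ would force $\ell(\eta) = 0$, contradicting $G_n \subset G_{\f}^+$. Consequently $\L^1(G_n(x) \cap [0,\eps)) = 0$ for every $\eps \in (0,1)$ and every such $x$, so the integrand in the $t=0$ case of Proposition \ref{P:densityreversed} equals $1$ identically on $\ee_0(G_n) \cap X^0$; letting $\eps \downarrow 0$ there forces $\mm(\ee_0(G_n) \cap X^0) = 0$. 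Since $\mu_0 \ll \mm$, this gives $\mu_0(\ee_0(G_n) \cap X^0) = 0$; invoking the injectivity of $\ee_0|_{G_n}$ through Corollary \ref{C:injectivity} (so that $(\ee_0)_{\sharp}(\nu\llcorner_{G_n}) = \mu_0\llcorner_{\ee_0(G_n)}$) I would conclude $\nu(\ee_0^{-1}(X^0) \cap G_n) = 0$, and a countable sum proves the first claim.

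For the identity $\mu_t\llcorner_{X^0} = \mu_0\llcorner_{X^0}$, I would decompose $\nu = \nu\llcorner_{G_{\f}^0} + \nu\llcorner_{G_{\f}^+}$: on the null part $\ee_t \equiv \ee_0$, hence $(\ee_t)_{\sharp}(\nu\llcorner_{G_{\f}^0}) = (\ee_0)_{\sharp}(\nu\llcorner_{G_{\f}^0})$ for all $t \in [0,1]$; for the positive-length part the image through $\ee_t$ never charges $X^0$ for $t \in [0,1)$, by the first claim at $t=0$ and by the dichotomy of Lemma \ref{lem:X0} (applied at the interior point $t \in \mathring{G}_{\f}(\gamma_t)$ of $\gamma \in G_{\f}^+$) when $t \in (0,1)$. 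Assembling these yields the asserted identity. The only genuinely non-trivial step is the key observation linking membership in $X^0$ with the vanishing of $G_n(x)$ inside $(0,1)$; the remaining work is a routine measurable-selection and density-point assembly of regularity already in place.
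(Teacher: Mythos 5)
Your proof is correct and follows essentially the same route as the paper's: both rest on combining the $t=0$ case of Proposition \ref{P:densityreversed} with the dichotomy of Lemma \ref{lem:X0} (a positive-length geodesic of $G_{\f}$ cannot have an interior point in $X^{0}$) on compact subsets of $G^{+}_{\f}$ supplied by inner regularity and Corollaries \ref{C:regularity3MCP}, \ref{C:injectivity}, and the ``in particular'' part is handled identically via the decomposition into null and positive-length geodesics. The only difference is organizational — the paper argues by contradiction on a single positive-measure compact subset of $\ee_{0}^{-1}(X^{0})\cap G^{+}_{\f}$, whereas you exhaust $G^{+}_{\f}$ by good compacts $G_n$ and bound $\mm(\ee_{0}(G_n)\cap X^{0})$ directly from the $L^{1}$ convergence — which is a cosmetic distinction.
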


Recall from Section \ref{sec:order12} that $G_{\f}\subset \Geo(X)$ denotes the set of $\f$-Kantorovich geodesics, $G^{+}_{\f}$ denotes the subset of geodesics in $G_{\f}$ having positive length, and $X^0 = \ee_{[0,1]}(G_\varphi^0)$ denotes the subset of null geodesic points in $X$. Necessarily $\nu (G_{\f}) = 1$. The assumption $\supp(\mm) = X$ guarantees by Lemma \ref{L:proper-support} that $(X,\sfd)$ is proper and geodesic, so that the results of Part I are in force; by Remark \ref{rem:supp-nu} this poses no loss in generality. 

\begin{proof}[Proof of Corollary \ref{cor:mu-on-X0}]
Suppose by contradiction that $\nu (\ee_{0}^{-1}(X^{0})\cap  G^{+}_{\f} ) > 0$. By inner regularity, there exists a compact 
$G \subset \ee_{0}^{-1}(X^{0})\cap  G^{+}_{\f}$ with $\nu(G)>0$ verifying the hypothesis of Proposition \ref{P:completeregularity} and therefore also 
the conclusion of Proposition \ref{P:densityreversed} for $t =0$. 
In particular, for $\mm$-a.e. $x \in \ee_{0}(G) \subset X^{0}$ there exists $\gamma \in G \subset G^{+}_{\f}$ and $t \in (0,1)$ (sufficiently small) such that $x = \gamma_{t}$. 
But $\mu_0(\ee_0(G)) = \nu(\ee_0^{-1}(\ee_0(G))) \geq \nu(G) > 0$, and hence $\mm(\ee_0(G)) > 0$ as $\mu_0 \ll \mm$. It follows that there exists at least one $x \in \ee_{0}(G)$ as above, in direct contradiction to the characterization of $X^0$ given in Lemma \ref{lem:X0}. Hence we can conclude that $\nu$-almost-surely, $\ee_{t}^{-1}(X^{0})$ is contained in the set of null geodesics $G_{\varphi}^0$. For $t \in (0,1)$, $\ee_t^{-1}(X^0) \subset G_\varphi^0$ by Lemma \ref{lem:X0}, and so we conclude that $\mu_{t}\llcorner_{X^0} = \mu_0\llcorner_{X^0}$ for all $t \in [0,1)$. 
\end{proof}

\begin{remark}
When applying the results of this section, note that when both $\mu_0,\mu_1 \ll \mm$, then by reversing the roles of $\mu_0$ and $\mu_1$, we in fact obtain all the above results also at the right end-point $t=1$. 
\end{remark}

\section{Two families of conditional measures}\label{S:Conditional}

The next two sections will be devoted to the study of $W_{2}$-geodesics over $(X,\sfd,\mm)$, when $(X,\sfd,\mm)$ is assumed to be essentially non-branching and verifies $\CD^1(K,N)$. By Remark \ref{R:CD1-localizes}, we also assume $\supp(\mm) = X$. We will use Proposition \ref{P:CD1-ENB} as an equivalent definition for $\CD^1(K,N)$.
By Proposition \ref{P:MCP} and Remark \ref{rem:CD1-MCP}, $X$ also verifies $\MCP(K,N)$, and so Theorem \ref{T:optimalmapMCP} applies. In addition, it follows by Lemma \ref{L:proper-support} that $(X,\sfd)$ is geodesic and proper, and so the results of Part I apply. 

Fix $\mu_{0},\mu_{1} \in \mathcal{P}_{2}(X,\sfd,\mm)$, and denote by $\nu$ the unique element of $\Opt(\mu_{0},\mu_{1})$. As usual, we denote $\mu_t := (\ee_t)_{\sharp} \nu \ll  \mm$ for all $t \in [0,1]$, and set:
$$
\mu_t  =: \rho_{t} \mm \;\;\; \forall t \in [0,1] . 
$$
Fix also an associated Kantorovich potential $\f : X \to \R$ for the $c$-optimal transport problem from $\mu_{0}$ to $\mu_{1}$, with $c = \sfd^2/2$.
Recall that $G_{\f}\subset \Geo(X)$ denotes the set of $\f$-Kantorovich geodesics and that necessarily $\nu (G_{\f}) = 1$.
We further recall from Section \ref{sec:order12} that the interpolating Kantorovich potential and its time-reversed version at time $t \in (0,1)$ are defined for any $x \in X$ as:
$$
-\f_{t} (x) = \inf_{y \in X} \frac{\sfd^{2}(x,y)}{2t} - \f(y) ~,~ \varphic_t(x) = \inf_{y \in X} \frac{\sfd^2(x,y)}{2(1-t)} - \varphi^c(y) \;\;\;\; \forall t \in (0,1) ,
$$
with $\varphi_0 = \varphic_0 = \varphi$ and $\varphi_1 = \varphic_1 = -\varphi^c$. By Proposition \ref{prop:duality-char} we have, for all $t \in (0,1)$, $\varphi_t(x) \leq \varphic_t(x)$, with equality iff $x \in \ee_t(G_\varphi)$. 

\medskip

It will be convenient from a technical perspective to first restrict $\nu$, by inner regularity of Radon measures, Corollary \ref{C:regularity3MCP} (applied to both pairs $\mu_0,\mu_1$ and $\mu_1,\mu_0$), Proposition \ref{P:densityreversed} and Corollary \ref{C:injectivity}, to a suitable \emph{good} compact subset $G \subset G^+_{\f}$ with $\nu(G) \geq \nu(G_\f^+)-\ve$. Recall that $G_\f^+$ was defined in Section \ref{sec:order12} as the subset of geodesics in $G_\f$ having positive length, and
note that the length function $\ell : \Geo(X) \rightarrow [0,\infty)$ is continuous and hence is bounded away from $0$ and $\infty$ on a compact $G \subset G^+_{\f}$. 

\begin{definition}[Good Subset of Geodesics] \label{def:good}
A subset $G \subset G^+_{\f}$ is called \emph{good} if the following properties hold:
\begin{itemize}
\item[-] $G$ is compact;
\item[-] there exists $c > 0$ so that for every $\gamma \in G$:
\begin{equation} \label{eq:length-bound}
c \leq \ell(\gamma) \leq 1/c \; ;
\end{equation} 
\item[-] for every $\gamma \in G$, $\rho_s(\gamma_s) > 0$ for all $s \in [0,1]$ and $(0,1) \ni s \mapsto \rho_s(\gamma_s)$ is continuous; 
\item[-] the claim of Proposition \ref{P:densityreversed} holds true for $G$;
\item[-] The map $\ee_{t}|_G  : G \to X$ is injective (and we will henceforth restrict $\ee_t$ to $G$ or its subsets). 
\end{itemize}
\end{definition}

\begin{assumption} \label{ass:good-assumption}
We will assume in this section and in Subsection \ref{subsec:compare} that:
\[
 \textbf{$\nu$ is concentrated on a good $G \subset G^+_\varphi$.}
\]
We will dispose of this assumption in the Change-of-Variables Theorem \ref{T:changeofV}.
\end{assumption}

\bigskip

\subsection{$L^{1}$ partition}\label{Ss:L1condition}

For $s \in [0,1]$ and $a_{s} \in \R$, we recall the following notation (introduced in Section \ref{S:Phi} for $G = G_\varphi$, but now we treat a general $G \subset G_\varphi$ as above):
$$
G_{a_{s}} = G_{a_s,s} : = \{ \gamma \in G : \f_{s}(\gamma_{s}) = a_{s} \}.
$$
As $G$ is compact and $\ee_s : G \rightarrow X$ is continuous, $\ee_s(G)$ is compact. When $s \in (0,1)$, $\f_s : X \rightarrow \Real$ is continuous by Lemma \ref{lem:order0}, and hence $G_{a_s}$ is compact as well. 

The structure of the evolution of $G_{a_{s}}$, i.e. $\ee_{[0,1]}(G_{a_{s}}) = \{ \gamma_{t} \colon t \in [0,1], \ \gamma \in G_{a_{s}} \}$,
will be the topic of this subsection, so the properties we prove below are only meaningful for $a_s \in \varphi_s(\ee_s(G))$ (and moreover typically when $\mm(\ee_{[0,1]}(G_{a_{s}})) > 0$).
It will be convenient to use a short-hand notation for the signed-distance function from a level set of $\f_{s}$, 
$d_{a_{s}} : = d_{\f_{s} - a_{s}}$ (see \eqref{E:levelsets}).

\begin{lemma}\label{L:dmonotonedistance}
For any $s \in [0,1]$ and $a_{s} \in \varphi_s(\ee_s(G))$ the following holds:
for each $\gamma \in G_{a_{s}}$ and $0 \leq r \leq t \leq 1$, $(\gamma_{r},\gamma_{t}) \in \Gamma_{d_{a_{s}}}$.
In particular, the evolution of $G_{a_{s}}$ is a subset of the transport set associated to $d_{a_{s}}$:
$$
\ee_{[0,1]}(G_{a_{s}}) \subset \mathcal{T}_{d_{a_{s}}}.
$$
\end{lemma}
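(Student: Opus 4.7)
The plan is to establish the stronger statement that along every $\gamma \in G_{a_s}$ the signed-distance function is \emph{affine} in the time parameter,
\[
d_{a_s}(\gamma_\tau) \;=\; (s-\tau)\, \ell(\gamma) \qquad \forall \tau \in [0,1] .
\]
From this, the lemma follows at once: for $0 \leq r \leq t \leq 1$,
\[
d_{a_s}(\gamma_r) - d_{a_s}(\gamma_t) \;=\; (t-r)\,\ell(\gamma) \;=\; \sfd(\gamma_r,\gamma_t),
\]
which is precisely $(\gamma_r,\gamma_t) \in \Gamma_{d_{a_s}}$. Moreover, since $\gamma \in G_\varphi^+$ has $\ell(\gamma) > 0$, each $\gamma_\tau$ is $R_{d_{a_s}}$-related to a distinct endpoint of $\gamma$ (take $\gamma_0$ if $\tau > 0$, $\gamma_1$ if $\tau < 1$), so $\gamma_\tau \in \mathcal{T}_{d_{a_s}}$.

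The affine identity decomposes into a sign computation and a distance computation. For the sign, combining $\varphi_\tau(\gamma_\tau) = a_s + (s-\tau)\ell(\gamma)^2/2$ from Lemma \ref{lem:varphi-linear} with the pointwise monotonicity $\tau \mapsto \varphi_\tau(x)$ non-decreasing from Lemma \ref{lem:order0}(2), applied at $x = \gamma_\tau$, yields $\varphi_s(\gamma_\tau) \geq \varphi_\tau(\gamma_\tau) > a_s$ when $\tau < s$, and the reverse inequalities when $\tau > s$; hence $\sgn(\varphi_s(\gamma_\tau) - a_s) = \sgn(s-\tau)$. The upper bound $\dist(\gamma_\tau, \{\varphi_s = a_s\}) \leq |s-\tau|\,\ell(\gamma)$ is immediate from $\gamma_s \in \{\varphi_s = a_s\}$.

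The matching lower bound is the only substantive step, and I plan to extract it from the two dual Hopf--Lax representations. Fix $y_0$ with $\varphi_s(y_0) = a_s$. In the regime $\tau \leq s$, the primal representation $\varphi_s(y_0) = \sup_y \{\varphi(y) - \sfd^2(y_0,y)/(2s)\}$ tested at $y = \gamma_0$, using the identity $\varphi(\gamma_0) = a_s + s\ell(\gamma)^2/2$ (Lemma \ref{lem:varphi-linear}), forces $\sfd(y_0,\gamma_0) \geq s\,\ell(\gamma)$, whence the triangle inequality gives $\sfd(y_0,\gamma_\tau) \geq (s-\tau)\ell(\gamma)$. In the symmetric regime $\tau \geq s$, I argue on the dual side: since $\varphic_s(y_0) \geq \varphi_s(y_0) = a_s$ by Proposition \ref{prop:duality-char}(2), testing $\varphic_s(y_0) = \inf_z\{\sfd^2(y_0,z)/(2(1-s)) - \varphi^c(z)\}$ at $z = \gamma_1$, with $\varphi^c(\gamma_1) = (1-s)\ell(\gamma)^2/2 - a_s$ (Kantorovich duality), yields $\sfd(y_0,\gamma_1) \geq (1-s)\ell(\gamma)$ and hence $\sfd(y_0,\gamma_\tau) \geq (\tau-s)\ell(\gamma)$. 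The boundary values $s,\tau \in \{0,1\}$ are absorbed by these same formulas (recalling $\varphic_0 = \varphi$ and $\varphi_1 = -\varphi^c$). The only genuine delicacy is that the proof must invoke both the primal and dual Hopf--Lax descriptions, depending on which side of the level set one is working on; once both are deployed, the rest reduces to bookkeeping with signs and the triangle inequality.
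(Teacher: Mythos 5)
Your proof is correct, and its analytic core coincides with the paper's: the two lower bounds $\sfd(y_0,\gamma_0)\ge s\,\ell(\gamma)$ and $\sfd(y_0,\gamma_1)\ge(1-s)\,\ell(\gamma)$, obtained by testing the Hopf--Lax representation of $\varphi_s$ at $\gamma_0$ and that of $\varphic_s$ (together with $\varphi_s\le\varphic_s$ from Proposition \ref{prop:duality-char}) at $\gamma_1$, are exactly the paper's two displayed estimates $\sfd(\gamma_0,\gamma_s)\le\sfd(\gamma_0,q)$ and $\sfd(\gamma_s,\gamma_1)\le\sfd(p,\gamma_1)$ for level-set points $p,q$. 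Where you differ is the assembly: the paper adds the two endpoint estimates, takes the infimum over $p,q$, invokes the $1$-Lipschitz bound of Lemma \ref{lem:df-Lip} for the reverse inequality to conclude $(\gamma_0,\gamma_1)\in\Gamma_{d_{a_s}}$, and then propagates to interior pairs via Lemma \ref{L:cicli}; you instead carry each endpoint estimate to $\gamma_\tau$ by the triangle inequality along the geodesic and fix the sign via Lemma \ref{lem:order0}(2) combined with Lemma \ref{lem:varphi-linear}, obtaining the exact affine identity $d_{a_s}(\gamma_\tau)=(s-\tau)\ell(\gamma)$, from which every pair $(\gamma_r,\gamma_t)$, $r\le t$, lies in $\Gamma_{d_{a_s}}$ at once. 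This yields a slightly stronger conclusion and dispenses with Lemmas \ref{L:cicli} and \ref{lem:df-Lip}; the price is that the strict sign determination genuinely uses $\ell(\gamma)>0$, which is legitimate here because the lemma lives under Assumption \ref{ass:good-assumption}, so $G\subset G_\varphi^+$ (the paper's sign step needs the same strict monotonicity). Your remark that the boundary values are absorbed by the same formulas is essentially right, but the precise bookkeeping is: at $s=0$ only the dual-side representation is available (and only the regime $\tau\ge s$ is non-trivial), while at $s=1$ only the primal-side one is (and only $\tau\le s$ matters).
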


\begin{proof}
Fix $\gamma \in G_{a_{s}}$. If $s \in [0,1)$ then for any $p \in \{  \f_{s} = a_{s} \}$:
$$
\frac{\sfd^{2}(\gamma_{s}, \gamma_{1})}{2(1-s)} = \f_{s}(\gamma_{s}) + \f^{c}(\gamma_{1}) 
= \f_{s}(p) + \f^{c}(\gamma_{1}) \leq \varphic_{s}(p) + \f^{c}(\gamma_{1})  \leq \frac{\sfd^{2}(p, \gamma_{1})}{2(1-s)}
$$
by Lemma \ref{lem:varphi-linear} and Proposition \ref{prop:duality-char} (2), and hence $\sfd(\gamma_{s},\gamma_{1}) \leq \sfd(p,\gamma_{1})$; the latter also holds for $s=1$ trivially. Similarly, if $s \in (0,1]$ then for any $ q\in \{ \f_{s} = a_{s}\}$:
$$
\frac{\sfd^{2}(\gamma_{0}, \gamma_{s})}{2s} = \f(\gamma_{0})  - \f_{s}(\gamma_{s}) 
= \f(\gamma_{0}) - \f_{s}(q) \leq \frac{\sfd^{2}(\gamma_{0}, q)}{2s},
$$
and therefore $\sfd(\gamma_{0},\gamma_{s}) \leq \sfd(\gamma_{0},q)$, with the latter also holding for $s=0$ trivially. 
Consequently, for any $p,q \in \{ \f_{s} = a_{s} \}$:
$$
\sfd(\gamma_{0},\gamma_{1}) \leq \sfd(\gamma_{0},p) + \sfd(q,\gamma_{1}) .
$$
Taking infimum over $p$ and $q$ it follows that:
$$
 \sfd(\gamma_{0},\gamma_{1}) \leq d_{a_{s}}(\gamma_{0}) - d_{a_{s}}(\gamma_{1}) ,
$$
where the sign of $d_{a_s}$ was determined by the fact that $s \mapsto \varphi_s(\gamma_s)$ is decreasing (e.g. by Lemma \ref{lem:varphi-linear}). 
On the other hand:
\[
d_{a_{s}}(\gamma_{0}) - d_{a_{s}}(\gamma_{1}) \leq  \sfd(\gamma_{0},\gamma_{1}), 
\]
thanks to the $1$-Lipschitz regularity of $d_{a_{s}}$ ensured by Lemma \ref{lem:df-Lip} since $(X,\sfd)$ is geodesic.
Therefore equality holds and $(\gamma_{0},\gamma_{1}) \in \Gamma_{d_{a_{s}}}$. The assertion then follows by  Lemma \ref{L:cicli}. 
\end{proof}

Next, recall by Proposition \ref{P:CD1-ENB} applied to the function $u = d_{a_{s}}$, that according to the equivalent characterization of $\CD^{1}_u(K,N)$, the following disintegration formula holds:
\begin{equation} \label{eq:dis-prelim}
\mm\llcorner_{\T_{d_{a_s}}} = \int_{Q} \hat \mm_{\alpha}^{a_{s}} \,\hat \qq^{a_{s}} (d\alpha),
\end{equation}
where $Q$ is a section of the partition of $\T_{d_{a_s}}^b$ given by the equivalence classes $\{R_{d_{a_s}}^b(\alpha)\}_{\alpha \in Q}$, and for $\hat \qq^{a_s}$-a.e. $\alpha \in Q$, the probability measure $\hat \mm_{\alpha}^{a_{s}}$ is supported on the transport ray $X_{\alpha} = \overline{R^b_{d_{a_s}}(\alpha)} = R_{d_{a_s}}(\alpha)$ and $(X_{\alpha}, \sfd, \hat \mm_{\alpha}^{a_{s}})$ verifies $\CD(K,N)$. It follows by Lemma \ref{L:dmonotonedistance} that:
\begin{equation} \label{eq:dis-prelim2}
\mm\llcorner_{\ee_{[0,1]}(G_{a_{s}})} = \int_{Q} \hat \mm_{\alpha}^{a_{s}}\llcorner_{\ee_{[0,1]}(G_{a_{s}})} \,\hat \qq^{a_{s}}(d\alpha) .
\end{equation}
It will be convenient to make the previous disintegration formula a bit more explicit. We refer to the Appendix for the definition of $\CD(K,N)$ density and the (suggestive) relation to one-dimensional $\CD(K,N)$ spaces. Recall that $\ell_s(\gamma_s) = \len(\gamma)$ for all $\gamma \in G$. 

\begin{proposition}\label{P:L1disintegration}
For any $s \in (0,1)$ and $a_{s} \in \varphi_s(\ee_s(G))$, the following disintegration formula holds:
\begin{equation}\label{E:L1-1}
\mm\llcorner_{\ee_{[0,1]}(G_{a_{s}}) } = \int_{\ee_{s}(G_{a_{s}})} g^{a_{s}}(\beta, \cdot)_{ \#} \left( h^{a_{s}}_{\beta} \cdot \mathcal{L}^{1} \llcorner_{[0,1]} \right) 
\qq^{a_{s}}(d\beta),
\end{equation}
with $\qq^{a_{s}}$ a Borel measure concentrated on $\ee_{s}(G_{a_{s}})$ 
of mass $\mm(\ee_{[0,1]}(G_{a_{s}}))$,  $g^{a_{s}} : \ee_s(G_{a_s}) \times [0,1] \rightarrow X$ is defined by $g^{a_{s}}(\beta,t)  = \ee_t(\ee_{s}^{-1}(\beta))$ and is Borel measurable, for $\qq^{a_s}$-a.e. $\beta \in \ee_{s}(G_{a_{s}})$, $h^{a_{s}}_{\beta}$ is a $\CD(\ell_s(\beta)^2 K,N)$ probability density on $[0,1]$ vanishing at the end-points, and the map $\ee_{s}(G_{a_{s}}) \times [0,1] \ni (\beta,t) \mapsto  h^{a_{s}}_{\beta}(t)$ is $\qq^{a_s} \otimes \L^1\llcorner_{[0,1]}$-measurable. 
\end{proposition}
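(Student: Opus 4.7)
The plan is to invoke the $\CD^{1}_{d_{a_s}}(K,N)$ condition from Proposition \ref{P:CD1-ENB} applied to the $1$-Lipschitz signed-distance function $u = d_{a_s} = d_{\varphi_s - a_s}$, thereby obtaining a one-dimensional disintegration of $\mm\llcorner_{\T_{d_{a_s}}}$ along the transport rays of $d_{a_s}$, and then re-index and re-parametrize this disintegration using the geodesics in $G_{a_s}$. First, by Lemma \ref{L:dmonotonedistance}, $\ee_{[0,1]}(G_{a_s}) \subset \T_{d_{a_s}}$ and for each $\gamma \in G_{a_s}$ the image $\gamma([0,1])$ is a totally ordered chain for $\leq_{d_{a_s}}$, hence by maximality of transport rays is contained in a single ray $R_{d_{a_s}}(\alpha(\gamma))$. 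Since $d_{a_s}$ is affine with unit slope along any transport ray, the zero-level set $\{d_{a_s} = 0\}$ meets each ray in at most one point, and $\beta := \gamma_s$ is by construction such an intersection point. Combined with injectivity of $\ee_s|_G$ on the good set, this yields a Borel bijection $\alpha \leftrightarrow \beta$ (modulo $\hat{\qq}^{a_s}$-null sets) between the subset $Q' \subset Q$ of rays that meet $\ee_{[0,1]}(G_{a_s})$ and $\ee_s(G_{a_s})$, with Borel structure inherited from the section property in Proposition \ref{P:CD1-ENB}.

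Restricting the disintegration \eqref{eq:dis-prelim} via Lemma \ref{L:dmonotonedistance} gives \eqref{eq:dis-prelim2}; we now convert it into the form \eqref{E:L1-1}. For $\hat{\qq}^{a_s}$-a.e. $\alpha \in Q'$, $(X_\alpha, \sfd, \hat{\mm}^{a_s}_\alpha)$ is a one-dimensional $\CD(K,N)$ \mms, and by the Appendix characterization, parametrizing $X_\alpha$ by signed arclength from $\beta$ writes $\hat{\mm}^{a_s}_\alpha = \tilde h_\alpha \cdot \L^1$ with $\tilde h_\alpha$ a $\CD(K,N)$ density vanishing at the endpoints of its support. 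Now apply the affine change of variables $r = (t-s)\ell$ with $\ell := \ell_s(\beta) = \ell(\gamma)$; this is admissible since $d_{a_s}$ decreases at unit rate along the constant-speed geodesic $\gamma$, so that $g^{a_s}(\beta, \cdot) = \ee_s^{-1}(\beta)$ bijects $[0,1]$ onto $\gamma([0,1]) \subset X_\alpha$. Under this pullback, $\hat{\mm}^{a_s}_\alpha \llcorner_{\gamma([0,1])}$ becomes a multiple of $\L^1\llcorner_{[0,1]}$ with density $t \mapsto \ell \cdot \tilde h_\alpha((t-s)\ell)$; the standard affine-scaling rule of the Appendix turns this into a $\CD(\ell^2 K, N)$ density on $[0,1]$. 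Letting $Z(\alpha) := \hat{\mm}^{a_s}_\alpha(\gamma([0,1]))$ and setting $\qq^{a_s} := \beta_\sharp(Z \cdot \hat{\qq}^{a_s}\llcorner_{Q'})$ absorbs the normalization and produces the required probability density $h^{a_s}_\beta$, with the vanishing at the endpoints of $[0,1]$ inherited from the convention for $\CD$ densities and from Theorem \ref{T:endpoints} (which identifies $\overline{R_{d_{a_s}}^b(\alpha)}$ with the maximal transport ray up to its tips).

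The main technical obstacle will be the joint $\qq^{a_s} \otimes \L^1$-measurability of $(\beta, t) \mapsto h^{a_s}_\beta(t)$. This requires selecting a jointly Borel-measurable representative of the family $\alpha \mapsto \tilde h_\alpha$ of one-dimensional $\CD$ densities, which is accomplished by combining the essential uniqueness of the $\CD^1$ disintegration in Proposition \ref{P:CD1-ENB}, the Borel structure of the $\mm$-section $\bar Q$, and a standard measurable-selection argument for densities on intervals. Joint measurability of $(\beta, t) \mapsto h^{a_s}_\beta(t)$ then transfers from $(\alpha, r) \mapsto \tilde h_\alpha(r)$ via the Borel map $g^{a_s}$, completing the proof.
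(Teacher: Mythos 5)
Your plan follows essentially the same route as the paper's proof: invoke the $\CD^1$ disintegration of Proposition \ref{P:CD1-ENB} for $u=d_{a_s}$, use Lemma \ref{L:dmonotonedistance} together with the unit-slope behaviour of $d_{a_s}$ on transport rays to show each $\gamma\in G_{a_s}$ lies (up to $\hat\qq^{a_s}$-null sets) in a unique ray meeting $\{\varphi_s=a_s\}$ precisely at $\gamma_s$, set up the Borel ray--geodesic correspondence, condition the ray measures on the geodesic images, and rescale to $[0,1]$ to get $\CD(\ell_s(\beta)^2K,N)$ densities, with $\qq^{a_s}$ defined exactly as your $\beta_\sharp\bigl(Z\cdot\hat\qq^{a_s}\bigr)$. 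The only places where the paper does more than your sketch are the positivity of $Z(\alpha)$ (needed to make the conditioning and the quotient set measurable) and the joint measurability of $(\beta,t)\mapsto h^{a_s}_\beta(t)$, which is obtained there not by an abstract selection argument but from the $\qq^{a_s}$-measurability of $\beta\mapsto\int_I h^{a_s}_\beta(\tau)\,d\tau$ combined with Lebesgue differentiation and continuity in $t$; also note that the vanishing of $h^{a_s}_\beta$ at the end-points is purely a normalization convention, not a consequence of Theorem \ref{T:endpoints}.
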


\begin{proof}

We will abbreviate $u = d_{a_s}$.

\smallskip
\textbf{Step 1.} We claim that:
\[
\forall \gamma \in G_{a_s} \;\; \forall \alpha \in Q ~,~  \ee_{[0,1]}(\gamma) \cap R_{u}^b(\alpha) \neq \emptyset  \; \Rightarrow \;  R_u(\alpha) \supset \ee_{[0,1]}(\gamma) . 
\]
Indeed, if $x \in \ee_{[0,1]}(\gamma)$, then $R_u(x) \supset \ee_{[0,1]}(\gamma)$ by Lemma \ref{L:dmonotonedistance}. But on the other hand, $R_u(x) = R_u(\alpha)$ for all $x \in R_u^b(\alpha)$, since any two transport rays intersecting in $\T_{u}^b$ must coincide by Corollary \ref{C:RayInTub}. Hence, if $\exists x \in \ee_{[0,1]}(\gamma) \cap R_{u}^b(\alpha)$, the assertion follows.

\smallskip
\textbf{Step 2.} We also claim that:
\[
\forall \gamma^1,\gamma^2 \in G_{a_s}\;\; \forall \alpha \in Q ~,~  \ee_{[0,1]}(\gamma^i) \cap R_{u}^b(\alpha) \neq \emptyset \; ,\;  i=1,2 \; \; \Rightarrow \; \gamma^1 = \gamma^2 .
\]
Indeed, since $\alpha \in Q \subset \T_u^b$ then $R_u(\alpha)$ is a transport ray by Lemma \ref{L:RayInTub}, and since $u = d_{a_s}$ is affine (with slope $1$) on a transport ray, $R_{u}(\alpha)$ must intersect $\{d_{a_s} = 0\}=\{\f_{s} = a_{s} \}$, and hence $\ee_{s} (G_{a_{s}})$, at most once. It follows by \textbf{Step 1} that $\gamma^1_s = \gamma^2_s$, and so by injectivity of $\ee_s|_G : G \rightarrow X$, that $\gamma^1 = \gamma^2$. 

\smallskip
\textbf{Step 3.} 
Denote:
\[
G^1_{a_s} := \set{ \gamma \in G_{a_s} \; ; \; \T_u^b \cap \ee_{[0,1]}(\gamma) \neq \emptyset} \; , \; Q^1 := \set{ \alpha \in Q \; ; \; R_u^b(\alpha) \cap \ee_{[0,1]}(G_{a_s})  \neq \emptyset} . \]
We claim that  there exists a bijective map:
\[
\eta : Q^1 \ni \alpha \mapsto \gamma^\alpha \in G^1_{a_s} ,
\]
 for which:
\[
R_u^b(\alpha) \cap \ee_{[0,1]}(G_{a_s}) = \T_u^b \cap \ee_{[0,1]}(\gamma^\alpha) = R_u^b(\alpha) \cap \ee_{[0,1]}(\gamma^\alpha)  .
\]
Indeed, for all $\alpha \in Q^1$, there exists precisely one $\gamma \in G_{a_s}$ (and hence $\gamma \in G^1_{a_s}$) so that $R_u^b(\alpha) \cap \ee_{[0,1]}(\gamma) \neq \emptyset$ by \textbf{Step 2}. And vice versa, given any $\gamma \in G^1_{a_s}$, there is at least one $\alpha \in Q$ (and hence $\alpha \in Q^1$) so that $R_u^b(\alpha) \cap \ee_{[0,1]}(\gamma) \neq \emptyset$, and it follows by \textbf{Step 1} that $\ee_{[0,1]}(\gamma) \subset R_u(\alpha)$ and hence $\T_u^b \cap \ee_{[0,1]}(\gamma) \subset R_u^b(\alpha)$; but this means that for all $\alpha \neq \beta \in Q$, $R_u^b(\beta) \cap \ee_{[0,1]}(\gamma) = \emptyset$, since $\set{R_u^b(\beta)}_{\beta \in Q}$ is a partition of $\T_u^b$, implying the uniqueness of $\alpha \in Q^1$.

Moreover, we claim that the map $\eta : (Q^1,\B(Q^1)) \rightarrow (G^1_{a_s} , \B(G^1_{a_s}))$ is measurable. Indeed, recall that $G_{a_s}$ is compact, and since $(X,\sfd)$ is proper, $\T_u^b$ and $R_u^b$ are Borel, and hence $G^1_{a_s}$ is analytic. 
Then write:
\[
\Lambda := P_{1,2}( \{ (y, \gamma , x, t) \in \T_u^b \times G_{a_s} \times X \times [0,1]  \; ; \; (y,x) \in R_u^b \; , \; x = \gamma_t  \} ) ,
\]
and:
\[
\gr(\eta) = \Lambda \cap (Q^1 \times G_{a_s}) = \Lambda \cap (Q^1 \times G^1_{a_s})  .
\]
Note that $\Lambda$ is analytic and that $\Lambda(x)$ is either an empty set or a singleton for all $x \in \T_u^b$ by \textbf{Step 2} (and the fact that $R_u^b$ is an equivalence relation on $\T_u^b$).
It follows that for any $B \in \B(G_{a_s})$, both $A_1 = P_1(\Lambda \cap (\T_u^b \times B))$ and $A_2 = P_1(\Lambda \cap (\T_u^b \times (G_{a_s} \setminus B)))$ are analytic, disjoint and $Q^1 = (Q^1 \cap A_1) \cup (Q^1 \cap A_2)$. By the Lusin separability principle \cite[Theorem 4.4.1]{Srivastava}, there exists a Borel subset $B_1 \subset \T_u^b$ containing $A_1$ which is still disjoint from $A_2$. Consequently $\eta^{-1}(B \cap G_{a_s}^1)  = \eta^{-1}(B) = Q^1 \cap A_1 = Q^1 \cap B_1 \in \B(Q^1)$, concluding the proof that $\eta$ is Borel measurable on $Q^1$. 

\smallskip
\textbf{Step 4.} Recall that for all $\alpha \in \bar Q$ of full $\hat \qq^{a_s}$ measure, $\hat \mm_\alpha^{a_s}$ is supported on the transport ray $R_u(\alpha) = \overline{R_u^b(\alpha)}$ and $(R_u(\alpha),\sfd,\hat \mm_\alpha^{a_s})$ verifies $\CD(K,N)$. Consequently, for such $\alpha$'s, $\hat \mm_\alpha^{a_s}$ gives positive mass to any relatively open subset of $R_u(\alpha)$ and does not charge points. It follows that for $\alpha \in \bar Q$, since $\ee_{[0,1]}(\gamma^\alpha) \subset R_u(\alpha)$ has non-empty relative interior, it holds that:\begin{align*}
& R_u^b(\alpha) \cap \ee_{[0,1]}(G_{a_s})  \neq \emptyset \; \Leftrightarrow \; \\
& \hat \mm^{a_s}_\alpha(\ee_{[0,1]}(G_{a_s})) = \hat \mm^{a_s}_\alpha(R_u^b(\alpha) \cap \ee_{[0,1]}(G_{a_s})) = \hat \mm^{a_s}_\alpha( R_u^b(\alpha) \cap \ee_{[0,1]}(\gamma^\alpha)) = \hat \mm^{a_s}_\alpha(\ee_{[0,1]}(\gamma^\alpha)) > 0 .
\end{align*}
In particular, $Q^1$ coincides up to a $\hat \qq^{a_s}$-null set with the $\hat \qq^{a_s}$-measurable set $Q^2 := \{ \alpha \in Q \; ; \; \hat \mm_\alpha^{a_s}(\ee_{[0,1]}(G_{a_s})) > 0 \}$, and thus $Q^1$ is itself $\hat \qq^{a_s}$-measurable. In fact, it is easy to see that $Q^1$ coincides with an analytic set up to a $\hat \qq^{a_s}$-null-set. 

\smallskip
\textbf{Step 5.} 
Recalling that $\ee_{[0,1]}(G_{a_s}) \subset \T_u$ by Lemma \ref{L:dmonotonedistance} and that $\mm(\T_u \setminus \T_u^b) = 0$ by Corollary \ref{C:disintMCP}, we obtain from (\ref{eq:dis-prelim}) the following disintegration of $\mm\llcorner_{\ee_{[0,1]}(G_{a_s})}$: \begin{align*}
& \mm\llcorner_{\ee_{[0,1]}(G_{a_s})}  = \mm\llcorner_{\T_u^b \cap \ee_{[0,1]}(G_{a_s})} = \int_{Q} \hat \mm_{\alpha}^{a_{s}}\llcorner_{\T_u^b \cap \ee_{[0,1]}(G_{a_s})} \,\hat \qq^{a_{s}}(d\alpha) \\
& =  \int_{\bar Q \cap Q^1} \hat \mm_{\alpha}^{a_{s}}\llcorner_{\ee_{[0,1]}(\gamma^\alpha)} \hat \qq^{a_{s}}(d\alpha) 
 =  \int_{\bar Q \cap Q^1} \frac{\hat \mm_{\alpha}^{a_{s}}\llcorner_{\ee_{[0,1]}(\gamma^\alpha)}}{\hat \mm_{\alpha}^{a_{s}}(\ee_{[0,1]}(\gamma^\alpha))}  \hat \mm_{\alpha}^{a_{s}}(\ee_{[0,1]}(\gamma^\alpha)) \hat \qq^{a_{s}}(d\alpha) ,
\end{align*}
where the last two transitions and the measurability of $\alpha \mapsto \hat \mm_{\alpha}^{a_{s}}(\ee_{[0,1]}(\gamma^\alpha)) > 0$ follow from \textbf{Step 4}. 
For all $\alpha \in \bar Q \cap Q^1$, define the probability measure:
\[
\bar \mm_{\alpha}^{a_s} := \frac{\hat \mm_{\alpha}^{a_{s}}\llcorner_{\ee_{[0,1]}(\gamma^\alpha)}}{\hat \mm_{\alpha}^{a_{s}}(\ee_{[0,1]}(\gamma^\alpha))} .
\]
Since $\ee_{[0,1]}(\gamma^\alpha)$ is a convex subset of $R_u(\alpha)$, it follows that the one-dimensional \mms $(\ee_{[0,1]}(\gamma^\alpha), \sfd,\bar \mm_{\alpha}^{a_s})$ verifies $\CD(K,N)$ and is of full support for all $\alpha \in \bar Q \cap Q^1$. Similarly, define:
\[
\bar \qq^{a_s} := \hat \mm_{\alpha}^{a_{s}}(\ee_{[0,1]}(\gamma^\alpha)) \hat \qq^{a_{s}} \llcorner_{Q^1} (d \alpha) .
\]

\smallskip
\textbf{Step 6.} Recall that our original disintegration (\ref{eq:dis-prelim}) was on $(Q,\mathscr{Q},\hat \qq^{a_s})$, so that there exists $\tilde{Q} \subset Q$ of full $\hat \qq^{a_s}$ measure so that $\tilde{Q} \in \B(\T_u^b)$ and $ \mathscr{Q} \supset \B(\tilde{Q})$. It follows that we may find $\mathscr{Q} \ni \tilde{Q}^1 \subset Q^1$ with $\hat \qq^{a_s}(Q^1 \setminus \tilde{Q}^1) = 0$ so that $\mathscr{Q} \supset \B(\tilde{Q}^1)$. Let us now push-forward the measure space $(Q^1 , \mathscr{Q} \cap Q^1 , \bar \qq^{a_s})$ via the Borel measurable map $\ee_s \circ \eta$ (by \textbf{Step 3}), yielding the measure space $(\ee_s(G_{a_s}^1), \mathscr{S}, \qq^{a_s})$, which is thus guaranteed to satisfy $\mathscr{S} \supset \B(\tilde{S})$, where $\tilde{S} := \ee_s \circ \eta (\tilde{Q}^1)$ is of full $\qq^{a_s}$ measure. 
Restricting the space to $\tilde{S}$ and abusing notation, we obtain $(\tilde{S}, \mathscr{S}, \qq^{a_s})$ with $\mathscr{S} \supset \B(\tilde{S})$,
 implying that $\qq^{a_s}$ is a Borel measure concentrated on $\tilde{S} \subset \ee_s(G^1_{a_s}) \subset \ee_s(G_{a_s})$. Note that $\hat \qq^{a_{s}}$, $\bar \qq^{a_s}$ and $\qq^{a_s}$ all have total mass 
$\mm(\ee_{[0,1]}(G_{a_{s}}))$. 

Denoting $\mm_{\gamma^\alpha_s}^{a_s} := \bar \mm_{\alpha}^{a_s}$, the disintegration from \textbf{Step 5} translates to:
\[
\mm\llcorner_{\ee_{[0,1]}(G_{a_s})} = \int_{\ee_s(G_{a_s})} \mm_{\beta}^{a_s} \qq^{a_s}(d \beta) .
\]
Furthermore, for $\qq^{a_s}$-a.e. $\beta$, the \mms $(\ee_{[0,1]}(\ee_s^{-1}(\beta)), \sfd, \mm_{\beta}^{a_s})$ verifies $\CD(K,N)$ and is of full support, and is therefore isometric to 
$(I^{a_{s}}_{\beta},\abs{\cdot}, \hat h^{a_{s}}_{\beta} \L^1\llcorner_{I^{a_{s}}_{\beta}})$, where $I^{a_{s}}_{\beta} : = [0,\ell_{s}(\beta)]$ and $\hat h^{a_{s}}_{\beta}$ is a $\CD(K,N)$ probability density on $I^{a_{s}}_{\beta}$ (see Definition \ref{def:CDKN-density}). To prevent measurability issues, we will use the convention that $\hat h^{a_{s}}_{\beta}$ vanishes at the end-points of $I^{a_{s}}_{\beta}$. 

\smallskip
\textbf{Step 7.} Next, we observe that $g^{a_s}$ is Borel. Indeed, note that by injectivity of $\ee_s$: \[
\gr(g^{a_s}) = P_{1,2,3} (\{ (\beta,t,x,\gamma) \in \ee_{s}(G_{a_{s}}) \times [0,1] \times X \times G_{a_s} \; ; \;  \gamma_s = \beta ~,~ \gamma_{t} = x  \}) .
\]
As $G_{a_s}$ is compact, it follows that $\gr(g^{a_s})$ is analytic, and hence (see \cite[Theorem 4.5.2]{Srivastava}) $g^{a_s}$ is Borel measurable. 

\smallskip

\textbf{Step 8.} It follows that $\mm_{\beta}^{a_s} = g^{a_s}(\beta,\cdot)_{\sharp}(h^{a_{s}}_{\beta} \L^1\llcorner_{[0,1]})$, where:
\[
[0,1] \ni t \mapsto h^{a_{s}}_{\beta} (t) : = \ell_s(\beta) \hat h^{a_{s}}_{\beta}(  t \ell_{s}(\beta)  ) .
\]
Clearly $h_{\beta}^{a_{s}}$ is now a $\CD(\ell_{s}(\beta)^{2}K,N)$ probability density on the interval $[0,1]$. The only remaining task is to prove that 
the map $\ee_s(G_{a_s}) \times [0,1] \ni (\beta,t) \mapsto h^{a_s}_\beta(t)$ is $\qq^{a_s} \otimes \L^1\llcorner_{[0,1]}$-measurable. 
By measurability of the disintegration (\ref{eq:dis-prelim2}) (recall Definition \ref{defi:dis}), the map $Q \ni \alpha \mapsto \hat \mm_\alpha^{a_s}(B)$ is $\hat \qq^{a_s}$-measurable for any Borel set $B \subset X$.
It follows that for any compact $I \subset (0,1)$, the map: 
$$
\ee_{s}(G_{a_{s}})\supset \tilde{S} \ni \beta \mapsto F (\beta) : = \int_{I} h^{a_{s}}_{\beta} (\tau) d\tau = \frac{\hat \mm^{a_s}_{\alpha(\beta)}(\ee_{I}(G_{a_s}))}{\hat \mm^{a_s}_{\alpha(\beta)}(\ee_{[0,1]}(G_{a_s}))} ,
$$
is $\qq^{a_{s}}$-measurable, where $\alpha(\beta) := (\ee_s \circ \eta)^{-1}(\beta)$ is $\qq^{a_s}$-measurable as a map 
$(\tilde{S},\mathscr{S},\qq^{a_s}) \rightarrow  (Q , \mathscr{Q} , \hat \qq^{a_s})$
by the construction from \textbf{Step 6}. As $h^{a_{s}}_{\beta}$ is continuous on $(0,1)$ for $\q^{a_s}$-a.e. $\beta$, we know that for such $\beta$ and all $t \in (0,1)$:
$$
h^{a_{s}}_{\beta}(t) = \lim_{\ve \to 0} \frac{1}{2\ve}\int_{[t-\ve,t+\ve]} h^{a_{s}}_{\beta} (\tau) d\tau .
$$
It follows by \cite[Proposition 3.1.27]{Srivastava} that for all $t \in (0,1)$, the map:
$$
\tilde{S} \ni \beta \mapsto h^{a_{s}}_{\beta}(t) 
$$
is $\qq^{a_{s}}$-measurable. As for $\q^{a_s}$-a.e. $\beta$, the map $(0,1) \ni t \mapsto h^{a_{s}}_{\beta}(t)$ is continuous, \cite[Theorem 3.1.30]{Srivastava} confirms the required measurability.
 
\smallskip

This concludes the proof.

\end{proof}

It will be convenient to invert the order of integration in \eqref{E:L1-1} using Fubini's Theorem: $$
\mm\llcorner_{\ee_{[0,1]}(G_{a_{s}}) } = \int_{[0,1]} g^{a_{s}}(\cdot,t)_{\sharp} \left( h^{a_{s}}_{\cdot}(t) \cdot \qq^{a_{s}} \right) \mathcal{L}^{1}(dt) .
$$
We thus define:
\[ 
\mm_{t}^{a_{s}} : = g^{a_{s}}(\cdot,t)_{\sharp} \left( h^{a_{s}}_{\cdot}(t) \cdot \qq^{a_{s}} \right),
\] 
so that the final formula is: 
\begin{equation}\label{E:L1-2}
\mm\llcorner_{\ee_{[0,1]}(G_{a_{s}}) } = \int_{[0,1]} \mm_{t}^{a_{s}} \, \mathcal{L}^{1}(dt). 
\end{equation}

\begin{remark} \label{rem:h-normalization}
Since for $\qq^{a_s}$-a.e. $\beta$, the $\CD(\ell_s^2(\beta) K , N)$ density $h^{a_s}_\beta$ must be strictly positive on $(0,1)$ (see Appendix),  
by multiplying and dividing $\qq^{a_s}$ by the positive $\qq^{a_s}$-measurable function $\beta \mapsto h^{a_s}_\beta(s)$ (recall that $s \in (0,1)$), we may always renormalize and assume that $h^{a_{s}}_{\beta}(s) = 1$. Note that this does not affect the definition of $\mm_{t}^{a_{s}}$ above. 
This normalization ensures that $\mm_s^{a_s} = \qq^{a_s}$ so that:
\begin{equation}  \label{eq:def-by-pushf}
 \mm_{t}^{a_{s}} : = g^{a_{s}}(\cdot,t)_{\sharp} \left( h^{a_{s}}_{\cdot}(t) \cdot \mm_s^{a_{s}} \right) .
\end{equation}
\end{remark}

\medskip

\begin{remark}\label{R:disjointsubset}
Note that since $\qq^{a_{s}}$ is concentrated on $\ee_s(G_{a_s})$, by definition $\mm_{t}^{a_{s}}$ is concentrated on $\ee_t(G_{a_s})$ for all $t \in (0,1)$. By Corollary \ref{cor:IntraProp}, the latter sets are disjoint for different $t$'s in $(0,1)$ (recall that $s \in (0,1)$ and that $G \subset G^+_{\varphi}$). 
Formula \eqref{E:L1-2} can thus be seen again as a disintegration formula over a partition. In particular, for any $s \in (0,1)$ and $0 < t , \tau < 1$ with $t \neq \tau$, the measures $\mm^{a_{s}}_{t}$ and $\mm^{a_{s}}_{\tau}$ are mutually singular.
\end{remark}

\medskip

\begin{proposition}\label{P:continuity}
For any $s\in (0,1)$ and $a_{s} \in \varphi_s(\ee_s(G))$, the map 
$$
(0,1) \ni t \mapsto \mm^{a_{s}}_{t}
$$
is continuous in the weak topology, we have: 
\[
\mm(\ee_{[0,1]}(G_{a_{s}})) > 0 \;\;\; \Rightarrow \;\;\; \forall t \in (0,1) \;\;\; \mm^{a_{s}}_{t}(\ee_{t}(G_{a_{s}}))  > 0 ,
\]
and:
$$
\forall t\in [0,1] \;\;\; \mm^{a_{s}}_{t}(\ee_{t}(G_{a_{s}})) =  \| \mm^{a_{s}}_{t} \| \leq C \; \mm(\ee_{[0,1]}(G_{a_{s}})),
$$
for some $C> 0$ depending only on $K$, $N$ and $c > 0$ from assumption (\ref{eq:length-bound}). 
\end{proposition}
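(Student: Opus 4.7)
My strategy is to reduce all three assertions to a single uniform $L^\infty$ estimate on the conditional one-dimensional densities appearing in (\ref{E:L1-1}), after which everything else follows by soft arguments.

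First I would set up the right framework. By the very construction $\mm^{a_s}_t = g^{a_s}(\cdot,t)_\sharp\bigl(h^{a_s}_\cdot(t)\cdot\qq^{a_s}\bigr)$, and the fact that $\qq^{a_s}$ is concentrated on $\ee_s(G_{a_s})$ (so that $g^{a_s}(\cdot,t)$ pushes it into $\ee_t(G_{a_s})$), the measure $\mm^{a_s}_t$ is concentrated on $\ee_t(G_{a_s})$, delivering the equality $\mm^{a_s}_t(\ee_t(G_{a_s})) = \|\mm^{a_s}_t\|$ for free. It is cleaner to undo the normalization of Remark \ref{rem:h-normalization} and work instead with the original probability densities $\tilde h^{a_s}_\beta$ (with $\int_0^1 \tilde h^{a_s}_\beta = 1$) and the associated quotient measure $\tilde\qq^{a_s}$, so that $\mm^{a_s}_t = g^{a_s}(\cdot,t)_\sharp\bigl(\tilde h^{a_s}_\cdot(t)\cdot\tilde\qq^{a_s}\bigr)$; then Fubini in (\ref{E:L1-2}) yields the book-keeping identity
\[
\mm\bigl(\ee_{[0,1]}(G_{a_s})\bigr) = \int_0^1 \|\mm^{a_s}_t\|\,dt = \int \Bigl(\int_0^1 \tilde h^{a_s}_\beta(t)\,dt\Bigr)\tilde\qq^{a_s}(d\beta) = \tilde\qq^{a_s}\bigl(\ee_s(G_{a_s})\bigr).
\]

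The crux of the argument, and the step I expect to be the main obstacle, is a uniform bound $\tilde h^{a_s}_\beta(t) \leq M$ valid for all $t \in [0,1]$ and $\tilde\qq^{a_s}$-a.e. $\beta$, with $M = M(K,N,c)$ depending only on the stated parameters. This is a one-dimensional Bishop--Gromov type estimate: $\tilde h^{a_s}_\beta$ is a $\CD(\ell_s(\beta)^2 K, N)$ probability density on $[0,1]$ (Appendix), and by (\ref{eq:length-bound}) the parameter $\ell_s(\beta)^2 K$ lies in a bounded interval. The required bound then follows from the fact that $f := (\tilde h^{a_s}_\beta)^{1/(N-1)}$ is $(\ell_s(\beta)^2 K, N-1)$-concave and satisfies $\int_0^1 f^{N-1}\,dt = 1$: in the model case $K=0$, concavity supplies the minorant $f(t) \geq \max f \cdot \min(t/t^\ast, (1-t)/(1-t^\ast))$ which integrates to at least $(\max f)^{N-1}/N$, hence $\max \tilde h \leq N$; the bounded-$K$ case is handled identically after replacing the linear coefficient by the appropriate $\sigma_{K',N-1}$-coefficient, which stays uniformly bounded below away from the boundary of the admissible range thanks to $\ell_s(\beta) \in [c,1/c]$. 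Granted this estimate, assertion (3) is immediate:
\[
\|\mm^{a_s}_t\| = \int \tilde h^{a_s}_\beta(t)\,\tilde\qq^{a_s}(d\beta) \leq M\cdot\tilde\qq^{a_s}\bigl(\ee_s(G_{a_s})\bigr) = M\cdot\mm\bigl(\ee_{[0,1]}(G_{a_s})\bigr).
\]

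For the remaining two assertions I would argue softly. Positivity follows because a $\CD(K',N)$ density is strictly positive on the relative interior of its support (Appendix), so $\tilde h^{a_s}_\beta(t) > 0$ for every $t \in (0,1)$ and $\tilde\qq^{a_s}$-a.e. $\beta$; coupled with $\tilde\qq^{a_s}(\ee_s(G_{a_s})) = \mm(\ee_{[0,1]}(G_{a_s})) > 0$ this gives $\|\mm^{a_s}_t\| > 0$ by integration. Weak continuity on $(0,1)$ I would obtain by dominated convergence applied to
\[
\int \varphi\,d\mm^{a_s}_t = \int \varphi\bigl(g^{a_s}(\beta,t)\bigr)\,\tilde h^{a_s}_\beta(t)\,\tilde\qq^{a_s}(d\beta),\qquad \varphi \in C_b(X),
\]
using that for $\tilde\qq^{a_s}$-a.e. $\beta$ the integrand is continuous in $t\in(0,1)$ (the map $g^{a_s}(\beta,\cdot)$ is a geodesic, and the $\CD$ density $\tilde h^{a_s}_\beta$ is continuous on $(0,1)$ by the Appendix), and that $\|\varphi\|_\infty M$ supplies an integrable majorant. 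All other steps are routine measure theory; the entire argument is driven by the one-dimensional $L^\infty$ estimate for $\CD(K',N)$ probability densities with $K'$ ranging in a bounded set.
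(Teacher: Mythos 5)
Your proposal is correct and follows essentially the same route as the paper: revert to the probability-normalized densities so that $\|\qq^{a_s}\|=\mm(\ee_{[0,1]}(G_{a_s}))$, use strict positivity of $\CD$ densities in the interior for the second assertion, a uniform sup bound on $\CD(\ell_s(\beta)^2K,N)$ probability densities on $[0,1]$ for the third, and dominated convergence (with continuity of $t\mapsto h^{a_s}_\beta(t)$ and $t\mapsto g^{a_s}(\beta,t)$) for weak continuity. The only difference is that you re-derive the uniform $L^\infty$ estimate via the concavity/Bishop--Gromov argument, whereas the paper simply invokes its Appendix Lemma \ref{lem:apriori0}, whose proof is the same computation.
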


\begin{proof}
Recall that the definition of $\mm^{a_{s}}_{t}$ does not depend on the last normalization we performed, when we imposed that $h^{a_s}_\beta(s) = 1$, so we revert to the normalization that $h^{a_s}_\beta$ is a $\CD(\ell_s(\beta)^2 K , N)$ probability density on $[0,1]$, and hence $\norm{\qq^{a_{s}}}= \mm(\ee_{[0,1]}(G_{a_{s}}))$. 
The second assertion follows since whenever the latter mass is positive, by positivity of a $\CD(K,N)$ density in the interior of its support (see Appendix):
\[
\forall t \in (0,1) \;\;\; \mm^{a_{s}}_{t}(\ee_{t}(G_{a_{s}})) = \norm{\mm^{a_{s}}_{t} } = \int h^{a_{s}}_{\beta}(t)  \qq^{a_s}(d\beta) > 0 .
\]
Similarly, it follows by Lemma \ref{lem:apriori0}, the lower semi-continuity of $h^{a_s}_\beta$ at the end-points (see Appendix), and assumption (\ref{eq:length-bound}), that $\max_{t \in [0,1]} h^{a_{s}}_{\beta}(t)$ is uniformly bounded in $a_{s}$ and $\beta$ for $\qq^{a_{s}}$-a.e. $\beta$ by a constant $C>0$ as above, implying that:
\[
\forall t \in [0,1] \;\;\; \norm{\mm^{a_{s}}_{t} } = \norm{h^{a_{s}}_{\cdot}(t) \cdot \qq^{a_{s}}} \leq C \norm{\qq^{a_{s}}} = C \; \mm(\ee_{[0,1]}(G_{a_{s}})) ,
\]
yielding the third assertion. 

Now note that the density $(0,1) \ni t \mapsto h^{a_{s}}_{\beta}(t)$ is continuous (see Appendix) for $\qq^{a_{s}}$-a.e. $\beta$, and the same trivially holds for the map $[0,1] \ni t \mapsto g^{a_{s}}(\beta,t)$. We conclude by Dominated Convergence that for any $f \in C_{b}(X)$ and any $t \in (0,1)$:
\begin{align*}
\lim_{\tau \to t}\int f(x) \, \mm^{a_{s}}_{\tau}(dx) =&~ \lim_{\tau \to t} \int f(g^{a_{s}}(\alpha,\tau)) h^{a_{s}}_{\beta}(\tau) \, \qq^{a_{s}}(d\beta)  \\
= &~ \int f(g^{a_{s}}(\beta,t)) h^{a_{s}}_{\alpha}(t) \, \qq^{a_{s}}(d\beta) 
= \int f(x) \, \mm^{a_{s}}_{t}(dx) , 
\end{align*}
yielding the first assertion, and concluding the proof. 
\end{proof}

\subsection{$L^{2}$ partition}
For each $t \in (0,1)$, we can find a natural partition of $\ee_{t}(G) \subset \ee_t(G_\varphi)$ consisting of level sets of the time-propagated intermediate Kantorovich potentials $\Phi_{s}^{t}$ introduced in Section \ref{S:Phi}. Recall that the function $\Phi_s^t$ ($s,t \in (0,1)$) was defined as:
$$
\Phi_{s}^{t} = \f_{t} + (t-s) \frac{\ell_{t}^{2}}{2} ,
$$
and interpreted on $\ee_{t}(G_\varphi)$ as the propagation of $\f_s$ from time $s$ to $t$ along $G_\varphi$, i.e. $\Phi_{s}^{t} = \f_{s} \circ \ee_{s} \circ \ee_{t}^{-1}$. In particular, for any $\gamma \in G$, $\Phi_{s}^{t}(\gamma_{t}) = \f_{s}(\gamma_{s})$, and $\ee_{t} (G_{a_{s}}) \cap \ee_{t} (G_{b_{s}}) = \emptyset$ as soon as $a_{s} \neq b_{s}$ (see Corollary \ref{cor:InterProp}). 
It follows that for any $s,t \in (0,1)$, we can consider the partition of the compact set $\ee_{t}(G)$ given by its intersection with the family $\{ \Phi_{s}^{t}= a_{s} \}_{a_{s} \in \R}$;
as usual, it will be sufficient to take $a_{s} \in \Phi_{s}^{t}(\ee_{t}(G)) = \f_{s}(\ee_{s}(G))$.

Since $\Phi_{s}^{t}$ is continuous, the Disintegration Theorem \ref{T:disintegrationgeneral} yields the following essentially unique disintegration 
of $\mm\llcorner_{\ee_{t}(G)}$ strongly consistent with respect to the quotient-map $\Phi_{s}^{t}$:
\begin{equation} \label{E:L2}
\mm\llcorner_{\ee_{t}(G)} = \int_{\f_{s}(\ee_{s}(G))} \hat \mm^{t}_{a_{s}} \, \qq^{t}_{s}(da_{s}), 
\end{equation}
so that for $\qq^{t}_{s}$-a.e. $a_{s}$, $\hat \mm^{t}_{a_{s}}$ is a probability measure concentrated on the set $\ee_t(G) \cap \set{\Phi_s^t = a_s} = \ee_{t}(G_{a_{s}})$. By definition, 
$\qq^{t}_{s} = (\Phi_{s}^{t})_{\#} \mm \llcorner_{\ee_{t}(G)}$. To make this disintegration more explicit, we show:

\begin{proposition}\label{P:2montone}
\hfill
\begin{enumerate}
\item For any $s,t,\tau \in (0,1)$, the quotient measures $\qq^{t}_{s}$ and $\qq^{\tau}_s$ are mutually absolutely continuous.
\item For any $s,t \in (0,1)$, the quotient measure $\qq^{t}_{s}$ is absolutely continuous with respect to Lebesgue measure $\mathcal{L}^1$ on $\R$. 
\end{enumerate}
\end{proposition}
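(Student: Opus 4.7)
The plan is to prove Part (1) by exhibiting a uniform comparison of each $\qq^t_s$ with the $t$-independent measure $(\f_s)_\sharp \mu_s$, and then to derive Part (2) from the $L^1$ disintegration of Proposition \ref{P:L1disintegration} combined with a Fubini argument.

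For Part (1), I will use that on $\ee_t(G)$ one has the factorization $\Phi^t_s = \f_s \circ \ee_s \circ (\ee_t|_G)^{-1}$, which is well defined since $\ee_t|_G : G \to X$ is injective by goodness of $G$. Consequently, for any Borel $B \subset \R$,
\[
\qq^t_s(B) = \mm(\ee_t(G_B)) , \qquad G_B := \{ \gamma \in G : \f_s(\gamma_s) \in B \} .
\]
On the other hand, Corollary \ref{C:injectivity} together with $\mu_t = \rho_t \mm$ and $(\ee_s)_\sharp \nu = \mu_s$ gives $\mu_t(\ee_t(G_B)) = \nu(G_B) = ((\f_s)_\sharp \mu_s)(B)$. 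Since $\rho_t$ is continuous and strictly positive on the compact set $\ee_t(G)$ by the definition of a good subset, it is bounded there between constants $0 < c_t \leq C_t < \infty$, yielding
\[
c_t \, \qq^t_s(B) \leq ((\f_s)_\sharp \mu_s)(B) \leq C_t \, \qq^t_s(B) .
\]
This uniform comparison with a single $t$-independent measure immediately establishes $\qq^t_s$ and $\qq^\tau_s$ are mutually absolutely continuous for all $s,t,\tau \in (0,1)$.

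For Part (2), in view of Part (1) it suffices to show $(\f_s)_\sharp \mu_s \ll \L^1$, equivalently $\nu(G_N) = 0$ for every $\L^1$-null Borel $N \subset \R$. The plan is to combine two disintegrations of $\mm \llcorner_{\ee_{[0,1]}(G)}$: the level-set partition $\{\ee_{[0,1]}(G_a)\}_a$, which is a genuine partition by Corollary \ref{cor:InterProp} since $G \subset G_\f^+$, and, within each level, the $L^1$ disintegration $\mm \llcorner_{\ee_{[0,1]}(G_a)} = \int_{[0,1]} \mm^a_\tau \, \L^1(d\tau)$ from Proposition \ref{P:L1disintegration}. The key structural consequence is time-absolute-continuity within each level: for any fixed $t \in (0,1)$,
\[
\mm(\ee_t(G_a)) = \int_{[0,1]} \mm^a_\tau(\ee_t(G_a)) \, \L^1(d\tau) = 0 ,
\]
since the integrand vanishes for all $\tau \neq t$ by Corollary \ref{cor:IntraProp} (disjointness of $\{\ee_\tau(G_a)\}_\tau$ for geodesics of positive length), and the single point $\{t\}$ is $\L^1$-null. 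Integrating this level-wise null statement across $a \in N$ through the global level-set disintegration, and unfolding both disintegrations via Fubini, should yield $\mm(\ee_t(G_N)) = \mm\big( \bigsqcup_{a \in N} \ee_t(G_a) \big) = 0$, hence $\qq^t_s(N) = 0$.

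The hard part will be controlling the cross contributions $\mm^a_\tau(\ee_t(G_N))$ for $\tau \neq t$: distinct geodesics $\eta \in G_a$ and $\gamma \in G_N$ may in principle pass through a common spatial point at different times ($\eta_\tau = \gamma_t$), and Corollary \ref{cor:InterProp} rules out only the synchronous case $\tau = t$. To close the Fubini estimate, I will carefully exploit the essentially non-branching assumption together with the injectivity of $\ee_\tau|_G$ for every $\tau$ from goodness (Corollary \ref{C:injectivity}), arguing that the set of parameters $(a, \tau)$ producing such asynchronous crossings is collectively negligible when integrated against the $\L^1$-null $N$.
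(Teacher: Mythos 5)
Your Part (1) follows essentially the same route as the paper: comparing $\qq^t_s(B)=\mm(\ee_t(G_B))$ with the $t$-independent measure $(\f_s)_\sharp\mu_s$ via $\mu_t(\ee_t(G_B))=\nu(G_B)$. One caveat: Definition \ref{def:good} only gives continuity of $s\mapsto\rho_s(\gamma_s)$ \emph{along each geodesic}, not spatial continuity of $\rho_t$ on $\ee_t(G)$, so your two-sided bounds $c_t\le\rho_t\le C_t$ are not justified. They are also not needed: mutual absolute continuity only requires the null-set equivalence, which follows from $\mu_t\ll\mm$, the positivity of $\rho_t$ on $\ee_t(G)$, and the injectivity of $\ee_t|_G$ — exactly the paper's argument.

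Part (2), however, has a genuine gap, and it is not the one you flag at the end. The level-wise statement $\mm(\ee_t(G_a))=0$ (which is correct, by Proposition \ref{P:L1disintegration} and Corollary \ref{cor:IntraProp}) holds for \emph{every} level $a$, not just for $a\in N$; it therefore carries no information capable of distinguishing an $\L^1$-null set of levels from a set of levels of full measure, and no Fubini unfolding can convert it into $\mm(\ee_t(G_N))=0$ selectively for $\L^1$-null $N$ — otherwise the same argument would give $\mm(\ee_t(G))=0$. The hypothesis $\L^1(N)=0$ can only enter through an integration in the level variable $a$ against Lebesgue measure, i.e.\ through a disintegration of $\mm\llcorner_{\ee_t(G)}$ over the levels whose quotient measure is already known to be absolutely continuous with respect to $\L^1$ — which is precisely the statement being proven. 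The disintegration of Proposition \ref{P:L1disintegration} is in the \emph{time} variable inside a single level and produces no Lebesgue measure in $a$. The deferred ``hard part'' cannot rescue this: essential non-branching and injectivity of $\ee_\tau|_G$ (Corollaries \ref{C:injectivity}, \ref{cor:InterProp}, \ref{cor:IntraProp}) only exclude synchronous coincidences, and nothing forbids $\eta_\tau=\gamma_t$ with $\eta\in G_a$, $\gamma\in G_b$, $\tau\neq t$; more importantly, even in the total absence of such crossings your scheme never uses $\L^1(N)=0$.

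The paper closes exactly this point with a different, genuinely analytic ingredient: by Part (1) it reduces to $t=s$ (where $\Phi^s_s=\f_s$), notes that $\f_s$ is Lipschitz with pointwise Lipschitz constant $\ell^+_s>0$ on $\ee_s(G)$, and uses doubling plus the local Poincar\'e inequality (available under $\MCP$) to get $\|D\f_s\|$ comparable to $|\nabla\f_s|\,\mm$; the BV coarea formula and the representation of the perimeter measure on level sets then yield $\mm(\f_s^{-1}(I)\cap\ee_s(G))=0$ whenever $\L^1(I)=0$. Some quantitative coarea-type control of the level-set decomposition of $\f_s$ against $\mm$ of this kind is what your outline is missing.
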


\begin{proof}
Recall that $\qq^{t}_{s} = (\Phi_{s}^{t})_{\#} \mm \llcorner_{\ee_{t}(G)}$. 

\medskip

(1) For any Borel set  $I \subset \R$, note that:
\[
\qq^{t}_{s}(I) = \mm \left( \left\{ \gamma_{t} : \f_{s}(\gamma_{s}) \in I, \gamma \in G \right\} \right)  > 0 \;\; \Leftrightarrow \;\;  \mu_{t}\left( \left\{ \gamma_{t} : \f_{s}(\gamma_{s}) \in I, \gamma \in G \right\} \right)  > 0 ,
\]
since $\mu_t \ll \mm$ and its density $\rho_t$ is assumed to be positive on $\ee_t(G)$ where $\mu_t$ is supported (see Definition \ref{def:good}).
But $\mu_\tau = (\ee_\tau \circ \ee_t^{-1})_{\sharp} \mu_t$, and so:
\[
\mu_{\tau}\left( \left\{ \gamma_{\tau} : \f_{s}(\gamma_{s}) \in I, \gamma \in G \right\} \right) 
= \mu_{t}\left( \left\{ \gamma_{t} : \f_{s}(\gamma_{s}) \in I, \gamma \in G \right\} \right)  .
\] 
It follows that $\qq^{t}_{s}(I)  > 0$ iff $\qq^{\tau}_{s}(I)  > 0$, thereby establishing the first assertion. 

\medskip

(2) Thanks to the first assertion, it is enough to only consider the case $t=s$ in the second one. Recall that $\Phi_{s}^{s} = \f_{s}$.  Then the claim boils down to showing that  
$\mm(\f_{s}^{-1} (I) \cap \ee_{s}(G)) = 0$ whenever $I \subset \f_{s}(\ee_{s}(G))$ is a compact set with $\L^{1}(I) = 0$.

By compactness, we fix a ball $B_{r}(o)$ containing $\ee_{s}(G)$. Since $\f_{s}$ is Lipschitz continuous on bounded sets (Corollary \ref{cor:AGS} (1)), 
possibly using a cut-off Lipschitz function over $B_{r}(o)$, we may assume that $\f_{s}$ has bounded total variation measure  
$\| D \f_{s} \|$ (we refer to \cite{miranda:bvmetric} and \cite{ambromarino:bvgeneral} for all missing notions and background regarding BV-functions on metric-measure spaces).
From the local Poincar\'e inequality (see Remark \ref{R:cutlocus} and \cite[page 992]{miranda:bvmetric}) and the doubling property (see Lemma \ref{L:proper-support} and recall that $\supp(\mm) = X$), it follows that 
the total variation measure of $\f_{t}$ is absolutely continuous with respect to $\mm$, and that:
\begin{equation} \label{eq:BV-AC}
\exists c > 0 \;\;\;  c |\nabla \f_{s}| \, \mm \leq \| D \f_{s} \| \leq |\nabla \f_{s}| \, \mm  
\end{equation}
(see \cite[page 992]{miranda:bvmetric} or \cite[Section 4]{miranda:bvcoarea}), where: 
$$
|\nabla \f_{s}|(x) : = \liminf_{\delta \to 0} \sup_{y \in B_{\delta}(x)} \frac{|\f_{s}(y)-\f_{s}(x)|}{\delta}.
$$
By \cite[Theorem 6.1]{cheeger:Lip}, the previous quantity in fact coincides in our setting with the pointwise Lipschitz constant of $\f_{s}$ at $x$, which in turn coincides with $\ell^+_s(x)$
by \cite[Theorem 3.6]{ambrgisav:heat}; hence for $x = \gamma_s$ we have $|\nabla \f_{s}|(x) =\ell_s(x)$.
By the co-area formula (see \cite[Proposition 4.2]{miranda:bvmetric}), for any Borel set $A \subset B_{r}(o)$:
\begin{equation} \label{eq:coarea}
\int_{-\infty}^{+\infty} \| \partial \{ \f_{s} > \tau \} \| (A) \, d\tau = \| D\f_{s} \|(A),
\end{equation}
where $\| \partial \{ \f_{s} > \tau \} \|$ denotes the total variation measure associated to the set of finite perimeter $\{ \f_{s} > \tau \}$.
From \cite[Theorem 5.3]{ambro:perimeter} it follows that $\| \partial \{ \f_{s} > \tau \} \|$ is concentrated on $\{\f_{s} = \tau \}$ and therefore, 
for any Borel set $I \subset \f_{s}(\ee_{s}(G))$ with $\L^{1}(I) = 0$, it follows by (\ref{eq:coarea}) and (\ref{eq:BV-AC}):
$$
\| D \f_{s} \| (\f_{s}^{-1}(I)) = 0 \; , \; |\nabla\f_{s}| \,\mm (\f_{s}^{-1}(I)) = 0 .
$$
Since $|\nabla\f_{s}| = \ell_s(x) > 0$ on $\ee_{s}(G)$, it follows that $\mm (\f_{s}^{-1}(I) \cap \ee_{s}(G)) = 0$, thereby concluding the proof.
\end{proof}

\begin{remark}
Inspecting the proof of Proposition \ref{P:2montone}, from the co-area formula (\cite[Proposition 4.2]{miranda:bvmetric}) and the 
Hausdorff representation of the perimeter measure (\cite[Theorem 5.3]{ambro:perimeter}), it follows that for  $\qq^{s}_{s}$-a.e. $a_{s} \in \f_{s}(\ee_{s}(G))$ 
the measure $\mm^{s}_{a_{s}}$ is absolutely continuous with respect to the Hausdorff measure of codimension one (see \cite{ambro:perimeter} for more details). 
\end{remark}

Employing the previous proposition, we define:
\[
 \mm^{t}_{a_{s}} :=  (d\qq^t_s/d\mathcal{L}^1) \cdot \hat \mm^{t}_{a_{s}} , 
\]
obtaining from (\ref{E:L2}) the following disintegration (for every $s,t \in (0,1)$):
\begin{equation}\label{E:L2-1}
\mm\llcorner_{\ee_{t}(G)} = \int_{\f_{s}(\ee_{s}(G))} \mm^{t}_{a_{s}} \, \mathcal{L}^{1}(da_{s}), 
\end{equation}
with $\mm^{t}_{a_{s}}$ concentrated on $\ee_{t}(G_{a_{s}})$, for $\L^1$-a.e. $a_{s} \in \f_{s}(\ee_{s}(G))$.

\medskip

We now shed light on the relation of the above disintegration to $L^2$-Optimal-Transport, by relating it to another disintegration 
formula for $\nu$, the unique element of $\Opt(\mu_{0},\mu_{1})$. Observe that the family of sets $\{G_{a_{s}}\}_{a_{s}\in \R}$ is a partition of $G$ and that $G_{a_{s}} = \set{ \f_{s} \circ \ee_{s} = a_{s}}$.
Since the quotient-map $\f_{s} \circ \ee_{s} : \Geo(X) \to \R$ is continuous and $G$ is compact, the Disintegration Theorem \ref{T:disintegrationgeneral} ensures 
the existence of an essentially unique disintegration of $\nu$ strongly consistent with $\f_{s} \circ \ee_{s}$:
\begin{equation}\label{E:disintegrationu}
\nu  = \int_{\f_{s}(\ee_{s}(G))} \nu_{a_{s}} \, \qq^{\nu}_{s}(da_{s}), 
\end{equation}
so that for $\qq^{\nu}_{s}$-a.e. $a_{s} \in \f_{s}(\ee_{s}(G))$, the probability measure $\nu_{a_s}$ is concentrated on $G_{a_s}$. Clearly $\qq^{\nu}_{s}(\f_{s}(\ee_{s}(G))) = \norm{\nu}=1$.

\begin{corollary}\label{C:continuity}
\hfill
\begin{enumerate}
\item
For any $s \in (0,1)$, the quotient measure $\qq^{\nu}_{s}$ is mutually absolutely continuous with respect to $\qq^{s}_{s}$, 
and in particular it is absolutely continuous with respect to $\L^{1}$. 
\item 
For any $s,t \in (0,1)$ and $\L^1$-a.e. $a_{s} \in \f_{s}(\ee_{s}(G))$:
\begin{equation} \label{eq:continuity-second}
\rho_{t} \cdot \mm^{t}_{a_{s}} =  q^{\nu}_s(a_{s}) \cdot  (\ee_{t})_{\#} \nu_{a_{s}} ,
\end{equation}
where $q^{\nu}_s := d \qq^{\nu}_{s} / d\mathcal{L}^{1}$. In particular,  $\mm^{t}_{a_{s}}$  and $(\ee_{t})_{\#} \nu_{a_{s}}$ are mutually absolutely-continuous for  $\qq^{\nu}_{s}$-a.e. $a_{s} \in \f_{s}(\ee_{s}(G))$. 
\item
In particular, for any $s \in (0,1)$ and $\qq^{\nu}_{s}$-a.e. $a_{s} \in \f_{s}(\ee_{s}(G))$, the map: $$
[0,1] \ni t \mapsto \rho_{t} \cdot \mm^{t}_{a_{s}} 
$$
coincides for $\L^{1}$-a.e. $t \in [0,1]$ with the $W_2$-geodesic $t \mapsto (\ee_{t})_{\sharp} \nu_{a_{s}}$ up to a positive 
multiplicative constant depending only on $a_{s}$. 
\end{enumerate}
\end{corollary}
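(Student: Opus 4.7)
The argument pivots on the essential uniqueness in the Disintegration Theorem \ref{T:disintegrationgeneral}, combined with the good-set hypothesis which guarantees $\rho_t>0$ on $\ee_t(G)$ for every $t\in[0,1]$. For (1), observe that $\mu_s=(\ee_s)_{\sharp}\nu$ is concentrated on $\ee_s(G)$, so
\[
\qq^{\nu}_s \;=\; (\f_s\circ \ee_s)_{\sharp}\nu \;=\; (\f_s)_{\sharp}\mu_s \;=\; (\f_s)_{\sharp}\bigl(\rho_s\,\mm\llcorner_{\ee_s(G)}\bigr),
\]
while by definition $\qq^{s}_{s}=(\f_s)_{\sharp}(\mm\llcorner_{\ee_s(G)})$. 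The positivity of $\rho_s$ on $\ee_s(G)$ (Definition \ref{def:good}) makes $\rho_s\,\mm\llcorner_{\ee_s(G)}$ and $\mm\llcorner_{\ee_s(G)}$ mutually absolutely continuous, and this is inherited by their $\f_s$-pushforwards $\qq^{\nu}_s$ and $\qq^{s}_{s}$; Proposition \ref{P:2montone}(2) then yields $\qq^{\nu}_s\ll \L^1$.

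For (2), fix $s,t\in(0,1)$ and compare two disintegrations of $\mu_t$ consistent with the continuous quotient-map $\Phi_s^t$ on $\ee_t(G)$; note that by Corollary \ref{cor:InterProp} the family $\{\ee_t(G_{a_s})\}_{a_s\in\R}$ realizes the fibers $\{\Phi_s^t=a_s\}\cap \ee_t(G)$. Pushing forward (\ref{E:disintegrationu}) via $\ee_t$ gives
\[
\mu_t \;=\; \int (\ee_t)_{\sharp}\nu_{a_s}\,\qq^{\nu}_s(da_s),
\]
with probability conditionals $(\ee_t)_{\sharp}\nu_{a_s}$ concentrated on $\ee_t(G_{a_s})$. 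On the other hand, multiplying (\ref{E:L2-1}) by $\rho_t$ and using $\mu_t=\rho_t\,\mm\llcorner_{\ee_t(G)}$ yields
\[
\mu_t \;=\; \int \rho_t\,\mm^{t}_{a_s}\,\L^1(da_s).
\]
Pushing this second formula forward via $\Phi_s^t$ recovers $\qq^{\nu}_s=\bigl(\int \rho_t\,d\mm^{t}_{a_s}\bigr)\L^1(da_s)$, which identifies $q^{\nu}_s(a_s)=\|\rho_t\,\mm^{t}_{a_s}\|$ for $\L^1$-a.e.\ $a_s$ (reproving (1)). Where $q^{\nu}_s(a_s)>0$, the normalized measure $\tfrac{1}{q^{\nu}_s(a_s)}\rho_t\,\mm^{t}_{a_s}$ is a probability measure; rewriting the second disintegration with the common base measure $\qq^{\nu}_s$ and invoking essential uniqueness in Theorem \ref{T:disintegrationgeneral} forces
\[
q^{\nu}_s(a_s)\,(\ee_t)_{\sharp}\nu_{a_s} \;=\; \rho_t\,\mm^{t}_{a_s}\qquad \text{for } \qq^{\nu}_s\text{-a.e.\ } a_s.
\]
On the $\L^1$-measurable set where $q^{\nu}_s(a_s)=0$, the identity holds trivially since $\|\rho_t\,\mm^{t}_{a_s}\|=0$ together with $\rho_t>0$ on $\ee_t(G)$ force $\mm^{t}_{a_s}=0$. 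This proves (2); dividing by $\rho_t>0$ on $\ee_t(G)$ gives the asserted mutual absolute continuity.

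For (3), I would apply Fubini to the identity in (2). The exceptional set
\[
E\;=\;\bigl\{(t,a_s)\in(0,1)\times \f_s(\ee_s(G)) \,:\, q^{\nu}_s(a_s)(\ee_t)_{\sharp}\nu_{a_s}\neq \rho_t\,\mm^{t}_{a_s}\bigr\}
\]
is jointly measurable, and by (2) each of its $t$-slices is $\L^1$-null; Fubini then yields that the $a_s$-section of $E$ is $\L^1$-null for $\L^1$-a.e.\ $a_s$, hence for $\qq^{\nu}_s$-a.e.\ $a_s$ by (1). The asserted positive multiplicative constant is then precisely $q^{\nu}_s(a_s)$, which is $\qq^{\nu}_s$-a.e.\ strictly positive. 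The main technical obstacle is the joint $(t,a_s)$-measurability of the measure-valued maps $\mm^{t}_{a_s}$ and $(\ee_t)_{\sharp}\nu_{a_s}$, since the $L^2$ partition carries no a priori continuity in $t$ (unlike the $L^1$ partition in Proposition \ref{P:continuity}); I would handle this by testing against a countable dense family in $C_b(X)$ and invoking the standard measurability of conditional measures furnished by Theorem \ref{T:disintegrationgeneral}.
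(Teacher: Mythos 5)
Your proof is correct and follows essentially the same route as the paper's: part (1) by pushing the mutually absolutely continuous measures $\mu_s$ and $\mm\llcorner_{\ee_s(G)}$ forward under $\f_s$ and invoking Proposition \ref{P:2montone}, part (2) by comparing the two disintegrations of $\mu_t=\rho_t\,\mm\llcorner_{\ee_t(G)}$ over the fibers $\ee_t(G_{a_s})$ and invoking essential uniqueness (your explicit normalization, the identification $q^{\nu}_s(a_s)=\|\rho_t\,\mm^t_{a_s}\|$, and the treatment of the set $\{q^{\nu}_s=0\}$ merely flesh out what the paper leaves implicit), and part (3) by the same Fubini argument. The only point you leave unsaid is the one-line remark that $t\mapsto(\ee_t)_{\sharp}\nu_{a_s}$ is indeed a $W_2$-geodesic because $\nu_{a_s}$ is concentrated on $G_{a_s}\subset G\subset G_\varphi$; the joint-measurability caveat you raise before applying Fubini is likewise glossed over in the paper, and your proposed fix via a countable family of test functions is adequate.
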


\begin{proof}
Recall that $\mu_s \ll \mm$ is supported on $\ee_s(G)$ and $\rho_{s} > 0$ there (see Definition \ref{def:good}), so that $\mu_s$ and $\mm\llcorner_{\ee_{s}(G)}$ are mutually absolutely-continuous. It immediately follows that the same holds for $(\f_{s} )_{\#} \mu_s$ and $\qq^s_s = (\f_{s} )_{\#} \mm\llcorner_{\ee_{s}(G)}$. But:
$$
(\f_{s} )_{\#} (\mu_s)  = (\f_{s} )_{\#} ( (\ee_{s})_{\#} \nu) = (\f_{s} \circ \ee_{s} )_{\#} ( \nu)  = \qq^{\nu}_s ,
$$
establishing (1).

Denoting the resulting probability density $q^{\nu}_s := d \qq^{\nu}_{s} / d\mathcal{L}^{1}$,
\eqref{E:disintegrationu} translates to: 
$$
\nu = \int_{\f_{s}(\ee_{s}(G))} q^{\nu}_{s}(a_{s}) \nu_{a_{s}} \, \L^{1}(da_{s}) .
$$
Pushing forward both sides via the evaluation map $\ee_t$ given $t \in (0,1)$, we obtain:
$$
\rho_{t} \mm = \int_{\f_{s}(\ee_{s}(G))} q^{\nu}_s(a_{s}) \cdot (\ee_{t})_{\#}\nu_{a_{s}}  \, \L^{1}(da_{s}),
$$
with  $q^{\nu}_s(a_{s}) \cdot (\ee_{t})_{\#}\nu_{a_{s}}$ concentrated on $\ee_{t}(G_{a_{s}})$ for $\L^1$-a.e. $a_s \in \f_{s}(\ee_{s}(G))$.
On the other hand, multiplying both sides of (\ref{E:L2-1}) by $\rho_t$ (which is supported on $\ee_t(G)$), we obtain:
\[
\rho_t \mm = \int_{\f_{s}(\ee_{s}(G))} \rho_t \cdot \mm^{t}_{a_{s}} \, \mathcal{L}^{1}(da_{s}), 
\]
with $\rho_t \cdot \mm^{t}_{a_{s}}$ concentrated on $\ee_{t}(G_{a_{s}})$ for $\L^1$-a.e. $a_s \in \f_{s}(\ee_{s}(G))$.
By the essential uniqueness of the disintegration (Theorem \ref{T:disintegrationgeneral}), noting that $\varphi_s(\ee_s(G))$ is compact, (\ref{eq:continuity-second}) immediately follows. 
As $\rho_t > 0$ on $\ee_t(G)$ (see Definition \ref{def:good}) and $q^\nu_s(a_s) \in (0,\infty)$ for $\qq^{\nu}_{s}$-a.e. $a_{s} \in \f_{s}(\ee_{s}(G))$, the ``in particular" part of (2) is also established. 

Finally, by Fubini's theorem, it follows that for each $s \in (0,1)$ and $\qq^{\nu}_{s}$-a.e. $a_{s} \in \f_{s}(\ee_{s}(G))$, (\ref{eq:continuity-second}) holds with $q^\nu_s(a_s) \in (0,\infty)$ for $\L^1$-a.e. $t \in (0,1)$. Note that for $\qq^{\nu}_{s}$-a.e. $a_{s} \in \f_{s}(\ee_{s}(G))$, the curve $t \mapsto (\ee_{t})_{\sharp} \nu_{a_{s}}$ is a $W_2$-geodesic (since $\nu_{a_s}$ is concentrated on $G_{a_s} \subset G$).
This establishes (3), thereby concluding the proof. 
\end{proof}

\bigskip

\section{Comparison between conditional measures}\label{S:comparison1}

So far we have proved, under Assumption \ref{ass:good-assumption}, that for each $s \in (0,1)$ we have the following two families of disintegrations:
\begin{equation}\label{E:representation1}
\mm \llcorner_{\ee_{t}(G)} = \int_{\f_{s}(\ee_{s}(G))}  \mm^{t}_{a_{s}} \, \mathcal{L}^{1} (da_{s}) \;\;\; \text{ and } \;\;\;
\mm \llcorner_{\ee_{[0,1]}(G_{a_{s}})} = \int_{[0,1]}  \mm^{a_{s}}_{t} \, \mathcal{L}^{1} (dt)
\end{equation}
for each $t \in (0,1)$ and each $a_{s} \in \f_{s}(\ee_{s}(G))$, respectively, corresponding to the partitions:
\[
\{\ee_t(G_{a_s}) \}_{a_s \in \f_{s}(\ee_{s}(G))} \;\;\; \text{ and } \;\;\;  \{  \ee_t(G_{a_s}) \}_{t \in (0,1)} .
\]
Moreover, 
both $\mm_{a_{s}}^{t}$ and $\mm_{t}^{a_{s}}$ are concentrated on $\ee_{t}(G_{a_{s}})$, for each $t \in (0,1)$ for $\L^1$-a.e. $a_{s} \in \f_{s}(\ee_{s}(G))$, and for each $a_s \in \f_{s}(\ee_{s}(G))$ and all $t \in (0,1)$, respectively, 
so that the above disintegrations are strongly consistent with respect to the corresponding partition. In addition, we have by (\ref{eq:def-by-pushf}) and (\ref{eq:continuity-second}) for all $s,t \in (0,1)$ and a.e. $a_s \in \varphi_s(G_{a_s})$:
\begin{equation} \label{eq:heuristic-COV1}
\mm_t^{a_s} = (\ee_t \circ \ee_s^{-1})_{\sharp}(h^{a_s}_{\cdot}(t) \mm_s^{a_s}) ~,~ \rho_t \mm_{a_s}^t = (\ee_t \circ \ee_s^{-1})_{\sharp}(\rho_s \mm_{a_s}^s) .
\end{equation}

The goal of the first subsection, in which we retain Assumption \ref{ass:good-assumption}, is to prove that $\mm^{t}_{a_{s}}$ and $\mm_{t}^{a_{s}}$ are in fact equivalent measures. We will prove in particular that for all $s \in (0,1)$:
\begin{equation} \label{eq:heuristic-COV2}
\mm_t^{a_s} = \partial_t \Phi_s^t \; \mm_{a_s}^t \;\;\; \text{for a.e. $t \in (0,1)$, $a_s \in \varphi_s(G_{a_s})$} .
\end{equation}
A heuristic formal argument for establishing (\ref{eq:heuristic-COV2}) may be seen as follows. Writing $\Phi_s^t(x) = \Phi_s(t,x)$, we have:
\[
\ee_t(G_{a_s}) = \ee_t(G) \cap \set{x\in X \; ; \; \Phi_s(t,x) = a_s} = \ee_t(G) \cap \set{ x \in X \; ; \; \Phi_s(\cdot,x)^{-1}(a_s) = t} .
\]
Formally applying the coarea formula (assuming spatial regularity), we have:
\[
\frac{\mm_t^{a_s}}{\mm_{a_s}^t} = \frac{\abs{\nabla_x \Phi_s(t,x)}}{\abs{\nabla_x \Phi_s(\cdot,x)^{-1}(a_s)}} = \abs{- \partial_t \Phi_s(t,x)} ,
\]
where the last transition follows by the implicit function theorem $\nabla_x \Phi_s + \partial_t \Phi_s \cdot \nabla_x\Phi^{-1}_s = 0$.

In the second subsection, we deduce the change-of-variables formula (\ref{E:intro-COV}) for the density along geodesics, discarding Assumption \ref{ass:good-assumption}. An insightful heuristic argument may be seen by combining (\ref{eq:heuristic-COV1}) and (\ref{eq:heuristic-COV2}) as follows:
\[
\frac{\partial_\tau|_{\tau=t}\Phi_s^\tau(\gamma_t)}{\rho_t(\gamma_t)} = \left . \frac{\mm_t^{a_s}}{\rho_t \mm_{a_s}^t}\right |_{\gamma_t} =
\left . \frac{h^{a_s}_{\cdot}(t) \mm_{s}^{a_s}}{\rho_s \mm_{a_s}^s} \right |_{\gamma_s}
= \frac{h^{a_s}_{\gamma_s}(t)}{\rho_s(\gamma_s)} {\partial_\tau|_{\tau=s} \Phi_s^\tau(\gamma_s)} = \frac{h^{a_s}_{\gamma_s}(t)}{\rho_s(\gamma_s)} {\ell(\gamma)^2} .
\]

\subsection{Equivalence of conditional measures} \label{subsec:compare}

Recall that Assumption \ref{ass:good-assumption} is still in force in this subsection.
We start with the following auxiliary:

\begin{lemma}\label{L:blowup1}
For every $s,t\in (0,1)$ and $a_{s} \in \f_{s}(\ee_{s}(G))$, the following limit:
$$
\mm_{t}^{a_{s}} = \lim_{\ve \to 0} \frac{1}{2\ve} \, \mm\llcorner_{\ee_{[t-\ve,t+\ve]}(G_{a_{s}})}
$$
holds true in the weak topology.  
Moreover, for any $f \in C_b(X)$, the map 
$\f_{s}(\ee_{s}(G)) \ni a_{s} \mapsto \int_{X} f \mm_{t}^{a_{s}}$ is Borel.
\end{lemma}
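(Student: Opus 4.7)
\bigskip
\noindent
\textbf{Proof plan for Lemma \ref{L:blowup1}.}

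The plan is to combine the disintegration formula \eqref{E:L1-2} with the disjointness of $\{\ee_\tau(G_{a_s})\}_{\tau \in (0,1)}$ and the weak continuity of $\tau \mapsto \mm_\tau^{a_s}$ established in Proposition \ref{P:continuity}.

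First, recall from Remark \ref{R:disjointsubset} (which is based on Corollary \ref{cor:IntraProp} and uses the fact that $G \subset G_\varphi^+$) that the sets $\{\ee_\tau(G_{a_s})\}_{\tau \in (0,1)}$ are pairwise disjoint. In particular, for $\ve>0$ small enough so that $[t-\ve,t+\ve] \subset (0,1)$ and for $\tau \in (0,1) \setminus [t-\ve,t+\ve]$, we have $\ee_\tau(G_{a_s}) \cap \ee_{[t-\ve,t+\ve]}(G_{a_s}) = \emptyset$, while for $\tau \in [t-\ve,t+\ve]$ we clearly have $\ee_\tau(G_{a_s}) \subset \ee_{[t-\ve,t+\ve]}(G_{a_s})$. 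Since $\mm_\tau^{a_s}$ is concentrated on $\ee_\tau(G_{a_s})$ for every $\tau \in (0,1)$ (and the endpoints $\tau \in \{0,1\}$ form a Lebesgue-null set, with the densities $h^{a_s}_\beta$ vanishing there anyway), restricting both sides of \eqref{E:L1-2} to $\ee_{[t-\ve,t+\ve]}(G_{a_s})$ yields the identity
\[
\mm\llcorner_{\ee_{[t-\ve,t+\ve]}(G_{a_s})} \;=\; \int_{t-\ve}^{t+\ve} \mm_\tau^{a_s} \, \mathcal{L}^1(d\tau).
\]

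Given $f \in C_b(X)$, applying the previous identity gives
\[
\frac{1}{2\ve} \int f \, d\bigl(\mm\llcorner_{\ee_{[t-\ve,t+\ve]}(G_{a_s})}\bigr) \;=\; \frac{1}{2\ve} \int_{t-\ve}^{t+\ve} \left( \int f \, d\mm_\tau^{a_s} \right) \mathcal{L}^1(d\tau).
\]
By Proposition \ref{P:continuity}, the map $(0,1) \ni \tau \mapsto \mm_\tau^{a_s}$ is weakly continuous, so in particular the scalar map $\tau \mapsto \int f\,d\mm_\tau^{a_s}$ is continuous at $\tau = t$. Hence the Lebesgue averages on the right converge as $\ve\to 0$ to $\int f \, d\mm_t^{a_s}$. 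Since $f \in C_b(X)$ was arbitrary, the claimed weak convergence follows.

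The argument is essentially a direct consequence of the disintegration formula; the only non-trivial input is the disjointness of the partition elements $\ee_\tau(G_{a_s})$, which in turn relies on the intra level-set propagation result (Corollary \ref{cor:IntraProp}) from Part I. There is no real obstacle here.
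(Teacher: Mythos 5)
Your proof is correct and follows essentially the same route as the paper: restrict the disintegration formula \eqref{E:L1-2} to $\ee_{[t-\ve,t+\ve]}(G_{a_s})$ (using the disjointness from Remark \ref{R:disjointsubset}, which the paper leaves implicit) and then invoke the weak continuity of $\tau \mapsto \mm_\tau^{a_s}$ from Proposition \ref{P:continuity} to pass to the limit of Lebesgue averages. No gaps.
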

\begin{proof}
By Proposition \ref{P:continuity}, $(0,1) \ni t\mapsto \mm^{a_{s}}_{t}$ is continuous in the weak topology, and so together with (\ref{E:representation1}), we see that for any $f \in C_b(X)$:
 \[
\lim_{\ve \to 0} \frac{1}{2\ve} \int_X f(z) \mm\llcorner_{\ee_{[t-\ve,t+\ve]}(G_{a_{s}})}(dz)  = \lim_{\ve \to 0}\frac{1}{2\ve} \int_{t-\ve}^{t+\ve} \bigg(  \int_{X} f(z)\mm^{a_s}_{\tau}(dz) \bigg)  \mathcal{L}^{1}(d \tau) = \int_{X} f(z) \mm^{a_{s}}_{t}(dz) ,
\]
thereby concluding the proof of the first assertion. 
For the second assertion, given a compact set $I \subset [0,1]$, consider the 
compact set:
$$
K : = \{ (x,t,\gamma,a_{s}) \in X \times I \times G \times \f_{s}(\ee_s(G)) 
\colon
x = \gamma_{t}, \ \f_{s}(\gamma_{s}) = a_{s} \}.
$$
Hence $B := P_{14}(K) = \{ (\ee_I(G_{a_s}) , a_s) : a_s \in \f_{s}(\ee_s(G)) \} $ is compact as well. 
It follows by Fubini's theorem that 
the map $\f_{s}(\ee_{s}(G)) \ni a_{s} \mapsto \int_{\ee_I(G_{a_s})} f \, \mm$
is Borel. Taking $I = [t-\eps,t+\eps]$, employing the first assertion, and recalling that the pointwise limit of Borel functions is Borel, the second 
assertion follows.
\end{proof}

\begin{remark}
One may similarly show (employing an additional density argument) that for every $s,t\in (0,1)$ and $\L^{1}$-a.e. $a_{s} \in \f_{s}(\ee_{s}(G))$, the following limit:
\[
 \mm_{a_{s}}^{t} = \lim_{\ve \to 0} \frac{1}{2\ve} \, \mm\llcorner_{(\Phi_{s}^{t})^{-1}[a_{s}-\ve,a_{s}+\ve] \cap \ee_t(G) }
\]
holds true in the weak topology, but this will not be required. 
\end{remark}

\medskip

We now find explicit expressions for the densities.

\begin{theorem}\label{T:s=s} \label{T:any-t}
For any $s \in (0,1)$,
\begin{equation}\label{E:sim}
\mm^{a_{s}}_{s} = \ell_{s}^{2} \cdot \mm^{s}_{a_{s}} \;\;\; \text{ for $\mathcal{L}^{1}$-a.e. $a_{s} \in \f_{s}(\ee_{s}(G))$ } .
\end{equation}
Moreover, for any $s \in (0,1)$ and $\L^{1}$-a.e. $t \in (0,1)$ including at $t=s$, $\partial_{t}\Phi_{s}^{t}(x)$ exists and is positive for $\L^1$-a.e. $a_s \in \f_{s}(\ee_{s}(G))$ and $\mm_{a_{s}}^{t}$-a.e. $x$, and we have:
\begin{equation}\label{E:sim-t}
\mm^{a_{s}}_{t} = \partial_{t}\Phi_{s}^{t} \cdot \mm^{t}_{a_{s}} \;\;\; \text{ for $\mathcal{L}^{1}$-a.e. $a_{s} \in \f_{s}(\ee_{s}(G))$} .
\end{equation}
\end{theorem}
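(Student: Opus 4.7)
The plan is: First, observe that the first assertion of the theorem is the special case $t=s$ of the second, since by Proposition \ref{prop:Phi}(2) combined with Theorem \ref{thm:order12-main}(1) one has $\partial_\tau|_{\tau=s}\Phi_s^\tau(x)=\ell_s^2(x)$ at every point $x\in\ee_s(G)\supset\supp(\mm^s_{a_s})$. The second assertion will be established by testing the desired identity against products $\psi(y)\mathbf{1}_I(a_s)$ with $\psi\in C_b(X)$ and $I\subset\R$ a compact interval: if for all such $\psi$ and $I$ one has $\int_I\int\psi\,d\mm^{a_s}_t\,\mathcal L^1(da_s)=\int_I\int\psi\,\partial_t\Phi_s^t\,d\mm^t_{a_s}\,\mathcal L^1(da_s)$, then running $\psi$ through a countable dense family in $C_b(X)$ and $I$ through a countable basis of intervals yields $\mm^{a_s}_t=\partial_t\Phi_s^t\cdot\mm^t_{a_s}$ for $\mathcal L^1$-a.e.\ $a_s$.

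First I would expand the left-hand side by means of Lemma \ref{L:blowup1}, writing $\int\psi\,d\mm^{a_s}_t=\lim_{\eps\to 0}\frac{1}{2\eps}\int_{\ee_{[t-\eps,t+\eps]}(G_{a_s})}\psi\,d\mm$. The uniform mass bound on $\mm^{a_s}_t$ from Proposition \ref{P:continuity} allows exchanging the limit and the $a_s$-integration by dominated convergence, and a Fubini step then rewrites the integrand as $\mathcal L^1(\{a_s\in I:y\in\ee_{[t-\eps,t+\eps]}(G_{a_s})\})$ paired with $\psi(y)\,\mm(dy)$. The crucial geometric identification is that, by Corollaries \ref{cor:InterProp} and \ref{cor:IntraProp} combined with the interpretation $\Phi_s^\tau=\varphi_s\circ\ee_s\circ\ee_\tau^{-1}$ on $\ee_\tau(G_\varphi)$, one has $y\in\ee_{[t-\eps,t+\eps]}(G_{a_s})$ if and only if there exists $\tau\in[t-\eps,t+\eps]\cap G(y)$ with $\Phi_s^\tau(y)=a_s$. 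Equivalently, the $a_s$-section is exactly the image of the map $\tau\mapsto\Phi_s^\tau(y)$ on $[t-\eps,t+\eps]\cap G(y)$ intersected with $I$.

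By Proposition \ref{prop:Phi}(3) this map is locally Lipschitz and strictly monotone non-decreasing on $\mathring G_\varphi(y)$ (strictness using $G\subset G_\varphi^+$ via Lemma \ref{lem:X0}), and hence the one-dimensional change-of-variables formula for absolutely continuous monotone functions gives
\[\mathcal L^1\big(I\cap\Phi_s^{\cdot}(y)([t-\eps,t+\eps]\cap G(y))\big)=\int_{[t-\eps,t+\eps]\cap G(y)}\mathbf 1_I(\Phi_s^\tau(y))\,\partial_\tau\Phi_s^\tau(y)\,d\tau.\]
Inserting this identity, switching the $y$- and $\tau$-integrals via Fubini, and applying the $L^2$-disintegration \eqref{E:L2-1} at each intermediate time $\tau\in[t-\eps,t+\eps]$, I would obtain
\[\int_I\int\psi\,d\mm^{a_s}_t\,\mathcal L^1(da_s)=\lim_{\eps\to 0}\frac{1}{2\eps}\int_{t-\eps}^{t+\eps}\Bigl(\int_I\int\psi\,\partial_\tau\Phi_s^\tau\,d\mm^\tau_{a_s'}\,\mathcal L^1(da_s')\Bigr)d\tau.\]
A Lebesgue-point argument applied to the (locally bounded, jointly measurable) function $\tau\mapsto\int_I\int\psi\,\partial_\tau\Phi_s^\tau\,d\mm^\tau_{a_s'}\,\mathcal L^1(da_s')$ then delivers the desired equality at $\mathcal L^1$-a.e.\ $t\in(0,1)$; to handle the distinguished value $t=s$ separately (and so to recover Claim~1), I would use Lemma \ref{L:densityPhi} in place of generic Lebesgue differentiation, since it guarantees that the Lebesgue average of $\partial_\tau\Phi_s^\tau$ at $\tau=s$ converges pointwise to $\ell_s^2$. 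Positivity of $\partial_t\Phi_s^t$ on $\supp(\mm^t_{a_s})\subset\ee_t(G_{a_s})\subset\ee_t(G_\varphi^+)$ is then immediate from the two-sided bound of Proposition \ref{prop:Phi}(5) together with $\ell_t(\gamma_t)=\ell(\gamma)>0$ for all $\gamma\in G$.

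The principal technical hurdle I anticipate is the joint $\mathcal L^1\otimes\mathcal L^1$-measurability of $(\tau,a_s')\mapsto\int\psi\,\partial_\tau\Phi_s^\tau\,d\mm^\tau_{a_s'}$ needed for the Lebesgue differentiation step, which must be combined with a countable-family argument to produce a single $\mathcal L^1$-full subset of $t\in(0,1)$ on which the equality holds for every test pair simultaneously. This measurability is expected to follow by combining the measurable structure of the $L^1$-disintegration from Proposition \ref{P:L1disintegration}, the continuity of $\Phi_s^\tau$ from Proposition \ref{prop:Phi}(1), and the semi-continuity and monotonicity information on $\partial_\tau\Phi_s^\tau$ recorded in Proposition \ref{prop:Phi}(4)--(5); a subsidiary but delicate point is the use of Lemma \ref{L:densityPhi} to control the oscillation of $\partial_\tau\Phi_s^\tau$ near $\tau=t$.
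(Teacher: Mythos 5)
Your plan is, in essence, the paper's own proof: the same blow-up via Lemma \ref{L:blowup1} with the uniform mass bound of Proposition \ref{P:continuity}, the same Fubini reduction to the incidence set, the same one-dimensional change of variables for the monotone Lipschitz map $\tau\mapsto\Phi_s^\tau(x)$ (which in the paper is carried out by extending the map from $\mathring G(x)$ to the whole interval and invoking Remarks \ref{R:diff-restriction} and \ref{R:differentiabilitydensity}), the same Lebesgue-differentiation step in $\tau$ for a.e.\ $t$, the same special treatment of $t=s$ via Lemma \ref{L:densityPhi}, and an equivalent conclusion (countable test families versus the paper's appeal to essential uniqueness of disintegrations, Theorem \ref{T:disintegrationgeneral}).

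Two citations need repair, though neither changes the architecture. First, positivity of $\partial_t\Phi_s^t$ does \emph{not} follow from Proposition \ref{prop:Phi}(5) alone: for $t<s$ the lower bound $\min\bigl(\frac{s}{t},\frac{1-s}{1-t}+\frac{t-s}{t(1-t)}\bigr)\ell_t^2(x)$ can be negative (e.g.\ $t=0.1$, $s=0.9$). What the paper uses is Corollary \ref{cor:accum2}: for a.e.\ $\tau\in\mathring G_\varphi(x)$ (including $\tau=s$) the derivatives of $\Phi_s^\tau$ and $\Phic_s^\tau$ exist, coincide, and are bounded below by $\min\bigl(\frac{s}{\tau},\frac{1-s}{1-\tau}\bigr)\ell_\tau^2(x)>0$ since $G\subset G_\varphi^+$. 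Second, at $t=s$ Lemma \ref{L:densityPhi} controls only $\frac{1}{2\eps}\int_{(s-\eps,s+\eps)\cap\mathring G(x)}(\partial_\tau\Phi_s^\tau(x)-\ell_s^2(x))\,d\tau\to 0$; to conclude that the $2\eps$-normalized average of $\partial_\tau\Phi_s^\tau(x)$ tends to $\ell_s^2(x)$ you must also know that $\frac{1}{2\eps}\L^1\bigl(G(x)\cap(s-\eps,s+\eps)\bigr)\to 1$, which fails pointwise in general and is only available in $L^1(\ee_s(G),\mm)$ by Proposition \ref{P:densityreversed} (this is precisely one of the defining properties of the good set $G$); so the convergence at $t=s$ is an $L^1(\mm)$ statement, not a pointwise one, exactly as in Step 5b of the paper's proof.
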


For the ensuing proof, it will be convenient to introduce the following notation. For all $t_0 \in \Real$ and $x_0 \in X$, denote:
\[
\i^1_{t_0} : X \ni x \mapsto (t_0,x) \in (\Real,X)  ~ , ~ \i^2_{x_0} : \Real  \ni t \mapsto (t,x_0)  \in (\Real,X) .
\]
Recall that $G(x)$ denotes the section $\set{t \in [0,1] \; ; \; \exists \gamma \in G \;,\;\gamma_t = x}$ and $\mathring G(x) = G(x) \cap (0,1)$. 

\begin{proof}[Proof of Theorem \ref{T:s=s}]
\hfill

\medskip
{\bf Step 1.}
Fix $s,t \in (0,1)$. By Lemma \ref{L:blowup1} and the boundedness of $\| \mm_{\tau}^{a_{s}}\|$ uniformly in $a_{s}$ and $\tau \in [0,1]$
(see Proposition \ref{P:continuity}), it is easy to deduce (e.g. by Dominated Convergence Theorem) the following limit of measures on $\f_{s}(\ee_{s}(G)) \times X$ in the weak topology (i.e. in duality with  $C_{b}(\f_{s}(\ee_{s}(G)) \times X)$):
$$
\int_{\f_{s}(\ee_{s}(G))} (\i^1_{a_s})_{\sharp} (\mm^{a_{s}}_{t}) \L^1(da_{s}) = \lim_{\ve \to 0} \frac{1}{2\ve} \int_{t-\ve}^{t+\ve} \int_{\f_{s}(\ee_{s}(G))}(\i^1_{a_s})_{\sharp} (\mm^{a_{s}}_{\tau})  \, \L^1(da_{s}) \, \L^1(d\tau) .
$$
Using Fubini's Theorem and \eqref{E:representation1}, we proceed as follows:
\begin{align}
\nonumber & = \lim_{\ve \to 0} \frac{1}{2 \eps} \int_{\f_{s}(\ee_{s}(G))} (\i^1_{a_s})_{\sharp}(\mm\llcorner_{\ee([t-\eps,t+\eps])(G_{a_s})}) \L^1(da_s) \\
\nonumber & = \lim_{\ve \to 0} \frac{1}{2\ve}(\mathcal{L}^{1}\otimes \mm)
\llcorner \{ (a_s,x) \in \f_{s}(\ee_{s}(G)) \times X \; ; \; \gamma_{\tau} = x,  \, \gamma \in G, \,\f_{s}(\gamma_{s}) = a_s, \tau \in (t-\ve,t+\ve) \}  \\
\nonumber & =  \lim_{\ve \to 0} \frac{1}{2\ve} (\mathcal{L}^{1}\otimes \mm)
\llcorner \{ (a_s,x) \in \f_{s}(\ee_{s}(G)) \times X \; ; \;  a_s = \Phi_{s}^{\tau}(x), \tau \in (t-\ve,t+\ve) \cap \mathring G(x) \}  \\
\label{eq:explain1} & = \lim_{\ve \to 0} \int_{\cup_{|\tau - t| < \ve} \ee_{\tau}(G)} \frac{1}{2\ve} (\i^2_{x})_{\sharp}(\mathcal{L}^{1}
\llcorner \{ \Phi_{s}^{\tau}(x) \; ; \; \tau \in (t-\ve,t+\ve) \cap \mathring G(x) \} )  \, \mm(dx) .
\end{align}
Moreover, we claim that it is enough to integrate on $\ee_{t}(G)$ above:
\begin{equation} \label{eq:explain2}
= \lim_{\ve \to 0} \int_{ \ee_{t}(G)} \frac{1}{2\ve} (\i^2_{x})_{\sharp}(\mathcal{L}^{1}
\llcorner \{ \Phi_{s}^{\tau}(x) \; ; \; \tau \in (t-\ve,t+\ve) \cap \mathring G(x) \} )  \, \mm(dx) .
\end{equation}
To see this, recall that by Proposition \ref{prop:Phi} (3) (relying on Theorem \ref{thm:order12-main} (2)), the map $(t-\ve,t+\ve)\cap \mathring G(x) \ni \tau \to \Phi_{s}^{\tau}(x)$ is Lipschitz with Lipschitz constant bounded uniformly in $\eps \in (0,t/2 \wedge (1-t)/2)$ and $x \in \cup_{|\tau - t|<\ve} \ee_{\tau}(G)$ (recall that for any $\gamma \in G$, $\ell(\gamma)\leq 1/c$); we denote the latter Lipschitz bound by $L$. Hence the family of measures 
$$
\frac{1}{2\ve}\mathcal{L}^{1}
\llcorner \{ \Phi_{s}^{\tau}(x) \; ; \; \tau \in (t-\ve,t+\ve) \cap \mathring G(x)\}, 
$$
is bounded in the total-variation norm by $L$, uniformly in $\ve$ and $x$ as above. But by continuity:
$$
\lim_{\ve \to 0} \mm (\cup_{|\tau- t|< \ve} \ee_{\tau}(G) \setminus \ee_{t}(G)) = 0, 
$$
and so we can modify the domain of integration in (\ref{eq:explain1}) yielding (\ref{eq:explain2}). 

\medskip

{\bf Step 2.} Fixing $x \in \ee_t(G)$, we now focus on the weak limit:
$$
\lim_{\ve \to 0} \frac{1}{2\ve}\mathcal{L}^{1}
\llcorner \{ \Phi_{s}^{\tau}(x) \; ; \; \tau \in (t-\ve,t+\ve) \cap \mathring G(x) \} . 
$$
Recall that $(t-\ve,t+\ve) \cap \mathring G(x) \ni \tau \mapsto \Phi_{s}^{\tau}(x)$ has Lipschitz constant bounded by $L$, and moreover, is increasing by Proposition \ref{prop:Phi} (3). Now extend it to the entire $(0,1)$ while preserving (non-strict) monotonicity and the bound on the Lipschitz constant,  e.g. $\hat \Phi_{s}^{\tau}(x) := \inf_{r \in (t-\ve,t+\ve) \cap \mathring G(x)} \Phi_{s}^{r}(x) + L (\tau - r)_+$. Then for any $f \in C_{b}(\R)$, by the change-of-variables formula for (monotone) Lipschitz functions: 
\begin{align*}
\frac{1}{2\ve}& \int_{\{\Phi_{s}^{\tau}(x) \; ; \; \tau \in (t-\ve,t+\ve) \cap \mathring G(x)\} } f(a) \, \L^1(da)  \crcr
&~ = \frac{1}{2\ve} \int_{ (t-\ve,t+\ve) \cap \mathring G(x)} f(\Phi_{s}^{\tau}(x))  \partial_{\tau} \hat \Phi_{s}^{\tau}(x) \, \L^1(d\tau) \crcr
&~ = \frac{1}{2\ve} \int_{ (t-\ve,t+\ve) \cap \mathring G(x)} f(\Phi_{s}^{\tau}(x))  \partial_{\tau}  \Phi_{s}^{\tau}(x) \, \L^1(d\tau) ;
\end{align*}
the last transition follows since $\tau \mapsto \Phi_{s}^{\tau}(x)$ is differentiable a.e. on $D_{\ell}(x)$ and hence $\partial_{\tau}  \Phi_{s}^{\tau}(x) = \partial_{\tau}  \Phi_{s}^{\tau}(x)|_{(t-\ve,t+\ve) \cap \mathring G(x)}$ for a.e. $\tau \in (t-\ve,t+\ve) \cap \mathring G(x)$ by Remark \ref{R:diff-restriction}, and in addition since $\partial_{\tau}  \Phi_{s}^{\tau}(x)|_{(t-\ve,t+\ve) \cap \mathring G(x)} = \partial_{\tau}  \hat \Phi_{s}^{\tau}(x)$ for a.e. $\tau \in (t-\ve,t+\ve) \cap \mathring G(x)$ by Remark \ref{R:differentiabilitydensity}. 
Recall that Proposition \ref{prop:Phi} ensures that for all $x \in X$, $\partial_t \Phi_{s}^{t}(x)$ exists for $\L^1$-a.e. $t \in \mathring G(x)$, including at $t=s$ if $s \in \mathring G(x)$ (in which case $\partial_t \Phi_{s}^{t}|_{t = s} = \ell_s^2(x)$). 
Moreover, Corollary \ref{cor:accum2} and our assumption that $G \subset G_\varphi^+$ ensure that $\partial_t \Phi_{s}^{t}(x) > 0$ for $\L^1$-a.e. $t \in \mathring G(x)$, including at $t=s$. 
Applying Fubini's theorem, we have: 
$$
0 =\int_{X}\mathcal{L}^{1}(\mathring G(x) \setminus\{ t \in \mathring G(x) \colon \exists \partial_{t}  \Phi_{s}^{t}(x)  > 0 \} )\mm(dx) 
= \int_0^1 \mm( \ee_t(G) \setminus \{ x \in \ee_t(G) \colon  \exists \partial_{t}  \Phi_{s}^{t}(x)  > 0 \} ) \L^1(dt).
$$
It follows that for $\L^1$-a.e. $t \in (0,1)$, $\partial_t \Phi_{s}^{t}(x)$ exists and is positive for $\mm$-a.e. $x \in \ee_t(G)$ (including at $t=s$ for all $x \in \ee_s(G)$).

\medskip
{\bf Step 3.}
We now claim that for $\L^1$-a.e. $t \in (0,1)$ including $t=s$, if $f \in C_b(\Real)$ and $\Psi \in C_b(X)$ then:
\[
\lim_{\eps \rightarrow 0} \int_{\ee_t(G)} \left [ \frac{1}{2 \eps} \int_{ (t-\ve,t+\ve) \cap \mathring G(x)} f(\Phi^\tau_s(x)) \partial_\tau \Phi_s^\tau(x) \L^1(d \tau) - f(\Phi^t_s(x)) \partial_t \Phi_s^t(x) \right ] \Psi(x) \mm(dx) =  0. 
\]
To this end, we will show that for such $t$'s, both:
\[
\textrm{I}_\eps(x) := \frac{1}{2 \eps} \int_{ (t-\ve,t+\ve) \cap \mathring G(x)} \brac{f(\Phi^\tau_s(x)) - f(\Phi^t_s(x)} \partial_\tau \Phi_s^\tau(x) \L^1(d \tau) ,
\]
and:
\[
\textrm{II}_\eps(x) := f(\Phi^t_s(x))  \left [ \frac{1}{2 \eps} \int_{ (t-\ve,t+\ve) \cap \mathring G(x)} \partial_\tau \Phi_s^\tau(x) \L^1(d \tau)   - \partial_t\Phi_s^t(x) \right ] ,
\]
tend to $0$ in $L^1(\ee_t(G),\mm)$ as $\eps \rightarrow 0$. 

\medskip
{\bf Step 4.}
To see the claim about $\textrm{I}_\eps$, since $\abs{\partial_\tau \Phi_s^\tau(x)} \leq L$ (uniformly in $\tau \in (t-\eps,t+\eps) \cap \mathring G(x)$ and $x \in \ee_t(G)$),
it is clear that $\lim_{\eps \rightarrow 0} \textrm{I}_\eps(x) = 0$ pointwise by continuity of $f$ and $\mathring G(x) \ni \tau \mapsto \Phi^\tau_s(x)$ (see Proposition \ref{prop:Phi}). To obtain convergence in $L^1(\ee_t(G),\mm)$, it is therefore enough to show by Dominated Convergence that:
\begin{equation} \label{eq:explain3}
\frac{1}{2 \eps} \int_{ (t-\ve,t+\ve) \cap \mathring G(x)} \brac{f(\Phi^\tau_s(x)) - f(\Phi^t_s(x)} \L^1(d \tau) \leq C ,
\end{equation}
uniformly in $x \in \ee_t(G)$. Since $f$ is uniformly continuous on the compact set $\varphi_s(\ee_s(G))$, the uniform estimate (\ref{eq:explain3}) follows since  $\mathring G(x) \ni \tau \mapsto \Phi^\tau_s(x)$ is Lipschitz on $[\delta,1-\delta]$, with Lipschitz constant depending only on $\delta > 0$ and an upper bound on $\{ \len(\gamma) \; ; \; \gamma \in G \}$ (see Proposition \ref{prop:Phi} (3) and Theorem \ref{thm:order12-main} (2)). 

\medskip
{\bf Step 5.}
To see the claim about $\textrm{II}_\eps$, it is clearly enough to show that:
\begin{equation} \label{eq:explain4}
\tilde{\textrm{II}}_\eps(x) := \frac{1}{2 \eps} \int_{ (t-\ve,t+\ve) \cap \mathring G(x)} \partial_\tau \Phi_s^\tau(x) \L^1(d \tau)  - \partial_t\Phi_s^t(x) \rightarrow 0  \text{ in $L^1(\ee_t(G),\mm)$.}
\end{equation}

\medskip
{\bf Step 5a.}
We first establish (\ref{eq:explain4}) for $\L^1$-a.e. $t \in (0,1)$ (independently of $f$ and $\Psi$). Since $\partial_\tau \Phi_s^\tau(x) \leq L$ uniformly in $\tau \in (t-\eps,t+\eps) \cap \mathring G(x)$ and $x \in \ee_t(G)$, by Dominated Convergence, it is enough to establish pointwise convergence in (\ref{eq:explain4}) for $\mm$-a.e. $x \in \ee_t(G)$.

For every $x \in X$, denote:
$$
Leb(x) : = \{ t \in \mathring G(x) \; ; \; t \text{ is a Lebesgue point of } \tau \mapsto \partial_{\tau}\Phi_{s}^{\tau}(x) 1_{\mathring G(x)}(\tau) \} .
$$
By Proposition \ref{prop:Phi} (based on Theorem \ref{thm:order12-main}), we know that for every $x \in X$, the map  $\tau \mapsto \partial_{\tau}\Phi_{s}^{\tau}(x)$ is in $L^{\infty}_{loc}(\mathring G(x))$, and so by Lebesgue's Differentiation Theorem, $\L^{1}(\mathring G(x) \setminus Leb(x)) = 0$. Integrating over $\mm$ and applying Fubini's Theorem, it follows that for $\L^{1}$-a.e. $t \in (0,1)$:
$$
\mm(\ee_{t}(G) \setminus \{ x \in \ee_{t}(G) \; ; \; t \text{ is a Lebesgue point of } \tau \mapsto \partial_{\tau}\Phi_{s}^{\tau}(x) 1_{\mathring G(x)}(\tau)  \}) = 0 ,
$$
thereby establishing (by definition) the pointwise convergence in (\ref{eq:explain4}) for $\mm$-a.e. $x \in \ee_t(G)$.

\medskip
{\bf Step 5b.}
We next establish (\ref{eq:explain4}) at $t=s$. Write:
\[
\tilde{\textrm{II}}_\eps(x) = 
\frac{1}{2 \eps} \int_{ (s-\ve,s+\ve) \cap \mathring G(x)} \brac{\partial_\tau \Phi_s^\tau(x) - \ell_s^2(x)} \L^1(d \tau) + \ell_s^2(x) \left [ 
\frac{1}{2 \eps} \int_{ (s-\ve,s+\ve) \cap \mathring G(x)} \L^1(d \tau) - 1 \right ] .
\]
The first expression tends to $0$ pointwise for all $x \in X$ by Lemma \ref{L:densityPhi}, and hence by Dominated Convergence also in $L^1(\ee_t(G),\mm)$ (since $\abs{\partial_\tau \Phi_s^\tau(x)} \leq L$ and $\ell_s(x) \leq 1/c$ uniformly). The second expression tends to $0$ in $L^1(\ee_t(G),\mm)$ by Proposition \ref{P:densityreversed} and the uniform boundedness of $\ell_s^2(x)$. 

\medskip
\textbf{Step 6.}
In other words, we have verified in \textbf{Steps 3-5} the following weak convergence, for $\L^1$-a.e. $t \in (0,1)$ including at $t=s$:
$$
\lim_{\ve \to 0} \int_{ \ee_{t}(G)} \frac{1}{2\ve} (\i^2_{x})_{\sharp}(\mathcal{L}^{1}
\llcorner \{ \Phi_{s}^{\tau}(x) \; ; \; \tau \in (t-\ve,t+\ve) \cap \mathring G(x)\} ) \, \mm(dx) = \int_{\ee_{t}(G)} (\i^2_{x})_{\sharp}(\delta_{\Phi_s^t(x)}) \partial_{t}\Phi_{s}^{t}(x)   \,\mm(dx) ,
$$
where recall $\Phi_s^s(x) = \varphi_s(x)$ and  $\partial_t \Phi_s^t |_{t=s} = \ell_{s}^{2}(x)$. Combining this with \textbf{Step 1}, we deduce that:
\[
\int_{\f_{s}(\ee_{s}(G))} (\i^1_{a_s})_{\sharp} (\mm^{a_{s}}_{t}) \L^1(da_{s}) = \int_{\ee_{t}(G)} (\i^2_{x})_{\sharp}(\delta_{\Phi_s^t(x)})) \partial_{t}\Phi_{s}^{t}(x)  \,\mm(dx)  .
\]
Integrating this identity against $1 \otimes \psi$ with $1 \in C_b(\Real)$ and $\psi \in C_{b}(X)$, we obtain:
\begin{align*}
& \int_{\f_{s}(\ee_{s}(G))} \int_{\ee_s(G)} \psi(x) \, \mm^{a_{s}}_{t}(dx) \, \L^1(d a_{s}) = \int_{\ee_{t}(G)} \psi(x) \partial_{t}\Phi_{s}^{t}(x) \, \mm(dx)  \\
 & = \int_{\f_{s}(\ee_{s}(G))} \int_{\ee_t(G)} \psi(x) \, \partial_{t}\Phi_{s}^{t}(x) \mm^{t}_{a_{s}}(dx) \, \L^1(d a_{s}) , 
\end{align*}
where we used that $\mm^{a_{s}}_{t}$ is concentrated on $\ee_t(G_{a_s}) \subset \ee_t(G)$ for all $t \in (0,1)$ and $a_s \in \f_{s}(\ee_{s}(G))$ in the first expression, and 
 the disintegration \eqref{E:representation1} of $\mm\llcorner_{\ee_{t}(G)}$ in the last transition. In other words, we obtained for $\L^1$-a.e. $t \in (0,1)$ including at $t=s$:
 \[
 \int_{\f_{s}(\ee_{s}(G))} \mm^{a_{s}}_{t} \L^1(d a_{s}) = \int_{\f_{s}(\ee_{s}(G))} \partial_{t}\Phi_{s}^{t}  \; \mm^{t}_{a_{s}} \L^1(d a_{s}) .
 \]
Since $\mm_{a_s}^t$ is also concentrated on $\ee_{t}(G_{a_{s}})$ for all $t \in (0,1)$ and $\L^1$-a.e. $a_s \in \f_{s}(\ee_{s}(G))$, the assertion follows 
by essential uniqueness of consistent disintegrations (Theorem \ref{T:disintegrationgeneral}). Note that by \textbf{Step 2}, $\partial_{t}\Phi_{s}^{t}(x)$ exists and is positive for $\L^1$-a.e. $t \in (0,1)$ including at $t=s$ for $\mm$-a.e. $x \in \ee_t(G)$, and so by (\ref{E:representation1}), the same holds for $\L^1$-a.e. $a_s \in \f_{s}(\ee_{s}(G))$ and $\mm_{a_{s}}^{t}$-a.e. $x$. 
\end{proof}

\bigskip

\subsection{Change-of-Variables Formula}\label{Ss:change}

We now obtain the following main result of Sections \ref{S:Conditional} and \ref{S:comparison1}. At this time, we dispense of Assumption \ref{ass:good-assumption}.

\begin{theorem}[Change-of-Variables]\label{T:changeofV}
Let $(X,\sfd,\mm)$ be an essentially non-branching \mms verifying $\CD^{1}(K,N)$ with $\supp(\mm) = X$, and let $\mu_0,\mu_1 \in \P_2(X,\sfd,\mm)$. Let $\nu$ denote the unique element of $\Opt(\mu_{0},\mu_{1})$, and set $\mu_t := (\ee_t)_{\sharp} \nu \ll \mm$ for all $t \in (0,1)$.

Then there exist versions of the densities $\rho_t := d\mu_t / d\mm$, $t \in [0,1]$, so that for $\nu$-a.e. $\gamma \in \Geo(X)$, (\ref{E:regularityrho}) holds for all $0 \leq s \leq t \leq 1$, and in particular, for $\nu$-a.e. $\gamma$, $t \mapsto \rho_t(\gamma_t)$ is positive and locally Lipschitz on $(0,1)$, and upper semi-continuous at $t=0,1$. 

Moreover, for any $s\in (0,1)$, for $\L^{1}$-a.e. $t \in (0,1)$ and $\nu$-a.e. $\gamma \in G_\varphi^+$, $\partial_{\tau}|_{\tau = t}\Phi_{s}^{\tau}(\gamma_{t})$ exists, is positive, and the following change-of-variables formula holds:
\begin{equation}\label{E:recap}
\frac{\rho_{t}(\gamma_{t})}{\rho_{s} (\gamma_{s})} = \frac{\partial_{\tau}|_{\tau = t}\Phi_{s}^{\tau}(\gamma_{t})}{\ell^{2}(\gamma)}  \cdot \frac{1}{ h^{\f_{s}(\gamma_{s})}_{\gamma_s}(t)} .
\end{equation}
Here $\f$ denotes a Kantorovich potential associated to the $c$-optimal-transport problem between $\mu_0$ and $\mu_1$ with cost $c = \sfd^2/2$, and $\Phi_s^t$ denotes the time-propagated intermediate Kantorovich potential introduced in Section \ref{S:Phi}; $h^{\f_{s}(\gamma_{s})}_{\gamma_{s}}$ is the $\CD(\ell(\gamma)^2 K ,N)$ density on $[0,1]$ from Proposition \ref{P:L1disintegration}, after applying the re-normalization from Remark \ref{rem:h-normalization}, so that $h^{\f_{s}(\gamma_{s})}_{\gamma_s}(s) = 1$. 
In particular, for $\nu\text{-a.e. } \gamma \in G_\varphi^+$, the above change-of-variables formula holds for $\L^{1}$-a.e. $t,s \in (0,1)$. 

Lastly, for all $\gamma \in G_\varphi^0$, we have:
\begin{equation} \label{E:recap2}
\rho_t(\gamma_t) = \rho_s(\gamma_s) \;\;\; \forall t,s \in [0,1] .
\end{equation}
\end{theorem}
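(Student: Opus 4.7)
The plan is to split the geodesics into null and positive-length parts and handle each separately. For $\gamma \in G_\varphi^0$, Corollary \ref{cor:mu-on-X0} gives $\mu_t\llcorner_{X^0} = \mu_0\llcorner_{X^0}$ for all $t \in [0,1)$; applying the same result with time reversed (swapping the roles of $\mu_0$ and $\mu_1$) extends this to $t=1$. Since every $\gamma \in G_\varphi^0$ is a constant path in $X^0$, I would select the versions $\rho_t$ so that $\rho_t \equiv \rho_0$ on $X^0$, which immediately yields (\ref{E:recap2}). For the two-sided density bounds (\ref{E:regularityrho}) on $\nu$-a.e. $\gamma$ and the resulting local Lipschitz regularity of $t \mapsto \rho_t(\gamma_t)$ on $(0,1)$ together with upper semi-continuity at the endpoints, I would apply Corollary \ref{C:regularity3MCP} both forward (for $0 \leq s \leq t < 1$) and backward (by reversing time, for $0 < s \leq t \leq 1$) and take a common refined version.

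For the change-of-variables formula (\ref{E:recap}), the key reduction is to a compact good subset $G \subset G_\varphi^+$ in the sense of Definition \ref{def:good}. Using inner regularity of $\nu$, Corollary \ref{C:injectivity} (to secure injectivity of $\ee_t|_G$), the continuity statement of Corollary \ref{C:regularity3MCP}, and Proposition \ref{P:densityreversed}, I would exhaust $G_\varphi^+$ by an increasing sequence $\{G_n\}$ of such good subsets with $\nu(G_\varphi^+ \setminus \bigcup_n G_n) = 0$. Fixing $n$ and invoking Assumption \ref{ass:good-assumption} for $G = G_n$, all the results of Sections \ref{S:Conditional} and Subsection \ref{subsec:compare} become available for $\nu\llcorner_{G_n}/\nu(G_n)$.

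Now fix $s \in (0,1)$. Combining the explicit form
\[
\mm^{a_s}_t = g^{a_s}(\cdot,t)_\sharp\!\left(h^{a_s}_\cdot(t) \cdot \mm^{a_s}_s\right)
\]
from Proposition \ref{P:L1disintegration} (with the renormalization $h^{a_s}_\beta(s) = 1$ of Remark \ref{rem:h-normalization}) with the identities $\mm^{a_s}_s = \ell_s^2 \cdot \mm^s_{a_s}$ and $\mm^{a_s}_t = \partial_t \Phi_s^t \cdot \mm^t_{a_s}$ from Theorem \ref{T:any-t}, and with Corollary \ref{C:continuity}'s relation $\rho_t \cdot \mm^t_{a_s} = q^\nu_s(a_s) \cdot (\ee_t)_\sharp \nu_{a_s}$ together with its version at $t=s$, I obtain the pushforward identity (in the sense of measures on $\ee_t(G_{n,a_s})$)
\[
\frac{\rho_t}{\partial_t \Phi_s^t} \cdot g^{a_s}(\cdot,t)_\sharp\!\left(h^{a_s}_\cdot(t) \cdot \mm^{a_s}_s\right) \;=\; g^{a_s}(\cdot,t)_\sharp\!\left(\frac{\rho_s}{\ell_s^2}\cdot \mm^{a_s}_s\right),
\]
valid for $\L^1$-a.e. $a_s \in \varphi_s(\ee_s(G_n))$ and $\L^1$-a.e. $t \in (0,1)$. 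Since $g^{a_s}(\cdot,t) = \ee_t \circ \ee_s^{-1}$ is injective on $\ee_s(G_n)$, comparing Radon--Nikodym derivatives on the source side and evaluating at $\beta = \gamma_s$, $x = \gamma_t$ (so $\ell_s(\gamma_s) = \ell(\gamma)$), produces exactly the formula (\ref{E:recap}) along $\nu\llcorner_{G_n}$-a.e. $\gamma$. Positivity of $\partial_\tau|_{\tau=t}\Phi_s^\tau(\gamma_t)$ is guaranteed by Corollary \ref{cor:accum2} since $\gamma \in G_\varphi^+$. Taking the countable union over $n$ and applying Fubini in $(s,t)$ yields (\ref{E:recap}) for $\nu$-a.e. $\gamma \in G_\varphi^+$ and $\L^1 \otimes \L^1$-a.e. $(s,t) \in (0,1)^2$.

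The main technical obstacle will not be the pushforward calculation above, but rather two consistency issues. First, one must verify that the density $h^{\varphi_s(\gamma_s)}_{\gamma_s}$ appearing in (\ref{E:recap}) is intrinsic to the transport ray of $d_s = d_{\varphi_s - \varphi_s(\gamma_s)}$ and does not depend on the choice of good exhausting subset $G_n$; this is where the essential uniqueness of the $\CD^1$ disintegration provided by Proposition \ref{P:CD1-ENB} plays a crucial role, since any two good subsets containing $\gamma$ must produce essentially the same one-dimensional factor. Second, careful measurability bookkeeping is needed to pass from ``for each fixed $s$, for $\L^1$-a.e. $t$'' to a Fubini-compatible joint statement; the joint measurability of $(a_s, \beta, t) \mapsto h^{a_s}_\beta(t)$ established in Proposition \ref{P:L1disintegration} and of $(s,t,x) \mapsto \partial_t \Phi_s^t(x)$ (via Proposition \ref{prop:Phi}) are what makes this work.
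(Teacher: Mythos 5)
Your proposal is correct and follows essentially the same route as the paper's proof: reduction to an exhausting family of good compact subsets, comparison of the two conditional families via Proposition \ref{P:L1disintegration} (with the normalization of Remark \ref{rem:h-normalization}), Theorems \ref{T:s=s}/\ref{T:any-t} and Corollary \ref{C:continuity}, independence of $h$ and $g$ from the exhausting subset, and the same treatment of null geodesics and of the density regularity via Corollaries \ref{cor:mu-on-X0} and \ref{C:regularity3MCP}. The only (harmless) deviation is that you identify the time-$s$ normalization directly from the $t=s$ instances of (\ref{E:sim}) and Corollary \ref{C:continuity} (2) and compare densities through the injective pushforward $g^{a_s}(\cdot,t)$, whereas the paper first shows the conditional integrals are constant in $t$ and then identifies the constant by letting $t' \to s$ along good times using Proposition \ref{prop:Phi} (5) -- both are legitimate, since the $t=s$ cases were established precisely for this purpose.
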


Recall that $\nu$ is concentrated on $G_\varphi = G_\varphi^+ \cup G_\varphi^0$, where $G_\varphi^+$ and $G_\varphi^0$ denote the subsets of positive and zero length $\varphi$-Kantorovich geodesics, respectively. Note that $\partial_{t}|_{t=s}\Phi_{s}^{t}(\gamma_{s}) = \ell_s^2(\gamma_s) = \ell^2(\gamma)$ by Proposition \ref{prop:Phi}, so that together with our normalization that $h^{\f_{s}(\gamma_{s})}_{\gamma_s}(s) = 1$, we see that both sides of (\ref{E:recap}) are indeed equal to $1$ for $t=s$.

\begin{proof}[Proof of Theorem \ref{T:changeofV}]
\hfill

\medskip
\textbf{Step 0.}
As usual, by Proposition \ref{P:MCP} and Remark \ref{rem:CD1-MCP}, $(X,\sfd,\mm)$ also verifies $\MCP(K,N)$, and so Theorem \ref{T:optimalmapMCP} and all the results of Section \ref{S:MCP} apply. We will use the versions of the densities given by Corollary \ref{C:regularity3MCP}. On $X^0 = \ee_{[0,1]}(G_\varphi^0)$, we know by Corollary \ref{cor:mu-on-X0} that $\mu_0\llcorner_{X^0} = \mu_1\llcorner_{X^0} = \mu_t\llcorner_{X^0}$ for all $t \in [0,1]$, and so if necessary, we simply redefine $\rho_t|_{X^0} := \rho_0|_{X^0}$  for all $t \in (0,1]$, so that (\ref{E:recap2}) holds. Note that by Lemma \ref{lem:X0}, this will not affect $(0,1) \ni t \mapsto \rho_t(\gamma_t)$ for all $\gamma \in G_{\varphi}^+$, and Corollary \ref{cor:mu-on-X0} (applied to the pair $\mu_1,\mu_0$) ensures that the same is true for $\nu$-a.e. $\gamma \in G_{\varphi}^+$ at $t=1$. 

\medskip
\textbf{Step 1.}
As explained in the beginning of Section \ref{S:Conditional}, by inner regularity of Radon measures, Corollary \ref{C:regularity3MCP} (applied to both pairs $\mu_0,\mu_1$ and $\mu_1,\mu_0$), Proposition \ref{P:densityreversed} and Corollary \ref{C:injectivity}, there exists a \emph{good} compact subset $G^{\eps} \subset G^+_{\f}$ with $\nu(G^{\eps}) \geq \nu(G_\f^+)-\ve$ for any $\eps > 0$ (recall Definition \ref{def:good}). Of course, we may assume that $G^{\eps}$ is increasing as $\eps$ decreases to $0$ (say, along a fixed sequence). Fixing $\eps > 0$ and a good $G^{\eps}$, denote $\nu^{\eps} = \frac{1}{\nu(G^\eps)} \nu\llcorner_{G^{\eps}}$ and $\mu_t^{\eps} := (\ee_t)_{\sharp} \nu^{\eps} \ll \mm$, so that all of the results of Section \ref{S:Conditional} and Subsection \ref{subsec:compare} apply to $\nu^{\eps}$. Note that by Corollary \ref{C:injectivity}, we have that $\mu^{\eps}_t = \frac{1}{\nu(G^\eps)} (\mu_t)\llcorner_{\ee_t(G^{\eps})}$ for all $t \in [0,1]$, and therefore:
\[
\mu_t^{\eps} = \rho_t^{\eps} \mm ~,~  \rho_t^{\eps} := \frac{1}{\nu(G^\eps)} \rho_t|_{\ee_t(G^{\eps})} \;\;\; \forall t \in [0,1] .
\]
Also note that as $\nu$ is concentrated on $G^{\eps} \subset G_\varphi$, $\varphi$ is still a Kantorovich potential for the associated transport-problem. 

\medskip
\textbf{Step 2.}
Recall that by Corollary \ref{C:continuity} (3), for each $s \in (0,1)$ and $\qq^{\eps,s}_{s}$-a.e. $a_{s} \in \f_{s}(\ee_{s}(G^\eps))$, 
the map:
$$
[0,1] \ni t \mapsto \rho_{t} \cdot \mm^{\eps,t}_{a_{s}} 
$$
coincides for $\L^{1}$-a.e. $t \in [0,1]$ with the geodesic $t \mapsto (\ee_{t})_{\sharp} \nu^{\eps}_{a_{s}}$ up to a (positive) constant $C^{\eps}_{a_s}$ depending on $a_{s}$, where $\nu^{\eps}_{a_s}$ is the conditional measure from the disintegration in (\ref{E:disintegrationu}). Consequently, for such $s$ and $a_s$, for $\L^{1}$-a.e. $t \in [0,1]$ and any Borel $H \subset G^{\eps}_{a_{s}}$, the quantity:
\begin{equation} \label{eq:change-explain1}
\int_{\ee_{t}(H)}\rho^{\eps}_{t} (x) \mm^{\eps,t}_{a_{s}}(dx) = C^{\eps}_{a_s}  \int_{\ee_t(H)} (\ee_{t})_{\sharp} \nu^{\eps}_{a_{s}}( dx) = C^{\eps}_{a_s} \nu^{\eps}_{a_{s}}(H) 
\end{equation}
is constant (where we used the fact that $\ee_t|_{G^{\eps}} : G^{\eps} \rightarrow X$ is injective).

By Theorem \ref{T:any-t}, for $\L^{1}$-a.e. $t \in (0,1)$ and $\L^1$-a.e. $a_s \in \varphi_s(G^{\eps}_s)$ (and hence for $\q_s^{\eps,s}$-a.e. $a_s \in \varphi_s(G_s^{\eps})$ by Proposition \ref{P:2montone}), $\partial_{t}\Phi_{s}^{t}(x)$ exists and is positive for $\mm^{\eps,t}_{a_s}$-a.e. $x$, and $\mm_{t}^{\eps,a_{s}} = \partial_{t}\Phi_{s}^{t} \cdot \mm^{\eps,t}_{a_{s}}$. It follows that for those $t$ and $a_s$ for which this representation and (\ref{eq:change-explain1}) hold true:
\begin{align}
\label{eq:change-explain2}
 C^{\eps}_{a_s} \nu^{\eps}_{a_{s}}(H)  & = \int_{\ee_{t}(H)} \rho^{\eps}_{t}(x) \mm^{\eps,t}_{a_{s}}(dx)  =  \int_{\ee_{t}(H)} \rho^{\eps}_{t}(x) (\partial_{t}\Phi_{s}^{t}(x))^{-1} \, \mm^{\eps,a_{s}}_{t}(dx)  \\
\nonumber &~ = \int_{\ee_{s}(H)} \rho^{\eps}_{t}(g^{a_{s}}(\beta,t)) (\partial_{\tau}|_{\tau=t}\Phi_{s}^{\tau}(g^{a_{s}}(\beta,t)))^{-1} h^{a_{s}}_{\beta}(t)\, \mm_{s}^{\eps,a_{s}}(d\beta) \\
\nonumber &~  = \int_{\ee_{s}(H)}  \rho^{\eps}_{t}(g^{a_{s}}(\beta,t)) (\partial_{\tau}|_{\tau=t} \Phi_{s}^{\tau}(g^{a_{s}}(\beta,t)))^{-1} h^{a_{s}}_{\beta}(t) \ell^{2}_{s}(\beta) \mm_{a_{s}}^{\eps,s}(d\beta) ,
\end{align}
where the second transition follows from our normalization and Remark \ref{rem:h-normalization}, ensuring that $\mm_t^{\eps,a_s} = (g^{a_{s}}(\cdot,t))_{\sharp}\,  ( h^{a_{s}}_{\cdot}(t) \mm_s^{\eps,a_{s}})$,  and the last transition follows from Theorem \ref{T:s=s}.

Note that $g$ and $h$ above do \emph{not} depend on $\eps > 0$. For $g$, this follows by its very definition as $g^{a_s}(\beta,t) = \ee_t(\ee_s^{-1}(\beta))$ (and the injectivity of $\ee_s|_{G_\eps}$ for all $\eps > 0$). For $h$, this immediately follows by inspecting the proof of Proposition \ref{P:L1disintegration}, where $h^{a_s}_{\gamma_s}(t)$ was uniquely defined (for $t \in (0,1)$) as the continuous version of the density of $\hat \mm^{a_s}_\alpha$ from (\ref{eq:dis-prelim}) after conditioning it on $\ee_{[0,1]}(\gamma)$ and pulling it back to the interval $[0,1]$, where $\alpha \in Q^{1,\eps}$ was bijectively identified with $\gamma \in G_{a_s}^{\eps,1}$ via $\eta^{\eps}$; as $Q^{1,\eps}$ and $G_{a_s}^{\eps,1}$ clearly increase as $\eps$ decreases to $0$, with $\eta^{\eps}|_{Q^{1,\eps'}} = \eta^{\eps'}$ for $0 < \eps < \eps'$, we verify that $h$ indeed does not depend on $\eps>0$.

\medskip
\textbf{Step 3.}
As the left-hand-side of (\ref{eq:change-explain2}) does not depend on $t$,  it follows that for all $s \in (0,1)$ and for $\qq^{\eps,s}_{s}$-a.e. $a_{s} \in \f_{s}(\ee_{s}(G^{\eps}))$ (both of which we fix for the time being), there exists a subset $T \subset (0,1)$ of full $\L^1$ measure, so that for all $H \subset G^{\eps}_{a_s}$:
\[
T \ni t \mapsto \int_{\ee_{s}(H)}  \rho^{\eps}_{t}(g^{a_{s}}(\beta,t)) (\partial_{\tau}|_{\tau=t}\Phi_{s}^{\tau}(g^{a_{s}}(\beta,t)))^{-1} h^{a_{s}}_{\beta}(t) \ell^{2}_{s}(\beta) \mm_{a_{s}}^{\eps,s}(d\beta)
\]
is constant. As any Borel subset of $\ee_s(G_{a_s})$ may be written as $\ee_s(H)$, equality of measures follows, and hence equality of densities for $\mm_{a_{s}}^{\eps,s}$-a.e. $\beta$. We have therefore proved that for $t,t' \in T$:
\begin{equation} \label{eq:change-explain5}
 \rho^{\eps}_{t'}(\gamma_{t'}) (\partial_{\tau}|_{\tau=t'}\Phi_{s}^{\tau}(\gamma_{t'}))^{-1} h^{a_{s}}_{\gamma_s}(t')  =  \rho^{\eps}_{t}(\gamma_t) (\partial_{\tau}|_{\tau=t}\Phi_{s}^{\tau}(\gamma_t))^{-1} h^{a_{s}}_{\gamma_s}(t) ,
\end{equation}
for $\mm^{\eps,s}_{a_{s}}$-a.e. $\beta \in \ee_{s}(G^{\eps}_{a_{s}})$, where $\gamma = \gamma^\beta = \ee_s^{-1}(\beta) = g^{a_s}(\beta,\cdot)\in G^{\eps}_{a_{s}}$, with the exceptional set depending on $t,t'$. Note that given $t' \in T$, $\partial_{\tau}|_{\tau=t'}\Phi_{s}^{\tau}(\gamma^\beta_{t'})$ indeed exists for $\mm^{\eps,s}_{a_{s}}$-a.e. $\beta \in \ee_{s}(G^{\eps}_{a_{s}})$ by Corollary \ref{C:continuity} (2). 

It follows that for all $t \in T$, for $\mm^{\eps,s}_{a_{s}}$-a.e. $\beta \in \ee_{s}(G^{\eps}_{a_{s}})$, (\ref{eq:change-explain5}) holds simultaneously for a countable sequence $t' \in T^t \subset T$ which is dense in $(0,1)$. 
Taking the limit in (\ref{eq:change-explain5}) as $T^t \ni t' \rightarrow s$, using Proposition \ref{prop:Phi} (5) which entails:
\[ \lim_{T^t \ni t' \rightarrow s} \partial_{\tau}|_{\tau=t'}\Phi_{s}^{\tau}(\gamma^\beta_{t'}) = \ell_s(\gamma^\beta_s)^2 = \ell(\gamma^\beta)^2 ,
\]  employing the continuity of $(0,1) \ni t' \mapsto h^{a_{s}}_{\gamma_s}(t')$, our normalization $h^{a_{s}}_{\gamma_s}(s) = 1$, and the continuity of $(0,1) \ni t' \mapsto \rho^{\eps}_{t'}(\gamma_{t'})$ (as $G^{\eps}$ is good), it follows that for all $s \in (0,1)$, for $\qq^{\eps,s}_{s}$-a.e. $a_{s} \in \f_{s}(\ee_{s}(G^{\eps}))$ and $\L^1$-a.e. $t \in (0,1)$:
\begin{equation} \label{eq:change-explain3}
 \rho^{\eps}_{s}(\gamma_s) \ell(\gamma)^{-2}  =  \rho^{\eps}_{t}(\gamma_t) (\partial_{\tau}|_{\tau=t}\Phi_{s}^{\tau}(\gamma_t))^{-1} h^{a_{s}}_{\gamma_s}(t) 
\end{equation}
for $\mm^{\eps,s}_{a_{s}}$-a.e. $\beta \in \ee_{s}(G^{\eps}_{a_{s}})$, with $\gamma = \ee_s^{-1}(\beta) \in G^{\eps}_{a_{s}}$.

\medskip
\textbf{Step 4.}
Recall that by Corollary \ref{C:continuity} (2), $\mm^{\eps,s}_{a_s}$ and $(\ee_s)_{\sharp} \nu^{\eps}_{a_s}$ are mutually absolutely continuous for $\qq^{\eps,s}_{s}$-a.e. $a_{s} \in \f_{s}(\ee_{s}(G^{\eps}))$. It follows that  for all $s \in (0,1)$, for $\qq^{\eps,s}_{s}$-a.e. $a_{s} \in \f_{s}(\ee_{s}(G^{\eps}))$ and $\L^1$-a.e. $t \in (0,1)$, (\ref{eq:change-explain3}) holds for $\nu_{a_s}$-a.e. $\gamma$. By Corollary \ref{C:continuity} (1), note that $\qq^{\eps,s}_{s}$ and $\qq^{\eps,\nu}_s$ are mutually absolutely continuous, and hence the disintegration formula \eqref{E:disintegrationu} implies that for all $s \in (0,1)$ and $\L^{1}$-a.e. $t \in (0,1)$:
$$
\rho^{\eps}_{s} (\gamma_{s})   \ell(\gamma)^{-2}= \rho^{\eps}_{t}(\gamma_{t})   (\partial_{\tau}|_{\tau=t}\Phi_{s}^{\tau}(\gamma_{t}))^{-1} h^{\varphi_s(\gamma_s)}_{\gamma_s}(t), 
$$
for $\nu$-a.e. $\gamma \in G^{\eps}$, and in particular that $\partial_{\tau}|_{\tau=t}\Phi_{s}^{\tau}(\gamma_{t})$ exists and is positive for those $s$, $t$ and $\gamma$. 
Taking the limit as $\eps \rightarrow 0$ along a countable sequence, it follows for all $s \in (0,1)$, $\L^{1}$-a.e. $t \in (0,1)$ and $\nu$-a.e. $\gamma \in G_\varphi^+$, that:
$$
\rho_{s} (\gamma_{s})   \ell(\gamma)^{-2}= \rho_{t}(\gamma_{t})   (\partial_{\tau}|_{\tau=t}\Phi_{s}^{\tau}(\gamma_{t}))^{-1} h^{\varphi_s(\gamma_s)}_{\gamma_s}(t) ,
$$
 thereby concluding the proof of (\ref{E:recap}). As a consequence, an application of Fubini's Theorem verifies that for $\nu$-a.e. $\gamma \in G_\varphi^+$, (\ref{E:recap}) holds for $\L^1$-a.e. $s,t \in (0,1)$.

\end{proof}

\begin{remark} \label{rem:COV-for-Phic}
Observe that all of the results of this section also equally hold for $\Phic_s^t$ in place of $\Phi_s^t$. Indeed, recall that for all $x \in X$, $\Phi_s^t(x) = \Phic_s^t(x)$ for $t \in \mathring G_\varphi(x)$, and that by Corollary \ref{cor:accum2}, $\partial_t \Phi_s^t(x) = \partial_t \Phic_s^t(x)$ for a.e. $t \in \mathring G_\varphi(x)$. As these were the only two properties used in the above derivation (in particular, in \text{Step 2} of the proof of Theorem \ref{T:any-t}), the assertion follows. 
\end{remark}

\bigskip
\bigskip

\part{Putting it all together}\label{part3}

\section{Combining Change-of-Variables Formula with Kantorovich 3rd order information} \label{sec:LY}

Let $(X,\sfd,\sfm)$ denote an essentially non-branching \mms verifying $\CD^1(K,N)$. Let $\mu_0,\mu_1 \in \P_2(X,\sfd,\mm)$, and let $\nu$ be the unique element  of $\Opt(\mu_0,\mu_1)$ (by Proposition \ref{P:MCP}, Remark \ref{rem:CD1-MCP} and Theorem \ref{T:optimalmapMCP}). Recall that $\mu_t := (\ee_t)_{\sharp} \nu \ll \mm$ for all $t \in [0,1]$, and we subsequently denote by $\rho_t$ the versions of the corresponding densities given by Theorem \ref{T:changeofV} (resulting from Corollary \ref{C:regularity3MCP}). Finally, denote by $\varphi$ a Kantorovich potential associated to the corresponding optimal transference plan, so that $\nu(G_\varphi) = 1$. 

\subsection{Change-of-Variables Rigidity}

Recall that by the Change-of-Variables Theorem \ref{T:changeofV}, we know that for $\nu$-a.e. geodesic $\gamma \in G_\varphi^+$ and for a.e. $t,s \in (0,1)$,
$\partial_\tau|_{\tau=t} \Phi^\tau_s(\gamma_t)$ exists, is positive, and it holds that:
\begin{equation} \label{eq:main-formula0}
\frac{\rho_s(\gamma_s)}{\rho_t(\gamma_t)} = \frac{h^{\varphi_s(\gamma_s)}_{\gamma_s}(t)}{ \partial_\tau|_{\tau=t} \Phi^\tau_s(\gamma_t) / \len(\gamma)^2 } .
\end{equation}
In fact, by Remark \ref{rem:COV-for-Phic}, the same also holds with $\Phic$ in place of $\Phi$, so that in particular:
\begin{equation} \label{eq:same-Phi}
\partial_\tau|_{\tau=t} \Phi^\tau_s(\gamma_t) = \partial_\tau|_{\tau=t} \Phic^\tau_s(\gamma_t) \;\;\; \text{for $\nu$-a.e. } \gamma \in G_{\varphi}^+ \;\;\; \text{for a.e. } t,s \in (0,1) .
\end{equation}
Recall that given $t,s \in (0,1)$, for $\tilde{\Phi} = \Phi,\Phic$ and $\tilde{\ell} = \ell,\ellc$, respectively, $\tilde{\Phi}_s^t$ was defined on $D_{\tilde{\ell}}$ as: 
\[
\tilde{\Phi}_s^t = \tilde{\varphi}_t + (t-s) \frac{\tilde{\ell}_t^2}{2} ,
\]
and that by Proposition \ref{prop:Phi} (2), the differentiability points of  $t \mapsto \tilde{\Phi}_{s}^t(x)$ and $t \mapsto \tilde{\ell}^2_t(x)$ coincide for all $t \neq s$, and at those points:
\begin{equation} \label{eq:plugging-into-COV}
\partial_t \tilde{\Phi}_s^t (x) = \tilde{\ell}_t^2(x) + (t-s) \partial_t \frac{\tilde{\ell}_t^2}{2}(x) .
\end{equation}
It follows from (\ref{eq:same-Phi}) that for $\nu$-a.e. geodesic $\gamma \in G_\varphi^+$ and for a.e. $t \in (0,1)$:
\begin{equation} \label{eq:assumptions12}
\exists \partial_\tau|_{\tau=t}  \frac{\ell_\tau^2}{2}(\gamma_t)  \;\; ,\;\; \exists \partial_\tau|_{\tau=t}  \frac{\ellc_\tau^2}{2}(\gamma_t) \;\;,\;\; \partial_\tau|_{\tau=t}  \frac{\ell_\tau^2}{2}(\gamma_t) = \partial_\tau|_{\tau=t}  \frac{\ellc_\tau^2}{2}(\gamma_t) .
\end{equation}
Alternatively, (\ref{eq:assumptions12}) follows directly by Lemma \ref{lem:Fubini-diagonal}, in fact for $\nu$-a.e. $\gamma$ (not just $\gamma \in G_\varphi^+$).

\medskip

Plugging (\ref{eq:plugging-into-COV}) and (\ref{eq:assumptions12}) into (\ref{eq:main-formula0}), it follows that we may express the Change-of-Variables Theorem \ref{T:changeofV} as the statement that for $\nu$-a.e. geodesic $\gamma \in G_\varphi^+$, we have:
\begin{equation} \label{eq:main-formula}
\frac{\rho_s(\gamma_s)}{\rho_t(\gamma_t)} = \frac{h^{\varphi_s(\gamma_s)}_{\gamma_s}(t)}{1 + (t-s) \frac{\partial_\tau|_{\tau=t}\ell_\tau^2/2(\gamma_t)}{\len(\gamma)^2}} = \frac{h^{\varphi_s(\gamma_s)}_{\gamma_s}(t)}{1 + (t-s) \frac{\partial_\tau|_{\tau=t}\ellc_\tau^2/2(\gamma_t)}{\len(\gamma)^2}}
\;\;\; \text{for a.e. } t,s \in (0,1) .
\end{equation}
Note that the denominators on the right-hand-side of (\ref{eq:main-formula}) are always positive (when defined) for all $t,s \in (0,1)$ by Theorem \ref{thm:order12-main} (3). 
Fixing the geodesic $\gamma$, we denote for brevity $\rho(t) := \rho_t(\gamma_t)$, $h_s(t) := h^{\varphi_s(\gamma_s)}_{\gamma_s}(t)$ and $K_0 := K \cdot \len(\gamma)^2$. We then have the following additional information for $\nu$-a.e. $\gamma \in G_\varphi^+$, by Corollary \ref{C:regularity3MCP} and Proposition \ref{P:L1disintegration}, respectively:

\begin{enumerate}[label=(\Alph*)]
\item $(0,1) \ni t \mapsto \rho(t)$ is locally Lipschitz and strictly positive.
\item For all $s \in (0,1)$, $h_s$ is a $\CD(K_0,N)$ density on $[0,1]$, satisfying $h_s(s) = 1$. In particular, it is locally Lipschitz continuous on $(0,1)$ and strictly positive there.
\end{enumerate}

\begin{remark}
It is in fact possible to deduce (A) just from the Change-of-Variables formula (\ref{eq:main-formula}) and without referring to Corollary \ref{C:regularity3MCP}. 
This may be achieved by a careful bootstrap argument, exploiting the separation of variables on the left-hand-side of (\ref{eq:main-formula}) and the a-priori estimates of Lemma \ref{lem:apriori} in the Appendix on the logarithmic derivative of $\CD(K_0,N)$ densities. But since we already know (A), and since (A) was actually (mildly) used in the proof of the Change-of-Variables Theorem \ref{T:changeofV}, we only mention this possibility in passing. Note that Corollary \ref{C:regularity3MCP} applies to all $\MCP(K,N)$ essentially non-branching spaces, whereas the Change-of-Variables formula requires knowing the stronger $\CD^1(K,N)$ condition.
\end{remark}
Fix a geodesic $\gamma \in G_\varphi^+$ satisfying (\ref{eq:main-formula}), (A) and (B) above. 
Let $I \subset (0,1)$ be the set of full measure where \eqref{eq:main-formula} holds for all $s \in I$. 
It follows from (\ref{eq:main-formula}) that for all $s \in I$, $t \mapsto \frac{\partial_\tau|_{\tau=t} \tilde{\ell}_\tau^2/2(\gamma_t)}{\len(\gamma)^2}$ coincide a.e. on $(0,1)$ for both $\tilde{\ell} = \ell,\ellc$ with the same locally Lipschitz function $t \mapsto z_s(t)$ defined on $(0,1) \setminus \set{s}$:
\[
z_s(t) := \frac{\frac{1}{\rho_s(\gamma_s)} h^{\varphi_s(\gamma_s)}_{\gamma_s}(t) \rho_t(\gamma_t)  - 1}{t-s} .
\]
 By continuity, it follows that the functions $\set{z_s}_{s \in I}$ must all coincide on their entire domain of definition with a single function $t \mapsto z(t)$ defined on $(0,1)$; the latter function must therefore be locally Lipschitz continuous, and satisfy:
\begin{equation} \label{eq:z-partial-lt}
z(t) = \frac{\partial_\tau|_{\tau=t} \ell_\tau^2/2(\gamma_t)}{\len(\gamma)^2} = \frac{\partial_\tau|_{\tau=t} \ellc_\tau^2/2(\gamma_t)}{\len(\gamma)^2} \;\;\; \text{for a.e. } t \in (0,1) . 
\end{equation}
By Theorem \ref{thm:z-c}, which provides us with 3rd order information on intermediate-time Kantorovich potentials, we obtain the following additional information on $z$:
\begin{enumerate}[label=(\Alph*)]
\setcounter{enumi}{2}
\item $(0,1) \ni t \mapsto z(t)$ is locally Lipschitz. \\ For any $\delta \in (0,1/2)$, there exists $C_\delta > 0$ so that:
\[
\frac{z(t) - z(s)}{t-s} \geq (1 - C_\delta (t-s)) \abs{z(s)} \abs{z(t)} \;\;\; \forall 0 < \delta \leq s < t \leq 1 - \delta < 1 . 
\]
In particular, $z'(t) \geq z^2(t)$ for a.e. $t \in (0,1)$. 
\end{enumerate}

\begin{remark}
By Theorem \ref{thm:z-c}, we obtain the following interpretation for $z(t)$ -- it coincides for \textbf{all} $t \in (0,1)$ with the second Peano derivative of $\tau \mapsto \varphi_{\tau}(\gamma_t)$ and of $\tau \mapsto \varphic_{\tau}(\gamma_t)$ at $\tau = t$.  In particular, these second Peano derivatives are guaranteed to exist for all $t \in (0,1)$ and are a continuous function thereof. 
\end{remark}

We have already seen above how (\ref{eq:main-formula}) enabled us to deduce (\ref{eq:z-partial-lt}), thereby gaining (by Theorem \ref{thm:z-c}) an additional order of regularity for $\partial_\tau|_{\tau=t} \ell_\tau^2/2(\gamma(t))$. The purpose of this section is to show that the combination of the Change-of-Variables Formula:
\begin{equation} \label{eq:rigid-assump}
\frac{\rho(s)}{\rho(t)} = \frac{h_{s}(t)}{1 + (t-s) z(t)} \;\;\; \text{for a.e. } t,s\in (0,1) ,
\end{equation}
together with properties (A), (B) and (C) above, forms a very rigid condition, and already implies the following representation for $\frac{1}{\rho_t(\gamma_t)}$; we formulate this independently of the preceding discussion as follows:

\begin{theorem}[Change-of-Variables Rigidity] \label{thm:rigid}
Assume that (\ref{eq:rigid-assump}) holds,
where $\rho$, $\set{h_s}$ and $z$ satisfy (A), (B) and (C) above. Then:
\[
\frac{1}{\rho(t)} = L(t) Y(t) \;\;\; \forall t \in (0,1) ,
\]
where $L$ is concave and $Y$ is a $\CD(K_0,N)$ density on $(0,1)$.
\end{theorem}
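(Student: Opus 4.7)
My plan follows the formal computation outlined in Subsection \ref{subsec:formal-rigid} of the introduction, which I would carry out in four steps.

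\textbf{Step 1 (Rigidity formula for $h_s$).} By (A), (B), (C) all three of $\rho$, $z$ and each $h_s$ are continuous on $(0,1)$, and the denominator $1+(t-s)z(t)$ on the right of \eqref{eq:rigid-assump} is continuous and strictly positive. Hence, for every $s$ in a Lebesgue-full subset $S\subset(0,1)$, the relation \eqref{eq:rigid-assump} holds not merely a.e.\ but for \emph{every} $t\in(0,1)$, giving the explicit closed-form
\begin{equation*}
h_s(t) \;=\; \frac{\rho(s)}{\rho(t)}\bigl(1+(t-s)z(t)\bigr)\qquad \forall\, t\in(0,1),\ \forall\, s\in S.
\end{equation*}
In particular $h_s$ is locally Lipschitz in both variables jointly on $S\times(0,1)$, and for $s\in S$ at which $\rho$ is differentiable one has $h_s'(s):=\partial_t|_{t=s}h_s(t)=-({\log\rho})'(s)+z(s)$.

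\textbf{Step 2 (Defining $L$ and $Y$).} Fix $r_0\in(0,1)$ and define
\begin{equation*}
L(r):=\exp\!\Bigl(-\!\int_{r_0}^{r}\!z(\tau)\,d\tau\Bigr),\qquad Y(r):=\frac{\rho(r_0)}{\rho(r)\,L(r)},
\end{equation*}
so that $1/\rho(r)=L(r)Y(r)/\rho(r_0)$ (the harmless constant $1/\rho(r_0)$ can be absorbed). Concavity of $L$ is immediate from property (C) via Lemma \ref{lem:z-ac} (the identity $z'\geq z^2$ is exactly the concavity of $\exp(-\int z)$). Moreover $\log Y$ is locally absolutely continuous and a.e.
\begin{equation*}
v_Y(r):=(\log Y)'(r)=-(\log\rho)'(r)+z(r)=h_r'(r).
\end{equation*}

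\textbf{Step 3 ($Y$ is a $\CD(K_0,N)$ density).} I claim that $v_Y$ satisfies the distributional inequality $v_Y'(r)+v_Y(r)^2/(N-1)\leq -K_0$, which characterises $\CD(K_0,N)$ densities (see the Appendix). The formal proof is the key point: viewing $v_s(t):=\partial_t\log h_s(t)$, the chain rule along the diagonal gives
\begin{equation*}
v_Y'(r)=\frac{d}{dr}v_r(r)=\underbrace{\partial_t v_s(t)\bigl|_{s=t=r}}_{(\mathrm{I})}+\underbrace{\partial_s v_s(t)\bigl|_{s=t=r}}_{(\mathrm{II})}.
\end{equation*}
Term (I) is controlled by the $\CD(K_0,N)$ condition for $h_r$ at $t=r$, yielding $(\mathrm{I})+v_r(r)^2/(N-1)\leq -K_0$. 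Term (II) is computed directly from the Step 1 formula: writing $N_s(t)=z(t)+(t-s)z'(t)$ and $D_s(t)=1+(t-s)z(t)$, one finds
\begin{equation*}
\partial_s(N_s/D_s)\bigl|_{t=s=r}=z(r)^2-z'(r)\;\leq\; 0,
\end{equation*}
the nonpositivity being \emph{exactly} the concavity of $L$ reappearing in its infinitesimal form $z'\geq z^2$. Adding (I) and (II) yields $v_Y'(r)+v_Y(r)^2/(N-1)\leq -K_0+(z(r)^2-z'(r))\leq -K_0$, which is the required distributional inequality.

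\textbf{Step 4 (Rigorous justification, and the main obstacle).} The delicate point is that the $\CD(K_0,N)$ condition for each $h_s$, $s\in S$, is only a distributional/integrated statement, and neither $\rho$, $z$, $v_Y$, nor the terms $(\mathrm{I})$, $(\mathrm{II})$ above are a priori classically twice differentiable. To make the above rigorous I would integrate the $\CD$ inequality for $h_s$ against test functions supported in a small neighbourhood of $t=s$ (for each $s\in S$), use the explicit Step 1 formula to expand $\log h_s(t)$ as $\log\rho(s)-\log\rho(t)+\log(1+(t-s)z(t))$, expand the last summand to second order in $(t-s)$ using the locally Lipschitz regularity of $z$ (whose ``second-order deficit'' $z(r)^2-z'(r)$ is exactly the signed-measure obstruction we need), and then let the test function concentrate at $s=r$ and integrate in $s$ over $S\cap(r-\eps,r+\eps)$. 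This is precisely the place where separation of variables in Step 1, the concavity of $L$, and the integrated $\CD$ condition must be balanced; a delicate approximation preserving the rigid structure of \eqref{eq:rigid-assump} is needed, and I expect this to be the most technical part of the argument, deferred to a dedicated section and to the Appendix.
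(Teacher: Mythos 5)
Your Steps 1--3 reproduce, in slightly repackaged form, the paper's own formal argument from Subsection \ref{subsec:formal-rigid}: the closed form for $h_s$, the definitions of $L$ and $Y$, the concavity of $L$ via Lemma \ref{lem:z-ac}, and the formal splitting of $(\log Y)''$ into a term controlled by the $\CD(K_0,N)$ condition for $h_r$ and a mixed-partial term equal to $z^2-z'$. That is fine as far as it goes, but it is exactly the part the paper itself labels as formal; the substance of Theorem \ref{thm:rigid} is making it rigorous, and your Step 4 is a plan rather than a proof, and as stated it would fail. The decisive term is the mixed partial of $\log(1+(t-s)z(t))$ at $t=s$: this is a genuinely second-order quantity, and local Lipschitzness of $z$ gives \emph{no} second-order expansion in $(t-s)$, so ``expand the last summand to second order using the locally Lipschitz regularity of $z$'' is not available. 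Moreover, once you smear against test functions in $t$ and then separately in $s$ (or mollify), the diagonal structure that produced the exact cancellation $z(r)^2-z'(r)$ is destroyed, and the a.e.\ inequality $z'\geq z^2$ is not in a form that survives this limit for the mixed-partial term. What the paper actually uses is a finite-difference, \emph{four-point} form of the inequality valid at separated points with a quantitative error: Lemma \ref{lem:z-incr-ratio}, which relies on the full quantitative statement in (C) (the factor $(1-C_\delta(t-s))\abs{z(s)}\abs{z(t)}$), not only its infinitesimal consequence $z'\geq z^2$. Rewritten as a four-point inequality for $\log\tilde h_s(t)$, it is preserved by the double logarithmic mollification of Proposition \ref{prop:log-convolve-2d} and yields the relaxed bound (H2), $\partial_s\partial_t|_{t=s=r}\log\tilde h^{\eps_1,\eps_2}_s(t)\leq 2C_\delta\eps$; combined with the differential characterization of the smooth mollified densities and the convergence statement (H1), this shows the mollified $Y^{\eps_1,\eps_2}$ is a $\CD(K_0-2C_\delta\eps,N)$ density on $[\delta,1-\delta]$, and one concludes by closedness of $\CD$ densities under pointwise limits. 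None of this machinery is replaced by your sketch.

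Two further points need repair. First, your closing appeal to ``the distributional inequality characterises $\CD(K_0,N)$ densities (see the Appendix)'' overstates what is there: Lemma \ref{lem:density-CDKN} requires differentiability at \emph{every} point with locally absolutely continuous derivative, and the remark following it (the example $h(x)=\abs{x}$) shows that an a.e.\ inequality alone is insufficient; so you must either genuinely establish the inequality against all nonnegative test functions -- which is precisely the missing step -- or, as the paper does, exhibit $Y$ as a pointwise limit of $\CD(K_0-2C_\delta\eps,N)$ densities. Second, before differentiating or mollifying in $s$ one should extend the closed form, and hence the $\CD(K_0,N)$ property, from your full-measure set $S$ to every $s\in(0,1)$ (the paper's Step 1, via continuity of $s\mapsto\rho(s)$ and pointwise-limit closedness of $\CD$ densities); this also provides the continuity in $s$ of $\partial_t\tilde h_s(t)$ at points of differentiability, which enters the proof of (H1).
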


\bigskip

\subsection{Formal Argument} \label{subsec:formal-rigid}

To better motivate the ensuing proof of Theorem \ref{thm:rigid}, we begin with a formal argument.

Assume that the functions $\rho$ and $z$ are $C^2$ smooth and that equality holds in (\ref{eq:rigid-assump}) for all $t,s \in (0,1)$. It follows that the mapping $(s,t) \mapsto h_s(t)$ is also $C^2$ smooth. Fix any $r_0 \in (0,1)$, and define the functions $L$ and $Y$ by:
\[
\log L(r) := - \int_{r_0}^r z(s) ds  ~,~  \log Y(r) := \int_{r_0}^r \partial_t|_{t=s} \log h_s(t) ds .
\]
Note that by (\ref{eq:rigid-assump}):
\begin{align*}
& \log \frac{\rho(r_0)}{\rho(r)} = \int_{r_0}^r \partial_t |_{t=s} \log \frac{\rho(s)}{\rho(t)}  ds \\ 
=  & \int_{r_0}^r \partial_t|_{t=s}  \log h_s(t) ds - \int_{r_0}^r \partial_t|_{t=s} \log(1+(t-s) z(t)) ds = \log Y(r) + \log L(r) . 
\end{align*}
As already noted in Lemma \ref{lem:z-ac}, the concavity of $L$ follows from (C), since:
\[
\frac{L''}{L} = (\log L)'' + ((\log L)')^2 = -z' + z^2 \leq 0 .
\]
The more interesting function is $Y$. We have for all $r \in (0,1)$:
\begin{align*}
(\log Y)'(r) &= \partial_t|_{t=r} \log h_r(t) , \\
(\log Y)''(r) &= \partial_t^2|_{t=r} \log h_r(t) + \partial_s \partial_t |_{t=s=r} \log h_s(t) .
\end{align*}
To handle the last term on right-hand-side above, note that by the separation of variables on the left-hand-side of (\ref{eq:rigid-assump}), we have by (C) again, after taking logarithms and calculating the partial derivatives in $t$ and $s$:
\begin{equation} \label{eq:partials-negative}
\partial_s \partial_t|_{t=s=r} \log h_s(t)  = \partial_s \partial_t |_{t=s=r} \log(1 + (t-s) z(t)) = -z'(r) + z^2(r) \leq 0.
\end{equation}
We therefore conclude that for all $r \in (0,1)$:
\[
(\log Y)''(r) + \frac{((\log Y)'(r))^2}{N-1} \leq \partial_t^2 |_{t=r} \log h_r(t) + \frac{(\partial_t |_{t=r} \log h_r(t))^2}{N-1} \leq -K_0 ,
\]
where the last inequality follows from (B) and the differential characterization of $\CD(K_0,N)$ densities (applied to $h_r(t)$ at $t=r$). Applying the characterization again, we deduce that $Y$ is a ($C^2$-smooth) $\CD(K_0,N)$ density on $(0,1)$. This concludes the formal proof that:
\[
\frac{\rho(r_0)}{\rho(r)} = L(r) Y(r) \;\;\; \forall r \in (0,1) ,
\]
with $L$ and $Y$ satisfying the desired properties. In a sense, the latter argument has been tailored to ``reverse-engineer" the smooth Riemannian argument, where the separation to orthogonal and tangential components of the Jacobian is already encoded in the Jacobi equation, (B) is a \emph{consequence} of the corresponding Riccati equation, and (C) is a \emph{consequence} of Cauchy--Schwarz (cf. \cite[Proof of Theorem 1.7]{sturm:II}).

\bigskip

\subsection{Rigorous Argument}

It is surprisingly very tedious to modify the above formal argument into a rigorous one. It seems that an approximation argument cannot be avoided, since the definition of $Y$ above is inherently differential, and so on one hand we do not know how to check the $\CD(K_0,N)$ condition for $Y$ synthetically, but on the other hand $Y$ is not even differentiable, so it is not clear how to check the $\CD(K_0,N)$ condition by taking derivatives. The main difficulty in applying an approximation argument here stems from the fact that we do not know how to approximate $\{h_s\}$ and $z$ by smooth functions $\{h^\eps_s\}$ and $z^\eps$, so that simultaneously:
\begin{itemize}
\item[-] $\{h^\eps_s\}$ are $\CD(K_0-\eps,N)$ densities ;
\item[-] $z^\eps$ is a function of $t$ only, and not of $s$ ; 
\item[-] and the separation of variables structure of (\ref{eq:rigid-assump}) is preserved. 
\end{itemize}
Our solution is to note that the main role of the separation of variables in the above formal argument was to ensure that (\ref{eq:partials-negative}) holds, and so we will replace the rigid third requirement with the following relaxed one:
\begin{itemize}
\item[-] $\partial_s \partial_t |_{t=s=r} \log h^\eps_s(t) \leq B_\delta \eps$ for all $r \in [\delta,1-\delta]$ and $\delta > 0$.  
\end{itemize}

\begin{proof}[Proof of Theorem \ref{thm:rigid}] 
\hfill \\
\noindent \textbf{Step 1 - Redefining $h_s(t)$.}\\
First, observe that there exists $I_y \subset (0,1)$ of full measure so that for all $s \in I_y$, (\ref{eq:rigid-assump}) is satisfied for a.e. $t \in (0,1)$, and hence for \textbf{all} $t \in (0,1)$, since all the functions $\rho$, $\set{h_s}$ and $z$ are assumed to be continuous on $(0,1)$. Unfortunately, we cannot extend this to \textbf{all} $s \in (0,1)$ as well, since there may be a null set of $s$'s for which the densities $h_s(t)$ do not comply at all with the equation (\ref{eq:rigid-assump}). To remedy this, we simply force (\ref{eq:rigid-assump}) to hold for all $s,t \in (0,1)$ by defining:
\begin{equation} \label{eq:rigid-assump2}
\tilde{h}_s(t) := \frac{\rho(s)}{\rho(t)} (1 + (t-s) z(t)) \;\;\; s,t \in (0,1) ,
\end{equation}
and claim that for all $s \in (0,1)$, $\tilde{h}_s$ is a $\CD(K_0,N)$ density on $(0,1)$. Indeed, for $s \in I_y$, $\tilde{h}_s = h_s$ and there is nothing to check. If $s_0 \in (0,1) \setminus I_y$, simply note that $\tilde{h}_s(t)$ is locally Lipschitz in $s \in (0,1)$ (since $\rho(s)$ is), and hence:
\[
\tilde{h}_{s_0}(t) = \lim_{s \rightarrow s_0} \tilde{h}_{s}(t) = \lim_{I_y \ni s \rightarrow s_0} \tilde{h}_s(t) = \lim_{I_y \ni s \rightarrow s_0} h_s(t) \;\;\; \forall t \in (0,1) . 
\]
But the family of $\CD(K_0,N)$ densities on $(0,1)$ is clearly closed under pointwise limits (it is characterized by a family of inequalities between 3 points), and so $\tilde{h}_{s_0}$ is a $\CD(K_0,N)$ density, as asserted. 

\medskip
\noindent \textbf{Step 2 - Properties of $z$ and $\{\tilde{h}_s\}$.}\\
We next collect several additional observations regarding the functions $z$ and $\{\tilde{h}_s\}$. 
Recall that $\rho$ (by assumption) and $\tilde{h}_s$ (as $\CD(K_0,N)$ densities) are strictly positive in $(0,1)$. 
Together with (\ref{eq:rigid-assump2}) (or directly from (\ref{eq:rigid-assump})), this implies that $1 + (t-s) z(t) > 0$ for all $t,s \in (0,1)$, and hence:
\begin{enumerate}[label=(\Alph*)]
\setcounter{enumi}{3}
\item $-\frac{1}{t} \leq z(t) \leq \frac{1}{1-t} \;\;\; \forall t \in (0,1)$. 
\end{enumerate}
In fact, we already knew this by Theorem \ref{thm:order12-main} (3) but refrained from including this into our assumption (C) since this is a consequence of the other assumptions. Furthermore:
\begin{enumerate}[label=(\Alph*)]
\setcounter{enumi}{4}
\item $I_x := \{ t \in (0,1) \; ; \; \tau \mapsto \tilde{h}_s(\tau) \text{ is differentiable at } \tau=t \text{ for all } s \in (0,1)\}$ is of full measure.
\end{enumerate}
Indeed, this follows directly from the definition (\ref{eq:rigid-assump2}) by considering the set all points $t$ where $\rho(t)$ and $z(t)$ are differentiable. In addition, we clearly have:
\begin{enumerate}[label=(\Alph*)]
\setcounter{enumi}{5}
\item $\forall t \in I_x$, $(0,1) \ni s \mapsto \partial_t \tilde{h}_s(t)$ is continuous.
\end{enumerate}

\medskip
\noindent \textbf{Step 3 - Defining $L$ and $Y$.}\\
Now fix $r_0  \in (0,1)$, and define the functions $L,Y$ on $(0,1)$ as follows:
\[
\log L(r) := - \int_{r_0}^r z(s) ds  ~,~  \log Y(r) := \int_{r_0}^r \partial_t |_{t=s} \log \tilde{h}_s(t) ds .
\]
Clearly, the function $L$ is well defined for all $r \in (0,1)$ as $z$ is assumed locally Lipschitz. As for the function $Y$, (E) implies that $\partial_t |_{t=s} \log \tilde{h}_s(t)$ exists for a.e. $s \in (0,1)$, and the fact that the latter integrand is locally integrable on $(0,1)$ is a consequence of Lemma \ref{lem:apriori} in the Appendix, which guarantees a-priori locally-integrable estimates on the logarithmic derivative of $\CD(K_0,N)$ densities. 

Consequently, as in our formal argument, we may write (since $\log \rho$ is locally absolutely continuous on $(0,1)$):
\begin{align*}
& \log \frac{\rho(r_0)}{\rho(r)} = \int_{r_0}^r \partial_t |_{t=s} \log \frac{\rho(s)}{\rho(t)}  ds \\ 
=  & \int_{r_0}^r \partial_t|_{t=s} \log \tilde{h}_s(t) ds - \int_{r_0}^r \partial_t |_{t=s} \log(1+(t-s) z(t)) ds = \log Y(r) + \log L(r) ,
\end{align*}
and hence:
\[
\frac{\rho(r_0)}{\rho(r)} = L(r) Y(r) \;\;\; \forall r \in (0,1). 
\]
We have already verified in Lemma \ref{lem:z-ac} that the property $z'(s) \geq z^2(s)$ a.e. in $s \in (0,1)$ implies that $L$ is concave on $(0,1)$, so it remains to show that $Y$ is a $\CD(K_0,N)$ density on $(0,1)$. 

\medskip
\noindent \textbf{Step 4 - Approximation argument.}\\
We now arrive to our approximation argument. Given $\eps_1 , \eps_2 >0$, $t \in (\eps_1,1-\eps_1)$ and $s \in (\eps_2,1-\eps_2)$, define the double logarithmic mollification of $\tilde{h}_s(t)$ by:
\[
\log \tilde{h}^{\eps_1,\eps_2}_s(t) := \int \int \log \tilde{h}_y(x) \psi_{\eps_1}(t-x) \psi_{\eps_2}(s-y) dx dy ,
\]
where $\psi_\eps(x) = \frac{1}{\eps} \psi(x/\eps)$ and $\psi$ is a $C^2$-smooth non-negative function on $\Real$ supported on $[-1,1]$ and integrating to $1$. Since for all $\eta \in (0,1/2)$, we clearly have by (\ref{eq:rigid-assump2}) (and, say, (D)):
\[
\int_{\eta}^{1-\eta} \int_{\eta}^{1-\eta} \abs{\log \tilde{h}_y(x)} dx dy  < \infty ,
\]
it follows by Proposition \ref{prop:log-convolve-2d} in the Appendix on logarithmic convolutions that $\{\tilde{h}^{\eps_1,\eps_2}_s(t)\}_{s \in (\eps_2,1-\eps_2)}$ is a $C^2$-smooth (in $(t,s)$) family of $\CD(K_0,N)$ densities on $(\eps_1,1-\eps_1)$. 

\medskip
\noindent \textbf{Step 5 - Concluding the proof assuming (H1) and (H2).}\\
We will subsequently show the following two additional properties of the family $\{h^{\eps_1,\eps_2}_s(t)\}$:
\begin{enumerate}[label=(H\arabic*)]
\item $\lim_{\eps_2 \rightarrow 0} \lim_{\eps_1 \rightarrow 0} \partial_t |_{t=s} \log \tilde{h}^{\eps_1,\eps_2}_s(t) = \partial_t |_{t=s} \log \tilde{h}_s(t)$ for a.e. $s \in (0,1)$. 
\item $\forall \delta \in (0,1/2) \; \exists C_\delta > 0 \;\; \forall \eps \in (0,\frac{\delta}{8}] \;\; \forall \eps_1,\eps_2 \in (0,\eps]$:
\[
\partial_s \partial_t |_{t=s=r} \log \tilde{h}^{\eps_1,\eps_2}_s(t) \leq 2 C_\delta \eps \;\;\; \forall r \in [\delta,1-\delta] .
\]
\end{enumerate}

Assuming these additional properties, let us show how to conclude the proof of Theorem \ref{thm:rigid}. Set $\eps = \max(\eps_1,\eps_2)$, and assuming that $\eps < \min(r_0,1-r_0)$, define the function $Y^{\eps_1,\eps_2}$ on $(\eps,1-\eps)$ given by:
\[
\log Y^{\eps_1,\eps_2}(r) := \int_{r_0}^r \partial_t |_{t=s} \log \tilde{h}^{\eps_1,\eps_2}_s(t) ds .
\]

First, we claim to have the following pointwise convergence for all $r \in (0,1)$:
\begin{equation} \label{eq:Y-converge}
\lim_{\eps_2 \rightarrow 0} \lim_{\eps_1 \rightarrow 0} \log Y^{\eps_1,\eps_2}(r) = \lim_{\eps_2 \rightarrow 0} \lim_{\eps_1 \rightarrow 0} \int_{r_0}^r 
\partial_t |_{t=s} \log \tilde{h}^{\eps_1,\eps_2}_s(t) ds = \int_{r_0}^r \partial_t |_{t=s}\log \tilde{h}_s(t) ds = \log Y(r)  .
\end{equation}
Indeed, the pointwise convergence of the integrands is ensured by property (H1), and as soon as $r_0,r \in (\eta , 1-\eta)$ for some $\eta > 0$, we obtain by  the a-priori estimates of Lemma \ref{lem:apriori} in the Appendix (since $\tilde{h}^{\eps_1,\eps_2}_s$ is a $\CD(K_0,N)$ density on $(\eta,1-\eta)$ for all $\eps_1,\eps_2 \in (0,\eta]$ and $s \in (\eta,1-\eta)$):
\[
\forall t,s \in [r_0,r] \;\; \forall \eps_1,\eps_2 \in (0,\eta] \;\;  \abs{\partial_t \log \tilde{h}^{\eps_1,\eps_2}_s(t)} \leq C(r,r_0,\eta,K_0,N) .
\]
Consequently, (\ref{eq:Y-converge}) follows by Lebesgue's Dominated Convergence theorem.

\smallskip
Now $Y^{\eps_1,\eps_2}$ is $C^2$-smooth, and so as in our formal argument, we have for all $r \in (\eps,1-\eps)$:
\begin{align*}
(\log Y^{\eps_1,\eps_2})'(r) &= \partial_t |_{t=r}\log \tilde{h}^{\eps_1,\eps_2}_r(t) , \\
(\log Y^{\eps_1,\eps_2})''(r) &= \partial_t^2 |_{t=r} \log \tilde{h}^{\eps_1,\eps_2}_r(t) + \partial_s \partial_t |_{t=s=r} \log \tilde{h}^{\eps_1,\eps_2}_s(t) .
\end{align*}
As $\tilde{h}^{\eps_1,\eps_2}_r$ is a $\CD(K_0,N)$ density on $(\eps,1-\eps)$, we know by the differential characterization of such densities that:
\[
\partial_t^2 |_{t=r} \log \tilde{h}^{\eps_1,\eps_2}_r(t) + \frac{1}{N-1} (\partial_t |_{t=r} \log \tilde{h}^{\eps_1,\eps_2}_r(t))^2 \leq -K_0 . 
\]
Combining this with property (H2), we conclude that for any $\delta \in (0,1/2)$, whenever $\eps = \max(\eps_1,\eps_2) \in (0,\min(r_0,1-r_0,\frac{\delta}{8}))$:
\[
(\log Y^{\eps_1,\eps_2})''(r) + \frac{1}{N-1} ((\log Y^{\eps_1,\eps_2})'(r))^2 \leq -K_0 + 2 C_{\delta} \eps \;\;\; \forall r \in [\delta , 1 - \delta],
\]
and hence $Y^{\eps_1,\eps_2}$ is a $C^2$-smooth $\CD(K_0 - 2 C_{\delta} \eps,N)$ density on $[\delta,1- \delta]$.

\smallskip
Combining all of the preceding information, since (as before) the family of $\CD(K_0',N)$ densities is closed under pointwise limits, we conclude from (\ref{eq:Y-converge}) that $Y$ is a $\CD(K_0 - 2 C_\delta \eps , N)$ density on $[\delta,1-\delta]$, for any $\delta \in (0,1/2)$ and $\eps \in (0,\min(r_0,1-r_0,\frac{\delta}{8}))$. Taking the limit as $\eps \rightarrow 0$ and then as $\delta \rightarrow 0$, we confirm that $Y$ must be a $\CD(K_0,N)$ density on $(0,1)$, concluding the proof.

\smallskip
It remains to establish properties (H1) and (H2).

\medskip
\noindent \textbf{Step 6 - proof of (H1).}\\
Given $y \in (0,1)$ and $t \in (\eps_1,1-\eps_1)$, denote:
\[
\log \tilde{h}_y^{\eps_1}(t) := \int \log \tilde{h}_y(x) \psi_{\eps_1}(t-x) dx ,
\]
so that for every $s \in (\eps_2,1-\eps_2)$:
\begin{equation} \label{eq:double-layer}
\log \tilde{h}^{\eps_1,\eps_2}_s(t) = \int \log \tilde{h}^{\eps_1}_y(t) \psi_{\eps_2}(s-y) dy .
\end{equation}
By Proposition \ref{prop:log-convolve} in the Appendix, $\tilde{h}_y^{\eps_1}$ is a $\CD(K_0,N)$ density on $(\eps_1,1-\eps_1)$ for all $y \in (0,1)$. Consequently, Lemma \ref{lem:apriori} implies that $t \mapsto \log \tilde{h}_y^{\eps_1}(t)$ is locally Lipschitz on $(\eps_1,1-\eps_1)$, uniformly in $y \in (0,1)$:
\begin{equation} \label{eq:H1-uniform}
\sup_{y \in (0,1)} \abs{\partial_t \log \tilde{h}^{\eps_1}_y(t)} \leq C(t,\eps_1,K_0,N) .
\end{equation}
 In particular, it follows that we may differentiate in $t$ under the integral in (\ref{eq:double-layer}) at any $t_0 \in (\eps_1,1-\eps_1)$:
\begin{equation} \label{eq:H1-under-integral}
\partial_t |_{t = t_0} \log \tilde{h}^{\eps_1,\eps_2}_s(t) = \int \partial_t |_{t=t_0} \log \tilde{h}^{\eps_1}_y(t) \psi_{\eps_2}(s-y) dy .
\end{equation}

Now, by a standard argument (see Lemma \ref{lem:Lip-deriv} at the end of this section), we know that the derivative of an $\eps$-mollification of a Lipschitz function converges to the derivative itself, at all points where the derivative exists, namely:
\[
\forall t_0 \in I_x \;\; \forall y \in (0,1) \;\; \lim_{\eps_1 \rightarrow 0} \partial_t |_{t=t_0} \log \tilde{h}_y^{\eps_1}(t) = \partial_t |_{t=t_0} \log \tilde{h}_y(t)  .
\]
Together with (\ref{eq:H1-uniform}) and (\ref{eq:H1-under-integral}), it follows by Dominated Convergence theorem that:
\[
\forall t_0 \in I_x \;\; \forall s \in (\eps_2 , 1-\eps_2) \;\;\; \lim_{\eps_1 \rightarrow 0} \partial_t |_{t = t_0} \log \tilde{h}^{\eps_1,\eps_2}_s(t) = \int \partial_t |_{t=t_0} \log \tilde{h}_y(t) \psi_{\eps_2}(s-y) dy .
\]
But by property (F), we know that $(0,1) \ni y \mapsto \partial_t |_{t=t_0} \log \tilde{h}_y(t)$ is continuous for all $t_0 \in I_x$, and therefore taking the limit as $\eps_2 \rightarrow 0$:
\[
\forall t_0 \in I_x \;\; \forall s \in (0,1) \;\;\;  \lim_{\eps_2 \rightarrow 0} \lim_{\eps_1 \rightarrow 0}  \partial_t |_{t = t_0} \log \tilde{h}^{\eps_1,\eps_2}_s(t) 
= \partial_t  |_{t=t_0} \log \tilde{h}_s(t) .
\]
By property (E), $I_x$ has full measure, thereby concluding the proof of (an extension of) property (H1). 

\medskip
\noindent \textbf{Step 7 - proof of (H2).}\\
We will require the following:
\begin{lemma} \label{lem:z-incr-ratio}
Let $z$ satisfy (C) and (D). Then for all $\delta \in (0,1/2)$, there exists $C_\delta > 0$, so that for all
$\eps \in (0,\frac{\delta}{4}]$, $r \in [\delta,1-\delta]$, $r-\eps \leq t_1 < t_2 \leq r + \eps$ and $r-\eps \leq s_1 < s_2 \leq r+\eps$, we have:
\begin{align*}
& (1 + (t_1 - s_1) z(t_1)) (1 + (t_2 - s_2) z(t_2)) \\
& \leq (1 + C_\delta \eps (t_2 - t_1) (s_2 - s_1))(1 + (t_2 - s_1) z(t_2)) (1 + (t_1 - s_2) z(t_1)) . 
\end{align*}
\end{lemma}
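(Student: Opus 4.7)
The plan is to reduce the asserted inequality to a straightforward algebraic identity combined with the quantitative third-order bound in (C). First I would expand both sides as polynomials in $z_1 := z(t_1)$ and $z_2 := z(t_2)$. Writing $A := (1+(t_1-s_1)z_1)(1+(t_2-s_2)z_2)$ and $B := (1+(t_2-s_1)z_2)(1+(t_1-s_2)z_1)$, a direct expansion together with the identity
\[
(t_2-s_1)(t_1-s_2) - (t_1-s_1)(t_2-s_2) = -(t_2-t_1)(s_2-s_1)
\]
gives
\[
B - A = (s_2-s_1)\bigl[(z_2-z_1) - (t_2-t_1)\,z_1 z_2\bigr].
\]
Thus the problem reduces to controlling the bracket from below.

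Next I would apply property (C) to the pair $(t_1,t_2) \in [r-\eps,r+\eps]^2 \subset [\delta/2, 1-\delta/2]$, which yields $z_2-z_1 \geq (t_2-t_1)(1 - C_\delta(t_2-t_1)) |z_1||z_2|$. Subtracting $(t_2-t_1) z_1 z_2$ and using the trivial inequality $|z_1||z_2| \geq z_1 z_2$, I would obtain
\[
(z_2-z_1)-(t_2-t_1) z_1 z_2 \geq -C_\delta (t_2-t_1)^2 |z_1||z_2|,
\]
and therefore $A \leq B + C_\delta (s_2-s_1)(t_2-t_1)^2 |z_1||z_2|$. It now suffices to absorb the error term into the multiplicative factor $1+C_\delta'\eps(t_2-t_1)(s_2-s_1)$ multiplying $B$, which, after bounding $t_2-t_1 \leq 2\eps$, reduces to showing $B \geq c_\delta |z_1||z_2|$ for some constant depending only on $\delta$.

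This last lower bound is where (D) enters: since $t_i \in [3\delta/4, 1-3\delta/4]$ (as $\eps \leq \delta/4$), (D) gives $|z_i| \leq 4/(3\delta)$, and since $|t_i-s_j| \leq 2\eps \leq \delta/2$, each factor of $B$ is at least $1 - 2/3 = 1/3$, so $B \geq 1/9$ while $|z_1||z_2| \leq 16/(9\delta^2)$. Taking a new constant $C_\delta' := 32 C_\delta/\delta^2$ (and relabeling it $C_\delta$) gives the desired inequality.

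The proof is essentially bookkeeping; the only conceptual step is isolating $(z_2-z_1)-(t_2-t_1)z_1 z_2$ as the key quantity, which is precisely what assumption (C) is tailored to estimate. The main obstacle will be handling the sign issue in the step $|z_1||z_2| \geq z_1 z_2$: this is lossy exactly when $z_1 z_2 < 0$, but in that regime $z_1 z_2$ is bounded by $|z_1||z_2|$ with the right sign so the estimate still goes through; I expect no substantive difficulty beyond tracking constants.
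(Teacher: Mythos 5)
Your proposal is correct and follows essentially the same route as the paper: expanding the products reduces the claim to bounding $\frac{z(t_2)-z(t_1)}{t_2-t_1}-z(t_1)z(t_2)$ from below, which is handled by (C) together with $z(t_1)z(t_2)\le|z(t_1)||z(t_2)|$, and the remaining bookkeeping via (D) (using $\min(t_i,1-t_i)\ge \tfrac{3}{4}\delta$ and $|t_i-s_j|\le 2\eps\le\tfrac{\delta}{2}$) yields the same constant of order $32/\delta^2$ times the constant from (C). The sign issue you flag at the end is a non-issue, since $|z_1||z_2|\ge z_1z_2$ holds unconditionally.
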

\begin{proof}
Opening the various brackets, the assertion is equivalent to the statement:
\begin{align*}
& z(t_1) (s_2 - s_1) - z(t_2)(s_2 - s_1) + z(t_1) z(t_2) (t_2 - t_1)(s_2 - s_1)  \\
& \leq C_\delta \eps (t_2 - t_1) (s_2 - s_1) (1 + (t_2 - s_1) z(t_2)) (1 + (t_1 - s_2) z(t_1)) ,
\end{align*}
and after dividing by $(t_2 - t_1) (s_2 - s_1)$, we see that our goal is to establish:
\begin{equation} \label{eq:z-incr-ratio}
z(t_1) z(t_2) - \frac{z(t_2) - z(t_1)}{t_2 - t_1} \leq C_\delta \eps (1 + (t_2 - s_1) z(t_2)) (1 + (t_1 - s_2) z(t_1)) ,
\end{equation}
for an appropriate $C_\delta$. Note that the right-hand-side of (\ref{eq:z-incr-ratio}) is always positive by (D). 
As $\min(t_i,1-t_i) \geq \delta - \eps \geq \frac{3}{4} \delta$, by our assumption (C), (\ref{eq:z-incr-ratio}) would follow from:
\[
\abs{z(t_1)} \abs{z(t_2)} B_{\frac{3}{4} \delta} 2 \eps \leq C_\delta \eps (1 - 2 \eps \abs{z(t_2)})(1 - 2 \eps \abs{z(t_1)}) ,
\]
or equivalently (assuming $\abs{z(t_1)} \abs{z(t_2)} > 0$, otherwise there is nothing to prove):
\begin{equation} \label{eq:z-incr-ratio-final}
2 B_{\frac{3}{4} \delta} \leq C_{\delta} \brac{\frac{1}{\abs{z(t_1)}} - 2 \eps} \brac{\frac{1}{\abs{z(t_2)}} - 2 \eps} .
\end{equation}
But $\frac{1}{\abs{z(t_i)}} \geq \min(t_i,1-t_i) \geq \frac{3}{4} \delta$ by  (D), and as $\eps \in (0,\frac{\delta}{4}]$, we see that (\ref{eq:z-incr-ratio-final}) is ensured by setting:
\[
C_{\delta} := \frac{32}{\delta^2} B_{\frac{3}{4} \delta}  .
\]
\end{proof}

Translating the statement of Lemma \ref{lem:z-incr-ratio} into a statement for $\tilde{h}_s(t)$ using (\ref{eq:rigid-assump2}), we obtain that for all $\delta \in (0,1/2)$, there exists $C_\delta > 0$, so that for all
$\eps \in (0,\frac{\delta}{8}]$, $r \in [\delta,1-\delta]$, $r-\eps \leq t,s\leq r + \eps$ and $\Delta t,\Delta s \in [0,\eps]$, we have:
\[ \log \tilde{h}_{s}(t) + \log \tilde{h}_{s+\Delta s}(t + \Delta t) \leq \log \tilde{h}_{s}(t + \Delta t) + \log \tilde{h}_{s+\Delta s}(t) + 2 C_\delta \eps \; \Delta t \; \Delta s .
\]  Integrating the above in $t$ against $\psi_{\eps_1}(r-t)$ and in $s$ against $\psi_{\eps_2}(r-s)$ with $\eps_1,\eps_2 \in (0,\eps]$, we obtain that under the same assumptions as above:
\[
\log \tilde{h}^{\eps_1,\eps_2}_{r}(r) + \log \tilde{h}^{\eps_1,\eps_2}_{r+\Delta s}(r + \Delta t) \leq \log \tilde{h}^{\eps_1,\eps_2}_{r}(r + \Delta t) + \log \tilde{h}^{\eps_1,\eps_2}_{r+\Delta s}(r) + C_\delta \eps \; \Delta t \; \Delta s.
\]
Exchanging sides, dividing by $\Delta t > 0$ and taking limit as $\Delta t \rightarrow 0$, and then dividing by $\Delta s > 0$ and taking limit as $\Delta s \rightarrow 0$, we obtain precisely:
\[
\partial_s \partial_t |_{t=s=r} \log \tilde{h}^{\eps_1,\eps_2}_{s}(t) \leq 2 C_\delta \eps ,
\]
thereby confirming (H2). 
\end{proof}

For completeness, we provide a proof of the following lemma, used in \textbf{Step 6} above.  

\begin{lemma} \label{lem:Lip-deriv} Let $f$ be a locally Lipschitz function on an open interval $I \subset \Real$. Let $\psi$ denote a $C^1$-smooth compactly supported function on $\Real$ which integrates to $1$. Denote by $\psi_\eps(x) = \frac{1}{\eps} \psi(x/\eps)$, $\eps > 0$, the corresponding family of mollifiers. Then:
\[
\lim_{\eps \rightarrow 0} (f \ast \psi_\eps)'(x) = f'(x)  ,
\]
at all points $x \in I$ where $f$ is differentiable. 
\end{lemma}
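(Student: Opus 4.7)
The plan is to differentiate under the integral sign, change variables to isolate a difference quotient at $x$, and pass to the limit by dominated convergence using the local Lipschitz bound.

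First, since $\psi \in C^1$ has compact support and $f$ is locally integrable, I can differentiate under the integral to obtain $(f\ast\psi_\eps)'(x) = \int f(y)\psi_\eps'(x-y)\,dy$. The substitution $y = x - \eps u$ (with $\psi_\eps'(z) = \eps^{-2}\psi'(z/\eps)$) converts this into
\[
(f\ast\psi_\eps)'(x) = \frac{1}{\eps}\int f(x-\eps u)\,\psi'(u)\,du.
\]
Because $\psi$ is compactly supported, $\int \psi'(u)\,du = 0$, so I may subtract the vanishing constant $\frac{f(x)}{\eps}\int\psi'(u)\,du$ and rewrite
\[
(f\ast\psi_\eps)'(x) = \int \frac{f(x-\eps u) - f(x)}{\eps}\,\psi'(u)\,du.
\]

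Next I will pass to the limit $\eps \to 0$. For each fixed $u \neq 0$, differentiability of $f$ at $x$ yields pointwise convergence of the integrand to $-u\,\psi'(u)\,f'(x)$. To apply dominated convergence, I choose a compact neighborhood of $x$ in $I$ on which $f$ is $L$-Lipschitz; then for $\eps$ small enough that $x - \eps u$ remains in this neighborhood for every $u \in \supp(\psi')$, I get the uniform bound $\bigl|\frac{f(x-\eps u)-f(x)}{\eps}\bigr| \leq L\,|u|$, so the integrand is dominated by the integrable function $L\,|u|\,|\psi'(u)|$. Hence
\[
\lim_{\eps\to 0}(f\ast\psi_\eps)'(x) = -f'(x)\int u\,\psi'(u)\,du.
\]
A final integration by parts (justified by compact support of $\psi$) gives $\int u\,\psi'(u)\,du = -\int\psi(u)\,du = -1$, and the claim follows.

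There is no real obstacle here; the only point requiring attention is that differentiability is assumed only at the single point $x$, which is nevertheless enough because the difference quotient $\frac{f(x-\eps u) - f(x)}{\eps}$ is evaluated only at that point, while the local Lipschitz hypothesis supplies the integrable dominant needed to exchange limit and integral.
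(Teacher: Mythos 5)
Your proof is correct and follows essentially the same route as the paper's: move the derivative onto the mollifier, rescale to expose the difference quotient $\frac{f(x-\eps u)-f(x)}{\eps}$, dominate it by $L|u|\,|\psi'(u)|$ via the local Lipschitz bound, apply dominated convergence, and finish with $\int u\,\psi'(u)\,du=-1$. The only cosmetic difference is that you subtract $f(x)$ explicitly using $\int\psi'=0$, whereas the paper first writes the derivative as $\int f'(y)\psi_\eps(y)\,dy$ and integrates by parts (implicitly normalizing $f(0)=0$); the computations are the same.
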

\begin{proof}
Without loss of generality, assume that $0 \in I$, that $f$ is differentiable at $0$ and that $f(0)=0$. Assume that $\psi$ is supported in $[-M,M]$, and let $\eps > 0$ be small enough so that $[-M \eps , M \eps] \subset I$. Then:
\[
(f \ast \psi_\eps)'(0) = \left . \frac{d}{dx} \right |_{x=0} \int f(x+y) \psi_\eps(y) dy = \int f'(y) \psi_\eps(y) dy ,
\]
where the differentiation under the integral is justified since $f$ is locally Lipschitz. Integrating by parts (which is justified as $f \psi_\eps$ is absolutely continuous), we obtain:
\[
(f \ast \psi_\eps)'(0) = - \int_{-M\eps}^{M\eps} f(y) \psi_\eps'(y) dy = - \int_{-M}^M \frac{f(\eps z)}{\eps z} z \psi'(z) dz .
\]
But for each $z \in [-M , M] \setminus \set{0}$, $\lim_{\eps \rightarrow 0} \frac{f (\eps z)}{\eps z} = f'(0)$, and since $f$ is Lipschitz on $[-\eps M , \eps M]$, we obtain by Lebesgue's Dominated Convergence Theorem that:
\[
\lim_{\eps \rightarrow 0} (f \ast \psi_\eps)'(0)  = - \int_{-M}^M f'(0) z \psi'(z) dz = f'(0) \int_{-M}^M \psi(z) dz = f'(0) ,
\]
as asserted. 
\end{proof}

\bigskip
\bigskip

\section{Final Results} \label{S:Final}

In this final section, we combine the results obtained in Parts \ref{part1}, \ref{part2} and the previous section, establishing at last the Main Theorem \ref{thm:main} and the globalization theorem for the $\CD(K,N)$ condition. We also treat the case of an infinitesimally Hilbertian space. 

\smallskip
Throughout this section, recall that we assume $K \in \Real$ and $N \in (1,\infty)$.

\subsection{Proof of the Main Theorem \ref{thm:main}}

\begin{theorem}\label{T:CDloctoCD1}
Let $(X,\sfd,\mm)$ be an essentially non-branching \mms, so that $(\supp(\mm),\sfd)$ is a length space. Then:
$$
\CD_{loc}(K,N) \Rightarrow \CD^{1}_{Lip}(K,N) .
$$
\end{theorem}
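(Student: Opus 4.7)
The plan is to follow the localization paradigm of Cavalletti--Mondino \cite{CM1}, adapted to our setting where we additionally need maximality of transport rays (Definition \ref{def:transport-ray}). First I would reduce to the case $\supp(\mm)=X$ by replacing the base space with $(\supp(\mm),\sfd,\mm)$, which is proper and geodesic by the length-space hypothesis together with the doubling property of $\CD_{loc}(K,N)$ (via the Hopf-Rinow theorem, as in Lemma \ref{L:proper-support}). Next, because $\CD_{loc}(K,N)\Rightarrow\CD^{*}(K,N)\Rightarrow\MCP(K,N)$ on non-branching spaces by Bacher--Sturm \cite{sturm:loc} and since our space is essentially non-branching, all the tools of $L^{1}$-optimal transport developed in Section~\ref{S:L1-OT} apply. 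In particular, given an arbitrary $1$-Lipschitz $u:X\to\R$, Corollary~\ref{C:disintMCP} yields the essentially unique disintegration
\[
\mm\llcorner_{\T_{u}}=\int_{Q}\mm_{\alpha}\,\qq(d\alpha),\qquad \mm_{\alpha}(R_{u}^{b}(\alpha))=1,
\]
strongly consistent with the equivalence relation $R_{u}^{b}$ on the non-branched transport set.

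The core task is to show that for $\qq$-a.e. $\alpha\in Q$, the conditional $\mm_{\alpha}$ is a $\CD(K,N)$ probability measure on the $1$-dimensional set $R_{u}^{b}(\alpha)$ (hence, after identifying the latter with an interval via its isometry to $(\R,|\cdot|)$, that $\mm_{\alpha}$ is represented by a $\CD(K,N)$ density as in the Appendix). This is the heart of the Cavalletti--Mondino argument: one selects countably many pairs of $W_{2}$-test measures whose supports are concentrated on small pieces of transverse level sets of $u$, transports them along rays, and invokes the local $\CD(K,N)$ inequality on appropriate neighborhoods $X_{o}$ (using the uniqueness/map structure granted by Theorem~\ref{T:optimalmapMCP}). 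Restricting such transports to individual rays yields, by essential uniqueness of the disintegration, local $\CD(K,N)$ density inequalities for $\mm_{\alpha}$ in a neighborhood of every interior point of $R_{u}^{b}(\alpha)$.

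At this point the crucial one-dimensional local-to-global gluing enters: in dimension one, the $\CD(K,N)$ density condition reduces to a differential inequality that is governed by the ODE \eqref{eq:intro-ode} for the $\sigma_{K,N-1}^{(t)}(\theta)$-coefficients. Consequently a density that is locally a $\CD(K,N)$ density on each piece of a finite cover of an interval is in fact a $\CD(K,N)$ density on the whole interval (see also Lemmas \ref{lem:point-CDKN}--\ref{lem:density-CDKN} referenced in the Introduction). This upgrades the locally established estimates on $\mm_{\alpha}$ to a genuine $\CD(K,N)$ density on all of $R_{u}^{b}(\alpha)$.

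The final, and genuinely new, step is the maximality requirement of Definition~\ref{D:CD1-u}: we set $X_{\alpha}:=\overline{R_{u}^{b}(\alpha)}$, and Theorem~\ref{T:endpoints} guarantees that for $\qq$-a.e. $\alpha$ this closure coincides with the full transport ray $R_{u}(\alpha)$ (up to the inessential initial/final endpoints lying in $\mathfrak{a}\cup\mathfrak{b}$), hence $X_{\alpha}$ is a maximal transport ray. Since $R_{u}^{b}(\alpha)\subset X_{\alpha}$ differ at most in the endpoints (to which a $\CD(K,N)$ density extends by lower semi-continuity, vanishing at the boundary when the interval is finite), $\mm_{\alpha}$ transparently descends to a $\CD(K,N)$ probability measure supported on $X_{\alpha}$ with $\supp(\mm_{\alpha})=X_{\alpha}$, verifying all four clauses of Definition~\ref{D:CD1-u} and hence $\CD^{1}_{u}(K,N)$; as $u$ was arbitrary, $\CD^{1}_{Lip}(K,N)$ follows. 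The main technical obstacle is the uniformity and measurability in $\alpha$ of the estimates extracted from $W_{2}$-geodesics in step two - one must ensure that the $\CD_{loc}(K,N)$ neighborhoods $X_{o}$ are handled compatibly with a measurable parameterization of the transport rays, exactly as in \cite{CM1}, while simultaneously tracking the behavior near the endpoints $\mathfrak{a},\mathfrak{b}$ so that Theorem~\ref{T:endpoints} can be invoked to pass from $R_{u}^{b}(\alpha)$ to the maximal $R_{u}(\alpha)$.
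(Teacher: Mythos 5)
Your proposal is correct and follows essentially the same route as the paper: reduce to $\supp(\mm)=X$ (proper and geodesic via Lemma \ref{L:proper-support}), obtain $\MCP(K,N)$ so that Corollary \ref{C:disintMCP} gives the disintegration along $R_u^b$-classes, invoke the Cavalletti--Mondino localization of \cite{CM1} for the fact that the conditionals are one-dimensional $\CD(K,N)$ measures of full support, and finally use Theorem \ref{T:endpoints} to upgrade $\overline{R_u^b(\alpha)}$ to the maximal transport ray $R_u(\alpha)$. The only difference is cosmetic: you sketch the $W_2$-testing and one-dimensional gluing steps that the paper simply delegates to \cite{CM1}, and you route $\CD_{loc}\Rightarrow\MCP$ through $\CD^*$ rather than citing \cite{cavasturm:MCP} directly.
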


\begin{proof}
By Remark \ref{rem:supp-nu}, $(X,\sfd,\mm)$ satisfies $\CD_{loc}(K,N)$ if and only if $(\supp(\mm),\sfd,\mm)$ does. By Remark \ref{R:CD1-localizes}, the same is true for $\CD^1_{Lip}(K,N)$. Consequently, we may assume that $\supp(\mm) = X$. By Lemma \ref{L:proper-support} we deduce that $(X,\sfd)$ is proper and geodesic (note that this would be false without the length space assumption above). 
Note that for geodesic essentially non-branching spaces, it is known that $\CD_{loc}(K,N)$ implies $\MCP(K,N)$ -- see \cite{cavasturm:MCP} for a proof assuming non-branching, 
but the same proof works under essentially non-branching, see the comments after \cite[Corollary 5.4]{CM3}.
 Consequently, the results of Section \ref{S:L1-OT} apply.

Recall that given a $1$-Lipschitz function $u : X \rightarrow \Real$, the equivalence relation $R^b_{u}$ on the transport set $\T_{u}^{b}$ induces a partition 
$\{R_u^b(\alpha)\}_{\alpha \in Q}$ of $\T_{u}^{b}$. By Corollary \ref{C:disintMCP}, we know that 
$\mm(\T_u \setminus \T_u^b) = 0$ with associated strongly consistent disintegration:
$$
\mm\llcorner_{\T_{u}}  = \mm\llcorner_{\T_{u}^{b}} = \int_{Q} \mm_{\alpha} \,\qq(d\alpha), \qquad \text{with } \mm_{\alpha} (R_u^b(\alpha)) = 1, \ \text{for } \qq\text{-a.e. }  \alpha \in Q .
$$
It was proved in \cite{CM1} that the $\CD_{loc}(K,N)$ condition ensures that for $\qq$-a.e. $\alpha \in Q$, $(\overline{R^b_u(\alpha)},\sfd,\mm_{\alpha})$ verifies $\CD(K,N)$ with $\supp(\mm_\alpha) = \overline{R^b_u(\alpha)}$.
Denoting by $X_{\alpha}$ the closure $\overline{R_u^b(\alpha)}$,
Theorem \ref{T:endpoints} ensures that $X_{\alpha}$ coincides with the transport ray $R_{u}(\alpha)$ for $\qq$-a.e. $\alpha \in Q$. Consequently, all 4 conditions of the $\CD^1_u(K,N)$ Definition \ref{D:CD1-u} are verified, and the assertion follows.  
\end{proof}

\begin{theorem}\label{T:CD1-CD}
Let $(X,\sfd,\mm)$ be an essentially non-branching \mms . Then:
\[
\CD^1(K,N) \Rightarrow \CD(K,N) .
\]
\end{theorem}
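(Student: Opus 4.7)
The plan is to verify directly the pointwise density characterization of $\CD(K,N)$ provided by Definition \ref{def:CDKN-ENB}. First I would reduce to the case $\supp(\mm)=X$, which is permissible by Remark \ref{R:CD1-localizes} and Remark \ref{rem:supp-nu}. Given $\mu_0,\mu_1\in\P_2(X,\sfd,\mm)$, Proposition \ref{P:MCP} and Remark \ref{rem:CD1-MCP} yield $\MCP(K,N)$, so Lemma \ref{L:proper-support} and Theorem \ref{T:optimalmapMCP} give a unique $\nu\in\Opt(\mu_0,\mu_1)$ induced by a map, with $\mu_t\ll\mm$ for $t\in[0,1)$; by reversing $\mu_0\leftrightarrow\mu_1$ one extends this to all $t\in[0,1]$. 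I will fix the versions of $\rho_t$ furnished by Corollary \ref{C:regularity3MCP} and Theorem \ref{T:changeofV}, so that (\ref{E:regularityrho}) holds and $t\mapsto\rho_t(\gamma_t)$ is locally Lipschitz on $(0,1)$ and upper semi-continuous at the endpoints for $\nu$-a.e.\ $\gamma$. For $\nu$-a.e.\ null Kantorovich geodesic $\gamma\in G_\varphi^0$ the desired inequality is trivial: by (\ref{E:recap2}) the densities are constant along $\gamma$, and since $\sfd(\gamma_0,\gamma_1)=0$ one has $\tau^{(s)}_{K,N}(0)=s$, so the $\CD(K,N)$ bound reduces to an equality.

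Fix then a positive-length Kantorovich geodesic $\gamma\in G_\varphi^+$ along which the change-of-variables formula (\ref{E:recap}) holds for a.e.\ $s,t\in(0,1)$ (a property holding for $\nu$-a.e.\ such $\gamma$ by Theorem \ref{T:changeofV}). As recorded in the discussion preceding Theorem \ref{thm:rigid}, combining (\ref{E:recap}) with the third-order information of Theorem \ref{thm:z-c}, the temporal regularity of $t\mapsto\rho_t(\gamma_t)$, and the $\CD(\ell^2 K,N)$-density property of the weights $h^{\varphi_s(\gamma_s)}_{\gamma_s}$ from Proposition \ref{P:L1disintegration}, one exactly obtains hypotheses (A)--(C) of the Change-of-Variables Rigidity Theorem \ref{thm:rigid} with $K_0:=\ell^2 K$, where $\ell:=\ell(\gamma)$. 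Fixing some $r_0\in(0,1)$ and applying Theorem \ref{thm:rigid}, I will obtain the disentanglement
\[
\frac{\rho_{r_0}(\gamma_{r_0})}{\rho_r(\gamma_r)} \;=\; L(r)\,Y(r)\qquad \forall r\in(0,1),
\]
with $L:(0,1)\to(0,\infty)$ concave and $Y:(0,1)\to(0,\infty)$ a $\CD(\ell^2 K,N)$ density.

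To harvest the pointwise $\CD(K,N)$ inequality from this decomposition, I will extend $L$ and $Y$ continuously to $[0,1]$: positivity and concavity of $L$ force finite limits at the endpoints via the standard monotonicity of slopes, while the one-dimensional $\CD$-density structure provides the analogous regularity for $Y$ (cf.\ the Appendix). Concavity of $L$ then yields $L(t)\geq (1-t)L(0)+tL(1)$, and the $\CD(\ell^2 K,N)$ density inequality for $Y$ (using $\sigma^{(s)}_{\ell^2 K,N-1}(1)=\sigma^{(s)}_{K,N-1}(\ell)$) gives
\[
Y(t)^{1/(N-1)}\;\geq\;\sigma^{(1-t)}_{K,N-1}(\ell)\,Y(0)^{1/(N-1)} + \sigma^{(t)}_{K,N-1}(\ell)\,Y(1)^{1/(N-1)}.
\]
A Hölder inequality applied to the pairs $\bigl(((1-t)L(0))^{1/N},(tL(1))^{1/N}\bigr)$ and $\bigl((\sigma^{(1-t)}_{K,N-1}(\ell)Y(0)^{1/(N-1)})^{(N-1)/N}, (\sigma^{(t)}_{K,N-1}(\ell)Y(1)^{1/(N-1)})^{(N-1)/N}\bigr)$ with conjugate exponents $N$ and $N/(N-1)$ combines these into
\[
L(t)^{1/N}Y(t)^{1/N}\;\geq\;\tau^{(1-t)}_{K,N}(\ell)\,L(0)^{1/N}Y(0)^{1/N} + \tau^{(t)}_{K,N}(\ell)\,L(1)^{1/N}Y(1)^{1/N},
\]
exactly matching the factorization $\tau^{(s)}_{K,N}(\theta)=s^{1/N}\sigma^{(s)}_{K,N-1}(\theta)^{1-1/N}$. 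Multiplying by $\rho_{r_0}(\gamma_{r_0})^{-1/N}$, using the identity $\rho_r(\gamma_r)^{-1}=\rho_{r_0}(\gamma_{r_0})^{-1}L(r)Y(r)$ on $(0,1)$, and invoking the upper semi-continuity of $r\mapsto\rho_r(\gamma_r)$ at $r=0,1$ to compare $L(0)Y(0)/\rho_{r_0}$ with $\rho_0(\gamma_0)^{-1}$ and analogously at $r=1$, I reach the desired pointwise $\CD(K,N)$ density inequality, verifying Definition \ref{def:CDKN-ENB}.

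The step I expect to demand the most care is the endpoint analysis $r\to 0^+$, $r\to 1^-$ of the relation $\rho_r\,L\,Y\equiv\rho_{r_0}$: one has to argue carefully that $L$ and $Y$ admit finite continuous extensions to $[0,1]$ (exploiting positivity plus concavity for $L$, and the fine structure of one-dimensional $\CD$-densities for $Y$) and that the resulting boundary values satisfy $L(0)Y(0)/\rho_{r_0}\geq\rho_0(\gamma_0)^{-1}$ and its $t=1$ analogue via the upper semi-continuity of $\rho_r(\gamma_r)$. Once this is established, the remainder of the argument is the formal Hölder computation above, which is essentially forced by the definition of $\tau^{(t)}_{K,N}$ as the geometric interpolation between the linear (tangential) and $\sigma$ (orthogonal) distortion coefficients.
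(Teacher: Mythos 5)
Your proposal is correct and follows essentially the same route as the paper: reduce to full support, get the unique plan and regular densities from $\MCP(K,N)$, feed the change-of-variables formula of Theorem \ref{T:changeofV}, the third-order bound of Theorem \ref{thm:z-c} and the $\CD(\ell^2K,N)$ densities of Proposition \ref{P:L1disintegration} into the rigidity Theorem \ref{thm:rigid}, and then combine the concavity of $L$ with the density inequality for $Y$ via H\"older to produce the $\tau$-coefficients, treating null geodesics separately via (\ref{E:recap2}).

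The only divergence is in the endpoint passage. You extend $L$ and $Y$ continuously to $[0,1]$ and apply their defining inequalities at $t_0=0$, $t_1=1$; this works, but the finiteness of the boundary limits of a $\CD(\ell^2K,N)$ density on the open interval (in particular for $K<0$, where $Y^{1/(N-1)}$ is not concave) is not recorded in the Appendix and would need a short extra argument (e.g.\ boundedness near the endpoints from the two-point $\sigma$-inequality, then local semi-concavity with bounded constant). The paper sidesteps this entirely: it runs the H\"older computation at interior times $t_0,t_1\in(0,1)$, where $L$ and $Y$ are defined, obtaining (\ref{eq:synthetic-inq-anyt}) directly in terms of $\rho_{t_0},\rho_{t_1}$, and only then lets $t_0\to 0$, $t_1\to 1$ using the upper semi-continuity of $t\mapsto\rho_t(\gamma_t)$ at $t=0,1$ from Corollary \ref{C:regularity3MCP} — the same semi-continuity you invoke, but applied so that no boundary values of $L$ or $Y$ are ever needed. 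Adopting that ordering would close the one small gap in your write-up without changing anything else.
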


\begin{proof}
By Remark \ref{R:CD1-localizes}, $(X,\sfd,\mm)$ satisfies $\CD^1(K,N)$ if and only if $(\supp(\mm),\sfd,\mm)$ does. By Remark \ref{rem:supp-nu}, the same is true for $\CD(K,N)$. Consequently, we may assume that $\supp(\mm) = X$. 

By Proposition \ref{P:MCP} and Remark \ref{rem:CD1-MCP}, $X$ also verifies $\MCP(K,N)$, and so Theorem \ref{T:optimalmapMCP} applies.
Given $\mu_{0},\mu_{1} \in \mathcal{P}_{2}(X,\sfd,\mm)$, consider the unique $\nu \in \Opt(\mu_{0},\mu_{1})$, and denote $\mu_t := (\ee_t)_{\sharp}(\nu) \ll \mm$ for all $t \in [0,1]$. Let $\rho_t := d\mu_t / d\mm$ denote the versions of the densities guaranteed by Corollary \ref{C:regularity3MCP}. 

Denote an associated Kantorovich potential by $\varphi$, and recall that $\nu$ is concentrated on $G_\varphi = G_\varphi^+ \cup G_\varphi^0$, where $G_\varphi^+$ and $G_\varphi^0$ denote the subsets of positive and zero length $\varphi$-Kantorovich geodesics, respectively.
The change-of-variables Theorem \ref{T:changeofV} and Proposition \ref{prop:Phi} yield that for $\nu$-a.e. geodesic $\gamma \in G_\varphi^+$:
\begin{equation} \label{eq:final1}
\frac{\rho_s(\gamma_s)}{\rho_t(\gamma_t)} = \frac{h^{\varphi_s(\gamma_s)}_{\gamma_s}(t)}{1 + (t-s) \frac{\partial_\tau|_{\tau=t}\ell_\tau^2/2(\gamma_t)}{\len(\gamma)^2}} = \frac{h^{\varphi_s(\gamma_s)}_{\gamma_s}(t)}{1 + (t-s) \frac{\partial_\tau|_{\tau=t}\ellc_\tau^2/2(\gamma_t)}{\len(\gamma)^2}}
\;\;\; \text{for a.e. } t,s \in (0,1) .
\end{equation}
where for all $s \in (0,1)$, $h_{s} = h^{\f_{s}(\gamma_{s})}_{\gamma_s}$ is a $\CD(K_{0},N)$ density, with $K_{0} = \ell^{2}(\gamma) K$ and $h_{s}(s)=1$. Together with Corollary \ref{C:regularity3MCP}, which ensures the Lipschitz regularity (and positivity) of $(0,1) \ni t \mapsto \rho_t(\gamma_t)$, this verifies assumptions (A) and (B) of Theorem \ref{thm:rigid}. 
As explained in Section \ref{sec:LY}, the 3rd order information on the Kantorovich potential $\varphi$ asserted by Theorem \ref{thm:z-c} verifies assumption (C) of Theorem \ref{thm:rigid}. It follows by Theorem \ref{thm:rigid} (and the discussion preceding it) that the rigidity of (\ref{eq:final1}) necessarily implies that for those $\gamma \in G_\varphi^+$ satisfying (\ref{eq:final1}), it holds:
$$
\frac{1}{\rho_{t}(\gamma_{t})} = L(t) Y(t) \;\;\; \forall t \in (0,1) ,
$$
where $L$ is concave and $Y$ is a $\CD(K_0,N)$ density on $(0,1)$. Noting that $\sigma_{K_0,N}^{(\alpha)}(\theta) = \sigma_{K,N}^{(\alpha)}(\theta \ell(\gamma))$, we obtain by a standard application of H\"{o}lder's inequality that for any $t_0,t_1 \in (0,1)$, $\alpha \in [0,1]$ and $t_\alpha = \alpha t_1 + (1-\alpha) t_0$:
\begin{align}
\nonumber & \rho_{t_\alpha}^{-\frac{1}{N}}(\gamma_{t_\alpha})  =  L^{\frac{1}{N}}(t_\alpha) Y^{\frac{1}{N}}(t_\alpha) \\
\nonumber & \geq \Big( \alpha L(t_1) + (1-\alpha) L(t_0) \Big)^{\frac{1}{N}} \cdot \Big( \sigma_{K_0,N-1}^{(\alpha)}(\abs{t_1-t_0}) Y^{\frac{1}{N-1}}(t_1) + \sigma_{K_0,N-1}^{(1-\alpha)}(\abs{t_1-t_0}) Y^{\frac{1}{N-1}}(t_0) \Big)^{\frac{N-1}{N}} \\
\nonumber & \geq  
 \alpha^{\frac{1}{N}} \sigma_{K_0,N-1}^{(\alpha)}(\abs{t_1-t_0})^{\frac{N-1}{N}} L^{\frac{1}{N}} (t_1)Y^{\frac{1}{N}}(t_1) + 
         (1-\alpha)^{\frac{1}{N}} \sigma_{K_0,N-1}^{(1-\alpha)}(\abs{t_1-t_0})^{\frac{N-1}{N}} L^{\frac{1}{N}} (t_0)Y^{\frac{1}{N}}(t_0)   \\
\nonumber & = 
 \alpha^{\frac{1}{N}} \sigma_{K,N-1}^{(\alpha)}(\abs{t_1-t_0} \ell(\gamma))^{\frac{N-1}{N}} \rho_{t_1}^{-\frac{1}{N}}(\gamma_{t_1})  + 
         (1-\alpha)^{\frac{1}{N}} \sigma_{K,N-1}^{(1-\alpha)}(\abs{t_1-t_0} \ell(\gamma))^{\frac{N-1}{N}} \rho_{t_0}^{-\frac{1}{N}}(\gamma_{t_0})    \\
\label{eq:synthetic-inq-anyt} & =  \tau_{K,N}^{(\alpha)}(\sfd(\gamma_{t_0}, \gamma_{t_1})) \rho_{t_1}^{-\frac{1}{N}}(\gamma_{t_1}) + 
\tau_{K,N}^{(1-\alpha)}(\sfd(\gamma_{t_0}, \gamma_{t_1})) \rho_{t_0}^{-\frac{1}{N}}(\gamma_{t_0}) .
\end{align}
Using the upper semi-continuity of $t \mapsto \rho_t(\gamma_t)$ at the end-points $t=0,1$ ensured by Corollary \ref{C:regularity3MCP} (as both $\mu_0,\mu_1 \ll \mm$), we conclude that for $\nu$-a.e. $\gamma \in G^+_{\varphi}$,  the previous inequality in fact holds for all $t_0,t_1 \in [0,1]$. In particular, for $t_0 = 0$, $t_1=1$ and all $\alpha \in [0,1]$:
\begin{equation} \label{eq:grand-conclusion}
\rho_{\alpha}^{-1/N}(\gamma_{\alpha})  \geq 
\tau_{K,N}^{(\alpha)}(\sfd(\gamma_0, \gamma_1)) \rho_{1}^{-\frac{1}{N}}(\gamma_{1}) + 
\tau_{K,N}^{(1-\alpha)}(\sfd(\gamma_0, \gamma_1)) \rho_{0}^{-\frac{1}{N}}(\gamma_{0}) .
\end{equation}
As for null-geodesics $\gamma \in G_\varphi^0$ (having zero length), note that $\tau_{K,N}^{(s)}(0) = s$ and that $[0,1] \ni t \mapsto\rho_t(\gamma_t)$ remains constant by Theorem \ref{T:changeofV}, and therefore (\ref{eq:grand-conclusion}) holds trivially with equality for all $\gamma \in G_\varphi^0$. In conclusion, (\ref{eq:grand-conclusion})  holds for $\nu$-a.e. geodesic $\gamma$, thereby confirming the validity of Definition \ref{def:CDKN-ENB} and verifying $\CD(K,N)$.
\end{proof}

As an immediate consequence of the previous two theorems, we obtain the Local-to-Global Theorem for the Curvature-Dimension condition.

\begin{theorem}\label{T:localtoglobal}
Let $(X,\sfd,\mm)$ be an essentially non-branching \mms so that $(\supp(\mm),\sfd)$ is a length space.  Then:
$$
\CD_{loc}(K,N) \iff \CD(K,N).
$$
\end{theorem}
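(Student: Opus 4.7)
My plan is to observe that this theorem is essentially a packaging result, combining the two preceding theorems in the excerpt with the trivial implications to close the loop. The forward direction is the substantive one and decomposes into a chain of three implications, each already (or almost) established:
\[ \CD_{loc}(K,N) \;\stackrel{(a)}{\Longrightarrow}\; \CD^1_{Lip}(K,N) \;\stackrel{(b)}{\Longrightarrow}\; \CD^1(K,N) \;\stackrel{(c)}{\Longrightarrow}\; \CD(K,N). \]
Implication $(a)$ is exactly Theorem \ref{T:CDloctoCD1}, whose hypotheses (essentially non-branching and $(\supp(\mm),\sfd)$ a length space) match those of the present theorem. Implication $(b)$ is immediate from the definitions of $\CD^1_{Lip}(K,N)$ and $\CD^1(K,N)$ in Definition \ref{D:New-CD1}: the latter only requires the conclusion of the former for the distinguished subfamily of signed-distance functions $u = d_f$, which by Corollary \ref{C:structure} are automatically $1$-Lipschitz on $\supp(\mm)$ under $\CD^1(K,N)$ (and hence the definition is meaningful). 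Implication $(c)$ is Theorem \ref{T:CD1-CD}, which again requires only the essentially non-branching assumption.

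For the reverse direction $\CD(K,N) \Rightarrow \CD_{loc}(K,N)$, I would simply observe that this is immediate from Definitions \ref{def:CDKN} and \ref{D:CDloc} by choosing the neighborhood $X_o = X$ for every $o \in \supp(\mm)$: any pair $\mu_0, \mu_1 \in \P_2(X,\sfd,\mm)$ supported in $X_o$ is in particular a valid pair for testing the global $\CD(K,N)$ condition, which supplies the required $\nu \in \Opt(\mu_0,\mu_1)$ satisfying (\ref{eq:CDKN-def}).

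There is no real obstacle here; all the technical work has been absorbed into Theorems \ref{T:CDloctoCD1} and \ref{T:CD1-CD} and the preparatory framework of Parts I--III. The only thing to double-check in writing the final version is that the length-space assumption is indeed needed only for $(a)$ (to ensure via Lemma \ref{L:proper-support} that $(\supp(\mm),\sfd)$ is proper and geodesic, so that transport rays and the $L^1$ disintegration machinery from Section \ref{S:L1-OT} are available) and that $(b)$ and $(c)$ proceed without it. Given the preceding results, the proof reduces to a single line citing Theorems \ref{T:CDloctoCD1} and \ref{T:CD1-CD} for the nontrivial direction, and to the definitional observation above for the trivial one.
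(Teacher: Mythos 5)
Your proposal is correct and follows essentially the same route as the paper, which deduces Theorem \ref{T:localtoglobal} as an immediate consequence of Theorem \ref{T:CDloctoCD1} and Theorem \ref{T:CD1-CD}, with the implications $\CD^1_{Lip}(K,N)\Rightarrow\CD^1(K,N)$ and $\CD(K,N)\Rightarrow\CD_{loc}(K,N)$ being definitional. The only superfluous touch is invoking Corollary \ref{C:structure} for step $(b)$: the definition of $\CD^1(K,N)$ already restricts to those $f$ for which $d_f$ is $1$-Lipschitz, so the implication from $\CD^1_{Lip}(K,N)$ needs no further argument.
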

\begin{remark} \label{rem:false-without-length}
It is clear that the above globalization theorem is false without some global assumption ultimately ensuring that $(\supp(\mm),\sfd)$ is geodesic. 
Indeed, simply consider a $\CD(K,N)$ space, and restrict it to two disjoint geodesically-convex closed subsets of $(\supp(\mm),\sfd)$ (each having positive measure) --
 the resulting space clearly satisfies $\CD_{loc}(K,N)$ but not $\CD(K,N)$; it is also easy to construct similar examples where $(\supp(\mm),\sfd)$ is connected. 
In addition, as already mentioned in the Introduction, the globalization theorem is known to be false without some type of non-branching assumption (see \cite{R2016}). 
\end{remark}

As an interesting byproduct, we also obtain that $\CD^{1}$ and $\CD^{1}_{Lip}$ are equivalent conditions on essentially non-branching spaces:

\begin{corollary}\label{C:CDLip-CD1}
Let $(X,\sfd,\mm)$ be an essentially non-branching \mms.  Then:
$$
\CD(K,N) \iff \CD^{1}(K,N) \iff \CD_{Lip}^{1}(K,N).
$$
\end{corollary}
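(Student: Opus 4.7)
The proof plan is to close the cycle of implications by combining three ingredients that are essentially already available: the tautological containment $\CD^1_{Lip}\Rightarrow\CD^1$, the main change-of-variables argument culminating in Theorem \ref{T:CD1-CD}, and the disintegration/localization result Theorem \ref{T:CDloctoCD1} applied to the length-space $(\supp(\mm),\sfd)$.

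First I would dispense with the easy direction $\CD^1_{Lip}(K,N)\Rightarrow\CD^1(K,N)$: by Corollary \ref{C:structure}, whenever $(X,\sfd,\mm)$ verifies $\CD^1(K,N)$, the space $(\supp(\mm),\sfd)$ is a proper geodesic space, and in particular Lemma \ref{lem:df-Lip} ensures that every signed-distance function $d_f$ associated to a continuous $f$ with $\{f=0\}\neq\emptyset$ is $1$-Lipschitz on $\supp(\mm)$. Consequently, assuming $\CD_u(K,N)$ for every $1$-Lipschitz $u$ immediately specializes to $\CD_{d_f}(K,N)$ for every admissible $f$, giving the inclusion. (Even without invoking Corollary \ref{C:structure}, this is tautological modulo unpacking Definition \ref{D:New-CD1}.)

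Next I would recall that Theorem \ref{T:CD1-CD} gives $\CD^1(K,N)\Rightarrow\CD(K,N)$ in the essentially non-branching setting: this is the content of the whole previous section, where the change-of-variables formula \eqref{E:recap}, the third-order Kantorovich estimate of Theorem \ref{thm:z-c}, and the change-of-variables rigidity Theorem \ref{thm:rigid} are combined to produce the synthetic inequality \eqref{eq:synthetic-inq-anyt}, and hence $\CD(K,N)$ via Definition \ref{def:CDKN-ENB}.

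The remaining step, closing the cycle, is to show $\CD(K,N)\Rightarrow\CD^1_{Lip}(K,N)$. Here I would argue as follows: since $\CD(K,N)$ trivially implies $\CD_{loc}(K,N)$ (the local condition involves only a neighborhood $X_o$ of each point, and one may take $X_o=X$), and since by Lemma \ref{L:proper-support} the space $(\supp(\mm),\sfd)$ is Polish, proper and geodesic under $\CD(K,N)$ and in particular a length space, the hypotheses of Theorem \ref{T:CDloctoCD1} are satisfied, and we conclude $\CD^1_{Lip}(K,N)$. Chaining the three implications yields the corollary.

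No step should present a real obstacle, since all hard work has been done earlier; the only point requiring minor care is verifying that $(\supp(\mm),\sfd)$ is a length space in the third implication, which is the hypothesis needed to invoke Theorem \ref{T:CDloctoCD1}, and this is handed to us by Lemma \ref{L:proper-support} precisely because $\CD(K,N)$ is assumed globally. Thus the proof is essentially a diagram-chase:
\[
\CD^1_{Lip}(K,N)\;\Rightarrow\;\CD^1(K,N)\;\stackrel{\text{Thm \ref{T:CD1-CD}}}{\Longrightarrow}\;\CD(K,N)\;\stackrel{\text{Thm \ref{T:CDloctoCD1}}}{\Longrightarrow}\;\CD^1_{Lip}(K,N),
\]
establishing the equivalence of all three conditions.
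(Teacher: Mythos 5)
Your proof is correct and follows essentially the same route as the paper: the tautological inclusion $\CD^1_{Lip}\Rightarrow\CD^1$, then Theorem \ref{T:CD1-CD} for $\CD^1\Rightarrow\CD$, and finally $\CD\Rightarrow\CD_{loc}$ together with Lemma \ref{L:proper-support} (ensuring $(\supp(\mm),\sfd)$ is geodesic, hence a length space) to invoke Theorem \ref{T:CDloctoCD1} and close the cycle.
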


\begin{proof}
$\CD_{Lip}^{1}(K,N)$ is by definition stronger than $\CD^{1}(K,N)$, which in turn implies $\CD(K,N)$ by Theorem \ref{T:CD1-CD}. 
But $\CD(K,N)$ implies its local version $\CD_{loc}(K,N)$, as well as that $(\supp(\mm),\sfd)$ is geodesic by Lemma \ref{L:proper-support}. The cycle is then closed by Theorem \ref{T:CDloctoCD1}.
\end{proof}

Finally, we deduce a complete equivalence between the reduced and the classic Curvature-Dimension conditions on essentially non-branching spaces. Recall that the reduced version $\CD^*(K,N)$, introduced in \cite{sturm:loc} (in the non-branching setting), is defined exactly in the same manner as $\CD(K,N)$, with the only (crucial) difference being that one employs the slightly smaller $\sigma^{(t)}_{K,N}(\theta)$ coefficients instead of the $\tau^{(t)}_{K,N}(\theta)$ ones in Definition \ref{def:CDKN}. 

\begin{corollary}\label{C:CDstarCD}
Let $(X,\sfd,\mm)$ be an essentially non-branching \mms.  Then:
$$
\CD^{*}(K,N) \iff \CD(K,N).
$$
\end{corollary}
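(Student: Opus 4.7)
The plan is to close the equivalence cycle
\[
\CD(K,N) \Rightarrow \CD^*(K,N) \Rightarrow \CD_{loc}(K,N) \Rightarrow \CD(K,N),
\]
combining the first two implications, which are due to Bacher--Sturm \cite{sturm:loc}, with the globalization Theorem \ref{T:localtoglobal} established in this work.

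For the direction $\CD(K,N) \Rightarrow \CD^*(K,N)$ no non-branching assumption is needed. It reduces to the pointwise comparison $\tau^{(t)}_{K,N}(\theta) \geq \sigma^{(t)}_{K,N}(\theta)$ between distortion coefficients, itself a straightforward analytic consequence of the identity $\tau^{(t)}_{K,N}(\theta) = t^{1/N}\, \sigma^{(t)}_{K,N-1}(\theta)^{(N-1)/N}$ and the monotonicity properties of the $\sigma$-coefficients in the dimensional parameter; this is verified in \cite{sturm:loc}. Since every candidate optimal dynamical plan witnessing $\CD(K,N)$ automatically witnesses the weaker $\CD^*(K,N)$ inequality, this implication is essentially a coefficient-level tautology.

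For the converse, I would first record that $\CD^*(K,N)$ implies $\MCP(K,N)$ (obtained by specializing the reduced interpolation inequality to $\mu_1=\delta_o$ and using a standard approximation/Jensen argument, as in \cite{sturm:loc}). Lemma \ref{L:proper-support} then guarantees that $(\supp(\mm),\sfd)$ is Polish, proper and geodesic, hence in particular a length space. Next I would invoke the implication $\CD^*(K,N) \Rightarrow \CD_{loc}(K,N)$ from \cite{sturm:loc}, whose essential analytic input is the asymptotic equivalence $\sigma^{(t)}_{K,N}(\theta)/\tau^{(t)}_{K,N}(\theta)\to 1$ as $\theta\to 0$, uniformly in $t\in[0,1]$; this allows one to convert the global reduced interpolation into a sharp $\CD$ interpolation on arbitrarily small neighborhoods of any point in $\supp(\mm)$.

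Having arrived at $\CD_{loc}(K,N)$ on an essentially non-branching \mms whose support is a length space, a direct application of Theorem \ref{T:localtoglobal} yields $\CD(K,N)$ and closes the circle. The substantive difficulty has already been packaged into Theorem \ref{T:localtoglobal}; at this stage no further obstacle arises, and the proof of the corollary is simply a matter of threading together the three implications above with the correct hypotheses in place.
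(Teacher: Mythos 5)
Your overall strategy coincides with the paper's: close the cycle $\CD(K,N)\Rightarrow\CD^*(K,N)\Rightarrow\CD_{loc}\Rightarrow\CD(K,N)$ using Theorem \ref{T:localtoglobal}, and the first implication as well as the reduction to a proper geodesic support are fine (the paper simply repeats the proof of Lemma \ref{L:proper-support} verbatim for $\CD^*$, while you pass through a measure-contraction-type statement; either route works for properness and geodesy). The gap is in your middle implication. What Bacher--Sturm actually prove (\cite[Proposition 5.5]{sturm:loc}, cf.\ \cite[Lemma 2.1]{dengsturm}) is $\CD^*(K,N)\Rightarrow\CD_{loc}(K',N)$ for \emph{every} $K'<K$, not $\CD_{loc}(K,N)$ itself. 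Your justification --- that $\sigma^{(t)}_{K,N}(\theta)/\tau^{(t)}_{K,N}(\theta)\to 1$ as $\theta\to 0$ lets one upgrade the reduced interpolation to the sharp one on small neighborhoods --- does not work: one would need $\sigma^{(t)}_{K,N}(\theta)\geq\tau^{(t)}_{K,N}(\theta)$ for small $\theta>0$, whereas the inequality always goes the other way, $\tau^{(t)}_{K,N}(\theta)\geq\sigma^{(t)}_{K,N}(\theta)$ (this is exactly why $\CD$ is stronger than $\CD^*$), and asymptotic equality of the ratio cannot reverse it. The usable comparison is $\sigma^{(t)}_{K,N}(\theta)\geq\tau^{(t)}_{K',N}(\theta)$ for $\theta$ small depending on $K'<K$ strictly, which is where the unavoidable loss $K\mapsto K^-$ enters (except when $K=0$, where $\tau\equiv\sigma$).

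Consequently, after applying Theorem \ref{T:localtoglobal} your argument only yields $\CD(K',N)$ for every $K'<K$, and a final limiting step is missing. The paper supplies it: for fixed $\mu_0,\mu_1\in\P_2(X,\sfd,\mm)$ the optimal dynamical plan $\nu$ is unique (Theorem \ref{T:optimalmapMCP}, applicable since $\CD(K',N)\Rightarrow\MCP(K',N)$ by Lemma \ref{lem:CD-MCPE-MCP}), so the \emph{same} $\nu$ witnesses the $\CD(K',N)$ inequality for all $K'<K$; letting $K'\to K$ and using the continuity of $K'\mapsto\tau^{(t)}_{K',N}(\theta)$ then gives the $\CD(K,N)$ inequality for that $\nu$. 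Replacing your middle implication by the correct statement $\CD^*(K,N)\Rightarrow\CD_{loc}(K^-,N)$ and adding this uniqueness-plus-continuity limit argument repairs the proof.
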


\begin{proof} 
By definition $\CD(K,N)$ is stronger than $\CD^*(K,N)$ (see \cite[Proposition 2.5 (i)]{sturm:loc}). For the converse implication, note that $\CD^*(K,N)$ implies that $(\supp(\mm),\sfd)$ is proper and geodesic, by verbatim repeating the proof of Lemma \ref{L:proper-support}. 
Then we observe that $\CD^*(K,N) \Rightarrow \CD_{loc}(K^{-},N)$, where 
$\CD_{loc}(K^{-},N)$ denotes that $(X,\sfd,\mm)$ verifies $\CD_{loc}(K',N)$ for every $K' < K$ (with the open neighborhoods possibly depending on $K'$).
For non-branching spaces, this was proved in \cite[Proposition 5.5]{sturm:loc} (see also \cite[Lemma 2.1]{dengsturm}), but the proof does not rely on any non-branching assumptions. Then, by Theorem \ref{T:localtoglobal}, we obtain $\CD(K',N)$ for any $K' <  K$. Finally, by uniqueness of dynamical plans (see Theorem \ref{T:optimalmapMCP} and Lemma \ref{lem:CD-MCPE-MCP}) and continuity of $\tau_{K',N}^{(t)}(\theta)$ in $K'$, the claim follows.
\end{proof}

\subsection{$\RCD(K,N)$ spaces}

We also mention the more recent Riemannian Curvature Dimension condition $\RCD^{*}(K,N)$. In the infinite dimensional case $N = \infty$, it was introduced in \cite{ambrgisav:Riemann} for finite measures $\mm$ and in \cite{AGMR} for $\sigma$-finite ones.
The class $\RCD^{*}(K,N)$ with $N<\infty$ has been proposed in \cite{gigli:laplacian} and extensively investigated 
in \cite{ambrgisav:Bakry, EKS-EntropicCD,AMS:RCD}. We refer to these papers and references therein for a general account 
on the synthetic formulation of the latter Riemannian-type Ricci curvature lower bounds.
Here we only briefly recall that it is a strengthening of the reduced Curvature Dimension condition:
a \mms verifies $\RCD^{*}(K,N)$ if and only if it satisfies $\CD^{*}(K,N)$ and is infinitesimally Hilbertian \cite[Definition 4.19 and Proposition 4.22]{gigli:laplacian}, meaning that
the Sobolev space $W^{1,2}(X,\mm)$ is a Hilbert space (with the Hilbert structure induced by the Cheeger energy).
Recall also that the local-to-global property for the $\RCD^{*}(K,N)$ condition (say for length spaces of full support) has already been established for $N=\infty$ in \cite[Theorem 6.22]{ambrgisav:Riemann} for non-branching spaces with finite second moment, for $N < \infty$ in \cite[Theorems 3.17 and 3.25]{EKS-EntropicCD} for strong $\RCD^*(K,N)$ spaces, and for all $N \in [1,\infty]$ in \cite[Theorems 7.2 and 7.8]{AMS:locglob} for proper spaces without any non-branching assumptions. 

\smallskip

We are now in a position to introduce the following (expected) definition: 

\begin{definition*}
We will say that a \mms $(X,\sfd,\mm)$ satisfies $\RCD(K,N)$ if it verifies $\CD(K,N)$ and is infinitesimally Hilbertian.
\end{definition*}

We can now immediately deduce:

\begin{corollary} \label{C:RCD}
$$
\RCD(K,N) \iff \RCD^{*}(K,N).
$$
\end{corollary}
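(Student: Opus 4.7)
The plan is to deduce this corollary almost immediately from the equivalence $\CD^{*}(K,N) \iff \CD(K,N)$ for essentially non-branching spaces established in Corollary \ref{C:CDstarCD}, together with the known fact that $\RCD^{*}(K,N)$ spaces are automatically essentially non-branching. Since both conditions $\RCD(K,N)$ and $\RCD^{*}(K,N)$ are defined by requiring infinitesimal Hilbertianity on top of $\CD(K,N)$ or $\CD^{*}(K,N)$ respectively, the only real content is the equivalence of the underlying curvature-dimension conditions under infinitesimal Hilbertianity.

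First I would treat the direction $\RCD(K,N) \Rightarrow \RCD^{*}(K,N)$, which is trivial: since by definition $\CD(K,N) \Rightarrow \CD^{*}(K,N)$ (the $\sigma$-coefficients being pointwise smaller than the $\tau$-coefficients, see \cite[Proposition 2.5(i)]{sturm:loc}), and since infinitesimal Hilbertianity is a property of the underlying Sobolev space $W^{1,2}(X,\mm)$ which is unchanged, the implication is immediate from the definitions.

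For the nontrivial direction $\RCD^{*}(K,N) \Rightarrow \RCD(K,N)$, I would invoke the result of Rajala--Sturm \cite{rajasturm:branch}, which shows that any $\RCD^{*}(K,\infty)$ space (and hence any $\RCD^{*}(K,N)$ space, as $\CD^{*}(K,N)$ implies $\CD^{*}(K,\infty)$ under the standing assumption $\mm(X)<\infty$) is essentially non-branching. With essential non-branching in hand, Corollary \ref{C:CDstarCD} applies and upgrades $\CD^{*}(K,N)$ to $\CD(K,N)$. Combining with infinitesimal Hilbertianity, we conclude $\RCD(K,N)$.

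The main obstacle here is not in the proof itself but in making sure that Rajala--Sturm's essentially non-branching result is directly applicable; in particular one should note that it was originally stated for $\RCD^{*}(K,\infty)$ spaces with finite reference measure, which is precisely the setting of this paper. No further work is needed beyond citing the appropriate results and composing them. The whole argument is therefore a one-line consequence of Corollary \ref{C:CDstarCD} once the essentially non-branching assumption is unlocked.
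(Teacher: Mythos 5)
Your proposal is correct and follows essentially the same route as the paper: the easy direction is immediate since $\CD(K,N)$ is stronger than $\CD^{*}(K,N)$, and the converse uses that $\RCD^{*}(K,N)$ spaces are essentially non-branching by Rajala--Sturm, so Corollary \ref{C:CDstarCD} upgrades $\CD^{*}(K,N)$ to $\CD(K,N)$. The extra remark you make about reducing to the $\RCD^{*}(K,\infty)$ setting of Rajala--Sturm is a harmless refinement of the same citation used in the paper.
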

\noindent
Note that $\CD^{*}(K,\infty)$ and $\CD(K,\infty)$ are the same condition, so the above also holds for $N = \infty$. 
\begin{proof}
Since $\CD(K,N)$ is stronger than $\CD^{*}(K,N)$, one implication is straightforward.
For the other implication, recall that $\RCD^{*}(K,N)$ forces the space to be essentially non-branching (see \cite[Corollary 1.2]{rajasturm:branch}), and so the assertion follows by Corollary \ref{C:CDstarCD}.
\end{proof}

\begin{corollary} \label{cor:RCD-loc}
Let $(X,\sfd,\mm)$ be an \mms so that $(\supp(\mm),\sfd)$ is a length space.  Then:
$$
\RCD_{loc}(K,N) \iff \RCD(K,N).
$$
\end{corollary}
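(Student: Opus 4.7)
The forward implication $\RCD(K,N) \Rightarrow \RCD_{loc}(K,N)$ is trivial from the definitions. The plan is therefore to establish the converse: assume $(X,\sfd,\mm)$ is infinitesimally Hilbertian and satisfies $\CD_{loc}(K,N)$, with $(\supp(\mm),\sfd)$ a length space, and derive $\CD(K,N)$. The strategy is to leverage the already-established local-to-global property for the reduced condition $\RCD^*(K,N)$ as a device for \emph{upgrading infinitesimal Hilbertianity into essential non-branching}, after which the new Theorem \ref{T:localtoglobal} finishes the job.

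Concretely, I would first observe that since $\sigma^{(t)}_{K,N}(\theta) \le \tau^{(t)}_{K,N}(\theta)$, the condition $\CD_{loc}(K,N)$ trivially implies $\CD^*_{loc}(K,N)$. Combined with infinitesimal Hilbertianity this gives $\RCD^*_{loc}(K,N)$. To invoke the existing globalization result for $\RCD^*$ (Ambrosio--Mondino--Savar\'e \cite{AMS:locglob}), I need the space to be proper: but $\CD_{loc}(K,N)$ implies a local doubling condition, and together with completeness and the length-space assumption on $\supp(\mm)$, the Hopf--Rinow theorem yields properness of $(\supp(\mm),\sfd)$ (exactly as in Lemma \ref{L:proper-support}). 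Applying the $\RCD^*$ globalization then produces $\RCD^*(K,N)$ on $(\supp(\mm),\sfd,\mm)$, and hence on $(X,\sfd,\mm)$ by Remark \ref{rem:supp-nu}.

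At this point, by \cite[Corollary 1.2]{rajasturm:branch}, any $\RCD^*(K,N)$ space is essentially non-branching. Consequently $(X,\sfd,\mm)$ is an essentially non-branching \mms verifying $\CD_{loc}(K,N)$ whose measure support is a length space, so Theorem \ref{T:localtoglobal} applies and yields $\CD(K,N)$. Together with the standing infinitesimal Hilbertianity, this is precisely $\RCD(K,N)$, completing the proof.

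The main (and only) conceptual obstacle is obtaining essential non-branching: $\CD_{loc}(K,N)$ alone is not known to imply it, but bootstrapping through the weaker $\RCD^*_{loc}(K,N)$ and its already-available globalization circumvents this in a clean way. Everything else reduces to applying the Main Theorem of the paper. Note that one could alternatively cite Corollary \ref{C:RCD} at the very end to state the result in its $\RCD^*$ formulation, but the cleanest route is the one outlined above.
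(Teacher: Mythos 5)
Your proof is correct and follows essentially the same route as the paper: bootstrap $\RCD_{loc}(K,N)\Rightarrow\RCD^*_{loc}(K,N)$, use properness from Lemma \ref{L:proper-support} to invoke the Ambrosio--Mondino--Savar\'e globalization for $\RCD^*$, and then exploit the resulting essential non-branching to conclude via the paper's main globalization machinery. The only (immaterial) difference is that you finish by applying Theorem \ref{T:localtoglobal} directly to $\CD_{loc}(K,N)$, whereas the paper concludes with Corollary \ref{C:RCD} (i.e.\ the $\CD^*\Leftrightarrow\CD$ equivalence); both rest on the same Main Theorem.
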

\begin{proof}
One implication is trivial. For the converse, as usual, we may assume that $\supp(\mm) = X$ by Remark \ref{rem:supp-nu}. By Lemma \ref{L:proper-support}, we know that $(X,\sfd)$ is proper and geodesic (as usual, this would be false without the length space assumption above).
 As the local-to-global property has been proved for proper geodesic $\RCD^*(K,N)$ spaces without any non-branching assumptions in \cite{AMS:locglob}, it follows that:
\[
\RCD_{loc}(K,N) \Rightarrow \RCD^*_{loc}(K,N) \Rightarrow \RCD^*(K,N) \Rightarrow \RCD(K,N) ,
\]
where the last implication follows by Corollary \ref{C:RCD}. 
\end{proof}

\bigskip

\subsection{Concluding remarks} 

We conclude this work with several brief remarks and suggestions for further investigation.
\begin{itemize}
\item[-] Note that the proof of Theorem \ref{T:CD1-CD} in fact yields more than stated: not only does the synthetic inequality (\ref{eq:synthetic-inq-anyt}) hold (for all $t_0,t_1 \in [0,1]$), but in fact we obtain for $\nu$-a.e. geodesic $\gamma$ the a-priori stronger disentanglement (or ``L-Y" decomposition):
\begin{equation} \label{eq:LY-decomp}
\frac{1}{\rho_t(\gamma_t)} = L_\gamma(t) Y_\gamma(t) \;\;\; \forall t \in (0,1),
\end{equation}
where $L_\gamma$ is concave and $Y_\gamma$ is a $\CD(\ell(\gamma)^2 K, N)$ density on $(0,1)$. As explained in the Introduction, it follows from \cite{dengsturm} that for a fixed $\gamma$, (\ref{eq:LY-decomp}) is indeed strictly stronger than (\ref{eq:synthetic-inq-anyt}). In view of Main Theorem \ref{thm:main}, this constitutes a new characterization of essentially non-branching $\CD(K,N)$ spaces. 
\item[-] According to \cite[p. 1026]{EKS-EntropicCD}, it is possible to localize the argument of \cite{rajasturm:branch} and deduce from a \emph{strong} $\CD_{loc}(K,\infty)$ condition  (when $K$-convexity of the entropy is assumed along \emph{any} $W_2$-geodesic with end-points inside the local neighborhood), that the space is globally essentially non-branching. In combination with our results, it follows that the \emph{strong} $\CD(K,N)$ condition enjoys the local-to-global property, without \emph{a-priori} requiring any additional non-branching assumptions. 
\item[-] It would still be interesting to clarify the relation between the $\CD(K,N)$ condition and the property $\BM(K,N)$ of satisfying a Brunn-Minkowski inequality (with sharp dependence on $K,N$ as in \cite{sturm:II}). Note that by Main Theorem \ref{thm:main}, it is enough to understand this locally on essentially non-branching spaces.
\item[-] It would also be interesting to study the $\CD^1(K,N)$ condition on its own, when no non-branching assumptions are assumed, and to verify the usual list of properties desired by a notion of Curvature-Dimension (see \cite{lottvillani:metric,sturm:II,CM2}). 
\item[-] A natural counterpart of $\RCD(K,N)$ would be $\RCD^{1}(K,N)$: we will say that a \mms verifies $\RCD^{1}(K,N)$ if it verifies $\CD^{1}(K,N)$ and it is infinitesimally Hilbertian. Recall that an $\RCD(K,N)$ space is always essentially non-branching \cite{rajasturm:branch}, and hence Main Theorem \ref{thm:main} immediately yields:
$$
\RCD(K,N) \Rightarrow \RCD^{1}(K,N).
$$
The converse implication would be implied by the following claim which we leave for a future investigation: an $\RCD^{1}(K,N)$-space is always essentially non-branching.
\item[-] In regards to the novel third order temporal information on the intermediate-time Kantorovich potentials $\varphi_t$ we obtain in this work -- it would be interesting to explore whether it has any additional consequences pertaining to the \emph{spatial} regularity of solutions to the Hamilton-Jacobi equation in general, and of the transport map $T_{s,t} = \ee_t \circ \ee_{s}|_G^{-1}$ from an intermediate time $s \in (0,1)$ in particular (where $G \subset G_\varphi$ is the subset of injectivity guaranteed by Corollary \ref{C:injectivity}). In the smooth Riemannian setting, the map $T_{s,t}$ is known to be locally Lipschitz by Mather's regularity theory (see \cite[Chapter 8]{Vil} and cf. \cite[Theorem 8.22]{Vil}). A starting point for this investigation could be the following bound on the (formal) Jacobian of $T_{s,t}$, which follows immediately from (\ref{eq:main-formula}), Theorem \ref{thm:order12-main} (3) and Lemma \ref{lem:apriori}: for $\mu_s$-a.e. $x$, the Jacobian is bounded above by a function of $s,t,K,N,\l_s(x)$ only. 
\end{itemize}

\renewcommand{\thesection}{A}
\section{Appendix - One Dimensional $\CD(K,N)$ Densities} \label{Appendix A} 
\setcounter{lemma}{0}
\setcounter{equation}{0}  \setcounter{subsection}{0}

\begin{definition} \label{def:CDKN-density}
 A non-negative function $h$ defined on an interval $I \subset \Real$ is called a $\CD(K,N)$ density on $I$, for $K \in \Real$ and $N \in (1,\infty)$, if for all $x_0,x_1 \in I$ and $t \in [0,1]$:
 \[
 h(t x_1 + (1-t) x_0)^{\frac{1}{N-1}} \geq  \sigma^{(t)}_{K,N-1}(\abs{x_1-x_0}) h(x_1)^{\frac{1}{N-1}} + \sigma^{(1-t)}_{K,N-1}(\abs{x_1-x_0}) h(x_0)^{\frac{1}{N-1}} ,
 \]
(recalling the coefficients $\sigma$ from Definition \ref{def:sigma}). 
While we avoid in this work the case $N = \infty$, it will be useful in this section to also treat the case $N = \infty$, whence the latter condition is interpreted by subtracting 1 from both sides, multiplying by $N-1$, and taking the limit as $N \rightarrow \infty$, namely:  \[
 \log h (t x_1 + (1-t) x_0) \geq t \log h(x_1) + (1-t) \log h(x_0) + \frac{K}{2} t (1-t) (x_1-x_0)^2 .
 \]
For completeness, we will say that $h$ is a $\CD(K,1)$ density on $I$ iff $K \leq 0$ and $h$ is constant on the interior of $I$. 
\end{definition}

Unless otherwise stated, we assume in this appendix that $K \in \Real$ and $N \in (1,\infty]$. 
The following is a specialization to dimension one of a well-known result in the theory of $\CD(K,N)$ mm-spaces, which explains the terminology above. Here we do not assume that a \mms is necessarily equipped with a \emph{probability} measure. 

\begin{theorem}
If $h$ is a $\CD(K,N)$ density on an interval $I \subset \Real$ then the \mms $(I,\abs{\cdot},h(t) dt)$ verifies $\CD(K,N)$. Conversely, if the \mms $(\Real,\abs{\cdot},\mu)$ verifies $\CD(K,N)$ and $I = \supp(\mu)$ is not a point, then $\mu \ll \L^1$ and there exists a version of the density $h = d\mu / d\L^1$ which is a $\CD(K,N)$ density on $I$.\end{theorem}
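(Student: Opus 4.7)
The key point is that on $(\Real,|\cdot|)$ the structure of $L^2$-optimal transport is entirely explicit: for $\mu_0,\mu_1 \in \P_2(\Real)$ with $\mu_0$ non-atomic, the unique optimal transference plan is induced by the monotone rearrangement $T$, and the unique element of $\OptGeo(\mu_0,\mu_1)$ is supported on the affine geodesics $t\mapsto(1-t)x + tT(x)$. Thus both directions reduce to explicit pointwise manipulations of densities along these geodesics, which I treat separately. I suppress $N = \infty$; it is recovered by an analogous computation with the logarithmic formulation.

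For the forward direction, let $h$ be a $\CD(K,N)$ density on $I$ and fix $\mu_0 = \rho_0 (h\L^1)$, $\mu_1 = \rho_1(h\L^1) \in \P_2(I,|\cdot|,h\L^1)$. Let $T$ be the monotone rearrangement from $\mu_0$ to $\mu_1$ and $\mu_t := ((1-t)\id + tT)_\sharp \mu_0$. Writing $\mu_t = \rho_t(h\L^1)$ and using the change-of-variables formula for the monotone (hence increasing) map $x\mapsto \gamma_t(x) := (1-t)x + tT(x)$, one obtains, for $\mu_0$-a.e.\ $x$,
\[
\rho_t(\gamma_t(x))^{-\frac{1}{N}} h(\gamma_t(x))^{-\frac{1}{N}} \gamma_t'(x)^{-\frac{1}{N}} = \rho_0(x)^{-\frac{1}{N}} h(x)^{-\frac{1}{N}},
\]
(and the analogue at $x \mapsto T(x)$ with $\gamma_t'$ replaced by $1 - \gamma_t'/T'(x)$). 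Factoring $\rho_t^{-1/N} = (\rho_t h)^{-1/N} h^{1/N}$ and applying the pointwise $\sigma_{K,N-1}$-inequality for $h$ on the segment $[x,T(x)]$ together with the elementary linear inequality $((1-t)a + tb)^{1/N} \geq (1-t)^{1/N} a^{1/N} + t^{1/N} b^{1/N}$ applied to the tangential factor, yields exactly the $\tau_{K,N}^{(t)}$-convexity inequality \eqref{eq:CDKN-def} after integrating against $\mu_0$.

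For the backward direction, assume $(\Real,|\cdot|,\mu)$ verifies $\CD(K,N)$ and $I:=\supp(\mu)$ is not a point. Absolute continuity is obtained by testing the $\CD(K,N)$ condition on $\mu_0 := \mu\llcorner J_0 / \mu(J_0)$ and $\mu_1 := \mu\llcorner J_1/\mu(J_1)$ for bounded intervals $J_0,J_1 \subset I$: the explicit form of $W_2$-geodesics on $\Real$ forces the interpolant $\mu_t$ to be supported in the interval $J_t := (1-t)J_0 + tJ_1$, while the $\CD(K,N)$ condition forces $\mu_t \ll \mu$; taking $J_0 = J_1$ of vanishing length around a putative atom of $\mu$ contradicts \eqref{eq:CDKN-def} unless $\mu$ is non-atomic, and a closely related argument (comparing $\mu(J_t)$ with an arbitrary open $J_t$ of small Lebesgue measure via Brunn--Minkowski type estimates inherent to $\CD(K,N)$) gives $\mu \ll \L^1$. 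Write $\mu = h\L^1$ and fix a Lebesgue-point representative of $h$.

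Having $\mu = h\L^1$, I now derive the density inequality. Fix $x_0 < x_1$ in the interior of $I$, $\alpha \in (0,1)$, and set $x_\alpha := (1-\alpha)x_0 + \alpha x_1$. For small $\eps > 0$ let $J_i^\eps := (x_i - \eps, x_i + \eps)$ and $\mu_i^\eps := \mu\llcorner J_i^\eps / \mu(J_i^\eps)$, so that $\mu_i^\eps = \rho_i^\eps \mu$ with $\rho_i^\eps = \mathbf{1}_{J_i^\eps}/\mu(J_i^\eps)$. The monotone-rearrangement description shows that the $W_2$-geodesic $\mu_\alpha^\eps$ is supported in an interval of length $O(\eps)$ around $x_\alpha$. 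Inserting these choices into \eqref{eq:CDKN-def}, dividing by $2\eps$, and letting $\eps \to 0$, the left-hand side converges by the Lebesgue differentiation theorem to $h(x_\alpha)^{-1/N} \cdot h(x_\alpha)$-weighted factors, while the right-hand side converges to the $\tau_{K,N}$-weighted expression involving $h(x_0)$ and $h(x_1)$; after simplification this yields precisely the $\sigma_{K,N-1}$-inequality in Definition~\ref{def:CDKN-density} at $(x_0,x_1,\alpha)$. The inequality on the whole of $I$ (including boundary points) follows by the continuity that it itself forces on $h$ in the interior of $I$, together with a lower-semicontinuous redefinition at endpoints; the main care is reserved for the case $K>0$, where one must verify $|x_1-x_0| < D_{K,N-1}$ automatically from the Bonnet--Myers-type diameter bound implied by $\CD(K,N)$. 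The delicate technical step is the Brunn--Minkowski argument underlying absolute continuity: this is the point where a-priori control on $\mu$ is weakest, and where one must exploit the explicit one-dimensional geometry most carefully.
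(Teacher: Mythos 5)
Your forward direction is fine: it is essentially a direct proof of what the paper simply cites (Sturm's one-dimensional computation), splitting the Jacobian along the monotone rearrangement into the reference-density factor, handled by the $\sigma_{K,N-1}$-inequality for $h$, and the linear stretching factor, handled by concavity, and recombining via H\"older. That is the standard route and is correct (modulo routine care, it also gives the inequality for all $N' \geq N$ since a $\CD(K,N)$ density is a $\CD(K,N')$ density).

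The converse, however, has a genuine gap at its core step. With blow-up intervals of \emph{equal} length $2\eps$ around $x_0$ and $x_1$, the monotone interpolant is supported in an interval of the \emph{same} length $2\eps$ around $x_\alpha$, so after bounding $\Eps_N(\mu_\alpha^\eps) \leq \mm(\supp \mu_\alpha^\eps)^{1/N}$, dividing by $(2\eps)^{1/N}$ and using Lebesgue differentiation, the limit you obtain is
\[
h(x_\alpha)^{\frac{1}{N}} \;\geq\; \tau^{(1-\alpha)}_{K,N}(|x_1-x_0|)\, h(x_0)^{\frac{1}{N}} + \tau^{(\alpha)}_{K,N}(|x_1-x_0|)\, h(x_1)^{\frac{1}{N}} ,
\]
i.e.\ a $\tau$-type inequality for $h^{1/N}$, \emph{not} the $\sigma_{K,N-1}$-inequality for $h^{1/(N-1)}$ of Definition \ref{def:CDKN-density}. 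These are not equivalent: the latter implies the former by H\"older, but not conversely (for $K=0$ the former is concavity of $h^{1/N}$, the latter concavity of $h^{1/(N-1)}$; $h(x)=x^N$ on $(0,1)$ separates them). This exponent discrepancy between $\tau$ and $\sigma$ is exactly the issue the whole paper revolves around, so it cannot be waved away as ``simplification''. The paper's own argument avoids it by breaking the symmetry: take uniform measures on intervals of lengths $\eps$ and $\alpha\eps$, so the interpolant support has length $((1-t)+t\alpha)\eps$, and then \emph{optimize over the ratio} $\alpha>0$; the equality case of H\"older in that optimization upgrades the resulting family of $\tau$-type inequalities to the $\sigma_{K,N-1}$-inequality for $h^{1/(N-1)}$. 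Your argument as written never recovers the $N-1$ exponent. (Secondarily, your Brunn--Minkowski argument for $\mu \ll \L^1$ is only sketched, but that part is standard and fixable; the exponent issue is the real defect.)
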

\begin{proof}
The first assertion follows from e.g. \cite[Theorem 1.7 (ii)]{sturm:II}, and the second follows by considering the $\CD(K,N)$ condition for uniform measures $\mu_0,\mu_1$ on intervals of length $\eps$ and $\alpha \eps$, respectively, letting $\eps \rightarrow 0$, employing Lebesgue's differentiation theorem, and optimizing on $\alpha > 0$ (e.g. as in the proof of \cite[Theorem 4.3]{cavasturm:MCP}).
\end{proof}

Let $h$ be a $\CD(K,N)$ density on an interval $I \subset \Real$. A few standard and easy consequences of Definition \ref{def:CDKN-density} are:
\begin{itemize}
\item
$h$ is also a $\CD(K_2,N_2)$ density for all $K_2 \leq K$ and $N_2 \in [N,\infty]$ (this follows from the corresponding monotonicity of the coefficients $\sigma^{(t)}_{K,N-1}(\theta)$ in $K$ and $N$, see e.g. \cite{sturm:II, lottvillani:metric}). 
\item 
$h$ is lower semi-continuous on $I$ and locally Lipschitz continuous in its interior (this is easily reduced to a standard identical statement for concave functions on $I$). 
\item 
$h$ is strictly positive in the interior whenever it does not identically vanish (follows immediately from the definition).
\item 
$h$ is locally semi-concave in the interior, i.e. for all $x_0$ in the interior of $I$, there exists $C_{x_0} \in \Real$ so that $h(x) - C_{x_0} x^2$ is concave in a neighborhood of $x_0$ (easily checked for $\CD(K,\infty)$ densities). In particular, it is twice differentiable (in the sense of Lemma \ref{lem:convex-2nd-diff}) a.e. in $I$. 
\end{itemize}

\subsection{Differential Characterization}

The following is a well-known differential characterization of $C^2$-smooth $\CD(K,N)$ densities:
\begin{lemma} \label{lem:CDKN-C2}
Let $h \in C^2_{loc}(I)$ on some open interval $I \subset \Real$. The following are equivalent:
\begin{enumerate}
\item $h$ is a $\CD(K,N)$ density on $I$. 
\item For all $x \in I$:
\begin{equation} \label{eq:diff-CDKN}
(\log h)''(x) + \frac{1}{N-1} ((\log h)'(x))^2 = (N-1) \frac{(h^{\frac{1}{N-1}})''(x)}{h^{\frac{1}{N-1}}(x)} \leq - K . 
\end{equation}
where the left hand side is interpreted as $(\log h)''(x)$ when $N=\infty$. 
\end{enumerate}
\end{lemma}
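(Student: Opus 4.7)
The plan is to separate the argument into the standard algebraic reformulation, followed by the two implications, the second of which is the only substantive one.

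First, I would introduce the substitution $f := h^{1/(N-1)}$ when $N \in (1,\infty)$, and $f := \log h$ when $N = \infty$. A direct computation verifies the identity
\[
(N-1)\frac{(h^{1/(N-1)})''(x)}{h^{1/(N-1)}(x)} \;=\; (\log h)''(x) + \frac{1}{N-1}((\log h)'(x))^2,
\]
so the two forms in~(\ref{eq:diff-CDKN}) agree. The right hand side is continuous in $N \in (1,\infty]$ (with the $N=\infty$ interpretation being the limit), and the inequality in~(2) becomes the Jacobi-type inequality
\[
f''(x) + \frac{K}{N-1} f(x) \;\leq\; 0 \qquad\text{for finite } N, \qquad (\log h)''(x) \leq -K \quad\text{for } N=\infty.
\]
Recall that for fixed $\theta \in (0,D_{K,N-1})$, the map $t\mapsto \sigma_{K,N-1}^{(t)}(\theta)$ is by definition the solution of the Jacobi equation $\ddot\sigma + \theta^2\frac{K}{N-1}\sigma = 0$ with $\sigma(0)=0$, $\sigma(1)=1$, so the Definition~\ref{def:CDKN-density} rewrites as a comparison between $f\circ x_\bullet$ and an explicit solution of this ODE along the segment $x_t := (1-t)x_0 + tx_1$.

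For (1)$\Rightarrow$(2), I would fix $x$ in the interior of $I$, take $x_{0} = x-\tfrac{\theta}{2}$, $x_{1}= x+\tfrac{\theta}{2}$, $t=\tfrac12$, and plug into Definition~\ref{def:CDKN-density}. Using the second-order Taylor expansion of $f$ at $x$ and the expansion
\[
\sigma_{K,N-1}^{(1/2)}(\theta) \;=\; \tfrac12 + \tfrac{1}{16}\tfrac{K}{N-1}\theta^{2} + O(\theta^{4}),
\]
(valid for all signs of $K$), dividing by $\theta^{2}$ and letting $\theta\to 0$ produces precisely $f''(x) + \tfrac{K}{N-1}f(x)\leq 0$, i.e.~(2). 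The analogous expansion at $N=\infty$ directly yields $(\log h)''(x)\leq -K$.

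The core of the proof is (2)$\Rightarrow$(1), which is a Sturm--Liouville comparison. Fix $x_{0},x_{1}\in I$, set $\theta := |x_{1}-x_{0}|$, and assume first that $\theta < D_{K,N-1}$. Let $\phi(t) := f(x_t)$ on $[0,1]$, so by the chain rule $\phi''(t) = \theta^{2} f''(x_t)$, and by~(2), $\phi''+\tfrac{K\theta^{2}}{N-1}\phi\leq 0$. Let
\[
g(t) := \sigma_{K,N-1}^{(t)}(\theta)\,f(x_{1}) + \sigma_{K,N-1}^{(1-t)}(\theta)\,f(x_{0}),
\]
which satisfies the equation $g'' + \tfrac{K\theta^{2}}{N-1}g = 0$ with $g(0)=\phi(0)$, $g(1)=\phi(1)$. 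Setting $u := \phi - g$, I would show $u''+ \tfrac{K\theta^{2}}{N-1}u \leq 0$ with $u(0)=u(1)=0$, and invoke the classical Sturm maximum principle: since $\theta < D_{K,N-1}$ guarantees that the Dirichlet problem for $-\partial_{t}^{2}-\tfrac{K\theta^{2}}{N-1}$ on $[0,1]$ has positive Green's function (equivalently, the first eigenvalue is strictly positive), one concludes $u\geq 0$ on $[0,1]$, i.e.~$\phi \geq g$, which is the $\CD(K,N)$-density inequality. For $N=\infty$ the same comparison collapses to the explicit Green-function formula on $[0,1]$, giving the quadratic correction $\tfrac{K}{2}t(1-t)\theta^{2}$. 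The main obstacle is handling the boundary case $K>0$ with $\theta \geq D_{K,N-1}$: there the RHS of the defining inequality is $+\infty$, so there is nothing to prove for any individual pair $x_0,x_1$; but one must separately justify that such pairs can actually occur under~(2). This is a standard Bonnet--Myers-type consequence of~(2) applied to $f>0$: the differential inequality $f''+\tfrac{K}{N-1}f\leq 0$ with $f>0$ on an interval forces that interval's length to be strictly less than $D_{K,N-1}$, so if $\theta\geq D_{K,N-1}$ then $h$ must vanish at one of the endpoints (by the lower semi-continuous extension), whence both sides of the $\CD(K,N)$ inequality degenerate and the statement holds vacuously or by continuity from the interior.
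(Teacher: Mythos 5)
Your proposal is correct, and the necessity direction $(1)\Rightarrow(2)$ is exactly the paper's argument (midpoint inequality plus the expansion $\sigma^{(1/2)}_{K,N-1}(\theta)=\tfrac12+\tfrac{\theta^2}{16}\tfrac{K}{N-1}+o(\theta^2)$, as in Lemma \ref{lem:point-CDKN}). The sufficiency direction $(2)\Rightarrow(1)$ is where you diverge: the paper (Lemma \ref{lem:density-CDKN}) works with the deficit $\Delta(t)$ and the Wronskian-type quantity $\beta(t)=\Delta'(t)\sigma(t)-\Delta(t)\sigma'(t)$, showing $\beta$ is non-increasing from $\beta(0)=0$ and then inspecting interior extrema of $\Delta$ (with a reflection trick for $K>0$), whereas you invoke the classical Sturm comparison/maximum principle: $u=\phi-g$ satisfies $u''+\tfrac{K\theta^2}{N-1}u\le 0$, $u(0)=u(1)=0$, and positivity of the Dirichlet Green's function for $-\partial_t^2-\tfrac{K\theta^2}{N-1}$ when $\theta<D_{K,N-1}$ gives $u\ge 0$. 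For $C^2$ densities this is perfectly adequate and arguably cleaner; what the paper's hands-on argument buys is validity under much weaker regularity (positive $h$ with locally absolutely continuous derivative and the inequality only a.e.), which is the form actually needed elsewhere in the paper. Two small inaccuracies in your long-interval discussion for $K>0$: the Bonnet--Myers step only yields $\mathrm{len}(I)\le D_{K,N-1}$ (equality is possible, e.g.\ $f(x)=\sin(x\sqrt{K/(N-1)})$), not strict inequality; but since $I$ is open this already implies that any two points $x_0,x_1\in I$ satisfy $|x_1-x_0|<D_{K,N-1}$, so the problematic case simply never arises. Consequently the closing remark that ``$h$ must vanish at one of the endpoints'' is both unnecessary and inconsistent with the standing assumption $h>0$ on $I$ (implicit in $(\log h)'$); the correct conclusion is that the case $\theta\ge D_{K,N-1}$ is vacuous, which is essentially what the paper proves by its $\sigma(t)=\sin(\pi t)$ contradiction argument.
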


\begin{remark}
The equality in (\ref{eq:diff-CDKN}) holds for any $N \in (1,\infty)$ by the Leibniz and chain rules at any point $x$ where $h(x)$ is positive and twice differentiable (and in particular, $h^{\frac{1}{N-1}}$ and $\log h$ are also twice differentiable at such a point $x$). The condition (\ref{eq:diff-CDKN}) is the one-dimensional specialization of the Bakry--\'Emery $\CD(K,N)$ condition for smooth weighted Riemannian manifolds \cite{BakryEmery,BakryStFlour}. 
\end{remark}

In fact, we will require a couple of extensions of the above standard claim, which in particular, together imply Lemma \ref{lem:CDKN-C2}; to avoid unnecessary generality, we only treat the case $N \in (1,\infty)$. 

\begin{lemma} \label{lem:point-CDKN}
Let $h$ denote a $\CD(K,N)$ density on an interval $I \subset \Real$, $N \in (1,\infty)$. Then $h$ satisfies (\ref{eq:diff-CDKN}) at any point $x$ in the interior where it is twice differentiable (in particular, (\ref{eq:diff-CDKN}) holds for a.e. $x \in I$). 
\end{lemma}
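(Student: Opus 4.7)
The plan is to apply the defining inequality of $\CD(K,N)$ density with a symmetric triple centered at $x$ and perform a Taylor expansion of both sides in the small parameter. Since the conclusion is a pointwise inequality, the key is to extract the correct second-order information from the synthetic definition.

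First I would dispose of the degenerate case: if $h(x_0) = 0$ for some interior $x_0$, applying the definition shows $h$ identically vanishes in the interior (by the strict positivity property of non-degenerate $\CD(K,N)$ densities), and (\ref{eq:diff-CDKN}) holds vacuously/trivially. So assume $h(x) > 0$ on an open neighborhood of the point $x$ where $h$ is twice differentiable. Set $f := h^{1/(N-1)}$, which is then also twice differentiable at $x$ with $f(x) > 0$, and by the Leibniz/chain rule the equality in (\ref{eq:diff-CDKN}) is automatic, so it suffices to establish
\begin{equation*}
(N-1)\frac{f''(x)}{f(x)} \leq -K.
\end{equation*}

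Next, for $\eps > 0$ small enough that $[x-\eps, x+\eps]$ lies in the interior of $I$, apply the $\CD(K,N)$ density inequality with $x_0 = x-\eps$, $x_1 = x+\eps$ and $t = 1/2$, so that $tx_1 + (1-t)x_0 = x$ and $|x_1-x_0| = 2\eps$. This yields
\begin{equation*}
f(x) \geq \sigma^{(1/2)}_{K,N-1}(2\eps)\bigl[f(x+\eps) + f(x-\eps)\bigr].
\end{equation*}
I would then Taylor expand. On the right, $f(x+\eps)+f(x-\eps) = 2f(x) + \eps^2 f''(x) + o(\eps^2)$ by twice differentiability of $f$ at $x$. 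For the coefficient, a direct computation using the trigonometric/hyperbolic identities (and the trivial value when $K=0$) shows that in all three cases of Definition \ref{def:sigma},
\begin{equation*}
\sigma^{(1/2)}_{K,N-1}(2\eps) = \frac{1}{2}\left(1 + \frac{K}{2(N-1)}\eps^2 + O(\eps^4)\right) \quad \text{as } \eps \to 0,
\end{equation*}
since $\sigma^{(1/2)}_{K,N-1}(2\eps) = \frac{1}{2\cos(\eps\sqrt{K/(N-1)})}$ for $K>0$ and $= \frac{1}{2\cosh(\eps\sqrt{-K/(N-1)})}$ for $K<0$.

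Substituting these expansions into the inequality and simplifying gives
\begin{equation*}
f(x) \geq f(x) + \frac{\eps^2}{2}f''(x) + \frac{K\eps^2}{2(N-1)}f(x) + o(\eps^2),
\end{equation*}
so that
\begin{equation*}
0 \geq \frac{\eps^2}{2}\left(f''(x) + \frac{K}{N-1}f(x)\right) + o(\eps^2).
\end{equation*}
Dividing by $\eps^2/2$ and letting $\eps \to 0$ yields $f''(x) + \frac{K}{N-1}f(x) \leq 0$, and dividing through by $f(x) > 0$ and multiplying by $N-1$ gives exactly $(N-1)f''(x)/f(x) \leq -K$, which is (\ref{eq:diff-CDKN}). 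The final assertion that (\ref{eq:diff-CDKN}) holds for all $x \in I$ with at most countably many exceptions follows from the local semi-concavity of $h$ (hence of $f$) in the interior, which ensures twice differentiability outside a countable set. The main (mild) obstacle is just bookkeeping the Taylor expansion of $\sigma^{(1/2)}_{K,N-1}(2\eps)$ uniformly across the three sign regimes of $K$; there is no genuine analytical difficulty.
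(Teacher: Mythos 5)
Your proof is correct and follows essentially the same route as the paper: apply the defining inequality with $x_0=x-\eps$, $x_1=x+\eps$, $t=1/2$, expand $\sigma^{(1/2)}_{K,N-1}(2\eps)=\frac{1}{2}\bigl(1+\frac{K}{2(N-1)}\eps^2+o(\eps^2)\bigr)$, and extract the second-order inequality for $h^{1/(N-1)}$ as $\eps\to 0$. The only cosmetic difference is that the paper phrases the limit via the symmetric second-difference quotient rather than substituting the Taylor (Peano) expansion, which is equivalent at a point of twice differentiability.
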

\begin{proof}
Let $x$ be a point as above. 
Observe that:
\[
\sigma^{(1/2)}_{K,N-1}(\theta) = \frac{1}{2} + \frac{\theta^2}{16} \frac{K}{N-1} + o(\theta^2) \;\;\; \text{as $\theta \rightarrow 0$} ,
\]
and so denoting $g = h^{\frac{1}{N-1}}$, the $\CD(K,N)$ condition with $x_0 = x - \eps$, $x_1 = x+\eps$ and $t = 1/2$ implies:
\[
2 g(x) \geq \brac{1 + \frac{\eps^2}{2} \frac{K}{N-1} + o(\eps^2)} \brac{g(x + \eps) + g(x-\eps)} \;\;\; \text{as $\eps \rightarrow 0$}.
\]
It follows by Taylor's theorem and continuity of $g$ in the interior of $I$ that:
\[
g''(x) = \lim_{\eps \rightarrow 0} \frac{g(x+\eps) + g(x-\eps) - 2 g(x)}{\eps^2} \leq \lim_{\eps \rightarrow 0}  -\frac{K}{N-1} \frac{g(x + \eps) + g(x-\eps)}{2} = -\frac{K}{N-1} g(x) ,
\]
confirming (\ref{eq:diff-CDKN}) and concluding the proof. 
\end{proof}

\begin{lemma} \label{lem:density-CDKN}
Let $h$ be a positive differentiable function on an open interval $I \subset \Real$ whose derivative is locally absolutely continuous there (and hence $h$ is twice differentiable a.e. in $I$). If $h$ satisfies (\ref{eq:diff-CDKN}) for a.e. $x \in I$ and $N \in (1,\infty)$, then $\len(I) \leq D_{K,N-1}$ and $h$ is a $\CD(K,N)$ density on $I$. 
\end{lemma}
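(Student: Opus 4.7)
My plan is to pass to $g := h^{1/(N-1)}$, which inherits from $h$ the property of being positive, differentiable, and of having a locally absolutely continuous derivative; moreover, the pointwise identity
\[
(N-1)\frac{(h^{1/(N-1)})''}{h^{1/(N-1)}} = (\log h)'' + \frac{1}{N-1}((\log h)')^2
\]
valid at any point of twice differentiability rewrites the hypothesis (\ref{eq:diff-CDKN}) as the one-dimensional Jacobi inequality
\[
g''(x) + \frac{K}{N-1}\, g(x) \leq 0 \qquad \text{for a.e.\ } x \in I.
\]

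First I would establish the diameter bound $\len(I) \leq D_{K,N-1}$, which is vacuous unless $K>0$. In the latter case, set $a := K/(N-1)$ and suppose for contradiction that $[x_0, x_0 + \pi/\sqrt{a}] \subset I$. Let $u(x) := \sin(\sqrt{a}(x-x_0))$, which is smooth, solves $u'' + au = 0$, and vanishes at both endpoints of that sub-interval while being non-negative inside. The Wronskian $W := gu' - g'u$ is absolutely continuous with $W' = -(g'' + ag)u \geq 0$ a.e.\ by the above inequality. Hence $W$ is monotone non-decreasing on $[x_0, x_0 + \pi/\sqrt{a}]$, in direct contradiction with $W(x_0) = \sqrt{a}\,g(x_0)>0$ and $W(x_0 + \pi/\sqrt{a}) = -\sqrt{a}\,g(x_0 + \pi/\sqrt{a})<0$.

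Now fix $x_0 < x_1$ in $I$, set $\theta := x_1 - x_0 < D_{K,N-1}$, and define $\psi(t) := g(x_0 + t\theta)$ together with
\[
\phi(t) := \sigma^{(1-t)}_{K,N-1}(\theta)\, g(x_0) + \sigma^{(t)}_{K,N-1}(\theta)\, g(x_1),\qquad t \in [0,1].
\]
By construction $\phi$ is the classical solution of $\phi''(t) + \theta^2 \tfrac{K}{N-1}\phi(t) = 0$ on $[0,1]$ satisfying $\phi(0)=g(x_0)$, $\phi(1)=g(x_1)$, while the chain rule together with the Jacobi inequality gives $\psi''(t) + \theta^2 \tfrac{K}{N-1}\psi(t) \leq 0$ a.e.\ on $(0,1)$. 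Consequently $\xi := \psi - \phi$ satisfies $\xi(0)=\xi(1)=0$ and $\xi'' + \theta^2 \tfrac{K}{N-1}\xi \leq 0$ distributionally; the desired $\CD(K,N)$ density inequality at $(x_0,x_1,t)$ is exactly $\xi(t) \geq 0$, so it remains only to deduce this sign.

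The main obstacle, though standard, is this last maximum principle. The key is that $\theta < D_{K,N-1}$ forces $\theta^2 K/(N-1) < \pi^2$, i.e.\ the zeroth-order coefficient stays strictly below the Dirichlet principal eigenvalue of $-\partial_t^2$ on $[0,1]$. One way to conclude is to construct explicitly the (non-negative) Green's function $G(s,t)$ of $-\partial_\tau^2 - \theta^2 K/(N-1)$ with Dirichlet boundary conditions on $[0,1]$ from the same trigonometric, affine or hyperbolic building blocks that define $\sigma^{(\cdot)}_{K,N-1}$, and then write $\xi(t) = \int_0^1 G(s,t)(-\xi''(s) - \theta^2 \tfrac{K}{N-1}\xi(s))\,ds \geq 0$. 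Alternatively, one can repeat the Wronskian comparison used for the diameter bound, now applied to $\phi \xi' - \phi' \xi$ on any candidate sub-interval where $\xi < 0$, using that $\phi$ is strictly positive on $[0,1]$ precisely because $\theta < D_{K,N-1}$. Either route yields $\xi \geq 0$ on $[0,1]$, finishing the proof.
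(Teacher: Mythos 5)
Your proposal is correct and takes essentially the same route as the paper: pass to $g=h^{\frac{1}{N-1}}$, rewrite (\ref{eq:diff-CDKN}) as the a.e.\ Jacobi inequality $g''+\frac{K}{N-1}g\le 0$, rule out $\len(I)>D_{K,N-1}$ by a Wronskian against $\sin$, and obtain the $\CD(K,N)$ inequality by a Sturm-type comparison of $\xi=\psi-\phi$ with the explicit solution of $\phi''+\theta^2\frac{K}{N-1}\phi=0$. The only (cosmetic) difference is in how the maximum principle is executed: the paper runs the Wronskian against $\sigma^{(t)}_{K,N-1}(\theta)$, which vanishes at $t=0$, and evaluates at interior extrema of the difference (symmetrizing in $t\mapsto 1-t$ when $K>0$), whereas you compare against the strictly positive interpolant $\phi$ on negativity intervals of $\xi$, or equivalently invoke positivity of the Dirichlet Green's function below the first eigenvalue.
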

\begin{remark}
The differentiability assumption at every point cannot be relaxed, as witnessed by the convex function $h(x) = \abs{x}$, which satisfies $h''(x) = 0$ for a.e. $x$ but nevertheless is not concave. 
\end{remark}
\begin{proof}
Given $x_0, x_1 \in I$ with $\abs{x_1 - x_0} < D_{K,N-1}$, consider the function $\Delta$ on $[0,1]$ given by:
\[
\Delta(t) :=  h(t x_1 + (1-t) x_0)^{\frac{1}{N-1}} - \sigma^{(t)}_{K,N-1}(\abs{x_1-x_0}) h(x_1)^{\frac{1}{N-1}} - \sigma^{(1-t)}_{K,N-1}(\abs{x_1-x_0}) h(x_0)^{\frac{1}{N-1}} .
\]
As $\Delta$ is positive and bounded away from zero on $[0,1]$, and since $y^{\frac{1}{N-1}}$ is Lipschitz on compact sub-intervals of $(0,\infty)$, it follows that $\Delta$ is differentiable with absolutely continuous derivative on $[0,1]$. 
In addition, clearly $\Delta(0) = \Delta(1) = 0$. Abbreviating $\sigma(t) = \sigma^{(t)}_{K,N-1}(\abs{x_1-x_0})$, it is immediate to verify that:
\begin{equation} \label{eq:max-principle-1}
\frac{d^2}{(dt)^2}  \sigma(t) = -\frac{K}{N-1} (x_1 - x_0)^2 \sigma(t) ,
\end{equation}
and therefore our assumption (\ref{eq:diff-CDKN}) for a.e. $x \in I$ implies:
\begin{equation} \label{eq:max-principle-2}
\frac{d^2}{(dt)^2} \Delta(t) \leq -\frac{K}{N-1} (x_1-x_0)^2 \Delta(t) \;\;\; \text{for a.e. } t \in [0,1] . 
\end{equation}

Now set $\Delta_0(t) = \Delta(t)$ and $\Delta_1(t) = \Delta(1-t)$, and for each $i \in \set{0,1}$, denote by $\beta_i$ the absolutely continuous function on $[0,1]$ given by:
\begin{equation} \label{eq:max-principle-beta}
\beta_i(t) := \Delta'_i(t) \sigma(t) - \Delta_i(t) \sigma'(t)  .
\end{equation}
It follows by the Leibniz rule that for any $i\in \set{0,1}$:
\[
\beta_i'(t) = \Delta''_i(t) \sigma(t) - \Delta_i(t) \sigma''(t) \leq 0 \;\;\; \text{for a.e. } t \in [0,1]  ,
\]
and since $\sigma(0) = 0$ we also have $\beta_i(0) = 0$. The absolute continuity implies that $\beta_i$ is monotone non-increasing, and hence $\beta_i(t) \leq 0$ for all $t \in [0,1]$. 

We are ready to conclude that $\Delta \geq 0$ on $[0,1]$, by showing that $\Delta(t_0) \geq 0$ for any local extremum point $t_0 \in (0,1)$ of $\Delta$.  Indeed, when $K \leq 0$, this is immediate, since $\sigma' > 0$ and:
\[
0 \geq \beta_0(t_0) =  \Delta'_0(t_0) \sigma(t_0)-\Delta_0(t_0) \sigma'(t_0) = -\Delta(t_0) \sigma'(t_0) .
\]
When $K > 0$, set $t_1 = 1 - t_0$ which is a local extremal point of $\Delta_1$ in $(0,1)$, and note that $t_{i^*} \in (0,1/2]$ for some $i^* \in \set{0,1}$. Since $\abs{x_1 - x_0} < D_{K,N-1}$, it follows that $\sigma' > 0$ on $[0,1/2]$, and so the same argument as for the case $K \leq 0$ but applied to $\Delta_{i^*}$ yields that $\Delta(t_0) = \Delta_{i^*}(t_{i^*}) \geq 0$, as asserted.

Finally, when $K > 0$, assume in the contrapositive that there exist $x_0,x_1 \in I$ with $x_1 - x_0 = D_{K,N-1}$. Denote $\Delta_0(t) = \Delta(t) := h(t x_1 + (1-t)x_0)$ and set $\sigma(t) := \sin(\pi t)$ for $t \in [0,1]$. Note that as before, (\ref{eq:max-principle-1}) and (\ref{eq:max-principle-2}) are satisfied, and so defining the function $\beta_0$ by (\ref{eq:max-principle-beta}), $\beta_0$ is again monotone non-increasing on $[0,1]$. But:
\[
\beta_0(0) = -\Delta_0(0) \sigma'(0) = -\pi h(x_0) < \pi h(x_1) = -\Delta_0(1) \sigma'(1) = \beta_0(1) ,
\]
yielding a contradiction to the monotonicity, and concluding the proof.  
\end{proof}

\subsection{A-priori estimates}

We will also require the following a-priori estimates on the supremum and logarithmic derivative of $\CD(K,N)$ densities. Here it is crucial that $N \in (1,\infty)$. 

\begin{lemma} \label{lem:apriori0}
Let $h$ denote a $\CD(K,N)$ density on a finite interval $(a,b)$, $N \in (1,\infty)$, which integrates to $1$. Then:
\[
\sup_{x_0 \in (a,b)} h(x_0) \leq \frac{1}{b-a} \begin{cases} N & K \geq 0  \\ (\int_0^1 (\sigma^{(t)}_{K,N-1}(b-a))^{N-1} dt)^{-1} & K < 0 \end{cases} .
\]
In particular, for fixed $K$ and $N$, $h$ is uniformly bounded from above as long as $b-a$ is uniformly bounded away from $0$ (and from above if $K < 0$).
\end{lemma}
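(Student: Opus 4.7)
\textbf{Proof plan for Lemma \ref{lem:apriori0}.} The strategy is to fix an arbitrary $x_{0}\in(a,b)$, use the $\CD(K,N)$ density inequality with the right-endpoint $x_{1}=b$ (respectively the left-endpoint $x_{1}=a$) to bound $h(y)$ from below for $y$ on either side of $x_{0}$ by an explicit multiple of $h(x_{0})$, and then integrate these bounds against the constraint $\int_{a}^{b} h\, dy = 1$.

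First I would observe that for any $y\in (x_{0},b)$, setting $t=(y-x_{0})/(b-x_{0})\in (0,1)$ we have $y=(1-t)x_{0}+tb$ and $1-t=(b-y)/(b-x_{0})$, so dropping the non-negative term involving $h(b)$ in Definition \ref{def:CDKN-density} gives
\[
h(y)^{\frac{1}{N-1}} \;\geq\; \sigma_{K,N-1}^{(1-t)}(b-x_{0})\, h(x_{0})^{\frac{1}{N-1}},
\]
and hence, after raising to the power $N-1$,
\[
h(y) \;\geq\; \sigma_{K,N-1}^{\bigl((b-y)/(b-x_{0})\bigr)}(b-x_{0})^{\,N-1}\, h(x_{0}).
\]
The symmetric argument with $x_{1}=a$ yields, for $y\in(a,x_{0})$,
\[
h(y) \;\geq\; \sigma_{K,N-1}^{\bigl((y-a)/(x_{0}-a)\bigr)}(x_{0}-a)^{\,N-1}\, h(x_{0}).
\]
Integrating the first inequality over $(x_{0},b)$ via the change of variables $s=(b-y)/(b-x_{0})$ and the second over $(a,x_{0})$ via $s=(y-a)/(x_{0}-a)$, and using $\int_{a}^{b} h\,dy = 1$, I obtain
\[
1 \;\geq\; h(x_{0})\left[(x_{0}-a)\!\int_{0}^{1}\!\sigma_{K,N-1}^{(s)}(x_{0}-a)^{N-1}\,ds \;+\; (b-x_{0})\!\int_{0}^{1}\!\sigma_{K,N-1}^{(s)}(b-x_{0})^{N-1}\,ds\right].
\]

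It then remains to bound the two integrals from below by the appropriate quantity, which splits according to the sign of $K$. When $K\geq 0$, $\sigma_{K,N-1}^{(s)}(\theta)\geq s$ (trivial for $K=0$; for $K>0$ this follows from the concavity of $\sin$ on $[0,\pi]$, noting that we may restrict to $\theta<D_{K,N-1}$ as otherwise the bound in the lemma is vacuous after confronting the $\CD$ inequality), so each integrand is bounded below by $s^{N-1}$ and each integral by $1/N$; adding the two contributions gives $1\geq h(x_{0})(b-a)/N$. When $K<0$, one needs the monotonicity $\theta\mapsto \sigma_{K,N-1}^{(s)}(\theta)$ is non-increasing on $(0,\infty)$ for each $s\in(0,1)$ — an elementary verification from $\sigma_{K,N-1}^{(s)}(\theta)=\sinh(s\theta\sqrt{-K/(N-1)})/\sinh(\theta\sqrt{-K/(N-1)})$, for instance by differentiating the logarithm and using $\coth$ monotonicity. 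Since $x_{0}-a, b-x_{0}\leq b-a$, both integrands dominate $\sigma_{K,N-1}^{(s)}(b-a)^{N-1}$, and I conclude
\[
1 \;\geq\; h(x_{0})\,(b-a)\!\int_{0}^{1}\!\sigma_{K,N-1}^{(s)}(b-a)^{N-1}\,ds,
\]
which is exactly the asserted bound.

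The proof is almost entirely routine; the only mildly non-trivial ingredients are the two inequalities on the distortion coefficients, namely $\sigma_{K,N-1}^{(s)}(\theta)\geq s$ for $K\geq 0$ and the $\theta$-monotonicity for $K<0$, both of which are elementary consequences of the explicit trigonometric/hyperbolic formulae. I do not anticipate any genuine obstacle here: the entire argument is a one-dimensional comparison-to-endpoints computation and should occupy at most a few lines once the above inequalities on $\sigma$ are noted.
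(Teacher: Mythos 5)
Your proposal is correct and is essentially the paper's own proof: you split $\int_a^b h = 1$ at $x_0$, drop the non-negative endpoint term in the $\CD(K,N)$ inequality to get $h \geq \sigma^{N-1}\, h(x_0)$ on each side, and conclude via $\sigma^{(t)}_{K,N-1}(\theta) \geq t$ when $K \geq 0$ and the monotonicity of $\theta \mapsto \sigma^{(t)}_{K,N-1}(\theta)$ when $K < 0$, exactly as the paper does. The only cosmetic differences are in justifying the elementary $\sigma$-inequalities: the paper deduces $\sigma^{(t)}_{K,N-1}(\theta)\geq t$ from monotonicity in $K$ rather than concavity of $\sin$, and for $K<0$ note that the fact you need after differentiating the logarithm is $s\coth(su)\leq\coth(u)$, i.e.\ that $u\mapsto u\coth u$ is increasing (equivalently $\sinh u\cosh u\geq u$, as the paper remarks), not the monotonicity of $\coth$ alone.
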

\begin{proof}
Given $x_0 \in (a,b)$, we have by the $\CD(K,N)$ condition:
\begin{align*}
1 &= (x_0-a) \int_0^{1} h(t x_0 + (1-t) a) dt + (b-x_0) \int_{0}^1 h((1-t) b + t x_0) dt \\
&\geq h(x_0) \brac{(x_0 - a) \int_0^1 (\sigma^{(t)}_{K,N-1}(x_0-a))^{N-1} dt + (b-x_0) \int_0^1  (\sigma^{(t)}_{K,N-1}(b-x_0))^{N-1} dt } .
\end{align*}
When $K \geq 0$, the monotonicity of $K \mapsto \sigma^{(t)}_{K,N-1}(\theta)$ implies that $\sigma^{(t)}_{K,N-1}(\theta) \geq \sigma^{(t)}_{0,N-1}(\theta) = t$, and we obtain:
\[
1 \geq h(x_0) \frac{b-a}{N} .
\]
When $K < 0$, one may show that the function $\theta \mapsto \sigma^{(t)}_{K,N-1}(\theta)$ is decreasing on $\Real_+$, as this is equivalent to showing that the function $x \mapsto \log \sinh \exp(x)$ is convex on $\Real_+$, and the latter may be verified by direct differentiation (and using that $\sinh(x) \cosh(x) \geq x$). Consequently, we obtain:
\[
1 \geq h(x_0) (b-a) \int_0^1 (\sigma^{(t)}_{K,N-1}(b-a))^{N-1} dt ,
\]
as asserted. We remark that when $K > 0$, one may similarly show that the function $\theta \mapsto \sigma^{(t)}_{K,N-1}(\theta)$ is increasing on $[0,D_{K,N-1})$, and since $\sigma^{(t)}_{K,N-1}(0) = t$, we obtain the previous estimate we employed. 
\end{proof}

\begin{lemma} \label{lem:apriori}
Let $h$ denote a $\CD(K,N)$ density on a finite interval $(a,b)$, $N \in (1,\infty)$. Then:
\[
- \sqrt{K (N-1)} \cot((b-x) \sqrt{K / (N-1)})  \leq (\log h)'(x) \leq \sqrt{K (N-1)} \cot((x-a) \sqrt{K / (N-1)}) , 
\]
for any point $x \in (a,b)$ where $h$ is differentiable. In particular, $\log h(x)$ is locally Lipschitz on $x \in (a,b)$ with estimates depending continuously only on $x,a,b,K,N$.
\end{lemma}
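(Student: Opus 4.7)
Set $\lambda := K/(N-1)$ and $f := h^{1/(N-1)}$. By Definition~\ref{def:CDKN-density}, $f$ is positive, locally Lipschitz and locally semi-concave on $(a,b)$, and by the preceding Lemma~\ref{lem:density-CDKN} we have $b-a \leq D_{K,N-1}$. Let $s : [a,b] \to \R$ denote the Jacobi-type solution of $s''+\lambda s = 0$ with initial data $s(a)=0$, $s'(a)=1$; explicitly $s(x)=\sin((x-a)\sqrt{\lambda})/\sqrt{\lambda}$ for $K>0$, $s(x)=x-a$ for $K=0$, and $s(x)=\sinh((x-a)\sqrt{-\lambda})/\sqrt{-\lambda}$ for $K<0$, and in every case $s>0$ on $(a,b)$. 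A direct computation yields
\[
(N-1)\,\frac{s'(x)}{s(x)}=\sqrt{K(N-1)}\,\cot\!\bigl((x-a)\sqrt{K/(N-1)}\bigr),
\]
so since $(\log h)'=(N-1)f'/f$ at any point of differentiability of $h$, the right-hand inequality in the statement is equivalent to the Wronskian-type estimate $W(x):=f(x)s'(x)-f'(x)s(x)\geq 0$ on $(a,b)$. The left-hand inequality is the right-hand one applied to $\tilde h(x):=h(a+b-x)$, which is again a $\CD(K,N)$ density on $(a,b)$.

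\textbf{Smooth case and approximation.} When $h\in C^{2}((a,b))$, Lemma~\ref{lem:point-CDKN} upgrades~\eqref{eq:diff-CDKN} to the pointwise inequality $f''(x)+\lambda f(x)\leq 0$ for \emph{every} $x\in(a,b)$. Then $W\in C^{1}((a,b))$ with
\[
W'(x)=f(x)s''(x)-f''(x)s(x)=-s(x)\bigl(f''(x)+\lambda f(x)\bigr)\geq 0,
\]
and a short Taylor expansion at $a$ (distinguishing the cases $f(a^{+})>0$ and $f(a^{+})=0$, in the latter using $f''(a^{+})\leq -\lambda f(a^{+})=0$) shows $\liminf_{x\to a^{+}}W(x)\geq 0$, whence $W\geq 0$ on $(a,b)$ by monotonicity. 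For a general $\CD(K,N)$ density $h$ we use the logarithmic convolution $h_{\epsilon}:=\exp((\log h)\ast\psi_{\epsilon})$ with $\psi_{\epsilon}$ a symmetric smooth mollifier: by Proposition~\ref{prop:log-convolve}, each $h_{\epsilon}$ is a $C^{2}$-smooth $\CD(K,N)$ density on $(a+\epsilon,b-\epsilon)$, so the bound of the lemma holds for $h_{\epsilon}$ on this sub-interval with $a$ replaced by $a+\epsilon$. Lemma~\ref{lem:Lip-deriv} gives $(\log h_{\epsilon})'(x_{0})\to(\log h)'(x_{0})$ at every point $x_{0}\in(a,b)$ of differentiability of $\log h$, and continuity of $\sqrt{K(N-1)}\cot((x_{0}-a-\epsilon)\sqrt{K/(N-1)})$ in $\epsilon$ as $\epsilon\to 0$ yields the desired inequality at $x_{0}$.

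\textbf{Main obstacle.} The principal technical issue is that the infinitesimal inequality $f''+\lambda f\leq 0$ provided by Lemma~\ref{lem:point-CDKN} is only known a.e., whereas the Wronskian estimate is required at \emph{every} point of differentiability of $h$, and the logarithmic derivative of a $\CD(K,N)$ density is merely locally Lipschitz rather than smooth. Rather than manipulate $f''$ distributionally as a signed Radon measure (exploiting local semi-concavity and the non-positive singular part), the cleanest route is the logarithmic-convolution smoothing above, which reduces the problem to the classical smooth Sturm--Liouville Wronskian comparison, at the cost of verifying stability of $(\log h_{\epsilon})'$ at a fixed differentiability point as $\epsilon\to 0$ (precisely Lemma~\ref{lem:Lip-deriv}) and checking the boundary behavior of $W$ in the smooth case. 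The final local Lipschitz regularity and continuous dependence of the bound on $(x,a,b,K,N)$ are then immediate from the explicit cotangent/$\coth$/$1/\theta$ form of the comparison function, which is smooth on its domain of definition.
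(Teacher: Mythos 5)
Your route is genuinely different from the paper's and is viable in outline: reduce to the Wronskian inequality $W=fs'-f's\ge 0$, prove it for smooth densities by Sturm comparison, then smooth a general density by logarithmic mollification (Proposition \ref{prop:log-convolve}) and pass to the limit at a fixed differentiability point via Lemma \ref{lem:Lip-deriv}; there is no circularity, since the proofs of those two results do not use Lemma \ref{lem:apriori}. However, the boundary step of your smooth case is a genuine gap as written. You justify $\liminf_{x\to a^+}W(x)\ge 0$ by ``a short Taylor expansion at $a$'' involving $f''(a^+)$, but a $\CD(K,N)$ density that is $C^2$ on the \emph{open} interval $(a,b)$ need not be even once differentiable up to the endpoint: for $K\le 0$ take $f(x)=1+\sqrt{x-a}$ (concave, hence $\sigma^{(t)}_{K,N-1}$-concave since $\sigma^{(t)}_{K,N-1}(\theta)\le t$ for $K\le 0$), which is $C^\infty$ on $(a,b)$ with $f'(a^+)=+\infty$; ``$f''(a^+)$'' does not exist and no expansion at $a$ is available. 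Mollification does not automatically cure this, because Proposition \ref{prop:log-convolve} only gives a smooth $\CD(K,N)$ density on the open interval $(a+\eps,b-\eps)$, so you face the same endpoint issue at $a+\eps$. (A smaller slip: you cite Lemma \ref{lem:density-CDKN} for $b-a\le D_{K,N-1}$, but that lemma goes in the converse direction under different hypotheses; for $K>0$ the bound follows directly from the definition, since $\sigma^{(t)}_{K,N-1}(\theta)=+\infty$ for $\theta\ge D_{K,N-1}$ while $h>0$ in the interior.)

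The gap is repairable, and it is worth seeing how. Within your smooth case: $W$ is nondecreasing since $W'=-s(f''+\lambda f)\ge 0$, so if $W(x_0)\le -c<0$ for some $x_0$, then $(f/s)'=-W/s^2\ge c/s^2$ on $(a,x_0)$, and since $s(x)\asymp (x-a)$ near $a$ this is non-integrable; integrating from $x$ to $x_0$ forces $f(x)/s(x)\to-\infty$ as $x\to a^+$, contradicting $f\ge 0$, $s>0$. Alternatively, one can check that $\log h$ is integrable up to the endpoints (it is bounded above near $a$, and bounded below by $\mathrm{const}+(N-1)\log(x-a)$ via the defining inequality against a fixed interior point), so in the convolution all derivatives fall on the mollifier and $h_\eps$ extends one-sidedly $C^2$ and strictly positive to $[a+\eps,b-\eps]$, after which your endpoint computation gives $W_\eps(a+\eps)=f_\eps(a+\eps)>0$. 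Compare this with the paper's proof, which is a two-line version of exactly the comparison you are circling around: apply the defining inequality with $x_1=x$ and the other endpoint sent to $a$, drop the nonnegative term, and compare $t$-derivatives at $t=1$ where the two sides touch, obtaining $(x-a)\Psi'(x)\le \partial_t|_{t=1}\sigma^{(t)}_{K,N-1}(x-a)\,\Psi(x)$ at every differentiability point --- no smoothing, no ODE comparison, and no boundary analysis needed.
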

\begin{proof}
Denote $\Psi = h^{\frac{1}{N-1}}$. The inequality on the right-hand-side follows since:
\[
\Psi(t x + (1-t) a) \geq \sigma^{(t)}_{K,N-1}(x-a) \Psi(x)  \;\;\; \forall t \in [0,1]
\]
with equality at $t=1$, and hence we may compare derivatives at $t=1$:
\[
(x-a) \Psi'(x) \leq \partial_t  |_{t=1} \sigma^{(t)}_{K,N-1}(x-a)  \Psi(x)  ,
\]
whenever $\Psi$ is differentiable at $x$. The inequality on the left-hand-side follows similarly. 
\end{proof}

\subsection{Logarithmic Convolutions}

We will require the following:
\begin{proposition} \label{prop:log-convolve}
Let $h$ denote a $\CD(K,N)$ density on an interval $(a,b)$. Let $\psi_\eps$ denote a non-negative $C^2$ function supported on $[-\eps,\eps]$ with $\int \psi_\eps = 1$. 
For any $\eps \in (0,\frac{b-a}{2})$, define the function $h^\eps$ on $(a+\eps,b-\eps)$ by:
\[
\log h^\eps := \log h \ast \psi_\eps .
\]
Then $h^\eps$ is a $C^2$-smooth $\CD(K,N)$ density on $(a+\eps ,b -\eps)$. 
\end{proposition}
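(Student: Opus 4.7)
The plan is to verify the $\CD(K,N)$ condition for $h^{\eps}$ directly, by combining the $\CD(K,N)$ inequality for $h$ with Jensen's inequality applied to a log-sum-exp type convex function. This bypasses any delicate distributional derivative computations, and explains conceptually why the \emph{logarithmic} convolution is the right regularization to preserve the curvature-dimension structure.

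First I would check that $h^{\eps}$ is well defined and $C^{2}$-smooth on $(a+\eps, b-\eps)$. By Lemma~A.4, $\log h$ is locally Lipschitz on $(a,b)$; for any $x \in (a+\eps, b-\eps)$ the closed interval $[x-\eps, x+\eps]$ is compactly contained in $(a,b)$, so $\log h$ is bounded there and $v^{\eps}(x) := ((\log h) \ast \psi_{\eps})(x)$ is well defined. Since $\psi_{\eps}$ is $C^{2}$ and compactly supported, standard differentiation-under-the-integral gives $v^{\eps} \in C^{2}(a+\eps, b-\eps)$, and hence so is $h^{\eps} = \exp(v^{\eps})$.

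The main step is to establish the $\CD(K,N)$ inequality in the case $N \in (1,\infty)$. Fix $z_{0}, z_{1} \in (a+\eps, b-\eps)$ and $t \in [0,1]$, and set $\theta := |z_{1}-z_{0}|$, $\sigma_{1} := \sigma^{(t)}_{K,N-1}(\theta)$, $\sigma_{0} := \sigma^{(1-t)}_{K,N-1}(\theta)$. Since $\theta < b-a \leq D_{K,N-1}$, the coefficients $\sigma_{0}, \sigma_{1}$ are finite. For each $y \in \supp(\psi_{\eps})$ the points $z_{0}-y, z_{1}-y$ lie in $(a,b)$ and $|(z_{1}-y)-(z_{0}-y)|=\theta$, so the $\CD(K,N)$ inequality for $h$ applied at these points yields, after taking $\log$,
\begin{equation*}
(\log h)\bigl(t(z_{1}-y)+(1-t)(z_{0}-y)\bigr) \;\geq\; (N-1) \, F\bigl((\log h)(z_{1}-y),\, (\log h)(z_{0}-y)\bigr),
\end{equation*}
where
\begin{equation*}
F(u_{1},u_{0}) \;:=\; \log\!\Bigl[\sigma_{1}\, e^{u_{1}/(N-1)} + \sigma_{0}\, e^{u_{0}/(N-1)}\Bigr].
\end{equation*}
The key observation is that $F : \R^{2} \to \R$ is \emph{convex} -- a standard fact, equivalent to H\"older's inequality (or to the convexity of log-sum-exp after an affine change of variables). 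Multiplying the pointwise inequality by $\psi_{\eps}(y)$ and integrating in $y$, the left-hand side becomes $v^{\eps}(tz_{1}+(1-t)z_{0})$; Jensen's inequality for the convex function $F$ with respect to the probability measure $\psi_{\eps}(y)\,dy$ bounds the right-hand side below by $(N-1) \, F\bigl(v^{\eps}(z_{1}), v^{\eps}(z_{0})\bigr)$. Exponentiating and raising to the power $1/(N-1)$ then recovers exactly the $\CD(K,N)$ inequality for $h^{\eps}$.

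The case $N=\infty$ is easier: the $\CD(K,\infty)$ inequality $\log h(tx_{1}+(1-t)x_{0}) \geq t \log h(x_{1}) + (1-t)\log h(x_{0}) + \tfrac{K}{2} t(1-t)(x_{1}-x_{0})^{2}$ is \emph{linear} in $\log h(x_{0})$ and $\log h(x_{1})$, so convolving with $\psi_{\eps}(y)\,dy$ preserves it directly, with no Jensen step needed. The only (mild) obstacle in the finite $N$ case will be to verify the convexity of $F$ cleanly and to handle the degenerate situations $t \in \{0,1\}$, where one of $\sigma_{0}, \sigma_{1}$ vanishes and $F$ degenerates to an affine function, for which Jensen's inequality holds trivially as equality.
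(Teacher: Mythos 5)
Your proof is correct, but it takes a genuinely different route from the paper's. The paper verifies the \emph{differential} characterization of $\CD(K,N)$ densities (Lemma \ref{lem:CDKN-C2}): it differentiates twice under the integral, integrates by parts for the first derivative, invokes Lemma \ref{lem:Aleksandrov} to control the singular part of the distributional second derivative of the semi-concave function $\log h$, uses Lemma \ref{lem:point-CDKN} to know that the inequality $(\log h)''+\frac{1}{N-1}((\log h)')^2\le -K$ holds a.e., and then applies Jensen to the square of the first derivative. You instead verify the \emph{synthetic} inequality directly: you exploit its invariance under the translation $x\mapsto x-y$ (which preserves $|x_1-x_0|$ and hence the $\sigma$-coefficients), rewrite it as $\log h \ge (N-1)F(\cdot,\cdot)$ with $F$ the log-sum-exp function composed with an affine map, and conclude by multivariate Jensen for the convex $F$. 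Your argument is more self-contained — it bypasses Lemmas \ref{lem:CDKN-C2}, \ref{lem:point-CDKN} and \ref{lem:Aleksandrov} entirely, and needs the $C^2$-regularity of $\psi_\eps$ only for the smoothness claim, not for the $\CD(K,N)$ property — while the paper's route has the side benefit of routing through differential facts it needs elsewhere anyway. Two small points worth making explicit if you write this up: the convexity of $F$ is exactly the convexity of log-sum-exp precomposed with the affine map $(u_1,u_0)\mapsto(u_1/(N-1)+\log\sigma_1,\,u_0/(N-1)+\log\sigma_0)$ (with the degenerate cases $\sigma_i=0$ handled as you indicate), and the finiteness of the $\sigma$-coefficients follows since a non-trivial $\CD(K,N)$ density is strictly positive in the interior, which forces $b-a\le D_{K,N-1}$ when $K>0$.
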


For the proof, we will require the following general:
\begin{lemma} \label{lem:Aleksandrov}
Let $g$ denote a semi-concave function on an open interval $I$ (i.e. $g(x) - M\frac{x^2}{2}$ is concave for some $M \geq 0$). Let $\psi$ denote a $C^2$-smooth non-negative test function with compact support in $I$. Then:
\[
\int_I g(x) \psi''(x) dx \leq \int_I g''(x) \psi(x) dx .
\]
In other words, the singular part of $g$'s distributional second derivative is non-positive. 
\end{lemma}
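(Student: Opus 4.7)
The plan is to reduce to the concave case and then apply two integrations by parts, using that the distributional second derivative of a concave function is a non-positive Radon measure. First, writing $g(x) = \tilde g(x) + M x^2/2$ with $\tilde g$ concave, the $C^2$ term $M x^2/2$ gives equality in the asserted inequality by ordinary integration by parts; hence it suffices to treat the case where $g$ itself is concave.

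For a concave $g$ on $I$, recall that $g$ is locally Lipschitz, the right derivative $g^{\prime,+}$ exists at every point of $I$, is monotone non-increasing and right-continuous, and coincides with the a.e. defined classical derivative $g'$. Since $\psi$ has compact support in $I$, the local absolute continuity of $g$ and the standard integration-by-parts formula yield
\[
\int_I g(x) \psi''(x)\, dx = -\int_I g^{\prime,+}(x) \psi'(x)\, dx .
\]
As $g^{\prime,+}$ is monotone non-increasing and right-continuous, it is of locally bounded variation and its distributional derivative is a non-positive Radon measure $\mu$ on $I$. A second integration by parts (Riemann--Stieltjes, valid since $\psi$ is $C^1$ with compact support) yields
\[
-\int_I g^{\prime,+}(x) \psi'(x)\, dx = \int_I \psi(x)\, d\mu(x) .
\]

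Finally, by the Lebesgue decomposition $\mu = \mu_{ac} + \mu_s$ with $\mu_{ac} \ll \L^1$ and $\mu_s \perp \L^1$, both measures being non-positive since $\mu \leq 0$. By the Lebesgue differentiation theorem for monotone functions, $(g^{\prime,+})'$ exists $\L^1$-a.e.\ and coincides with $g''$ wherever the latter exists (which is also $\L^1$-a.e.), and $\mu_{ac} = g''\, \L^1$. Since $\psi \geq 0$ and $\mu_s \leq 0$, we conclude
\[
\int_I \psi\, d\mu = \int_I g''(x)\psi(x)\, dx + \int_I \psi\, d\mu_s \leq \int_I g''(x)\psi(x)\, dx ,
\]
which combined with the previous identities gives the asserted inequality. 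The observation that $\mu_s$ is non-positive is precisely the content of the ``in other words'' clause of the statement. The only point requiring any care is the justification of the two integrations by parts at the level of BV/absolutely continuous functions, which is entirely standard.
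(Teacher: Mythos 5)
Your proof is correct, but it follows a genuinely different route from the paper's. You reduce to the concave case, integrate by parts twice (once using local absolute continuity of $g$, once in the Lebesgue--Stieltjes sense against the non-positive measure $\mu = D g^{\prime,+}$), and then split $\mu = g''\,\L^1 + \mu_s$ with $\mu_s \leq 0$; this is the classical one-dimensional BV/monotone-function argument, and it in fact establishes the stronger structural statement behind the ``in other words'' clause, namely that the distributional second derivative equals $g''\,\L^1$ plus a non-positive singular measure. (The identification $(g^{\prime,+})' = g''$ a.e.\ that you use is exactly the content of Lemma \ref{lem:convex-2nd-diff}, so that step is fully covered.) The paper instead follows Cordero--Erausquin's trick: it tests the symmetric second difference quotient $D^2_\eps g$ against $\psi$, notes that semi-concavity gives the one-sided bound $D^2_\eps g \leq M$ so that Fatou's lemma applies as $\eps \to 0$, and moves the difference operator onto $\psi$ by a change of variables, handled by dominated convergence. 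What each buys: your argument is more self-contained at the level of one-variable real analysis and directly exhibits the Lebesgue decomposition, but it is tied to dimension one; the paper's argument avoids any Stieltjes or BV machinery and, more importantly, transfers verbatim to $\Real^n$ via Aleksandrov's theorem (which is why the paper cites Cordero--Erausquin), even though only the interval case is needed here.
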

The argument is identical to the one used by D.~Cordero--Erausquin in the proof of \cite[Lemma 1]{CorderoMassTransportAndGaussianInqs}. For completeness, we present the proof. 
\begin{proof}
Extend $g$ and $\psi$ to the entire $\Real$ by defining them as equal to zero outside of $I$. 
Given $\eps > 0$ and $x \in I$, denote:
\[
D^2_\eps g(x) := \frac{g(x+\eps) + g(x-\eps) - 2 g(x)}{\eps^2}  ,
\]
and similarly for $D^2_\eps \psi(x)$. By Taylor's theorem, for any point $x \in I$ where $g$ is twice differentiable we have $\lim_{\eps \rightarrow 0} D^2_\eps g(x) = g''(x)$. In fact, this holds at any point where $g$ has a second Peano derivative, see Subsection \ref{subsec:prelim-derivatives}; in the context of convex functions on $\Real^n$, such points are called points possessing a Hessian in the sense of Aleksandrov. 
Now since for small enough $\eps > 0$, $D^2_\eps g \leq M$ on the support of $\psi$ by semi-concavity (and since $\psi \geq 0$), we obtain by Fatou's lemma:
\[
\int_I g''(x) \psi(x) dx \geq \limsup_{\eps \rightarrow 0} \int_I D^2_\eps g(x) \psi(x) dx = \limsup_{\eps \rightarrow 0} \int_I g(x) D^2_\eps \psi(x) dx = \int_I g(x) \psi''(x) dx ,
\]
where the last equality follows by Lebesgue's Dominated Convergence theorem using the fact that $\abs{D^2_\eps \psi (x)} \leq \max \abs{\psi''}$ for all $x \in I , \eps > 0$, and the fact that $g$ is locally integrable. 
\end{proof}

\begin{proof}[Proof of Proposition \ref{prop:log-convolve}]
Note that $\log h$ is locally integrable on $(a,b)$, so that the integral:
\[
\log h^\eps(x) = \int \log h(y) \psi_{\eps}(x-y) dy ,
\]
is well-defined for all $x \in (a+\eps, b-\eps)$, and we may take two derivatives in $x$ under the integral (as $\psi_\eps$ is $C^2$-smooth with bounded corresponding derivatives), implying the asserted smoothness. 
In addition:
\[
(\log h^\eps)'(x) = \int \log h(y) \frac{d}{dx} \psi_{\eps}(x-y) dy = - \int \log h(y) \frac{d}{dy} \psi_{\eps}(x-y) dx = \int (\log h)'(y) \psi_{\eps}(x-y) dy ,
\]
where the last equality follows from the usual integration by parts formula and Leibniz rule since $(\log h(y)) \psi_{\eps}(x-y)$ is absolutely continuous. 
Furthermore:
\[
(\log h^\eps)''(x) = \int \log h(y) \frac{d^2}{(dx)^2} \psi_{\eps}(x-y) dy = \int \log h(y) \frac{d^2}{(dy)^2} \psi_{\eps}(x-y) dy \leq \int (\log h)''(y) \psi_{\eps}(x-y) dy ,
\]
where the last inequality follows by Lemma \ref{lem:Aleksandrov} applied to $g = \log h$, since $h$ is a $\CD(K,\infty)$ density (by monotonicity in $N$), and hence $\log h(x) + K \frac{x^2}{2}$ is concave on $(a,b)$. 

Putting everything together and applying Jensen's inequality, we obtain:
\begin{align*}
& (\log h^\eps)''(x) + \frac{1}{N-1} ((\log h^{\eps})'(x))^2 \\
\leq  & \int (\log h)''(y) \psi_\eps(x-y) dy + \frac{1}{N-1} \brac{\int (\log h)'(y) \psi_\eps(x-y) dy}^2 \\
\leq & \int \brac{(\log h)''(y) + \frac{1}{N-1}((\log h)'(y))^2} \psi_\eps(x-y) dy \leq 0  ,
\end{align*}
where the last inequality follows since the integrand is non-positive (where it is defined) by Lemma \ref{lem:point-CDKN}. A final application of Lemma \ref{lem:CDKN-C2} concludes the proof. 
\end{proof}

We will use Proposition \ref{prop:log-convolve} in the following form:

\begin{proposition} \label{prop:log-convolve-2d}
Let $\set{h_s(t)}_{s \in (c,d)}$ denote a Borel measurable family of $\CD(K,N)$ densities on $(a,b)$ (so that for every $t \in (a,b)$, $(c,d) \ni s \mapsto h_s(t)$ is Borel measurable). Assume in addition that:
\begin{equation} \label{eq:log-integrable}
\int_{c}^{d} \int_{a}^{b} \abs{\log h_y(x)} dx dy < \infty .
\end{equation}
Given $\eps_1,\eps_2 > 0$ and $s \in (c+\eps_2 , d-\eps_2)$, denote the following function:
\begin{equation} \label{eq:log-double-moll}
\log h^{\eps_1,\eps_2}_{s}(t) := \int \int \log h_y(x) \psi_{\eps_1}(t-x) \psi_{\eps_2}(s-y) dx dy \;,\; t \in (a+\eps_1,b-\eps_1) ,
\end{equation}
where as usual, $\psi_{\eps_i}$ denotes a non-negative $C^2$ function supported on $[-\eps_i,\eps_i]$ with $\int \psi_{\eps_i} = 1$. Then $\set{h^{\eps_1,\eps_2}_{s}(t)}_{s \in (c+\eps_2,d-\eps_2)}$ is a $C^2$-smooth (in $(s,t)$) family of $\CD(K,N)$ densities on $(a+\eps_1,b-\eps_1)$.
\end{proposition}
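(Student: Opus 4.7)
The strategy is to first resolve the mollification in $t$ by invoking Proposition~\ref{prop:log-convolve}, and then to handle the mollification in $s$ via Jensen's inequality applied to the differential characterization of Lemma~\ref{lem:CDKN-C2}. Accordingly, I will proceed in three stages.

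\emph{Stage 1 (fiberwise mollification in $t$).} For each fixed $y \in (c,d)$, set
\[
\log h^{\eps_1}_y(t) := \int \log h_y(x)\, \psi_{\eps_1}(t-x)\, dx, \qquad t \in (a+\eps_1, b-\eps_1).
\]
Since $h_y$ is a $\CD(K,N)$ density on $(a,b)$, Proposition~\ref{prop:log-convolve} yields that $h^{\eps_1}_y$ is a $C^2$-smooth $\CD(K,N)$ density on $(a+\eps_1, b-\eps_1)$. In particular, by Lemma~\ref{lem:CDKN-C2},
\[
(\log h^{\eps_1}_y)''(t) + \frac{1}{N-1}\bigl((\log h^{\eps_1}_y)'(t)\bigr)^2 \le -K \qquad \forall\, t \in (a+\eps_1, b-\eps_1),
\]
uniformly in $y$.

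\emph{Stage 2 (differentiation under the integral).} With $s \in (c+\eps_2, d-\eps_2)$, I rewrite the defining formula \eqref{eq:log-double-moll} as
\[
\log h^{\eps_1,\eps_2}_s(t) = \int \log h^{\eps_1}_y(t)\, \psi_{\eps_2}(s-y)\, dy.
\]
The integrability hypothesis \eqref{eq:log-integrable} together with the trivial bound $|\log h^{\eps_1}_y(t)| \le \tfrac{\|\psi\|_\infty}{\eps_1}\int_a^b |\log h_y(x)|\,dx$ yields an $L^1_y$ majorant for the integrand, so that all partial derivatives in $s$ up to order $2$ may be taken under the integral against the $C^2$ factor $\psi_{\eps_2}$. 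Simultaneously, from the a-priori bounds of Lemma~\ref{lem:apriori}, the quantities $(\log h^{\eps_1}_y)'(t)$ and $(\log h^{\eps_1}_y)''(t)$ (the latter via the differential $\CD(K,N)$ inequality) are locally bounded in $t$ uniformly in $y$, which permits differentiation twice in $t$ under the integral as well. Combining the two, $(s,t) \mapsto \log h^{\eps_1,\eps_2}_s(t)$ is jointly $C^2$, and so is $h^{\eps_1,\eps_2}_s(t)$ itself.

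\emph{Stage 3 ($\CD(K,N)$ property via Jensen).} For fixed $s$, the probability measure $\psi_{\eps_2}(s-y)\, dy$ (using $\int \psi_{\eps_2} = 1$) turns the above formula into an average of the log-densities $\log h^{\eps_1}_y(t)$. Differentiating twice in $t$ under the integral,
\begin{align*}
(\log h^{\eps_1,\eps_2}_s)'(t) &= \int (\log h^{\eps_1}_y)'(t)\, \psi_{\eps_2}(s-y)\, dy,\\
(\log h^{\eps_1,\eps_2}_s)''(t) &= \int (\log h^{\eps_1}_y)''(t)\, \psi_{\eps_2}(s-y)\, dy.
\end{align*}
By Jensen's inequality applied to $u \mapsto u^2$,
\[
\bigl((\log h^{\eps_1,\eps_2}_s)'(t)\bigr)^2 \le \int \bigl((\log h^{\eps_1}_y)'(t)\bigr)^2 \psi_{\eps_2}(s-y)\, dy,
\]
and therefore, using Stage~1,
\[
(\log h^{\eps_1,\eps_2}_s)''(t) + \frac{1}{N-1}\bigl((\log h^{\eps_1,\eps_2}_s)'(t)\bigr)^2 \le \int \Bigl[(\log h^{\eps_1}_y)''(t) + \frac{1}{N-1}\bigl((\log h^{\eps_1}_y)'(t)\bigr)^2\Bigr]\psi_{\eps_2}(s-y)\,dy \le -K.
\]
A final application of Lemma~\ref{lem:CDKN-C2} in the reverse direction concludes that each $h^{\eps_1,\eps_2}_s$ is a $\CD(K,N)$ density on $(a+\eps_1,b-\eps_1)$.

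\emph{Main obstacle.} The only nontrivial point is the justification of differentiation under the integral in the $s$-variable (since a-priori the $y$-dependence of $\log h_y$ is only assumed Borel measurable); this is where the integrability assumption \eqref{eq:log-integrable} plays a decisive role, by supplying an $L^1_y$ majorant for $\log h^{\eps_1}_y(t)$ on the support of $\psi_{\eps_2}(s-\cdot)$. The rest of the argument is a fairly mechanical combination of the fiberwise Proposition~\ref{prop:log-convolve}, Lemma~\ref{lem:apriori}, and Jensen's inequality.
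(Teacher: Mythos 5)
Your overall strategy is sound and is essentially the paper's argument in modular form: the paper simply reruns the one-variable proof of Proposition \ref{prop:log-convolve} on the double integral (differentiation under the integral justified by \eqref{eq:log-integrable}, one integration by parts in $x$, Lemma \ref{lem:Aleksandrov} for the second $x$-derivative, then Jensen with respect to the product kernel $\psi_{\eps_1}(t-x)\psi_{\eps_2}(s-y)\,dx\,dy$), whereas you apply Proposition \ref{prop:log-convolve} fiberwise in $y$ and then only need Jensen in the $y$-variable. The two proofs are equivalent in substance; your packaging has the small advantage of not re-invoking Lemma \ref{lem:Aleksandrov}.

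The one step whose justification does not stand as written is in Stage 2: you assert that $(\log h^{\eps_1}_y)''(t)$ is locally bounded in $t$ uniformly in $y$ ``via the differential $\CD(K,N)$ inequality''. That inequality yields only the one-sided bound $(\log h^{\eps_1}_y)''(t) \le -K$; it gives no lower bound, and without two-sided (or at least $L^1(dy)$-dominated) control of the second derivative you cannot pass the second difference quotient in $t$ through the $dy$-integral by dominated convergence. The conclusion is nevertheless true and easily repaired. Either (i) differentiate \eqref{eq:log-double-moll} in $t$ directly under the double integral — the identity
\[
\partial_t^2 \log h^{\eps_1,\eps_2}_s(t) = \int\int \log h_y(x)\, \psi_{\eps_1}''(t-x)\, \psi_{\eps_2}(s-y)\, dx\, dy
\]
is licensed by \eqref{eq:log-integrable} alone — and then identify the inner $dx$-integral with $(\log h^{\eps_1}_y)''(t)$ by Fubini; this is the paper's route and makes the exchange trivial. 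Or (ii) note that a genuine two-sided bound does hold, but it requires more than the differential inequality: since $(\log h_y)'' \le -K$ as a measure and $\abs{(\log h_y)'}$ is bounded on $[t-\eps_1,t+\eps_1]$ uniformly in $y$ by Lemma \ref{lem:apriori}, the total variation of the measure $(\log h_y)''$ on that window is bounded uniformly in $y$, whence $\abs{(\log h^{\eps_1}_y)''(t)} = \abs{\int \psi_{\eps_1}(t-x)\, d(\log h_y)''(x)} \le \norm{\psi_{\eps_1}}_\infty\, C(t,\eps_1,a,b,K,N)$ locally uniformly in $t$. With either repair, the rest of your argument (Jensen in $y$ and Lemma \ref{lem:CDKN-C2}) goes through.
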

\begin{proof}
The proof is a repetition of the proof of the previous proposition, so we will be brief. 
Our assumption (\ref{eq:log-integrable}) implies that (\ref{eq:log-double-moll}) is well-defined, and justifies taking two derivatives in $t$ and $s$ under the integral, 
implying the assertion on smoothness. The first derivative in $t$ under the integral may be integrated by parts, whereas for the second derivative we apply Lemma \ref{lem:Aleksandrov}. A final application of Jensen's inequality as in Proposition \ref{prop:log-convolve} establishes the asserted differential characterization of $\CD(K,N)$, concluding the proof. 

\end{proof}

\bigskip
\bigskip

\setlinespacing{1.0}
\setlength{\bibspacing}{2pt}

\end{document}